\newcommand{\cu}{\mathcal{U}}
\newcommand{\co}{\mathcal{O}}
\newcommand{\cm}{\mathcal{M}}
\newcommand{\cn}{\mathcal{N}}
\newcommand{\cc}{\mathcal{C}}
\newcommand{\cv}{\mathcal{V}}
\newcommand{\cs}{\mathcal{S}}
\newcommand{\cf}{\mathcal{F}}
\newcommand{\cl}{\mathcal{L}}
\newcommand{\ca}{\mathcal{A}}
\newcommand{\cb}{\mathcal{B}}
\newcommand{\cp}{\mathcal{P}}
\newcommand{\ce}{\mathcal{E}}
\newcommand{\ct}{\mathcal{T}}
\newcommand{\ch}{\mathcal{H}}
\newcommand{\cd}{\mathcal{D}}
\newcommand{\cj}{\mathcal{J}}
\newcommand{\cx}{\mathcal{X}}
\newcommand{\cw}{\mathcal{W}}
\newcommand{\ck}{\mathcal{K}}
\newcommand{\cend}{\mathcal{E}nd}
\newcommand{\fr}{\mathfrak{R}}
\newcommand{\id}{\mathrm{id}}
\newcommand{\ev}{\mathrm{ev}}
\newcommand{\cem}{\mathrm{cem}}
\newcommand{\ctm}{\mathcal{TM}}
\newcommand{\One}{\mathbbm{1}}
\newcommand{\gh}{\hat{\Gamma}}
\newcommand{\sdiff}{\widehat{\mathcal{SD}}}
\newcommand{\scinfty}{\widehat{SC}^\infty}
\newcommand{\pderiv}[2]{\frac{\partial{#1}}{\partial{#2}}}
\newcommand{\opar}{\overline{\partial}}
\newcommand{\realR}{\mathbbm{R}}
\newcommand{\complexC}{\mathbbm{C}}
\newcommand{\intZ}{\mathbbm{Z}}
\newcommand{\fieldK}{\mathbbm{K}}
\newcommand{\naturalN}{\mathbbm{N}}
\newcommand{\expe}{\mathrm{e}}
\newcommand{\olv}{\overline{V}}
\newcommand{\olr}{\overline{\realR}}
\newcommand{\ovlr}[1]{\overline{\realR^{#1}}}
\newcommand{\olc}{\overline{\complexC}}
\newcommand{\olk}{\overline{\fieldK}}
\newcommand{\ovlc}[1]{\overline{\complexC^{#1}}}
\newcommand{\ovlk}[1]{\overline{\fieldK^{#1}}}
\newcommand{\olf}{\overline{f}}
\newcommand{\olcdot}{\overline{\cdot}}
\newcommand{\olpi}{\overline{\Pi}}
\newcommand{\olmu}{\overline{\mu}}
\newcommand{\fg}{\mathfrak{g}}
\newcommand{\fvect}{\mathfrak{vect}}
\newcommand{\fsvect}{\mathfrak{svect}}
\newcommand{\witt}{\mathfrak{witt}}
\newcommand{\fkl}{\mathfrak{k}^L}
\newcommand{\fkm}{\mathfrak{k}^M}
\newcommand{\fns}{\mathfrak{ns}}
\newcommand{\fram}{\mathfrak{r}}
\newcommand{\Hom}{\mathrm{Hom}}
\newcommand{\ihom}{\underline{\mathrm{Hom}}}
\newcommand{\ider}{\underline{\mathrm{der}}}
\newcommand{\Ob}{\mathrm{Ob}}
\newcommand{\Mor}{\mathrm{Mor}}
\newcommand{\Spec}{\mathrm{Spec}\,}
\newcommand{\Aut}{\mathrm{Aut}}
\newcommand{\der}{\mathrm{der}}
\newcommand{\diver}{\mathrm{div}}
\newcommand{\Jac}{\mathrm{Jac}}
\newcommand{\Pic}{\mathrm{Pic}}
\newcommand{\Sym}{\mathrm{Sym}}
\newcommand{\catgr}{\mathsf{Gr}}
\newcommand{\catsets}{\mathsf{Sets}}
\newcommand{\catc}{\mathsf{C}}
\newcommand{\catd}{\mathsf{D}}
\newcommand{\catmod}{\mathsf{Mod}}
\newcommand{\catsmod}{\mathsf{SMod}}
\newcommand{\cattop}{\mathsf{Top}}
\newcommand{\catman}{\mathsf{Man}}
\newcommand{\catbsdom}{\mathsf{BSDom}}
\newcommand{\catbsman}{\mathsf{BSMan}}
\newcommand{\catsman}{\mathsf{SMan}}
\newcommand{\catvbun}{\mathsf{VBun}}
\newcommand{\catsvbun}{\mathsf{SVBun}}
\newcommand{\catsvect}{\mathsf{SVec}}
\newcommand{\catspoint}{\mathsf{SPoint}}
\newcommand{\cattwo}{\mathbf{2}}
\newcommand{\catfinsman}{\mathsf{FinSMan}}
\newcommand{\catsalg}{\mathsf{SAlg}}
\newcommand{\catalg}{\mathsf{Alg}}
\newcommand{\catslie}{\mathsf{SLie}}
\newcommand{\catlie}{\mathsf{Lie}}
\newtheorem{thm}{Theorem}[section]
\newtheorem{prop}[thm]{Proposition}
\newtheorem{lemma}[thm]{Lemma}
\newtheorem{cor}[thm]{Corollary}
\newtheorem{dfn}[thm]{Definition}
\newtheorem{conj}[thm]{Claim}
\def\cleardoublepage{\clearpage\if@twoside \ifodd\c@page\else%
    \hbox{}%
    \thispagestyle{empty}
    \newpage%
    \if@twocolumn\hbox{}\newpage\fi\fi\fi}
\begin{document}

\thispagestyle{empty}
\begin{center}

%
\LARGE\sc Global Analytic Approach to\\ Super Teichm\"uller Spaces
\\
\hfill\\
\normalsize Der Fakult\"at f\"ur Mathematik und Informatik\\
der Universit\"at Leipzig\\
eingereichte\\
\hfill\\
\Huge Dissertation\\
\hfill\\
\normalsize zur Erlangung des akademischen Grades\\
\hfill\\
\LARGE Doctor rerum naturalium\\
\large (Dr. rer. nat.)\\
\hfill\\
\normalsize im Fachgebiet\\
\hfill\\
\LARGE Mathematik\\
\hfill\\
\normalsize vorgelegt von\\
\hfill\\
\Large Christoph Sachse\\
\hfill\\
\normalsize geboren am \Large 15. M\"arz 1977 \normalsize in \Large
Fulda\\
\hfill\\

\normalsize\upshape

\vspace{0.6cm}
\begin{center}
Die Annahme der Dissertation haben empfohlen:
\vspace*{0.3cm}

\hspace*{1cm}\begin{minipage}[c]{12cm}
\begin{enumerate}
\item Prof. Dr. J\"urgen Jost (MPIMiS, Leipzig) \vspace*{-0.3cm}
\item Prof. Dr. Dimitri Leites (Stockholm University, Schweden) \vspace*{-0.3cm}
\item Prof. Dr. Dan Freed (University of Texas at Austin, USA) \vspace*{-0.3cm}
\end{enumerate}
\end{minipage}
\end{center}
\vspace*{0.4cm}
\begin{minipage}[c]{15cm}
Die Verleihung des akademischen Grades erfolgt auf Beschluss des Rates der Fakult\"at f\"ur \\Mathematik und 
Informatik vom 22.10.2007 mit dem Gesamtpr\"adikat summa cum laude.
\end{minipage}

\end{center}


\newpage
\thispagestyle{empty}
\mbox{}
\newpage

\tableofcontents
\newpage
\thispagestyle{empty}
\mbox{}
\newpage

\chapter{Introduction}

\begin{flushright}
\textit{Falls ein Wechsel\\
des Universums erforderlich ist,\\
werden wir das anzeigen.}\\
\textsc{H. Schubert, }\textnormal{Kategorien I}\\
\end{flushright}

In this thesis, we adopt and investigate a new formalism for supergeometry which focuses on the
categorical and homological properties of the theory,
and apply it to the construction of Teichm\"uller spaces of certain superconformal structures.
Supergeometry in its mathematically rigorous form is usually formulated as a theory of locally ringed 
topological
spaces, where all rings involved are supercommutative. Once one has established the definition of a 
supermanifold as a ringed space (which, as algebraic geometry tells us, is basically inevitable), one
has the full arsenal of tools from algebraic geometry at hand. The availability of many of these tools is
not just a nice side-effect, many of them are actually indispensible to making the whole theory
viable. One such tool is the functor of points,
which is a well-known method of homological algebra allowing one to define an object of a category via
the morphisms of all other objects of this category to it. The functor of points is, e.g., needed to
define what a Lie supergroup is. Its usefulness had been realized already at a very early stage, at 
least among
those supergeometers who used the ringed space approach, such as Leites \cite{Itttos}, \cite{SoS}, 
Bernstein 
\cite{NoS_QFaSACfM}, Manin \cite{Gftacg}, \cite{M:Topics} and others. Molotkov \cite{I-dZ2ks} was the
first to publish a full-blown reformulation of supergeometry in categorical terms, the main purpose
of which was to enable the construction of infinite-dimensional superspaces. Unfortunately, his amazing
preprint \cite{I-dZ2ks} contains no proofs, and its dense and abstract style made it difficult to approach
at least for many of the more physically oriented researchers in the field. Therefore, no applications
of his approach have been published so far, and infinite-dimensional supergeometry was very rarely used,
although physics offers plenty of opportunities for applications.

\section{Supersymmetry and supergeometry}

Superalgebra and supergeometry were invented in the early seventies as tools for the path integral formalism
of quantum field theories involving fermion fields. The pioneer of this subject was undoubtedly Berezin,
who, being concerned with the method of second quantization, already realized in the sixties that one
could use algebras with anticommuting generators to unify the description of boson and fermion fields.
Algebras containing anticommuting along with commuting generators were, of course, known long ago. The
archetype of such an algebra is the Grassmann algebra $\Lambda_n$, which is generated by $n$ ``odd'',
i.e., anticommuting elements $\theta_1,\ldots,\theta_n$. It is free except for the relations
\[
\theta_i\theta_j=-\theta_j\theta_i
\]
which imply $\theta_i^2=0$. This algebra can also be viewed simply as the exterior algebra of a
vector space $V$ of dimension $n$. So in the realm of linear and commutative algebra, the step from
ordinary algebras to superalgebras is relatively easy: a superalgebra is defined as a 
$\intZ/2\intZ$-graded
algebra whose operations and morphisms preserve the grading. But it was also Berezin who first 
realized that it
could be possible to extend the use of anticommuting quantities to analysis and geometry, which is
far less obvious. The search for a concept of ``supermanifold'' indeed took several years, and was settled
in 1972 by Leites \cite{Itttos}, \cite{BL:Supermanifolds}. The emergence of this new concept
went disregarded for about two years, until Wess and Zumino \cite{WZ:Supergauge} presented their
famous first supersymmetric field theories, which
triggered an avalanche of research on supergeometry and its applications in physics. The majority
of applications for supersymmetry still originates from high engery physics. Supersymmetric
field theories have become standard tools during the past decades. The most famous applications are
probably superstring theory and supergravity. But there are also a variety of applications in 
solid-state physics, see, e.g., the book by Efetov \cite{E:Supersymmetry}.

Although supergeometry as it was invented in the seventies is formulated in a 
way which mimicks ordinary differential geometry as closely as
possible, it can only really be understood with the help of algebraic geometry. An indispensible tool
is the language of ringed spaces. Unlike an ordinary manifold, a supermanifold is not defined as a space
which is locally homeomorphic to some linear space, although it is still locally isomorphic to a certain
model space. The local rings of functions are not commutative algebras anymore, but rather 
supercommuative ones,
which implies, in particular, that they are not reduced. This entails that a 
supermanifold is not
fully described by its topological points, which one can recover as the set of maximal ideals of the
structure sheaf. The notion of a ``point'' itself has to be handled with great care in this context.
Nonetheless, concepts like coordinates, functions, vector fields, differential forms and integration
can all be carried over to supergeometry, but their intuitive interpretation becomes more subtle. 
In particular the existence of some notion of coordinate makes
it possible to formally wrap up much of the theory in the same language as ordinary differential geometry.

This is an advantage for many applications and calculations, but the fact that the odd ``dimensions'' of
supergeometry are really only algebraic, and not topological, creates some pitfalls.
To hide the essentially algebraic nature of the odd variables behind a seemingly analytic formalism can
sometimes obstruct a clear view on the problems.
These subtleties, together with the generally higher level of abstraction enforced by the ringed space
language, led to attempts to formulate supergeometry in different, less algebraic terms. An example is
the de Witt approach to supermanifolds \cite{D:Supermanifolds} which tries to avoid the use of ringed
spaces by formally reformulating ordinary geometry over the ``supernumbers'', which are assumed to be 
elements of a 
Grassmann algebra. In order to escape the ``finite size effects'' one can create by an unfortunate
choice of this algebra, one often assumes it to be infinite. This, however, produces different types of
problems, related to, e.g., convergence questions. As was already argued many times by the more
algebraically oriented supergeometers, one must
require \emph{functoriality} with respect to an exchange of the algebra over which one works. If
one adheres to this point of view, the de Witt approach becomes equivalent to the ringed space approach,
and the de Witt topology on linear superspaces becomes the Grothendieck topology on the category
of superdomains as proposed by Molotkov \cite{I-dZ2ks}.

In much of the physics literature, the rigorous ringed space approach has still not become standard, 
which has several reasons. One main reason is certainly the considerable technical difficulty that inevitably
accompanies this approach. On the other hand, the supergeometric problems appearing in physics often do not
require this whole massive machinery. Many problems can be treated locally, and then it is enough to
manipulate a set of even and odd coordinates tailor-made for the situation at hand. Besides, odd quantities
in physics are almost exclusively associated with fermion fields, i.e., they form spin representations
of the underlying spacetime symmetry group (usually the Poincar\'e group). This is a very special type of
supergeometry. In other words, supersymmetry and supergeometry are two rather different things.
Supergeometry is a genuine extension of commutative geometry, achieved by using supercommutative rings
and algebras instead of commutative ones. Supersymmetry is a certain feature of quantum field theories
where superalgebra --- or, in the case of supergravity,
supergeometry --- is only used to express this feature. Most applications of supergeometry, 
however, still originate in physics and are therefore often
encoded in the language of quantum field theory, which to non-specialists is often cryptic and hard
to decipher. It is maybe for this reason that, despite spectacular successes like \cite{W:Supersymmetry} and
contributions by renowned mathematicians like Manin \cite{Gftacg}, \cite{M:Topics} or Deligne \cite{D:Letter} 
and Bernstein \cite{NoS_QFaSACfM}, the mathematical community concerned with supergeometry has
remained small. We hope that this work might also help in diminishing the prejudice that supergeometry is
just a bizarre game with spinors invented by physicists, and show that it is a real extension of
ordinary commutative geometry which should indeed be of great interest to mathematicians, in particular
differential and algebraic geometers.

\section{The categorical approach}

In this work we will strictly stick to the ringed space formalism. Besides this, we will work out
the categorical approach to supermathematics. This is not a substitute for the explicit description in
terms of sheaves, but rather an extension which allows us to study questions which would not be directly
accessible in the standard formalism. Its investigation and the demonstration of its use for concrete
problems is one of the main results of this thesis. Most of the methods of the categorical approach 
are actually
well known in algebraic geometry and are in use, e.g., in the theory of schemes and non-reduced varieties
\cite{FAG_GFGAE}, \cite{G:Fondements}. As remarked above, this is an approach based not on the 
investigation of single objects
but rather on their interplay with other objects of the category. It requires
considerable technical machinery to establish. Nonetheless, we think that the gains in both
conceptual insight and also completely new technical tools, e.g., infinite-dimensional supermanifolds,
more than justify the work.

On the conceptual side, the categorical approach allows for making many of the cumbersome peculiarities
of the ringed space formalism more transparent. For example, it allows us to enlighten the somewhat awkward
role of ``functions'' on a supermanifold. They are, in analogy to classical geometry, sections of the
structure sheaf, but have no interpretation as maps to the ground field.
One often treats these sections
as being ``Grassmann-valued'', i.e., to be actual maps into some Grassmann algebra $\Lambda_n$.
But then, to avoid side-effects from the choice of the algebra, one has to adopt mysterious 
prescriptions like the one that the algebra must be ``big enough''
(usually meaning that it must contain at least one more generator than the number of odd quantities 
appearing in calculations). The categorical approach makes such unnatural conditions superfluous by
simply requiring functoriality under an exchange of the algbra over which one works. The price to
pay is that one then has to think of functions as functors taking values in \emph{any} Grassmann
algebra. That is, to think of them as being valued in some particular algebra is legitimate as long as one
bears in mind that this means that one only handles one of their infinitely many components this way.
For many practical problems this may, of course, be good enough, but it immediately becomes dangerous 
if one wants to define maps or morphisms by the values they take.

In the same
vein, the functor of points clarifies the origin and necessity of the so-called odd parameters 
appearing when one works
with supermanifolds. These parameters have been used in physics since the very first days of
supersymmetry, but unfortunately most often without wondering where they actually come from. They usually
take the form of ``odd functions depending only on even variables''. As an example, the transition
function between a smooth superdomain with coordinates $(x,\theta)$ and a domain with coordinates $(y,\eta)$ 
would usually be written as
\begin{eqnarray*}
y &=& f(x)+\psi(x)\theta\\
\eta &=& \xi(x)+g(x)\theta,
\end{eqnarray*}
where $f,g$ are ordinary smooth functions, but $\psi,\xi$ are odd, i.e., anticommuting with
other odd objects and nilpotent. But how can this happen for a function depending only on even quantities?
And besides, how can they fit into the picture that this is the transition function of a superringed space,
i.e., stalkwise a homomorphism of superalgebras?
Again, one usually argues with Grassmann-valuedness here. And again, this is only almost true. To cleanly
interpret the odd parameters, one has to think of a family of supermanifolds over a base 
supermanifold, rather than a single one.
The odd coordinates of the base then become odd constants of the fibers of the family, providing the
mysterious parameters. For brevity or other reasons, one usually does not denote them explicitly, but
rather sticks to the rule that in all morphisms of superdomains, like the above, as many ``odd functions''
have to be introduced as necessary in order to not lose generality. If the base of the family is a
superpoint, i.e. a supermanifold of the form $(\{*\},\Lambda_n)$ where $\{*\}$ is the one-point space
and $\Lambda_n$ its sheaf of functions,
then the structure sheaf of the family is just $\Lambda_n\otimes\co_\cm$, where $\cm$ is the fiber with sheaf
$\co_\cm$.
This sheaf can indeed be interpreted as maps from $\co_\cm$ to $\Lambda_n$, as one
often does. But again, it is functoriality under exchange of the base which is the salient feature of
this direct interpretation.

These two examples should make it clear that the categorical approach is not just a clever reformulation
of the same old stuff, but really allows for a deeper and, in particular, cleaner understanding of
supergeometry. Conceptual insight, however, is not the only advantage of this formalism. It also
provides a whole variety of new technical tools and possibilities. Our main motivation to study it
actually arose from the attempt to extend supergeometry to spaces of infinte dimensions. The ringed
space approach is not directly applicable here. This is not a problem of supergeometry, but a 
general one, occurring,
for example, also in complex geometry \cite{D:probleme}. The solution in the case of complex geometry is the
use of so-called functored spaces, which are essentially an application of the functor of points to this
particular problem. The defining property of an infinite-dimensional manifold is still that of being locally
isomorphic to a linear space. But while in finite dimensions, there is --- up to isomorphy --- only one complex
linear space to which the manifold can be locally identified, this is not true in the infinite-dimensional,
e.g., Banach, context.
One can still choose explicit coordinate maps to define the manifold structure. But to endow the underlying
topological space with a sheaf of maps into $\complexC$ is not sufficient anymore. If one wants to
stay in the ringed space picture, one now has to endow it with a functor
from the category of complex Banach domains to the category of sheaves of holomorphic maps over the
topological space in question.

To translate this to supergeometry, one first has to define linear supermanifolds of infinite dimension.
This is where the functor of points comes in. The crucial difference from the ordinary complex case is that 
a supermanifold is not defined by its topological points, so bijections with open sets in super vector
spaces will not solve the problem. If one switches to the functors of points, however, this intuition
works again: two objects are isomorphic if and only if all of their point sets are isomorphic. We therefore
define a supermanifold (of possibly infinite dimension) as a certain type of functor into the category
of sets which is locally isomorphic to the functor of points of a linear supermanifold. Of course,
one then has to specify what ``locally'' means for a functor. This is accomplished, following \cite{I-dZ2ks},
by introducing a Grothendieck pretopology on the appropriate functor category.

Another achievement of the categorical approach is that it allows a complete description of the
diffeomorphism supergroup of a supermanifold. Already in the finite-dimensional cases supergroups are
defined as group objects in the category of supermanifolds, i.e., in categorical terms. Formally
any supergroup is therefore defined by a property of its functor of points, but in most cases one
can construct an explicit ringed space and afterwards show that it represents this functor. In
the infinite-dimensional case this is not possible anymore --- here the functor of points must 
indeed be put to use.

\section{Super Riemann surfaces}

Our initial motivation for studying infinite-dimensional supergeometry was the Teichm\"uller theory of
so-called $N=1$ super Riemann surfaces. They are a particular species of complex $1|1$-dimensional
supermanifolds appearing as world-sheets in $N=1$ superstring theory. In fact, there are several
types of $1|1$-dimensional supermanifolds which could legitimately be called analogues of Riemann surfaces,
since in the super context, there are several types of superconformal structures. The one appearing
in 2D supergravity and superstring theory is related to the superconformal algebra $\fkl(1|1)$,
a superversion of the agebra of contact vector fields. 

Moduli problems can be formulated in
supergeometry in precisely the same way as in ordinary geometry, namely as the search for universal
parameter spaces for families of the objects in question. These moduli spaces can then, of course, also
be supermanifolds, i.e., a superobject may have even as well as odd deformations. Since
the underlying surface of a complex supersurface is a Riemann surface, one can then also study
Teichm\"uller spaces by fixing a marking on the underlying surface.
It was known from works of Vaintrob \cite{V:Deformations}, LeBrun and Rothstein \cite{LBR:Moduli} and
Crane and Rabin \cite{CR:Super} that
$\fkl(1|1)$-surfaces indeed possess a Teichm\"uller space which has
dimension $3g-3|2g-2$. It was, however, shown that one either has to construct this space as a
quotient of a supermanifold by a certain $\intZ_2$-action, i.e., a ``superorbifold'' \cite{LBR:Moduli}, 
or that one
has to settle with a supermanifold that only paratrizes a semiuniversal family. These results were obtained 
mainly with the help of Kodaira-Spencer deformation theory, adapted
to the context of supergeometry. A problem with this approach is that it gives only local information about
the moduli spaces, since deformation theory only constructs infinitesimal neighbourhoods of a given structure.

The idea of this thesis was to attempt to carry a global approach to the construction of
the Teichm\"uller space of Riemann surfaces invented by Fischer and Tromba \cite{TTiRG} 
over to supergeometry. Their
approach is completely based on global analysis and Riemannian geometry. They first construct the
space $\cc_g$ of all complex structures on a smooth closed orientable surface of genus $g$ by
exploiting the fact that all almost complex structures are integrable in two dimensions. The goal is
then to locally divide out the pullback action of the diffeomorphism group on $\cc_g$ and obtain 
Teichm\"uller space as a manifold glued from local slices. In order to achieve this, a couple of
specialties of surfaces have to be used, in particular the fact that one can identify $\cc_g$ with
the space of conformal classes of metrics, and that this space can be given a natural manifold structure
again. But even with these tools at hand, the process of taking the slice is complicated by the fact
that one works with spaces of smooth objects, which do not form Banach manifolds. Hence, one cannot
use the implicit function theorem directly. The trick that circumvents this problem is to use
Sobolev spaces throughout the whole work, and only in the end to show that everything remains true
if one restricts oneself to smooth objects only.

It is clear that if one wants to carry some version of this procedure over to supergeometry, one will
have to use infinite-dimensional supermanifolds. It is not enough to know how to handle infinite-dimensional
super vector spaces, since they are \emph{not} supermanifolds. Even with these tools at hand, 
Fischer and Tromba's
approach cannot simply be repeated. First of all, supercomplex structures and superconformal structures
do not necessarily coincide, and for $\fkl(1|1)$-surfaces, they actually do not. Nonetheless, every 
super Riemann
surface possesses a supercomplex structure, and it turns out that any deformation of its $\fkl(1|1)$-structure
also deforms the complex structure. Therefore, these two moduli problems cannot be separated, and one has
to be treated as a subproblem of the other. Besides, it turns out that not all almost complex structures
in $2|2$ real dimensions are integrable \cite{P:analogs}. So a suitble way to restrict the 
construction to the integrable ones is required. Another severe problem
is the use of objects of finite differentiability class in Tromba's approach. In supergeometry, only
smooth objects can usually be defined. It will be argued below that this restriction can, to a certain
extent, be circumvented if one tailors special categories of superobjects. We could, however, not find
a practicable way to employ these ``$C^k$-supermanifolds'' to the our problem. Luckily, it turns out
that this is unnecessary because one can more or less separate the underlying moduli problem from its ``odd
parts'', and the underlying one can be identified as a classical one. The topological and analytical
subtleties pertain only to this underlying problem. But still, a full understanding of the
diffeomorphism supergroup has to be achieved in order to be able to take a slice for its action.
The investigation of the diffeomorphism supergroup and its pullback action can indeed be seen as one of
the main results of this work.

All this should make it clear that plenty of problems must be overcome to attempt a 
supergeometric version of Fischer and
Tromba's approach. The resulting procedure only loosely follows their tracks, for the reasons outlined
above. In particular, it requires many more algebraic considerations. 
Nonetheless, we finally arrive
at local quotients for the action of the diffeomorphism supergroup which divide out as much of its action
as is possible if one wants the quotient to be a supermanifold. The main result of this thesis
is maybe not the super Teichm\"uller spaces themselves, which had been obtained by less technical means
before. It is rather the tools and methods developed to pursue the global approach which should
prove useful for many similar problems. Applications for such methods can, for example, be found in
the study of classical configuration spaces of supersymmetric field theories, or for super Hilbert spaces
and operators on them. In particular, the categorical approach can be expected to be a powerful tool if
one tries to study supergeometric analogues of classical constructions. Examples are conformal and
topological field theories, vertex operator superalgebras or automorphic forms, to name just a few.

\section{Organization}

Roughly a third of the work we present here is dedicated to
a realization of the categorical programme set up by Molotkov in \cite{I-dZ2ks}, 
in particular to developing the part
of his theory needed for our later applications together with complete proofs and
examples. In Chapter \ref{ch:sgeo}, we give a brief overview of superalgebra and the ringed 
space formulation of supergeometry. Although this is rather standard material by now, there are only
a small number of concise expositions, e.g., \cite{SoS}, \cite{NoS_QFaSACfM}, \cite{SfMAI}. 

In Chapter \ref{ch:categories}, the categorical approach is laid out in detail. The first sections contain
a recapitulation of the necessary tools from homological algebra, like representable functors, inner
Hom-objects and algebraic structures in categories. 

In Section \ref{sect:functsman}, we expound
some of the consequences of the categorical approach, in particular the necessity of the use of
families instead of single objects. 

In Section \ref{sect:catsalg}, the reformulation of linear and
commutative superalgebra in terms of the functor of points is given. 

In Section \ref{sect:sdom} we
return to geometry by defining superdomains of possibly infinite dimension. To be able to do so,
we have to introduce a Grothendieck pretopology on a certain functor category which contains the
functors of points for linear supermanifolds. This description allows the construction of Banach,
Fr\'echet or even just locally convex superdomains. For the rest of the construction, we restrict ourselves to
the Banach case in order not to overload this chapter with formalities. With some minor adaptions,
everything goes through equally well for the Fr\'echet case.

In Section \ref{sect:bansman} we then define Banach supermanifolds as certain functors which are locally
isomorphic to Banach superdomains, along with supersmooth morphisms between them. The last section of
Chapter \ref{ch:categories} contains the definition of super vector bundles within the categorical framework.

Chapter \ref{ch:sconf} is the first one that is specialized to the application to superconformal
geometry. We give a brief review over the classification of superconformal algebras which was obtained 
and expanded by
many authors over the past decades. To only mention a small selection, contributions can be found in
\cite{DswU1cs}, \cite{Ssacc}, \cite{Fdmop}, \cite{Dtfff}, \cite{KvdL:On}, \cite{SS:Comments} and
\cite{LSoST}. 

We then identify
the type of superconformal surface referred to as $N=1$ super Riemann surface in the physics
literature as the type associated with the algebra $\fkl(1|1)$. A subsequent short analysis of
the structure of general complex $1|1$-dimensional supermanifolds and $\fkl(1|1)$-manifolds
shows that the latter are subspecies of the former. While complex supermanifolds of
dimension $1|1$ are equivalent to a Riemann surface together with a holomorphic line bundle on it,
$\fkl(1|1)$-surfaces are precisely those for which this line bundle is a spin bundle. This 
implies in particular that the moduli problem of super
Riemann surfaces cannot be separated from that of supercomplex structures.

Chapters \ref{ch:acs} and \ref{ch:cpx} are concerned with almost complex structures and their integrability
on supermanifolds. They treat the general case and are not specialized to supersurfaces. 

Chapter
\ref{ch:acs} contains two of our main results. The first is Thm.~\ref{thm:sectsvbun}, which asserts that
the functor of smooth sections of a super vector bundle (which is just the functor of points for the
supermanifold of its sections) is represented by an infinite-dimensional super vector space. This
intuitively plausible statement turns out to be rather hard to prove.
The second result
is the construction of a complex supermanifold $\ca(\cm)$ of all almost complex structures on 
given supermanifold $\cm$. Here, the power of the categorical framework becomes fully visible for the first
time. 
We construct $\ca(\cm)$ as a submanifold of the supermanifold of sections of the endomorphism bundle of
the tangent bundle $\ctm$ by adapting the method invented by Abresch and Fischer for ordinary almost
complex manifolds \cite{TTiRG}. 

Chapter \ref{ch:cpx} is rather short, since we can base our analysis of 
integrability on the results of Vaintrob \cite{V:Deformations}, \cite{Acsos}. The main result is the
determination of those deformations of an integrable almost complex structure which preserve integrability.
Here, we follow a method which was proposed in a similar form by \cite{NG:geometry}.

Chapter \ref{ch:sdiff} contains another of the main results of this thesis, namely a complete description
of the structure of the diffeomorphism supergroup $\sdiff(\cm)$ of a supermanifold. We first identify
$\sdiff(\cm)$ as the subfunctor of the inner Hom-object $\Hom_{\catsman}(\cm,\cm)$ consisting of
invertible morphisms and show that it is in fact a restriction of
$\Hom_{\catsman}(\cm,\cm)$ to the group $\Aut(\cm)$ of automorphisms of $\cm$. 

In Section \ref{sect:sdiff},
we begin by analysing the ``higher'' functor points of $\sdiff(\cm)$, i.e., those which are not simply
morphisms $\cm\to\cm$ but contain odd parameters. It turns out that they have a very simple and neat
description as exponentials of vector fields. The topological subtleties usually pertaining to
the exponential in the case when the Lie group is not Banach only occur for the underlying group $\Aut(\cm)$.
This allows us to split the superdiffeomorphism group $\sdiff(\cm)$ into a semidirect product of $\Aut(\cm)$
and a nilpotent group. 

In Section \ref{sect:autm} we complete the description of $\sdiff(\cm)$ by an
analysis of $\Aut(\cm)$. The structure of this group is more intricate, but again it turns out that
one can split it into several semidirect factors by comparing the supermanifold $\cm$ to the exterior
bundle $(M,\wedge^\bullet E)$ to 
which it is isomorphic by Batchelor's theorem \cite{TSOS}. The deviation of $\cm$ from the exterior bundle form
can be expressed by an element of a nilpotent group $N_\cm$ which
is generated by the even nilpotent vector fields on $\cm$. The remaining normal subgroup of $\Aut(\cm)$
can then be expressed as the isomorphisms of $(M,\wedge^\bullet E)$, which are those of a vector bundle $E$ over
the base manifold $M$. This group is again a semidirect product, namely of $\mathrm{Diff}(M)$, the
diffeomorphisms of the underlying manifold, and $\Aut_M(E)$, the smooth automorphisms of $E$ over $M$.

Only with this detailed description of $\sdiff(\cm)$ at hand does it become possible to try to find a
slice for the pullback action of $\sdiff(\cm)$ on the integrable almost complex structures on $\cm$,
which would be a patch of super moduli space. As in the classical case, to divide out all of 
$\sdiff(\cm)$ at once is too ambitious,
since the underlying Riemann surface of $\cm$ may have additional nontrivial
automorphisms. We confine ourselves to the identity component $\sdiff_0(\cm)$ to obtain the super
Teichm\"uller space. 

In Section \ref{sect:sconfaut} we determine the residual automorphisms contained in $\sdiff_0(\cm)$
and find out that there remains a nontrivial group of them. This will make it impossible to
construct the Teichm\"uller space of $\fvect^L(1|1)$-surfaces as the base of a universal family.
The supermanifold $\ct_{\fvect^L(1|1)}^{g,d}$ that we construct instead still parametrizes all 
deformations of a given compact complex
$1|1$-dimensional supermanifold. We did, however, not succeed in showing that it is the base of a
semiuniversal family. We leave this problem for further investigations. The base manifold of
$\ct_{\fvect^L(1|1)}^{g,d}$ is
the family $J(V_g)$ of Jacobian varieties over Teichm\"uller space constructed by 
Earle \cite{E:Families}.
This space can be seen as the Teichm\"uller space of pairs of Riemann surfaces together with holomorphic
line bundles of a given fixed degree on them. 

Finally we restrict this construction to $\fkl(1|1)$-structures and we find that also in this case there
exists no universal family (at least not in the category of supermanifolds). The reason is that 
there remains a residual group of automorphisms of the 
$\fkl(1|1)$-surface which is isomorphic to $\intZ_2$. This is in accord with previous results 
\cite{LBR:Moduli}, \cite{CR:Super}.

\section{Main results}

The main results of this thesis can be summed up as follows.
\begin{enumerate}
\item We realize and extend the categorical programme laid out by Molotkov in \cite{I-dZ2ks}. In particular,
we present proofs to most of the statements claimed in \cite{I-dZ2ks} and construct as an 
example for the viability
of the categorical approach a manifold of all almost complex structures on a given almost 
complex supermanifold.
\item A detailed analysis of the structure of the diffeomorphism supergroup $\sdiff(\cm)$ of a 
smooth finite-dimensional 
supermanifold $\cm$ is presented. This includes an investigation of the group $\Aut(\cm)$ of automorphisms
of $\cm$, as well as an analysis of the functor of points of $\sdiff(\cm)$. The latter is
completely described in terms of a set of generators for the category of supermanifolds.
\item We present an anlysis of the deformations of two types of superconformal surfaces and
investigate the viability of the global approach to Teichm\"uller theory of Tromba \cite{TTiRG} 
in the supergeometric context. We demonstrate that a universal Teichm\"uller family does not exist
for compact complex $1|1$-dimensional supermanifolds, but that a semiuniversal one can be constructed
for $N=1$ super Riemann surfaces, i.e., $\fkl(1|1)$-surfaces.
\end{enumerate}

Except for \cite{I-dZ2ks}, infinite-dimensional supergeometry has, to our knowledge, only been
thoroughly investigated in \cite{S:Supergeometry}. The latter, however, takes a quite different route.
Bits and pieces of the categorical approach have appeared throughout the literature, e.g., the
so-called ``even-rules principle'' \cite{NoS_QFaSACfM} or the quite well-known functor of points.
The full program proposed in \cite{I-dZ2ks} has, however, never been realized and applied before.

A full investigation of the structure of the diffeomorphism supergroup has seemingly also not been 
given before. The group $\Aut(\cm)$ has, of course, been mentioned in many places, but the difficulties
of its infinite dimensionality seem to have obstructed its analysis.

The deformation theory of complex superspaces has been developed in great detail by Vaintrob
\cite{V:Deformations}, \cite{V:moduli}, \cite{V:complex}. In particular, he proved the 
existence of versal deformations in 
the complex $1|1$-dimensional case. In a similar way, Flenner and Sundararaman
investigated deformations of complex analytic superspaces \cite{FS:Analytic}. 
The Teichm\"uller space of $\fkl(1|1)$-surfaces was investigated
by Crane and Rabin \cite{CR:Super}, Rothstein and LeBrun \cite{LBR:Moduli}, Hodgkin \cite{H:structure},
\cite{H:direct}, Nelson and Giddings \cite{NG:geometry},
and many others. The authors of \cite{CR:Super} observed that no universal
family of marked $\fkl(1|1)$-surfaces exists. Rothstein and LeBrun consequently divided out the
resulting ambiguities and introduced for the description of the resulting space the concept of 
a superorbifold. Natanzon in \cite{N:Moduli} developed an approach to the
moduli problems of $N=1$ and $N=2$ super Riemann surfaces based on superanalogs of algebraic curves
and Fuchsian groups. This approach uses completely different tools than the ones employed in this thesis,
but on the other hand the construction of the moduli spaces instead of only the Teichm\"uller spaces,
as stated in \cite{N:Moduli}, would be quite an improvement as compared to our results. A comparison
of the results of Natanzon with ours, however, would have exceeded the scope of this work by far. 
We must leave it as an interesting topic for future research. 

The contribution of this thesis to super Teichm\"uller theory is twofold. On one hand we investigate
the possibility of a global approach to supergeometric moduli problems. This should be understood 
also as a methodological
result, since global techniques may prove valuable for similar problems, e.g., the study of
classical configuration spaces of supersymmetric gauge theories or variational problems in
supergeometry. On the other we study here (to our knowledge for the first time) the Teichm\"uller space
of general complex $1|1$-dimensional supermanifolds and determine the obstructions to the construction
of a universal family of those.

\section{Acknowledgements}

I would like to thank my advisor Prof.~J\"urgen Jost for suggesting
this very interesting and challenging problem to me, as well as his continued support, 
patience and encouragement
during my attempts to solve it. It is hard to grasp how much I learned during the past three years,
both scientifically and personally, and this I owe, first of all, to the friendly and open-minded 
yet competitive and creative environment at the  
Max Planck Institute for Mathematics in the Sciences and especially the hospitality of the IMPRS. 

I am also deeply grateful to Vladimir Molotkov for his invaluable help in understanding his work
on the categorical approach to supermathematics, and for patiently answering my numerous questions. 
Without his advice,
I would most probably have been unable to tackle the problems of infinite-dimensional supergeometry. 

I am also indebted to
Dimitry A. Leites, whose help and explanations have been vital to this work.
\footnote{Despite me not always listening carefully enough!} I am especially grateful for
pointing out the results of Molotkov and Vaintrob to me, among countless other tips. I am grateful
to Joseph Bernstein for the lectures he gave to our seminar about the moduli spaces of SUSY curves and the
approaches to their construction.

I also want to thank the participants of the seminar of the geometry group at the MPIMiS, especially 
Guofang Wang,
Brian Clarke, Guy Buss, Miaomiao Zhu and Alexei Lebedev for countless discussions, feedback and
corrections. In particular
the discussions with Guy about classical Teichm\"uller theory and moduli spaces have yielded substantial
contributions to the results presented here. I am also grateful to Mattias Wendt for some quite
insightful discussions about topoi, the problems of infinite-dimensional ringed spaces, and lots of
other aspects of algebraic geometry.
My thanks also go to Hilke Reiter, Guy and especially Brian for careful checking and 
test-reading.

Finally, I am very grateful to the Klaus-Tschira-Stiftung, whose financial support made this work
and my studies at the Max Planck Research School possible in the first place.

\newpage
\thispagestyle{empty}
\mbox{}
\newpage
\chapter{Supergeometry}
\label{ch:sgeo}

This chapter is dedicated to an outline of the general theory of supergeometry, as far as it will be
needed in the following chapters. Here, we first present the ringed space formalism of supergeometry. The
categorical approach will be presented in a separate chapter. Detailed accounts of the materials of this
chapter can be found in \cite{NoS_QFaSACfM},\cite{SfMAI}.

Roughly speaking, supersymmetry deals with $\intZ_2$-graded objects \footnote{By $\intZ_2$, we 
denote the field of residues $\intZ/2\intZ$. The 2-adic numbers, which are sometimes denoted by the
same symbol, do not occur in this work.} and morphisms between them which preserve
the grading. It might seem somewhat arbitrary to restrict oneself to the case of just $\intZ_2$-grading,
and not to consider gradings by arbitrary abelian groups.
But it has turned out that this case plays a special role: extending the scope beyond the linear realm
into geometry, the $\intZ_2$-grading becomes translated into commuting and anticommuting variables. Thus,
supergeometry is a special kind of noncommutative geometry, but its noncommutativity is one of the tamest
possible ones.
While graded objects had been used in algebra for a long time already,
Berezin's idea to construct a theory of analysis and geometry which
uses commuting and anticommuting variables on the same footing was quite revolutionary. As could be
expected, this geometry is considerably more algebraic than classical differential geometry.
In particular, the use of non-reduced ringed
spaces for the definition of supermanifolds becomes inevitable.

\section{Linear superspaces and superalgebras}

Throughout this work, we denote the elements of $\intZ_2$ as $\{\bar{0},\bar{1}\}$. The field $\fieldK$
denotes either $\realR$ or $\complexC$.

\begin{dfn}
\label{def:sring}
A ring $R$ is called a superring, if it is $\intZ_2$-graded, i.e., if it decomposes into a direct sum
of additive subgroups $R=R_{\bar{0}}\oplus R_{\bar{1}}$ for which 
\begin{equation}
R_{\bar{i}}\cdot R_{\bar{j}}\subseteq R_{\bar{i}+\bar{j}}
\end{equation}
holds. A module $M$ over a superring $R$ is called a supermodule, if it is $\intZ_2$-graded,
\begin{equation}
M=M_{\bar{0}}\oplus M_{\bar{1}},
\end{equation}
and if $R_{\bar{i}}\cdot M_{\bar{j}}\subseteq M_{\bar{i}+\bar{j}}$.
The submodule $M_{\bar{0}}$ is called even, $M_{\bar{1}}$ is called odd. A morphism $\phi:M\to M'$ of
$R$-supermodules is a morphism of $R$-modules which preserves the grading.
\end{dfn}

We denote by $\catsmod_R$ (resp. ${}_R\catsmod$) the category of right (resp. left) $R$-supermodules.
Note that any ring $R$ can be considered as a superring by just setting $R_{\bar{0}}:=R$ and
$R_{\bar{1}}:={0}$.

\begin{dfn}
Let $M$ be an $R$-supermodule. An element $m\in M$ is called homogeneous,
if $m\in M_{\bar{0}}$ or $m\in M_{\bar{1}}$. For a homogeneous element, its parity is defined to be
\begin{equation}
p(m):=\left\{\begin{array}{lcl}
0 & \mathrm{if} & m\in M_{\bar{0}}\\
1 & \mathrm{if} & m\in M_{\bar{1}}
\end{array}\right.
\end{equation}
An inhomogeneous element is said to be of indefinite parity.
\end{dfn}

\begin{dfn}
A $\fieldK$-supermodule is called a $\fieldK$-super vector space. 
\end{dfn}

The category of $\fieldK$-super vector spaces will be denoted by $\catsvect_\fieldK$. Particularly important
are the standard super vector spaces $\fieldK^{m|n}$, which have $m$ even and $n$ odd dimensions.

\begin{dfn}
A $\fieldK$-superalgebra $A$ is a $\fieldK$-super vector space endowed with a morphism
\begin{equation}
\mu:A\otimes_\fieldK A\to A.
\end{equation}
The algebra
$A$ is called supercommutative if for all homogeneous elements $a,b$, one has
\begin{equation}
\label{salgcomm}
\mu(a,b)=(-1)^{p(a)p(b)}\mu(b,a).
\end{equation}
$A$ is associative, resp. with unit, if it is associative, resp. has a unit, as an ordinary $\fieldK$-algebra.
\end{dfn}

\begin{dfn}
A Lie superalgebra $L$ over $\fieldK$ is a $\fieldK$-superalgebra whose morphism 
$[\cdot,\cdot]:L\otimes L\to L$ satisfies the following properties:
\begin{enumerate}
\item it is super-antisymmetric: $[a,b]=-(-1)^{p(a)p(b)}[b,a]$ for all homogeneous elements $a,b\in L$,
\item it satisfies the super Jacobi identity, i.e., for all homogeneous $a,b,c\in L$, one has
\begin{equation}
\label{sjacobi}
[a,[b,c]]+(-1)^{p(a)p(b)+p(a)p(c)}[b,[c,a]]+(-1)^{p(a)p(c)+p(b)p(c)}[c,[a,b]]=0.
\end{equation}
\end{enumerate}
\end{dfn}

Expressions like (\ref{salgcomm}) and (\ref{sjacobi}) are extended to inhomogeneous elements by linearity.
It is clear that if a unit exists, it has to be even. If no confusion can arise, 
multiplication in an algebra will be
denoted by either a dot (like $a\cdot b$) or by juxtaposition (like $ab$). In the following, 
we will assume that all superalgebras are associative and have a unit.

\begin{dfn}
A left (resp.~right) module over a $\fieldK$-superalgebra $A$ is a $\fieldK$-super vector space $M$ 
endowed with a morphism
\begin{equation}
\rho:A\otimes_\fieldK M\to M\qquad (\textrm{resp. }\rho:M\otimes_\fieldK A\to M).
\end{equation}
satisfying the usual identities that make $M$ a module over $A$ as an ordinary algebra.
\end{dfn}

For a supercommutative algebra, every left module can be made a right module by defining
\begin{equation}
\label{modstr}
m\cdot a:=(-1)^{p(a)p(m)}a\cdot m
\end{equation}
for all homogeneous elements $a\in A$ and $m\in M$, and extending this definition by linearity. From now on,
we will restrict ourselves to supercommutative algebras with unit. Consequently, we will just speak of
``modules'', all of which are defined to be left-modules whose right module structure is given by
(\ref{modstr}).

\begin{dfn}
\label{def:dprod}
Let $A$ be a supercommutative superalgebra.
The direct sum of $A$-super modules $M=M_{\bar{0}}\oplus M_{\bar{1}}$ and $N=N_{\bar{0}}\oplus N_{\bar{1}}$
is a supermodule whose homogeneous submodules are given by
\begin{equation}
(M\oplus N)_{\bar{i}}=M_{\bar{i}}\oplus N_{\bar{i}}.
\end{equation}
This definition extends to direct sums of families indexed by a set $\mathcal{I}$ in the obvious way.
\end{dfn}

Actually, Def. \ref{def:dprod} is a consequence of the general definition of direct sums as colimits
of certain functors (cf. Section \ref{sect:catprod}).

\subsection{Tensor products of supermodules}

\begin{dfn}
\label{tprod}
Let $M,N$ be $\fieldK$-super vector spaces. The tensor product of $M$ and $N$ is the tensor product of
$M,N$ as ordinary $\fieldK$-super vector spaces, endowed with the grading
\begin{equation}
(M\otimes_\fieldK N)_{\bar{i}}=\bigoplus_{\bar{j}+\bar{k}=\bar{i}}M_{\bar{j}}\otimes_\fieldK N_{\bar{k}}.
\end{equation}
If $A,B$ are $\fieldK$-superalgebras, their tensor product can be given a canoncial $\fieldK$-superalgebra
structure again by setting
\begin{equation}
(a\otimes b)(c\otimes d):=(-1)^{p(b)p(c)}ac\otimes bd.
\end{equation}
\end{dfn}

This extends to modules over supercommutative superalgebras in a natural way.

\begin{dfn}
Let $M,N$ be modules over a $\fieldK$-superalgebra $A$. Their tensor product is 
defined as 
\begin{equation}
(M\otimes_A N):=(M\otimes_\fieldK N)/I,
\end{equation}
where $I$ is the ideal spanned by elements of the form $ma\otimes n-m\otimes an$, $m\in M, n\in N, a\in A$.
\end{dfn}

The definition of the tensor product is chosen such that it has ``good'' properties. The tensor product
of modules over some commutative ring $R$ has several functorial properties, e.g. universality in the
category of multilinear maps. Besides, it comes equipped with a unit object ($R$ itself) and an 
associativity
isomorphism. Categories which allow a product which has these properties are called
\emph{tensor categories}. Usually one supplements this by a commutativity isomorphism
$c_{V,W}:V\otimes W\cong W\otimes V$. If the isomorphisms $c_{V,W}$ satisfy certain compatibility
axioms with respect to associativity and
the unit element\footnote{The so-called Hexagon axioms, see, e.g., \cite{BK:Lectures}}, they
are called a \emph{braiding} of the tensor category. 
The category of supermodules over a supercommutative $\fieldK$-superalgebra $A$
inherits all this from the tensor product in the category of modules over $A$ as an
ordinary algebra, except for one crucial
difference: it has a different braiding, namely \cite{NoS_QFaSACfM}
\begin{eqnarray}
\label{tprodcomiso}
c_{V,W}:V\otimes_A W &\to& W\otimes_A V\\
\nonumber
v\otimes w &\mapsto& (-1)^{p(v)p(w)}w\otimes v.
\end{eqnarray}
If one has several factors arranged in two different permutations $V=V_1\otimes\ldots\otimes V_n$
and $V'=V_{\sigma(1)}\otimes\ldots\otimes V_{\sigma(n)}$, the axioms of a braided tensor 
category require that
there exists a unique isomorphism $\tau:V\cong V'$. This isomorphism is given by a composition of the
associativity and commutativity isomorphisms of the factors, and the axioms thus require that whichever
composition we choose, the result is the same. Since we have inherited associativity from the category
of ordinary modules and commutativity is given by (\ref{tprodcomiso}), $\tau$ must be the map
\begin{equation}
\tau:v_1\otimes\ldots\otimes v_n\mapsto (-1)^Nv_{\sigma(1)}\otimes\ldots\otimes v_{\sigma(n)},
\end{equation}
where $N$ is the number of pairs of indices where $i<j$ but $\sigma(i)>\sigma(j)$, and 
for which $v_i,v_j$ are odd.
This fact is the origin of the \emph{sign rule}, which says that whenever one interchanges two 
neighbouring odd factors in a product of elements of
some superalgebra, one picks up a factor $(-1)$.

\begin{dfn}
\label{def:copf}
Let $R$ be a superring. The change of parity functor is the functor $\Pi:\catsmod_R\to\catsmod_R$ 
(equivalently for left modules) which assigns to a supermodule 
$M=M_{\bar{0}}\oplus M_{\bar{1}}$ the supermodule $\Pi(M)$ with $(\Pi(M))_{\bar{0}}=M_{\bar{1}}$
and $(\Pi(M))_{\bar{1}}=M_{\bar{0}}$.
\end{dfn}

Parity reversal has to be a functor since any morphism between two supermodules has to preserve
parity.
One clearly has $\Pi(\fieldK)=\fieldK^{0|1}$. With the help of definition (\ref{tprod}), one also
easily verifies the useful fact that for any $\fieldK$-super vector space $V$,
\begin{equation}
\fieldK^{0|1}\otimes_\fieldK V\cong\Pi(V).
\end{equation}

\subsection{$\intZ$-graded supermodules}

Some constructions yield modules and spaces which are already naturally $\intZ$-graded, e.g., exterior
or symmetric algebras over super vector spaces, or the complex of differential forms appearing below.
One has to be careful to choose a consistent convention for the interplay of the degree with parity. There
are two choices \cite{NoS_QFaSACfM}, and although they are basically equivalent, they lead to sign 
differences in some formulas. To remove all ambiguities it is sufficient to fix the commutativity isomorphism
of the tensor product. Following \cite{NoS_QFaSACfM}, we choose the following one.

\begin{dfn}
Let $A$ be a commutative $\fieldK$-superalgebra. 
Define a tensor category $\catsmod_A^{gr}$ of $\intZ$-graded objects of $\catsmod_A$ by defining
the tensor product of any two $\intZ$-graded $A$-supermodules $M,N$ to be
the tensor product $M\otimes_A N$ in $\catsmod_A$ with the usual
associativity isomorphism, but with the commutativity isomorphism redefined as
\begin{eqnarray}
c_{M,N}:M\otimes_A N &\to& N\otimes_A M\\
m\otimes n &\mapsto& (-1)^{p(m)p(n)+\deg(m)\deg(n)}n\otimes m.
\end{eqnarray}
\end{dfn}

This redefines the $c_{M,N}$ of $\catsmod_A$ to $(-1)^{\deg(m)\deg(n)}c_{M,N}$ for all 
$m\in M$ and $n\in N$.

An application of this rule is the construction of the symmetric and exterior powers of a super vector
space $V$. The symmetric power $\Sym^n(V)$ is the quotient of $V\otimes\ldots\otimes V$ by the action
of the commutativity isomorphisms $c_{V,V}:V\otimes V\to V\otimes V$ exchanging the elements of all
factors in combination with associativity. Clearly one has
\begin{equation}
\Sym^\bullet(V)=\Sym^\bullet(V_{\bar{0}})\otimes\wedge^\bullet(V_{\bar{1}})
\end{equation}
where $\Sym$ and $\wedge$ on the right hand side are the operations in the category of ordinary vector
spaces.

Analogously the exterior power $\wedge^n(V)$ of a super vector space $V$ is the quotient of 
$V\otimes\ldots\otimes V$ by the action of the symmetric group $S_n$ acting via $c_{V,V}$ on 
the factors, additionally 
multiplying the permutation with its sign character $\epsilon(\sigma)$. The result is
\begin{equation}
\label{svectextalg}
\wedge^\bullet(V)=\wedge^\bullet(V_{\bar{0}})\otimes\Sym^\bullet(V_{\bar{1}}).
\end{equation}

Note that as an object of $\catsmod_\fieldK^{gr}$, $\wedge^\bullet(V)$ is the symmetric algebra of
the object $V\in\catsmod_\fieldK^{gr}$ which is purely of degree one.

\subsection{The inner Hom object for supermodules}
\label{ihomsvect}

If $V,W$ are two $\fieldK$-vector spaces, the set of their morphisms, i.e. linear maps $V\to W$, naturally
carries the structure of a $\fieldK$-vector space itself. The same goes for, e.g., abelian groups, sets and many
other categories. This situation is axiomatized by the so-called inner Hom object \cite{CftWM},\cite{MoHA}.
Let $\catc$ be some category with a product operation $*$, e.g. $\catsets$ with the direct product, or
$\fieldK$-vector spaces with the tensor product over $\fieldK$. Then, for $Y,Z\in\catc$, the inner Hom
object $\ihom(Y,Z)$ is an object satisfying the \emph{adjunction formula} \cite{MoHA}:
\begin{equation}
\label{ihomdef}
\Hom_\catc(X,\ihom(Y,Z))=\Hom_\catc(X*Y,Z)\qquad\forall X\in\catc.
\end{equation}
This formula requires that $\ihom(Y,Z)$, if it exists, must represent the functor
\begin{eqnarray}
\catc^\circ &\to& \catsets\\
X &\mapsto& \Hom(X*Y,Z),
\end{eqnarray}
where $\catc^\circ$ denotes the dual category of $\catc$. This implies that if $\ihom(Y,Z)$ exists, it is
unique up to a unique isomorphism.
In view of the above examples, the interpretation is clear: the inner Hom object $\ihom(Y,Z)$ is 
intended to be an
object which turns the application of a map $\varphi:Y\to Z$ into an \emph{operation} 
$\ihom(Y,Z)*Y\to Z$ in $\catc$.\footnote{A (left) operation of an object $K$ on an object $A$ is a
morphism $K\times A\to A$.}

In the super context, inner Hom objects play an important role as superizations of spaces of maps
between super objects. In this work they will be needed for various $\fieldK$-super modules and for
supermanifolds (e.g., as the diffeomorphism supergroup).

As an example, look at $\fieldK$-super vector spaces first. For $V,W$ two $\fieldK$-super vector spaces
the set $\Hom(V,W)$ naturally inherits the structure of a $\fieldK$-vector space but it is not a 
super vector space: there is no natural grading. The inner Hom object will have to satisfy
\begin{equation}
\Hom_{\catsvect_\fieldK}(U,\ihom(V,W))=\Hom_{\catsvect_\fieldK}(U\otimes_\fieldK V,W)
\end{equation}
for all $\fieldK$-super vector spaces $U$.
Since $\fieldK^{1|0}$ and $\fieldK^{0|1}$ generate the category $\catsvect_\fieldK$ (cf. Prop.
\ref{gensvect}), it is enough to let $U$ run over these two spaces. One finds
\begin{equation}
\Hom(\fieldK^{1|0}\otimes V,W)\cong \Hom(V,W)
\end{equation}
and
\begin{equation}
\Hom(\fieldK^{0|1}\otimes V,W)\cong \Hom(\Pi(V),W).
\end{equation}
Thus the even part $\ihom(V,W)_{\bar{0}}$ consists of the $\fieldK$-linear maps $V\to W$ which preserve the
grading (i.e. the actual morphisms of $\catsvect_\fieldK$), while $\ihom(V,W)_{\bar{1}}$ consists of those
which reverse it. Elements of the even part are therefore called even linear maps, those of the odd part
odd linear maps. One should, however, bear in mind that only the even ones are really morphisms in the
strict sense.

In a similar way \cite{NoS_QFaSACfM} one shows that for modules $M,N$ over a supercommutative super 
algebra $A$ the inner Hom
object $\ihom_A(M,N)$ is the $\fieldK$-super vector space of even, resp. odd, maps $f:M\to N$ for
which
\begin{equation}
f(am)=(-1)^{p(f)p(a)}af(m).
\end{equation}
This enforces $f(ma)=f(m)a$. The $A$-module structure is given by 
\begin{equation}
(af)(m)=af(m). 
\end{equation}
Thus again, 
$\ihom_A(M,N)_{\bar{0}}=\Hom_A(M,N)$. This turns out to be
a general feature of inner Hom objects in categories of super objects: their underlying space is always
the set of actual morphisms between the two objects in question. The parts of them which involve odd elements
are only made visible via products with other super objects.

\subsection{Derivations of superalgebras}
A derivation of a $\fieldK$-superalgebra $A$ is a morphism $D:A\to A$ such that
\begin{equation}
D(a\cdot b)=D(a)\cdot b+a\cdot D(b).
\end{equation}
By this definition, the set of derivations would only be an ordinary Lie algebra over $\fieldK$. 
To make it a 
Lie superalgebra, one has to allow odd derivations, i.e., odd elements of $\ihom_\fieldK(A,A)$.

\begin{dfn}
Let $A$ be a $\fieldK$-superalgebra. The Lie superalgebra $\ider_\fieldK(A,A)$ of derivations of $A$
is the subsuperspace of $\ihom_\fieldK(A,A)$ whose elements $D$ satisfy
\begin{equation}
D(a\cdot b)=(Da)\cdot b+(-1)^{p(D)p(a)}a\cdot Db.
\end{equation}
\end{dfn}

One easily checks that this defines a Lie super algebra, with bracket 
\begin{equation}
[D_1,D_2]=D_1D_2-(-1)^{p(D_1)p(D_2)}D_2D_1.
\end{equation}

\subsection{Dual modules}
\label{sect:dualmod}

In a tensor category with inner Hom objects, one defines the dual object of $M$ as \cite{NoS_QFaSACfM}
\begin{equation}
M^*:=\ihom(M,\underline{1})
\end{equation}
where $\underline{1}$ is the unit of the tensor product. In the case of modules over a supercommutative
superalgebra $A$ one has $\underline{1}=A$. Using the adjunction formula (\ref{ihomdef}), this yields a
canonical morphism
\begin{eqnarray}
M^*\otimes_A N &\to& \ihom(M,N)\\
\omega\otimes n &\mapsto& \left(m\mapsto (-1)^{p(\omega)p(n)}n\omega(m)\right),
\end{eqnarray}
which is an isomorphism if $M$ is free of finite type \cite{NoS_QFaSACfM}.
Dual modules appear naturally in the context of vector bundles on super manifolds.

\subsection{The category of finite-dimensional Grassmann algebras}
\label{subsect:gr}

A particularly important role will be played by the finite dimensional Grassmann algebras $\Lambda_n$.
They are the free commutative superalgebras on $n$ odd generators. Equivalently, they can be thought 
of as the
exterior algebras of $\fieldK$-vector spaces of $\fieldK$-dimension $n$. To make it precise, they are
generated over $\fieldK$ by generators $\xi_1,\ldots,\xi_n$ subject to the relations
\begin{equation}
\xi_i\xi_j=-\xi_j\xi_i,\qquad\textrm{ for } i,j\in\{1,\ldots,n\}.
\end{equation}
Together with their morphisms as superalgebras, the finite-dimensional Grassmann algebras form a 
category $\catgr$.
This category has a terminal object: the algebra $\Lambda_0=\fieldK$. A terminal object has the property that
every other object possesses a unique arrow to it, called the terminal morphism. In the case of $\catgr$, 
this morphism acts by removing all odd generators:
\begin{eqnarray}
\label{finmorgr}
\epsilon_{\Lambda_n}:\Lambda_n &\to& \fieldK\\
\nonumber
1 &\mapsto& 1\\
\nonumber
\xi_i &\mapsto& 0,\qquad\textrm{ for } 1\leq i\leq n.
\end{eqnarray}
Note that $\Lambda_0=\fieldK$ is also the initial object of $\catgr$: there is a unique morphism
\begin{equation}
c_{\Lambda_n}:\fieldK\to\Lambda_n\qquad\textrm{ for all } n\in\naturalN
\end{equation}
which embeds the ground field into $\Lambda_n$ by sending 1 to 1. Thus $\fieldK$ is the \emph{null object}
of $\catgr$.

We will write $\catgr$ only for the real Grassmann algebras, i.e. those with $\fieldK=\realR$. In the case
of $\fieldK=\complexC$ we denote the appropriate category as $\catgr^\complexC$, to avoid confusion. The
complex algebras will be denoted $\Lambda_n^\complexC$. As exterior algebras they are generated over
complex vector spaces of dimension $n$, but it is easy to show that one gets the same if one defines them
as complexifications of the real Grassmann algebras. More precisely, if $V$ is a real vector space, then
\begin{equation}
\Lambda(V\otimes_\realR\complexC)\cong \Lambda(V)\otimes_\realR\complexC.
\end{equation}

\begin{prop}
\label{homgr}
Let $\Lambda_n,\Lambda_m$ be the Grassmann algebras over $\fieldK$ on $n$ and $m$ generators, respectively.
Then there exists an isomorphism of $\fieldK$-vector spaces
\begin{equation}
\Hom_\catgr(\Lambda_n,\Lambda_m)\cong \fieldK^n\otimes_\fieldK \Lambda_{m,\bar{1}}.
\end{equation}
\end{prop}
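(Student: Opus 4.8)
The plan is to exploit the fact, recorded just above the statement, that $\Lambda_n$ is the \emph{free} supercommutative $\fieldK$-superalgebra on the $n$ odd generators $\xi_1,\ldots,\xi_n$; equivalently it is the exterior algebra $\wedge^\bullet(\fieldK^{0|n})$ of the purely odd space $\fieldK^{0|n}$, whose degree-one part is spanned by the $\xi_i$. A morphism out of a free object is determined by, and freely specified by, the images of the generators, so the first step is to introduce the evaluation map
\[
\Phi\colon \Hom_\catgr(\Lambda_n,\Lambda_m)\to (\Lambda_{m,\bar 1})^{\oplus n},\qquad \phi\mapsto(\phi(\xi_1),\ldots,\phi(\xi_n)),
\]
show it is a bijection, and then identify the target with $\fieldK^n\otimes_\fieldK\Lambda_{m,\bar 1}$.

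A few routine verifications carry this. First, $\Phi$ is well defined: a morphism of superalgebras preserves parity, so each $\phi(\xi_i)$ is odd, i.e.\ lies in $\Lambda_{m,\bar 1}$. Second, $\Phi$ is injective, since the $\xi_i$ generate $\Lambda_n$ as a unital $\fieldK$-algebra and a unital multiplicative $\fieldK$-linear map is fixed by its values on them. Third, $\Phi$ is surjective: given odd elements $a_1,\ldots,a_n\in\Lambda_{m,\bar 1}$, the only relations of $\Lambda_n$ are $\xi_i\xi_j=-\xi_j\xi_i$ (forcing $\xi_i^2=0$), and these hold automatically in the supercommutative algebra $\Lambda_m$: by the sign rule $a_ia_j=-a_ja_i$, and $a_i^2=-a_i^2$ gives $a_i^2=0$ as $\mathrm{char}\,\fieldK=0$. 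Hence the assignment $\xi_i\mapsto a_i$ extends to a morphism. The last identification $(\Lambda_{m,\bar 1})^{\oplus n}\cong\fieldK^n\otimes_\fieldK\Lambda_{m,\bar 1}$ is just distributivity of the tensor product over direct sums.

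The one genuinely delicate point — and the step I would be most careful about — is the claim that $\Phi$ is an isomorphism of \emph{$\fieldK$-vector spaces}, because unital algebra morphisms do not form a vector space under pointwise operations (already $(\phi+\psi)(1)=2$). The clean remedy is to supply the linear structure through the adjunction that characterizes the free superalgebra: restriction to the generating odd space gives a natural bijection between morphisms $\Lambda_n=\wedge^\bullet(\fieldK^{0|n})\to\Lambda_m$ and \emph{even} $\fieldK$-linear maps $\fieldK^{0|n}\to\Lambda_m$, that is, $\fieldK$-linear maps $\fieldK^n\to\Lambda_{m,\bar 1}$. These genuinely form a vector space $\Hom_\fieldK(\fieldK^n,\Lambda_{m,\bar 1})\cong(\fieldK^n)^*\otimes_\fieldK\Lambda_{m,\bar 1}\cong\fieldK^n\otimes_\fieldK\Lambda_{m,\bar 1}$, and this is the structure one transports along $\Phi$.

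Framing the argument this way is in fact the most economical route, since the universal property simultaneously yields surjectivity (every even linear map extends) and the correct linear structure, leaving only the bookkeeping of parities and the dimension count. With the vector space structure fixed as above, linearity of $\Phi$ is immediate and the proof concludes. I expect no serious obstacle beyond keeping the distinction between set-level and linear statements scrupulously clear.
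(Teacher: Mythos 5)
Your proposal is correct and follows essentially the same route as the paper: evaluate a morphism on the free odd generators $\xi_1,\ldots,\xi_n$, observe the images must lie in $\Lambda_{m,\bar 1}$ and may be chosen freely by supercommutativity, and identify the resulting $n$-tuples with $\fieldK^n\otimes_\fieldK\Lambda_{m,\bar 1}$ (the paper writes this as $\Lambda_{m,\bar 1}\otimes_\fieldK V^*$ with $V=\mathrm{Span}(\xi_1,\ldots,\xi_n)$). Your extra remark that the set of unital algebra morphisms carries no pointwise linear structure, so the vector space structure must be transported from $\Hom_\fieldK(\fieldK^n,\Lambda_{m,\bar 1})$, is a legitimate clarification that the paper glosses over.
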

\begin{proof}
Let $\xi_1,\ldots,\xi_n$ be the free generators of $\Lambda_n$.
A morphism $\phi:\Lambda_n\to\Lambda_m$ is a homomorphism of $\fieldK$-algebras which preserves parity.
Thus $\phi(1)=1$, and $\phi$ is uniquely determined by choosing the images $\phi(\xi_n)$ of its
generators which have to lie in $\Lambda_{m,\bar{1}}$. Setting 
$V=\mathrm{Span}_\fieldK(\xi_1,\ldots,\xi_n)$, we can write
\[
\Hom_\catgr(\Lambda_n,\Lambda_m)\cong \Lambda_{m,\bar{1}}\otimes_\fieldK V^*,
\]
where $V^*$ is the dual space of $V$, which is in our case isomorphic to $\fieldK^n$.
\end{proof}

\section{Supermanifolds}
\label{sect:sman}

Finite-dimensional supermanifolds are most conveniently defined using the language of ringed spaces. 
Although it is possible to give a description in terms of charts and atlases, the ringed space formalism
is an indispensable tool. Furthermore, if one wants to describe singular superspaces,
the ringed space approach seems to be the only viable one. The description of supermanifolds resembles in
this regard the theory of complex analytic spaces \cite{CAS} (where non-reduced spaces also appear).
However, there are important deviations from
ordinary geometry, mainly originating in the fact that one would like to retain the concept of
a coordinate for something that one calls a manifold, but coordinates cannot play the same role as usual
in the context of nilpotent elements. In particular, it turns out that one must be careful with the
notion of a ``function'' on a supermanifold. While a function on an ordinary manifold is described by
the values it takes at every point (and thus coordinates are just a suitable collection of functions whose
values label the points), such an interpretation cannot directly be carried over the super context.
Functions considered as maps from the supermanifold
to the ground field $\fieldK$ would not form a superalgebra; they only inherit the structure of a
$\fieldK$-algebra. 
To make the full geometric structure visible in terms of maps it is therefore often necessary not to
consider single superringed spaces but families of them over a base which is a supermanifold itself.
The use of such families has always been common in physics, but often implicit, by using 
``Grassmann-valued'' functions. We will argue below that this method, although it often yields the
correct results when carefully used, is not really clean. Rather one should clearly
distinguish between single supermanifolds and families of them.
Another (and mathematically rigorous) motivation for the use of families instead of single objects 
will be supplied by the functor of points approach in the next chapter.

\subsection{Superspaces}

\begin{dfn}
A locally superringed space $\cm=(M,\co_\cm)$ is a topological space $M$ endowed with a sheaf $\co_\cm$ 
of local supercommutative rings.
A morphism of superspaces is a morphism of locally ringed spaces which is stalkwise a homomorphism of
supercommutative rings.
\end{dfn}

The structure sheaf $\co_\cm$ contains a canonical subsheaf of nilpotent ideals 
generated by odd elements:
\begin{equation}
\label{nilideal}
\cj = \co_{\cm,\bar{1}}\oplus\co_{\cm,\bar{1}}^2.
\end{equation}
The superspace $\cm_{red}=(M,\co_\cm/\cj)$ is then purely even and if there are no even nilpotent elements
in $\co_\cm$, it is also reduced. It possesses a canonical embedding 
\begin{equation}
\cem:\tilde{\cm}\to\cm.
\end{equation}
The space $\cm_{red}$ is called the underlying space of $\cm$.
In general, $\cm_{red}$ can itself be a non-reduced space. The ``completely reduced'' space is
denoted as $\cm_{rd}$. In the structure sheaves of the superspaces considered in this work
no nilpotent even elements occur, so no confusion can arise. In this case, we speak of $\cm_{red}=\cm_{rd}$
as the underlying manifold.

\subsection{Superdomains and supermanifolds}

As in ordinary algebraic geometry, a supermanifold is a superspace which has only smooth points. The
notion of ``smooth point'', however, is somewhat different in the super context, as it still allows
nilpotent elements in the stalks. But still, a supermanifold is defined by the property that each of
its points
has a neighbourhood which is isomorphic to the same model space.

\begin{dfn}
\label{linsman}
Let $V$ be a real super vector space of dimension $m|n$. To $V$ we associate a locally ringed space
$\olv$ by setting
\begin{equation}
\olv:=(V_{\bar{0}},C^\infty_{V_{\bar{0}}}\otimes_\realR\wedge[\theta_1,\ldots,\theta_n]),
\end{equation}
where $C^\infty_{V_{\bar{0}}}$ denotes the sheaf of smooth functions, and $\theta_1,\ldots,\theta_n$
are a basis of $V_{\bar{1}}$. $\olv$ is called the smooth linear supermanifold associated with $V$.
\end{dfn}

Together with their morphisms as superringed spaces linear supermanifolds form a category. 
An isomorphism $V\to V$ corresponds to an isomorphism $\olv\to\olv$ of ringed spaces, therefore it is 
sufficient to consider the equivalent full subcategory $\ovlk{m|n}$ for $m,n\in\naturalN_0$.

\begin{dfn}
A smooth superdomain $\cu$ of dimension $m|n$ is a restriction of $\ovlr{m|n}$ to an underlying domain
$U\subseteq \realR^m$:
\begin{equation}
\cu=(U,C^\infty_U\otimes_\realR\wedge[\theta_1,\ldots,\theta_n]\big|_{U}).
\end{equation}
\end{dfn}

\begin{dfn}
\label{sman1}
A smooth supermanifold $\cm=(M,\co_\cm)$ of dimension $m|n$ is a superspace which is locally
isomorphic to a linear supermanifold of dimension $m|n$.
\end{dfn}

\begin{dfn}
An open subsupermanifold $\cu$ of $\cm$ is defined by an open subset $U$ of $M$ and the restriction of
$\co_\cm$ to $U$:
\begin{equation}
\cu=(U,\co_\cm\big|_U).
\end{equation}
\end{dfn}

Analogously, one defines real analytic superdomains as open subsupermanifolds of the linear real analytic
supermanifold $(\realR^m,\ca(\realR^m)[\theta_1,\ldots,\theta_n])$, and real analytic supermanifolds as
being locally isomorphic to them.
For complex analytic supermanifolds, one first constructs the reference space
\begin{equation}
\ovlc{m|n}=(\complexC^m,\co_{\complexC^m}\otimes_\complexC\wedge[\theta_1,\ldots,\theta_n]),
\end{equation}
where $\co(\complexC^m)$ denotes the sheaf of holomorphic functions. A complex (analytic) supermanifold
of dimension $m|n$ is then defined by the property of being locally isomorphic to $\ovlc{m|n}$.

It is important to note that, unlike in the purely even case, a super vector space is \emph{not}
a supermanifold. Although there exists a canonical way to produce a linear supermanifold $\olv$ from
any finite-dimensional super vector space $V$ the two objects behave quite differently. 
For example, if $\dim_\realR V=m|n$, the set-theoretical
model underlying $V$ is a linear $m+n$-dimensional space, while that of $\olv$ is only an $m$-dimensional
space. 

Together with their morphisms as superringed spaces, smooth finite-di\-men\-sion\-al supermanifolds form a 
category which we will
denote as $\catfinsman$. For the category of complex supermanifolds, whose morphisms are those of 
topological spaces ringed with local $\complexC$-superalgebras, we write $\catfinsman^\complexC$.

An example for a morphism of supermanifolds is the embedding of its underlying manifold. The
nilpotent elements of each stalk form an ideal in this stalk and these together form an ideal subsheaf
$\cj\subset\co_\cm$ (cf. (\ref{nilideal})). Then one has the canonical epimorphism $q:\co_\cm\to\co_\cm/\cj$ 
of sheaves, and setting
\begin{eqnarray}
\cem=(\id_M,q):M_{rd} &\to& \cm,
\end{eqnarray}
one obtains a canonical embedding of $M_{rd}$ as a closed subsupermanifold of $\cm$. Note that in general
there is no canonical projection $\mathrm{cpr}=(\id_M,\phi):\cm\to M_{rd}$. To construct such a projection,
we need a sheaf morphism $\phi:\co_\cm/J\to\co_\cm$. Of course, such morphisms always exist, but they are
not canonically given. An exception is the case of a split supermanifold (see below), i.e., a supermanifold
which is globally isomorphic to the exterior bundle of a vector bundle over $M_{rd}$.

\subsection{Sections, functions, and values}

One refers to the sections of the structure sheaf as ``superfunctions'', in analogy with the case
of a purely even manifold. One has to be careful with this intuition, however.
The sections of a sheaf of $\fieldK$-superalgebras cannot straightforwardly be interpreted as maps from the
supermanifold $\cm$ to some other (super)space, in particular not to the field $\fieldK$.

In the case of an ordinary manifold $M$, even if is initially defined abstractly just by a sheaf $\co_M$ of 
commutative $\fieldK$-algebras with unit, one can reinterpret the sections of $\co_M$ as actual
functions, i.e. as local morphisms $M\to\fieldK$. Namely one
ascribes to each section $s\in\co_M(U)$ and each point $p\in U$ its \emph{value} $\lambda=s(p)$, which 
is defined
as the unique number in $\fieldK$, such that $s-\lambda$ is not invertible in any neighbourhood of $p$.
In the case of a reduced space the values $s(p)$ for all $p\in U$ determine $s$ uniquely. Conversely,
this fact enables one to choose $n$ suitable coordinate functions, whose values can be used to label the
points of $M$. 
This method does not extend to nonreduced spaces \cite{MoHA},\cite{CAS}, thus in particular not to 
superspaces. 

\begin{dfn}
Let $\cm$ be a supermanifold (real or complex). Let $U\subseteq M$ be an open set of $M$ and let
$f\in\co_\cm(U)$ be a section of the structure sheaf over $U$. Then we define the evaluation of $f$ at some
point $p\in M$ as the map
\begin{equation}
\ev_p:f\mapsto f_{red}(p),
\end{equation}
where $f_{red}$ is the image of $f$ under the sheaf map $\co_\cm\to\co_\cm/\cj$.
\end{dfn}

Evaluation of functions is a sheaf homomorphism which is equivalent
to reducing the structure sheaf. So it makes sense to speak of the values of a section of $\co_\cm$ for
a supermanifold $\cm$ and the value at $p$ is still the unique number $\lambda$ in $\fieldK$ such that 
$s-\lambda$ is not invertible in any neighbourhood of $p$. It just does not specify $s$ uniquely --- 
for example
a purely odd section will have the value zero everywhere, but need not be the zero section.
The fact that its values don't characterise a 
superfunction uniquely also carries over to
sections of modules over the structure sheaf, such as vector fields and differential forms.

\subsection{Coordinates}

\begin{dfn}
Let $\cm$ be a smooth supermanifold, and let $\Phi=(\phi,\varphi):\cm\to\cv\subseteq\ovlr{m|n}$ be a morphism 
to a superdomain. The coordinates of $\Phi$ are defined to be the pullback of the standard
coordinates $(x_1,\ldots,x_m,\theta_1,\ldots,\theta_n)$ of $\ovlr{m|n}$ to $\cm$, i.e. they are
\begin{eqnarray}
y_i &=& \varphi(x_i)\\
\eta_j &=& \varphi(\theta_j).
\end{eqnarray}
\end{dfn}

As remarked in the previous section, the idea to describe the points of a manifold in terms of
tuples of real (or complex) numbers which ``label'' its topological points is
inadequate for nonreduced spaces. In the theory of complex analytic spaces \cite{CAS} one just drops
this kind of intuition and resorts to the algebraic description. For supermanifolds, however, it is
possible to maintain the ``coordinate'' concept in the sense that one may still describe them locally 
in terms of standard (super)functions on model domains which one pulls back along an isomorphism.
The crucial statement that makes this construction work is:

\begin{thm}
\label{scoords}
Let $\cv\subseteq\ovlr{m|n}$ be a superdomain and $\cm$ be an arbitrary supermanifold. Then the map which 
assigns to each morphism $f:\cm\to\cv$ its coordinates $(y_1,\ldots,y_m,\eta_1,\ldots,\eta_n)$ is a bijection
from the set of morphisms $f:\cm\to\cv$ to the set of systems of $m$ even functions $a_i$ and $n$ odd
functions $\beta_j$ on $\cm$, such that the values $(a_1(x),\ldots,a_m(x))$ are in $V\subset\realR^m$ for 
any $x\in M$.
\end{thm}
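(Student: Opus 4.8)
The plan is to treat Theorem~\ref{scoords} as the super analogue of the classical chart theorem and to prove the asserted map is a bijection by establishing \emph{injectivity} (uniqueness) and \emph{surjectivity} (existence) separately, after reducing to the local model. Both a morphism $f:\cm\to\cv$ and a system $(a_i,\beta_j)$ are local data: morphisms of superringed spaces restrict to, and glue uniquely over, any open cover, and likewise for systems of sections. Hence it suffices to prove the bijection when $\cm=\cu=(U,C^\infty_U\otimes\wedge[\tau_1,\ldots,\tau_r])$ is a superdomain, and then glue using the uniqueness to obtain the global statement. Before splitting, I would record two structural facts. First, for any morphism $f=(\phi,\varphi)$ the underlying map is recovered from the bodies of the even coordinates, $\phi(p)=(\ev_p(\varphi(x_1)),\ldots,\ev_p(\varphi(x_m)))$, and the value condition $(a_{1,red}(p),\ldots,a_{m,red}(p))\in V$ is exactly what is needed for $\phi$ to land in $V$. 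Second, reducing the structure sheaves, $f$ induces a morphism $\cm_{red}\to\cv_{red}$ of honest smooth manifolds; since a morphism of real locally ringed spaces between manifolds is the same as a smooth map with pullback as its sheaf map, one gets $\varphi(g)\equiv g\circ\phi\pmod{\cj}$ for every even smooth $g$ on $\cv$, where $\cj\subset\co_\cm$ is the nilpotent ideal sheaf~(\ref{nilideal}).

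For injectivity, I would exploit the finite filtration $\co_\cu\supset\cj\supset\cj^2\supset\cdots\supset\cj^{\,r+1}=0$. Let $f,f'$ have the same coordinate images $\varphi(x_i)=\varphi'(x_i)=:y_i$ and $\varphi(\theta_j)=\varphi'(\theta_j)$; by the first structural fact they share the same underlying $\phi$. I claim $\varphi\equiv\varphi'\pmod{\cj^{k}}$ for all $k$, by induction. The base case is the second structural fact. For the step, set $\mu_k(g):=\varphi(g)-\varphi'(g)\in\cj^{k}$ on even $g$; writing $\varphi=\varphi'+\mu_k$ and using $\varphi'(g)\equiv g\circ\phi\pmod\cj$ together with $\mu_k(g)\mu_k(h)\in\cj^{2k}\subseteq\cj^{k+1}$, one finds that modulo $\cj^{k+1}$,
\[
\mu_k(gh)=(g\circ\phi)\,\mu_k(h)+(h\circ\phi)\,\mu_k(g),
\]
so $\mu_k$ is a $\phi$-derivation valued in the finite-rank free $C^\infty_U$-module $\cj^{k}/\cj^{k+1}$, vanishing on the $x_i$. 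Expanding $g$ by Hadamard's lemma around $\phi(p)$ for an arbitrary point $p$ shows that each $C^\infty_U$-coefficient of $\mu_k(g)$ vanishes at $p$; as $p$ ranges over $U$ these coefficients vanish identically, whence $\mu_k\equiv0$. Since the $\theta_j$ also agree and both maps are algebra homomorphisms, $\varphi\equiv\varphi'\pmod{\cj^{k+1}}$ on all sections. The filtration being finite, $\varphi=\varphi'$.

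For surjectivity, given $(a_i,\beta_j)$ I would construct $f$ explicitly. Put $n_i:=a_i-a_{i,red}\in\cj$ (nilpotent), define $\phi(p):=(a_{1,red}(p),\ldots,a_{m,red}(p))\in V$, and define the sheaf map by the super-Taylor (Berezin) formula
\[
\varphi\Big(\textstyle\sum_I g_I(x)\,\theta^I\Big):=\sum_I\Big(\sum_{\alpha}\tfrac{1}{\alpha!}\,(\partial^\alpha g_I\circ\phi)\,n^\alpha\Big)\,\beta^I,
\]
where $n^\alpha=n_1^{\alpha_1}\cdots n_m^{\alpha_m}$ and $\beta^I=\beta_{j_1}\cdots\beta_{j_k}$; the inner sum is finite because $\cj^{\,r+1}=0$. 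Checking $\varphi(x_i)=a_i$ and $\varphi(\theta_j)=\beta_j$ is immediate, and locality of the stalk maps follows from $\varphi(g)_{red}=g_{red}\circ\phi$. The one substantive computation is multiplicativity on the even part, which reduces to the generalized Leibniz rule $\partial^\gamma(gh)=\sum_{\alpha+\beta=\gamma}\binom{\gamma}{\alpha}\partial^\alpha g\,\partial^\beta h$ combined with the multinomial identity $\tfrac{1}{\gamma!}\binom{\gamma}{\alpha}=\tfrac{1}{\alpha!\beta!}$; this is exactly the statement that truncated Taylor expansion is an algebra homomorphism. On the odd generators one uses that the $\beta_j$ are odd, so $\beta_j^2=0$ and $\beta_i\beta_j=-\beta_j\beta_i$, matching the relations of $\wedge[\theta_1,\ldots,\theta_n]$ under the sign rule.

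I expect the main obstacle to be the genuinely analytic content hidden in the surjectivity step: unlike the polynomial case, a smooth $g$ is not an algebraic expression in the coordinates, so one must first make sense of \emph{evaluating $g$ at the non-reduced even superfunction} $a=(a_i)$, and then prove this assignment is multiplicative. This is precisely where Hadamard's lemma / Taylor's theorem with remainder enters and ties the smooth structure to the algebra; once the finiteness from nilpotency and the Leibniz--multinomial bookkeeping are in place, the remaining verifications (sheaf compatibility, stalkwise locality, and the gluing of the local morphisms via the uniqueness established in the injectivity step) are routine.
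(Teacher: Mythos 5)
Your proposal is correct and follows essentially the same route as the paper: the heart of the matter is exactly the formal Taylor expansion of a smooth function in the nilpotent parts of the even coordinates (the paper's formula (\ref{pbeven})), with multiplicativity reducing to the Leibniz--multinomial identity and termination guaranteed by nilpotency. The paper only sketches this and defers the full argument to \cite{Itttos}, \cite{Gftacg}, \cite{SfMAI}; your injectivity step via the filtration by powers of $\cj$ and the observation that successive differences are $\phi$-derivations killed by Hadamard's lemma is a clean way to supply the uniqueness half, and it is the same mechanism the paper itself uses later in Section \ref{sect:explmor} when factorizing morphisms through the nilpotent group $N_\cv$.
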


For a full proof, see \cite{Itttos}, \cite{Gftacg}, \cite{SfMAI}. The key point of the proof 
consists in giving
a natural meaning to the pullback of arbitrary functions $F$ from $\cv\subseteq\ovlr{m|n}$ to $\cm$.
Let $(t_1,\ldots,t_m,\xi_1,\ldots,\xi_n)$ be the coordinates of $\cv$.
Assume that $\cm$ is a domain $\cu\subseteq\ovlr{p|q}$ with coordinates $x_1,\ldots,x_p$ and
$\theta_1,\ldots,\theta_q$. Then any function $f\in\co_\cm(M)$ can be written as
\begin{equation}
f=\sum_{I}f_I\theta_I.
\end{equation}
Here $I\subset\{1,\ldots,q\}$ is a multiindex running through all increasingly ordered 
subsets, $\theta_I$ is the
product of the appropriate odd coordinates in the same order and the $f_I$ are ordinary smooth functions on $U$.
Denote the morphism as $\Phi=(\phi,\varphi):\cm\to\cv$.
Then the pullbacks $a_i=\varphi(t_i)$, $1\leq i\leq m$, of the even coordinates of $\cv$ are even sections 
of $\co_\cm$,
thus they are given by elements of the form $a_{i0}+r_i$, where $a_{i0}$ is just a smooth function 
on $V$ and $r_i$ is even and nilpotent. Now let $F(t_1,\ldots,t_m)$ be a purely even smooth function on
$V$. One then writes its pullback as the formal Taylor series
\begin{equation}
\label{pbeven}
F(a_1,\ldots,a_m)=\sum_{\mathbf{k}\subseteq\{1,\ldots,m\}}\partial^{\mathbf{k}}F(a_{10},\ldots,a_{m0})%
\frac{r_{\mathbf{k}}}{|k|!}.
\end{equation}
The sum runs over all multiindices taking values in $\{1,\ldots,m\}$, $|k|$ is the length of the multiindex,
and $r_\mathbf{k}$ is the product of the corresponding $r_{k_i}$'s.
Since the $r_i$'s are nilpotent, the sum terminates after finitely many summands.

The pullback of
a general function $F=\sum_{I}F_I\xi_I$ on $\cv$ is now uniquely fixed by (\ref{pbeven}) and the requirement
that $\varphi$ be a homomorphism. It can be written as
\begin{equation}
F(a_1,\ldots,a_m,\beta_1,\ldots,\beta_n)=\sum_{I}F_I(a_1,\ldots,a_m)\beta_I,
\end{equation}
with $\beta_j=\varphi(\xi_j)$.

Thus, prescribing $n$ odd and $m$ even functions whose underlying maps form a continuous
map $M\to V$ fixes the morphism $\Phi$ uniquely, if one adopts the above prescription (\ref{pbeven}).
But why should one choose this definition? What other choices are there? In the next sections it will be 
shown that there is actually no other possibility to define coordinates. The Taylor-like formula
(\ref{pbeven}) is forced upon us by the algebraic properties of supercommutative algebras.

\subsection{Splitness}

One obvious way to construct a supermanifold of dimension $m|n$ is to take an ordinary manifold 
$M$ of dimension $m$ and a smooth (resp. holomorphic, whichever category $M$ is in and $\cm$ should
be in) vector bundle $E\to M$ of rank $n$. Then simply set $\cm=(M,\wedge^\bullet E)$, i.e. the 
structure sheaf
to be just the set of sections of the exterior bundle over $E$, and parity to be given by the degree in
the exterior algebra mod 2. This amounts to declaring the fibers of $E$ to be purely odd super vector spaces.
The transition functions of such a  supermanifold
are just smooth transition functions on the base manifold and linear maps in the fibers of $E$,
which then induce the transition functions of $\wedge^\bullet E$. It is clear that not every supermanifold is
of this form, since one may in general have maps like
\begin{equation}
\theta'_i=A^i_j(x)\theta_j+A^i_{jkl}(x)\theta_j\theta_k\theta_l+\ldots
\end{equation}
between the odd coordinates of overlapping patches, where $A^i_j$ and $A^i_{jkl}$ are independent collections
of smooth (resp. holomorphic) functions. So, while $\wedge^\bullet E$ is always naturally $\intZ$-graded,
the structure sheaf of a supermanifold $\cm$ is not. However, it is always filtered, namely by the powers
of the nilpotent ideal $\cj\subset\co_\cm$:
\begin{equation}
\co_\cm\supset\cj\supset\cj^2\supset\cj^3\supset\ldots
\end{equation}
One can reconstruct a vector bundle over the underlying manifold from this filtration.
If we set $\co_M=\co_\cm/\cj$ and $E=\Pi(\cj/\cj^2)$, then $\co_M$ is the structure sheaf of the underlying manifold.
Clearly, $E$ can be given the structure of a locally free module of rank $n$ over $\co_M$, where
$n$ is the odd dimension of the superdomain. Namely, if $f$ is a section of $\co_\cm$ and $v$ is a section of
$\cj$ then the module structure is just given by $[f][v]=[fv]$ where the square brackets denote equivalence classes
with respect to the respective quotienting. This way we assign a vector bundle $E\to M$ to $\cm$. One checks that
this construction is functorial in $\cm$. Therefore we call this vector bundle the \emph{canonical vector bundle} associated
with $\cm$. But as in
the case of the reduced manifold we do not get a canonical morphism $\cm\to(M,\wedge^\bullet E)$.

\begin{dfn}
Let $\cm=(M,\co_\cm)$ be a supermanifold (smooth or complex), and let $E=\Pi(\cj/\cj^2)$ be the vector
bundle over $M$ associated to $\cm$. Then $\cm$ is called split, if it is
isomorphic as a supermanifold to $(M,\wedge^\bullet E)$.
\end{dfn}

\begin{lemma}
Every supermanifold $\cm=(M,\co_\cm)$ gives rise to a canonical vector bundle
$\cm^{split}=(M,\wedge^\bullet(\co_\cm/\cj^2))$ and a canonical morphism 
$P_\cm^{split}:\cm^{split}\to\cm$.
\end{lemma}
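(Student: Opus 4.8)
The plan is to treat the statement in two stages: first exhibit $\cm^{split}$ as a genuine split supermanifold produced canonically from $\cm$, and then build the morphism $P_\cm^{split}$ at the level of structure sheaves. Throughout I work over the fixed underlying space $M$ with $\co_M=\co_\cm/\cj$, and I exploit the $\cj$-adic filtration $\co_\cm\supset\cj\supset\cj^2\supset\cdots$, which locally terminates because $\cj$ is nilpotent in the finite odd-dimensional case. Writing $\co_\cm/\cj^2=\co_M\oplus(\cj/\cj^2)$ with $(\cj/\cj^2)^2=0$, I read $\wedge^\bullet(\co_\cm/\cj^2)$ as the exterior algebra over $\co_M$ on the odd generators $\cj/\cj^2$, i.e.\ as $\wedge^\bullet_{\co_M}(\cj/\cj^2)$.

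First stage. I would recall from the preceding discussion that $\cj/\cj^2$ is a locally free $\co_M$-module of rank $n$, so $E=\Pi(\cj/\cj^2)$ is the canonical vector bundle and $\wedge^\bullet_{\co_M}(\cj/\cj^2)$ is, over a trivializing chart, isomorphic to $C^\infty_U\otimes\wedge[\theta_1,\dots,\theta_n]$. Hence $\cm^{split}$ is locally isomorphic to a linear supermanifold of dimension $m|n$ and is therefore a supermanifold, split by construction; this is what is meant by the ``canonical vector bundle''. The canonical input here is the identification of the associated graded, $\mathrm{gr}_\cj\co_\cm=\bigoplus_k\cj^k/\cj^{k+1}\cong\wedge^\bullet_{\co_M}(\cj/\cj^2)$, induced by the multiplication maps $\Sym^k(\cj/\cj^2)\to\cj^k/\cj^{k+1}$ (which become $\wedge^k$ after the parity shift). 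One checks this locally against the model, and it glues with no choices because the associated graded of a filtered sheaf is functorial.

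Second stage. A morphism $P_\cm^{split}=(\id_M,p^\sharp)\colon\cm^{split}\to\cm$ is exactly a morphism of sheaves of $\fieldK$-superalgebras $p^\sharp\colon\co_\cm\to\wedge^\bullet_{\co_M}(\cj/\cj^2)$. I would pin down its canonical low-order components. Its degree-zero part is forced to be the reduction $\co_\cm\to\co_M=\wedge^0$. Its degree-one part is the assignment $f\mapsto[f_{\bar1}]\in\cj/\cj^2$ sending a section to the class of its odd component; using $(fg)_{\bar1}=f_{\bar0}g_{\bar1}+f_{\bar1}g_{\bar0}$ together with $\co_{\cm,\bar0}\cap\cj\subseteq\cj^2$, one sees this is a canonical $\co_M$-derivation into $\cj/\cj^2$. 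On the subalgebra generated by odd sections these two data determine $p^\sharp$ uniquely and canonically, via the universal property of $\wedge^\bullet_{\co_M}(\cj/\cj^2)$ as the free supercommutative $\co_M$-algebra on the odd module $\cj/\cj^2$.

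Main obstacle. The hard step is extending $p^\sharp$ to all of $\co_\cm$ — above all to even sections carrying nilpotent parts — while keeping it a well-defined algebra homomorphism independent of trivialization. The difficulty is already visible: the canonical degree-zero-plus-degree-one assignment is a perfectly good $\co_M$-linear sheaf map but fails to be multiplicative, since for odd coordinates $\theta_i,\theta_j$ one gets $[(\theta_i\theta_j)_{\bar1}]=0$ in degree one, whereas multiplicativity forces the value $\bar\theta_i\bar\theta_j$ in degree two. Repairing multiplicativity thus dictates the higher-degree components, and writing an even coordinate in an overlapping chart as $x_i'=f_i(x)+c_{jk}(x)\theta_j\theta_k+\cdots$ shows that the naive chart-wise rule ``coordinate $\mapsto$ its class'' yields different degree-$\ge2$ components in different charts — this is precisely the mirror image of the non-canonicity the text noted for the reverse arrow $\cm\to(M,\wedge^\bullet E)$. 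My plan is therefore to define the higher components intrinsically, forcing them from the degree-zero and degree-one data through the homomorphism property, and then to verify chart-compatibility degree by degree using the canonical identification $\cj^k/\cj^{k+1}\cong\wedge^k_{\co_M}(\cj/\cj^2)$ from the first stage, i.e.\ to prove that the transition discrepancies land in the appropriate graded pieces and cancel. Establishing this consistency is the technical heart of the lemma; once it is in hand, the sheaf axioms and functoriality of $P_\cm^{split}$ in $\cm$ are routine.
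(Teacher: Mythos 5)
Your first stage agrees with the paper's (the paper's proof simply notes that $E=\Pi(\cj/\cj^2)$ is locally free over $\co_M$, so $(M,\wedge^\bullet E)$ is a split supermanifold), but your second stage contains a genuine gap, and the repair you defer to ``the technical heart'' cannot exist. You correctly observe that the canonical degree-$\leq 1$ assignment $f\mapsto f_{red}+[f_{\bar{1}}]$ fails to be multiplicative into $\wedge^\bullet E$, and you plan to force the higher-degree components from the homomorphism property and then verify chart-compatibility. But any parity-preserving sheaf map of superalgebras $p^\sharp:\co_\cm\to\wedge^\bullet E$ whose degree-$\leq 1$ part is the canonical one automatically preserves the filtrations (odd sections land in $\wedge^{\geq 1}E$, hence $p^\sharp(\cj^k)\subseteq\wedge^{\geq k}E$) and, because $\mathrm{gr}\,\co_\cm$ is generated over $\co_M$ in degree one, multiplicativity forces $p^\sharp$ to induce on the associated graded exactly your first-stage identification $\cj^k/\cj^{k+1}\cong\wedge^k(\cj/\cj^2)$ in every degree. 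Since both filtrations are finite, such a $p^\sharp$ would be an isomorphism, i.e.\ a \emph{canonical} Batchelor splitting of $\cm$. No such thing exists: in the smooth case splittings require choices (partitions of unity), and the lemma is stated for complex supermanifolds as well, where non-split $\cm$ exist (Rothstein's obstructions) and no such $p^\sharp$ exists at all. The chart discrepancies you hope will ``cancel'' are precisely representatives of these obstruction classes, so your degree-by-degree consistency check must fail in general; the statement you are steering toward is false, not merely unproven.

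The paper's own proof is far more modest, and in fact glosses the very point you isolated: it takes the canonical epimorphism $q:\co_\cm\to\co_\cm/\cj^2$ and the inclusion $\iota$ of $\co_M\oplus E\cong\co_\cm/\cj^2$ as the degree-$\leq 1$ part of $\wedge^\bullet E$, and declares $P_\cm^{split}=(\id_M,q\circ\iota)$ --- but, as your computation with $[(\theta_i\theta_j)_{\bar{1}}]=0$ shows, this composite is not stalkwise multiplicative into $\wedge^\bullet E$ once the odd dimension exceeds one. The honest canonical content is weaker than what you attempted: $q$ is a genuine morphism onto the \emph{quotient} superalgebra $\co_\cm/\cj^2\cong\wedge^\bullet E/\wedge^{\geq 2}E$ (there multiplicativity is automatic, since $\wedge^{\geq 2}E$ is an ideal), giving a canonical superspace morphism $(M,\co_\cm/\cj^2)\to\cm$ together with the canonical closed embedding of $(M,\co_\cm/\cj^2)$ into $\cm^{split}$. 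This truncation-on-generators reading is exactly what the paper later uses in Section~\ref{sect:explmor} and Lemma~\ref{splitmor}, where $\varphi^{split}$ is defined by truncating the images of coordinates, and well-definedness follows from functoriality of the associated graded rather than from any homomorphism into $\wedge^\bullet E$. So the fix for your write-up is not to complete your extension but to weaken the target: route $P_\cm^{split}$ through $\co_\cm/\cj^2$, and record separately that no canonical multiplicative lift $\co_\cm\to\wedge^\bullet E$ exists.
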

\begin{proof}
It is clear from the above discussion that the structure sheaf $\co_\cm$ always gives rise to a 
locally free module $E=\Pi(\cj/\cj^2)$ over the structure sheaf $\co_M$ of the base.
Thus, we can always construct a split supermanifold $(M,\wedge^\bullet E)$ from $\cm$.
Besides, there is always a canonical epimorphism
\begin{equation}
q:\co_\cm\to\co_\cm/\cn_\cm^2.
\end{equation}
Therefore, one always has a morphism of supermanifolds $(\id_M,q\circ\iota):\cm^{split}\to\cm$,
where $\iota:\co_M\oplus E\hookrightarrow\wedge^\bullet E$ is the canonical inclusion.
\end{proof}

The obstructions to splitness were characterized in terms of sheaf cohomology by Rothstein in
\cite{R:Deformations}. It turns out that in the smooth case, there are no obstructions: every smooth
supermanifold is isomorphic to the exterior bundle of a smooth vector bundle. That follows from the
fact that in the smooth category all involved sheaves are fine because they are modules over the
sheaf of smooth functions on the base. This fact is also known as Batchelor's theorem \cite{TSOS}.
In the complex case, that does not hold anymore. In this work, we will be concerned only with
complex $1|1$-dimensional
manifolds, so splitness plays no role for them. But what will play a role is the fact that
also a split supermanifold is not necessarily presented in the form $(M,\wedge^\bullet E)$, its transition
functions may still involve higher nilpotent corrections. In particular, if we allow arbitrary
coordinate changes, then we can never assure that a split supermanifold stays in its exterior bundle form.
The sets of morphisms between supermanifolds, split or not, are much larger than that of the associated
exterior bundles.
Since we want to divide out the action of the superdiffeomorphism group, this makes it impossible to
simply exploit splitness and consider all the structures we are interested in just in the case of
supermanifolds of the form $(M,\wedge^\bullet E)$.

\subsection{Explicit description of morphisms of supermanifolds}
\label{sect:explmor}

The fact that every supermanifold can be ``cut down'' to a vector bundle turns out to be also very
useful for the description of morphisms. In fact, every morphism between supermanifolds can be seen to
be a morphism of vector bundles composed with higher order nilpotent corrections. These corrections
themselves form a unipotent group whose construction will be sketched below. We will not try to prove
every detail here because much of this has already been done elsewhere (albeit in a somewhat different
form, see, e.g., \cite{SoS}, \cite{SfMAI}). Besides, for the $2|2$-dimensional case for which we want 
to employ these results later, things become much simpler and we will work them out specifically then.

Consider a morphism $\cm\to\cn$ of supermanifolds. By locality, it is sufficient to look at the case where
$\cm=\cu$ and $\cn=\cv$ are superdomains. Let $(x_1,\ldots,x_m,\theta_1,\ldots,\theta_n)$ be coordinates
on $\cu$ and $(y_1,\ldots,y_p,\eta_1,\ldots,\eta_q)$ be coordinates on $\cv$. The morphism consists
of a continuous map $\phi:U\to V$ of the underlying topological spaces, and of a sheaf map
$\varphi:\phi^*\co_\cv\to\co_\cu$, where $\phi^*\co_\cv$ is the pullback of $\cv$. 
Let us denote the associated vector bundles of $\cu$ and $\cv$ as $E$ and $F$, respectively.
Denote the images of the coordinates of $\cv$ as
\begin{eqnarray}
\varphi(y_i) &=& a_0^i(x)+a_{jk}^i(x)\theta_i\theta_j+\ldots+a_I^i(x)\theta_I+\ldots\\
\varphi(\eta_j) &=& b_k^j(x)\theta_k+b_{klm}^j(x)\theta_k\theta_l\theta_m+\ldots+b_I^j(x)\theta_I+\ldots.
\end{eqnarray}
Now if we reduce to $\cu^{split}$ by composing $\Phi$ with $P_\cu^{split}$, then only superfunctions on 
$\cu$ which are linear in the $\theta_i$'s are allowed.
This implies that $\varphi$ reduces to a morphism of the associated split manifolds
$(U,\wedge^\bullet E)\to(V,\wedge^\bullet F)$, because we are left with
\begin{eqnarray}
\label{bdlmor}
\varphi(y_i) &=& a_0^i(x)\\
\nonumber
\varphi(\eta_j) &=& b_k^j(x)\theta_k.
\end{eqnarray}
It is a classical consequence of the Weierstra\ss{} approximation theorem that a smooth morphism between 
smooth domains is uniquely determined by the images of the coordinate functions under the pullback.
Therefore, the sections $a_0^i(x)\in C^\infty_U(U)$, $1\leq i\leq p$ determine a smooth morphism 
$\phi:(U,C^\infty_U)\to (V,C^\infty_V)$. The smooth functions $b_k^j$, $1\leq k\leq n$, $1\leq j\leq q$
on the other hand, determine a morphism of vector bundles $\phi^*F\to E$. We will call (\ref{bdlmor}) the
\emph{associated morphism of canonical bundles} of $\varphi$, denoted as
$\varphi^{split}:\cu^{split}\to\cv^{split}$. 

\begin{lemma}
\label{splitmor}
Let $\cm,\cn$ be smooth supermanifolds and let $E\to M$, $F\to N$ be the respective canonical vector
bundles.
Then every morphism $\Phi:\cm\to\cn$ induces an associated morphism of exterior bundles
$\Phi^{split}:(M,\wedge^\bullet E)\to(N,\wedge^\bullet F)$.
\end{lemma}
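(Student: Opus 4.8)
The local half of the construction has already been carried out in the discussion leading up to the statement: in any pair of coordinate patches the reduction (\ref{bdlmor}) exhibits an associated morphism $\varphi^{split}$ of split domains. The only thing left to prove is that these local morphisms are canonical --- i.e.\ independent of the chosen coordinates --- so that they glue to a global morphism $\Phi^{split}$. The plan is to avoid a cocycle computation altogether by producing $\Phi^{split}$ intrinsically, as the morphism induced by $\Phi$ on the associated graded of the structure sheaves with respect to the nilpotent filtration. The starting observation is that $\Phi=(\phi,\varphi)$ respects this filtration: since $\varphi:\phi^{-1}\co_\cn\to\co_\cm$ is stalkwise a homomorphism of supercommutative algebras, it preserves parity and therefore maps $\co_{\cn,\bar1}$ into $\co_{\cm,\bar1}$; by (\ref{nilideal}) this gives $\varphi(\phi^{-1}\cj_\cn)\subseteq\cj_\cm$, and multiplicativity then yields $\varphi(\phi^{-1}\cj_\cn^{\,k})\subseteq\cj_\cm^{\,k}$ for every $k$.

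A filtered sheaf homomorphism induces a homomorphism on associated gradeds, so $\varphi$ descends to $\mathrm{gr}\,\varphi:\phi^{-1}\!\left(\bigoplus_k\cj_\cn^{\,k}/\cj_\cn^{\,k+1}\right)\to\bigoplus_k\cj_\cm^{\,k}/\cj_\cm^{\,k+1}$. Next I would identify these graded sheaves with the structure sheaves of the split supermanifolds. Working in the local model of Def.~\ref{linsman}, the $k$-th graded piece $\cj^{\,k}/\cj^{\,k+1}$ consists of the classes of degree-exactly-$k$ monomials in the odd generators, so it is canonically $\wedge^k_{\co_M}(\cj/\cj^2)$; in degree zero one recovers $\co_M=\co_\cm/\cj$ and in degree one the section sheaf of $E=\Pi(\cj/\cj^2)$. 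Assembling the pieces gives a canonical isomorphism $\mathrm{gr}\,\co_\cm\cong\wedge^\bullet E$ of sheaves of supercommutative algebras, in accordance with (\ref{svectextalg}). Under this identification $\mathrm{gr}\,\varphi$ becomes precisely a homomorphism $\phi^{-1}(\wedge^\bullet F)\to\wedge^\bullet E$ covering $\phi$, i.e.\ a morphism of ringed spaces $\Phi^{split}:(M,\wedge^\bullet E)\to(N,\wedge^\bullet F)$. Its degree-one part is the bundle map $\phi^*F\to E$, and its degree-zero part is $\phi$, so in any coordinate system it reproduces the local formula (\ref{bdlmor}); this both justifies the name $\Phi^{split}$ and confirms that the intrinsic object restricts to the previously constructed local ones.

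The step that requires genuine care --- and which I regard as the main obstacle --- is the identification $\mathrm{gr}\,\co_\cm\cong\wedge^\bullet E$. One must verify that the local isomorphisms $\cj^{\,k}/\cj^{\,k+1}\cong\wedge^k_{\co_M}(\cj/\cj^2)$ are independent of coordinates and glue: the point is that although the transition functions of $\co_\cm$ mix nilpotent orders, on the associated graded only their leading, degree-preserving linear part survives, and this is exactly the cocycle of the vector bundle $E$. One must also check that the multiplication $\cj^{\,i}/\cj^{\,i+1}\otimes\cj^{\,j}/\cj^{\,j+1}\to\cj^{\,i+j}/\cj^{\,i+j+1}$ is the exterior product with the correct Koszul signs, so that $\mathrm{gr}\,\varphi$ is an algebra homomorphism for the supercommutative structure of $\wedge^\bullet E$ rather than merely an $\co_M$-module map. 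Note that Batchelor's theorem is \emph{not} needed here: we use only that the structure sheaf is \emph{locally} split, whereas the canonical identification of the associated graded holds in the complex analytic case as well. Once this is in place, coordinate-independence is automatic and no further gluing argument is required.
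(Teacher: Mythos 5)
Your proof is correct, and it is a cleaner, more intrinsic version of what the paper does. The paper's own proof is a one-liner: it sets $\Phi^{split}$ to be the composition of $\Phi$ with the canonical morphism $P_\cm^{split}$ (i.e.\ it post-composes the sheaf map with the quotient $\co_\cm\to\co_\cm/\cj_\cm^2$), observes that in coordinates this truncates the images of the target coordinates to the form (\ref{bdlmor}), and declares the resulting bundle morphism $(M,E)\to(N,F)$ to be the answer --- leaving implicit both why the composite factors through $\co_\cn/\cj_\cn^2$ on the source side and how the bundle map then extends to a morphism of the exterior algebras. You instead pass to the full associated graded of the nilpotent filtration: the observation that $\varphi$ preserves parity, hence $\cj$, hence every power $\cj^k$, is exactly the structural fact the paper is exploiting, but by taking $\mathrm{gr}$ of the whole filtration at once you get the algebra homomorphism $\phi^{-1}(\wedge^\bullet F)\to\wedge^\bullet E$ in a single canonical step, with coordinate-independence and gluing automatic rather than asserted. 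What this buys is a proof that visibly works verbatim in the complex analytic category and that makes transparent why only the leading linear part of the transition data survives; what it costs is the extra verification you correctly single out, namely that the multiplication on $\mathrm{gr}\,\co_\cm$ identifies with the (super)exterior product on $\wedge^\bullet\Pi(\cj/\cj^2)$ with the right signs --- a routine local check in the model of Def.~\ref{linsman}, consistent with (\ref{svectextalg}). Both arguments produce the same morphism, since your $\mathrm{gr}^0\oplus\mathrm{gr}^1$ is precisely the paper's (\ref{bdlmor}).
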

\begin{proof}
Set $\Phi^{split}=P_\cm^{split}\circ\Phi$. This induces a morphism which has locally the form
(\ref{bdlmor}), i.e. a smooth bundle morphism $(M,E)\to(N,F)$.
\end{proof}

Now let us start to add the nilpotent corrections to the bundle map (\ref{bdlmor}). Let us suppose
we only divide out $\co_\cu\to\co_\cu/\cj_\cu^3$. Then we have transition functions
\begin{eqnarray}
\varphi(y_i) &=& a_0^i(x)+a_{jk}^i(x)\theta_i\theta_j\\
\nonumber
\varphi(\eta_j) &=& b_k^j(x)\theta_k.
\end{eqnarray}
Since $\varphi$ is a homomorphism, we must have
\begin{eqnarray}
\varphi(y_iy_j) &=& \varphi(y_i)\varphi(y_j)\\
\nonumber
&=& a_0^i(x)a_0^j(x)+(a_0^i(x)a_{km}^j(x)+a_{km}^i(x)a_0^j(x))\theta_k\theta_m.
\end{eqnarray}
Therefore, $a_{km}^i(x)$ is a derivation of $C^\infty_U(U)$, i.e. we can write
\begin{equation}
\varphi(y_i)=\left(1+\sum_{k,m}a^i_{km}(x)\theta_k\theta_m\pderiv{}{(a_0^i(x))}\right)a_0^i(x).
\end{equation}
This means, we additionally have to choose $p$ vector fields of degree $2$ in the odd variables to
determine the morphism completely. Allowing terms of order 3 in the $\theta_i$'s will switch on the
first correction to $\varphi(\eta_j)$:
\begin{equation}
\varphi(\eta_j) = b_k^j(x)\theta_k+b_{klm}^j(x)\theta_k\theta_l\theta_m.
\end{equation}
Obviously, this can as well be written as
\begin{equation}
\varphi(\eta_j)=\left(1+\sum_{k,l,m}b_{klm}^j(x)\theta_k\theta_l\theta_m\pderiv{}{(b_k^j(x)\theta_k)}\right)%
b_k^j(x)\theta_k.
\end{equation}
So here again, we have to choose $q$ additional vector fields of degree $2$ in the odd variables to
describe the morphism $\varphi$. We can now inductively consider $\co/\cj^n$ for $n\to\infty$, and it
is clear that in each step $n\to n+1$, we have to choose additional vector fields. Note here the
analogy of this construction with the formal Taylor series (\ref{pbeven}). In fact, our construction here
justifies the ad-hoc definition (\ref{pbeven}) by showing that there is actually no other way to
construct the pullback map.

To write up the result in a readable way, let us denote the images of the coordinates $y_i$, $\eta_j$
under the associated bundle morphism $\varphi^{split}$ (\ref{bdlmor}) as
\begin{eqnarray}
a_i &:=& \varphi^{split}(y_i)\qquad\in\co_\cu/\cj_\cu\\
\beta_j &:=& \varphi^{split}(\eta_j)\qquad\in\Gamma(E),
\end{eqnarray}
where $E:=\Pi(\cj_\cu/\cj_\cu^2)$ is the vector bundle associated with $\cu$. Then the image of
$\co_\cv$ under the sheaf map $\varphi^{split}$ comes endowed with an action of the derivations
\begin{equation}
\label{tvu}
\pderiv{}{a_i},\pderiv{}{\beta_j}.
\end{equation}
We can generate an algebra of even vector fields from the derivations (\ref{tvu}) over the sheaf $\co_\cu$,
which we will denote as $\ct_\cv\cu$. It is, of course, again filtered by the powers of $\cj_\cu$.
Let us write $\ct_\cv\cu^{(j)}$ for those elements of $\ct_\cv\cu$ whose coefficient function is of
order $\geq j$ in the odd variables $\theta_i$. E.g., elements of $\ct_\cv\cu^{(2)}$ are given by
vector fields of the form
\[
f_{ij}^k(x)\theta_i\theta_j\pderiv{}{a_k},\quad g_{ijm}^l(x)\theta_i\theta_j\theta_m\pderiv{}{\beta_l},
\quad h_{ijkl}^n(x)\theta_i\theta_j\theta_k\theta_l\pderiv{}{a_n},\ldots.
\]
Since the $a_k,\beta_l$ are not independent of the $x_m,\theta_n$, the Lie algebra $\ct_\cv\cu$ is
not abelian, although it might look like that at first. 

\begin{lemma}
\label{lem:nilp}
The vector fields $\ct_\cv\cu^{(2)}$ generate a nilpotent group $N_\cv$ under the exponential mapping:
\begin{equation}
\exp:\ct_\cv\cu(2)\to N_\cv.
\end{equation}
This group acts naturally on $\varphi^{split}(\co_\cv)$:
\begin{equation}
N_\cv:\varphi^{split}(\co_\cv)\to\co_\cu.
\end{equation}
\end{lemma}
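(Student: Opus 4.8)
The plan is to prove the two assertions of the lemma separately: first that $\exp$ maps $\ct_\cv\cu^{(2)}$ into a well-defined nilpotent group $N_\cv$, and then that this group acts on $\varphi^{split}(\co_\cv)$ by superalgebra automorphisms. The crucial structural fact underlying everything is the filtration of $\ct_\cv\cu$ by the powers of the nilpotent ideal $\cj_\cu$. Since the odd coordinates $\theta_1,\ldots,\theta_n$ satisfy $\theta_i^2=0$ and there are only finitely many of them, we have $\cj_\cu^{n+1}=0$. A vector field in $\ct_\cv\cu^{(2)}$ has coefficient functions of order $\geq 2$ in the $\theta_i$'s, so applying such a field raises the $\cj_\cu$-order by at least one (or, for the derivations $\partial/\partial\beta_l$ acting on odd generators, keeps the parity bookkeeping consistent while still strictly raising nilpotency degree). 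Consequently, any composition of $\lceil n/2\rceil$ or more such vector fields annihilates everything.

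First I would make precise the claim that elements of $\ct_\cv\cu^{(2)}$ are \emph{nilpotent as operators}. Because each generator listed in (\ref{tvu}), when weighted by a coefficient of $\theta$-order $\geq 2$, strictly increases the filtration degree with respect to $\cj_\cu$, any $D\in\ct_\cv\cu^{(2)}$ satisfies $D^k=0$ for $k$ large enough (explicitly $k>n$). This makes the exponential series
\begin{equation}
\exp(D)=\sum_{k\geq 0}\frac{D^k}{k!}
\end{equation}
a \emph{finite} sum, hence a well-defined even operator, with no convergence issues — this is precisely the point alluded to in the introduction, that the topological subtleties of the exponential do not arise here. Next I would verify the group structure. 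For $D_1,D_2\in\ct_\cv\cu^{(2)}$, the bracket $[D_1,D_2]$ again lies in $\ct_\cv\cu^{(2)}$ (indeed in a deeper filtration step), and the whole Lie algebra is nilpotent because iterated brackets eventually land in $\cj_\cu^{n+1}=0$. The Baker–Campbell–Hausdorff series $\log(\exp(D_1)\exp(D_2))$ is therefore a \emph{finite} sum of iterated brackets, showing $\exp(D_1)\exp(D_2)=\exp(Z)$ for $Z\in\ct_\cv\cu^{(2)}$. Thus $\exp$ maps onto a set closed under multiplication; the inverse of $\exp(D)$ is $\exp(-D)$, and the identity is $\exp(0)$, so $N_\cv$ is a group. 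Nilpotency of $N_\cv$ follows from nilpotency of the Lie algebra via BCH.

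For the action, I would show that each $\exp(D)$ is a homomorphism of the sheaf of superalgebras, i.e.\ that it respects products. This is the standard fact that the exponential of a derivation is an algebra automorphism: if $D$ is a (super)derivation then $\exp(D)(fg)=\exp(D)(f)\cdot\exp(D)(g)$, which one checks by the Leibniz rule term by term (the derivations in (\ref{tvu}) are even, so no sign complications enter). Applying this to the image $\varphi^{split}(\co_\cv)$ gives the natural action
\begin{equation}
N_\cv:\varphi^{split}(\co_\cv)\to\co_\cu
\end{equation}
by superalgebra homomorphisms, which is exactly what is needed to reconstruct the full morphism $\varphi$ from its associated bundle morphism $\varphi^{split}$ by adding the nilpotent corrections described just before the lemma.

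The main obstacle I anticipate is \emph{not} any single analytic estimate but rather the careful bookkeeping of the filtration degrees: one must confirm that every generator in $\ct_\cv\cu^{(2)}$ — including the odd-direction derivations $\partial/\partial\beta_j$ whose action on a single $\theta_k$ might naively look degree-lowering — genuinely raises the $\cj_\cu$-nilpotency order once the coefficient's $\theta$-order is taken into account, so that both $D^k=0$ and the termination of the BCH series are guaranteed. Getting the degree counting exactly right is what makes the finiteness, and hence the whole construction, go through.
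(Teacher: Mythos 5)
Your proposal is correct and follows the same route the paper takes: the paper's justification of Lemma \ref{lem:nilp} consists only of the remark that $\exp$ is well defined by nilpotency of the coefficient functions and that the inverse of $\exp(X)$ is $\exp(-X)$, followed by the illustrative computation (\ref{examp}). You supply the details the paper leaves implicit --- the filtration-degree bookkeeping showing each element of $\ct_\cv\cu^{(2)}$ strictly raises the $\cj_\cu$-order (so powers vanish and the exponential and Baker--Campbell--Hausdorff series terminate), closure of the group under multiplication via the nilpotent Lie algebra structure, and the standard fact that the exponential of an even derivation is an algebra homomorphism --- all of which are consistent with the paper's setup, including its warning that $\ct_\cv\cu$ is not abelian.
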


The exponential mapping is well defined because of the nilpotency of the coefficient functions of
vector fields in $\ct_\cv\cu$, and the inverse of $\exp(X)$ is $\exp(-X)$, as one easily checks. To
illustrate the action of $N_\cv$ on $\varphi^{split}(\co_\cv)$, consider a purely even function
$F(y_1,\ldots,y_p)$ on $\cv$. Its image under $\varphi^{split}$ is $F(a_1,\ldots,a_p)$. Let then
$X=h(x)\theta_1\theta_2\pderiv{}{a_1}$ be an element of $\ct_\cv\cu^{(2)}$. We have
\begin{equation}
\exp(X)=1+h(x)\theta_1\theta_2\pderiv{}{a_1}
\end{equation}
and thus
\begin{equation}
\label{examp}
\exp(X)(F)=F+h(x)\theta_1\theta_2\pderiv{F}{a_1}.
\end{equation}
Then the main result is the following.

\begin{thm}
\label{thm:smanmorph}
Let $\cu$, $\cv$ be two superdomains with coordinates $(x_1,\ldots,x_m,$ $\theta_1,\ldots,\theta_n)$ and
$(y_1,\ldots,y_p,\eta_1,\ldots,\eta_q)$, respectively. Let $(U,\wedge^\bullet E)$ and 
$(V,\wedge^\bullet F)$ be the
associated split supermanifolds. Let $\Phi=(\phi,\varphi):\cu\to\cv$ be a morphism of superdomains, and
let $\varphi^{split}$ be the associated morphism of canonical bundles. Then one can factorize
$\varphi$ into $\varphi^{split}$ and an element of the nilpotent group $N_\cv$ constructed in Lemma
(\ref{lem:nilp}):
\begin{equation}
\varphi=\exp(X)\circ\varphi^{split}
\end{equation}
where $X\in\ct_\cv\cu^{(2)}$ is an element of the Lie algebra of $N_\cv$.
\end{thm}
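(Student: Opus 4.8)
The plan is to construct the vector field $X$ by solving $\varphi=\exp(X)\circ\varphi^{split}$ recursively along the $\cj_\cu$-adic filtration. Since $\exp(X)$ is the exponential of a (nilpotent, even) derivation, it is an automorphism of superalgebras, so $\exp(X)\circ\varphi^{split}$ is again a morphism of superdomains; by Thm.~\ref{scoords} such a morphism is completely determined by the images of the coordinates. Hence it suffices to find $X\in\ct_\cv\cu^{(2)}$ with
\[
\exp(X)(a_i)=\varphi(y_i),\qquad \exp(X)(\beta_j)=\varphi(\eta_j),
\]
where $a_i=\varphi^{split}(y_i)$ and $\beta_j=\varphi^{split}(\eta_j)$ as above.

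First I would record the two facts that drive the recursion. An element of $\ct_\cv\cu^{(2)}$ strictly raises the $\cj_\cu$-degree each time it is applied, since it multiplies by a coefficient of order $\geq2$ in the $\theta_i$ while $\pderiv{}{\beta_j}$ lowers that order by only one; consequently $X$ is nilpotent, $\exp(X)=\sum_{r\geq0}X^r/r!$ is a finite sum (this is the content of Lemma~\ref{lem:nilp}), and the procedure must terminate once $\cj_\cu^{n+1}=0$, with $n$ the odd dimension of $\cu$. Secondly, $\varphi$ and $\varphi^{split}$ have the same linear part, so they already agree modulo $\cj_\cu^2$.

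The induction then runs on the filtration degree. Suppose a partial sum $X_{\leq k}\in\ct_\cv\cu^{(2)}$, built from terms of order $2,\dots,k$ in the $\theta_i$, has been found with $\exp(X_{\leq k})\circ\varphi^{split}\equiv\varphi\pmod{\cj_\cu^{k+1}}$. Form the defects
\[
R_i:=\varphi(y_i)-\exp(X_{\leq k})(a_i),\qquad S_j:=\varphi(\eta_j)-\exp(X_{\leq k})(\beta_j),
\]
which lie in $\cj_\cu^{k+1}$, and write $R_i^{(k+1)},S_j^{(k+1)}$ for their parts of order exactly $k+1$. As $\exp(X_{\leq k})$ is even, $R_i$ is even and $S_j$ is odd, so for each $k$ only one of the two families is nonzero. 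I would then set
\[
X_{k+1}:=\sum_i R_i^{(k+1)}\pderiv{}{a_i}+\sum_j S_j^{(k+1)}\pderiv{}{\beta_j}\in\ct_\cv\cu^{(k+1)}\subseteq\ct_\cv\cu^{(2)},
\]
whose coefficients have order $k+1\geq2$. Evaluated on the coordinates, $X_{k+1}$ reproduces precisely $R_i^{(k+1)}$ and $S_j^{(k+1)}$ and disturbs no lower order, whereas every higher power of $\exp$ and every cross term with $X_{\leq k}$ raises the degree to at least $k+2$. Hence $\exp(X_{\leq k}+X_{k+1})\circ\varphi^{split}\equiv\varphi\pmod{\cj_\cu^{k+2}}$, and iterating until $\cj_\cu^{n+1}=0$ yields $X=\sum_{r=2}^{n}X_r\in\ct_\cv\cu^{(2)}$ with $\varphi=\exp(X)\circ\varphi^{split}$.

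The step I expect to demand the most care is the degree accounting in the inductive step: one must verify that the order-$(k+1)$ part of the defect is matched exactly by the linear action of $X_{k+1}$, while every contribution of the quadratic and higher terms of $\exp(X_{\leq k}+X_{k+1})$ drops one filtration step deeper (here the alternation of even and odd corrections forced by parity must be tracked, since the $\partial/\partial\beta_j$-terms raise the degree by one less than the $\partial/\partial a_i$-terms). This is precisely where the degree-raising property and the nilpotency established in Lemma~\ref{lem:nilp} do the real work; granting them, the factorization is essentially forced, the linear term of $\exp$ being the sole contribution at the critical order.
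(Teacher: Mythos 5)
Your recursion along the $\cj_\cu$-adic filtration is exactly the argument the paper has in mind: the text only carries out the orders $2$ and $3$ explicitly and then asserts the induction, so your version with the explicit defects $R_i^{(k+1)},S_j^{(k+1)}$ and the degree bookkeeping showing that all cross terms of the exponential land in $\cj_\cu^{k+2}$ is a correct completion of the same proof. No gaps.
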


This theorem has a couple of very useful
consequences. It allows one to control the deviation from the exterior bundle form produced by 
coordinate changes on a split
supermanifold in cases where one cannot retain the form $(M,\wedge^\bullet E)$. Besides, it gives
information about the behaviour of geometric objects, e.g., tensor fields, on the supermanifold under
morphisms. When one starts with a supermanifold of the form $\cm=(M,\wedge^\bullet E)$, then all vector and
tensor fields on $\cm$ have a natural interpretation in terms of the bundle $E$. Applying a transformation
which involves elements of the nilpotent group $N_\cm$, this interpretation no longer holds because
terms are appearing which are proportional to products of the odd variables. But these additional
terms, as one can see in (\ref{examp}), are all infinitesimal: they are proportional to Lie derivatives.
In the case where the morphisms $\Phi$ are superdiffeomorphisms, which will be studied in chapter
\ref{ch:sdiff}, the analogue of Thm.~\ref{thm:smanmorph} takes on a more lucid and practical form:
it splits $\sdiff(\cm)$ into automorphisms of bundles and a subgroup of nilpotent corrections.

\subsection{Products}

The existence of direct products in the category $\catfinsman$ is also assured by Thm.~\ref{scoords}.

\begin{dfn}
Let $\cu\subseteq\ovlr{m|n},\cv\subseteq\ovlr{p|q}$ be two superdomains with underlying domains $U,V$.
Then $\cu\times\cv$ is the superdomain in $\ovlr{m+p|n+q}$ whose underlying domain is $U\times V$.
\end{dfn}

Thm.~\ref{scoords} tells us that the homologically defining property
\begin{equation}
\Hom(\cw,\cu\times\cv)\cong\Hom(\cw,\cu)\times\Hom(\cw,\cv)
\end{equation}
holds for every superdomain $\cw$.

The generalization to supermanifolds is then performed as usually by taking products of domains \cite{SoS}.
Let $\cm=(M,\co_\cm),\cn=(N,\co_\cn)$ be two supermanifolds. Then the topology of $M$ has a base
$B_M=\{U_i\}_{i\in I}$ such that $\cm\big|_{U_i}$ is isomorphic to a superdomain for each $i\in I$.
Likewise, $N$ has such a base $B_N$. Choosing isomorphisms $u_i:\cm\big|_{U_i}\to\cu_i$ and
$v_j:\cn\big|_{V_j}\to\cv_j$ which cover $\cm$ and $\cn$ with superdomains and appropriate isomorphisms
$u_{ik},v_{jl}$, we can construct a supermanifold by gluing the superdomains $\cu_i\times\cv_j$
for all $(i,j)\in I\times J$ via the isomorphisms $(u_{ik},v_{jl})$ for all $(i,j),(k,l)\in I\times J$.
This manifold is defined to be the product manifold $\cm\times\cn$. For more details, see
\cite{SoS}.

\subsection{The category of superpoints}
\label{sect:spoint}

A particular class of finite-dimensional supermanifolds are the superpoints $\cp(\Lambda_n)$.

\begin{dfn}
A finite-di\-men\-sion\-al supermanifold whose underlying manifold is a one-point topological
space is called a superpoint.
\end{dfn}

Comparison with definition \ref{linsman} shows that superpoints are the supermanifolds associated to
purely odd super vector spaces, i.e., to those of dimension $0|n$. 
Together with their morphisms as supermanifolds, finite-dimensional superpoints form a category $\catspoint$
which is a full subcategory of $\catsman$.

\begin{prop}[see also \cite{I-dZ2ks}]
\label{spointeq}
There exists an equivalence of categories
\begin{equation}
\cp:\catgr^\circ\to\catspoint.
\end{equation}
\end{prop}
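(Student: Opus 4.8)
The plan is to construct $\cp$ explicitly and then verify the two conditions characterizing an equivalence: full faithfulness and essential surjectivity. On objects I would set $\cp(\Lambda_n) := (\{*\}, \Lambda_n)$, the superpoint whose one-point underlying space carries $\Lambda_n$ as its (constant) structure sheaf. On morphisms, a morphism $\Lambda_n \to \Lambda_m$ in $\catgr^\circ$ is by definition a superalgebra homomorphism $\varphi \colon \Lambda_m \to \Lambda_n$ in $\catgr$; since the underlying continuous map between one-point spaces is forced to be the identity, the pair $(\id, \varphi)$ is exactly a morphism of superspaces $\cp(\Lambda_n) \to \cp(\Lambda_m)$. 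Functoriality is then immediate, as composition of sheaf maps over a point is just composition of algebra homomorphisms, with the order reversed to match the passage to $\catgr^\circ$.

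For full faithfulness I would observe that over a one-point base a sheaf is nothing but its algebra of global sections, so a morphism of superspaces $\cp(\Lambda_n) \to \cp(\Lambda_m)$ consists precisely of a homomorphism of supercommutative $\fieldK$-algebras $\Lambda_m \to \Lambda_n$, i.e.\ an element of $\Hom_{\catgr^\circ}(\Lambda_n, \Lambda_m)$. This yields a tautological bijection on Hom-sets, provided one checks the single genuine constraint in the definition of a morphism of \emph{locally} superringed spaces: that $\varphi$ be local, i.e.\ $\varphi^{-1}(\mathfrak{m}_n) = \mathfrak{m}_m$, where $\mathfrak{m}_n \subset \Lambda_n$ is the unique maximal ideal. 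The key point is that $\mathfrak{m}_n$ consists exactly of the nilpotent elements of $\Lambda_n$ (the augmentation ideal, with $\Lambda_n/\mathfrak{m}_n \cong \fieldK$), and any unital homomorphism carries nilpotents to nilpotents; hence $\varphi(\mathfrak{m}_m) \subseteq \mathfrak{m}_n$ automatically, and locality follows because $\varphi(1) = 1 \notin \mathfrak{m}_n$ forces the proper ideal $\varphi^{-1}(\mathfrak{m}_n)$ to coincide with the maximal ideal $\mathfrak{m}_m$. Thus every algebra homomorphism is automatically a morphism of superpoints, and none are gained or lost.

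For essential surjectivity I would argue directly from the definition of a supermanifold. Any superpoint has a one-point underlying space, which forces its even dimension to be zero; it is therefore a supermanifold of some dimension $0|n$ and, being locally isomorphic to the linear model $\overline{V}$ of a purely odd super vector space $V$ of dimension $0|n$, it is in fact globally so, since the single point is its own neighbourhood. By Def.~\ref{linsman} this model is exactly $(\{*\}, \wedge[\theta_1,\ldots,\theta_n]) = (\{*\}, \Lambda_n) = \cp(\Lambda_n)$, so every object of $\catspoint$ lies in the essential image of $\cp$.

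The step I expect to require the most care is full faithfulness, and specifically the locality verification above: one must confirm that the locally-ringed-space requirement imposes nothing beyond being a superalgebra homomorphism, so that $\Hom_{\catspoint}$ and $\Hom_{\catgr}$ genuinely agree. Proposition~\ref{homgr} then serves as a convenient consistency check, since it describes both Hom-sets by the same data, namely the images in $\Lambda_{m,\bar{1}}$ of the odd generators.
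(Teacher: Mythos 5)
Your proposal is correct and follows essentially the same route as the paper: the same explicit functor $(\Lambda_n)\mapsto(\{*\},\Lambda_n)$, full faithfulness via the identification of sheaf maps over a point with superalgebra homomorphisms, and essential surjectivity from the fact that a superpoint's structure sheaf is a free superalgebra on $n$ odd generators, hence isomorphic to some $\Lambda_n$. The only difference is that you spell out the locality check for the maximal (nilpotent) ideal, which the paper's proof leaves implicit; that is a welcome extra detail rather than a divergence.
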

\begin{proof}
Define the functor $\cp$ on the objects $\Lambda_n$ of $\catgr^\circ$ as
\begin{equation}
\cp(\Lambda_n):=\cp_n:=(\{*\},\Lambda_n).
\end{equation}
To every morphism $\varphi:\Lambda_n\to\Lambda_m$ of Grassmann algebras, assign the morphism
\begin{equation}
\Phi=(\id_{\{*\}},\varphi):\cp_m \to \cp_n.
\end{equation}
To see that this functor establishes an equivalence, note first that it is fully faithful: on the set
of morphisms it is a bijection from $\Hom_\catgr(\Lambda_n,\Lambda_m)$ to $\Hom_\catsman(\cp_m,\cp_n)$.
The last property to check is essential surjectivity, i.e. every superpoint has to be isomorphic to one
of the $\cp_n$. This is clear from the fact that since a superpoint is a supermanifold its structure
sheaf must be a free superalgebra with $n$ odd generators. Since each such algebra is isomorphic to
$\Lambda_n$, the assertion is proved.
\end{proof}

This equivalence enables one to use just the full subcategory $\{\cp_n\big|n\in\naturalN_0\}$ given as
the image of the functor $\cp$ instead of the whole category $\catspoint$. Thus, in the following,
$\catspoint$ denotes just the image of $\catgr^\circ$ under the functor $\cp$ constructed in
Prop.~\ref{spointeq}. Its objects will be denoted $\cp_n$. One could also have defined $\cp$
as a contravariant functor $\catgr\to\catspoint$. Equivalences of this type are also called
\emph{dualities of categories}.

\section{Super vector bundles}

As in ordinary geometry, a smooth super vector bundle over a supermanifold $\cm$ can be viewed in two ways 
\cite{NoS_QFaSACfM}:
\begin{enumerate}
\item as a fiber bundle $\pi:\cv\to\cm$ whose typical fiber is isomorphic to $\ovlr{p|q}$ and whose
structure group is a subsupergroup of $GL(p|q)$
\item as a sheaf $\cv$ of $\co_\cm$-modules which is locally free of rank $p|q$.
\end{enumerate}
Even if the fiber has dimension $p|0$, it is not advisable to think of the bundle as being ``purely even''.
Its sheaf of sections is locally isomorphic to $\co_\cm^p$, and thus contains odd elements as well. For
this reason,
if $\cm$ is not purely even, the grading of the fibres of $\cv$ does not decompose the bundle into a sum
of an even and an odd bundle.

\begin{dfn}
A section of a super vector bundle $\pi:\cv\to\cm$ is a morphism $\sigma:\cm\to\cv$ of smooth supermanifolds
such that $\pi\circ\sigma=\id_\cm$.
\end{dfn}

One should always keep in mind that the fibers of a super vector bundle are, correctly speaking, not super 
vector spaces. They
are linear supermanifolds. Since there is a canonical relation between linear supermanifolds and super
vector spaces this does not always make much of a difference. But there are occasions when it plays a role.
If $\pi:\cv\to\cm$ is a super vector bundle whose fibre is the linear supermanifold $\olv=\ovlr{1|2}$, then
one can also define the bundle whose fiber is $\overline{\Pi(V)}=\ovlr{2|1}$. The underlying topological
spaces of these two bundles are not the same: the first one is a fiber space whose fibers have dimension 1
while the fibers of the second one have dimension two.

\subsection{The tangent and cotangent bundle of a supermanifold}
\label{sect:tangent}

Let $\cm$ be a smooth supermanifold of dimension $p|q$ with structure sheaf $\co_\cm$. 
Its tangent sheaf $\ctm$ is,
as in ordinary geometry, the sheaf of $\realR$-linear derivations of the structure sheaf, i.e., its
stalk at $x\in M$ is $\ider_\realR(\co_{\cm,x},\co_{\cm,x})$. Let 
$(t_1,\ldots,t_p,\theta_1,\ldots,\theta_q)$ be local coordinates on $\cm$. Then one has the coordinate
vectors $\partial/\partial t_i,\partial/\partial \theta_j$, which are defined to act on a smooth function 
$f=\sum_I f_I\theta_I$ as follows:
\begin{equation}
\pderiv{}{t_i}f:=\sum_I\pderiv{f_I}{t_i}\theta_I.
\end{equation}
Writing
\begin{equation}
f=\sum_{j\notin I}(f_I\theta_I+(-1)^{p(f_{j,I})}\theta_jf_{j,I}\theta_I),
\end{equation}
then
\begin{equation}
\pderiv{}{\theta_j}f:=\sum_I (-1)^{p(f_{j,I})}f_{j,I}\theta_I.
\end{equation}

\begin{thm}
Let $\cm=(M,\co_\cm)$ be a supermanifold of dimension $p|q$. Let $(t_1,\ldots,t_p,\theta_1,\ldots,\theta_q)$
be local coordinates on $\cm$. Then its tangent sheaf $\ctm$ is a locally
free module over $\co_\cm$ with basis $\partial/\partial t_i,\partial/\partial \theta_j$ for $1\leq i\leq p$
and $1\leq j\leq q$.
\end{thm}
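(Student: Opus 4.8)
The statement is local in $M$, so since $\cm$ is locally isomorphic to a linear supermanifold we may assume $\cm=\cu\subseteq\ovlr{p|q}$ is a superdomain with $\co_\cm=C^\infty_U\otimes_\realR\wedge[\theta_1,\ldots,\theta_q]$ and $(t_1,\ldots,t_p,\theta_1,\ldots,\theta_q)$ the standard coordinates; every section over $U$ then has the form $f=\sum_I f_I\theta_I$ with $f_I\in C^\infty(U)$. The plan is to prove that the $p+q$ coordinate fields $\partial/\partial t_i,\ \partial/\partial\theta_j$ form a free basis of the $\co_\cm$-module $\ider_\realR(\co_\cm,\co_\cm)$. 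First I would record that these operators are indeed (even, resp.\ odd) derivations, which is a direct check from the defining formulas, and that for any homogeneous $f\in\co_\cm$ the operator $f\,\partial/\partial t_i$ (and likewise $f\,\partial/\partial\theta_j$) is again a derivation of parity $p(f)$, using supercommutativity to move $f$ past the arguments. Linear independence is then immediate: a relation $\sum_i f_i\,\partial/\partial t_i+\sum_j g_j\,\partial/\partial\theta_j=0$ evaluated on the coordinate $t_k$ yields $f_k=0$, and evaluated on $\theta_l$ yields $g_l=0$, since $\partial t_k/\partial t_i=\delta_{ik}$, $\partial\theta_l/\partial\theta_j=\delta_{lj}$ and the mixed derivatives vanish.

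For spanning, let $D$ be an arbitrary derivation; I set $f_i:=D(t_i)$ and $g_j:=D(\theta_j)$ and consider the difference $D':=D-\sum_i f_i\,\partial/\partial t_i-\sum_j g_j\,\partial/\partial\theta_j$, which is again a derivation (after separating $D$ into its homogeneous parts to keep the signs straight). By construction $D'(t_i)=0$ and $D'(\theta_j)=0$ for all $i,j$, and it remains to show $D'=0$. Since $D'$ annihilates every coordinate, the super-Leibniz rule forces $D'(\theta_I)=0$ for all multiindices $I$, whence $D'(g\,\theta_I)=D'(g)\,\theta_I$ for $g\in C^\infty(U)$; the problem is thus reduced to proving that $D'(g)=0$ for every purely even smooth function $g$ of the $t_i$ alone.

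The heart of the argument, and the step I expect to be the main obstacle, is exactly this last vanishing, where one must control not only the value but also the nilpotent ($\theta$-dependent) components of $D'(g)$. Here I would invoke Hadamard's lemma: fixing a base point $t_0\in U$ and a convex neighbourhood $W$, write $g(t)=g(t_0)+\sum_i(t_i-t_{0,i})\,h_i(t)$ with $h_i\in C^\infty(W)$. Applying the sheaf derivation $D'$ over $W$ and using $D'(t_i)=0$ together with $D'(\text{const})=0$ gives the identity of sections
\begin{equation}
D'(g)=\sum_i(t_i-t_{0,i})\,D'(h_i)\quad\text{on }W.
\end{equation}
Expanding $D'(h_i)=\sum_I d^i_I\,\theta_I$, every component of $D'(g)$ in the $\theta_I$-decomposition equals $\sum_i(t_i-t_{0,i})\,d^i_I$ and therefore vanishes at the point $t_0$. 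Since $t_0\in U$ was arbitrary, each such component vanishes identically on $U$, so $D'(g)=0$. This yields $D'=0$, hence $D=\sum_i f_i\,\partial/\partial t_i+\sum_j g_j\,\partial/\partial\theta_j$, proving that the coordinate fields generate $\ider_\realR(\co_\cm,\co_\cm)$ freely and that $\ctm$ is locally free of rank $p|q$ with the asserted basis.
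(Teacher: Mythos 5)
Your proof is correct. For this theorem the paper itself supplies no argument at all --- it simply defers to Leites (\cite{Itttos}) --- so there is nothing internal to compare against; what you have written is the standard proof that the cited reference carries out. Your reduction (localize to a superdomain, check independence by evaluating a putative relation on the coordinates, then subtract $\sum_i D(t_i)\,\partial/\partial t_i+\sum_j D(\theta_j)\,\partial/\partial\theta_j$ and kill the residual derivation) is exactly right, and you correctly isolate the only delicate step: showing $D'(g)=0$ for $g$ a smooth function of the $t_i$ alone. Your handling of that step is the sound one --- apply Hadamard's lemma at a base point $t_0$, observe that every $\theta_I$-component of $D'(g)$ vanishes \emph{at} $t_0$, and then let $t_0$ range over $U$ --- rather than the tempting but broken alternative of iterating the expansion to conclude $D'(g)\in\bigcap_k\mathfrak{m}_{t_0}^k$, which in the smooth category only places $D'(g)$ among the flat germs. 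The one hypothesis you use implicitly and should state is that $D'$ is a local operator (compatible with restriction to the convex neighbourhood $W$); this is automatic for a section of the sheaf of derivations, or follows from the usual bump-function argument, and with that remark the proof is complete.
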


For a proof see \cite{Itttos}. This theorem actually legitimizes the term ``tangent bundle`` 
for supermanifolds. Sections of the tangent bundle are called vector fields.

\begin{dfn}
The cotangent sheaf $\ct^*\cm$ of a supermanifold $\cm$ is the sheaf of modules dual to $\ctm$ in the sense
of section \ref{sect:dualmod}, i.e., the sheaf $\ihom(\ctm,\co_\cm)$. Sections of $\ct^*\cm$ are called 
(differential) 1-forms.
\end{dfn}

At this point one has to fix the conventions regarding the definition of the duality pairing between vector
fields and forms as well as the handling of the cohomological degree of differential forms. Both
$\ctm$ and $\ct^*\cm$ will be treated as left modules.
We will always denote the pairing between 1-forms and vector fields as a map
\begin{equation}
\label{sforms}
\langle\cdot,\cdot\rangle:\ctm\otimes_{\co_\cm}\ctm^*\to\co_\cm.
\end{equation}
This implies $\langle fX,g\omega\rangle=(-1)^{p(g)p(X)}\langle X,\omega\rangle$ for any vector 
field $X$, 1-form $\omega$ and functions $f,g$. 

The differential is defined as the unique derivation $d:\co_\cm\to\ct^*\cm$ for which
\begin{equation}
\langle X,df\rangle=X(f)
\end{equation}
holds, with $X$ any vector field and $f$ any function. The local dual basis for $\ct^*\cm$ is denoted by
$dt_1,\ldots,dt_p,d\theta_1,\ldots,d\theta_q$. 
\begin{dfn}
The algebra of differential forms on a smooth supermanifold $\cm$ is defined to be
\begin{equation}
\Omega^\bullet_\cm=\wedge^\bullet\ct^*\cm.
\end{equation}
\end{dfn}

Thus the algebra of differential forms is formally defined as usual. The crucial difference is that
$d\theta_i\wedge d\theta_j=d\theta_j\wedge d\theta_i$ (cf. definition \ref{svectextalg}). Therefore, 
if the odd dimension of $\cm$ is not zero, $\Omega^n_\cm\neq 0$ for all $n\geq 0$. There are no top-degree 
forms. Integration on supermanifolds instead involves sections of the Berezinian line bundle \cite{SoS},
\cite{SfMAI}, \cite{NoS_QFaSACfM}.
As in classical geometry, there exists a unique extension of $d$ to a derivation of square zero on the
algebra $(\Omega^\bullet_\cm,\wedge)$, acting by
\begin{equation}
d(\omega\wedge\eta)=(d\omega)\wedge\eta+(-1)^{\deg(\omega)}\omega\wedge (d\eta).
\end{equation}

Many further properties of differential forms carry over to supermanifolds, e.g. the Poincar\'e lemma
holds \cite{NoS_QFaSACfM}.

\chapter{The categorical approach}
\label{ch:categories}

While ringed spaces might seem easier to handle at first and are also
more wide\-spread in the mathematical literature about supersymmetry, the categorical approach to supergeometry
is, in a way, more fundamental. The ringed space formulation has the disadvantage of hiding certain 
aspects of
supergeometry, in particular the origin of odd parameters appearing in geometric constructions. The
approach of working with ``functorial points'', i.e., morphism sets from other objects of the category
to the object in question, is in a sense also closer to the way the physicists usually do their calculations.

\section{Preliminaries}

\subsection{Notation}

The object class of a category $\catc$ will be denoted as $\Ob(\catc)$.
If $A$ is an object of the category $\catc$, we simply write $A\in\catc$ (instead of 
$A\in\Ob(\catc)$).

The class of all morphisms of a category $\catc$ will be denoted as $\Mor\,\catc$.

Let $F,G:\catc\to\catd$ be two functors. We will denote a functor morphism $\eta:F\to G$ as a family
of maps
\begin{equation}
\eta=\{\eta_A:F(A)\to G(A)\in\Hom_\catd(F(A),G(A))\big|A\in\catc\}.
\end{equation}

For the category of covariant functors $\catc\to\catd$, we write $\catd^\catc$.

The category $\cattwo$ is the category with two objects, $\Ob(\cattwo)=\{1,2\}$, and only identity arrows.

\subsection{Representable functors, Yoneda embedding}

Let $\catc$ be a (nonempty) category, $A$ be an object of $\catc$, and let $F:\catc\to\catsets$ be a functor.
Let further
\begin{equation}
\eta:\Hom_\catc(A,-)\to F
\end{equation}
be a functor morphism of the covariant Hom-functor to $F$.

\begin{thm}
The functor $H^*:\catc^\circ\to\catsets^{\catc}$, given by
\begin{eqnarray*}
A &\mapsto& \Hom(A,-)\\
(f:A\to B) &\mapsto& \left(\begin{array}{rcl}
\Hom(B,-) &\to& \Hom(A,-)\\
u &\mapsto& u\circ f\end{array}\right)
\end{eqnarray*}
defines a full embedding called the Yoneda embedding.
\end{thm}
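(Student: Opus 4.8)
The plan is to deduce the statement from the \emph{Yoneda lemma}, which I would prove first and from which both fullness and faithfulness fall out at once. Recall that ``full and faithful'' means that for all $A,B\in\catc$ the assignment
\[
\Hom_\catc(A,B)\longrightarrow \Hom_{\catsets^{\catc}}\bigl(\Hom(B,-),\Hom(A,-)\bigr),\qquad f\mapsto H^*(f)=(u\mapsto u\circ f),
\]
is a bijection. The engine driving this is the evaluation map
\[
\Phi\colon \Hom_{\catsets^{\catc}}\bigl(\Hom(B,-),F\bigr)\longrightarrow F(B),\qquad \eta\mapsto \eta_B(\id_B),
\]
defined for an arbitrary functor $F\colon\catc\to\catsets$. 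First I would show $\Phi$ is a bijection (this is the Yoneda lemma), and then specialise to $F=\Hom(A,-)$.

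To prove $\Phi$ is bijective I would exhibit an explicit inverse $\Psi\colon F(B)\to \Hom_{\catsets^{\catc}}(\Hom(B,-),F)$. Given $x\in F(B)$, define a candidate natural transformation by $\Psi(x)_C(g):=F(g)(x)$ for each object $C$ and each $g\in\Hom(B,C)$. The first task is to check that $\Psi(x)$ is indeed natural: for $h\colon C\to C'$ both composites of the naturality square send $g$ to $F(h\circ g)(x)$, using only the functoriality $F(h)\circ F(g)=F(h\circ g)$. Next I would verify the two round trips. The composite $\Phi\circ\Psi$ is the identity because $\Phi(\Psi(x))=F(\id_B)(x)=x$. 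The composite $\Psi\circ\Phi$ is the identity by the one genuinely non-formal step: for a natural transformation $\eta$, naturality applied to $g\colon B\to C$ and evaluated at $\id_B$ gives $F(g)\bigl(\eta_B(\id_B)\bigr)=\eta_C(g\circ\id_B)=\eta_C(g)$, so $\Psi(\Phi(\eta))=\eta$. This last manipulation --- recovering all components $\eta_C$ from the single element $\eta_B(\id_B)$ via naturality --- is the crux of the whole argument and the step I expect to require the most care.

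With the Yoneda lemma in hand I would set $F=\Hom(A,-)$, so that $F(B)=\Hom(A,B)$ and $\Phi$ becomes a bijection $\Hom_{\catsets^{\catc}}(\Hom(B,-),\Hom(A,-))\cong\Hom_\catc(A,B)$. It remains only to observe that its inverse $\Psi$ is precisely the morphism part of $H^*$: for $f\in\Hom(A,B)$ one has $\Psi(f)_C(u)=F(u)(f)=u\circ f$, which is exactly the natural transformation $H^*(f)$. Hence $H^*$ on morphisms is a bijection onto the natural transformations, i.e. $H^*$ is full and faithful. Conversely one checks directly $\Phi(H^*(f))=\id_B\circ f=f$, reconfirming the pairing.

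Finally, for the word \emph{embedding} I would add that $H^*$ is injective on objects: if $\Hom(A,-)=\Hom(B,-)$ as functors, then evaluating at $A$ shows $\id_A\in\Hom(A,A)=\Hom(B,A)$, and since a morphism has a unique source this forces $A=B$. Even without literal equality, a full and faithful functor reflects isomorphisms, so it embeds $\catc^\circ$ as a full subcategory of $\catsets^{\catc}$ up to isomorphism. Everything except the naturality step of the second paragraph is routine unwinding of definitions.
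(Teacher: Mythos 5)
Your proof is correct and is exactly the standard argument: establish the Yoneda lemma (the bijection $\eta\mapsto\eta_B(\id_B)$ with inverse $x\mapsto(g\mapsto F(g)(x))$, with naturality supplying the key identity $\eta_C(g)=F(g)(\eta_B(\id_B))$) and then specialise $F=\Hom(A,-)$ to get full faithfulness of $H^*$. The paper itself omits the proof and simply cites the standard references, which argue precisely as you do, so there is nothing to compare beyond noting that your write-up fills in the details correctly, including the routine check that $\Psi(f)$ coincides with $H^*(f)$.
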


A proof can be found in many books on category theory or homological algebra, e.g., in \cite{CftWM},
\cite{MoHA}, \cite{KI}. Replacing $\catc$ with $\catc^\circ$, one obtains an embedding 
$H_*:\catc\to\catsets^{\catc^\circ}$. The sets $T(A):=\Hom(A,T)$ are called the 
$A$-points of $T$. If $\catc$
possesses a terminal object $t$, the $t$-points $\Hom(t,M)$ are simply called the \emph{points} of $M$.
The above statement (usually referred to as the Yoneda lemma) allows one to consider
each category $\catc$ as a full subcategory of $\catsets^{\catc^\circ}$.
If one finds it more convenient, one may therefore avoid
working with objects of $\catc$ directly, working with their points instead, which are all sets. Thinking
of familiar categories like manifolds or vector spaces, this might seem an unnecessary complication, 
but in algebraically more involved settings like, e.g., schemes, this approach is a standard tool.

\begin{dfn}
A functor $F:\catc\to\catsets$ is called representable if it is isomorphic to some $\Hom(A,-)$-functor.
$A$ is then called the representing object of $F$.
\end{dfn}

\subsection{Generators}

Luckily, to check the representability of a given functor $F:\catc\to\catsets$, it is in many categories
not necessary to look at all point sets $F(A)$ for all $A$. As an example, consider the 
one-element set $\{*\}$. For any set $X$, one has
\begin{equation}
\Hom_\catsets(\{*\},X)\cong X.
\end{equation}
This reflects the obvious fact that a set can be described by knowing all ways to throw a single element 
into it. Some other
categories also furnish a single object with this property. For example, for the category of smooth manifolds,
the manifold $\Spec\realR=(\{*\},\realR)$ consisting of a single point with structure sheaf $\realR$ does 
the job. It will turn out that $\catsman$ does not possess this nice property.

\begin{dfn}
Let $\catc$ be a category. A set $\{G_i\}_{i\in I}$ for some index set $I$ is called a set of generators
for $\catc$ if for any pair $f,g:A\to B$ of distinct morphisms between objects $A,B\in\catc$ 
there exists $i\in I$ and a morphism $s:G_i\to A$ such that the compositions
\begin{equation}
\xymatrix{G_i \ar[r]^s & A \ar@<0.5ex>[r]^f \ar@<-0.5ex>[r]_g & B}
\end{equation}
are still distinct.
\end{dfn}
A set of generators is thus able to keep all distinct morphisms distinct. Obviously, if a generator set exists, then it need not at all be unique. To find a suitable
one can be non-trivial. The Hom-sets $\Hom(G_i,X)$ can be used to describe the object $X$ but in general they
do not encode the full information about $X$. For example, knowing the set $\Hom(\Spec\fieldK,M)$ for a smooth manifold is not
sufficient to reconstruct $M$. In fact, this set only reproduces the underlying set of $M$ but forgets about
the topology and smooth structure. Nonetheless these sets can be very useful if we retain the missing information
seperately. In the above example we \emph{can} reproduce $M$ if we keep the inner Hom object $\ihom(\Spec\fieldK,M)$ instead of just the Hom-set. In fact, this inner Hom object is canonically isomorphic to $M$ as one easily
checks from the definition.

Conversely, if we give a set $S$ a topology and a manifold structure we may say we define a manifold by assuming
that this set is now an inner Hom object. In the
case of ordinary manifolds this is, of course, a trivial statement. But this is due to the fact that there exists
a single generator in this case. For more subtle objects which cannot be reduced to a single set with structure
this reasoning opens up a way to construct them, namely be specifying all the data needed to describe an inner
Hom-object (by it's functor of points) and then to prove that there exists an object which represents it.

The following Proposition is sometimes useful.

\begin{prop}
\label{gensvect}
$\{\fieldK^{1|1}\}$ is a generator set for $\catsvect_\fieldK$.
\end{prop}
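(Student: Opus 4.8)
The plan is to unwind the definition of a generator set directly. I must show that for any two \emph{distinct} morphisms $f,g\colon V\to W$ in $\catsvect_\fieldK$ there is a morphism $s\colon\fieldK^{1|1}\to V$ with $f\circ s\neq g\circ s$. The first step is to understand the $\fieldK^{1|1}$-points of an arbitrary super vector space $V$. Since a morphism in $\catsvect_\fieldK$ is by definition an \emph{even} (parity-preserving) linear map, and since $\fieldK^{1|1}=\fieldK^{1|0}\oplus\fieldK^{0|1}$ has a single even basis vector $e$ and a single odd basis vector $\varepsilon$, such a morphism $s$ is freely determined by the data $s(e)\in V_{\bar 0}$ and $s(\varepsilon)\in V_{\bar 1}$. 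Hence
\begin{equation}
\Hom_{\catsvect_\fieldK}(\fieldK^{1|1},V)\cong V_{\bar 0}\oplus V_{\bar 1}=V
\end{equation}
as $\fieldK$-vector spaces. The crucial consequence is that for \emph{any} homogeneous $v\in V$ there is a morphism $s_v\colon\fieldK^{1|1}\to V$ whose image contains $v$: if $v$ is even put $s_v(e)=v$, $s_v(\varepsilon)=0$, and if $v$ is odd put $s_v(e)=0$, $s_v(\varepsilon)=v$.

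Next I would exploit linearity. Given distinct $f,g$, the even linear map $h:=f-g$ is nonzero, so there is some $v\in V$ with $h(v)\neq 0$. Decomposing $v=v_{\bar 0}+v_{\bar 1}$ into its homogeneous components and using that $h$ preserves parity, I get $h(v)=h(v_{\bar 0})+h(v_{\bar 1})$ with the two summands lying in $W_{\bar 0}$ and $W_{\bar 1}$ respectively. Since $h(v)\neq 0$, at least one of $h(v_{\bar 0})$, $h(v_{\bar 1})$ is nonzero; fix a homogeneous $v'\in\{v_{\bar 0},v_{\bar 1}\}$ with $h(v')\neq 0$, that is, $f(v')\neq g(v')$.

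Finally I would assemble the separating morphism. Take $s:=s_{v'}\colon\fieldK^{1|1}\to V$ from the first step, sending the even (resp.\ odd) standard basis vector to $v'$ according to its parity. Then $f\circ s$ and $g\circ s$ send that same basis vector to $f(v')$ and $g(v')$, which are distinct, so $f\circ s\neq g\circ s$, as required. I do not expect any genuine obstacle here; the only point that needs care is conceptual rather than technical, namely that a single even generator such as $\fieldK^{1|0}$ would fail to detect morphisms differing only on odd elements, so one must use an object carrying both an even and an odd basis vector---precisely $\fieldK^{1|1}$---and correspondingly argue on homogeneous components instead of on a single generic vector.
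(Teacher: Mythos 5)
Your proof is correct and follows essentially the same route as the paper: both arguments reduce to the observation that two distinct parity-preserving linear maps must already differ on some \emph{homogeneous} vector, which can then be hit by the even or odd basis vector of $\fieldK^{1|1}$ as appropriate. Your write-up merely makes explicit the homogeneous-component step that the paper's proof compresses into its final sentence.
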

\begin{proof}
First we prove that $\{\fieldK^{1}\}$ is a generator set for the category of
ordinary $\fieldK$-vector spaces. Let $V,W$ be two such vector spaces, and let $f,g:V\to W$ be two distinct 
linear maps.
Since $f\neq g$, there exists $v\in V,\,v\neq 0$, such that $f(v)\neq g(v)$. Then $f,g$ differ on the
one-dimensional linear subspace spanned by $v$ in $V$. Let $z$ denote the basis vector of $\fieldK^{1}$.
Then the linear map $\varphi:\fieldK^1\to V$ given by $\varphi(z)=v$ separates $f$ and $g$.

Now since two morphisms of super vector spaces may differ either in an even or an odd subspace, we need
one even and one odd dimension to separate all distinct morphisms between super vector spaces.
\end{proof}

The set $\{\fieldK^{1|0},\fieldK^{0|1}\}$ is evidently a set of generators as well. Often it is more
convenient to use this set than $\fieldK^{1|1}$.

\subsection{Direct products, direct sums and fiber products}
\label{sect:catprod}

As seen in the previous chapter, the category $\catsman$ has direct products. There is also a categorical
characterization of products \cite{MoHA}. Following the representable functor philosophy, a direct 
product $X\times Y$ is the object $Z$ which represents the functor $\catc\to\catsets$ given by
\begin{equation}
U\mapsto\{X(U)\times Y(U)\}\textrm{ in }\catsets.
\end{equation}
Thus, by definition, the functor of points commutes with direct products. 

An alternative equivalent 
definition states that the direct product of $X$ and $Y$ is the limit \cite{MoHA} of the functor
\begin{eqnarray}
\label{functprod}
F:\cattwo &\to& \catc\\
\nonumber
F(1)=X &,& F(2)=Y.
\end{eqnarray}
That means the product is an object $Z$ with projection morphisms 
$\pi_X:Z\to X$, $\pi_Y:Z\to Y$ which is universal: 
for any other object $Z'$ also possessing projections $\pi'_X,\pi'_Y$ there exists a unique 
morphism $h:Z'\to Z$ such that $\pi_X\circ h=\pi'_X$ and $pi_Y\circ h=\pi'_Y$.

The direct sum $X\oplus Y$ is the ``co''-version of the direct product. In homological terms, 
it is the colimit of the functor (\ref{functprod}). This means it is an object $Z$ with canonical injections
$\iota_X:X\to Z$, $\iota_Y:Y\to Z$ which is universal among all objects with these properties.

Analogously, given morphisms $\phi:X\to T$ and $\psi:Y\to T$, one can define the fiber product 
$X\times_T Y$ as the representing object for the functor
\begin{equation}
U\mapsto\{X(U)\times_T Y(U)\}\textrm{ in }\catsets,
\end{equation}
or as a universal object with projections $p_X:X\times_T Y\to X$, $p_Y:X\times_T Y\to Y$, such that
$\phi\circ p_X=\psi\circ p_Y$.
But there is another way to define the fiber product which is quite insightful.

\begin{dfn}
\label{covert}
Let $\catc$ be some category and $T\in\catc$ be an object. Define the category $\catc/T$ of 
objects over $T$ in the following way. An object of $\catc/T$ is a pair $(X,\phi)$, where $X\in\catc$ is
an object and $\phi$ is
an arrow $X\to T$. A morphism $\eta:(X,\phi)\to(Y,\varphi)$ is a morphism $\eta\in\Hom_\catc(X,Y)$ such
that
\begin{equation}
\xymatrix{X \ar[rr]^\eta \ar[dr]_\phi && Y \ar[dl]^\varphi\\
          &T&}
\end{equation}
commutes. Composition of morphisms consists in the juxtaposition of commutative triangles.
\end{dfn}

The category $\catc/T$ is sometimes also called an arrow category or comma category. It is, in geometric
terms, the category of bundles over the base $T$. Then one can show:

\begin{prop}
\label{fiberp}
Let $\catc$ be some category with direct products and let $T$ be an object of $\catc$. 
Let $\phi:X\to T$ and $\psi:Y\to T$ be two objects of $\catc/T$. Then the fiber product $X\times_S Y$
is the direct product in $\catc/T$, i.e. there exists a canonical isomorphism
\begin{equation}
(X\times_T Y\textrm{ in }\catc)\cong(\phi\times\psi\textrm{ in }\catc/T).
\end{equation}
\end{prop}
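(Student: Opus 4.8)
The plan is to prove the proposition by directly comparing the two universal properties and showing that the defining data of the product in $\catc/T$ is exactly the defining data of the fiber product in $\catc$. Since both constructions are characterized up to canonical isomorphism by a universal property, it suffices to check that an object solves one problem if and only if it solves the other, so I would not construct anything by hand but rather translate one universal property into the other.

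First I would unwind what a morphism in $\catc/T$ is. By Definition \ref{covert}, for an object $(Z',\chi')$ of $\catc/T$, a morphism $(Z',\chi')\to(X,\phi)$ is a morphism $f:Z'\to X$ of $\catc$ satisfying $\phi\circ f=\chi'$, and likewise a morphism to $(Y,\psi)$ is a $g:Z'\to Y$ with $\psi\circ g=\chi'$. Hence giving such a pair $(f,g)$ forces $\phi\circ f=\chi'=\psi\circ g$, so in particular $\phi\circ f=\psi\circ g$, which is precisely the compatibility condition in the definition of the fiber product; moreover the structure map $\chi'$ is not independent data, but is recovered as the common composite $\phi\circ f=\psi\circ g$.

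Next I would match the universal properties. Suppose $(Z,\chi)$ together with projections $\pi_X:(Z,\chi)\to(X,\phi)$ and $\pi_Y:(Z,\chi)\to(Y,\psi)$ is the direct product in $\catc/T$. Being morphisms in $\catc/T$, the projections satisfy $\phi\circ\pi_X=\chi=\psi\circ\pi_Y$, so $(Z,\pi_X,\pi_Y)$ is already a candidate cone for the fiber product in $\catc$. Given any $Z'$ with $f:Z'\to X$, $g:Z'\to Y$ and $\phi\circ f=\psi\circ g$, the previous step turns $(Z',\phi\circ f)$ into an object of $\catc/T$ equipped with morphisms $f,g$ to $(X,\phi)$ and $(Y,\psi)$; the product property then yields a unique $h:(Z',\phi\circ f)\to(Z,\chi)$ with $\pi_X\circ h=f$ and $\pi_Y\circ h=g$. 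The morphism underlying $h$ is a map $Z'\to Z$ in $\catc$, and this assignment is exactly the universal mediating morphism required by the fiber product. Conversely, any fiber product in $\catc$ produces a product in $\catc/T$ by reversing this translation, which gives the canonical isomorphism asserted.

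The only point requiring care — and the closest thing to an obstacle — is to check that the ``over $T$'' condition on the mediating morphism is automatic, so that it imposes nothing beyond the plain fiber-product universal property. Concretely, if $\pi_X\circ h=f$, then $\chi\circ h=\phi\circ\pi_X\circ h=\phi\circ f=\chi'$, so $h$ is automatically a morphism in $\catc/T$; thus existence and uniqueness of $h$ as a $\catc/T$-morphism coincide with existence and uniqueness of $h$ as a plain $\catc$-morphism respecting the projections. This shows the two universal properties are literally the same problem. Alternatively, one could phrase the whole argument via the functor of points, observing that $\Hom_{\catc/T}((Z',\chi'),(X,\phi))$ is the fiber of $\phi\circ-:\Hom_\catc(Z',X)\to\Hom_\catc(Z',T)$ over $\chi'$, and that taking the product of these fibers reproduces precisely the set representing the fiber product $X\times_T Y$.
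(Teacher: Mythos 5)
Your proof is correct and follows essentially the same route as the paper's: both arguments identify the direct product in $\catc/T$ with the fiber product in $\catc$ by matching their universal properties. Your version is in fact more careful than the paper's, since you explicitly verify that the structure map $\chi'$ is recovered as $\phi\circ f=\psi\circ g$ and that the mediating morphism automatically lives over $T$, steps the paper leaves implicit.
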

\begin{proof}
Denote the canonical projections of the fiber product as above as $p_X:X\times_T Y\to X$ and
$p_Y:X\times_T Y\to Y$. In the category $\catc/T$, the direct product of $\phi$ and $\psi$ is a morphism
$h:Z\to T$ together with morphisms $\pi_X:Z\to X$ and $\pi_Y:Z\to Y$ such that the following diagram commutes:
\begin{equation}
\xymatrix{Z \ar[r]^{\pi_Y} \ar[dr]^h \ar[d]_{\pi_X} & Y \ar[d]^\psi\\
X \ar[r]_\phi & T}.
\end{equation}
Therefore we have $\psi\circ\pi_Y=\phi\circ\pi_X$ and universality then entails that
there exists a unique isomorphism $\eta:Z\to X\times_T Y$.
\end{proof}

Thus, the fiber product of a pair of morphisms of two objects of $\catc$ into the same base can be thought 
of as the direct product in the category of bundles of objects of $\catc$ over that base.

\section{Algebraic structures in a category}

In this entire section, $\catc$ denotes a category with direct products and a terminal object
$t$. An algebraic structure in $\catc$ is an object of $\catc$ which also ``has'' this 
type of structure. For concrete categories, i.e., those whose objects all have a natural description
as sets with additional properties, such constructions are well-known.
For example, a Lie group is a group which is also a manifold and whose operations are diffeomorphisms,
or a topological vector space is a topological space which is also a module over a field and 
whose operations are continuous. 
The functor of points enables one to extend this concept far beyond this concrete context. The
idea is, however, still intriguingly simple: one defines an object $T\in\catc$ to be some
algebraic structure, if it induces this structure on all sets of morphisms of other objects of
$\catc$ into it. A group in $\catc$, for example, is then an object which induces a group structure on all 
Hom-sets of
other objects to it. In the concrete cases, this coincides with the usual intuition.

\subsection{Rings, modules, algebras}

\begin{dfn} An object $R\in\catc$ is called a ring with unit in $\catc$, if there exist morphisms
\begin{eqnarray}
+:R\times R &\to& R\\
\cdot: R\times R &\to& R\\
e: p &\to& R,
\end{eqnarray}
such that for every $Y\in\catc$, the sets $R(Y)=\Hom(Y,R)$ together with the morphisms 
$+_Y:R(Y)\times R(Y)\to R(Y)$ and $\cdot_Y:R(Y)\times R(Y)\to R(Y)$ form a ring and
$e_Y:p(Y)\to R(Y)$ is its unit. The ring is called commutative if all rings $R(Y)$ are commutative.
\end{dfn}

We have implicitly considered $R$ as a functor $\catc\to\catsets^{\catc^\circ}$ in this definition, 
namely as its
associated Hom-functor $\Hom(-,R)$. Consequently, $+,\cdot,e$ have been turned into functor morphisms,
and $+_Y,\cdot_Y$, etc. denote their components.
We are free to do so by the Yoneda lemma, and will continue to switch between an object and its contravariant
Hom-functor whenever this is suitable. 

\begin{dfn}
Let $R$ be a ring in $\catc$. An object $V\in\catc$ is called a left $R$-module if there exists a
morphism $\rho:R\times V\to V$ such that for all $Y\in\catc$, the sets $(V(Y),\rho_Y)$ are left 
$R(Y)$-modules. Right modules are analogously defined. Let $V,V'$ be $R$-modules. A morphism $f:V\to V'$ of
$R$-modules is a functor morphism 
\begin{equation}
f=\{f_Y:V(Y)\to V'(Y)\big|Y\in\catc\},
\end{equation}
such that each $f_Y$ is a morphism of $R(Y)$-modules.
\end{dfn}

Together with such morphisms the right (left) $R$-modules in $\catc$ form a category $\catmod_R(\catc)$
(${}_R\catmod(\catc)$). Since all the points $V(Y)$ of a module $V$ are abelian groups, this category is
additive. Supermodules can be defined completely analogously.

\begin{dfn}
An $R$-module $V$ in $\catc$ is called an $R$-supermodule if it possesses a direct sum decomposition
\begin{equation}
V=V_{\bar{0}}\oplus V_{\bar{1}}.
\end{equation}
$V_{\bar{0}}$ will be called the even submodule, $V_{\bar{1}}$ the odd one. A morphism of $R$-super modules
is a morphism of $R$-modules which preserves the parity.
\end{dfn}

Note that we did not define $R$ itself to be a superring. This is possible, but we do not need it
here.
For supermodules to exist, $\catc$ must obviously allow finite direct sums, which will be the case for
all categories that will be dealt with in this work. Left (resp. right) $R$-super modules in $\catc$ form a 
category ${}_R\catsmod(\catc)$ (resp. $\catsmod_R(\catc)$).
Commutative, resp. supercommutative rings will be sufficient for almost all our purposes. In this case,
each left module can be turned into a right module in a natural way (cf. the convention (\ref{modstr}).

\begin{dfn}
Let $R$ be a commutative ring in $\catc$ and let $A$ be an $R$-module. 
If there exists an $R$-bilinear morphism $\mu:A\times A\to A$ such that for each $Y\in\catc$ the pair
$(A(Y),\mu_Y)$ is an $R(Y)$-algebra, the pair $(A,\mu)$ is called an $R$-algebra. The algebra is 
called (anti-)commutative, Lie, or associative if each of the algebras $A(Y)$ has the corresponding 
property.
\end{dfn}

Modules over $R$-algebras and morphisms of $R$-algebras are defined in the obvious way.

\subsection{Multilinear morphisms}
\label{sect:multmor}

\begin{dfn}
\label{def:multmor}
Let $R$ be a ring in $\catc$ and let $V_1,\ldots,V_n,V$ be $R$-modules. Let $Z$ be some object of $\catc$.
A morphism $f:Z\times V_1\times\ldots\times V_n\to V$ is called a $Z$-family of $R$-$n$-linear morphisms if
for every $Y\in\catc$, the map
\begin{equation}
f_Y:Z(Y)\times V_1(Y)\times\ldots\times V_n(Y)\to V(Y)
\end{equation}
is a $Z(Y)$-family of $R(Y)$-$n$-linear maps, i.e., for every $z\in Z(Y)$, $f_Y(z,\ldots)$ is 
$R(Y)$-$n$-linear. 
\end{dfn}

The set of $Z$-families of $R$-$n$-linear morphisms from $V_1\times\ldots\times V_n$ to $V$ will 
be denoted by $L_R^n(Z;V_1,\ldots,V_n;V)$. It inherits the structure of an abelian group from $V(Y)$:
\begin{equation}
(f+f')_Y=f_Y+f'_Y.
\end{equation}

Besides this, $L_R^n(Z;V_1,\ldots,V_n;V)$ can be given a natural $R(Z)$-module structure. If
$f:Z\times V_1\times\ldots\times V_n\to V$ is an $R$-$n$-linear morphism and $r:Z\to R$ is an element 
of $R(Z)$ one sets
\begin{equation}
(rf)_Y(z,v_1,\ldots,v_n)=r_Y(z)f_Y(z,v_1,\ldots,v_n)
\end{equation}
for $z\in Z(Y)$ and $v_i\in V_i(Y)$. This allows us to consider $L_R^n(Z;-;-)$ as a functor
$(\catmod_R(\catc)^\circ)^n\times\catmod_R(\catc)\to\catmod_{R(Z)}(\catsets)$.

A family of multilinear morphisms over the terminal object $p$ is just called an $R$-$n$-linear morphism.
The abelian groups of such morphisms are denoted simply by $L_R^n(V_1,\ldots,V_n;V)$.

For supermodules things are quite similar. One can show \cite{I-dZ2ks}, \cite{SoaE:ATTC} that the functor
$L_R^n(Z;-;-)$ defined above commutes with finite direct sums. This allows one to equip
the set of $R$-$n$-linear maps between $R$-supermodules with the structure of a $R(Z)$-supermodule
by decomposing it as
\begin{equation}
(L_R^n)_{\bar{i}}(Z;V_1,\ldots,V_n;V)=%
\bigoplus_{\bar{j}+\sum \bar{i}_k=\bar{i}}L_R^n(Z;V_{1,\bar{i}_1},\ldots,V_{n,\bar{i}_n};V_{\bar{j}}).
\end{equation}
The functor $L_R^n(Z;-;-)$ thus becomes a functor into the category of $R(Z)$-super modules.

\subsection{Internal Hom-functors and tensor products}
\label{sect:ihom}

In many of the most interesting cases, the sets of multilinear maps can be internalized, i.e. turned into
modules themselves. Clearly, these will be the inner Hom-objects for the category $\catmod_R(\catc)$. One has
\begin{equation}
L_R^n(V_1,\ldots,V_n;V)=\Hom_{\catmod_R(\catc)}(V_1\times\ldots\times V_n;V).
\end{equation}
Therefore, an inner Hom-object $\cl_R^n(V_1,\ldots,V_n;V)$ has to satisfy
\begin{equation}
\xymatrix{
L_R^1(W;\cl_R^n(V_1,\ldots,V_n;V)) \ar@{=}[d] & L_R^{n+1}(W,V_1,\ldots,V_n;V) \ar@{=}[d]\\
\Hom(W,\cl_R^n(V_1,\ldots,V_n;V)) \ar@{}[r] |{\cong} & \Hom(W\times V_1\times\ldots\times V_n;V)}
\end{equation}
for all $R$-modules $W$, and these isomorphisms must be functorial in $W$. In different terms,
$\cl_R^n(V_1,\ldots,V_n;V)$ has to be a corepresenting object for the functor
\begin{eqnarray}
L_R^{n+1}(-,V_1,\ldots,V_n;V):\catmod_R(\catc)^\circ &\to& \catsets\\
\nonumber
W &\mapsto& L_R^{n+1}(W,V_1,\ldots,V_n;V).
\end{eqnarray}

If these inner Hom-objects do all exist, one can go further and construct an internal functor
\begin{equation}
\cl_R^n:(\catmod_R(\catc)^\circ)^n\times\catmod_R(\catc)\to\catmod_R(\catc),
\end{equation}
assigning to each sequence of domain modules $V_1,\ldots,V_n$ and each target module $V$ the inner 
Hom-object of multilinear maps $V_1\times\ldots\times V_n\to V$.

Closely related are tensor products. The category $\catc$ is said to possess tensor products over a
ring $R\in\catc$ if for every set $V_1,\ldots,V_n$ of $R$-modules there exists an $R$-module
$V_1\otimes_R\ldots\otimes_R V_n$ such that the functors $L_R^1(V_1\otimes_R\ldots\otimes_R V_n;-)$ and
$L_R^n(V_1,\ldots,V_n;-)$ are isomorphic. That means
\begin{equation}
\label{tprodcat}
L^1_R(V_1\otimes_R\ldots\otimes_R V_n;W)\cong L_R^n(V_1,\ldots,V_n;W)\qquad\forall W\in\catmod_R(\catc),
\end{equation}
and these sets have to behave functorially under maps $\phi:W\to W'$. But that is just the condition
that the functor
\begin{eqnarray}
L_R^n(V_1,\ldots,V_n;-):\catmod_R(\catc) &\to& \catsets\\
\nonumber
W &\mapsto& L_R^n(V_1,\ldots,V_n;W)
\end{eqnarray}
be representable. This definition coincides for ordinary rings and modules with the usual one. In
particular, the
tensor product is again a universal object: any other module which satisfies (\ref{tprodcat}) is canonically
isomorphic to $V_1\otimes_R\ldots\otimes_R V_n$.

The existence and properties of internal Hom-functors and tensor products are, in general, more subtle 
than presented here. 
One usually has to assure that they are \emph{coherent}, i.e., they
behave as their ordinary counterparts in $\catmod_R(\catsets)$ with respect to associativity and the
unit of $\otimes$ (if there is one). For the categories
of importance for the problems in this work we can take coherence for granted by the results of
Molotkov \cite{I-dZ2ks}. For an in-depth study of such questions, consult \cite{SoaE:ATTC}.

\section{The functor of points for supermanifolds}
\label{sect:functsman}

\subsection{Interpretation of the functor of points in terms of families}
\label{sect:functpoints}

The sets of points of an object have a nice interpretation which shows that despite their rather abstract
definition they are quite close to geometric reasoning. 

The category $\catc/T$ constructed in Definition \ref{covert} has a terminal object: the identity
arrow $\id_T:T\to T$.
If the category $\catc$ has finite direct products,
which we will always assume, then one has a natural functor
$T^*:\catc\to\catc/T$ which assigns to any object its trivial family over $T$:
\begin{equation}
\label{def:tstar}
T^*:X\mapsto (\pi_T:T\times X\to T).
\end{equation}
To each morphism $\varphi:X\to Y$ it assigns $(\id_T,\varphi):T\times X\to T\times Y$.

\begin{prop}
\label{points}
The map $\Xi:\Hom_\catc(T,X)\to\Hom_{\catc/T}(\id_T,T^*(X))$ which assigns to each $\varphi:T\to X$ the
morphism of families
\begin{equation}
\xymatrix{T \ar[rr]^{(\id_T,\varphi)} \ar[dr]_{\id_T} && T\times X \ar[dl]^{\pi_T}\\
          &T&}
\end{equation}
is a bijection.
\end{prop}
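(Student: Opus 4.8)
The plan is to reduce everything to the universal property of the direct product $T\times X$, which is the defining feature of $T^*(X)$. By Definition~\ref{covert}, a morphism in $\catc/T$ from the terminal object $\id_T$ to $T^*(X)=(\pi_T:T\times X\to T)$ is simply a morphism $\eta:T\to T\times X$ in $\catc$ for which the defining triangle commutes, that is $\pi_T\circ\eta=\id_T$. Thus $\Hom_{\catc/T}(\id_T,T^*(X))$ is identified with the subset of $\Hom_\catc(T,T\times X)$ cut out by this single equation.

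First I would invoke the universal property of the product to obtain the canonical bijection
\[
\Hom_\catc(T,T\times X)\;\cong\;\Hom_\catc(T,T)\times\Hom_\catc(T,X),\qquad \eta\mapsto(\pi_T\circ\eta,\;\pi_X\circ\eta),
\]
whose inverse sends a pair $(\alpha,\beta)$ to the unique morphism with those two components. Under this identification the constraint $\pi_T\circ\eta=\id_T$ says precisely that the first component is fixed to be $\id_T$, imposing no condition whatsoever on the second. Hence
\[
\Hom_{\catc/T}(\id_T,T^*(X))\;\cong\;\{\id_T\}\times\Hom_\catc(T,X)\;\cong\;\Hom_\catc(T,X),
\]
the last bijection being given by reading off the second component $\pi_X\circ\eta$.

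It then remains to check that $\Xi$ is exactly this bijection. By construction $\Xi(\varphi)=(\id_T,\varphi)$ is the unique morphism $T\to T\times X$ whose components are $\id_T$ and $\varphi$; it visibly satisfies $\pi_T\circ\Xi(\varphi)=\id_T$, so $\Xi$ does land in $\Hom_{\catc/T}(\id_T,T^*(X))$, and its second component is $\varphi$. Therefore $\Xi$ is a one-sided inverse to the ``second component'' map above, and since that map is already a bijection, $\Xi$ is its two-sided inverse, which proves the claim.

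I expect no serious obstacle here: the whole content is the universal property of the product together with an unwinding of the definition of the comma category. The only point requiring a little care is the bookkeeping of the two components, together with the verification that the commutativity of the triangle corresponds exactly to pinning the first component to $\id_T$ rather than to some subtler relation between the components.
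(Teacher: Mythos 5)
Your proof is correct and complete: identifying $\Hom_{\catc/T}(\id_T,T^*(X))$ with the subset of $\Hom_\catc(T,T\times X)$ satisfying $\pi_T\circ\eta=\id_T$, and then using the universal property of the product to see that this pins the first component to $\id_T$ while leaving the second component free, is exactly the standard argument. The paper itself does not supply a proof of this proposition (it only refers the reader to the literature), so there is no alternative route to compare against; your argument is the expected one and fills that gap correctly.
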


A proof can be found, for example, in \cite{SoaE:ATTC}.
In words: one can identify the $T$-points of $X$ with the ordinary points of $T^*(X)$ and these are nothing
other than the global sections of the projection $\pi_T:T\times X\to T$.

If $\Hom(T,X)$ and $\Hom(S,X)$ are the $T$- and $S$-points of $X$, a morphism $u:T\to S$ induces
a morphism $\Hom(S,X)\to\Hom(T,X)$ of these points via composition. From the family point of view
this morphism corresponds to a base change, i.e., a pullback of the family. To each family $S\times X\to S$ 
the map $u:T\to S$ associates the family $T\times_S X\to T$ and to every $S$-point $\phi:S\to X$
it associates $u\circ\phi:T\to X$. Constructions involving the points of some objects only make sense
if they are compatible with arbitrary base changes. In this case one calls them \emph{geometric}.

Furthermore, we have the following.

\begin{prop}
\label{dirprod}
The functor $T^*:\catc\to\catc/T$ preserves direct products, i.e., there is a canonical isomorphism
\begin{equation}
T^*(X\times Y)\cong T^*(X)\times_T T^*(Y).
\end{equation}
\end{prop}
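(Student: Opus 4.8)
The plan is to exploit the characterization of the fiber product from Proposition~\ref{fiberp}: since $T^*(X)$ and $T^*(Y)$ are objects of $\catc/T$, the fiber product $T^*(X)\times_T T^*(Y)$ is precisely their direct product in the comma category $\catc/T$. It therefore suffices to verify that $T^*(X\times Y)$ satisfies the universal property of this direct product, for then the isomorphism follows from the uniqueness of products up to canonical isomorphism.

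The most transparent way to carry this out is via the functor of points in $\catc/T$. For any object $(h\colon W\to T)$ of $\catc/T$, I would first establish the natural bijection
\begin{equation}
\Hom_{\catc/T}\big((W,h),T^*(X)\big)\cong\Hom_\catc(W,X).
\end{equation}
This is the key step: a morphism $(W,h)\to(\pi_T\colon T\times X\to T)$ in $\catc/T$ is a morphism $f\colon W\to T\times X$ in $\catc$ with $\pi_T\circ f=h$; writing $f=(f_T,f_X)$ with respect to the product $T\times X$, the triangle condition forces $f_T=h$, so $f$ is determined by its free component $f_X=\mathrm{pr}_X\circ f\in\Hom_\catc(W,X)$, and conversely every such $f_X$ yields a unique admissible $f$.

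Applying this bijection to $X$, to $Y$, and to $X\times Y$, and using that the fiber product represents the product in $\catc/T$, I obtain
\begin{align}
\Hom_{\catc/T}\big((W,h),T^*(X)\times_T T^*(Y)\big)
&\cong\Hom_\catc(W,X)\times\Hom_\catc(W,Y)\\
&\cong\Hom_\catc(W,X\times Y)\\
&\cong\Hom_{\catc/T}\big((W,h),T^*(X\times Y)\big),
\end{align}
where the middle isomorphism is the universal property of the direct product $X\times Y$ in $\catc$. Since all these identifications are natural in $(W,h)$, the Yoneda lemma in $\catc/T$ produces the claimed canonical isomorphism $T^*(X\times Y)\cong T^*(X)\times_T T^*(Y)$.

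The argument is essentially formal, so the only real obstacle is bookkeeping: one must check that the bijection $\Hom_{\catc/T}((W,h),T^*(X))\cong\Hom_\catc(W,X)$ is natural not only in $(W,h)$ but also with respect to morphisms of the target, so that the product projections on the two sides genuinely correspond. This is exactly what guarantees the resulting isomorphism is canonical and, in the language of Section~\ref{sect:functpoints}, compatible with base change. One could alternatively bypass Yoneda and simply write down the projections $T\times(X\times Y)\to T\times X$ and $T\times(X\times Y)\to T\times Y$ explicitly, verifying the universal property by hand, but the point-theoretic route keeps the naturality manifest.
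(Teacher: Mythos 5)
Your argument is correct, but it takes a different route from the paper's. The paper proves the statement by a direct diagram chase: it takes the product $Z=X\times Y$ with its projections, applies $T^*$, and verifies the universal property of the product in $\catc/T$ by hand --- given a competitor $\eta:Q\to T$ with morphisms $P_X:Q\to T\times X$ and $P_Y:Q\to T\times Y$, it composes with the projections $p_X,p_Y$ to land in $X$ and $Y$, obtains the unique mediating morphism $\alpha:Q\to Z$, and then pairs $\alpha$ with $\eta$ to get $Q\to T\times Z$; the identification with the fiber product is then Prop.~\ref{fiberp}, exactly as in your plan. You instead observe that
\[
\Hom_{\catc/T}\bigl((W,h),T^*(X)\bigr)\cong\Hom_\catc(W,X),
\]
which is precisely the statement that $T^*$ is right adjoint to the domain (forgetful) functor $\catc/T\to\catc$, and then chain natural bijections of point sets and invoke Yoneda in $\catc/T$. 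What your route buys: naturality, and hence canonicity of the isomorphism, is manifest rather than something to be checked afterwards, and the adjunction viewpoint actually proves more --- $T^*$ preserves \emph{all} limits, not just binary products. What the paper's route buys: it is entirely elementary, needs no Yoneda lemma in the comma category, and stays in the concrete diagrammatic style of the surrounding text. The one point you flag but do not fully discharge --- that the bijection is natural in the target as well as in $(W,h)$, so that the projections on the two sides correspond --- is a routine unwinding of the adjunction isomorphism and is not a genuine gap, but for a self-contained writeup you should include that two-line verification.
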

\begin{proof}
Denote the direct product of $X,Y\in\catc$ by $Z$. This means we have canonical projections 
$\pi_X:Z\to X$ ,$\pi_Y:Z\to Y$, and $Z$ is universal with this property. The functor $T^*$ translates 
this situation into the diagram
\begin{equation}
\xymatrix{T\times X \ar[dr]_{\pi_T} & T\times Z \ar[l]_{T^*(\pi_X)} \ar[r]^{T^*(\pi_Y)} \ar[d]^{\pi_T} & 
T\times Y \ar[dl]^{\pi_T}\\
          &T&}.
\end{equation}
It has to be shown that $T\times Z$ together with the morphims $T^*(\pi_X)$ and $T^*(\pi_Y)$ is universal.
Let $\eta:Q\to T$ be another object of $\catc/T$ with morphisms $P_X:Q\to T\times X$, $P_Y:Q\to T\times Y$, 
such that
\begin{equation}
\xymatrix{T\times X \ar[dr]_{\pi_T} & Q \ar[l]_<<<<{P_X} \ar[r]^>>>>>{P_Y} \ar[d]^{\eta} & 
T\times Y \ar[dl]^{\pi_T}\\
          &T&}
\end{equation}
commutes. Denote by $p_X:T\times X\to X$ and $p_Y:T\times Y\to Y$ the canonical projections.
Then we have morphisms $p_X\circ P_X:Q\to X$ and $p_Y\circ P_Y:Q\to Y$, and thus, there
exists a unique morphism $\alpha:Q\to Z$ such that $\pi_X\circ\alpha=p_X\circ P_X$ and
$\pi_Y\circ\alpha=p_Y\circ P_Y$.

But then we have morphisms $\alpha:Q\to Z$ and $\eta:Q\to T$ and thus there exists a unique morphism 
$h:Q\to T\times Z$.
\end{proof}

By virtue of Prop.~\ref{fiberp}, we can then conclude that $T^*$ maps direct products $X\times Y$ into fiber
products $X\times_T Y$. This is important if one  has an algebraic structure on an object of the category 
$\catc$, e.g., if some $X\in\catc$ is a group in $\catc$. Then $T^*$ will produce an object with the
same structure in $\catc/T$.

\subsection{Generators for the category of supermanifolds}

In order to be able to actually work with the functor of points of a supermanifold we need a manageable
set of generators. A single generator is not enough, as it turns out.

\begin{thm}
\label{gensman}
The objects of the category $\catspoint$ (cf. section \ref{sect:spoint}) form a set of generators for
the category $\catfinsman$ of finite dimensional super manifolds.
\end{thm}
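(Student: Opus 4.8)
The plan is to unwind the definition of a generating set and verify the separation property by hand. Given two distinct morphisms $f=(\phi_f,\varphi_f)$ and $g=(\phi_g,\varphi_g)$ from $\cm=(M,\co_\cm)$ to $\cn=(N,\co_\cn)$, I must produce some superpoint $\cp_n$ and a morphism $s:\cp_n\to\cm$ with $f\circ s\neq g\circ s$. Since a morphism of superspaces is a continuous map of underlying spaces together with a stalkwise homomorphism of the structure sheaves, $f\neq g$ means either $\phi_f\neq\phi_g$, or else $\phi_f=\phi_g=:\phi$ while $\varphi_f\neq\varphi_g$. I would treat these two cases separately.

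In the first case I would use the ordinary point $\cp_0=\Spec\fieldK$. Choosing $p\in M$ with $\phi_f(p)\neq\phi_g(p)$, let $s:\cp_0\to\cm$ be the $\fieldK$-point at $p$, i.e.\ the morphism with underlying map $*\mapsto p$ and sheaf map $\ev_p:\co_{\cm,p}\to\fieldK$. Then $f\circ s$ and $g\circ s$ have underlying maps $*\mapsto\phi_f(p)$ and $*\mapsto\phi_g(p)$, which differ, so the two composites are already distinct.

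The substantial case is $\phi_f=\phi_g=\phi$ with $\varphi_f\neq\varphi_g$. Then there is an open set $W\subseteq N$ and a section $h\in\co_\cn(W)$ with $w:=\varphi_f(h)-\varphi_g(h)\neq 0$ as a section of $\co_\cm$ over $U:=\phi^{-1}(W)$. I would restrict to a coordinate chart $\cu\subseteq U$ with coordinates $(x_1,\dots,x_a,\theta_1,\dots,\theta_b)$ on which $w$ is still nonzero, and expand $w=\sum_I w_I(x)\theta_I$. Since $w\not\equiv 0$ on $\cu$, some coefficient $w_{I_0}$ is not identically zero, so there is a point $q\in\cu$ with $w_{I_0}(q)\neq 0$. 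The key move is to center the superpoint at this $q$ rather than at an a priori fixed point: using Thm.~\ref{scoords} I define $s:\cp_b\to\cu$ by prescribing the coordinate data $s^\sharp(x_k)=x_k(q)\in\fieldK$ and $s^\sharp(\theta_l)=\xi_l$, the free generators of $\Lambda_b$. Because the even pullbacks carry no nilpotent part, the Taylor formula (\ref{pbeven}) collapses to plain evaluation, giving $s^\sharp(w)=\sum_I w_I(q)\,\xi_I$, whose $\xi_{I_0}$-coefficient is $w_{I_0}(q)\neq 0$; as the $\xi_I$ are linearly independent in $\Lambda_b$, we get $s^\sharp(w)\neq 0$. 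Hence $(f\circ s)^\sharp(h)=s^\sharp(\varphi_f(h))\neq s^\sharp(\varphi_g(h))=(g\circ s)^\sharp(h)$, so $f\circ s\neq g\circ s$, as required.

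The main obstacle, and the reason a single generator cannot suffice, is precisely this choice of center. A superpoint anchored at a \emph{fixed} point $p$ sees only the finite Taylor jet of $w$ at $p$, since nilpotency truncates the series in (\ref{pbeven}); it is therefore blind to coefficient functions that are flat at $p$, and one cannot in general detect a nonzero germ at a prescribed point. The resolution is that $\varphi_f\neq\varphi_g$ as sheaf maps forces the difference $w$ to be a nonzero \emph{section} over an open set, whose coefficient functions cannot all vanish identically, so that some coefficient takes a nonzero \emph{value} at a nearby point $q$ — and there the degree-zero Taylor term alone already does the job. The odd generators $\xi_1,\dots,\xi_b$ then spread that single nonzero value across the linearly independent monomials $\xi_I$, which is exactly why superpoints of arbitrarily large odd dimension, and not merely $\cp_0$, are needed.
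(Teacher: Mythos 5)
Your proof is correct and follows essentially the same route as the paper's: split on whether the underlying maps agree, use $\cp_0$ when they do not, and otherwise relocate the superpoint to a point $q$ where some coefficient function of the difference takes a nonzero value, sending the odd coordinates to the free generators $\xi_l$ so that the nonzero coefficient survives in $\Lambda_b$. The only cosmetic difference is that you separate via an arbitrary section $h$ and its difference $w$, where the paper argues coordinate-by-coordinate on the images of the $\eta_i$ and $y_k$; the substance, including the observation that one needs superpoints of arbitrarily large odd dimension, is identical.
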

\begin{proof}
The claim is local, therefore it is sufficient to check it for superdomains. Let $\cu,\cv$ be two 
superdomains
and $f,g:\cu\to\cv$ be a pair of distinct morphisms. It is well known that $\Spec\fieldK=(\{*\},\fieldK)$ is
a generator for the categories of real smooth (resp.~complex analytic) finite-dimensional manifolds:
these are completely described by their $\fieldK$-points. Thus if the underlying smooth maps of $f,g$
are distinct they will be separated already by $\cp_0=\Spec\fieldK$. Let us therefore assume that the
underlying maps of $f$ and $g$ are identical. Since $f\neq g$, there must exist a (topological) point
$p\in U$, which gets mapped by $f$ and $g$ to the point $q\in V$ such that the sheaf maps
\begin{equation}
\tilde{f},\tilde{g}:\co_{\cv}\to\co_{\cu}
\end{equation}
differ on the stalk at $q$, and this difference must either be visible in the images of the odd coordinates
or in the even nilpotent corrections to the images of the even coordinates.
Let $x$ collectively denote the even coordinates of $\cu$ and let $\theta_1,\ldots,\theta_n$ be the 
odd coordinates of $\cu$. Let $y_1,\ldots,y_p$ be the even and $\eta_1,\ldots,\eta_r$ be the odd coordinates
of $\cv$. Let us first assume that
$\tilde{f},\tilde{g}$ differ in the image of some $\eta_i$, which, without loss of generality,
we assume to be $\eta_1$. Then we can write
\begin{eqnarray}
\tilde{f}(\eta_1) &=& \alpha_i(x)\theta_i+\alpha_{ijk}(x)\theta_i\theta_j\theta_k+\ldots\\
\tilde{g}(\eta_1) &=& \beta_i(x)\theta_i+\beta_{ijk}(x)\theta_i\theta_j\theta_k+\ldots
\end{eqnarray}
where $\alpha_I,\beta_I$ are germs of smooth (resp.~holomorphic) functions around $p\in U$. 
At least for one index $J$,
$\alpha_J\neq\beta_J$ can be assumed. 
Since these germs are entirely defined
by their values, there must be a point $p'$ in any neighbourhood of $p$ where the values of 
$\alpha_J$ and $\beta_J$ are different.
To separate
$f$ and $g$, we may then define a map $\Phi=(\phi,\varphi):\cp_N\to\cu$ by choosing $N\geq n$ and
setting $\phi(\{*\})=p'$. If $\xi_1,\ldots,\xi_N$ are the coordinates of $\cp_N$, then the sheaf map
is defined to be
\begin{eqnarray}
\nonumber
\varphi(\theta_i) &:=& \xi_i\\
\label{phimap2}
\varphi(f) &:=& f(p)
\end{eqnarray}
where $f$ is any germ of a function on the underlying manifold $U$. The image of $\eta_1$ under
$\tilde{f},\tilde{g}$ will be mapped to a different element of $\wedge[\xi_1,\ldots,\xi_N]$'s. Thus the two
morphisms remain distinct. It is clear that one must be able to choose an arbitrarily large $N$ in order to
separate all morphisms between finite-dimensional supermanifolds: to make the difference between
$\alpha_J$ and $\beta_J$ visible one needs at least as many odd coordinates as the length $|J|$ of the
index.

If we assume instead that $\tilde{f},\tilde{g}$ differ on the image of an even coordinate $y_k$, say $y_1$,
we can write
\begin{eqnarray}
\tilde{f}(y_1) &=& \alpha_0(x)+\alpha_{ij}(x)\theta_i\theta_j+\ldots\\
\tilde{g}(y_1) &=& \beta_0(x)+\beta_{ij}(x)\theta_i\theta_j+\ldots,
\end{eqnarray}
and for some $J\neq 0$, we have $\alpha_J\neq\beta_J$. By the same arguments as before, there exists a
$p''$ in any neighbourhood of $p$ for which the map (\ref{phimap2}) separates $\tilde{f},\tilde{g}$.
\end{proof}

In general, we will have to keep track of an infinite tower of sets of points, one set
for each $\Lambda_n$, $n\in\naturalN_0$. This is still much less comfortable than ordinary geometry, but
at least allows a clean handling of, e.g., odd parameters or supergroups. It will also provide a means to
extend the notion of a supermanifold beyond finite dimensions.

Theorem \ref{gensman} also means that a supermanifold can be defined as a contravariant functor 
$\catspoint\to\catsets$ which is representable, i.e., a family of sets $S_n$ such that there exists a 
supermanifold $\cm$ for which $S_n\cong\Hom(\cp_n,\cm)$ for all $n\in\naturalN_0$. By Prop.~\ref{spointeq},
we can as well use covariant functors $\catgr\to\catsets$, and this is precisely the way we will
proceed in the next section, following largely the program proposed by Molotkov \cite{I-dZ2ks}.

\subsection{Consequences of the family point of view for supermanifolds}
\label{sect:smanfpv}

Thm.~\ref{gensman} shows that it is sufficient to consider the $\cp_n$-points of any supermanifold $\cm$ to
describe it entirely. One way to construct a supermanifold is therefore to propose a functor
$F:\catspoint\to\catsets$ and show that it is representable. How to do this in detail will be discussed
later. At this point we only want to use the family point of view to explain the origin of the odd
parameters occuring in coordinate changes on supermanifolds.

The $\cp_n$-points of a supermanifold $\cm$ are morphisms $\cp_n\to\cm$. By the interpretation of
Prop.~\ref{points} they may be viewed
as \emph{local} sections of the bundle $\cp_n\times\cm\to\cp_n$. That means to study a supermanifold 
one studies families $\phi:\cm\to\cp_n$ of locally superringed spaces which are locally on $\cm$ isomorphic
to projections $\cp_n\times\cu\to\cp_n$, where $\cu$ is a superdomain. Now suppose one has found two such 
local sections,
$\cp_n\times\cu\to\cp_n$ and $\cp_n\times\cv\to\cp_n$, whose domains $U,V\subset M$ overlap. Then on the
overlap one needs an invertible transition function, i.e., an isomorphism $\psi$ such that
\begin{equation}
\xymatrix{\cp_n\times \cu \ar[rr]_{\psi} \ar[dr]_{\pi_{\cp_n}} && \cp_n\times \cv \ar[dl]^{\pi_{\cp_n}}\\
          &\cp_n&}
\end{equation}
commutes. This means that we can redefine $\psi$ as $\pi_{\cp_n}\times\psi$ where $\psi$ is now a map 
$\psi:\cp_n\times\cu\to\cv$. That it is an isomorphism means that there exists an inverse
map $\psi^{-1}:\cp_n\times\cv\to\cu$, such that the composition
\begin{equation}
\begin{CD}
\cp_n\times\cu @>{\pi_{\cp_n}\times\psi}>> \cp_n\times\cv @>{\pi_{\cp_n}\times\psi^{-1}}>> \cp_n\times\cu
\end{CD}
\end{equation}
is the identity $\id_{\cp_n\times\cu}$. The map $\psi$ may therefore involve the odd generators of
$\Lambda_n$, i.e., it is stalkwise a map $\co_{\cu,p}\otimes\Lambda_n\to\co_{\cv,\psi(p)}$. This
``mixing in'' of the odd coordinates of the base causes the occurence of the so-called odd parameters in
coordinate transformations on a supermanifold.

One may also interpret the occurence of odd parameters in terms of inner Hom-objects. For two supermanifolds
$\cm$ and $\cn$ an inner Hom object $\ihom(\cm,\cn)$ would have to be a supermanifold of supersmooth maps 
satisfying 
\[
\Hom(\cx,\ihom(\cm,\cn))=\Hom(\cx\times\cm,\cn)
\]
for all supermanifolds $\cx$. Since we
know that the superpoints generate $\catfinsman$, it will be sufficient to verify that one has
\begin{equation}
\label{ihomsman}
\Hom(\cp_n,\ihom(\cm,\cn))=\Hom(\cp_n\times\cm,\cn)\qquad\forall n\in\naturalN_0.
\end{equation}
Thus, a morphism $\cp_n\times\cm\to\cn$ may be thought of as a ``higher point'' of a supermanifold of 
morphisms $\ihom(\cm,\cn)$, as opposed to the $\fieldK$-points which are just the ordinary morphisms
$\ihom(\cm,\cn)_{red}=\Hom(\cm,\cn)$. This point of view will be pursued further when the supergroup
of superdiffeomorphisms will be studied, which is the restriction of the 
inner Hom object $\ihom(\cm,\cm)$ to the (ordinary) group $\Aut(\cm)$. We will also take it up when 
supermanifolds of sections of vector bundles on a supermanifold are constructed.

\section{The categorical formulation of linear and commutative superalgebra}
\label{sect:catsalg}

As a first step towards a fully categorical formulation of supergeometry, we will translate linear
and commutative
superalgebra into the language of the functor of points. By the remark following Prop. \ref{gensvect},
the only points that would matter for a linear superspace are the $\Lambda_0$- and $\Lambda_1$-points. 
In view of the later
extension of the construction to algebras and general supermanifolds, we will nonetheless use
a bigger set of generators right away, namely all finite dimensional Grassmann algebras. The construction
outlined below can be viewed as a systematic treatment of the so-called even rules principle, which
is a way to do superalgebra without having to handle odd quantities \cite{NoS_QFaSACfM}, \cite{SfMAI}.

\subsection{The rings $\olr$ and $\olc$}
All considerations in this and the following sections refer to the category $\catsets^\catgr$ of covariant
functors $\catgr\to\catsets$. 

For the case when we work over the reals, i.e. with the category $\catgr$ of real Grassmann algebras,
we define in $\catsets^\catgr$ a commutative ring $\olr$ with unity by setting
\begin{eqnarray}
\olr(\Lambda) &:=& (\Lambda\otimes_\realR \realR)_{\bar{0}}=\Lambda_{\bar{0}},\\
\olr(\varphi) &:=& \varphi\big|_{\Lambda_{\bar{0}}}
\end{eqnarray}
for $\varphi:\Lambda\to\Lambda'$ a morphism in $\catgr$.\footnote{To tensor with $\realR$ over
$\realR$ is, of course, not necessary in this definition. We merely used this notation to stress the 
analogy with the construction of modules in the next section.} The ring structure is directly inherited from
the one of the $\Lambda_{\bar{0}}$.

If we work over $\complexC$, i.e., with $\catgr^\complexC$, then we analogously define 
\begin{eqnarray}
\olc(\Lambda^\complexC) &:=& (\Lambda^\complexC\otimes_\complexC \complexC)_{\bar{0}}=%
\Lambda^\complexC_{\bar{0}},\\
\olc(\varphi) &:=& \varphi\big|_{\Lambda^\complexC_{\bar{0}}}.
\end{eqnarray}

These two rings will play the role usually played by the ground field in the definition of linear spaces.

\subsection{Superrepresentable $\olk$-modules in $\catsets^\catgr$}

In this section we will introduce a particular class of $\olr$- and $\olc$-modules in the 
functor category $\catsets^\catgr$, which, as it will turn out, can be used instead of super vector spaces
in the categorical approach.

Let $V$ be some real or complex super vector space. We define a functor $\olv\in\catsets^\catgr$ by
setting
\begin{eqnarray}
\label{def:olv}
\olv(\Lambda) &:=& (\Lambda\otimes_\fieldK V)_{\bar{0}}=\left(\Lambda_{\bar{0}}\otimes V_{\bar{0}}\right)\oplus%
\left(\Lambda_{\bar{1}}\otimes V_{\bar{1}}\right)\\
\nonumber
\olv(\varphi) &:=& (\varphi\otimes\mathrm{id}_{V})\big|_{\olv(\Lambda)}\qquad\mathrm{for}\quad%
\varphi:\Lambda\to\Lambda'.
\end{eqnarray}
Here $\Lambda$ means the $\fieldK$-version of the Grassmann algebras.
All sets $\olv(\Lambda)$ are naturally $\Lambda_{\bar{0}}$-modules, and thus, $\olv$ is a $\olk$-module.

Let $f:V_1\times\ldots\times V_n\to V$ be a multilinear map of $\fieldK$-super vector spaces. To $f$,
one assigns the functor morphism $\olf:\olv_1\times\ldots\times\olv_n\to\olv_n$ whose components
\begin{equation}
\olf_\Lambda:\olv_1(\Lambda)\times\ldots\times\olv_n(\Lambda)\to\olv(\Lambda)
\end{equation}
are defined by
\begin{equation}
\label{fbar}
\olf_\Lambda(\lambda_1\otimes v_1,\ldots,\lambda_n\otimes v_n)=\lambda_n\cdots\lambda_1\otimes%
f(v_1,\ldots,v_n)
\end{equation}
for all $\lambda_i\otimes v_i\in\olv_i(\Lambda)$. All maps $\olf_\Lambda$ are clearly $\Lambda_{\bar{0}}$-linear,
hence $\olf$ is a $\olk$-$n$-linear morphism in $\catmod_{\olk}(\catsets^\catgr)$.

\begin{prop}[see also \cite{I-dZ2ks}]
\label{isokmod}
The assignment $f\mapsto\olf$, which is a map
\begin{equation}
L^n_{\fieldK,\bar{0}}(V_1,\ldots,V_n;V)\to L^n_{\olk}(\olv_1,\ldots,\olv_n;\olv)
\end{equation}
for any tuple $V_1,\ldots,V_n,V$ of $\fieldK$-super vector spaces, is an isomorphism of $\fieldK$-modules.
\end{prop}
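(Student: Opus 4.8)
The plan is to exhibit an explicit two-sided inverse to the assignment $f\mapsto\olf$ rather than to verify bijectivity abstractly. Since $\fieldK$-linearity of $f\mapsto\olf$ is immediate from the defining formula~(\ref{fbar}), the entire content lies in inverting it. Throughout I would work with homogeneous elements and with the Grassmann algebras $\Lambda_k$ on generators $\xi_1,\dots,\xi_k$, exploiting the two structural features of an $\olk$-$n$-linear morphism $g$: at each $\Lambda$ the component $g_\Lambda$ is $\Lambda_{\bar0}$-multilinear, and the family $\{g_\Lambda\}$ is natural in $\Lambda\in\catgr$.

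First I would dispose of injectivity. Given homogeneous $v_j\in V_j$, let $i_1<\dots<i_k$ be the odd slots and form the \emph{standard element}: $w_j=1\otimes v_j$ in the even slots and $w_j=\xi_a\otimes v_j$ when $j=i_a$ is the $a$-th odd slot, an element of $\prod_j\olv_j(\Lambda_k)$. Formula~(\ref{fbar}) then gives $\olf_{\Lambda_k}(w_1,\dots,w_n)=\xi_k\cdots\xi_1\otimes f(v_1,\dots,v_n)$, and since $\xi_k\cdots\xi_1\otimes(-)$ is injective on $V$, the values of $\olf$ on standard elements recover $f$; hence $\olf=0$ forces $f=0$.

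The surjectivity is the heart of the matter. Given an $\olk$-$n$-linear $g$, I would define $f(v_1,\dots,v_n)$ to be the coefficient of the top monomial $\xi_k\cdots\xi_1$ in $g_{\Lambda_k}(w_1,\dots,w_n)$. The key lemma, and the step I expect to be the main obstacle, is that $g_{\Lambda_k}(w_1,\dots,w_n)$ in fact lies in the subspace $\fieldK\,\xi_k\cdots\xi_1\otimes V$, so that this coefficient is well defined. I would extract this from a homogeneity argument: for each generator $\xi_a$ and each $t\in\fieldK^\times$ the rescaling $\xi_a\mapsto t\xi_a$ is an automorphism $s$ of $\Lambda_k$, and naturality of $g$ together with $\Lambda_{\bar0}$-linearity (used to pull the even scalar $t$ out of the $a$-th odd slot) yields $\olv(s)\big(g_{\Lambda_k}(w)\big)=t\,g_{\Lambda_k}(w)$. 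Since $\olv(s)$ multiplies precisely the monomials containing $\xi_a$ by $t$ and fixes the rest, every monomial component missing some $\xi_a$ must vanish; ranging over all $a\in\{1,\dots,k\}$ leaves only $\xi_k\cdots\xi_1$. The parity grading $\olv(\Lambda_k)=(\Lambda_{\bar0}\otimes V_{\bar0})\oplus(\Lambda_{\bar1}\otimes V_{\bar1})$ then forces the coefficient into $V_{\bar{k}}$, which is exactly the assertion that the resulting $f$ is even; and $\fieldK$-multilinearity of $f$ follows from additivity and $\Lambda_{\bar0}$-linearity of $g$ in each slot.

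It remains to check $\olf=g$ for the $f$ just built, and here I would again reduce to standard inputs by naturality. By multilinearity it suffices to treat decomposable homogeneous arguments $\mu_j\otimes v_j\in\olv_j(\Lambda)$ with $p(\mu_j)=p(v_j)$. The even factors $\mu_j\in\Lambda_{\bar0}$ pull straight out, reducing to the case where all even inputs are $1\otimes v_j$; for the odd factors $\mu_{i_1},\dots,\mu_{i_k}\in\Lambda_{\bar1}$, Proposition~\ref{homgr} furnishes an algebra morphism $\psi:\Lambda_k\to\Lambda$ with $\psi(\xi_a)=\mu_{i_a}$ (no relations to verify, since odd elements square to zero). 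Naturality of $g$ along $\psi$ rewrites $g_\Lambda$ on these inputs as $\olv(\psi)$ applied to $g_{\Lambda_k}(w)=\xi_k\cdots\xi_1\otimes f(v)$, namely $\mu_{i_k}\cdots\mu_{i_1}\otimes f(v)$, which is precisely $\olf_\Lambda$ read off from~(\ref{fbar}); the reverse ordering built into~(\ref{fbar}) is exactly what makes the signs agree. Assembling the two directions shows $f\mapsto\olf$ is bijective, and being $\fieldK$-linear it is an isomorphism of $\fieldK$-modules. I would close by remarking that this argument is nothing but the systematic form of the even rules principle.
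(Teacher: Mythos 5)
Your proof is correct and follows essentially the same route as the paper's: evaluate $g$ on the standard element of $\prod_j\olv_j(\Lambda_k)$, use naturality along the morphism $\Lambda_k\to\Lambda$ sending generators to the given odd factors to reduce the general case to this one, and use naturality along endomorphisms of $\Lambda_k$ to show the value is concentrated on the top monomial $\xi_k\cdots\xi_1$. The only cosmetic difference is that you isolate the top monomial via the rescalings $\xi_a\mapsto t\xi_a$ whereas the paper uses the degenerations $\xi_a\mapsto 0$; both arguments are valid here.
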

\begin{proof}\footnote{I am grateful to V. Molotkov for pointing out an error in my original proof of this
Proposition and for sending me a correct version.}
Remember that the $\fieldK$-module structure on $L^n_{\olk}(\olv_1,\ldots,\olv_n;\olv)$ is due to the
fact that $L^n_{\olk}(\olv_1,\ldots,\olv_n;\olv)$ is a family of multilinear maps over the final object
$\fieldK$ of $\catgr$, and thus, $L^n_{\olk}(\olv_1,\ldots,\olv_n;\olv)$ has the natural structure of a
$\olk(\fieldK)=\fieldK$-module.

Definition (\ref{fbar}) assigns to every $f\in L^n_{\fieldK,\bar{0}}(V_1,\ldots,V_n;V)$ a functor
morphism $\olf$. We have to
show that one can reconstruct a unique $f$ from a given $\olf\in L^n_{\olk}(\olv_1,\ldots,\olv_n;\olv)$. 
Let a seqeuence $\lambda_i\otimes v_i\in \olv_i(\Lambda)$
be given, $1\leq i\leq n$ and let $j\leq n$ of these $v_i$ be odd, assuming for simplicity that these are
the first $j$. By $\Lambda_{\bar{0}}$-linearity we then have
\begin{equation}
\olf_\Lambda(\lambda_1\otimes v_1,\ldots,\lambda_n\otimes v_n)=%
\lambda_n\cdots\lambda_{j+1}\olf_\Lambda(\lambda_1\otimes v_1,\ldots,\lambda_j\otimes v_j,%
1\otimes v_{j+1},\ldots,1\otimes v_n).
\end{equation}
Now consider the morphism $\eta:\Lambda_j\to\Lambda$ which is defined by $\eta(\theta_k)=\lambda_k$,
where $\theta_k$, $1\leq k\leq j$ are the odd generators of $\Lambda_j$. In order to prove the
statement of the Proposition it will now be enough to show that there exists a unique
\[
g\in L^n_{\fieldK,\bar{0}}(V_1,\ldots,V_n;V)
\]
such that
\begin{equation}
\label{unig}
f_{\Lambda_j}(\theta_1\otimes v_1,\ldots,\theta_j\otimes v_j,1\otimes v_{j+1},\ldots,1\otimes v_n)=%
\theta_i\cdots\theta_1\otimes g(v_1,\ldots,v_n).
\end{equation}
This is sufficient because the fact that $f$ is a functor morphism implies that we have a commutative
square
\begin{equation}
\label{lambfunc}
\xymatrix{
\olv_1(\Lambda_j)\times\ldots\times\olv_n(\Lambda_j) \ar[rrr]^{\olv_1(\eta)\times\ldots\times\olv_n(\eta)}
\ar[d]_{\olf_{\Lambda_j}} &&& \olv_1(\Lambda)\times\ldots\times\olv_n(\Lambda) \ar[d]^{\olf_\Lambda} \\
\olv(\Lambda_j) \ar[rrr]^{\olv(\eta)} &&& \olv(\Lambda)
}
\end{equation}
and this then entails that
\[
\olf_\Lambda(\lambda_1\otimes v_1,\ldots,\lambda_n\otimes v_n)=%
\lambda_n\cdots\lambda_1\otimes g(v_1,\ldots,v_n)
\]
for a unique even linear map $g$.

To prove that a unique $g$ as claimed in eq. (\ref{unig}) exists, we first observe that the most general
expression that could appear on the right hand side of eq. (\ref{unig}) reads
\begin{multline}
\label{gensum}
f_{\Lambda_j}(\theta_1\otimes v_1,\ldots,\theta_j\otimes v_j,1\otimes v_{j+1},\ldots,1\otimes v_n)=%
\theta_i\cdots\theta_1\otimes g(v_1,\ldots,v_n)+\\
+\sum_{\substack{m<j \\ j_1>\ldots> j_m}}%
\theta_{j_1}\cdots\theta_{j_m}\otimes g_{j_1\cdots j_m}(v_1,\ldots,v_n),
\end{multline}
where all $g_{j_1\cdots j_m}$ are linear maps. To show that the sum on the right hand side equals zero, 
we again use functoriality (\ref{lambfunc}), this time for the morphism 
\begin{eqnarray*}
\varphi_l:\Lambda_j &\to& \Lambda_j\\
\varphi_l(\theta_k) &=& 0\qquad\mathrm{if}\quad k=l\\
\varphi_l(\theta_k) &=& \theta_k\qquad\mathrm{if}\quad k\neq l.
\end{eqnarray*}
This evidently kills all summands which contain $\theta_l$ and yields
\[
0=\sum_{\substack{m<j \\ l\notin\{j_1,\ldots, j_m\}}}%
\theta_{j_1}\cdots\theta_{j_m}\otimes g_{j_1\cdots j_m}(v_1,\ldots,v_n).
\]
We obtain such an equation for each $1\leq l\leq j$, therefore the sum on the right hand side of
(\ref{gensum}) must equal zero.
\end{proof}

This Proposition is of central importance for all further constructions. As a first application, we obtain
the following Corollary which shows that $\olk$-modules in $\catsets^\catgr$ are just the right objects
to fully capture the structure of a super vector space, i.e., of a $\fieldK$-supermodule in $\catsets$.

\begin{cor}[see also \cite{I-dZ2ks}]
\label{cor:barfunct}
The assignment $V\mapsto\olv$ and $f\mapsto\olf$ defines a fully faithful functor
\begin{equation}
\olcdot:\catsmod_\fieldK(\catsets)\to\catmod_{\olk}(\catsets^\catgr).
\end{equation}
\end{cor}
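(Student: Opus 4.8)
The plan is to deduce the statement almost entirely from the case $n=1$ of Proposition \ref{isokmod}, after first disposing of the routine verification that $\olcdot$ is a functor at all. The object assignment $V\mapsto\olv$ is already given by (\ref{def:olv}), where it was noted that each $\olv(\Lambda)$ is a $\Lambda_{\bar{0}}$-module, so that $\olv$ is indeed an $\olk$-module in $\catsets^\catgr$. For a morphism, i.e.\ an even $\fieldK$-linear map $f:V\to W$, formula (\ref{fbar}) with $n=1$ reads $\olf_\Lambda(\lambda\otimes v)=\lambda\otimes f(v)$, and the discussion preceding Proposition \ref{isokmod} shows that this is an $\olk$-linear morphism $\olv\to\overline{W}$, hence a morphism of $\catmod_{\olk}(\catsets^\catgr)$.

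First I would check functoriality on the level of components. For the identity one computes directly $\overline{\id_V}_\Lambda(\lambda\otimes v)=\lambda\otimes v=\id_{\olv(\Lambda)}(\lambda\otimes v)$, so that $\overline{\id_V}=\id_{\olv}$. For a composable pair $f:V\to W$, $g:W\to U$ one has, again from (\ref{fbar}) with $n=1$, where no sign or reordering issues can arise,
\begin{equation}
\overline{g\circ f}_\Lambda(\lambda\otimes v)=\lambda\otimes g(f(v))=\overline{g}_\Lambda(\lambda\otimes f(v))=(\overline{g}\circ\olf)_\Lambda(\lambda\otimes v),
\end{equation}
so $\overline{g\circ f}=\overline{g}\circ\olf$. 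Thus $\olcdot$ is a functor.

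It then remains to see that $\olcdot$ induces a bijection on Hom-sets. The key observation is that both Hom-sets are exactly the spaces of $1$-linear maps appearing in Proposition \ref{isokmod}. On the source side, a morphism of $\fieldK$-supermodules is precisely a parity-preserving $\fieldK$-linear map, so $\Hom_{\catsmod_\fieldK(\catsets)}(V,W)=L^1_{\fieldK,\bar{0}}(V;W)$. On the target side, a morphism of $\olk$-modules is by definition an $\olk$-$1$-linear morphism over the terminal object $\fieldK=\Lambda_0$ (cf.\ Section \ref{sect:multmor}), so $\Hom_{\catmod_{\olk}(\catsets^\catgr)}(\olv,\overline{W})=L^1_{\olk}(\olv;\overline{W})$. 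Under these identifications the assignment $f\mapsto\olf$ is exactly the map of Proposition \ref{isokmod} in the case $n=1$, which is an isomorphism of $\fieldK$-modules and in particular a bijection. Hence $\olcdot$ is fully faithful.

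Since the hard content --- the reconstruction of $f$ from $\olf$ via the naturality square (\ref{lambfunc}) and the vanishing of the unwanted lower-order terms --- is already contained in Proposition \ref{isokmod}, there is no genuine obstacle remaining. The only points requiring care are the correct identification of the two Hom-sets with the corresponding spaces $L^1_{\fieldK,\bar{0}}(V;W)$ and $L^1_{\olk}(\olv;\overline{W})$, and the essentially trivial functoriality check carried out above.
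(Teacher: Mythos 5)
Your proof is correct and follows essentially the same route as the paper: the paper's own proof of Cor.~\ref{cor:barfunct} consists precisely of identifying the Hom-sets with $(L_{\fieldK})_{\bar{0}}(V;V')$ and $L^1_{\olk}(\olv;\overline{V'})$ and invoking Prop.~\ref{isokmod} in the case $n=1$. The only difference is that you additionally spell out the (routine) verification that $\olcdot$ preserves identities and composition, which the paper leaves implicit.
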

\begin{proof}
It has to be shown that the assignment $f\mapsto\olf$ is a bijection
\begin{equation}
(L_{\fieldK})_{\bar{0}}(V;V')\to L^1_{\olk}(\olv;\overline{V'})
\end{equation}
for any pair $V,V'$ of $\fieldK$-super vector spaces. This is a special case of Prop.~\ref{isokmod}
if one inserts there $V_1=V$ and $V=V'$.
\end{proof}

We do not get an equivalence here, because Prop.~\ref{isokmod} only holds for superrepresentable
$\olk$-modules (see Def.~\ref{def:srep}).
On the other hand we can conclude much more than the above Corollary, since Prop.~\ref{isokmod} does not just 
make a statement about linear maps
between two super vector spaces but about multilinear maps on an arbitrary finite number of arguments.

\begin{cor}
\label{barfunct}
The functor $\olcdot:\catsmod_\fieldK(\catsets)\to\catmod_{\olk}(\catsets^\catgr)$ induces fully
faithful functors
\begin{eqnarray}
\olcdot:\catslie_\fieldK(\catsets) &\to& \catlie_{\olk}(\catsets^\catgr)\\
\olcdot:\catsalg_\fieldK(\catsets) &\to& \catalg_{\olk}(\catsets^\catgr)
\end{eqnarray}
between the categories of $\fieldK$-super Lie algebras and ordinary $\olk$-Lie algebras in $\catsets^\catgr$,
and between $\fieldK$-super algebras and ordinary $\olk$-algebras.
\end{cor}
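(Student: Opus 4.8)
Corollary~\ref{barfunct} asserts two things: that $\olcdot$ carries the super-algebraic structure on $A$ to the corresponding \emph{ordinary} structure on $\overline{A}$, and that it remains fully faithful after this restriction. The plan is to deduce both from the single mechanism by which $\olcdot$ works, namely that the \emph{super}-signs in the defining axioms get absorbed into the reordering signs of the Grassmann scalars appearing in~(\ref{fbar}), turning each super-identity into its ordinary counterpart. Since a $\fieldK$-superalgebra is a pair $(A,\mu)$ with $\mu\in L^2_{\fieldK,\bar 0}(A,A;A)$, Prop.~\ref{isokmod} (with $n=2$) already produces the $\olk$-bilinear morphism $\olmu$ and hence an object $(\overline{A},\olmu)$ of $\catmod_{\olk}(\catsets^\catgr)$ equipped with a multiplication; the assignment on morphisms is likewise inherited from the module-level functor of Cor.~\ref{cor:barfunct}. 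It remains only to check that this lands in $\catalg_{\olk}(\catsets^\catgr)$ resp.\ $\catlie_{\olk}(\catsets^\catgr)$ and is full and faithful there.

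The routine ingredient is that $\olcdot$ respects composition of multilinear maps: evaluating both sides of $\overline{\mu\circ(\mu\times\id)}=\olmu\circ(\olmu\times\id)$ at $(\lambda_1\otimes a,\lambda_2\otimes b,\lambda_3\otimes c)$ gives $\lambda_3\lambda_2\lambda_1\otimes\mu(\mu(a,b),c)$ on each side, by associativity of the Grassmann product and formula~(\ref{fbar}); the analogous identity holds with the factors in the other order, and post-composition gives $\overline{\psi\circ\mu}=\overline{\psi}\circ\olmu$. Combined with the \emph{bijectivity} of $f\mapsto\olf$ in Prop.~\ref{isokmod} (now for $n=3$), associativity transfers immediately, since it is exactly the equality of the two trilinear maps $\mu\circ(\mu\times\id)$ and $\mu\circ(\id\times\mu)$, whose bars are the two trilinear maps whose equality is associativity of $\olmu$.

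The step carrying the real content is the translation of the \emph{signed} axioms, and I expect this to be where the argument must be done by hand. For homogeneous $a,b$ the corresponding elements $\lambda_i\otimes v_i$ are even, so $p(\lambda_i)=p(v_i)$, and using $\lambda_1\lambda_2=(-1)^{p(a)p(b)}\lambda_2\lambda_1$ in $\Lambda$ one computes
\[
\olmu_\Lambda(\lambda_2\otimes b,\lambda_1\otimes a)=\lambda_1\lambda_2\otimes\mu(b,a)=(-1)^{p(a)p(b)}\lambda_2\lambda_1\otimes\mu(b,a).
\]
Feeding in super-antisymmetry $\mu(b,a)=-(-1)^{p(a)p(b)}\mu(a,b)$ collapses the two signs and yields $-\olmu_\Lambda(\lambda_1\otimes a,\lambda_2\otimes b)$, i.e.\ \emph{ordinary} antisymmetry of $\olmu$; the same computation with the opposite sign turns supercommutativity into ordinary commutativity. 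For the super Jacobi identity~(\ref{sjacobi}) the same bookkeeping works: the three cyclic terms $\olmu(X,\olmu(Y,Z))$, $\olmu(Y,\olmu(Z,X))$, $\olmu(Z,\olmu(X,Y))$ acquire the scalars $\lambda_3\lambda_2\lambda_1$, $\lambda_1\lambda_3\lambda_2$, $\lambda_2\lambda_1\lambda_3$, and reordering the latter two to $\lambda_3\lambda_2\lambda_1$ produces exactly the signs $(-1)^{p(a)p(b)+p(a)p(c)}$ and $(-1)^{p(a)p(c)+p(b)p(c)}$ occurring in~(\ref{sjacobi}); hence ordinary Jacobi for $\olmu$ is equivalent to super Jacobi for $\mu$. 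This shows $\olcdot$ lands in the asserted target categories with the right properties.

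Full faithfulness then follows formally. A morphism of superalgebras $\psi\colon A\to B$ is an even linear map with $\psi\circ\mu_A=\mu_B\circ(\psi\times\psi)$; applying the composition-compatibility above gives $\overline{\psi\circ\mu_A}=\overline{\psi}\circ\olmu_A$ and $\overline{\mu_B\circ(\psi\times\psi)}=\olmu_B\circ(\overline{\psi}\times\overline{\psi})$, so by the injectivity of $f\mapsto\olf$ on bilinear maps, $\psi$ intertwines the multiplications if and only if $\overline{\psi}$ does. Since $\olcdot$ is already a bijection $\Hom(A,B)\cong\Hom(\overline{A},\overline{B})$ on the underlying module morphisms (Cor.~\ref{cor:barfunct}), it therefore restricts to a bijection between algebra (resp.\ Lie algebra) morphisms, giving the two fully faithful functors claimed.
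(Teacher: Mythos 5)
Your proof is correct and follows essentially the same route as the paper's: both rest on Prop.~\ref{isokmod} and the explicit formula (\ref{fbar}), the paper's own proof merely declaring the transfer of the algebra axioms to be clear and then checking only that superalgebra morphisms map to $\olk$-algebra morphisms. Your explicit sign computations (super-antisymmetry and super Jacobi collapsing to their ordinary counterparts via reordering of the Grassmann scalars) fill in exactly the detail the paper elides, and your full-faithfulness argument via injectivity of $f\mapsto\olf$ on bilinear maps is a slightly cleaner phrasing of the same verification.
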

\begin{proof}
A $\fieldK$-super algebra $A$ is a $\fieldK$-super vector space with a bilinear product 
$\mu:A\otimes A\to A$. Clearly, the functor $\olcdot$ assigns to $A$ a functor $\overline{A}$ and to
$\mu$ a $\olk$-bilinear morphism $\overline{\mu}$ such that $\overline{A}$ becomes a $\olk$-algebra.
By Prop.~\ref{isokmod} we are done if we can show that a morphism $f:(A,\mu)\to(A',\mu')$ of 
$\fieldK$-super algebras will be mapped to a morphism of the corresponding $\olk$-algebras. This means
we want to verify that for any $\Lambda$,
\begin{equation}
\olf_\Lambda(\overline{\mu}_\Lambda(\lambda_1\otimes a_1,\lambda_2\otimes a_2))=%
\overline{\mu}'_\Lambda(\olf_\Lambda(\lambda_1\otimes a_1),\olf(\lambda_2\otimes a_2)).
\end{equation}
for all $\lambda_i\otimes a_i\in \overline{A}(\Lambda)$. The left hand side is
\begin{equation}
\label{lab1}
\olf_\Lambda(\lambda_2\lambda_1\otimes\mu(a_1,a_2))=\lambda_2\lambda_1\otimes f(\mu(a_1,a_2)).
\end{equation}
The right hand side can be written as
\begin{equation}
\overline{\mu}'_\Lambda(\lambda_1\otimes f(a_1),\lambda_2\otimes f(a_2))=%
\lambda_2\lambda_1\otimes\mu'(f(a_1),f(a_2)),
\end{equation}
and that is the same as (\ref{lab1}) since $f$ was a homomorphism of super algebras.
\end{proof}

One could extend the list of categories between which $\olcdot$ maps fully faithfully to any kind of
$\fieldK$-multilinear superalgebraic structure which is defined by relations involving finitely 
many arguments, e.g., associative superalgebras \cite{I-dZ2ks}.

\begin{dfn}
\label{def:srep}
A $\olk$-module $\cv$ in $\catsets^\catgr$ is called superrepresentable if it is isomorphic to
$\olv$ for some $\fieldK$-super vector space $V$.
\end{dfn}

Due to Prop.~\ref{isokmod}, the superrepresentable $\olk$-modules form a full subcategory in 
$\catmod_{\olk}(\catsets^\catgr)$, and $\olcdot$ is an equivalence between this subcategory and
$\catsmod_\fieldK(\catsets)$. Non-superrepresentable $\olk$-modules do indeed exist. An example that
will prove useful later on is $\olv^{nil}$. Let $V$ be some $\fieldK$-super vector space
and let $\Lambda^{nil}$ be the nilpotent ideal in $\Lambda$. Then one can define a $\olk$-module by
setting
\begin{eqnarray}
\label{vnildef}
\olv^{nil}(\Lambda) &:=& (\Lambda^{nil}\otimes_\fieldK V)_{\bar{0}}\\
\olv(\varphi) &:=& (\varphi\otimes\mathrm{id}_{V})\big|_{\olv^{nil}(\Lambda)}\qquad\mathrm{for}\quad%
\varphi:\Lambda\to\Lambda'.
\end{eqnarray}
For every $\Lambda$, one has
\begin{equation}
\olv(\Lambda)=V_{\bar{0}}\oplus\olv^{nil}(\Lambda)=V_{\bar{0}}\oplus%
\left(\Lambda_{\bar{0}}^{nil}\otimes V_{\bar{0}}\right)\oplus%
\left(\Lambda_{\bar{1}}^{nil}\otimes V_{\bar{1}}\right).
\end{equation}
Hence $\olv^{nil}$ is superrepresentable if and only if $V_{\bar{0}}=0$, in which case $V$ itself
superrepresents it.

Finally, the change of parity functor $\Pi$ can be carried over to the category of 
superrepresentable $\olk$-modules in an obvious way.

\begin{dfn}
\label{def:copf2}
Let $\mathsf{SRepMod}_{\olk}\subset\catsmod_{\olk}$ be the full subcategory of superrepresentable 
$\olk$-modules. The change of parity functor $\olpi$ is defined as
\begin{eqnarray}
\olpi:\mathsf{SRepMod}_{\olk} &\to& \mathsf{SRepMod}_{\olk}\\
\olv &\mapsto& \overline{(\Pi(V))}.
\end{eqnarray}
\end{dfn}

\section{Superdomains}
\label{sect:sdom}

In this section, superdomains will be defined in purely categorical terms as open regions of
superrepresentable $\olk$-modules. Since we can define $\fieldK$-super vector spaces and thus $\olk$-modules
of arbitrary dimensions, infinite-dimensional superdomains will also become available. Later, these will
again serve as the building blocks of infinite-dimensional supermanifolds.

To make the notion of an open subobject sensible, one has to define an analogue of a topology on 
$\olk$-modules.
Since the objects of $\catmod_{\olk}(\catsets^\catgr)$ have no immediate interpretation as sets, this seems
somewhat strange at first. The solution is, once more, to use the sets of functorial points, each of which
is just an ordinary set and may thus be given a topology in the usual sense. Given an open set in each of
these spaces, these open sets will have to behave functorially with respect to $\catgr$ in order to form
an \emph{open subfunctor} in some $\olk$-module. The concept one arrives at, pursuing this intuition, 
is that of a Grothendieck (pre-)topology.

\subsection{The site $\cattop^\catgr$}

Following the idea outlined above, we study functors $\catgr\to\cattop$, i.e. from the finite-dimensional
Grassmann algebras into topological spaces.

\begin{dfn}
Let $\cf,\cf'$ be functors in $\cattop^\catgr$. $\cf'$ is called a subfunctor of $\cf$, if
\begin{enumerate} 
\item for every $\Lambda\in\catgr$, $\cf'(\Lambda)$ is a topological subspace of $\cf(\Lambda)$, and
\item the family of inclusions $\{\cf'(\Lambda)\subset\cf(\Lambda)\big|\Lambda\in\catgr\}$ forms a
functor morphism.
\end{enumerate}
In this case, one just writes $\cf'\subset\cf$. $\cf'$ is called an open subfunctor of $\cf$ if, in 
addition, each $\cf'(\Lambda)$ is open in $\cf(\Lambda)$.
\end{dfn}

\begin{dfn}
Let $\cf',\cf''$ be open subfunctors of $\cf\in\cattop^\catgr$. Then the intersection $\cf'\cap\cf''$ is
the functor whose points are
\begin{equation}
(\cf'\cap\cf'')(\Lambda):=\cf'(\Lambda)\cap\cf''(\Lambda).
\end{equation}
The union $\cf'\cup\cf''$ is the functor defined by
\begin{equation}
(\cf'\cup\cf'')(\Lambda):=\cf'(\Lambda)\cup\cf''(\Lambda).
\end{equation}
A morphism $\varphi:\Lambda\to\Lambda'$ is mapped by
$\cf'\cap\cf''$ resp. $\cf'\cup\cf''$ to $\cf(\varphi)\big|_{\cf'(\Lambda)\cap\cf''(\Lambda)}$ resp.
$\cf(\varphi)\big|_{\cf'(\Lambda)\cup\cf''(\Lambda)}$.
\end{dfn}

Clearly, both $\cf'\cap\cf''$ and $\cf'\cup\cf''$ are again open subfunctors of $\cf$. The functor
$\mathrm{emp}:\Lambda\to\emptyset$ is the initial object in $\cattop^\catgr$.

\begin{dfn}
A functor morphism $g:\cf''\to\cf$ is called open if there exists a factorization
\[
\begin{CD}
g:\cf'' @>f>> \cf'\subset\cf
\end{CD}
\]
such that $f$ is an isomorphism and $\cf'$ is an open subfunctor of $\cf$.
\end{dfn}

Finally, the notion of an open covering can be carried over straightforwardly.

\begin{dfn}
A family
$\{u_\alpha:\cu_\alpha\to\cf\}$ of open functor morphisms is called an open covering of $\cf$ if for
each $\Lambda\in\catgr$, the family of maps
\[
u_{\alpha,\Lambda}:\cu_\alpha(\Lambda)\to\cf(\Lambda)
\]
is an open covering of the topological space $\cf(\Lambda)$.
\end{dfn}

It is not hard to check that this assignment of a set of coverings to each functor $\cf\in\cattop^\catgr$
endows $\cattop^\catgr$ with a Grothendieck topology \cite{I-dZ2ks}, \cite{KI}, 
\cite{FAG_GFGAE}, \cite{SGA4}, \cite{SiGaL}, and thus turns it into a \emph{site}.\footnote{Categories
allowing the definition of a Grothendieck topology are called topoi (or toposes). The choice of
a Grothendieck topology turns the topos into a site.}

An example of an open subfunctor can be constructed in the following way: let  $\cf\in\cattop^\catgr$ be
an arbitrary functor, and let $U\subset\cf(\realR)$ be an open subset of its underlying set. Then we can
construct an open subfunctor $\cu\subset\cf$ by setting
\begin{eqnarray}
\nonumber
\cu(\realR) &:=& U\\
\label{restfunc}
\cu(\Lambda) &:=& \cf(\epsilon_\Lambda)^{-1}(U)\subset\cf(\Lambda)\\
\nonumber
\cu(\varphi) &:=& \cf(\varphi)\big|_{\cu(\Lambda)}\qquad\mathrm{for}\,\,\varphi:\Lambda\to\Lambda'.
\end{eqnarray}
Here, $\epsilon_\Lambda:\Lambda\to\realR$ is the final morphism of $\Lambda\in\catgr$ (cf.~(\ref{finmorgr})).
It is clear from the definition that the inclusion $\cu\subset\cf$ is indeed a functor morphism. We will
denote subfunctors of this form by $\cu=\cf\big|_U$ and call them restrictions. It will turn out that in
the cases we are interested in, such subfunctors are the only useful ones.

\subsection{Superdomains and supersmooth morphisms}

Now everything is prepared for the introduction of (possibly in\-fin\-ite-di\-men\-sion\-al) superdomains.

\begin{dfn}
Let $\cv$ be a superrepresentable $\olk$-module in $\cattop^\catgr$. $\cv$ will be called a locally
convex, resp.~Fr\'echet, resp.~Banach $\olk$-module if for every $\Lambda\in\catgr$, the topological
vector space $\cv(\Lambda)$ is locally convex, resp.~Fr\'echet, resp.~Banach.
\end{dfn}

\begin{dfn}
An open subfunctor $\cf$ of a locally convex (resp.~Fr\'echet, resp.~Banach) $\olk$-module in
$\cattop^\catgr$ is called a real (or complex, whichever $\olk$ is) locally convex 
(resp.~Fr\'echet, resp.~Banach) superdomain.
\end{dfn}

\begin{dfn}
A functor $\cf\in\cattop^\catgr$ is called locally isomorphic to real (or complex) locally convex
(resp.~Fr\'echet, resp.~Banach) superdomains if there exists an open covering 
$\{u_\alpha:\cu_\alpha\to\cf\}$ of $\cf$ such that each $\cu_\alpha$ is a locally convex
(resp.~Fr\'echet, resp.~Banach) superdomain.
\end{dfn}

Restrictions, as it turns out, are the only open subfunctors a superrepresentable $\olk$-module has.

\begin{prop}
\label{subf}
Any open subfunctor $\cu\subset\olv$ of a Banach (resp.~Fr\'echet, resp.~locally convex) $\olk$-module $\olv$
is a restriction $\olv\big|_U$, where $U=\cu(\fieldK)$ (cf. (\ref{restfunc})).
\end{prop}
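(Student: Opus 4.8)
The plan is to exhibit the open set $U := \cu(\fieldK) \subseteq \olv(\fieldK) = V_{\bar{0}}$ explicitly and then prove the two inclusions $\cu(\Lambda)\subseteq\olv\big|_U(\Lambda)$ and $\olv\big|_U(\Lambda)\subseteq\cu(\Lambda)$ for every $\Lambda\in\catgr$. The essential inputs are the two canonical structure morphisms of each Grassmann algebra: the terminal morphism $\epsilon_\Lambda:\Lambda\to\fieldK$ of (\ref{finmorgr}) and the unit $c_\Lambda:\fieldK\to\Lambda$, which satisfy $\epsilon_\Lambda\circ c_\Lambda=\id_\fieldK$. Since $\cu\subset\olv$ is a subfunctor, its family of inclusions is a functor morphism, which forces $\olv(\varphi)(\cu(\Lambda))\subseteq\cu(\Lambda')$ for every $\varphi:\Lambda\to\Lambda'$. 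Applying this to $\varphi=\epsilon_\Lambda$ gives $\olv(\epsilon_\Lambda)(\cu(\Lambda))\subseteq\cu(\fieldK)=U$, that is $\cu(\Lambda)\subseteq\olv(\epsilon_\Lambda)^{-1}(U)=\olv\big|_U(\Lambda)$ by (\ref{restfunc}); applying it to $\varphi=c_\Lambda$ gives $\olv(c_\Lambda)(U)\subseteq\cu(\Lambda)$. The first inclusion is thus immediate, and the second relation serves as a ``base point'' for the rest.

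The hard part will be the reverse inclusion $\olv(\epsilon_\Lambda)^{-1}(U)\subseteq\cu(\Lambda)$. Here I would introduce, for each scalar $t\in\fieldK$, the rescaling morphism $m_t:\Lambda\to\Lambda$ determined on generators by $\xi_i\mapsto t\,\xi_i$; by Prop.~\ref{homgr} this is a legitimate morphism of $\catgr$, and one checks directly that $m_s\circ m_t=m_{st}$, that $m_1=\id_\Lambda$, and that $m_0=c_\Lambda\circ\epsilon_\Lambda$. Fix $w\in\olv(\epsilon_\Lambda)^{-1}(U)$ and write $b:=\olv(\epsilon_\Lambda)(w)\in U$ for its body. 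Expanding $w=\sum_I\xi_I\otimes v_I$ into monomials shows that $\gamma(t):=\olv(m_t)(w)=\sum_k t^k w_k$ is a polynomial, hence continuous, path $\fieldK\to\olv(\Lambda)$ with $\gamma(1)=w$ and $\gamma(0)=\olv(c_\Lambda)(b)\in\cu(\Lambda)$ by the base point observation above.

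It then remains to show that $1$ lies in the set $S:=\{\,t\in\fieldK\mid\gamma(t)\in\cu(\Lambda)\,\}$. Since $\cu(\Lambda)$ is open and $\gamma$ is continuous, $S$ is open, and we have just seen $0\in S$. The decisive point is that $S$ is saturated under scalar multiplication: if $t\in S$, then for every $s\in\fieldK$ the invariance $\olv(m_s)(\cu(\Lambda))\subseteq\cu(\Lambda)$ together with $\olv(m_s)(\gamma(t))=\olv(m_{st})(w)=\gamma(st)$ yields $st\in S$. Because $\fieldK$ is not discrete, the open set $S$ containing $0$ must also contain some $t_0\neq 0$; saturation then forces $S=\fieldK$, so in particular $1\in S$ and $w=\gamma(1)\in\cu(\Lambda)$. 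As this argument only uses that each $\olv(\Lambda)$ is a topological vector space, it applies verbatim in the Banach, Fr\'echet and locally convex cases.

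I expect the main obstacle to be precisely the discovery of the rescaling endomorphisms $m_t$ and the observation that subfunctoriality makes $\cu(\Lambda)$ invariant under $\olv(m_t)$. This is what turns the would-be homotopy from the body $\olv(c_\Lambda)(b)$ to $w$ into a single scalar-multiplication orbit, so that mere openness of $\cu(\Lambda)$ near the body --- rather than any completeness or connectedness argument along the whole path --- already suffices to conclude.
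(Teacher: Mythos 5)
Your proof is correct, and it is genuinely more complete than the argument the paper gives. The paper's own proof uses only the terminal morphisms: functoriality of the inclusion $\cu\subset\olv$ makes the square with $\epsilon_\Lambda$ commute, and the paper asserts that this ``enforces'' $\cu(\Lambda)=\olv(\epsilon_\Lambda)^{-1}(U)$. Strictly speaking, that commutativity only yields the easy inclusion $\cu(\Lambda)\subseteq\olv(\epsilon_\Lambda)^{-1}(U)$; without openness the equality can fail (e.g.\ the subfunctor $\Lambda\mapsto 1\otimes V_{\bar{0}}\subset\olv(\Lambda)$ is stable under all $\olv(\varphi)$ and has the same $\fieldK$-points as $\olv$, but is a proper, non-open subfunctor of $\olv$). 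Your rescaling morphisms $m_t$, the polynomial path $\gamma(t)=\olv(m_t)(w)$ from the body $\olv(c_\Lambda)(b)$ to $w$, and the observation that the set $S=\{t:\gamma(t)\in\cu(\Lambda)\}$ is open, contains $0$, and is saturated under scalar multiplication, together supply exactly the missing reverse inclusion, and they do so using only that each $\cu(\Lambda)$ is open in a topological vector space --- so the argument indeed works uniformly in the Banach, Fr\'echet and locally convex settings. In short: same statement, but your route makes explicit (and actually proves) the step the paper's one-line ``enforces'' conceals, at the cost of introducing the auxiliary endomorphisms $m_t$ that the paper never needs to mention.
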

\begin{proof}
Clearly, one can write 
\begin{equation}
\olv=\olv\big|_{V_{\bar{0}}}
\end{equation}
for a superrepresentable $\olk$-module $\olv$ which is represented by $V$, since
\begin{equation}
\olv(\Lambda)=\olv(\epsilon_\Lambda)^{-1}(V_{\bar{0}}).
\end{equation}
Let now $\cu$ be an arbitrary open subfunctor of $\olv$ and $U\subset V_{\bar{0}}$ be its $\fieldK$-points.
The inclusion $\cu\subset\olv$ must be a functor morphism, therefore the diagram
\begin{equation}
\begin{CD}
\cu(\Lambda) @>\subset>> \olv(\Lambda)\\
@VV{\cu(\epsilon_\Lambda)}V @VV{\olv(\epsilon_\Lambda)}V\\
U @>\subset>> \olv(\fieldK)=V_{\bar{0}}
\end{CD}
\end{equation}
has to commute for all $\Lambda\in\catgr$. This enforces
\begin{equation}
\cu(\Lambda)=\olv(\epsilon_\Lambda)^{-1}(U)
\end{equation}
for all $\Lambda$. For any morphism $\varphi:\Lambda\to\Lambda'$ of Grassmann algebras, we also have
\begin{equation}
\begin{CD}
\cu(\Lambda) @>\subset>> \olv(\Lambda)\\
@VV\cu(\varphi)V @VV\olv(\varphi)V\\
\cu(\Lambda') @>\subset>> \olv(\Lambda')
\end{CD},
\end{equation}
which commutes again, because the inclusion is a functor morphism. So,
\begin{equation}
\cu(\varphi)=\olv(\varphi)\big|_{\cu(\Lambda)}.
\end{equation}
\end{proof}

By the properties of unions and intersections of open subfunctors, we obtain the following corollary.

\begin{cor}
Let $\cf\in\cattop^\catgr$ be a functor which is locally isomorphic to Banach 
(resp.~Fr\'echet, resp.~locally convex) superdomains. Then every open subfunctor $\cu\subset\cf$ is 
a restriction $\cf\big|_U$.
\end{cor}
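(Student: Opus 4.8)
The plan is to reduce the statement to Proposition \ref{subf} by working chart by chart. By hypothesis there is an open covering $\{u_\alpha:\cu_\alpha\to\cf\}$ in which each $\cu_\alpha$ is a superdomain, that is, an open subfunctor of a superrepresentable $\olk$-module $\olv_\alpha$; since each $u_\alpha$ is an open functor morphism I may identify $\cu_\alpha$ with its image $\cf_\alpha\subset\cf$, an open subfunctor of $\cf$ which is isomorphic, via $u_\alpha$, to an open subfunctor of $\olv_\alpha$. I set $U:=\cu(\fieldK)$ and $W_\alpha:=\cf_\alpha(\fieldK)$. Because $\{u_{\alpha,\fieldK}\}$ is an open covering of $\cf(\fieldK)$, the $W_\alpha$ cover $\cf(\fieldK)$, and more generally $\{\cf_\alpha(\Lambda)\}_\alpha$ covers $\cf(\Lambda)$ for every $\Lambda\in\catgr$.

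First I would dispatch the easy inclusion $\cu(\Lambda)\subseteq\cf(\epsilon_\Lambda)^{-1}(U)$. Since the inclusion $\cu\subset\cf$ is a functor morphism, the naturality square for the terminal morphism $\epsilon_\Lambda:\Lambda\to\fieldK$ forces $\cf(\epsilon_\Lambda)\big(\cu(\Lambda)\big)\subseteq\cu(\fieldK)=U$, exactly as in the proof of Proposition \ref{subf}.

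For the reverse inclusion I would argue pointwise at each level $\Lambda$. Given $x\in\cf(\epsilon_\Lambda)^{-1}(U)$, put $p:=\cf(\epsilon_\Lambda)(x)\in U$. Since the covering is an open covering of the space $\cf(\Lambda)$, there is an index $\gamma$ with $x\in\cf_\gamma(\Lambda)$, and naturality of $\epsilon_\Lambda$ then forces $p=\cf_\gamma(\epsilon_\Lambda)(x)\in W_\gamma$, so $p\in U\cap W_\gamma$. Now $\cu\cap\cf_\gamma$ is an open subfunctor of $\cf_\gamma$; transporting it through the chart isomorphism $u_\gamma$ yields an open subfunctor of $\olv_\gamma$, to which Proposition \ref{subf} applies. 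This gives $(\cu\cap\cf_\gamma)(\Lambda)=\cf(\epsilon_\Lambda)^{-1}(U\cap W_\gamma)\cap\cf_\gamma(\Lambda)$, a restriction whose $\fieldK$-points are $U\cap W_\gamma$. Since $x\in\cf_\gamma(\Lambda)$ and $\cf(\epsilon_\Lambda)(x)=p\in U\cap W_\gamma$, I conclude $x\in(\cu\cap\cf_\gamma)(\Lambda)\subseteq\cu(\Lambda)$. Combined with the easy inclusion this yields $\cu(\Lambda)=\cf(\epsilon_\Lambda)^{-1}(U)$ for all $\Lambda$, i.e.\ $\cu=\cf\big|_U$.

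I expect the main obstacle to be the transport step: checking that the intersection $\cu\cap\cf_\gamma$, pulled back along $u_\gamma^{-1}$, really is an open subfunctor of the superrepresentable module $\olv_\gamma$ (using that an open subfunctor of the open subfunctor $\cu_\gamma\subset\olv_\gamma$ is again open in $\olv_\gamma$), and that the $\Lambda$-point identification produced by Proposition \ref{subf} is compatible with $\cf$'s own reduction maps $\cf(\epsilon_\Lambda)$. Both points hinge on the commutativity of $u_\gamma$ with the terminal morphisms $\epsilon_\Lambda$, which is precisely the naturality of the chart isomorphism; once this compatibility is recorded, the remaining bookkeeping with unions and intersections of open subfunctors is routine.
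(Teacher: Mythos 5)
Your proof is correct and follows essentially the same route as the paper's: intersect $\cu$ with the covering superdomains, apply Proposition \ref{subf} to each intersection (transported through the chart), and reassemble using functoriality of the inclusions with respect to the terminal morphisms $\epsilon_\Lambda$. The paper phrases the reassembly as a single union $\cu(\Lambda)=\bigcup_\alpha(\cf(\epsilon_\Lambda))^{-1}(U_\alpha\cap U)=(\cf(\epsilon_\Lambda))^{-1}(U)$ rather than as two separate inclusions, but the content is the same.
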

\begin{proof}
Let $\{u_\alpha:\cu_\alpha\to\cf\}$ be an open cover of $\cf$ by superdomains of the appropriate type
and let $\cu$ be an arbitrary open subfunctor of $\cf$. Then
every intersection $\cu\cap\cu_\alpha$ is a superdomain, i.e. is a restriction $\cf\big|_{U\cap U_\alpha}$,
where $U,U_\alpha$ are the underlying open sets of the respective functors. Therefore,
\[
\cu(\Lambda)=\bigcup_{\alpha}(\cf(\epsilon_\Lambda))^{-1}(U_\alpha\cap U)=(\cf(\epsilon_\Lambda))^{-1}(U)
\]
By the same argument as in Prop.~\ref{subf} (functoriality of inclusions),
the images of the morphisms of $\catgr$ under $\cu$ must be the restrictions of those of 
$\cf$.
\end{proof}

It the rest of this Chapter, we will focus exclusivly on
Banach superdomains. This is mainly for the reason not to overload the construction with technicalities.
After the suitable corrections the constructions described below work equally well in the Fre\'echet case.

\begin{dfn}
\label{bsdom}
Let $\cv\big|_U$ and $\cv'\big|_{U'}$ be two real Banach superdomains. A functor morphism 
$f:\cv\big|_U\to\cv'\big|_{U'}$ is called supersmooth if 
\begin{enumerate}
\item the map
\begin{equation}
f_\Lambda:\cv\big|_U(\Lambda)\to\cv'\big|_{U'}(\Lambda)
\end{equation}
is smooth for every $\Lambda\in\catgr$, and
\item for every $u\in \cv\big|_U(\Lambda)$, the derivative
\begin{equation}
Df_\Lambda(u):\cv(\Lambda)\to\cv'(\Lambda)
\end{equation}
is $\Lambda_{\bar{0}}$-linear.
\end{enumerate}
\end{dfn}

The second condition is necessary and sufficient to turn the sets of differential morphisms
\begin{eqnarray}
(Df)_\Lambda:\cv\big|_U(\Lambda)\times\cv(\Lambda) &\to& \cv'(\Lambda)\\
(Df)_\Lambda(u,v) &=& (Df_\Lambda(u))(v)
\end{eqnarray}
into a $\cv\big|_U$-family of $\olr$-linear morphisms $\cv\to\cv'$.

Together with supersmooth morphisms, smooth Banach superdomains form a category $\catbsdom$. Replacing 
``smooth'' with ``real analytic'' in 
Def.~\ref{bsdom} leads to the definition of the category of real analytic
superdomains. 

For complex analytic domains, there seem to be two different approaches. One can start with superdomains
in $\cattop^\catgr$ which are isomorphic to open subfunctors of superrepresentable $\olc$-modules, and
define morphisms to be complex superanalytic functor morphisms. On the other hand, one could as well use
the category $\catgr^\complexC$ from the very beginning on, studying only functors in 
$\cattop^{\catgr^\complexC}$ and using morphisms which are analytic in their complex coordinates. 
However, the two resulting categories are equivalent: the $\Lambda$-points of some superrepresentable
$\olc$ module $V$ are $(V\otimes_\realR\Lambda)_{\bar{0}}$. But
\begin{equation}
(V\otimes_\realR\Lambda)_{\bar{0}}\cong%
(V^\realR\otimes_\realR\complexC\otimes_\realR\Lambda)_{\bar{0}}\cong%
(V^\realR\otimes_\realR\Lambda^\complexC)_{\bar{0}},
\end{equation}
where $V^\realR$ is the realification of the super vector space $V=V^\complexC$. Thus, whether one applies
analytic maps to the last of these three sets of points, or applies complex analytic ones to the first
does not make a difference.

\subsection{Remark on superdomains of finite differentiability class}

In the usual ringed space formalism, it is always stressed that it is impossible to define supermanifolds
of class $C^k$, because the formula (\ref{pbeven}), which is crucial for the definition of supermanifolds,
involves an arbitrarily high number of derivatives. There is, however, a way to circumvent this problem:
by restricting the category $\catsman$ to those whose odd dimension is no greater than $N<\infty$. 
That seems a bit
unnatural, but yields a perfectly consistent theory as well. In the categorical approach, one simply
uses the categories $\catgr_{(N)}$ of Grassmann algebras with no more than $N$ generators. Then the entire
theory of superdomains outlined above works just as well with functors from $\catsets^{\catgr_{(N)}}$ and
$\cattop^{\catgr_{(N)}}$. In fact, working with such functors is even simpler than with the general ones,
since one only has to handle a finite number of point sets. In view of formula (\ref{pbeven}), if only
$N$ odd dimensions can occur, then the functions $F$ which can still sensibly be pulled back must be of
class $C^{\lceil (N+1)/2\rceil}$. So, in this case, supermanifolds of this class or higher can be
consistently defined.

It then becomes meaningful to define superfunctions of finite differentiability class (but at least of
class $\lceil (N+1)/2\rceil$) in the category $\catsman_{(N)}$. It even seems possible to construct
``super-Sobolev spaces'' of such functions by imposing pointwise a norm and completing $C^k$-spaces
of functions with respect to it \cite{mol-priv}. With the help of V.~Molotkov, I have tried to construct
such spaces, and could find no inconsistency in doing this so far. A problem that arises in practical 
applications of this construction is that one would usually not
want to restrict the number of possible odd dimensions. In particular in physical contexts, such a restriction
seems unjustifiable. For the solution of the moduli problem later in this work, it was therefore not
employed, although it seemed at first that it would enable us to directly carry over the global
analytic approach of Tromba \cite{TTiRG}. It also seemed not to be any simpler to work with the categories
$\catsman_{(N)}$ at first and subsequently let $N\to\infty$. This approach might, however, be viable for
certain problems of supergeometry, e.g., for the study of solutions of variational problems 
involving functionals
on supermanifolds, like action functionals of supersymmetric field theories.

\section{Banach supermanifolds}
\label{sect:bansman}

Actually, the superrepresentable $\olk$-modules in $\cattop^\catgr$ do no take their
values just in $\cattop$, but rather in $\catman$, the category of smooth (resp.~complex)
Banach manifolds. Since we will only be interested in functors which are locally isomorphic to
superdomains, this implies that we may restrict ourselves to functors in the category $\catman^\catgr$.

\begin{dfn}[Molotkov]
\label{def:sman}
Let $\cf$ be a functor in $\catman^\catgr$. An open covering $\ca=\{u_\alpha:\cu_\alpha\to\cf\}_{\alpha\in I}$
of $\cf$ is called a supersmooth atlas on $\cf$ if
\begin{enumerate}
\item every $\cu_\alpha$ is a Banach superdomain,
\item for every pair $\alpha,\beta\in I$, the fiber product 
\begin{equation}
\cu_{\alpha\beta}=\cu_\alpha\times_\cf\cu_\beta\in\catman^\catgr
\end{equation}
can be given the structure of a superdomain such that the projections 
$\Pi_\alpha:\cu_{\alpha\beta}\to\cu_\alpha$ and $\Pi_\beta:\cu_{\alpha\beta}\to\cu_\beta$ are supersmooth.
\end{enumerate}
The maps $u_\alpha:\cu_\alpha\to\cf$ are called charts on $\cf$.
\end{dfn}

\begin{dfn}
Two supersmooth atlases $\ca,\ca'$ on the functor $\cf\in\catman^\catgr$ are said to be
equivalent if their union $\ca\cup\ca'$ is again a supersmooth atlas on $\cf$. A supermanifold $\cm$
is a functor in $\catman^\catgr$ endowed with an equivalence class of atlases.
\end{dfn}

The second condition in Definition \ref{def:sman} needs some explanation. One might think at first that
the fiber product of any two supersmooth superdomains is automatically again supersmooth. But we may
only assume here that $\cf$ is a functor in $\catman^\catgr$, and thus we can a priori only 
assume the fiber product to exist in $\catman^\catgr$. The projection morphisms $\Pi_\alpha,\Pi_\beta$
are therefore only guaranteed to be functor morphisms in $\catman^\catgr$. The second condition therefore
requires
the fiber product of $\cu_\alpha,\cu_\beta$ to exist in the subcategory $\catbsdom\subset\catman^\catgr$.
Since there do really exist functor isomorphisms in $\catman^\catgr$ which are not supersmooth
\cite{mol-priv}, this is not automatic.

\begin{dfn}
Let $\cm,\cm'$ be Banach supermanifolds. A functor morphism $f:\cm\to\cm'$ is called supersmooth if for
each pair of charts $u:\cu\to\cm$, $u':\cu'\to\cm'$, the pullback 
\begin{equation}
\xymatrix{
\cu\times_{\cm'}\cu' \ar[d]_{\Pi}\ar[rr]^{\Pi'} && \cu' \ar[d]^{u'}\\
\cu \ar[r]^{u} & \cm \ar[r]^{f} & \cm'
}
\end{equation}
can be given the structure of a Banach superdomain such that its projections $\Pi,\Pi'$ are supersmooth.
\end{dfn}

It is clear that the composition of two supersmooth morphisms of Banach supermanifolds is again
supersmooth. Thus Banach supermanifolds form a category $\catbsman$. If nothing else is specified, we
will from now on only write $\catsman$ for the category of Banach supermanifolds, since it 
will mainly be them we will
be concerned with. The set of supersmooth morphisms $f:\cm\to\cm'$ will be denoted as
$SC^\infty(\cm,\cm'):=\Hom_{\catbsman}(\cm,\cm')$.

According to Prop.~\ref{subf}, every open submanifold of a Banach supermanifold $\cm$ is of the form
$\cu=\cm\big|_U$, where $U$ is an open submanifold of the underlying manifold $M=\cm(\fieldK)$. Clearly,
the restriction of the supermanifold structure of $\cm$ to $\cu$ induces on the latter the
unique supermanifold structure which makes the inclusion $\cu\subset\cm$ a supersmooth morphism.

\subsection{Linear algebra in the category $\catsman$}

Evidently, the functor $\overline{\cdot}$ defined in Def.~\ref{def:olv} takes its values in $\catsman$ and
not just in $\catsets^\catgr$. In addition, if $f:V_1\times\ldots\times V_n\to V$ is a $\fieldK$-n-linear map
of super vector spaces, then $\overline{f}:\olv_1\times\ldots\times \olv_n\to \olv$ is a supersmooth
morphism. Therefore, Corollary \ref{cor:barfunct} tells us that $\olcdot$ is a fully faithful functor
$\catsmod_{\fieldK}(\catman)\to\catmod_{\olk}(\catsman)$. But since all supermanifolds are locally isomorphic
to \emph{superrepresentable} $\olk$-modules, we get more.

\begin{thm}[see also \cite{I-dZ2ks}]
\label{thm:smodsman}
The functor
\begin{equation}
\olcdot:\catsmod_{\fieldK}(\catman)\to\catmod_{\olk}(\catsman)
\end{equation}
is an equivalence of categories.
\end{thm}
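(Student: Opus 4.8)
The plan is to prove the statement by the standard criterion for an equivalence: I will show that $\olcdot$ is fully faithful and essentially surjective. Full faithfulness will be essentially inherited from Corollary \ref{cor:barfunct}, together with the observation that for Banach super vector spaces a morphism is a morphism of manifolds precisely when it is a bounded linear map. Essential surjectivity is the substantial part, and the new content over the purely set-theoretic result: I must show that \emph{every} $\olk$-module in $\catsman$ — not merely the superrepresentable ones — is isomorphic to some $\olv$, and moreover that the isomorphism can be taken supersmooth.

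For full faithfulness, fix Banach super vector spaces $V,W$. A morphism $V\to W$ in $\catsmod_\fieldK(\catman)$ is an even $\fieldK$-linear map which is also a morphism of Banach manifolds, i.e.\ a bounded even linear map, while a morphism $\olv\to\overline{W}$ in $\catmod_{\olk}(\catsman)$ is an $\olk$-linear \emph{supersmooth} functor morphism. First I check that $\olf$ is supersmooth whenever $f$ is bounded and linear: each component $\olf_\Lambda$ is $\Lambda_{\bar{0}}$-linear and continuous, hence smooth, and since its derivative at every point is $\olf_\Lambda$ itself, the $\Lambda_{\bar{0}}$-linearity required in Definition \ref{bsdom} holds automatically. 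Conversely, an $\olk$-linear supersmooth morphism is in particular an $\olk$-linear functor morphism in $\catmod_{\olk}(\catsets^\catgr)$, so by Corollary \ref{cor:barfunct} it equals $\olf$ for a unique even linear $f$, which is bounded since it is a restriction of the continuous map $\olf_\fieldK$. Thus $\olcdot$ induces a bijection on morphism sets.

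For essential surjectivity let $\cv\in\catmod_{\olk}(\catsman)$. Because each $\cv(\Lambda)$ is a Banach manifold carrying a smooth $\Lambda_{\bar{0}}$-module structure with global zero, it is in fact a Banach $\Lambda_{\bar{0}}$-module. Mimicking the recovery of a super vector space from its generators, I set $V_{\bar{0}}:=\cv(\fieldK)$ and let $V_{\bar{1}}$ be the kernel of $\cv(\epsilon_{\Lambda_1})\colon\cv(\Lambda_1)\to\cv(\fieldK)$; the section $c_{\Lambda_1}\colon\fieldK\to\Lambda_1$ splits $\cv(\Lambda_1)=V_{\bar{0}}\oplus V_{\bar{1}}$, and since $\Lambda_{1,\bar{1}}$ is one-dimensional this identifies $V_{\bar{1}}$ with the odd part. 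Writing $V:=V_{\bar{0}}\oplus V_{\bar{1}}$, I build a comparison morphism $\Theta\colon\olv\to\cv$ on points: an element of $\olv(\Lambda)=(\Lambda_{\bar{0}}\otimes V_{\bar{0}})\oplus(\Lambda_{\bar{1}}\otimes V_{\bar{1}})$ is sent, using the $\olk$-module structure of $\cv$, to the corresponding $\Lambda_{\bar{0}}$-combination of the images of $V_{\bar{0}}$ and $V_{\bar{1}}$ under $\cv(c_\Lambda)$ and $\cv(\eta)$, where $\eta\colon\Lambda_1\to\Lambda$ is the morphism used in the proof of Proposition \ref{isokmod}. Re-running that proof — in particular the generator-killing morphisms $\varphi_l$ — shows that each $\Theta_\Lambda$ is a bijection, so $\Theta$ is an isomorphism of $\olk$-modules in $\catsets^\catgr$.

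The main obstacle is that, as the remark following Definition \ref{def:sman} warns, a functor isomorphism need not be supersmooth, so I must upgrade $\Theta$ to an isomorphism in $\catmod_{\olk}(\catsman)$. This is precisely where the supermanifold hypothesis on $\cv$ does work the set-theoretic statement cannot: it is what excludes non-superrepresentable modules such as $\olv^{nil}$ of Definition \ref{def:srep} and forces global linearity. Since $\cv$ is a supermanifold, near its zero section it is locally isomorphic to an open subfunctor of a superrepresentable module $\overline{W}$, and on such a chart $\Theta^{-1}$ is supersmooth by the set-level identification together with Proposition \ref{subf}. I then globalize using the module structure: scalar multiplication $m_t\colon\cv\to\cv$ by $t\in\fieldK^\times$ is supersmooth, the dilates of the chart under the $m_t$ cover all of $\cv$, and on each dilate $\Theta^{-1}$ factors through the chart by conjugation with supersmooth scalings, hence is supersmooth there. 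Therefore $\Theta$ and $\Theta^{-1}$ are supersmooth and $\olk$-linear, giving $\cv\cong\olv$ in $\catmod_{\olk}(\catsman)$. I expect the delicate points to be checking that the local chart can be centred at the zero section compatibly with the module operations, and that the covering by dilates is by \emph{open} subfunctors so that supersmoothness is genuinely a local condition; these are the places where the Banach setting and Proposition \ref{subf} are actually used.
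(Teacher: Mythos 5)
Your full-faithfulness argument is fine and is essentially what the paper relies on (the remark preceding the theorem already records that $\olf$ is supersmooth for linear $f$, so Cor.~\ref{cor:barfunct} gives full faithfulness). The problem is in essential surjectivity, and it is a genuine gap rather than a different route. You claim that $\Theta_\Lambda$ is a bijection --- i.e.\ that $\cv$ is superrepresentable as a $\olk$-module in $\catsets^\catgr$ --- by ``re-running'' the proof of Prop.~\ref{isokmod}, \emph{before} the supermanifold structure of $\cv$ has been used; the supermanifold hypothesis is then invoked only to upgrade $\Theta$ to a supersmooth isomorphism. This order cannot work. The module $\olv^{nil}$ of (\ref{vnildef}) is a $\olk$-module in $\catsets^\catgr$ all of whose point sets are Banach $\Lambda_{\bar{0}}$-modules with smooth operations and a global zero, yet your construction applied to it gives $\cv(\fieldK)=0$ and $\ker\cv(\epsilon_{\Lambda_1})\cong V_{\bar{1}}$, so the comparison module has $\Lambda_2$-points $\Lambda_{2,\bar{1}}\otimes V_{\bar{1}}$ and $\Theta_{\Lambda_2}$ misses the even nilpotent part $\fieldK\tau_1\tau_2\otimes V_{\bar{0}}$ of $\olv^{nil}(\Lambda_2)$: it is not surjective. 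The techniques of Prop.~\ref{isokmod} (the generator-killing morphisms $\varphi_l$) decompose the \emph{values of a functor morphism between modules already known to be superrepresentable}; they do not decompose the points of an abstract $\olk$-module, and in particular they do not show that every point of $\cv(\Lambda)$ is a $\Lambda_{\bar{0}}$-combination of points coming from $\cv(\fieldK)$ and $\ker\cv(\epsilon_{\Lambda_1})$. So superrepresentability itself --- not merely smoothness of the comparison map --- is what the supermanifold hypothesis must deliver, and you have placed that hypothesis one step too late.

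The smoothness upgrade is also not justified as written: a set-level functor isomorphism between superdomains need not be supersmooth (this is exactly the warning after Def.~\ref{def:sman} that you quote), and the composite of $\Theta^{-1}$ with a chart is not linear, so neither Cor.~\ref{cor:barfunct} nor Prop.~\ref{subf} makes it supersmooth. The paper's proof runs in the opposite order: because $\cv$ is a supermanifold, a chart around the zero section identifies it locally and \emph{supersmoothly} with an open subfunctor of a superrepresentable $\overline{W}$; the $\olk$-module structure then forces this identification to be linear, which simultaneously yields global superrepresentability and the supersmoothness of the resulting isomorphism. Your dilation maps $m_t$ are the right tool for globalizing from the chart, but they must be applied after the local linear identification, not before. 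The proof can be repaired by reorganizing it along these lines; as written, the key intermediate claim is false.
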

\begin{proof}
As seen above, every $\fieldK$-super vector space defines a $\olk$-module in $\catsman$. Conversely, let
a $\olk$-module $\cv$ in $\catsman$ be given. It only has to be shown that $\cv$ is superrepresentable when
considered as a $\olk$-module in in $\catsets^\catgr$. But this follows at once from the fact that each
of the sets $\cv(\Lambda)$ is already locally isomorphic to the $\Lambda$-points $\olv(\Lambda)$ of a 
superrepresentable module $\olv$. The $\olk$-module structure on $\cv$ then requires all transition functions
of the supermanifold to be linear. Therefore, taking any subfunctor of 
$\cv$ which is isomorphic to a superdomain $\olv\big|_U$, we can conclude that $\cv\cong\olv$.
\end{proof}

This theorem can be viewed as the categorical version of the statement that supermanifolds must be
locally modelled on linear superspaces. This was already obvious in the ringed space formulation for
the finite-dimensional case, but Thm.~\ref{thm:smodsman} extends it to the infinite-dimensional case.
The analogue of Corollary \ref{barfunct} also holds.

\begin{cor}
The functor $\olcdot:\catsmod_\fieldK(\catman)\to\catsmod_{\olk}(\catsman)$ induces equivalences
\begin{eqnarray}
\olcdot:\catslie_\fieldK(\catman) &\to& \catlie_{\olk}(\catsman)\\
\olcdot:\catsalg_\fieldK(\catman) &\to& \catalg_{\olk}(\catsman)
\end{eqnarray}
between the categories of $\fieldK$-super Lie algebras and ordinary $\olk$-Lie algebras in $\catsman$,
and between $\fieldK$-super algebras and ordinary $\olk$-algebras.
\end{cor}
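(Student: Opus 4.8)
The plan is to upgrade Corollary~\ref{barfunct} from a fully faithful embedding to an equivalence by supplying the single ingredient missing there --- essential surjectivity --- which will be handed to us almost for free by Theorem~\ref{thm:smodsman}. Both functors in question are restrictions of the equivalence $\olcdot:\catsmod_\fieldK(\catman)\to\catmod_{\olk}(\catsman)$ to the subcategories cut out by an extra bilinear operation, so full faithfulness is inherited directly: a morphism of $\fieldK$-super Lie algebras (resp.\ superalgebras) is by definition an even linear map commuting with the bracket (resp.\ product), and by Theorem~\ref{thm:smodsman} such maps correspond bijectively to $\olk$-module morphisms in $\catsman$ commuting with the transported operation, exactly by the computation (\ref{lab1}) carried out in the proof of Corollary~\ref{barfunct}. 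Hence only essential surjectivity remains.

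For essential surjectivity I would argue as follows. Let $\cl$ be an object of $\catlie_{\olk}(\catsman)$, that is, a $\olk$-module in $\catsman$ equipped with a $\olk$-bilinear morphism $\overline{[\cdot,\cdot]}:\cl\times\cl\to\cl$ whose $\Lambda$-components make each point set an ordinary Lie algebra. By Theorem~\ref{thm:smodsman} the underlying $\olk$-module is superrepresentable, so $\cl\cong\overline{L}$ for some $\fieldK$-super vector space $L$, unique up to isomorphism. Applying Proposition~\ref{isokmod} with $n=2$, the bracket $\overline{[\cdot,\cdot]}$ comes from a unique even $\fieldK$-bilinear map $[\cdot,\cdot]:L\times L\to L$ with $\overline{[\cdot,\cdot]}_\Lambda(\lambda_1\otimes a,\lambda_2\otimes b)=\lambda_2\lambda_1\otimes[a,b]$. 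It then has to be checked that this $[\cdot,\cdot]$ makes $L$ a $\fieldK$-super Lie algebra, after which $\overline{L}$ with its induced bracket is isomorphic to $\cl$ as a $\olk$-Lie algebra, yielding essential surjectivity. The superalgebra case is identical, replacing the bracket by a product $\mu$ and the Lie axioms by associativity and (super)commutativity.

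The heart of the matter, and the only place where real work is needed, is the sign bookkeeping showing that the super axioms on $L$ are equivalent to the ordinary axioms on the points $\overline{L}(\Lambda)$. For antisymmetry this is a one-line check: for homogeneous $\lambda_1\otimes a,\lambda_2\otimes b\in\overline{L}(\Lambda)$ one has $p(\lambda_i)=p(a_i)$, hence $\lambda_2\lambda_1=(-1)^{p(a)p(b)}\lambda_1\lambda_2$, and substituting the super-antisymmetry $[a,b]=-(-1)^{p(a)p(b)}[b,a]$ into the defining formula turns it precisely into ordinary antisymmetry of $\overline{[\cdot,\cdot]}_\Lambda$. The super Jacobi identity (\ref{sjacobi}) transfers to the ordinary Jacobi identity by the same mechanism, the Koszul signs being cancelled exactly against those produced when the scalars $\lambda_i$ are commuted past each other in $\overline{L}(\Lambda)$. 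I expect this sign-matching to be the main --- though essentially routine --- obstacle; since it is the same bilinear computation already underlying Corollary~\ref{barfunct}, I would reuse that argument or relegate the verification to a short direct check. The associative/commutative case is genuinely easier, as the relevant rearrangements are all covered by the braiding (\ref{tprodcomiso}).
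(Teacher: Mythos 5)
Your proposal is correct and follows exactly the route the paper takes: its proof is the single sentence that the corollary ``follows immediately from Cor.~\ref{barfunct} and Thm.~\ref{thm:smodsman},'' and your argument is precisely the expansion of that citation --- full faithfulness inherited from Cor.~\ref{barfunct}, essential surjectivity from superrepresentability (Thm.~\ref{thm:smodsman}) plus transporting the bilinear operation back via Prop.~\ref{isokmod}. The sign bookkeeping you flag (using $\lambda_1\lambda_2=(-1)^{p(a)p(b)}\lambda_2\lambda_1$ to trade super-antisymmetry and the super Jacobi identity for their ordinary counterparts on the point sets) is the correct and complete verification that the paper leaves implicit.
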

\begin{proof}
This follows immediately from Cor.~\ref{barfunct} and Thm.~\ref{thm:smodsman}.
\end{proof}

In fact, it is Thm.~\ref{thm:smodsman} that assures that coherent tensor products and coherent 
inner Hom-functors
exist in the category $\catsman$, since they exist in $\catsmod_\fieldK(\catman)$. For a deeper discussion,
see \cite{I-dZ2ks} or \cite{SoaE:ATTC}. Finally, the change of parity functor can be extended to linear
supermanifolds.

\begin{dfn}
\label{def:copf3}
The change of parity functor $\olpi$ is defined on isomorphism classes of $\olk$-modules in $\catsman$ as
\begin{eqnarray}
\olpi:\catmod_{\olk}(\catsman) &\to& \catmod_{\olk}(\catsman)\\
\cv\cong\olv &\mapsto& \overline{(\Pi(V))}.
\end{eqnarray}
\end{dfn}

\subsection{Superpoints, again}

Superpoints were introduced in section \ref{sect:spoint} as linear supermanifolds corresponding to purely odd
super vector spaces. In Prop.~\ref{spointeq}, it was shown that the category $\catspoint$ is dual to
the category $\catgr$, and a duality was chosen, namely
\begin{eqnarray}
\label{cpfunct}
\cp:\catgr^\circ &\to& \catspoint\\
\nonumber
\Lambda &\mapsto& \Spec(\Lambda)=(\{*\},\Lambda).
\end{eqnarray}
In terms of the categorical definition, we have chosen $\cp(\Lambda_n)=\ovlk{0|n}$ for the Grassmann algebra
on $n$ generators over $\fieldK$. A supermanifold is, in categorical terms, still a functor 
$\catgr\to\catman$. Thus, $\cp$ can be considered as a bifunctor
\begin{equation}
\cp:\catgr^\circ\times\catgr\to\catman.
\end{equation}

\begin{prop}
\label{prop:isocp}
There exists an isomorphism of bifunctors
\begin{equation}
\cp\cong\Hom_\catgr(-,-).
\end{equation}
\end{prop}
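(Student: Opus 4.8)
The plan is to unwind the definition of the bifunctor $\cp$ and to recognise that its value is, essentially tautologically, a set of morphisms of superpoints, to which Prop.~\ref{spointeq} applies directly. Recall that in the categorical formalism a supermanifold $\cm$ is the covariant functor $\catgr\to\catman$ whose value on $\Lambda'$ is the set of its $\Lambda'$-points, namely $\cm(\Lambda')=\Hom_\catsman(\cp(\Lambda'),\cm)$; this is the reading of Thm.~\ref{gensman} spelled out in Section \ref{sect:smanfpv}. Applying it to the superpoint $\cm=\cp(\Lambda)=\Spec(\Lambda)$ gives, for every pair $(\Lambda,\Lambda')\in\catgr^\circ\times\catgr$,
\[
\cp(\Lambda)(\Lambda')=\Hom_\catsman(\cp(\Lambda'),\cp(\Lambda)).
\]
Since this is precisely the value of the bifunctor $\cp$ at $(\Lambda,\Lambda')$, the statement reduces to a natural identification of $\Hom_\catsman(\cp(\Lambda'),\cp(\Lambda))$ with $\Hom_\catgr(\Lambda,\Lambda')$.

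First I would invoke Prop.~\ref{spointeq}: the functor $\cp:\catgr^\circ\to\catspoint$ is an equivalence, hence fully faithful, so it induces for all $\Lambda,\Lambda'$ a bijection
\[
\cp\colon\Hom_{\catgr^\circ}(\Lambda',\Lambda)=\Hom_\catgr(\Lambda,\Lambda')\xrightarrow{\ \sim\ }\Hom_\catspoint(\cp(\Lambda'),\cp(\Lambda))=\Hom_\catsman(\cp(\Lambda'),\cp(\Lambda)),
\]
the last equality holding because $\catspoint$ is a full subcategory of $\catsman$. Concretely this bijection sends a Grassmann-algebra morphism $\psi\colon\Lambda\to\Lambda'$ to the induced superpoint morphism $\cp(\psi)\colon\cp(\Lambda')\to\cp(\Lambda)$, that is, to the $\Lambda'$-point $\cp(\psi)$ of $\cp(\Lambda)$ (exactly as in the proof of Prop.~\ref{spointeq}). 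Composed with the identification of the previous paragraph, this yields the desired bijection $\Hom_\catgr(\Lambda,\Lambda')\cong\cp(\Lambda,\Lambda')$ for each fixed pair, and by Prop.~\ref{homgr} both sides are finite-dimensional $\fieldK$-vector spaces, so the bijection is compatible with their $\catman$-structures.

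The only genuine work is naturality in both slots, i.e.\ that these pointwise bijections assemble into an isomorphism of bifunctors $\catgr^\circ\times\catgr\to\catman$. For the first slot, a $\catgr$-arrow acts on $\cp(\Lambda,\Lambda')$ by post-composition with its image under $\cp$, which under the bijection matches the action of $\Hom_\catgr(-,\Lambda')$; for the second slot, a $\catgr$-arrow acts on the point sets by pre-composition with the dual superpoint morphism, matching the action of $\Hom_\catgr(\Lambda,-)$. I would verify both by drawing the two relevant commuting squares and reducing them, through full faithfulness of $\cp$, to the defining functoriality of $\Hom_\catgr(-,-)$. The main obstacle is purely bookkeeping: keeping the variances straight---each argument is flipped once because $\cp$ is contravariant from $\catgr$ to $\catspoint$, so both slots come out covariant as required---and confirming that the functor of points of a superpoint is computed exactly by the Hom-set used above, rather than by some a priori larger functor in $\catman^\catgr$.
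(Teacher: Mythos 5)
Your proof is correct in outline but takes a genuinely different, and heavier, route than the paper's. The paper simply unwinds the definition: since $\cp(\Lambda_n)$ was chosen to be $\ovlk{0|n}$, its $\Lambda_m$-points are, by (\ref{def:olv}), the set $(\Lambda_m\otimes\fieldK^{0|n})_{\bar{0}}\cong\Lambda_{m,\bar{1}}\otimes\fieldK^n$, which Prop.~\ref{homgr} identifies with $\Hom_\catgr(\Lambda_n,\Lambda_m)$ --- a two-line computation using nothing beyond the definition of $\olcdot$ and the explicit description of Grassmann-algebra morphisms. You instead route the identification through $\cm(\Lambda')\cong\Hom_\catsman(\cp(\Lambda'),\cm)$ and the full faithfulness of the duality $\cp:\catgr^\circ\to\catspoint$. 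That is conceptually cleaner and makes bifunctoriality essentially automatic (a point the paper's proof silently omits), but it leans on two results not available at this stage of the text: the Yoneda-type identification of $\cm(\Lambda)$ with $SC^\infty(\cp(\Lambda),\cm)$ is only the Corollary to Thm.~\ref{catgensman}, which appears later and is quoted from Molotkov without proof (Thm.~\ref{gensman} gives only the generator property for finite-dimensional ringed spaces, not this representability statement), and your equality $\Hom_\catspoint(\cp(\Lambda'),\cp(\Lambda))=\Hom_\catsman(\cp(\Lambda'),\cp(\Lambda))$ implicitly identifies the ringed-space superpoints of Prop.~\ref{spointeq} with their categorical avatars $\ovlk{0|n}$, which is part of the Berezin--Leites equivalence, also only sketched in the paper. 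So your argument buys naturality for free at the cost of presupposing much deeper machinery, whereas the paper's computation is elementary, self-contained, and in addition exhibits the concrete model $\Lambda_{m,\bar{1}}\otimes\fieldK^n$ of the Hom-set that is reused later, for instance in the proof of Thm.~\ref{thm:sectsvbun}.
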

\begin{proof}
We have chosen $\cp(\Lambda_n)=\overline{\fieldK^{0|n}}$. Therefore, the $\Lambda_m$-points of 
$\cp(\Lambda_n)$ are
\begin{equation}
\cp(\Lambda_n)(\Lambda_m)=(\Lambda_m\otimes\fieldK^{0|n})_{\bar{0}}%
\cong \Lambda_{m,\bar{1}}\otimes\fieldK^n\cong\Hom_\catgr(\Lambda_n,\Lambda_m),
\end{equation}
where the last isomorphism was proved in Prop.~\ref{homgr}. 
\end{proof}

The following easy fact is also very useful.

\begin{lemma}
\label{cpriso}
There exists an isomorphism of supermanifolds
\begin{equation}
\cp(\fieldK)\times\cm\cong\cm.
\end{equation}
\end{lemma}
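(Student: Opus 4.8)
The plan is to recognise $\cp(\fieldK)$ as the terminal object of the category of supermanifolds and then invoke the standard fact that the product of any object with a terminal object is canonically isomorphic to that object.

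First I would identify $\cp(\fieldK)$ concretely. Since $\cp(\fieldK)=\cp(\Lambda_0)$ and $\Lambda_0=\fieldK$, Proposition~\ref{prop:isocp} computes its points as
\[
\cp(\fieldK)(\Lambda)\cong\Hom_\catgr(\Lambda_0,\Lambda)=\Hom_\catgr(\fieldK,\Lambda).
\]
Because $\fieldK=\Lambda_0$ is the \emph{initial} object of $\catgr$ — every $\Lambda$ receives the unique morphism $c_\Lambda:\fieldK\to\Lambda$ sending $1\mapsto 1$ — this Hom-set is a singleton for every $\Lambda$. Hence, as a functor, $\cp(\fieldK)$ sends every $\Lambda$ to the one-point manifold $\{*\}$; equivalently $\cp(\fieldK)=\Spec\fieldK=(\{*\},\fieldK)$, the terminal object of $\catman$.

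Next I would argue that $\cp(\fieldK)$ is therefore the terminal object of the functor category $\catman^\catgr$, and in particular of $\catsman$. Terminal objects in a functor category valued in $\catman$ are computed objectwise, so the constant functor at the one-point manifold is terminal: for any supermanifold $\cm$ there is a unique functor morphism $\tau:\cm\to\cp(\fieldK)$, assembled from the unique maps $\cm(\Lambda)\to\{*\}$. The crux is then the categorical identity $X\times 1\cong X$. Since products in $\catsman$ are computed pointwise on functors of points (the functor of points commutes with direct products, cf. Section~\ref{sect:catprod}), for every $\Lambda$ one has
\[
(\cp(\fieldK)\times\cm)(\Lambda)=\cp(\fieldK)(\Lambda)\times\cm(\Lambda)=\{*\}\times\cm(\Lambda)\cong\cm(\Lambda),
\]
naturally in $\Lambda$, giving a functor isomorphism $\cp(\fieldK)\times\cm\cong\cm$.

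Finally I would check that this is an isomorphism in $\catsman$ and not merely of the underlying set-valued functors. The projection $\pi_\cm:\cp(\fieldK)\times\cm\to\cm$ is supersmooth, and by the universal property of the product the pair $(\tau,\id_\cm):\cm\to\cp(\fieldK)\times\cm$ is supersmooth as well, with $\tau$ the (unique, hence supersmooth) terminal morphism. The two composites are the respective identities, so $\pi_\cm$ is an isomorphism of supermanifolds. The only real obstacle is bookkeeping rather than conceptual difficulty: one must confirm that the product in $\catsman$ genuinely discards the terminal factor pointwise and that the resulting bijections assemble into a \emph{supersmooth} isomorphism. Once the terminal-object character of $\cp(\fieldK)$ is pinned down via the initial-object property of $\Lambda_0=\fieldK$ in $\catgr$, the remainder is the universal-property argument for $X\times 1\cong X$.
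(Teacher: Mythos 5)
Your proposal is correct and follows essentially the same route as the paper: both arguments reduce to showing that every point set $\cp(\fieldK)(\Lambda)$ is a singleton (the paper computes $(\Lambda\otimes\{0\})_{\bar{0}}=\{0\}$ directly, you obtain the same via $\Hom_\catgr(\fieldK,\Lambda)$ and the initial-object property of $\Lambda_0$) and then use that products of functors are computed pointwise. The terminal-object packaging and the explicit supersmoothness check are harmless elaborations of what the paper leaves implicit.
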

\begin{proof}
Each of the sets of points $\cp(\fieldK)(\Lambda)$ is just the one-point set:
\[
\cp(\fieldK)(\Lambda)=(\Lambda\otimes\{0\})_{\bar{0}}=\{0\}.
\]
Therefore,
\[
(\cp(\fieldK)\times\cm)(\Lambda)=\cp(\fieldK)(\Lambda)\times\cm(\Lambda)\cong\cm(\Lambda).
\]
\end{proof}

\section{Connection to the Berezin-Leites theory in the finite-dimensional case}

It is instructive to recover the standard ringed space version of a supermanifold from the categorical 
construction. A rough sketch of the idea can be found in \cite{I-dZ2ks}. We will not try to
prove every statement here, since we will not need to rely on this construction later on. 
This section should rather be understood as a heuristic discussion.

One defines an $\olr$-superalgebra $\fr$ in $\catsman$ by setting
\begin{eqnarray}
\fr(\Lambda) &:=& \Lambda\\
\fr(\varphi) &:=& \varphi\quad\textrm{ for }\varphi:\Lambda\to\Lambda'.
\end{eqnarray} 
The $\olr$-superalgebra structure is provided by the $\Lambda_{\bar{0}}$-superalgebra structures on
each $\Lambda$. Note that up to now, we never defined super objects in any of our categories of
superobjects. It was never necessary -- in fact one of great advantages of the categorical approach is
that one can work with purely even objects. This is expressed by Thm.~\ref{thm:smodsman}. Every
super vector space, superalgebra etc. is an ordinary algebra in $\catsets^\catgr$. In this sense
$\fr$ is ``super super''. The functor $\fr$ is still representable as we will show now, but the
superalgebra representing it is non-supercommutative.

As an $\olr$-module, we have
\begin{equation}
\fr\cong\olr\oplus\olpi(\olr)\cong\ovlr{1|1}. 
\end{equation}
We want to find a superalgebra structure on $\realR^{1|1}$ which represents the one on $\fr$.
Denoting the standard basis of $\realR^{1|1}$ as $\{1,\theta\}$, we can write
\begin{equation}
\fr(\Lambda)=\Lambda_{\bar{0}}\oplus\Lambda_{\bar{1}}\cong%
\Lambda_{\bar{0}}\otimes 1\oplus \Lambda_{\bar{1}}\otimes\theta.
\end{equation}
Let $\olmu:\fr\times\fr\to\fr$ denote the multiplication in $\fr$ and let $\mu$ denote the
hypothetical multiplication in $\realR^{1|1}$ that we want to determine. Let 
$\lambda_1,\lambda_2\in\Lambda_{\bar{0}}$ be given. We have
\begin{equation}
\olmu_\Lambda(\lambda_1\otimes 1,\lambda_2\otimes 1)=\lambda_2\lambda_1\otimes\mu(1,1).
\end{equation}
This must coincide with the product $\lambda_1\lambda_2\otimes 1$, by construction of $\fr$.
Since $\lambda_1,\lambda_2$ are even, this requires $\mu(1,1)=1$. Likewise, for
$\lambda_1\in\Lambda_{\bar{0}}$ and $\lambda_2\in\Lambda_{\bar{1}}$ we have
\begin{equation}
\olmu_\Lambda(\lambda_1\otimes 1,\lambda_2\otimes\theta)=\lambda_2\lambda_1\otimes\mu(1,\theta).
\end{equation}
This must coincide with $\lambda_1\lambda_2\otimes\theta$, and thus we must have $\mu(1,\theta)=\theta$.
Analogously we find $\mu(\theta,1)=\theta$.

Let then $\lambda_1,\lambda_2\in\Lambda_{\bar{1}}$ be given. The product is
\begin{equation}
\olmu_\Lambda(\lambda_1\otimes\theta,\lambda_2\otimes\theta)=\lambda_2\lambda_1\otimes\mu(\theta,\theta).
\end{equation}
This must be equal to $\lambda_1\lambda_2\otimes 1$, which enforces $\mu(\theta,\theta)=-1$.

The super space $\realR^{1|1}$ endowed with this multiplication will be denoted as $\complexC^s$.
As an $\realR$-algebra it is isomorphic to $\complexC$, but as an $\realR$-superalgebra, it is
isomorphic to $\complexC$ with $i$ declared odd. It is non-supercommutative, in particular, every
non-zero element is invertible (which makes it a kind of super analog of a skew field). Note that,
although $\complexC^s$ is non-supercommutative, the $\olr$-superalgebra $\fr$ \emph{is}
supercommutative. Just like passing to their functors of points turns supercommutative algebras
into commutative $\olr$-algebras, the special non-supercommutativity of $\complexC^s$ is weakened
to supercommutativity of its functor of points as an $\olr$-algebra. It would be an interesting
question to study whether this kind of reasoning can be iterated to yield something like 
``super super super'' objects and whether these would possess any geometric interpretation. 
Molotkov in \cite{I-dZ2ks} proposes a formalism to investigate such
questions, but a conclusive answer has yet to be found.

The reason why we introduced $\fr$ is that we need
a superalgebra in $\catsman$ in order to induce the structure of a superalgebra on certain sets
of morphisms which we want to interpret as the superfunctions on a supermanifold $\cm$. Following
Molotkov \cite{I-dZ2ks}, we define an $\realR$-superalgebra 
\begin{equation}
SC^\infty(\cm):=SC^\infty(\cm,\fr),
\end{equation}
which will be called the superalgebra of superfunctions on $\cm$. Since $\fr$ is a supercommutative
$\olr$-superalgebra, $SC^\infty(\cm)$
is canonically equipped with the structure of a supercommutative $SC^\infty(\cm,\olr)$-superalgebra.
Moreover, we can embed $\realR\hookrightarrow SC^\infty(\cm)$ as the constant functions $\cm\to\realR$.
More precisely, for any $r\in\realR$, we define a morphism $f_r:\cm\to\realR$ by setting
\begin{equation}
(f_r)_\Lambda(m)=r\quad\textrm{ for all }m\in\cm(\Lambda).
\end{equation}
These are obviously supersmooth morphisms. Via this embedding, $SC^\infty(\cm,\fr)$ becomes endowed
with an $\realR$-superalgebra structure.

The following example is borrowed from \cite{I-dZ2ks}. Consider a superdomain $\cu\subset\ovlr{m|n}$.
One has maps
\begin{eqnarray}
x_i:\ovlr{m|n} &\to& \ovlr{1|0}\hookrightarrow\fr,\qquad 1\leq i\leq m,\\
\theta_j:\ovlr{m|n} &\to& \ovlr{0|1}\hookrightarrow\fr,\qquad 1\leq j\leq n,
\end{eqnarray}
where the first arrow in every line represents the canonical projection onto the $i$-th even and
$j$-th odd coordinate, respectively. As in ordinary geometry, the sheaf of functions can be
generated from these coordinate maps. One can show that \cite{I-dZ2ks}
\begin{equation}
SC^\infty(\cu)\cong C^\infty(x_1,\ldots,x_m)\otimes\wedge^\bullet[\theta_1,\ldots,\theta_n],
\end{equation}
where $C^\infty(x_1,\ldots,x_m)=C^\infty_U$.

Now let $\cm$ be a supermanifold and let $M$ be its underlying topological space (i.e., the
topological space underlying the base manifold $\cm(\realR)$). Then we can assign to every open set
$U\subset M$ the $\realR$-superalgebra $SC^\infty(\cm\big|_U)$. This yields a presheaf
$P(\cm)$ on $M$. The standard procedure of ``sheafification'', i.e., taking for every point 
$x\in M$ the direct limit over all open sets containing it produces the
associated sheaf $S(\cm)$. It is clear that any morphism $f:\cm\to\cm'$ of supermanifolds
induces, via its associated map $M\to M'$ of the underlying spaces, a morphism of sheaves
$S(f):S(\cm)\to S(\cm')$. Therefore the assignment 
\begin{eqnarray}
\cs:\cm &\mapsto& S(\cm)\\
f &\mapsto& S(f)
\end{eqnarray}
defines a functor from the category
of supermanifolds to the category of topological spaces locally ringed by supercommutative superalgebras.

Denote by $\catfinsman$ the category of finite-dimensional supermanifolds defined by the categorical
construction. Moreover, we call the category of finite-dimensional topological spaces locally ringed by
supercommutative superalgebras the category of Berezin-Leites supermanifolds. Then we have the following:

\begin{thm}[Molotkov \cite{I-dZ2ks}]
The functor $\cs$ establishes an equivalence between the category $\catfinsman$ and the category
of Berezin-Leites supermanifolds.
\end{thm}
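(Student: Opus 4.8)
The plan is to verify that $\cs$ is fully faithful and essentially surjective, reducing every question to the local model of a superdomain, since both $\catfinsman$ and the category of Berezin-Leites supermanifolds are obtained by gluing local charts and $\cs$ is by construction compatible with restriction to open subfunctors and with sheafification. First I would record the behaviour of $\cs$ on the model spaces: for a superdomain $\cu\subseteq\ovlr{m|n}$ the explicit computation quoted above gives
\[
SC^\infty(\cu)\cong C^\infty_U\otimes\wedge^\bullet[\theta_1,\ldots,\theta_n],
\]
and since sheafification of the presheaf $U'\mapsto SC^\infty(\cu\big|_{U'})$ merely re-assembles these algebras over the open subsets of $U$, the locally ringed space $S(\cu)$ is precisely the Berezin-Leites superdomain with the same underlying domain and the same structure sheaf. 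Thus $\cs$ carries model spaces to model spaces, which is the starting point for both halves of the argument.

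For essential surjectivity, given a Berezin-Leites supermanifold $\cn=(N,\co_\cn)$, I would produce a preimage by forming its functor of points $\Lambda\mapsto\Hom(\cp(\Lambda),\cn)$, where $\cp$ is the duality of Prop.~\ref{spointeq}. Theorem~\ref{scoords} identifies these $\Lambda$-points locally with systems of even and odd functions valued in a superdomain, which endows the functor with a supersmooth atlas inherited from any atlas of $\cn$; hence it lies in $\catfinsman$. Applying the local computation above chart by chart, and using that $SC^\infty(\cm)=SC^\infty(\cm,\fr)$ recovers exactly the sections of $\co_\cn$, one obtains a natural isomorphism between $\cs$ of this functor and $\cn$.

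Full faithfulness is the main obstacle. For $\cm,\cm'\in\catfinsman$ I must show that $\Hom_{\catfinsman}(\cm,\cm')\to\Hom(S(\cm),S(\cm'))$ is a bijection, and by the reduction I may take $\cm=\cu$, $\cm'=\cv$ to be superdomains. On the Berezin-Leites side a morphism $S(\cu)\to S(\cv)$ is classified by Theorem~\ref{scoords} as a tuple of even and odd functions on $\cu$ whose reduced values land in the underlying domain of $\cv$; on the categorical side a supersmooth functor morphism $\cu\to\cv$ is a natural transformation of the entire tower of point functors indexed by $\catgr$. The technical heart—the step I expect to be hardest—is to show these two descriptions coincide: reading off the images of the coordinate maps $x_i,\theta_j$ is the easy direction, but for the converse one must check that a natural transformation is rigidly determined by, and reconstructible from, its effect on coordinates. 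This is exactly where Theorem~\ref{gensman} (the superpoints generate $\catfinsman$, so a morphism is fixed by its action on all $\cp(\Lambda)$-points) together with the Taylor prescription (\ref{pbeven}) forced by supercommutativity do the work, while Proposition~\ref{isokmod} guarantees that the linear-algebraic data attached to each Grassmann algebra is freely and functorially determined. Finally, naturality of the resulting bijection in $\cu$ and $\cv$ lets these local identifications glue, via the sheaf condition, to the global full-faithfulness statement, completing the equivalence.
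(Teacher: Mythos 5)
First, a point of reference: the paper itself offers \emph{no} proof of this theorem. The surrounding section is explicitly labelled a heuristic discussion, the local computation $SC^\infty(\cu)\cong C^\infty_U\otimes\wedge^\bullet[\theta_1,\ldots,\theta_n]$ is quoted from \cite{I-dZ2ks} without argument, and immediately after the statement the author writes that he does not want to go into the proof. So there is nothing to compare your route against; your proposal has to stand on its own.

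As an outline it is reasonable, and you correctly locate the hard step, but that step is not actually carried out, and the tools you cite do not suffice to carry it out. The claim you need is that \emph{every} supersmooth functor morphism $\cu\to\cv$ between superdomains (in the sense of Def.~\ref{bsdom}: each component $f_\Lambda$ smooth with $\Lambda_{\bar{0}}$-linear derivative) is induced by a unique superalgebra homomorphism of structure sheaves, i.e.\ by a tuple of even and odd superfunctions via the Taylor prescription (\ref{pbeven}). Theorem~\ref{gensman} lives in the ringed-space category and says that superpoints separate \emph{ringed-space} morphisms; combined with Thm.~\ref{scoords} it gives you at most injectivity of the comparison map from sheaf morphisms to natural transformations. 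It says nothing about whether an arbitrary supersmooth natural transformation is of this form, which is the surjectivity half of full faithfulness and the actual content of Molotkov's theorem. Proposition~\ref{isokmod} does not fill the hole either: it classifies \emph{multilinear} morphisms of superrepresentable $\olk$-modules, where everything is forced by $\Lambda_{\bar{0}}$-linearity and functoriality under killing or permuting odd generators; a general supersmooth map has an arbitrary smooth dependence on the body of $\cu(\Lambda)$, and one must prove a genuine super Taylor expansion — that functoriality in $\catgr$ together with smoothness and the $\Lambda_{\bar{0}}$-linearity of $Df_\Lambda$ force $f_\Lambda$ to be the finite Taylor polynomial of the underlying map in the nilpotent directions, with the residual freedom exactly parametrized by the odd coefficient functions. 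That analytic-algebraic argument is nowhere supplied in your proposal, and without it the equivalence is asserted rather than proved.

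Two smaller points. The reduction of full faithfulness to superdomains needs the remark that a morphism into a supermanifold $\cm'$ is only locally a morphism into a chart after refining the cover of the source so that images land in charts of the target, and that Hom-sets on both sides satisfy the sheaf condition; you gloss this. And in the essential surjectivity step, the natural isomorphism between $\cs$ applied to the functor of points of $\cn$ and $\cn$ itself rests entirely on the unproved local computation of $SC^\infty(\cu)$; that is a legitimate black box given that the paper treats it the same way, but you should flag it as such rather than fold it silently into the argument.
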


We do not want to go into the proof of this Theorem here. Let us just point out that it really only
holds in the finite-dimensional case, for the same reasons as in ordinary geometry: while in
any finite dimension $m|n$, there exists up to isomorphy only one super vector space to which a
supermanifold can be identified (in the functor of points sense), this is no longer the case in
infinite dimensions.

\section{Super vector bundles}
\label{sect:svbun}

Following Molotkov \cite{I-dZ2ks}, we will define
super vector bundles, like supermanifolds, in terms of atlases of trivial super vector bundles.

\begin{dfn}
A smooth trivial super vector bundle over $\cm$ is defined to be a triple $(\cm\times\cv,\cm,\Pi_\cm)$,
where $\cm$ is a supermanifold, $\cv$ is a superrepresentable $\olr$-module and $\Pi_\cm:\cm\times\cv\to\cm$ 
is the canonical projection. A morphism $(\cm\times\cv,\cm,\Pi_\cm)\to(\cm'\times\cv',\cm',\Pi_{\cm'})$ of
trivial super vector bundles consists of a pair of supersmooth morphisms
\begin{eqnarray}
f:\cm\times\cv &\to& \cm'\times\cv'\\
g:\cm &\to& \cm',
\end{eqnarray}
such that $\Pi_{\cm'}\circ f=g\circ\Pi_\cm$, and such that $\Pi_{\cv'}\circ f:\cm\times\cv\to\cv'$ is
a $\cm$-family of $\olr$-linear morphisms (cf.~definition \ref{def:multmor}).
\end{dfn}

By definition, trivial super vector bundles are therefore particular functors $\catgr\to\catvbun$, where
$\catvbun$ is the category of smooth vector bundles. Every functor $\ce\in\catvbun^\catgr$ gives rise to
a functor $\cm:\catgr\to\catman$ in a canonical way: every bundle $\ce(\Lambda)$ possesses a projection
map $\pi(\Lambda):\ce(\Lambda)\to M(\Lambda)$, where $M(\Lambda)$ is an ordinary manifold. Then, clearly,
setting $\cm(\Lambda)=M(\Lambda)$ defines a functor in $\catman^\catgr$.

\begin{dfn}
Let $\ce,\ce'$ be functors in $\catvbun^\catgr$, and let $\cm,\cm'$ be their associated functors in
$\catman^\catgr$. Then $\ce$ is said to be an open subfunctor of $\ce'$, denoted $\ce\subset\ce'$, if
\begin{enumerate}
\item $\cm$ is an open subfunctor of $\cm'$, and
\item for each $\Lambda\in\catgr$ we have $\pi(\Lambda)^{-1}(\cm(\Lambda))=\pi'(\Lambda)^{-1}(\cm(\Lambda))$,
\end{enumerate}
where $\pi(\Lambda)$ is the projection of the bundle $\ce(\Lambda)$ to its base $\cm(\Lambda)$.
\end{dfn}

This gives us again the notion of an open morphism $\ce''\to\ce$ of functors in $\catvbun^\catgr$: it
is called open if it can be factorized as a composition
\begin{equation}
\begin{CD}
\ce'' @>f>> \ce' \subset\ce
\end{CD},
\end{equation}
where $f$ is an isomorphism of functors and $\ce'$ is an open subfunctor of $\ce$. An open 
covering $\{\ce_\alpha\}_{\alpha\in A}$ of a $\ce\in\catvbun^\catgr$
is then a collection of open morphisms $\{\phi_\alpha:\ce_\alpha\to\ce\}_{\alpha\in A}$, such 
that the associated maps
$\{\pi\circ\phi_\alpha\}_{\alpha\in A}$ are an open covering of the functor $\cm:\catgr\to\catman$ 
associated with 
$\ce$. In analogy to supermanifolds, a supervector bundle is a functor in $\catvbun^\catgr$ endowed with an
atlas of trivial open subbundles.

\begin{dfn}
\label{def:svbun}
Let $\ce$ be a functor in $\catvbun^\catgr$, and let $\cm\in\catman^\catgr$ be its associated functor
of base manifolds. Let $\ca=\{\phi_\alpha:\ce_\alpha\to\ce\}_{\alpha\in A}$ be an open covering of $\ce$. 
Then this covering is an atlas of a
super vector bundle $\ce$ over the supermanifold $\cm$ if the following conditions hold:
\begin{enumerate}
\item each of the $\ce_\alpha$ is a trivial super vector bundle $\cu_\alpha\times\cv_\alpha$, and
$\cv_\alpha\cong\cv_\beta$ for all $\alpha,\beta\in A$, and
\item for each $\alpha,\beta\in A$, the overlaps
\begin{equation}
\begin{CD}
\ce_\alpha\times_\ce\ce_\beta @>{\Pi_\alpha}>> \ce_\alpha\\
@V{\Pi_\beta}VV @VV{\phi_\alpha}V\\
\ce_\beta @>{\phi_\beta}>> \ce
\end{CD}
\end{equation}
can be given the structure of a trivial super vector bundle in such a way that the projections
$\Pi_\alpha,\Pi_\beta$ become morphisms of trivial super vector bundles.
\end{enumerate}
Two atlases $\ca$ and $\ca'$ are equivalent, if their union $\ca\cup\ca'$ is again an atlas. A super
vector bundle $\ce$ is a functor in $\catvbun^\catgr$ together with an equivalence class of atlases.
\end{dfn}

The second condition is necessary because the fiber product in the diagram is constructed as the fiber
product in $\catvbun^\catgr$. We thus have to make sure that it actually exists in the subcategory of
trivial super vector bundles (compare to the discussion following Def.~\ref{def:sman}). Note also that
the requirement that the transition functions be morphisms of trivial super vector bundles automatically
turns $\cm$ into a supermanifold. In general, one would usually start with a given base supermanifold and
construct a super vector bundle on it by choosing a local trivialization which is compatible with the 
transition functions of the base.

\begin{dfn}
\label{def:svbunmor}
Let $\ce,\ce'$ be super vector bundles with open coverings $\{\phi_\alpha:\ce_\alpha\to\ce\}_{\alpha\in A}$ 
and
$\{\phi_{\alpha'}:\ce'_{\alpha'}\to\ce'\}_{\alpha'\in A'}$. A functor morphism $\Phi:\ce\to\ce'$ in 
$\catvbun^\catgr$ is a
morphism of super vector bundles if for all $\alpha\in A$ and all $\alpha'\in A'$, the pullbacks
\begin{equation}
\xymatrix{
\ce_\alpha\times_{\ce'}\ce_{\alpha'} \ar[rr]^{\Pi_{\alpha'}} \ar[d]_{\Pi_{\alpha}} &&%
\cu_{\alpha'} \ar[d]^{\phi_{\alpha'}}\\
\cu_\alpha \ar[r]^{\phi_\alpha} & \ce \ar[r]^{\Phi} & \ce'
}
\end{equation}
can be chosen such that $\ce_\alpha\times_{\ce'}\ce_{\alpha'}$ is a trivial super vector bundle and
the projections $\Pi_\alpha,\Pi_{\alpha'}$ are morphisms of trivial super vector bundles.
\end{dfn}

Definitions \ref{def:svbun} and \ref{def:svbunmor} yield a category $\catsvbun$, which is obviously
a subcategory of $\catvbun^\catgr$, but not a full one. One can define super vector bundles in terms
of cocycles with values in a Lie supergroup as well \cite{I-dZ2ks}, but we will not attempt to do this here.

We have a natural functor 
\begin{eqnarray}
B:\catsvbun &\to& \catsman\\
(\pi:\ce\to\cm) &\mapsto& \cm
\end{eqnarray}
which assigns to every super vector bundle its base supermanifold. The resulting full subcategories of
bundles are denoted by $\catsvbun(\cm):=B^{-1}(\cm)$.

\begin{prop}
\label{trivsvbun}
A super vector bundle $\pi:\ce\to\cm$ is trivial if and only if all of its $\Lambda$-points
$\pi_\Lambda:\ce(\Lambda)\to\cm(\Lambda)$ are trivial bundles.
\end{prop}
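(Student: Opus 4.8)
The plan is to handle the two implications separately, with essentially all the work in the converse direction. For the ``only if'' direction, suppose $\ce$ is trivial. By Definitions~\ref{def:svbun} and~\ref{def:svbunmor} this means there is an isomorphism $\Phi\colon\ce\to\cm\times\cv$ of super vector bundles, i.e.\ a natural transformation whose component $\Phi_\Lambda$ is an isomorphism of vector bundles over $\cm(\Lambda)$ for every $\Lambda$. Since products in the functor category are computed objectwise (cf.\ Section~\ref{sect:catprod}), one has $(\cm\times\cv)(\Lambda)=\cm(\Lambda)\times\cv(\Lambda)$, so each $\Phi_\Lambda\colon\ce(\Lambda)\to\cm(\Lambda)\times\cv(\Lambda)$ exhibits $\pi_\Lambda$ as a trivial bundle. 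This direction requires nothing more.

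The substance is the converse, where the whole difficulty is one of \emph{coherence}: the hypothesis yields, for each fixed $\Lambda$ separately, a trivialization of $\ce(\Lambda)$, whereas a trivialization of $\ce$ is a family $\{\Phi_\Lambda\}$ that is \emph{natural} in $\Lambda$, and an arbitrary level-wise choice will not be. I would first reformulate triviality in cocycle terms. Using an atlas $\{\phi_\alpha\colon\ce_\alpha=\cu_\alpha\times\cv\to\ce\}$ as in Definition~\ref{def:svbun}, the overlaps are governed, by the linearity clause in the definition of morphisms of trivial super vector bundles, by $\cu_{\alpha\beta}$-families of $\olr$-linear even automorphisms of $\cv$, i.e.\ by a \v{C}ech $1$-cocycle $g=\{g_{\alpha\beta}\}$ valued in the automorphism group object $\underline{\Aut}(\cv)$ in $\catman^\catgr$. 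Then $\ce$ is trivial iff $g$ is a coboundary $g_{\alpha\beta}=h_\alpha h_\beta^{-1}$ for functor morphisms $h_\alpha\colon\cu_\alpha\to\underline{\Aut}(\cv)$, and the hypothesis says exactly that for each $\Lambda$ the cocycle $g(\Lambda)$ splits. Equivalently, the set of trivializations of $\ce(\Lambda)$ is a non-empty torsor under the gauge group $\Hom_{\catman}(\cm(\Lambda),\underline{\Aut}(\cv)(\Lambda))$, and the task is to produce a natural global point of this torsor-valued functor.

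To do this I would exploit the nilpotent filtration of the Grassmann algebras. Reduction modulo the augmentation ideal $\mathfrak{m}$ maps $\underline{\Aut}(\cv)$ onto its reduced part, an ordinary Lie group over $M=\cm(\fieldK)$ (the block-diagonal even automorphisms, $GL(V_{\bar{0}})\times GL(V_{\bar{1}})$ in the finite-dimensional case), with unipotent kernel consisting of automorphisms that are the identity modulo nilpotents. Triviality at this reduced level is furnished by the hypothesis, since it is detected already on the $\Lambda_0$- and $\Lambda_1$-points; this fixes a trivialization over the reduced base. I would then lift along the tower $\Lambda/\mathfrak{m}^{k}$ by induction on $k$ (equivalently along small extensions in $\catgr$). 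Each inductive step is no longer a non-abelian matching problem but an \emph{affine} one: the extensions of a given trivialization to the next order form either the empty set or a torsor under an additive group of $\underline{\End}(V)$-valued cochains, whose obstruction and whose ambiguity are both sections of vector bundles over $M$. Non-emptiness at each order is guaranteed by the level-wise hypothesis, and in the smooth category, where $M$ admits partitions of unity, these additive sheaves are fine, so the local lifts provided by the atlas patch to a global one (vanishing of $H^{1}$ of a fine sheaf); this is the one place where care is needed when passing outside the smooth setting.

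The main obstacle is precisely this last point: arranging the order-by-order lifts to be natural in the Grassmann variable rather than merely to exist for each fixed $\Lambda$. I expect to discharge it by carrying out the affine extension once on the universal small extension, so that functoriality in $\Lambda$ is inherited by naturality of pullback along every morphism $\varphi\colon\Lambda\to\Lambda'$, and by invoking Prop.~\ref{isokmod} and Cor.~\ref{barfunct}, which identify the relevant spaces of $\olr$-linear data with honest super vector space data and thereby force the additive extension problem and its solutions to transform correctly under $\catgr$. Assembling the compatible choices then yields a single natural transformation $\Phi$, so $\ce\cong\cm\times\cv$ as super vector bundles.
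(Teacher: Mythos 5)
Your ``only if'' direction is fine, and it is essentially all the paper itself proves: the paper's own argument just unwinds the statement that a trivialization of $\ce$ is a functor morphism whose components $f_\Lambda$ trivialize each $\ce(\Lambda)$, and then asserts the equivalence without confronting the converse. You are right that the converse is where all the content lies, and right that the issue is coherence of the levelwise trivializations. But the step of your induction that fails is its base: ``triviality at the reduced level is furnished by the hypothesis, since it is detected already on the $\Lambda_0$- and $\Lambda_1$-points.'' What the hypothesis gives you there is triviality of $\ce(\realR)\cong E_{\bar{0}}$ and of $\ce(\Lambda_1)$, whose restriction to the reduced base (onto which $\cm(\Lambda_1)$ retracts) is the \emph{ungraded} bundle $E_{\bar{0}}\oplus E_{\bar{1}}$. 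To start your order-by-order lifting you need the reduced cocycle to split in the graded group $GL(V_{\bar{0}})\times GL(V_{\bar{1}})$, i.e.\ you need $E_{\bar{0}}$ and $E_{\bar{1}}$ trivial \emph{separately}; but ``$E_{\bar{0}}$ trivial and $E_{\bar{0}}\oplus E_{\bar{1}}$ trivial'' does not give ``$E_{\bar{1}}$ trivial,'' because vector bundles do not cancel, and the higher $\Lambda_n$-points only add triviality of $E_{\bar{0}}^{\oplus 2^{n-1}}\oplus E_{\bar{1}}^{\oplus 2^{n-1}}$, which does not help.

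This gap cannot be closed, because the converse of the proposition is false as stated. Take $\cm=(S^2,C^\infty_{S^2})$ purely even and let $\ce$ be the super vector bundle of rank $1|2$ with $E_{\bar{0}}$ the trivial line bundle and $E_{\bar{1}}=TS^2$; since the base has no odd or nilpotent functions, the transition cocycle is block diagonal and $\ce(\Lambda_n)\cong E_{\bar{0}}^{\oplus 2^{n-1}}\oplus E_{\bar{1}}^{\oplus 2^{n-1}}\cong(\realR\oplus TS^2)^{\oplus 2^{n-1}}$ for $n\geq 1$, which is trivial because $TS^2$ is stably trivial, while $\ce(\realR)$ is the trivial line bundle. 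Yet $\ce$ is not trivial: a trivialization is locally an $\cm$-family of $\olr$-linear automorphisms of $\cv$, hence by Prop.~\ref{isokmod} comes from an even, grading-preserving automorphism of $V$, and its $E_{\bar{1}}$-block would trivialize $TS^2$. So every $\Lambda$-point of $\ce$ is a trivial bundle although $\ce$ is not. Your instinct to distrust the levelwise statement was correct, but the cure is to strengthen the hypothesis --- e.g.\ to require the reduced bundle to be trivial as a $\intZ_2$-graded bundle --- after which your nilpotent-filtration and fine-sheaf argument is the right strategy in the smooth category; no coherence argument can manufacture the missing graded trivialization from the ungraded ones.
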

\begin{proof}
The bundle $\pi:\ce\to\cm$ is trivial if and only if there exists an isomorphism $f:\ce\to\cm\times\cv$
for some superrepresentable $\fieldK$-module $\cv$ such that $\pi=\pi_\cm\circ f$. This means that for 
every $\Lambda\in\catgr$, the components of $f$ must make the diagram
\begin{equation}
\xymatrix{f_\Lambda:\ce(\Lambda)\ar[rr] \ar[dr]_{\pi_\Lambda} & &
\cm(\Lambda)\times\cv(\Lambda) \ar[dl]^{\pi_{\cm,\Lambda}}\\
&\cm(\Lambda)&}
\end{equation}
commutative. That is precisely the condition for the triviality of the ordinary vector bundle 
$\pi_\Lambda:\ce(\Lambda)\to\cm(\Lambda)$.
\end{proof}

\subsection{Pullback of super vector bundles}

Let $\pi:\ce\to\cm$ be a super vector bundle, and let $f:\cm'\to\cm$ be a supersmooth morphism of
supermanifolds. Then we define the pullback of $\ce$ along $f$ as the functor
\begin{eqnarray}
f^*\ce:\catgr&\to&\catvbun\\
\Lambda &\mapsto& f_\Lambda^*(\ce(\Lambda)).
\end{eqnarray}
It is clear that by this construction, $f^*\ce$ is again a super vector bundle: pulling back each of the
trivial open subbundles $\ce_\alpha$ which make up the atlas of $\ce$ gives an atlas on $f^*\ce$
\cite{I-dZ2ks}. It is equally clear that the pullback bundle thus defined is indeed the pullback in
the homological sense: it completes the cartesian square
\begin{equation}
\begin{CD}
f^*\ce @>{\Pi_\ce}>> \ce\\
@V{\Pi_{f^*\ce}}VV @VV{\pi}V\\
\cm' @>f>> \cm
\end{CD}\qquad,
\end{equation}
where $\Pi_\ce$ is the canonical projection defined pointwise by the pullback bundles
$f_\Lambda^*\ce(\Lambda)$. As in the ordinary case, any given morphism $f:\cm'\to\cm$ gives in 
this way rise to a functor
\begin{equation}
f^*:\catsvbun(\cm)\to\catsvbun(\cm').
\end{equation}

In spite of the formal similarity, it is somewhat dangerous to think of the $\olr$-modules $\cv$ in
$\Pi_\cm:\cm\times\cv\to\cm$ as ``fibers'' in the set-theoretical sense, since they are parametrized 
not just by the ``points'', i.e.,
the underlying manifold of $\cm$, but by the odd dimensions of $\cm$ as well. However, this becomes
true again on the underlying manifold:
looking at an underlying point $x:\cp(\realR)\to\cm$ of $\cm$, we note that, since each $\cp(\realR)(\Lambda)$
consists of a single element,
\begin{equation}
x^*\ce\cong\cv,
\end{equation}
where $\cv$ is the typical fiber of $\ce$, i.e. a superrepresentable $\olr$-module. This means that the
pullback of $\ce$ along the inclusion $M_{red}\hookrightarrow \cm$ yields a canonical $\intZ_2$-graded
vector bundle over the ordinary manifold $M_{red}$.

\subsection{The tangent bundle $\ctm$ of a supermanifold $\cm$}

The tangent bundle $\ctm$ of a supermanifold $\cm$ is defined in the categorical framework as a functor
$\ctm:\catgr\to\catvbun$ in the following way: for every $\Lambda\in\catgr$ and every 
$\varphi:\Lambda\to\Lambda'$, set
\begin{eqnarray}
\label{tdef1}
\ctm(\Lambda) &:=& T(\cm(\Lambda)),\\
\nonumber
\ctm(\varphi) &:=& D(\cm(\varphi)):T(\cm(\Lambda))\to T(\cm(\Lambda')).
\end{eqnarray}
To every morphism $f:\cm\to\cm'$ of supermanifolds, we assign a functor morphism
\begin{eqnarray}
\label{tdef2}
\cd f:\ctm &\to& \ctm'\\
\nonumber
(\cd f)_\Lambda &:=& Df_\Lambda:T(\cm(\Lambda))\to T(\cm'(\Lambda)).
\end{eqnarray}
The assignments (\ref{tdef1}) and (\ref{tdef2}) define a functor $\ct:\catsman\to\catvbun^\catgr$ which
will be called the tangent functor. For the definition of a super vector bundle to make sense, we would
certainly expect the tangent bundle to be in $\catsvbun$, not just in $\catvbun^\catgr$. This is indeed
the case:

\begin{prop}
The tangent functor is a functor $\ct:\catsman\to\catsvbun$.
\end{prop}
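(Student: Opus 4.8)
The plan is to manufacture an atlas for $\ct\cm$ out of a given supersmooth atlas of $\cm$, and to show that the tangent functor carries each structural ingredient of that atlas (trivial superdomain charts, fibrewise-linear transition data) into the corresponding ingredient required by Def.~\ref{def:svbun}. Fix a supersmooth atlas $\{u_\alpha:\cu_\alpha\to\cm\}_{\alpha\in A}$ as in Def.~\ref{def:sman}, where each chart domain is a Banach superdomain $\cu_\alpha=\olv_\alpha\big|_{U_\alpha}$ sitting inside a superrepresentable $\olr$-module $\olv_\alpha$. First I would establish that $\ct$ sends each superdomain to a \emph{trivial} super vector bundle. For fixed $\Lambda$ the set $\cu_\alpha(\Lambda)=\olv_\alpha(\epsilon_\Lambda)^{-1}(U_\alpha)$ is an open subset of the Banach space $\olv_\alpha(\Lambda)=(\Lambda\otimes V_\alpha)_{\bar{0}}$, and the tangent bundle of an open subset of a Banach space is the canonical product $T(\cu_\alpha(\Lambda))\cong\cu_\alpha(\Lambda)\times\olv_\alpha(\Lambda)$, with the bundle projection equal to projection onto the first factor.

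Next I would check that this trivialization is natural in $\Lambda$, which is what upgrades a pointwise statement to a statement in $\catvbun^\catgr$. For $\varphi:\Lambda\to\Lambda'$ the transition map $\cu_\alpha(\varphi)$ is the restriction of the $\realR$-linear map $\olv_\alpha(\varphi)=(\varphi\otimes\id_{V_\alpha})\big|_{\olv_\alpha(\Lambda)}$, so its derivative at every point is that same linear map. In the chosen trivialization this means $\ct\cu_\alpha(\varphi)=D(\cu_\alpha(\varphi))$ acts as $\cu_\alpha(\varphi)$ on the base factor and as $\olv_\alpha(\varphi)$ on the fibre factor. Consequently the fibre functor $\Lambda\mapsto\olv_\alpha(\Lambda)$ together with the maps $\olv_\alpha(\varphi)$ is exactly the superrepresentable module $\olv_\alpha$, the projection agrees with $\Pi_{\cu_\alpha}$, and $\ct\cu_\alpha\cong\cu_\alpha\times\olv_\alpha$ is a trivial super vector bundle over $\cu_\alpha$ in the sense of the definition preceding Def.~\ref{def:svbun}. (That each $\Lambda$-point is a trivial bundle is of course consistent with Prop.~\ref{trivsvbun}.)

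It then remains to verify that $\{\cd u_\alpha:\ct\cu_\alpha\to\ct\cm\}_{\alpha\in A}$ is an atlas. Since each $u_\alpha$ is an open morphism which is an isomorphism onto the open subfunctor $\cm\big|_{U_\alpha}$, the induced $\cd u_\alpha$ is fibrewise a bundle isomorphism onto the restriction of $\ct\cm$; hence it is an open morphism in $\catvbun^\catgr$ and, as $\{u_\alpha\}$ covers $\cm$, the family $\{\cd u_\alpha\}$ is an open covering of $\ct\cm$. For the overlap condition of Def.~\ref{def:svbun}, the atlas data equip $\cu_{\alpha\beta}=\cu_\alpha\times_\cm\cu_\beta$ with a superdomain structure for which the projections $\Pi_\alpha,\Pi_\beta$ are supersmooth. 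Because the charts are local isomorphisms, applying $\ct$ gives $\ct\cu_\alpha\times_{\ct\cm}\ct\cu_\beta\cong\ct\cu_{\alpha\beta}$, which by the argument above is a trivial super vector bundle over $\cu_{\alpha\beta}$; and by clause~2 of Def.~\ref{bsdom} the fibrewise part of each derivative $\cd\Pi_\alpha,\cd\Pi_\beta$ is $\Lambda_{\bar{0}}$-linear, so by the chain rule these are $\cu_{\alpha\beta}$-families of $\olr$-linear morphisms (Def.~\ref{def:multmor}), i.e.\ morphisms of trivial super vector bundles. After refining the atlas so that the model fibres $\olv_\alpha$ are mutually isomorphic (automatic on overlaps, since the transition derivatives are isomorphisms), $\{\cd u_\alpha\}$ exhibits $\ct\cm$ as an object of $\catsvbun$.

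I expect the main obstacle to be precisely this last overlap verification: one must confirm that the fibre product taken in $\catvbun^\catgr$ actually lands in the subcategory of trivial super vector bundles, i.e.\ that the transition derivatives are genuine families of $\olr$-linear morphisms and not merely smooth fibrewise-linear maps. This is exactly the point where the defining $\Lambda_{\bar{0}}$-linearity clause of supersmoothness (Def.~\ref{bsdom}) is indispensable, and it is what carries the genuinely super-geometric content beyond the underlying classical fact that the tangent bundle of a Banach manifold is a vector bundle.
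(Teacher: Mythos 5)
Your proposal is correct and follows essentially the same route as the paper's proof: take a supersmooth atlas of $\cm$, observe that $\ct$ of a superdomain is the trivial bundle $\cu_\alpha\times\olv_\alpha$, and use clause~2 of Def.~\ref{bsdom} to see that the overlap projections $\cd\Pi_\alpha,\cd\Pi_\beta$ are families of $\olr$-linear morphisms, hence morphisms of trivial super vector bundles. You fill in one step the paper declares ``clear'' --- the naturality in $\Lambda$ of the trivialization, via the fact that $\olv_\alpha(\varphi)$ is linear and therefore equal to its own derivative --- which is a worthwhile addition but not a different argument.
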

\begin{proof}
Choose a supersmooth atlas $\{u_\alpha:\cu_\alpha\to\cm\}_{\alpha\in A}$ of $\cm$ which satisfies the 
conditions of Def.~\ref{def:sman}. Then all $\cu_\alpha$ are open domains in some superrepresentable
$\olr$-module $\cv$, so their tangent bundles are trivial and isomorphic to
\begin{equation}
\ct\cu_\alpha\cong\cu_\alpha\times\cv.
\end{equation}
It is clear that the tangent bundles $\{\ct\cu_\alpha\}_{\alpha\in A}$ of the coordinate domains form
an atlas of open subfunctors for the functor $\ctm\in\catvbun^\catgr$. It has to be shown that this
atlas satisfies the conditions of Def.~\ref{def:svbun}.

By Definition \ref{def:sman}, each intersection $\cu_\alpha\times_\cm\cu_\beta$ has the structure of a 
superdomain itself, and the projections to $\cu_\alpha,\cu_\beta$ are supersmooth. Thus, the intersection
$\ct\cu_\alpha\times_{\ctm}\ct\cu_\beta$ has the structure of a trivial super vector bundle as well.
Moreover, the maps $\cd\Pi_\alpha$ and $\cd\Pi_\beta$ are $\cu_\alpha\times_\cm\cu_\beta$-families of
morphisms of $\olr$-modules (cf. definition \ref{bsdom}). Therefore, we have the commutative square
\begin{equation}
\xymatrix{
\ct\cu_\alpha\times_{\ctm}\ct\cu_\beta \ar[r]^>>>>>{\cd\Pi_\beta} \ar[d]_{\cd\Pi_\alpha} &%
\ct\cu_\beta \ar[d]\\
\ct\cu_\alpha \ar[r] & \ctm
},
\end{equation}
which is precisely the second condition of \ref{def:svbun}.
\end{proof}

One can also show \cite{I-dZ2ks} that the sections of the tangent bundle $\ctm$ thus constructed have 
the properties of vector fields in the sense that an action of these sections on the sections of
any ``natural bundle'' (like functions, tensor fields, etc.) can be defined which possesses the properties
of the Lie derivative \cite{GLS:Invariant}.

\subsection{The endomorphism bundle $\cend(\ce)$ of a super vector bundle}

To construct bundles of morphisms, it is most convenient to use the intuition of
Section \ref{sect:functpoints}, in particular Prop.~\ref{dirprod}. In the spirit of the family point
of view, one
should consider a super vector bundle $\pi:\ce\to\cm$ as an object in the category $\catsman/\cm$. 
The fact that
it is a vector bundle and not an arbitrary fiber bundle is then translated into an 
$\cm^*(\olr)=\cm\times\olr$ action on $\ce$ (compare to Section \ref{sect:functpoints}). The dual bundle 
$\ce^*$ is defined to be
\begin{equation}
\ce^*:=\ihom_{\catsvbun(\cm)}(\ce,\cm^*(\olr)),
\end{equation}
i.e., it is an inner Hom-object in the category of super vector bundles over $\cm$. In the same vein, 
one defines the endomorphism bundle of $\ce$.

\begin{dfn}
Let $\pi:\ce\to\cm$ be a super vector bundle. The endomorphism bundle of $\ce$ is defined to be the
inner Hom-object
\begin{equation}
\cend(\ce):=\ihom_{\catsvbun(\cm)}(\ce,\ce).
\end{equation}
\end{dfn}

The existence of inner Hom-objects in the categories $\catsvbun/\cm$ can be deduced from their existence
in $\catsvect$ and local triviality, but for nontrivial super vector bundles, this is a rather tedious 
job. We take these objects for granted, relying on the work of Molotkov \cite{I-dZ2ks}, \cite{G:aGoTt}
and the
general theory of topoi, expounded for example in \cite{SoaE:ATTC} and \cite{SiGaL}.

\subsection{The change of parity functor $\olpi$ for super vector bundles}

The functor $\olpi$ defined in Definitions \ref{def:copf2} and \ref{def:copf3} extends to super vector
bundles. For any trivial super vector bundle $\cm\times\cv$, we set
\begin{equation}
\olpi(\cm\times\cv):=\cm\times\olpi(\cv).
\end{equation}

To show that an arbitrary super vector bundle $\pi:\ce\to\cm$ gets mapped into a well defined 
parity-reversed version $\olpi(\ce)$, note first that for any pair $V,V'$ of $\realR$-super vector spaces, 
one has the natural isomorphism
\begin{equation}
\Hom_{\catsvect_\realR}(V,V')\cong\Hom_{\catsvect_\realR}(\Pi(V),\Pi(V')),
\end{equation}
which entails
\begin{equation}
\label{pr2}
\ihom_{\catsvect_\realR}(V,V')=\cl_{\olr}(V;V')\cong\cl_{\olr}(\Pi(V);\Pi(V')).
\end{equation}
As remarked above, Thm.~\ref{thm:smodsman} ensures the existence of inner Hom-objects in the category
$\catmod_{\olk}(\catsman)$. This allows us to extend equation (\ref{pr2}) to an isomorphism of families
of $\olr$-linear maps (cf.~Sections \ref{sect:multmor} and \ref{sect:ihom})
\begin{equation}
\label{pr3}
L_{\olr}(\cm;\cv;\cv')\cong L_{\olr}(\cm;\Pi(\cv);\Pi(\cv')).
\end{equation}
This is precisely what we need, because $L_{\olr}(\cm;\cv;\cv')$ is, of course, isomorphic to the set
of morphisms of trivial super vector bundles $\Hom_{\catsvbun}(\cm\times\cv,\cm\times\cv')$.

Let now $\pi:\ce\to\cm$ be an aribtrary super vector bundle, and let $\{\ce_\alpha\}_{\alpha\in A}$ be an
atlas of trivial super vector bundles. Then we can just take the atlas 
$\{\Pi(\ce_\alpha)\}_{\alpha\in A}$ of parity reversed bundles to define the bundle $\Pi(\ce)$. This
atlas satisfies the conditions of Definition \ref{def:svbun}: to each overlap $\ce_\alpha\times_\ce\ce_\beta$,
one assigns its parity reversed counterpart, and to each of the projections
$\Pi_\alpha:\ce_\alpha\times_\ce\ce_\beta\to\ce_\alpha$, one assigns its image under the isomorphism
(\ref{pr3}). For a more detailed and formal exposition, cf.~\cite{G:aGoTt}, \cite{I-dZ2ks}.

\chapter{Superconformal surfaces}
\label{ch:sconf}

The widely used term \emph{super Riemann surface} suggests that there should exist a unique
generalization of Riemann surfaces to supergeometry. This
is not at all the case. Not all of the features of a Riemann surface have a unique super analog.
In particular, there are infinitely many types of superconformal structures (four families and
several exceptional ones) which can serve as super analogs of the conformal structure of a
Riemann surface. Superconformal structures are not the same as supercomplex structures anymore,
instead, a supercomplex structure is a particular superconformal structure.

The super Riemann surfaces used in superstring theory are only one of these infinitely many 
kinds of superconformal surfaces, whose specific features make it especially useful for physics.
They are the surfaces associated to the algebra $\fkl(1|1)$, which can be seen as a $1|1$-dimensional
analog of the contact vector fields.
In particular, the algebra $\fkl(1|1)$ possesses a non-trivial central extension, which
is an indispensible ingredient of the field theory constructed on it.

\section{The superconformal algebras}

The history of the superalgebras which are nowadays subsumed under the term ``superconformal''
reaches back to the days of the so-called dual models, a topic in the study of hadronic and
mesonic resonances whose
ideas and methods directly led to the emergence of string theory during the seventies. 
The superconformal algebras comprise four main series, accompanied by several exceptional ones. The main
series were discovered in the early seventies, first by Neveu and Schwarz \cite{Fdmop} and Ramond
\cite{Dtfff}, and then by Ademollo, Brink et al.
\cite{DswU1cs}, \cite{Ssacc}. Schwimmer and Seiberg later found an additional one-parameter family of 
deformations for one type of these series. The exceptional ones were announced in \cite{S:Five},
\cite{LSoST}. The completeness of the classification by the four series was conjectured
in \cite{KvdL:On}, and their central extensions were completely classified. An extended list, including
the exceptional algebras and several additional extensions, was conjectured to be the complete
classification in \cite{LSoST} and seems to be the definitive answer today. The authors of
\cite{LSoST} also argue for the use of the term ``stringy'' instead of ``superconformal'', since 
the latter seems to suggest that these algebras preserve some geometric structure up to a factor. But
only a few of them indeed do that, while the others merely inherit their name from containing the algebra
of conformal transformations of $\complexC^\times$.

\subsection{Definition}

Although the geometric structures whose moduli space we want to construct in this work are preserved by only
two particular superconformal algebras, we will give a brief overview of their definitions and
explicit realizations in general in this section. Each of these algebras defines its own species 
of superconformal
structure on a supersurface, thus each of them could rightfully be associated with its own kind of
super Riemann surface. Each of them therefore poses a moduli problem, of which the one studied in this
work (and dubbed ``the'' super moduli space in many physics papers) is just one particular case.

We study the complex punctured superspace $\ovlc{1|n}\setminus\{0\}$, i.e., the superspace $\ovlc{1|n}$ with
the stalk at zero removed.
Denote by $(z,\theta_1,\ldots,\theta_n)$ its coordinates and by 
$\cp^L(n):=\complexC[z,z^{-1},\theta_1,\ldots,\theta_n]$ the algebra of Laurent polynomials on
$\ovlc{1|n}\setminus\{0\}$. Then the algebra $\der(\cp^L(n))$ of its derivations is
\begin{equation}
\fvect^L(1|n)=\left\{X=f_0\pderiv{}{z}+\sum_{i=1}^n f_i\pderiv{}{\theta_i}\mid%
f_i\in\cp^L(n),\,\, 0\leq i\leq n\right\},
\end{equation}
the algebra of vector fields on the punctured complex superplane with Laurent coefficients. 
All superconformal algebras can be realized as subalgebras of $\fvect^L(1|n)$.

\begin{dfn}
The Witt algebra (or centerless Virasoro algebra) $\witt$ is the algebra
\begin{equation}
\der(\cp^L(0))=\fvect^L(1|0)=\left\{X=f(z)\pderiv{}{z}\mid f(z)\in\cp^L(0)\right\}
\end{equation}
of derivations of the Laurent polynomials on $\complexC^\times$.
\end{dfn}

The following definition is borrowed from \cite{KvdL:On}.

\begin{dfn}
\label{def:sconf}
A Lie superalgebra $\fg$ is called superconformal, if
\begin{enumerate}
\item $\fg$ is simple,
\item $\fg$ contains $\witt$ as a subalgebra, and
\item $\fg$ has growth 1.
\end{enumerate}
\end{dfn}

The third condition means that whenever one takes a finite set of elements $x_1,\ldots,x_k\in\fg$ and
computes the linear span $V_j$ of commutators of the $x_i$ of length $\leq j$, then 
$\dim V_j\leq C(x_1,\ldots,x_k)\cdot j$, where $C$ is a constant independent of $j$.

Since it will turn out that every superconformal algebra is a subalgebra of some $\fvect^L(1|n)$, each of
them has a standard $\intZ$-grading, but some have additional nonstandard gradings. Actually, the 
definition of stringy
superalgebras given in \cite{LSoST} slightly extends the one of superconformal algebras above. The authors 
of \cite{LSoST} carry over
a definition of O. Mathieu, who introduced the notion of a \emph{deep algebra} for simple $\intZ$-graded
Lie algebras \cite{M:Classification} to the case of superalgebras. Then an infinitely deeply $\intZ$-graded
superalgebra is called \emph{stringy}, according to \cite{LSoST}, if it possesses a
root vector which does not act locally nilpotently. This will be the case for all superconformal algebras
(after Def.~\ref{def:sconf}), since they have the root vector $\pderiv{}{z}$. A detailed account can
 be found in \cite{LSoST} and
the references therein. For our purposes, it will suffice to just outline the explicit construction of
the superconformal algebras and the geometric structures they preserve.

\subsection{The algebras $\fvect^L(1|n)$ and $\fsvect^L(1|n)$}

Obviously, the algebras $\fvect^L(1|n)$ are all superconformal. They comprise the first series of those
algebras and can be viewed as the algebra of holomorphic vector fields on
$\ovlc{1|n}\setminus\{0\}$. 

The second series is the divergence free vector fields. For a vector field
$X=f_0\pderiv{}{z}+\sum_{i=1}^n f_i\pderiv{}{\theta_i}\in\fvect^L(1|n)$, its divergence is defined as
\begin{equation}
\diver\, X=\pderiv{f_0}{z}+\sum_{i=1}^n(-1)^{p(f_i)}\pderiv{f_i}{\theta_i}.
\end{equation}
Then the algebras
\begin{equation}
\fsvect^L_\lambda(1|n):=\{X\in\fvect^L(1|n)\mid\diver(z^\lambda X)=0\}
\end{equation}
are superconformal if $\lambda\notin\intZ$. If one defines the standard holomorphic part of the
volume element with constant coefficients as
\begin{equation}
\mathrm{dvol}(z,\theta_1,\ldots,\theta_n)=dz\otimes\pderiv{}{\theta_1}\otimes\cdots\otimes\pderiv{}{\theta_n},
\end{equation}
then
\begin{equation}
L_X(z^\lambda\cdot \mathrm{dvol}(z,\theta_1,\ldots,\theta_n))=0\qquad\forall X\in\fsvect^L_\lambda(1|n).
\end{equation}
Thus one can view these as the algebras which preserve the $z^\lambda$-twisted holomorphic part of the
standard volume element.

One easily verifies that $\fsvect^L_\lambda(1|n)\cong\fsvect^L_\mu(1|n)$ if and only if
$\lambda-\mu\in\intZ$. If $\lambda\in\intZ$, the algebra $\fsvect^L_\lambda(1|n)$ is not
simple (so it is not superconformal according to Definition \ref{def:sconf}). It contains, however, a
simple ideal ${\fsvect^{L}_\lambda}'(1|n)$ of codimension $(1|0)$ if $n$ is even, and of codimension 
$(0|1)$, if $n$ is odd. This ideal is described by the exact sequence
\begin{equation}
0 \longrightarrow {\fsvect^{L}_\lambda}'(1|n) \longrightarrow \fsvect^L_\lambda(1|n) \longrightarrow%
f(z)\theta_1\cdots\theta_n\pderiv{}{z} \longrightarrow 0.
\end{equation} 

\subsection{The algebras of contact vector fields and M\"obius contact fields}

These comprise the third and fourth series of superconformal algebras. They owe their names to the
fact that they are supergeometric generalizations of the classical contact structures. 
Define the following one-forms on $\ovlc{1|n}\setminus\{0\}$:
\begin{eqnarray}
\label{conform}
\alpha_n &=& dz+\sum_{i=1}^n\theta_i d\theta_i,\\
\label{mconform}
\alpha_n^M &=& dz+\sum_{i=1}^{n-1}\theta_i d\theta_i+z\theta_n d\theta_n.
\end{eqnarray}
The first one will be called a \emph{contact form}, the second one a \emph{M\"obius contact form}. Then
we define the algebra of contact vector fields on $\ovlc{1|n}\setminus\{0\}$ as
\begin{equation}
\label{def:fkl}
\fkl(1|n) := \left\{X\in\fvect^L(1|n)\mid L_X(\alpha_n)=f_X\cdot\alpha_n\,\,%
\textrm{for some }f_X\in\cp^L(n)\right\},
\end{equation}
and the algebra of M\"obius contact vector fields as
\begin{equation}
\fkm(1|n) := \left\{X\in\fvect^L(1|n)\mid L_X(\alpha_n^M)=f_X\cdot\alpha_n^M\,\,%
\textrm{for some }f_X\in\cp^L(n)\right\}.
\end{equation}
These definitions mean that these vector fields preserve the forms (\ref{conform}) and (\ref{mconform}) up
to a factor, or equivalently, that they preserve their kernels. Therefore one often describes them
as the algebras which preserve the Pfaff equations 
\begin{equation}
\alpha_n(X)=0\qquad\mathrm{for}\,\,X\in\fvect^L(1|n),
\end{equation}
resp. $\alpha_n^M(X)=0$. It is well known that in the case of $\fkl(1|0)=\witt$, i.e., in 
the absence of odd dimensions, 
the contact vector fields coincide with the conformal vector fields on $\complexC^\times$. There is no
analog of the M\"obius superalgebra for the non-super case.

All algebras $\fkl(1|n)$ and $\fkm(1|n)$, except for $\fkl(1|4)$ and $\fkm(1|5)$, are simple. The latter two
contain a simple ideal of codimension $(1|0)$, denoted ${\fkl}'(1|4)$, resp. of codimension $(0|1)$ denoted
${\fkm}'(1|5)$. These two
simple subalgebras are, in fact, two of the four exceptional superconformal algebras. The other two
are $\mathfrak{m}^L(1)$, the algebra preserving the form $\beta=d\theta_1+zd\theta_2+\theta_2dz$ on
$\ovlc{1|2}\setminus\{0\}$ up to a factor, and $\mathfrak{kas}^L\subset\fkl(1|6)$, which is perhaps the
only truly exceptional one, generated by certain polynomial functions on $\ovlc{1|6}\setminus\{0\}$
(see \cite{S:Five}, \cite{LSoST} for the definitions and \cite{LJ:Super} for an extensive review).

For an explicit construction of these algebras, another description is more
useful. First, note that $\cp^L(N)$ can be endowed with a Lie superalgebra structure by introducing the
\emph{contact bracket} on it:
\begin{equation}
\{f,g\}_{kb}:=(2-E)(f)\pderiv{g}{z}-\pderiv{f}{z}(2-E)(g)-\{f,g\}_{pb}.
\end{equation}
Here, $E=\sum_{i=1}^N\theta_i\pderiv{}{\theta_i}$ is the so-called Euler operator, and
$\{f,g\}_{pb}$ is the \emph{Poisson bracket}:
\begin{equation}
\{f,g\}_{pb}:=-(-1)^{p(f)}\sum_{i=1}^N\pderiv{f}{\theta_i}\pderiv{g}{\theta_i}
\end{equation}

In \cite{LJ:Super}, it is shown that $\cp^L(N)$ with its contact bracket is isomorphic to $\fkl(1|N)$.
Specifically, we have:
\begin{prop}
\label{p1}
To every $f\in\cp^L(N)$, assign a vector field $K_f$ by setting
\begin{equation}
K_f:=(2-E)(f)\pderiv{}{z}-H_f+\pderiv{f}{z}E,
\label{kf}
\end{equation}
where
\begin{equation}
H_f:=-(-1)^{p(f)}\sum_{i=1}^N\pderiv{f}{\theta_i}\pderiv{}{\theta_i}
\end{equation}
assigns to each $f$ a Hamiltonian vector field. Then 
\begin{equation}
L_{K_f}(\alpha_N)=2\pderiv{f}{z}\alpha_N,
\end{equation}
and
\begin{equation}
\fkl(1|N)=\mathrm{Span}\{K_f\big|f\in\cp^L(N)\}.
\end{equation}
Furthermore,
\begin{equation}
[K_f,K_g]=K_{\{f,g\}_{kb}}.
\end{equation}
\end{prop}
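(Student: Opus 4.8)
The plan is to avoid expanding the graded commutator $[K_f,K_g]$ of the two explicit vector fields term by term, and instead to push everything through the contact one-form $\alpha_N$ by Cartan calculus. The cornerstone is a \emph{reconstruction formula} recovering $f$ from $K_f$: contracting $K_f$ into $\alpha_N=dz+\sum_{i}\theta_i d\theta_i$, the $\partial/\partial z$-component of $K_f$ is $(2-E)(f)$ and its $\partial/\partial\theta_i$-component is $(-1)^{p(f)}\pderiv{f}{\theta_i}+\theta_i\pderiv{f}{z}$, so that
\begin{equation}
\iota_{K_f}\alpha_N=(2-E)(f)+\sum_i\theta_i\pderiv{f}{\theta_i}=(2-E)(f)+E(f)=2f,
\end{equation}
the $\theta_i\theta_i$-terms dropping out and the two factors $(-1)^{p(f)}$ (one from the coefficient, one from pulling $\iota_{K_f}$ past the odd function $\theta_i$ in the conventions of Section~\ref{sect:tangent}) cancelling for both parities. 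This immediately yields assertion (2): $f\mapsto K_f$ is injective with one-sided inverse $X\mapsto\tfrac12\iota_X\alpha_N$, and the splitting of $\ctm$ induced by $\alpha_N$ forces any $X\in\fkl(1|N)$ to equal $K_{\frac12\iota_X\alpha_N}$, so the $K_f$ exhaust $\fkl(1|N)$.

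Next I would prove the first displayed identity from $L_{K_f}=d\circ\iota_{K_f}+\iota_{K_f}\circ d$. Here $d\iota_{K_f}\alpha_N=d(2f)=2\,df$, while $d\alpha_N=\sum_i d\theta_i\wedge d\theta_i$ (recall from \eqref{svectextalg} that $d\theta_i\wedge d\theta_i\neq0$ in the super setting), so $\iota_{K_f}d\alpha_N$ supplies the remaining $d\theta_i$-terms. The cleanest way to read off the coefficient is: once one knows $L_{K_f}\alpha_N$ is a multiple $c\,\alpha_N$, comparing $dz$-components forces $c=2\pderiv{f}{z}$, since $\iota_{K_f}d\alpha_N$ contributes no $dz$-part and $d(2f)$ contributes $2\pderiv{f}{z}\,dz$. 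Establishing that the result is proportional to $\alpha_N$ at all is the genuinely computational part, requiring the $d\theta_i$-terms of $2\,df$ and of $\iota_{K_f}d\alpha_N$ to combine into $2\pderiv{f}{z}\sum_i\theta_i d\theta_i$. As a corollary I would record that contact fields are closed under the bracket: since $L_{[X,Y]}=[L_X,L_Y]$ graded, $L_X\alpha_N,L_Y\alpha_N\in\co\cdot\alpha_N$ implies $L_{[X,Y]}\alpha_N\in\co\cdot\alpha_N$; hence $[K_f,K_g]=K_h$ for a \emph{unique} $h$, and by the reconstruction formula $h=\tfrac12\iota_{[K_f,K_g]}\alpha_N$.

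It then remains only to compute this $h$, which is where the bracket formula falls out. I would apply the graded Cartan identity $\iota_{[X,Y]}=L_X\iota_Y-(-1)^{p(X)p(Y)}\iota_Y L_X$ with $X=K_f$, $Y=K_g$, feeding in $\iota_{K_g}\alpha_N=2g$ and $L_{K_f}\alpha_N=2\pderiv{f}{z}\alpha_N$. The term $\iota_{K_g}\big(2\pderiv{f}{z}\alpha_N\big)$ produces a factor $(-1)^{p(f)p(g)}$ from pushing $\iota_{K_g}$ past $\pderiv{f}{z}$, which cancels against the $(-1)^{p(f)p(g)}$ in the identity, giving
\begin{equation}
\iota_{[K_f,K_g]}\alpha_N=2K_f(g)-4\,\pderiv{f}{z}\,g.
\end{equation}
Now $K_f(g)=(2-E)(f)\pderiv{g}{z}-H_f(g)+\pderiv{f}{z}E(g)$, and the key algebraic observation is that $H_f(g)=\{f,g\}_{pb}$ by the very definitions of $H_f$ and the Poisson bracket. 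Substituting and using $E(g)-2g=-(2-E)(g)$ turns the right-hand side into exactly $2\{f,g\}_{kb}$, whence $h=\{f,g\}_{kb}$ and therefore $[K_f,K_g]=K_{\{f,g\}_{kb}}$.

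The main obstacle is not conceptual but the super-sign bookkeeping pervading every step: the contraction signs when moving $\iota_X$ past odd functions and odd coordinates, the symmetric wedge $d\theta_i\wedge d\theta_i$, and the double-grading signs in the graded Leibniz and Cartan identities must all be taken consistently with the parity conventions fixed in Section~\ref{sect:tangent}, and one must verify throughout that $p(K_f)=p(f)$. The two places most sensitive to a convention slip are the cancellation of the paired factors $(-1)^{p(f)p(g)}$ in the Cartan step and the sign in $\iota_{K_f}\alpha_N=2f$; a wrong sign in either would propagate into a spurious bracket. Hence the real content of the proof is checking that these conventions are mutually coherent, after which the bracket homomorphism is essentially forced by the reconstruction formula together with the uniqueness of the generating function.
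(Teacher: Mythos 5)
The paper offers no proof of this Proposition at all --- immediately after the statement it writes ``We do not want to prove this statement \dots but refer to \cite{LSoST}, \cite{LJ:Super}'' --- so your argument is not a variant of the paper's proof but a self-contained replacement for it, and the route you choose (Cartan calculus on $\alpha_N$ rather than a term-by-term expansion of $[K_f,K_g]$) is sound. I checked the steps you actually carry out against the paper's conventions: the reconstruction formula $\iota_{K_f}\alpha_N=2f$ is correct (the $\pderiv{}{\theta_i}$-coefficient of $K_f$ is $(-1)^{p(f)}\pderiv{f}{\theta_i}+\pderiv{f}{z}\theta_i$, the two parity signs cancel, and the $\theta_i^2$-term dies); the Cartan-identity step gives $\iota_{[K_f,K_g]}\alpha_N=2K_f(g)-4\pderiv{f}{z}g$ with the paired factors $(-1)^{p(f)p(g)}$ cancelling exactly as you say; and since $H_f(g)=\{f,g\}_{pb}$ by definition, $K_f(g)-2\pderiv{f}{z}g=(2-E)(f)\pderiv{g}{z}-\pderiv{f}{z}(2-E)(g)-\{f,g\}_{pb}=\{f,g\}_{kb}$, which closes the bracket identity once one knows $[K_f,K_g]=K_h$ for a unique $h$. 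Two points should be made explicit before this counts as a full proof. First, the exhaustion claim $\fkl(1|N)=\{K_f\}$ needs more than injectivity of $f\mapsto K_f$: you must show a contact field lying in $\cd=\ker\alpha_N$ vanishes, which uses maximal non-integrability (for $Y,Z\in\cd$ one has, up to parity signs, $(L_Y\alpha_N)(Z)=-\alpha_N([Y,Z])$, so $L_Y\alpha_N\in\co\cdot\alpha_N$ forces $[Y,\cd]\subseteq\cd$ and hence $Y=0$); this is the $N$-variable analogue of Lemma~\ref{lem:split}. Second, the one load-bearing computation --- that $2\,df+\iota_{K_f}d\alpha_N$ is proportional to $\alpha_N$, i.e.\ that the $d\theta_i$-coefficients assemble to $2\pderiv{f}{z}\theta_i$ using the symmetric wedge $d\theta_i\wedge d\theta_i\neq 0$ of (\ref{svectextalg}) --- is exactly the step you defer, and it is the only place where the precise form of $K_f$ (the sign of $H_f$ and the $\pderiv{f}{z}E$ term) is actually tested. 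It does go through, so the remaining gap is one of labour rather than of substance, but as written the argument is an outline whose central identity is asserted rather than verified.
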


We do not want to prove this statement, which can also be extended to spaces with more than one
even coordinate, but refer to \cite{LSoST}, \cite{LJ:Super} for an extensive analysis. 
For the M\"obius algebras, an analogous description is also given in \cite{LSoST}, \cite{LJ:Super}.

\subsection{Central extensions, critical dimensions}

These infinitely many superconformal algebras all define superconformal surfaces of dimension $1|n$,
but not all of these surfaces are suitable for superconformal field theories. A superconformal algebra
can only be the algebra of a two-dimensional superconformal field theory if it possesses a nontrivial
central extension. These extensions have been classified (for the algebras known at that time) 
in \cite{KvdL:On}. For a complete review, see \cite{LJ:Super}. In general, 
the maximal dimension of a nontrivial central extension of a Lie algebra $\fg$ is the dimension of its
second cohomology group $H^2(\fg)$. To find suitable representatives for these cohomology classes can
be a quite nontrivial job. We will only list the extensions here, for information on how to obtain
them, and how to represent them by superfields, see \cite{KvdL:On}, \cite{LJ:Super},
\cite{S:Thecocycles}.

Of the contact algebras, the following ones have nontrivial central extensions: 
$\witt$ has a one-dimensional extension, the Virasoro algebra $\mathfrak{vir}$. The
algebras $\fkl(1|1)$, $\fkl(1|2)$, $\fkl(1|3)$ each have a one-dimensional extension, 
while ${\fkl}'(1|4)$ has
a 3-dimensional one.
These extensions are called the Neveu-Schwarz algebras $\fns(1)$, $\fns(2)$, $\fns(3)$ and $\fns(4)$. The
latter is one particular of the three extensions of $\fkl(1|4)$. 
For $n\geq 5$, none of the $\fkl(1|n)$ possesses an extension.

Of the M\"obius contact algebras, each of the algebras
$\fkm(1|1)$, $\fkm(1|2)$, $\fkm(1|3)$, $\fkm(1|4)$ has a one-dimensional extension. 
These extensions are dubbed the Ramond algebras
$\fram(n)$. None of the $\fkm(1|n)$ for $n\geq 5$ has a nontrivial extension. 

Of the other superconformal algebras, $\fvect^L(1|1)$, $\fvect^L(1|2)$, $\fsvect^L_\lambda(1|2)$ and 
$\mathfrak{m}^L(1)$ each have a unique nontrivial central extension.

This is the complete list, so altogether, there are at best 15 possibilites to define superconformal
field theories if one adopts Definition \ref{def:sconf} for a superconformal algebra. There might,
however, be other possbilities if one allows loop or Kac-Moody algebras. Yet, not all of these 15
theories can describe the worldsheets of a superstring theory. In order to be able to define such a
theory free of anomalies, there must exist a spacetime of dimension $D_{crit}\geq 0$ (obviously, one would
prefer $D_{crit}\geq 4$), in which the (super-)conformal anomaly cancels \cite{GSW:Superstring},
\cite{DP:Thegeometry}. This is quite a severe restriction. The Virasora algebra has $D_{crit}=26$,
$\fns(1)$ and $\fram(1)$ both have $D_{crit}=10$, and $\fns(2)$ and $\fram(2)$ have $D_{crit}=2$. For
the other centrally extendable algebras, the critical dimension is $\leq 1$ \cite{LS:Critical}.

\section{Spin surfaces and super Riemann surfaces}

\subsection{Complex supersurfaces}
\label{sect:compss}

In all of the following work we will be concerned with orientable smooth supersurfaces of
dimension $2|2$. Let us assume we are given such a surface $\cm$ together with a complex structure, that
is, we have a complex supermanifold $\cm$ of dimension $1|1$. Let $(z,\theta)$ and $(z',\theta')$ be two
overlapping complex coordinate charts on $\cm$. Then the transition function between these must have the form
\begin{eqnarray}
\label{transf}
z' &=& f(z)\\
\nonumber
\theta' &=& g(z)\theta,
\end{eqnarray}
because it must be a morphism of superalgebras on each stalk. Writing the structure sheaf as
$\co_\cm=\co_{\cm,\bar{0}}\oplus\co_{\cm,\bar{1}}$, we find that $\co_{\cm,\bar{0}}$ is simply the sheaf
$\co_M$ of holomorphic functions on the underlying Riemann surface $M=\cm_{red}$,\footnote{In complex
supergeometry, also even nilpotent elements can occur in the structure sheaf. In this case, 
$\cm_{red}$ denotes the complex analytic space where one has only divided out the ideal of odd elements from
the structure sheaf, while $\cm_{rd}$ denotes the underlying space, i.e. the completely reduced one.
In this work, we do not have to make a distinction, since no non-reduced ordinary complex spaces occur.}  
while $\co_{\cm,\bar{1}}$ is
a sheaf of locally free modules of rank $0|1$ over $\co_{\cm,\bar{0}}$. Thus, setting 
\begin{equation}
L:=\Pi(\co_{\cm,\bar{1}})
\end{equation}
turns $\cm$ into a Riemann surface endowed with a locally free sheaf of $\co_M$-modules of rank 1, i.e., a
line bundle. 

Conversely, it is obvious that starting with a pair $(M,L)$ consisting of a Riemann surface and a holomorphic
line bundle $L$, we can produce a $1|1$-dimensional complex supermanifold $\cm$ by just setting
$\cm=(M,\co_\cm=\co_M\oplus\Pi(\Gamma(L)))$, where $\Gamma(L)$ is the sheaf of holomorphic sections of $L$.
Thus we have shown

\begin{prop}
\label{isosrs}
There is a bijection between the set of pairs $(M,L)$ of Riemann surfaces with a holomorphic line bundle
and the set of complex supermanifolds $\cm$ of dimension $1|1$.
\end{prop}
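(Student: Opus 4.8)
The plan is to verify that the two assignments already exhibited in the text are well defined and mutually inverse, namely $\cm\mapsto(M,L)$ with $M=\cm_{red}$ and $L:=\Pi(\co_{\cm,\bar1})$, and $(M,L)\mapsto(M,\co_M\oplus\Pi(\Gamma(L)))$. The conceptual key, which I would establish first, is that a complex supermanifold of odd dimension exactly one is \emph{rigidly split}: with a single odd coordinate $\theta$ satisfying $\theta^2=0$, the nilpotent ideal $\cj=\co_{\cm,\bar1}\oplus\co_{\cm,\bar1}^2$ already satisfies $\cj=\co_{\cm,\bar1}$ and $\cj^2=0$, so the filtration collapses after one step. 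Consequently the general transition function of Section \ref{sect:explmor}, which may carry higher nilpotent corrections, reduces here to exactly the form (\ref{transf}): an even coordinate can receive no correction, since the only candidate term $h(z)\theta$ is odd and any quadratic term $\theta^2$ vanishes, and an odd coordinate can only be rescaled by an even function. This rigidity is what substitutes for Batchelor's theorem, which fails in the complex category in general.

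For the forward direction I would argue as follows. Given $\cm=(M,\co_\cm)$, the reduced space $M=\cm_{red}$ is a Riemann surface. Because $\cj^2=0$, there are locally no even nilpotents (the single possible source $\theta^2$ vanishes), so $\co_{\cm,\bar0}=\co_M$; this is exactly the point flagged in the footnote of Section \ref{sect:compss}, which here resolves in our favour. The odd part $\co_{\cm,\bar1}$ is then, over each chart, the free rank-one $\co_M$-module $\co_M\,\theta$, and the patching relations $\theta'=g(z)\theta$ show it to be a locally free sheaf of rank one whose transition cocycle $\{g_{\alpha\beta}\}$ is precisely that of a holomorphic line bundle (up to the standard coordinate/section convention). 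Applying the parity shift yields the line bundle $L:=\Pi(\co_{\cm,\bar1})$, well defined up to isomorphism. For the reverse direction, given $(M,L)$ I would set $\co_\cm:=\co_M\oplus\Pi(\Gamma(L))$ with the supercommutative product in which the odd summand squares to zero; over any trivialising chart $U$ for $L$ this sheaf is isomorphic to $\co_U\otimes\wedge[\theta]=\co_U\oplus\co_U\theta$, so $(M,\co_\cm)$ is locally isomorphic to the model $\ovlc{1|1}$ and hence is a complex supermanifold of dimension $1|1$, with transition maps of the form (\ref{transf}).

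Finally I would check that the two constructions are inverse. Starting from $\cm$, extracting $(M,L)$ and rebuilding $\co_M\oplus\Pi(\Gamma(L))$ recovers $\co_\cm$: the parity grading furnishes the canonical splitting $\co_\cm\cong\co_M\oplus\co_{\cm,\bar1}$ of sheaves of superalgebras (canonical because $\cj^2=0$ leaves nothing to obstruct it), and $\Pi(\Gamma(L))=\Pi\bigl(\Pi(\co_{\cm,\bar1})\bigr)=\co_{\cm,\bar1}$ since $\Pi^2=\id$. Conversely, starting from $(M,L)$, the reduced space of the constructed $\cm$ is $M$ and its odd part is $\Pi(\Gamma(L))$, whence $\Pi(\co_{\cm,\bar1})=\Pi\bigl(\Pi(\Gamma(L))\bigr)\cong L$. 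The only place where the $1|1$ hypothesis is genuinely used is in securing both $\co_{\cm,\bar0}=\co_M$ and the canonical algebra splitting; everywhere else the argument is a routine unwinding of definitions. I therefore do not expect a deep obstacle here, only the need to be careful that no even nilpotents intrude and that the splitting is canonical rather than merely chosen — exactly the subtlety that makes the analogous statement \emph{false} in higher odd dimension.
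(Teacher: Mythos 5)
Your proposal is correct and follows essentially the same route as the paper: the paper likewise observes that with a single odd coordinate the transition functions are forced into the form $z'=f(z)$, $\theta'=g(z)\theta$, reads off $\co_{\cm,\bar{0}}=\co_M$ and identifies $\Pi(\co_{\cm,\bar{1}})$ as a line bundle, and inverts the construction by setting $\co_\cm=\co_M\oplus\Pi(\Gamma(L))$. Your write-up merely makes explicit the points the paper leaves implicit (the vanishing of $\cj^2$, the absence of even nilpotents, and the verification that the two assignments are mutually inverse).
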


In fact, we will solely be concerned with compact (super) Riemann surfaces without boundary later on, but
Prop.~\ref{isosrs} holds also in the noncompact case.

This is the point where one has to clarify an important conceptual issue. 
In much of the literature (e.g., \cite{CR:Super}), the transition functions between 
$(z,\theta)$ and $(z',\theta')$ are not given by (\ref{transf}), but rather by expressions like
\begin{eqnarray}
\label{transf2}
z' &=& f(z) + \psi(z)\theta\\
\nonumber
\theta' &=& \eta(z)+ g(z)\theta,
\end{eqnarray}
where $\psi(z),\eta(z)$ are functions taking ``odd values''. As remarked earlier, this point of view
reflects the idea of functions taking values in some Grassmann algebra which we prefer to avoid
in this work. Instead, we want to keep a clear distinction between the notion of a single supermanifold 
and that of a family of supermanifolds. 

On a single ringed space $\cm$, where each
stalk is just an exterior algebra over the algebra of germs of holomorphic functions, it does not make
any sense to speak of functions depending only on $z$ as being odd. If, however, one looks at a
family $\ct\times\cm$, where $\ct$ is supermanifold with structure sheaf $\co_\ct$, then the transition
maps on $\cm$ may contain odd germs from the stalk of the base $\ct$. Then $\psi(z),\eta(z)$ can be
understood as being proportional to such an odd germ. The geometric picture describing this situation 
most accurately
is to think of the presence of these odd germs as deformations of the transition function (\ref{transf})
along odd dimensions of the base.
Thus the transition function (\ref{transf}) describes the family $\ct\times\cm$ restricted to the
base $\ct_{red}$, while (\ref{transf2}) describes the full family, which can be understood as
a deformation involving odd and even parameters.

To illustrate this point, consider the following example. Let $\cm$ be a complex
$1|1$-dimensional supermanifold, and let $\ct=\Spec\Lambda_1^\complexC=(\{*\},\Lambda_1^\complexC)$ be the
complex superpoint with one odd dimension. Let $\tau$ be the generator of $\Lambda_1$. 
Now consider the family $\ct\times\cm$. Let $\cu,\cv$ be two
superdomains on $\cm$. The transition
function between $\cu$ and $\cv$ is then a morphism of families: a map of superdomains parametrized
by the base. This means it is a map $\Phi:\ct\times\cu\to\ct\times\cv$ which makes the diagram
\begin{equation}
\xymatrix{\ct\times\cu \ar[dr]_{\Pi_\ct} \ar[rr]^\Phi && \ct\times\cv \ar[dl]^{\Pi_\ct}\\
& \ct &}
\end{equation}
commutative. This implies that $\Phi$ can be written as $\Pi_\ct\times(\Phi_2:\ct\times\cu\to\cv)$. Fix a
point $p$ in the underlying domain $U$ which gets mapped to $\phi(p)$ by the underlying homeomorphism
$\phi$ of $\Phi_2$. The sheaf map $\varphi$ associated with $\Phi_2$ maps the stalk $\co_{\cv,\phi(p)}$ into
$\co_{\cu,p}\otimes\Lambda_1^\complexC$.
Let $z,\theta$ be the generators of the stalk $\co_{\cv,\phi(p)}$, then their image
under $\varphi$ is
\begin{eqnarray}
z' &=& f(z)+\tau g(z)\theta\\
\theta' &=& h(z)\theta+\tau k(z)
\end{eqnarray}
where $f,g,h,k$ are germs of holomorphic functions in the stalk $\co_{\cu,p}$. This is precisely the
form of (\ref{transf2}).

For smooth supermanifolds, it always suffices to study just families over the superpoints 
$\cp(\Lambda_n),\,n\in\naturalN_0$. One may view this fact
as the extension of Ehresmann's theorem \cite{E:connexions} to the super case: there are no 
nontrivial deformations of a smooth structure by real parameters.
For complex supermanifolds, however, even families over purely even complex base spaces can be
nontrivial, just as in the case of classical complex geometry.

This discussion should make it clear that, although Prop.~\ref{isosrs} states that the set of single
complex $1|1$-dimensional supermanifolds (i.e., families over the point $\Spec\complexC$) is 
in one-to-one correspondence with pairs of Riemann surfaces and line bundles,
the Teichm\"uller and moduli spaces of these two types of structures will be different. These spaces
describe deformations of structures, and as seen above, there are more deformations of a super object
than just the ones of the classical underlying object.

All supersurfaces of complex dimension $1|n$ are superconformal surfaces, namely they are 
$\fvect^L(1|n)$-surfaces.
All other superconformal surfaces are subspecies of the $\fvect^L(1|n)$-surfaces, since all 
superconformal algebras can be realized as subalgebras of algebras of vector fields.

\subsection{$\fkl(1|1)$-surfaces and spin curves}

The notion of a super Riemann surface (SRS) was introduced by Friedan \cite{F:Notes} in the context
of superstring theory, which can be viewed as a special type of 2D supergravity. In some of the
mathematical literature \cite{M:Topics}, \cite{Gftacg}, the term $\mathrm{SUSY}_1$-curve is 
used for an SRS. It is defined as a 
complex $1|1$-dimensional supermanifold $\cm$ with 
the additional property that it possesses a maximally non-integrable distribution 
$\cd\subset \ct\cm$ of rank $0|1$. This means that the pairing given by
\begin{eqnarray}
\label{ddef}
\cd\otimes\cd &\to& \ct\cm/\cd\\
\nonumber
X\otimes Y &\mapsto& [X,Y]/\cd,
\end{eqnarray}
where $[\cdot,\cdot]$ is the Lie bracket, is an isomorphism. Recalling that a distribution $\cd$ is 
integrable if and only if $[\cd,\cd]\subseteq\cd$,
it is obvious why a distribution with the above property is called maximally non-integrable. In ordinary
geometry, such a distribution corresponds precisely to a contact structure, and we will show that in
the super case, it is preserved by the contact algebra $\fkl(1|1)$.

A theorem of LeBrun and Rothstein \cite{LBR:Moduli} states that, given a distribution $\cd$ with 
properties (\ref{ddef}), one can always find a local coordinate
system $(z,\theta)$ such that $\cd$ is locally generated by the odd vector field
\begin{equation}
D=\pderiv{}{\theta}+\theta\pderiv{}{z}.
\end{equation}
That means that the transition functions of a super Riemann surface have to preserve $D$ up to an
invertible factor. Writing again
\begin{eqnarray}
z' &=& f(z)\\
\nonumber
\theta' &=& g(z)\theta,
\end{eqnarray}
one deduces that 
\begin{equation}
D=\theta f'(z)\pderiv{}{z'}+g(z)\pderiv{}{\theta'}.
\end{equation}
So in order to have $D\sim D'$, we have to require
\begin{equation}
\label{srstrans}
f'(z)=g(z)^2.
\end{equation}
This differs from the result in \cite{CR:Super} because we only look at a single SRS here,
while the authors of \cite{CR:Super} implicitly study a family of SRS parametrized by a supermanifold 
(cf.~the discussion
in the previous section). Equation (\ref{srstrans}) states that under super coordinate transformations, the
odd coordinate $\theta$ transforms like a section of a spin bundle $S=K^{1/2}$. By an argument analogous
to that in Section \ref{sect:compss}, we conclude

\begin{prop}
\label{isofkl}
There exists a bijection between the set of super Riemann surfaces and the set of pairs $(M,S)$, where
$M$ is Riemann surface and $S$ is a spin bundle on $M$.
\end{prop}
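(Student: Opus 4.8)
The plan is to obtain Prop.~\ref{isofkl} as a refinement of Prop.~\ref{isosrs}. The latter already furnishes a bijection $\cm\leftrightarrow(M,L)$ with $L=\Pi(\co_{\cm,\bar1})$, and, following the definition given above, being a super Riemann surface is a \emph{property} of the complex $1|1$-dimensional supermanifold $\cm$ (namely, that it admits a maximally non-integrable distribution $\cd\subset\ct\cm$ of rank $0|1$). Hence the assignment $\cm\mapsto(M,L)$ restricts to an injection on the subclass of super Riemann surfaces, and it suffices to show that, under this correspondence, $\cm$ admits such a $\cd$ if and only if its associated line bundle $L$ is a spin bundle, i.e. $L^{\otimes2}\cong K$. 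The bijection of Prop.~\ref{isofkl} is then just this restricted correspondence, provided the spin condition cuts out matching subsets on both sides.

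For the forward implication I would invoke the local normal form of LeBrun and Rothstein \cite{LBR:Moduli}: given $\cd$, choose coordinates $(z,\theta)$ in which $\cd$ is generated by $D=\partial_\theta+\theta\partial_z$. In an overlapping chart the transition must have the single-surface shape $(z',\theta')=(f(z),g(z)\theta)$ of (\ref{transf}), with $f,g$ ordinary holomorphic germs carrying no odd parameters. Rewriting $D$ there and using $\theta^2=0$, the cubic term drops out and one finds $D=g\,(\partial_{\theta'}+(f'/g^2)\,\theta'\partial_{z'})$, so that the local generators span the same line, up to the invertible factor $g$, precisely when $f'=g^2$, which is (\ref{srstrans}). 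Since $g$ is the transition cocycle of $L$ and $f'$ that of the canonical bundle $K$, this is exactly the condition $L^{\otimes2}\cong K$, i.e. $L=S=K^{1/2}$ is a spin bundle.

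For the converse I would run the computation backwards. Starting from $(M,S)$ with $S^{\otimes2}\cong K$, form $\cm=(M,\co_M\oplus\Pi(\Gamma(S)))$ by Prop.~\ref{isosrs}, whose transition functions then satisfy $g^2=f'$, and define $\cd$ chartwise as the subsheaf generated by $D=\partial_\theta+\theta\partial_z$. The overlap identity of the previous paragraph gives $D=g\,(\partial_{\theta'}+\theta'\partial_{z'})=gD'$, so the local generators differ by a unit and glue to a globally defined rank $0|1$ distribution. Maximal non-integrability follows from the direct computation $D^2=\partial_z$, whence $[D,D]=2\partial_z$: the pairing $\cd\otimes\cd\to\ct\cm/\cd$ of (\ref{ddef}) sends $D\otimes D$ to the nonvanishing class of $2\partial_z$ modulo $\cd$ and is therefore an isomorphism, so $\cm$ is a super Riemann surface.

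Assembling the two implications yields the claimed bijection, since the forward map is the restriction of the bijection of Prop.~\ref{isosrs} and the converse construction supplies its surjectivity onto the pairs with $S^{\otimes2}\cong K$. I expect the main obstacle to be not the local superalgebra, which is short, but the global bookkeeping in the converse: verifying that the chartwise distribution is independent of the chosen local trivialisation of $S$ and patches to a single subsheaf of $\ct\cm$, and — as the surrounding discussion stresses — making sure that only the single-surface transitions (\ref{transf}) are used and not the family form (\ref{transf2}). I would therefore state explicitly at the outset that $f,g$ are honest holomorphic germs, so that $\theta^2=0$ may be used freely and no odd parameters of a base enter the computation.
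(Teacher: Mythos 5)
Your proposal is correct and follows essentially the same route as the paper: invoke the LeBrun--Rothstein local normal form $D=\partial_\theta+\theta\partial_z$, compute its behaviour under a single-surface transition $(z',\theta')=(f(z),g(z)\theta)$ to extract the condition $f'=g^2$, and identify this as the statement that $\theta$ transforms as a section of $K^{1/2}$, so that the bijection of Prop.~\ref{isosrs} restricts to the claimed one. You are in fact somewhat more explicit than the paper, which dispatches the converse with ``by an argument analogous to that in Section~\ref{sect:compss}''; your verification that the chartwise $D$'s glue and that $[D,D]=2\partial_z$ gives maximal non-integrability fills in exactly that gap.
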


Riemann surfaces with a spin structure are also called \emph{spin curves}. For an extensive investigation,
see, e.g., \cite{A:Riemann}. On a Riemann surface, a spin bundle is simply a holomorphic line bundle 
which transforms by the square root of the transition functions of the canonical bundle $K=T^*M$. 
If the surface $M$ has genus $g$, then
there exist $2^{2g}$ spin structures on $M$, one for each element of $H^1(M,\intZ_2)$. The reason is
that to fix a square root of $K$ uniquely, one has to choose the sign for the square roots of the transition
functions which one picks up running along any homologically nontrivial 1-cycle on $M$.
Often a spin bundle
is denoted as $K^{1/2}$, but that notation is, of course, not unique if the genus of the surface
is greater than zero. The same goes for other half powers of $K$, like $K^{3/2}$. All this implies that
a Riemann surface $M$ is able to carry at least $2^{2g}$ non-equivalent SRS-structures. But again, by
the same arguments as outlined in the previous section, it is the possibility of odd deformation parameters
which makes the Teichm\"uller and moduli spaces for SRS interesting, even though the underlying 
spaces of these
can at this point already be expected to be just the spin Teichm\"uller and spin moduli spaces.

Super Riemann surfaces are, if one uses the naming of a superconformal surface after the superconformal
structure it carries, $\fkl(1|1)$-surfaces. By definition, the algebra $\fkl(1|1)$ preserves the
contact form $\omega=dz+\theta d\theta$ up to a factor. This means it preserves its kernel. 
Using our convention (\ref{sforms}) for the pairing between forms and vector fields, we see that
\begin{equation}
\langle f(z,\theta)\pderiv{}{z}+g(z,\theta)\pderiv{}{\theta}, dz+\theta d\theta\rangle=0
\end{equation}
implies $f=g\theta$. Therefore the kernel of $\omega$ consists of elements of the form
\begin{equation}
g(z,\theta)\left(\theta\pderiv{}{z}+\pderiv{}{\theta}\right),
\end{equation}
and therefore coincides with the distribution $\cd$ defined above. 

\subsection{Families of spin curves and of $\fkl(1|1)$-curves}

It is interesting to directly compare the situation for families of spin curves and SRS, because it
shows how far the relation between these two types of objects reaches, as well as where the differences occur.
We follow here the exposition given in \cite{M:Topics}, Chapter 2.

Let $\pi:\cx\to \cb$ be a family of complex supermanifolds. The associated sheaf map 
$\pi^*:\co_\cb\to\co_\cx$ of this projection embeds the structure sheaf of the base into that of 
the total space. The relative tangent sheaf $\ct_{\cx/\cb}$ is then defined as
the subsheaf of vector fields in $\ct_\cx$ which annihilate $\pi^*(\co_\cb)$. One may think of them as those
vector fields which point ``vertically'' along the fibers, such that the images of germs of functions from
$\co_\cb$ are treated as constants by these vector fields.

A family of super Riemann surfaces is a family $\pi:\cx\to\cb$ of complex supermanifolds such that the
relative tangent sheaf contains a distribution $\cd\subset\ct_{\cx/\cb}$ of rank $0|1$ which is
maximally non-integrable (compare with Definition \ref{ddef}). One may think of this as a distribution
satisfying Definition \ref{ddef} on \emph{every fiber}. One must be warned, however, that the concept of
``fibers'' parametrized by a supermanifold differs from that of fibers over an ordinary manifold or
topological space, since the supermanifold is not specified by its topological points. It parametrizes
the fibers both by even as well as odd parameters, and only the former ones can be thought of as
a parametrization in the sense of families of topological spaces. 

By the considerations of the previous section, it is clear how to construct a family of $\fkl(1|1)$-curves
over a purely even base $B_0$. One starts with a family $\pi_0:X_0\to B_0$ of relative dimension
$1|0$, i.e., a family of ordinary complex 1-dimensional manifolds. Choosing a line bundle $L$ on $X_0$
and declaring it odd turns every fiber into a complex supermanifold of dimension $1|1$. To obtain a
family of $\fkl(1|1)$-surfaces, we choose a line bundle $L$ for which there exists an isomorphism
\begin{equation}
\alpha:L\otimes L\to \ct^*_{X_0/B_0},
\end{equation}
which means nothing else than the requirement that $L$ restricts to a square root of the canonical bundle
of each fiber. Bundles with this property are called \emph{theta characteristics}\footnote{Often
the points of the half-period lattice on the Jacobian $\Jac(\Sigma)$ of a Riemann surface $\Sigma$
are called the theta characteristics of $\Sigma$. This almost coincides with the above usage of the term:
the divisor classes of line bundles which square to the canonical bundle are given by the 
half-period lattice shifted
by the vector of Riemann constants. For more details see Section \ref{sect:earle} or \cite{GH:Principles},
\cite{ACGH:Geometry}.} of $\pi_0$. Denoting 
the sheaf of sections of $L$ by $\Gamma(L)$, we obtain a family of particular
complex supermanifolds by setting $\cx=(X_0,\co_{X_0}\oplus\Pi(\Gamma(L)))$ and a projection $p:\cx\to X_0$,
as well as a projection $\pi=\pi_0\circ p:\cx\to B_0$.
This family can explicitly be endowed with the structure of a family of super Riemann surfaces: take
a local relative coordinate $z$ on $X_0$ and a section $\theta$ of $L$ such that 
$\alpha(\theta,\theta)=d_{X/B_0}z$ ($d_{X/B_0}$ denotes the relative differential). Such a section 
always exists, because $\alpha$ is an isomorphism. Now we can set $(z,\theta)$ as 
a local relative coordinate system on $\cx$, and define
$D:=\pderiv{}{\theta}+\theta\pderiv{}{z}$. Then the distribution $\co_XD\subset\ct_{\cx/B_0}$ satisfies
the condition of maximal non-integrability, and we have turned $X$ into a family of super Riemann surfaces. 
This construction does not depend on the relative coordinate
$z$ that one starts with: if we had started with $z'=f(z)$, then we would have ended up with
$\theta'=\sqrt{\pderiv{f}{z}}\theta$, and thus with
\begin{equation}
D'=\left(\pderiv{f}{z}\right)^{-\frac{1}{2}}D.
\end{equation}
Since $f$ is conformal, i.e., holomorphic and with nowhere vanishing derivative, the coordinate system
$(z',\theta')$ would thus have produced the same distribution in $\ct_\cx$. In \cite{M:Topics} it is
shown that the converse also holds:
every family $\pi:\cx\to \cb$ of super Riemann surfaces reduces to a family $X_{red}\to B_{red}$ with
a canonically determined theta characteristic.

\begin{thm}
Let a family $\pi_0:X_0\to B$ of complex supermanifolds of relative dimension $1|0$ be given. Then 
there exists a bijection between the following two sets:
\begin{enumerate}
\item relative theta characteristics of $\pi_0$ up to equivalence
\item $\fkl(1|1)$-families $\pi:X\to B$ such that $X_{red}=X_{0,red}$ for which 
$\co_{X.\bar{0}}\cong\co_{X_0}$.
\end{enumerate}
\end{thm}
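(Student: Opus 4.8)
The plan is to construct explicit maps in both directions and show they are mutually inverse on equivalence classes, leaning on Prop.~\ref{isosrs} and the local normal form of \cite{LBR:Moduli}. First I would build the map from theta characteristics to $\fkl(1|1)$-families, which is essentially the construction carried out immediately before the statement: given a relative theta characteristic $(L,\alpha)$ with $\alpha:L\otimes L\to\ct^*_{X_0/B}$, set $\cx=(X_0,\co_{X_0}\oplus\Pi(\Gamma(L)))$, a family of complex $1|1$-dimensional supermanifolds by the relative version of Prop.~\ref{isosrs}. The isomorphism $\alpha$ lets one pick, over each relative coordinate patch, a local generator $\theta$ of $L$ with $\alpha(\theta,\theta)=d_{X/B}z$, and the odd field $D=\partial/\partial\theta+\theta\,\partial/\partial z$ then defines a rank $0|1$ distribution $\cd=\co_X D\subset\ct_{\cx/\cb}$ that is maximally non-integrable in the sense of (\ref{ddef}); independence from the chosen coordinate is exactly the computation (\ref{srstrans}). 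By construction $\co_{X,\bar 0}=\co_{X_0}$ and $X_{red}=X_{0,red}$, so $\cx$ lies in the second set.

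Next I would construct the reverse map. Given an $\fkl(1|1)$-family $\pi:\cx\to\cb$ with $X_{red}=X_{0,red}$ and $\co_{X,\bar 0}\cong\co_{X_0}$, the odd part $\co_{X,\bar 1}$ is a locally free $\co_{X,\bar 0}$-module of rank one, so $L:=\Pi(\co_{X,\bar 1})$ is a line bundle on $X_0$. By the theorem of LeBrun and Rothstein \cite{LBR:Moduli}, the distribution $\cd$ is locally generated by $D=\partial/\partial\theta+\theta\,\partial/\partial z$, and comparing two such normal forms across a coordinate overlap forces the transition law (\ref{srstrans}): the odd coordinate transforms by a square root of the relative canonical transition functions. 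This is precisely the statement that $L$ is a square root of $\ct^*_{X_0/B}$, and the bracket pairing (\ref{ddef}) supplies the canonical isomorphism $\alpha:L\otimes L\to\ct^*_{X_0/B}$. That this theta characteristic is canonically determined is the content of the result of \cite{M:Topics} cited above.

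Finally I would check that the two maps are mutually inverse and descend to equivalence classes. For one composite, starting from $(L,\alpha)$ and forming $\cx$, one reads off $\Pi(\co_{X,\bar 1})=\Pi(\Pi(\Gamma(L)))=\Gamma(L)$, recovering $L$, and the induced $\alpha$ agrees with the original since it is built from the same $D$. For the other composite, the crucial point is splitness of complex $1|1$-dimensional supermanifolds: by Prop.~\ref{isosrs} their transition functions have the form (\ref{transf}) with no higher corrections, so once $\co_{X,\bar 0}\cong\co_{X_0}$ is imposed $\cx$ is canonically recovered as $(X_0,\co_{X_0}\oplus\Pi(\Gamma(L)))$. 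It then remains to match the equivalence relations: an isomorphism $(L,\alpha)\cong(L',\alpha')$ induces an isomorphism of the associated families fixing $X_{red}$ and $\co_{X,\bar 0}$, and conversely any such isomorphism of families restricts on $\co_{X,\bar 1}$ to an isomorphism of line bundles intertwining the $\alpha$'s.

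The main obstacle I expect lies in the reverse direction in the genuine family setting over a possibly odd base $\cb$: one must verify that the conditions $X_{red}=X_{0,red}$ and $\co_{X,\bar 0}\cong\co_{X_0}$ really do suppress all odd-parameter deformations of the even coordinate (the terms of type $\psi(z)\theta$ appearing in (\ref{transf2})), so that $L=\Pi(\co_{X,\bar 1})$ together with $\alpha$ captures the full family rather than only its restriction to $\cb_{red}$. Keeping the distinction between relative and absolute cotangent sheaves consistent throughout, and confirming that $\alpha$ is genuinely canonical rather than defined only up to sign along homologically nontrivial cycles, is the delicate bookkeeping here.
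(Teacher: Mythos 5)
You should first be aware that the paper does not actually prove this theorem: it gives only the forward construction (theta characteristic $\Rightarrow$ $\fkl(1|1)$-family) in the paragraph preceding the statement, and for the converse and the bijection it defers entirely to \cite{M:Topics} and \cite{D:Letter}. Your forward map reproduces the paper's construction essentially verbatim, including the coordinate-independence check via (\ref{srstrans}), so that half is in exact agreement. Your converse and the verification that the two maps are mutually inverse go beyond what the paper writes down, and the route you take (extract $L=\Pi(\co_{X,\bar 1})$, use the LeBrun--Rothstein normal form to get the square-root transition law, and read $\alpha$ off the bracket pairing (\ref{ddef})) is the standard one and is sound. One small improvement: the pairing (\ref{ddef}) gives $\cd\otimes\cd\cong\ct_{X_0/B}$ directly, so $\alpha$ on $L\cong\cd^*$ is canonical with no square-root or sign choice to worry about; the residual sign $g\mapsto -g$ is exactly the automorphism $\theta\mapsto-\theta$ and lives in the equivalence relation, not in the definition of $\alpha$.

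The one point that needs to be settled rather than flagged is the issue you raise at the end, and its resolution depends on reading the hypotheses correctly. The theorem is stated, following the paper's preceding construction, for a base $B$ over which $X_0$ has relative dimension $1|0$ and for which the condition $\co_{X,\bar 0}\cong\co_{X_0}$ makes sense as written --- i.e.\ the paper's setting is the purely even base $B_0$ of the previous paragraph. In that case there are no odd constants available, the terms $\psi(z)\theta$ and $\eta(z)$ of (\ref{transf2}) cannot occur, the relative transition functions are forced into the form (\ref{transf}), and your appeal to Prop.~\ref{isosrs} (applied relatively over $B$) legitimately recovers $\cx$ from $(X_0,L)$; your ``main obstacle'' is then vacuous and the proof closes. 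Note however that the correct reason the higher corrections vanish is not ``splitness'' in the sense of Batchelor (which fails in the complex category) but simply that with one relative odd coordinate and no odd functions on the base the nilpotent ideal squares to zero. If instead one insists on reading $B$ as a genuine superspace, then your composite ``family $\mapsto$ theta characteristic $\mapsto$ family'' really does lose the odd-parameter data of (\ref{transf2}) --- this data is precisely the source of the odd moduli computed later in the paper --- and the argument as written does not close; that stronger statement requires the normal-form analysis of \cite{M:Topics} (or the cocycle constraints (\ref{rest1})--(\ref{rest2}) used in Thm.~\ref{srsaut}) and cannot be obtained from Prop.~\ref{isosrs} alone. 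Make the choice of base explicit and the proof is complete in the first reading.
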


A universal family of super Riemann surfaces above their moduli space must therefore restrict to a family
spin curves, as well. Indeed, the underlying spaces of the Teichm\"uller and moduli spaces of SRS turn
out to be just the Teichm\"uller and moduli spaces of spin curves. The reference for the above Theorem and
the preceding discussion is \cite{M:Topics} and references therein, in particular \cite{D:Letter}.

\chapter{The supermanifold $\ca(\cm)$}
\label{ch:acs}

In this chapter, we will construct a supermanifold $\ca(\cm)$ of all almost complex structures on a given
almost complex supermanifold $\cm$. In subsequent Chapters we will identify the tangent spaces to the
Teichm\"uller spaces that we wish to construct as certain subspaces of the tangent spaces of $\ca(\cm)$.
The Teichm\"uller spaces themselves can therefore be thought of as being glued together from local
submanifolds of $\ca(\cm)$.

\section{Supermanifolds of sections of super vector bundles}

Let $\pi:\ce\to\cm$ be a smooth super vector bundle over a compact supermanifold $\cm$. We would like
to give the set of sections 
\begin{equation}
\Gamma(\cm,\ce):=\{\sigma:\cm\to\ce\big|\,\pi\circ\sigma=\id_\cm\}
\end{equation}
the ``structure of a supermanifold''. As always in the super context, to obtain this set is by far not 
sufficient for this purpose; it will only form the underlying space of such a supermanifold.
The starting point of the construction is the inner Hom-object $\scinfty(\cm,\ce)$ for the set 
$\Hom(\cm,\ce)=SC^\infty(\cm,\ce)$. By definition, we have
\begin{eqnarray}
\scinfty(\cm,\ce)(\Lambda)=SC^\infty(\cp(\Lambda)\times\cm,\ce).
\end{eqnarray}
This inner Hom-object is in general not a Banach supermanifold, for the same reason as in ordinary geometry: 
spaces of smooth maps usually only form Fr\'echet manifolds.

Next, define a functor $\gh(\cm,\ce):\catgr\to\catsets$ on the objects of $\catgr$ by setting
\begin{equation}
\gh(\cm,\ce)(\Lambda):=\Gamma(\cp(\Lambda)\times\cm,\Pi_\cm^*\ce).
\end{equation}
Here, $\Pi_\cm^*\ce$ denotes the pullback of $\ce$ along the projection
$\Pi_\cm:\cp(\Lambda)\times\cm\to\cm$.
For a morphism $\varphi:\Lambda\to\Lambda'$, we define
\begin{eqnarray}
\label{svbungr}
\gh(\cm,\ce)(\varphi):\gh(\cm,\ce)(\Lambda) &\to& \gh(\cm,\ce)(\Lambda')\\
\nonumber
\sigma &\mapsto& \sigma\circ(\cp(\varphi)\times\id_\cm).
\end{eqnarray}

The functor $\gh(\cm,\ce)$ represents the global sections of the bundle $\pi:\ce\to\cm$ as a functor
in $\catsets^\catgr$.
As with $\scinfty(\cm,\ce)$, this functor cannot be a Banach supermanifold, but at most a Fr\'echet
supermanifold. Nonetheless, it can be easily constructed, since it turns out that it is actually a
superrepresentable $\olr$-supermodule.

\begin{thm}
\label{thm:sectsvbun}
Let $\pi:\ce\to\cm$ be a smooth super vector bundle. Then the functor $\gh(\cm,\ce)$ is a superrepresentable
$\olr$-module, i.e., there exists an $\realR$-super vector space $E$ such that
\begin{equation}
\gh(\cm,\ce)\cong \overline{E}.
\end{equation}
\end{thm}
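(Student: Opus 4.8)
The plan is to prove the statement first for \emph{trivial} super vector bundles, where it reduces to the coordinate description of morphisms into a linear supermanifold, and then to obtain the general case by realizing $\gh(\cm,\ce)$ as an equalizer of superrepresentable modules. Two structural facts make the passage to the equalizer harmless. First, $\olcdot$ is fully faithful (Cor.~\ref{cor:barfunct}), so every morphism between superrepresentable $\olr$-modules is $\olcdot$ applied to an even linear map of super vector spaces. Second, the functor $E\mapsto\overline{E}$, $\overline{E}(\Lambda)=(\Lambda\otimes_\realR E)_{\bar{0}}$, is exact and preserves finite products: since each $\Lambda\in\catgr$ is finite-dimensional, $\Lambda\otimes_\realR(-)$ is exact and commutes with finite products, and taking even parts preserves kernels and products. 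Hence the full subcategory of superrepresentable modules is closed under the finite products and kernels we shall need, and, $\cm$ being compact, we may work with a finite trivializing atlas throughout.

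For the local step, let $\cu$ be a superdomain and $\ce=\cu\times\cv$ a trivial bundle with fibre a superrepresentable module $\cv\cong\overline{W}$. A section of $\Pi_\cu^*\ce$ over $\cp(\Lambda)\times\cu$ is, after projecting to the fibre, the same as a supersmooth morphism $\cp(\Lambda)\times\cu\to\overline{W}$. First I would identify the superfunctions on the product, $SC^\infty(\cp(\Lambda)\times\cu)\cong\Lambda\otimes_\realR SC^\infty(\cu)$, functorially in $\Lambda$, using the description of the structure sheaf of a product with a superpoint as $\Lambda\otimes_\realR\co_\cu$. Then, applying the coordinate theorem Thm.~\ref{scoords} (in its Banach form) to $\overline{W}$, a morphism into $\overline{W}$ is prescribed exactly by a tuple of even and odd superfunctions matching the parities of a basis of $W$, giving a natural bijection
\[
\gh(\cu,\cu\times\cv)(\Lambda)\;\cong\;\bigl(SC^\infty(\cp(\Lambda)\times\cu)\otimes_\realR W\bigr)_{\bar{0}}\;\cong\;\bigl(\Lambda\otimes_\realR E_\cu\bigr)_{\bar{0}},
\qquad E_\cu:=SC^\infty(\cu)\otimes_\realR W.
\]
Checking that the maps $\gh(\cu,\cu\times\cv)(\varphi)$ induced by $\varphi:\Lambda\to\Lambda'$ go over into $\overline{E_\cu}(\varphi)$ then yields a functor isomorphism $\gh(\cu,\cu\times\cv)\cong\overline{E_\cu}$, so the section functor is superrepresentable in the trivial case.

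To globalize, I would fix a finite atlas $\{\cu_\alpha\}$ of $\cm$ trivializing $\ce$ and use the sheaf property of sections: for every $\Lambda$, a global section of $\Pi_\cm^*\ce$ over $\cp(\Lambda)\times\cm$ is a family of local sections agreeing on overlaps, so $\gh(\cm,\ce)$ is the equalizer of the two restriction morphisms
\[
\prod_\alpha\gh(\cu_\alpha,\ce|_{\cu_\alpha})\;\rightrightarrows\;\prod_{\alpha,\beta}\gh(\cu_{\alpha\beta},\ce|_{\cu_{\alpha\beta}}).
\]
By the local step each factor is superrepresentable; by full faithfulness the two restriction maps are $\olcdot$ of even linear maps of super vector spaces; and by the exactness and product-compatibility of $\olcdot$ the equalizer is again of the form $\overline{E}$, with $E$ the super vector space of global sections $\Gamma(\cm,\ce)$. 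This proves superrepresentability.

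The main obstacle I expect is the local identification carried out functorially in $\Lambda$: one must show, using the explicit pullback formula and the structure theory of supersmooth morphisms, that supersmooth maps $\cp(\Lambda)\times\cu\to\overline{W}$ are exactly the even part of $\Lambda\otimes_\realR SC^\infty(\cu)\otimes_\realR W$, and that these bijections are natural in both $\Lambda$ and $\cu$ so that they survive the gluing. This is where the genuine analytic content sits --- the Banach/Fr\'echet version of Thm.~\ref{scoords} and the naturality of the Taylor-type pullback (\ref{pbeven}); the remaining categorical bookkeeping is then formal.
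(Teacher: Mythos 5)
Your proposal is correct, but it takes a genuinely different route from the paper's proof. The paper also reduces to a local statement, but it works at the level of stalks and leans on the factorization theorem for morphisms $\cp(\Lambda_n)\times\cm\to\ce$ (Thm.~\ref{sdiff} --- a forward reference to the chapter on $\sdiff(\cm)$): a $\Lambda_n$-point of the section functor is written stalkwise as $\alpha_0\circ\exp\bigl(\sum_I\tau_IX_I\bigr)$, the section condition $\pi\circ\sigma=\Pi_\cm$ forces the derivations $X_I$ to be \emph{relative}, and the relative derivations are then identified with a free $\co_{\cm,p}$-module of rank $a|b$, which assembles into $\olv(\Lambda_n)$ for the super vector space $V$ built from $(\co_{\cm,\bar{0},p})^a\oplus(\co_{\cm,\bar{1},p})^b$ and its parity reversal. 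You replace this with Thm.~\ref{scoords}: over a trivializing chart a $\Lambda$-point of the section functor is just a supersmooth map $\cp(\Lambda)\times\cu\to\overline{W}$, i.e.\ a tuple of superfunctions of prescribed parities, which gives $(\Lambda\otimes_\realR SC^\infty(\cu)\otimes_\realR W)_{\bar{0}}$ directly; you then glue by an explicit equalizer, using full faithfulness of $\olcdot$ (Cor.~\ref{cor:barfunct}, Prop.~\ref{isokmod}) together with the exactness and product-compatibility of $\Lambda\otimes_\realR(-)$ for finite-dimensional $\Lambda$. Your version buys independence from the (later-proven) exponential factorization and makes the local-to-global step explicit, whereas the paper's passage from stalk maps to global sections is compressed into the final ``it is then clear that $\bar{E}(\Lambda)\cong\bar{V}(\Lambda)$''; the paper's version buys the concrete description of the higher points of $\gh(\cm,\ce)$ as exponentials of relative vector fields along the underlying section, which is reused in the analysis of $\sdiff(\cm)$. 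One small correction to your closing sentence: in the paper's notation $\Gamma(\cm,\ce)$ denotes only the actual bundle sections $\cm\to\ce$, so the representing super vector space is $E=\Gamma(\cm,\ce\oplus\Pi\ce)$ (your equalizer does compute exactly this, since the local models $SC^\infty(\cu_\alpha)\otimes_\realR W$ already carry both parities); as written, ``$\Gamma(\cm,\ce)$'' names only its even part.
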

\begin{proof}
We first show that $\gh(\cm,\ce)$ is indeed an $\olr$-module. Let $\sigma:\cp(\Lambda)\times\cm\to\ce$
be a given section, and let $\lambda\in\Lambda_{\bar{0}}$ be an element of $\olr(\Lambda)$. Because
we have $\cp(\Lambda)(\Lambda')\cong\Hom(\Lambda,\Lambda')$ (cf.~Prop.~\ref{homgr}), 
every $p\in\cp(\Lambda)(\Lambda')$ may
be viewed as a morphism $p:\Lambda\to\Lambda'$. We then define the functor morphism
\begin{equation}
\lambda\cdot\sigma:\cp(\Lambda)\times\cm\to\ce
\end{equation}
componentwise:
\begin{eqnarray}
\nonumber
(\lambda\cdot\sigma)_{\Lambda'}:\cp(\Lambda)(\Lambda')\times\cm(\Lambda') &\to& \ce(\Lambda')\\
(p,m) &\mapsto& p(\lambda)\cdot \sigma_{\Lambda'}(p,m).
\end{eqnarray}
Here, we have used the fact that $\sigma_{\Lambda'}(p,m)\in (\pi_{\Lambda'})^{-1}(m)\cong\cv(\Lambda')$,
i.e., that the fibres over all points $m\in\cm(\Lambda')$ carry the structure of 
$\olr(\Lambda')=\Lambda'_{\bar{0}}$-modules.

Now consider the set of sections
\begin{equation}
E:=\Gamma(\cm,\ce\oplus\Pi\ce)=\{\sigma:\cm\to\ce\oplus\Pi\ce\,\big|\,\pi\circ\sigma=\id_\cm\}.
\end{equation}
This set carries the structure of an $\realR$-super vector space: every section taking values in
$\ce$ is defined to be even, every one taking values in $\Pi\ce$ to be odd. We claim that
$\overline{E}\cong\gh(\cm,\ce)$, i.e., that there exists a bijection between the sets
$\overline{E}(\Lambda)$ and $\gh(\cm,\ce)(\Lambda)$ for all $\Lambda\in\catgr$.

We will prove this locally, i.e., on every stalk, using the ringed space formalism. Denote the rank of
the super vector bundle by $a|b$.
Let $p\in M$ be
a point in the underlying manifold $\cm_{rd}$, and let $\sigma:\cp(\Lambda_n)\times\cm\to\ce$ be
an element of $\gh(\cm,\ce)(\Lambda_n)$. Then
$\sigma$ is determined by a continuous map of the underlying spaces $s:\{*\}\times\cm_{rd}\to\ce_{rd}$,
and by a sheaf map $\sigma^*:\co_{\ce}\to\co_{\cm}\times\Lambda_n$. The sheaf map induces a stalk 
map between the stalk at $(\{*\},p)$ and the stalk at $s(p)$:
\begin{eqnarray}
\label{vstalk}
\sigma^*_p:\co_{\ce,s(p)} &\to& \co_{\cm,p}\times\Lambda_n\\
\nonumber
f &\mapsto& \sum_{I\subseteq\{1,\ldots,n\}}\tau_I\alpha_I(f),
\end{eqnarray}
where each $\alpha_I$ is a homomorphism $\co_{\ce,s(p)}\to\co_{\cm,p}$ of superalgebras, and the sum
runs over all increasingly ordered subsets. The $\tau_1,\ldots,\tau_n$ are the odd generators of $\Lambda_n$,
and $\tau_I$ is the product of all the $\tau$'s indexed by $I$ in the same order (cf.~Thm.~\ref{sdiff}
for this notation). By Thm.~\ref{sdiff} we know that there exists a homomorphism
$\alpha_0:\co_{\ce,s(p)}\to\co_{\cm,p}$ of superalgebras, as well as $2^{n-1}$ odd and $2^{n-1}-1$ even derivations
of $\co_{\ce,s(p)}$ such that the stalk map at $p$ induced by any morphism 
$\sigma:\cp(\Lambda_n)\times\cm\to\ce$ can be written as
\begin{equation}
\label{sigmap}
\sigma^*_p=\alpha_0\circ\exp\left(\sum_{I\subseteq\{1,\ldots,n\}}\tau_IX_I\right).
\end{equation}
Here, each $X_I$ is a derivation of parity $|I|$ of $\co_{\ce,s(p)}$.

Now the fact that $\sigma$ is a section implies that the composition with $\pi:\ce\to\cm$  
must satisfy $\pi\circ\sigma=\Pi_\cm$. That means that for any germ $f\in\co_{\cm,p}$, the composition of
sheaf maps satisfies $\sigma^*\pi^*(f)=\Pi_\cm^*(f)$. Inserting $\pi^*(f)$ into 
(\ref{sigmap}) yields immediately the conditions
\begin{equation}
\alpha_0(\pi^*(f))=f,\qquad X_{I}(\pi^*(f))=0.
\end{equation}
The first of these conditions means that the ``underlying'' section $\sigma:\cm\to\ce$, described 
by $\alpha_0$, is a local inverse of the projection $\pi$, just as one would have expected. The second one
tells us that all the derivations $X_I$ in (\ref{sigmap}) have to be \emph{relative}, i.e. they must
annihilate the functions depending only on coordinates of the base.

Since a super vector bundle of rank $a|b$ is
a locally free module of this rank over $\co_\cm$, we know that the set of all homomorphisms
$\alpha_0$ is
\begin{equation}
\label{salghom}
\Hom_{SAlg}(\co_{\ce,s(p)},\co_{\cm,p})\cong (\co_{\cm,\bar{0},p})^a\oplus(\co_{\cm,\bar{1},p})^b.
\end{equation}
The relative derivations $X:\co_{\ce,s(p)}\to\co_{\ce,s(p)}$ form a module of rank $a|b$ over
$\co_{\ce,s(p)}$. Each of the $X_I$ appears composed with $\alpha_0$, which maps $\co_{\ce,s(p)}$
to $\co_{\cm,p}$, so we can consider each of the
$\alpha_I$ in (\ref{vstalk}) actually as an element of an $\co_{\cm,p}$-module of rank $a|b$. So
we can write
\begin{equation}
\alpha_I\in\left\{\begin{array}{ll}
(\co_{\cm,\bar{0},p})^a\oplus(\co_{\cm,\bar{1},p})^b & \textrm{ if }|I|\textrm{ even}\\
(\co_{\cm,\bar{1},p})^a\oplus(\co_{\cm,\bar{0},p})^b & \textrm{ if }|I|\textrm{ odd}.
\end{array}\right.
\end{equation}
Therefore, writing
\begin{equation}
V=V_{\bar{0}}\oplus V_{\bar{1}}=\left[(\co_{\cm,\bar{0},p})^a\oplus(\co_{\cm,\bar{1},p})^b\right]\oplus\\
\left[(\co_{\cm,\bar{1},p})^a\oplus(\co_{\cm,\bar{0},p})^b\right]
\end{equation}
shows us that we may identify the set of all stalk maps $\sigma^*_p$ as in (\ref{vstalk}) with the vector
space $\olv(\Lambda_n)$. For $\Lambda_n=\realR$, we thus obtain the space $V_{\bar{0}}$, which is
the space (\ref{salghom}). Since these are just the sections $\cm\to\ce$, we have $V_{\bar{0}}=E_{\bar{0}}$. 
For $\Lambda_n=\Lambda_1$, we obtain $V_{\bar{0}}\oplus\Pi(V_{\bar{0}})$
because $V_{\bar{1}}=\Pi(V_{\bar{0}})$, and this is isomorphic to $E_{\bar{0}}\oplus E_{\bar{1}}$.
It is then clear that $\bar{E}(\Lambda)\cong \bar{V}(\Lambda)$ for all $\Lambda\in\catgr$, which
completes the proof.
\end{proof}

Theorem \ref{thm:sectsvbun} reflects the intuitive expectation that the sections of a super vector
bundle on a super manifold should inherit the structure of a super vector space from the fibers, as it
is the case for ordinary vector bundles. That the proof is somewhat involved shows, on the other hand,
that things are considerably more complicated in the super setting. This is due to the fact that one
does not have a collection of super vector spaces, parametrized by some topological space (where the
sections would obviously form a super vector space even if they were not even continuous), 
but rather a collection parametrized by the smooth structure
of the base manifold $\cm$ \emph{and} by the odd elements of the structure sheaf $\co_\cm$.
As a corollary, we obtain the following theorem of Molotkov \cite{I-dZ2ks}.
\begin{cor}
\label{trivbd}
Let $\cm$ be a supermanifold and $\cv$ be a superrepresentable $\olr$-module.
Then there exists an isomorphism of functors 
\begin{equation}
\scinfty(\cm,\cv)\cong \overline{SC^\infty(\cm,\cv\oplus\Pi(\cv))},
\end{equation}
which is natural in $\cm$ and $\cv$.
\end{cor}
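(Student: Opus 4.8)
The plan is to obtain this as a direct consequence of Theorem~\ref{thm:sectsvbun}, applied to the \emph{trivial} super vector bundle determined by $\cv$. First I would regard the superrepresentable $\olr$-module $\cv$ as the trivial bundle $\Pi_\cm:\cm\times\cv\to\cm$ over $\cm$. The crucial first step is to recognise that $\scinfty(\cm,\cv)$ is nothing but $\gh(\cm,\cm\times\cv)$. Indeed, for any $\Lambda\in\catgr$ the pullback of $\cm\times\cv$ along the projection $\Pi_\cm:\cp(\Lambda)\times\cm\to\cm$ is again trivial, with total space $(\cp(\Lambda)\times\cm)\times\cv$, so its global sections are exactly the morphisms $\cp(\Lambda)\times\cm\to\cv$. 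Under the graph correspondence (a section corresponds to its $\cv$-component) this identifies
\[
\gh(\cm,\cm\times\cv)(\Lambda)=\Gamma(\cp(\Lambda)\times\cm,\Pi_\cm^*(\cm\times\cv))\cong SC^\infty(\cp(\Lambda)\times\cm,\cv)=\scinfty(\cm,\cv)(\Lambda),
\]
and since both $\gh$ and $\scinfty$ send a morphism $\varphi:\Lambda\to\Lambda'$ to precomposition with $\cp(\varphi)\times\id_\cm$, these bijections assemble into an isomorphism of functors $\scinfty(\cm,\cv)\cong\gh(\cm,\cm\times\cv)$.

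Then I would invoke Theorem~\ref{thm:sectsvbun} for $\ce=\cm\times\cv$: it yields a superrepresentable $\olr$-module $\overline{E}\cong\gh(\cm,\cm\times\cv)$ with $E=\Gamma(\cm,\ce\oplus\Pi\ce)$. It remains only to identify $E$. Because $\ce$ is trivial, so is its sum with $\Pi\ce$, namely $\ce\oplus\Pi\ce\cong\cm\times(\cv\oplus\Pi(\cv))$; applying the section-to-map correspondence once more gives $E=\Gamma(\cm,\cm\times(\cv\oplus\Pi(\cv)))\cong SC^\infty(\cm,\cv\oplus\Pi(\cv))$. Chaining the isomorphisms produces $\scinfty(\cm,\cv)\cong\overline{SC^\infty(\cm,\cv\oplus\Pi(\cv))}$, as claimed.

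The genuinely routine geometric content is thus exhausted by the two applications of the trivial-bundle section correspondence; the step that requires real care is naturality in $\cm$ and in $\cv$. Each isomorphism built above is manifestly natural for morphisms of the respective variable: the graph correspondence is compatible with pullback of bundles and with postcomposition of target data, and the functor $\olcdot$ is itself functorial by Corollary~\ref{cor:barfunct}. The only place where naturality is not completely transparent is the isomorphism supplied by Theorem~\ref{thm:sectsvbun}, whose proof proceeds stalkwise via exponentials of relative derivations. I would therefore check that the identification there, being induced by the canonical assignment $\ce\mapsto\Gamma(\cm,\ce\oplus\Pi\ce)$, is functorial in the bundle $\ce$ and in the base $\cm$, so that it transports to naturality of the corollary's isomorphism in $\cv$ (through $\ce=\cm\times\cv$) and in $\cm$. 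This functoriality of the theorem's isomorphism --- rather than any new computation --- is the main point to verify.
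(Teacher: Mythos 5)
Your proof is correct and follows exactly the route the paper takes: the paper's own proof is the one-line observation that the statement follows from applying Theorem~\ref{thm:sectsvbun} to the trivial bundle $\Pi_\cm:\cm\times\cv\to\cm$. You have simply spelled out the section-to-map identifications and the naturality check that the paper leaves implicit.
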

\begin{proof}
This follows directly from the application of Thm.~\ref{thm:sectsvbun} to the trivial super vector
bundle $\Pi_\cm:\cm\times\cv\to\cm$.
\end{proof}

Actually, as in the classical case, the $\olr$-module
$\gh(\cm,\ce)$ of global sections of a trivial super vector bundle is a module over the globally
defined superfunctions, but this fact will not be used in the following.

\section{Construction of the supermanifold $\ca(\cm)$}

\subsection{Almost complex structures and $Q$-structures}
\label{sect:qstruct}

An almost complex structure on a supermanifold $\cm$ is a section
\[
J:\cm\to\cend(\ctm)
\]
of the endomorphism bundle of $\ctm$, such that $J^2=-\One$. Here,
$\One$ denotes the section of $\cend(\ctm)$ which is constantly the identity.

On a supermanifold of dimension $2n|2n$, there might be odd sections as well as even ones which satisfy
$J^2=-\One$. So in the case of a smooth $2|2$-dimensional supermanifold, the case we want to study in this
work, there might exist odd almost complex structures. Although they are an interesting case in their
own right, we will ignore them in this work. The main reason is that the geometry defined by them is 
quite different from the one defined by an even almost complex structure. The latter
corresponds to a reduction of the frame group of a supermanifold from $GL(2n|2m;\realR)$ to
$GL(n|m;\complexC)$ and can therefore be viewed as a superization of the concept of an almost complex 
structure. An odd one, however, corresponds to a reduction of the frame group from
$GL(2n|2n;\realR)$ to the supergroup $Q(n)$ (the so-called queer analogue of the general linear
group \cite{SoS}), so the term ``Q-structure'' seems more appropriate for them 
(see \cite{Acsos}, \cite{Docsos} for this terminology and an in-depth discussion). Q-structures
are rigid \cite{Acsos}. The geometry defined them is completely different from the one of
complex supermanifolds. In particular, it is non-supercommutative.

\subsection{Construction of $\ca(\cm)$}

We denote from now on by $\ca(\cm)(\Lambda)$ the almost complex structures in $\gh(\cend(\ctm))(\Lambda)$,
i.e. the sections 
\begin{equation}
J:\cp(\Lambda)\times\cm\to\cp(\Lambda)\times\cend(\ctm)
\end{equation}
such that $J^2=-\One_\Lambda$. In view of the definition given above, these sections
should more precisely be called almost complex structures on the trivial family over $\cp(\Lambda)$ with 
fiber $\cm$. As always, the higher $\Lambda$-points of a superspace of maps between two super objects
$\cm,\cn$ are maps $\cp(\Lambda)\times\cm\to\cn$.

It will be shown that the almost complex structures on an almost complex supermanifold $\cm$ form a 
submanifold in $\gh(\cend(\ctm))$. The strategy will be to extend the technique invented by U.~Abresch
and A.E.~Fischer \cite{TTiRG} for the construction of holomorphic coordinates on the set of almost complex 
structures of an ordinary manifold. This will turn each of the sets $\ca(\cm)(\Lambda)$ into a complex
manifold, and it will subsequently be shown that the Abresch-Fischer charts which provide these
manifold structures can be constructed in a functorial manner with respect to $\Lambda$. This in turn 
allows us to make them the points of supercharts on the functor $\ca(\cm)$.

\begin{lemma}
The points $\ca(\cm)(\Lambda)$ form a subfunctor of $\gh(\cend(\ctm))$ in $\catsets^\catgr$.
\end{lemma}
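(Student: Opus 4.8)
The plan is to verify directly the two defining conditions of a subfunctor in $\catsets^\catgr$: that each $\ca(\cm)(\Lambda)$ is a subset of $\gh(\cend(\ctm))(\Lambda)$, and that the transition maps of $\gh(\cend(\ctm))$ carry $\ca(\cm)(\Lambda)$ into $\ca(\cm)(\Lambda')$ for every morphism $\varphi\colon\Lambda\to\Lambda'$ of $\catgr$. The first is immediate from the definitions: an element of $\ca(\cm)(\Lambda)$ is by construction a section $J$ of $\Pi_\cm^*\cend(\ctm)$ over $\cp(\Lambda)\times\cm$, hence an element of $\gh(\cend(\ctm))(\Lambda)$, subject only to the extra equation $J^2=-\One_\Lambda$. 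So the real content is stability under the transition maps.

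First I would make the algebraic framing explicit. The endomorphism bundle $\cend(\ctm)$ is a bundle of associative unital algebras: fibrewise composition is a morphism of super vector bundles $\cend(\ctm)\times_\cm\cend(\ctm)\to\cend(\ctm)$, and the identity endomorphism $\One$ is a global unit section. Pulling back along $\Pi_\cm$ and passing to sections via $\gh$ (equivalently, applying the functor $\gh$ to the fibrewise composition morphism), one obtains for each $\Lambda$ a unital associative product $\mu_\Lambda$ and a unit $\One_\Lambda$ on the set $\gh(\cend(\ctm))(\Lambda)$, so that
\begin{equation}
\ca(\cm)(\Lambda)=\{J\in\gh(\cend(\ctm))(\Lambda)\mid \mu_\Lambda(J,J)=-\One_\Lambda\}.
\end{equation}

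The key step is then to show that each transition map $\gh(\cend(\ctm))(\varphi)$ is a homomorphism for these products and sends $\One_\Lambda$ to $\One_{\Lambda'}$. By (\ref{svbungr}) this map is precomposition with $\cp(\varphi)\times\id_\cm$, and since the projections to $\cm$ agree, $\Pi_\cm\circ(\cp(\varphi)\times\id_\cm)=\Pi_\cm$, the pullback of the algebra bundle $\Pi_\cm^*\cend(\ctm)$ along $\cp(\varphi)\times\id_\cm$ is canonically again $\Pi_\cm^*\cend(\ctm)$, now over $\cp(\Lambda')\times\cm$, carrying the identical fibrewise algebra structure. A pointwise (stalkwise) check then gives
\begin{equation}
\gh(\cend(\ctm))(\varphi)\bigl(\mu_\Lambda(\sigma,\sigma')\bigr)=\mu_{\Lambda'}\bigl(\gh(\cend(\ctm))(\varphi)(\sigma),\gh(\cend(\ctm))(\varphi)(\sigma')\bigr)
\end{equation}
together with $\gh(\cend(\ctm))(\varphi)(\One_\Lambda)=\One_{\Lambda'}$. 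Applying this to $J\in\ca(\cm)(\Lambda)$ yields $(\gh(\cend(\ctm))(\varphi)(J))^2=\gh(\cend(\ctm))(\varphi)(-\One_\Lambda)=-\One_{\Lambda'}$, so the image lies in $\ca(\cm)(\Lambda')$. This is exactly the condition that the inclusions form a functor morphism, finishing the proof. The only genuine obstacle is the compatibility of the transition maps with fibrewise composition; once one observes that these maps act only on the superpoint factor and leave the $\cm$-factor — over which the entire algebra structure lives — untouched, the verification is routine.
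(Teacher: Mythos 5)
Your proposal is correct and follows essentially the same route as the paper: both reduce the claim to the observation that the transition maps $\gh(\cend(\ctm))(\varphi)$ are unital algebra homomorphisms (because the multiplication on $\gh(\cend(\ctm))$ is a functor morphism), and hence preserve the equation $J^2=-\One$. Your added justification that the homomorphism property holds because precomposition with $\cp(\varphi)\times\id_\cm$ leaves the $\cm$-factor, over which the fibrewise algebra structure lives, untouched is a slightly more explicit unpacking of what the paper states by appealing to $\gh(\cend(\ctm))$ being an $\olr$-algebra, but it is the same argument.
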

\begin{proof}
Each set $\ca(\cm)(\Lambda)$ is obviously a subset of $\gh(\cend(\ctm))(\Lambda)$. It remains to be
shown that the family of inclusions $\{\ca(\cm)(\Lambda)\subset\gh(\cend)(\Lambda)|\Lambda\in\catgr\}$
is a functor morphism. This means we have to prove that for a morphism $\varphi:\Lambda\to\Lambda'$, we can
define a morphism 
\begin{equation}
\label{tfing}
\ca(\cm)(\varphi):\ca(\cm)(\Lambda)\to\ca(\cm)(\Lambda')
\end{equation}
in such a way that it is compatible with the image of $\varphi$ under $\gh(\cend(\ctm))$. This image is
the map (recall that $\cp$ is contravariant)
\begin{eqnarray}
\gh(\cend(\ctm))(\varphi):\gh(\cend(\ctm))(\Lambda) &\to& \gh(\cend(\ctm))(\Lambda')\\
\nonumber
S &\mapsto& S\circ(\cp(\varphi)\times\id_\cm).
\end{eqnarray}
We will show that defining $\ca(\cm)(\varphi)$ as the restriction
\begin{equation}
\ca(\cm)(\varphi)=\gh(\cend(\ctm))(\varphi)\big|_{\ca(\cm)(\Lambda)}
\end{equation}
does the job. $\gh(\cend(\ctm))$ is an $\olr$-algebra, therefore $\gh(\cend(\ctm))(\varphi)$ is a
map between a $\Lambda_{\bar{0}}$- and a $\Lambda'_{\bar{0}}$-algebra whose multiplications are the
components of a functor morphism. Therefore, if $\sigma\in\ca(\cm)(\Lambda)$ is given and we denote for
brevity $\gh(\cend(\ctm))(\varphi)$ by $\varphi$ and by $\mu$ the multiplication
in $\gh(\cend(\ctm))$, we have a commutative square
\begin{equation}
\xymatrix{(\sigma,\sigma) \ar@{|->}[r]^{\varphi} \ar@{|->}[d]_{\mu_\Lambda} & 
(\varphi(\sigma),\varphi(\sigma)) \ar@{|->}[d]^{\mu_{\Lambda'}}\\
-\One_\Lambda \ar@{|->}[r]^{\varphi} & -\One_{\Lambda'}}.
\end{equation}
Thus, $\gh(\cend(\ctm))(\varphi)$ preserves the property of a section to be
of square $-\One$, and restricting it to $\ca(\cm)(\Lambda)$ yields a well defined map $\ca(\cm)(\varphi)$,
turning the sets $\ca(\cm)(\Lambda)$ into a subfunctor of $\gh(\cend(\ctm))$.
\end{proof}

We first show that we can turn each of the sets $\ca(\cm)(\Lambda)$ into an ordinary manifold:

\begin{thm}
For each $\Lambda\in\catgr$, the set $\ca(\cm)(\Lambda)$ is a submanifold of \newline
$\gh(\cend(\ctm))(\Lambda)$.
\label{thm1}
\end{thm}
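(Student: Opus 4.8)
The plan is to reduce the statement to a question about a quadric in a single Fréchet algebra and to straighten it by an explicit Cayley transform, thereby avoiding any appeal to an inverse function theorem (which is not available in the Fréchet category). By Theorem~\ref{thm:sectsvbun} the set $W:=\gh(\cend(\ctm))(\Lambda)$ is the even $\Lambda$-part $(\Lambda\otimes E)_{\bar 0}$ of a super vector space $E$ of smooth sections over the compact base, hence a Fréchet space; moreover, as noted in the preceding Lemma, $\gh(\cend(\ctm))$ is an $\olr$-algebra, so $W$ is a unital associative Fréchet $\Lambda_{\bar 0}$-algebra under fibrewise composition, with unit $\One$. In these terms $\ca(\cm)(\Lambda)=\{J\in W\mid J\cdot J=-\One\}$, a quadric, and each such $J_0$ is invertible with $J_0^{-1}=-J_0$. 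First I would fix $J_0\in\ca(\cm)(\Lambda)$ and split $W=\mathfrak c_{J_0}\oplus\mathfrak a_{J_0}$ into the $J_0$-commuting and $J_0$-anticommuting sections by the continuous idempotents $P_a(H)=\tfrac12(H+J_0HJ_0)$ and $P_c(H)=\tfrac12(H-J_0HJ_0)$; both summands are closed and $\mathfrak a_{J_0}$, which will serve as the local model, is complemented. (This $\mathfrak a_{J_0}$ is exactly the tangent space to the quadric at $J_0$, since differentiating $J^2=-\One$ forces the variation to anticommute with $J_0$.)

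Next I would introduce, on the open set $\{G\in W\mid G+J_0\text{ invertible}\}\ni J_0$, the map
\begin{equation}
R_{J_0}(G)=(G+J_0)^{-1}(J_0-G),
\end{equation}
together with the candidate inverse $R_{J_0}^{-1}(B)=J_0(\One-B)(\One+B)^{-1}$, defined where $\One+B$ is invertible. A short algebraic manipulation using only associativity shows that these two maps are mutually inverse, so $R_{J_0}$ is a homeomorphism between two open neighbourhoods, sending $J_0\mapsto 0$. Both maps are built solely from the continuous bilinear multiplication of $W$ and from inversion on the open set of invertible elements; since inversion in the Fréchet algebra of smooth sections over a compact base is smooth (the invertibles form a Fréchet Lie group), $R_{J_0}$ is a diffeomorphism of open subsets of $W$. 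Thus $R_{J_0}$ is a chart of the ambient space centred at $J_0$.

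The heart of the argument, and the step I expect to be the main obstacle, is to prove that this ambient diffeomorphism straightens the quadric, i.e.\ that $R_{J_0}$ maps $\ca(\cm)(\Lambda)$ onto an open subset of the closed subspace $\mathfrak a_{J_0}$. One inclusion is immediate: for $B\in\mathfrak a_{J_0}$ the relation $BJ_0=-J_0B$ gives $J_0(\One-B)=(\One+B)J_0$, whence $R_{J_0}^{-1}(B)=(\One+B)J_0(\One+B)^{-1}$ is a conjugate of $J_0$ and therefore satisfies $(R_{J_0}^{-1}(B))^2=-\One$. The converse---that $G\cdot G=-\One$ forces $R_{J_0}(G)$ to anticommute with $J_0$---is where the defining relation is genuinely used: writing $M=GJ_0$ one computes $J_0R_{J_0}(G)J_0=-(\One-M)^{-1}(\One+M)$, while $R_{J_0}(G)=(\One-M^{-1})^{-1}(\One+M^{-1})$, and the identity $GJ_0\cdot J_0G=-G^2=\One$ (so that $M^{-1}=J_0G$) collapses the second expression to the first, giving $J_0R_{J_0}(G)J_0=R_{J_0}(G)$, i.e.\ $R_{J_0}(G)\in\mathfrak a_{J_0}$. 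With both inclusions in hand the restrictions $R_{J_0}$ provide, as $J_0$ ranges over $\ca(\cm)(\Lambda)$, an atlas of submanifold charts each carrying $\ca(\cm)(\Lambda)$ onto the closed complemented subspace $\mathfrak a_{J_0}$; the transition maps $R_{J_1}\circ R_{J_0}^{-1}$ are smooth as compositions of the ambient diffeomorphisms. Hence $\ca(\cm)(\Lambda)$ is a split (Fréchet) submanifold of $\gh(\cend(\ctm))(\Lambda)$, as claimed. I would remark that one could alternatively pass to Sobolev completions and invoke the Banach inverse function theorem in the manner of Abresch--Fischer and Fischer--Tromba~\cite{TTiRG}, but Theorem~\ref{thm:sectsvbun} delivers the smooth object directly and the explicit charts make the detour through finite differentiability unnecessary.
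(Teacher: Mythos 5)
Your proposal is correct and follows essentially the same route as the paper: the Abresch--Fischer/Cayley chart $H\mapsto(\One+H)J_0(\One+H)^{-1}$ (your $R_{J_0}^{-1}$ restricted to $\mathfrak a_{J_0}$ coincides with the paper's $\psi_{J_0}$), combined with the splitting of $\gh(\cend(\ctm))(\Lambda)$ into $J_0$-commuting and $J_0$-anticommuting parts via the idempotents $\tfrac12(H\pm J_0HJ_0)$. Your version is in fact more complete on one point the paper only asserts, namely the verification that $G^2=-\One$ forces $R_{J_0}(G)$ to anticommute with $J_0$ (so that the chart really straightens the quadric onto the closed complemented subspace), and your explicit appeal to smoothness of inversion in the Fr\'echet algebra of sections makes the smoothness of the transition maps cleaner than the paper's ``clearly smooth'' remark.
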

\begin{proof}
$\ca(\cm)(\Lambda)$ is a subset of $\gh(\cend(\ctm))(\Lambda)$. In the following, we keep some 
$J_0\in\ca(\cm)(\Lambda)$ fixed.
If $\ca(\cm)(\Lambda)$ is indeed a manifold, then its
tangent space at $J_0$ will be
\begin{equation}
T_{J_0}\ca(\cm)(\Lambda)=\left\{H\in\gh(\cend)(\Lambda)\big|J_0H+HJ_0=0\right\}
\label{tjzero}
\end{equation}
as one sees from formally differentiating $J^2=-\One_\Lambda$. For brevity, we will write
just $\One$ instead of $\One_\Lambda$ in this proof, since this is the only component of $\One$ that
will appear. 

Now let $0\in U$ be a neighbourhood of zero in $T_{J_0}\ca(\cm)(\Lambda)$ such that $(\One+H)$ is
invertible for all $H\in U$. Then define
\begin{eqnarray}
\label{psimap}
\psi_{J_0}:U &\to& \ca(\cm)(\Lambda)\\
\nonumber
H &\mapsto& (\One+H)J_0(\One+H)^{-1}=:J.
\end{eqnarray}
Clearly, $J^2=-\One$ if and only if $J_0^2=-\One$, so the range of $\psi$ is in $\ca(\cm)(\Lambda)$.
Solving for $H$, one finds the inverse of $\psi$ to be
\begin{equation}
\psi_{J_0}^{-1}(J)=(J-J_0)(J+J_0)^{-1}
\label{invpsi}
\end{equation}
This already proves that $\psi$ provides a chart around $J_0$, because we have constructed a bijective 
map from a neighbourhood around $J_0$ in $\ca(\cm)(\Lambda)$ to the linear space 
$T_{J_0}\ca(\cm)(\Lambda)$. Such charts are available around any $J_0\in\ca(\cm)(\Lambda)$. We have to
show that the overlaps are given by smooth maps. Therefore let $J$ be in the intersection of two charts,
i.e.,
\begin{equation}
(\One+H)J_0(\One+H)^{-1}=J=(\One+K)J_1(\One+K)^{-1},
\end{equation}
with $K\in T_{J_1}\ca(\cm)(\Lambda)$. Using (\ref{invpsi}), we find
\begin{equation}
\label{transmap}
K=\left[(\One+H)J_0(\One+H)^{-1}-J_1\right]\left[(\One+H)J_0(\One+H)^{-1}+J_1\right],
\end{equation}
which is, clearly, smooth with respect to $H$ whenever $(\One+H)$ is invertible.

To see that $\ca(\cm)(\Lambda)$ is actually a submanifold of $\gh(\cend)(\Lambda)$, we must show that
there exists a tubular neighbourhood of $\ca(\cm)(\Lambda)$ around $J_0\in\gh(\cend)(\Lambda)$.
This means we must find a chart $\phi$ for $V\subset\gh(\cend)(\Lambda)$ 
around $J_0$, which locally flattens $\ca(\cm)(\Lambda)$, i.e., such a chart $\phi(V)$ admits a direct sum
decomposition into a subspace which provides the chart for $\ca(\cm)(\Lambda)\cap V$ and a complement.
For this, note that every $H\in \gh(\cend(\ctm))(\Lambda)$ may be decomposed as $H=H^++H^-$, with
\[
H^+=\frac{1}{2}(H+J_0HJ_0),\qquad H^-=\frac{1}{2}(H-J_0HJ_0)
\]
such that $J_0H^++H^+J_0=0$, i.e., an anticommuting part with respect to $J_0$, and
$J_0H^--H^-J_0=0$, i.e., a commuting part. This splits
\begin{equation}
\gh(\cend)(\Lambda)=T^+_{J_0}\oplus T^-_{J_0}
\label{splitend}
\end{equation}
where $T^+_{J_0}$ denotes the space of endomorphisms anticommuting with $J_0$, and 
$T^-_{J_0}$ denotes the space of those commuting with $J_0$. Now take a neighbourhood 
$V\subset\gh(\cend)(\Lambda)$ such that $V\cap\ca(\cm)(\Lambda)=U$. 
Then $\id:V\to V$ is a chart around
$J_0$ for $\gh(\cend)(\Lambda)$, and $\psi_{J_0}^{-1}:U\to T_{J_0}^+$ is a chart for
$\ca(\cm)(\Lambda)\cap V$ whose image lies in a direct subspace of $V$. It remains to check that
there exists a neighbourhood around $J_0$ in $T_{J_0}^+$ which does not contain any points of 
$\ca(\cm)(\Lambda)$. Indeed, consider an $H\in T_{J_0}^+$ and check the equation
\begin{equation}
(J_0+H)^2=J_0^2+HJ_0+J_0H+H^2=-\One.
\end{equation}
Since $H$ commutes with $J_0$, this is equivalent to
\[
H(2J_0+H)=0.
\]
We are not interested in the $H=0$ solution, so we look for solutions of $2J_0+H=0$. 
It is clear that there exists a neighbourhood of $0\in T_{J_0}^+$ for which this equation has no
solutions, because $J_0$ is invertible.
This shows that
$\ca(\cm)(\Lambda)$ is indeed a submanifold of $\gh(\cend)(\Lambda)$.
\end{proof}

In fact, we get even more, namely complex coordinates on $\ca(\cm)(\Lambda)$. The construction in the
following proof is an adaptation of the idea developed in \cite{TTiRG} for ordinary almost complex
manifolds.

\begin{thm}
\label{cstr}
$\ca(\cm)(\Lambda)$ is a complex manifold.
\end{thm}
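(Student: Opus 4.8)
The plan is to put a \emph{constant} complex structure on each Abresch--Fischer chart domain produced in Thm.~\ref{thm1} and to show that these structures are compatible on overlaps. By (\ref{tjzero}), the model space of the chart $\psi_{J_0}$ is $T^+_{J_0}=T_{J_0}\ca(\cm)(\Lambda)$, the space of $H\in\gh(\cend(\ctm))(\Lambda)$ anticommuting with $J_0$. The first step is to observe that left multiplication $\mathcal{I}_{J_0}\colon H\mapsto J_0H$ preserves this space: if $J_0H+HJ_0=0$ then $J_0(J_0H)+(J_0H)J_0=-H+J_0HJ_0=-H+H=0$, where I used $J_0^2=-\One$ and $J_0HJ_0=-J_0^2H=H$. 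Since moreover $\mathcal{I}_{J_0}^2=J_0^2\cdot=-\id$, the space $T^+_{J_0}$ becomes a $\complexC$-vector space, and $\psi_{J_0}^{-1}$ is a candidate holomorphic chart. It then remains only to verify that the transition maps (\ref{transmap}) are holomorphic for these complex structures.

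The crux is a single intertwining identity. Differentiating (\ref{psimap}) gives
\[
D\psi_{J_0}|_H(\dot H)=(\dot H J_0-J\dot H)(\One+H)^{-1},\qquad J:=\psi_{J_0}(H).
\]
Feeding in $\mathcal{I}_{J_0}\dot H=J_0\dot H$ and using $J_0^2=-\One$ together with the anticommutation $\dot H J_0=-J_0\dot H$ of the tangent vector $\dot H\in T^+_{J_0}$, a short purely algebraic computation yields $D\psi_{J_0}|_H(J_0\dot H)=J\cdot D\psi_{J_0}|_H(\dot H)$. In words: the chart $\psi_{J_0}$ carries the constant structure $\mathcal{I}_{J_0}$ to the operator ``left multiplication by $J$'' on the tangent directions at the image point $J\in\gh(\cend(\ctm))(\Lambda)$. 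The decisive feature is that this description of the pushed-forward structure depends only on the point $J$ and \emph{not} on the base point $J_0$ of the chart.

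This gives the conclusion at once. Because the induced structure at a point $J$ is the same operator (multiplication by $J$) regardless of which chart contains $J$, any two overlapping charts $\psi_{J_0},\psi_{J_1}$ transport $\mathcal{I}_{J_0}$ and $\mathcal{I}_{J_1}$ to one and the same structure on the overlap; hence the transition map (\ref{transmap}) intertwines $\mathcal{I}_{J_0}$ with $\mathcal{I}_{J_1}$ and is therefore holomorphic. This simultaneously shows that the almost complex structure assembled from the charts is globally well defined and, since in each chart it is the constant-coefficient structure $\mathcal{I}_{J_0}$, manifestly integrable; thus $\ca(\cm)(\Lambda)$ is a complex manifold. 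The main obstacle is precisely the intertwining identity of the second step; everything else (invertibility of $\One+H$, smoothness of the overlaps, and the splitting (\ref{splitend})) is inherited from Thm.~\ref{thm1}. I would finally remark that the whole argument is algebraic in $J_0$ and $\One$ over the coefficient ring $\Lambda_{\bar 0}$, so it proceeds verbatim for every $\Lambda\in\catgr$ and in a way compatible with the maps $\ca(\cm)(\varphi)$ — the functoriality in $\Lambda$ that the next section needs in order to promote the family $\{\ca(\cm)(\Lambda)\}$ to a supermanifold.
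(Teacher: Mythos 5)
Your proof is correct and follows essentially the same route as the paper: both rest on the single identity $\psi_{J_0}^*(\Phi)=\Phi_{J_0}$, i.e.\ that the Abresch--Fischer chart intertwines the constant structure $H\mapsto J_0H$ with the globally defined structure $H\mapsto JH$, from which holomorphy of the transitions (\ref{transmap}) follows because the induced structure at a point depends only on the point and not on the chart. Your verification via $D\psi_{J_0}|_H(J_0\dot H)=J\cdot D\psi_{J_0}|_H(\dot H)$ is just a more economical packaging of the paper's longer computation of $D\psi_{J_0}(H)^{-1}\Phi_J D\psi_{J_0}(H)K=J_0K$.
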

\begin{proof}
This will be shown by directly constructing holomorphic coordinates. First, observe that
$\ca(\cm)(\Lambda)$ is already an almost complex manifold: for any $J\in\ca(\cm)(\Lambda)$,
the tensor $J$ itself provides the almost complex structure on its tangent space:
\begin{eqnarray}
\label{phimap}
\Phi:T_{J}\ca(\cm)(\Lambda) &\to& T_{J}\ca(\cm)(\Lambda)\\
\nonumber
H &\mapsto& JH
\end{eqnarray}
This is a well defined map, because for each $H\in T_{J}\ca(\cm)(\Lambda)$, we have
\[
J(JH)+(JH)J=-H+(JH)J=-H-HJJ=0,
\]
so the image is again in $T_{J}\ca(\cm)(\Lambda)$. Now we claim that
\begin{equation}
\label{pbacs}
\psi_{J_0}^*(\Phi)=\Phi_{J_0},
\end{equation}
where and $\psi_{J_0}$ is the chart defined in (\ref{psimap}) and
$\Phi_{J_0}$ denotes the fixed almost complex structure on the vector space $T_{J_0}\ca(\cm)(\Lambda)$.
If (\ref{pbacs}) holds true, then $\psi_{J_0}$ is actually a holomorphic coordinate chart around $J_0$ 
for the
almost complex structure $\Phi$ (\ref{phimap}). Such a chart can be found around any $J_0$, and on two
overlapping charts, we have the transition function (\ref{transmap}), which is clearly holomorphic in
$H$ for $H$ near zero. Therefore the statement (\ref{pbacs}) is equivalent to the assertion of the theorem.

Take an arbitrary $K\in T_{J_0}\ca(\cm)(\Lambda)$. We have to show that
\[
D\psi_{J_0}(H)^{-1}\Phi_J D\psi_{J_0}(H)K=\Phi_{J_0}(K)=J_0K
\]
where $H\in T_{J_0}\ca(\cm)(\Lambda)$ is mapped to $J$ under $\psi_{J_0}$.

The map $G\mapsto G^{-1}$, defined on the invertible operators, has the derivative
$K\mapsto -G^{-1}KG^{-1}$. So
\begin{eqnarray*}
D\psi_{J_0}(H)K &=& KJ_0(\One+H)^{-1}-(\One+H)J_0(\One+H)^{-1}K(\One+H)^{-1}\\
&=& KJ_0(\One+H)^{-1}-JK(\One+H)^{-1}
\end{eqnarray*}
and
\[
JD\psi_{J_0}(H)K=-J(J+J_0)K(\One+H)^{-1}
\]
To find $D\psi_{J_0}(H)^{-1}$, we calculate the derivative of $\psi_{J_0}^{-1}$ from (\ref{invpsi}):
\begin{eqnarray*}
D\psi_{J_0}^{-1}(J)L &=& L(J+J_0)^{-1}-(J-J_0)(J+J_0)^{-1}L(J+J_0)^{-1}\\
&=& L(J+J_0)^{-1}-HL(J+J_0)^{-1}\\
&=& (\One-H)L(J+J_0)^{-1}
\end{eqnarray*}
So, altogether, one has
\begin{eqnarray}
\nonumber
D\psi_{J_0}(H)^{-1}\Phi_J D\psi_{J_0}(H)K &=& D\psi_{J_0}(H)^{-1}JD\psi_{J_0}(H)K\\
\nonumber
&=&(\One-H)(-J(J+J_0)K(\One+H)^{-1})(J+J_0)^{-1}\\
\nonumber
&=&-(\One-H)\left(J(J+J_0)K)\right)((J+J_0)(\One+H))^{-1}\\
\nonumber
&=&-(\One-H)\left((\One+H)J_0(\One+H)^{-1}(J+J_0)K\right)\cdot\\
&&\cdot((J+J_0)(\One+H))^{-1}
\label{dpsi}
\end{eqnarray}
Now use
\[
(\One-H)(\One+H)J_0(\One+H)^{-1}=J_0(\One-H)
\]
to obtain for (\ref{dpsi})
\begin{equation}
-J_0(\One-H)(J+J_0)K((J+J_0)(\One+H))^{-1}
\label{dpsi2}
\end{equation}
However
\begin{eqnarray*}
(\One-H)(J+J_0) &=& (\One-(J-J_0)(J+J_0)^{-1})(J+J_0)\\
&=& ((J+J_0)-(J-J_0))\\
&=& 2J_0
\end{eqnarray*}
and additionally,
\[
(J+J_0)(\One+H)=(J+J_0)\left(\One+(J-J_0)(J+J_0)^{-1}\right)
\]
which, together with
\[
(J+J_0)(J-J_0)=-(J-J_0)(J+J_0)
\]
yields
\[
(J+J_0)(\One+H)=((J+J_0)-(J-J_0))=2J_0
\]
But $J_0^{-1}=-J_0$, so
\[
((J+J_0)(\One+H))^{-1}=-\frac{1}{2}J_0.
\]
Inserting everything into (\ref{dpsi2}), one obtains finally
\begin{eqnarray*}
D\psi(H)^{-1}\Phi_J D\psi(H)K &=& -J_0(2J_0)K(-\frac{1}{2}J_0)\\
&=& -KJ_0\\
&=& J_0K,
\end{eqnarray*}
which was to be shown.
\end{proof}

In order to show that these complex manifold structures on the sets $\ca(\cm)(\Lambda)$ assemble to form 
a complex supermanifold $\ca(\cm)$, we have to show that the construction of Abresch-Fischer charts
carried out in the above proofs can be performed in a functorial manner. Since every open subfunctor of
a functor $\cf\in\cattop^\catgr$ which is locally isomorphic to superdomains is a restriction 
(Prop.~\ref{subf}), this means that around every $J_0\in\ca(\cm)(\Lambda)$, we have to find an open domain $U$
such that $\ca(\cm)\big|_U$ is isomorphic to an open subfunctor of a superrepresentable $\olc$-module.
In order to achieve this, the following two lemmas will be of use.

\begin{lemma}
\label{preim}
Let $\olv$ be a superrepresentable $\olk$-module in $\catsets^\catgr$. Let $S$ be a subset of 
its underlying points: $S\subset\olv(\fieldK)$.
Then $\olv(\epsilon_\Lambda)^{-1}(S)\subset\olv(\Lambda)$ has the form 
$S+\olv^{nil}(\Lambda)$. Here, the (not superrepresentable) $\fieldK$-module $\olv^{nil}$ is defined
by (\ref{vnildef}).
\end{lemma}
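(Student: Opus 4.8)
The plan is to reduce everything to an explicit identification of the map $\olv(\epsilon_\Lambda)$ as the canonical projection coming from the splitting $\olv(\Lambda)=V_{\bar{0}}\oplus\olv^{nil}(\Lambda)$ recorded just after (\ref{vnildef}). First I would unwind the target: since $\fieldK=\Lambda_0$ is purely even, Definition (\ref{def:olv}) gives
\[
\olv(\fieldK)=(\fieldK_{\bar{0}}\otimes V_{\bar{0}})\oplus(\fieldK_{\bar{1}}\otimes V_{\bar{1}})=V_{\bar{0}},
\]
so that $S$ is simply a subset of $V_{\bar{0}}$ and the assertion concerns the full preimage of $S$ inside $\olv(\Lambda)$.

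The main computation is to show that $\olv(\epsilon_\Lambda)=(\epsilon_\Lambda\otimes\id_V)|_{\olv(\Lambda)}$ is exactly the projection onto the summand $V_{\bar{0}}$. Here I would use that the terminal morphism $\epsilon_\Lambda$ of (\ref{finmorgr}) is precisely the quotient map $\Lambda\to\Lambda/\Lambda^{nil}\cong\fieldK$: it is the identity on the scalar part $\fieldK\subset\Lambda_{\bar{0}}$ and annihilates the entire nilpotent ideal $\Lambda^{nil}=\Lambda_{\bar{0}}^{nil}\oplus\Lambda_{\bar{1}}$, recalling that every odd element is nilpotent. Writing an arbitrary element of $\olv(\Lambda)$ according to
\[
\olv(\Lambda)=V_{\bar{0}}\oplus\left(\Lambda_{\bar{0}}^{nil}\otimes V_{\bar{0}}\right)\oplus\left(\Lambda_{\bar{1}}\otimes V_{\bar{1}}\right),
\]
applying $\epsilon_\Lambda\otimes\id_V$ fixes the first summand $\fieldK\otimes V_{\bar{0}}=V_{\bar{0}}$ and kills the remaining two, since those carry a tensor factor lying in $\Lambda^{nil}$. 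Thus $\olv(\epsilon_\Lambda)$ is the projection $\olv(\Lambda)=V_{\bar{0}}\oplus\olv^{nil}(\Lambda)\to V_{\bar{0}}$ with kernel exactly $\olv^{nil}(\Lambda)$.

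Once this is established the conclusion is immediate: the preimage of $S\subset V_{\bar{0}}$ under a projection with kernel $\olv^{nil}(\Lambda)$ is the coset sum
\[
\olv(\epsilon_\Lambda)^{-1}(S)=\{s+w\mid s\in S,\ w\in\olv^{nil}(\Lambda)\}=S+\olv^{nil}(\Lambda),
\]
which is the claim. I do not expect a serious obstacle; the only point requiring care is the bookkeeping in the second step, namely confirming that taking even parts $(-)_{\bar{0}}$ is compatible with the decomposition $\Lambda=\fieldK\oplus\Lambda^{nil}$, so that the constant summand is exactly $V_{\bar{0}}=\olv(\fieldK)$ and the nilpotent complement is exactly $\olv^{nil}(\Lambda)$ as defined in (\ref{vnildef}). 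Everything else is a direct consequence of the naturality already built into the functor $\olcdot$.
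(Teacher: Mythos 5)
Your proof is correct and follows essentially the same route as the paper's: both decompose $\olv(\Lambda)=V_{\bar{0}}\oplus\olv^{nil}(\Lambda)$, observe that $\olv(\epsilon_\Lambda)$ kills exactly the nilpotent summand because $\epsilon_\Lambda$ annihilates the odd generators, and read off the preimage as $S+\olv^{nil}(\Lambda)$. Your phrasing of the key step as identifying $\olv(\epsilon_\Lambda)$ with the projection of kernel $\olv^{nil}(\Lambda)$ is just a slightly cleaner packaging of the same computation.
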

\begin{proof}
Since $\olv$ is superrepresentable, there exists a super vector space $V$ (a $\fieldK$-supermodule in
$\catsets$), such that
\[
\olv(\Lambda)=(\Lambda\otimes V)_{\bar{0}}\qquad\mathrm{and}\qquad\olv(\varphi)=\varphi\otimes\id_V
\]
for any morphism $\varphi:\Lambda\to\Lambda'$.
For $S\subset\olv(\fieldK)$, this means simply that $S\subset V_{\bar{0}}$, and additionally
\[
\olv(\Lambda)=(\Lambda\otimes V)_{\bar{0}}=\Lambda_{\bar{0}}\otimes V_{\bar{0}}+%
\Lambda_{\bar{1}}\otimes V_{\bar{1}}.
\]
The terminal morphism $\epsilon_\Lambda:\Lambda\to\fieldK$ maps all odd generators of $\Lambda$ to
zero. Therefore, the general preimage of $S$ under $\epsilon_\Lambda$ contains 
$S=(\Lambda_{\bar{0}}\cap\fieldK)\otimes S$, plus an arbitrary term proportional
to at least one odd generator of $\Lambda$. These are
$\Lambda_{\bar{1}}\otimes V_{\bar{1}}$ and  
$(\Lambda_{\bar{0}}\cap\Lambda^{nil})\otimes V_{\bar{0}}$. The sum of the latter two sets forms
$(\Lambda^{nil}\otimes V)_{\bar{0}}=\olv^{nil}(\Lambda)$.
\end{proof}

\begin{lemma}
\label{alginv}
Let $\cb$ be an associative $\olk$-algebra with unit in $\catsets^\catgr$. Let $a\in\cb(\fieldK)$ 
be an invertible elment of its underlying algebra,
i.e. there exists $a^{-1}\in\cb(\fieldK)$ such that $aa^{-1}=a^{-1}a=1$ where 1 is the unit in
$\cb(\fieldK)$. Then $\cb(\epsilon_\Lambda)^{-1}(a)$ is a set of invertible elements in
$\cb(\Lambda)$ for all $\Lambda\in\catgr$.
\end{lemma}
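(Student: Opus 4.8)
The plan is to exploit the functoriality of $\cb$ together with the fact that the terminal morphism of a Grassmann algebra splits. Recall that $\fieldK=\Lambda_0$ is the null object of $\catgr$, so besides $\epsilon_\Lambda:\Lambda\to\fieldK$ we also have the unit embedding $c_\Lambda:\fieldK\to\Lambda$, and $\epsilon_\Lambda\circ c_\Lambda=\id_\fieldK$. Applying the functor $\cb$ and using that it is an algebra object (so that every morphism of $\catgr$ is sent to a unital algebra homomorphism), I obtain algebra homomorphisms $\cb(\epsilon_\Lambda):\cb(\Lambda)\to\cb(\fieldK)$ and $\cb(c_\Lambda):\cb(\fieldK)\to\cb(\Lambda)$ with $\cb(\epsilon_\Lambda)\circ\cb(c_\Lambda)=\id_{\cb(\fieldK)}$. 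In particular $\hat a:=\cb(c_\Lambda)(a)$ is a distinguished element of $\cb(\epsilon_\Lambda)^{-1}(a)$, and since $\cb(c_\Lambda)$ is a unital homomorphism it carries $a^{-1}$ to an inverse of $\hat a$; thus $\hat a$ is invertible in $\cb(\Lambda)$.

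Next I would reduce an arbitrary preimage to this invertible one. Let $b\in\cb(\epsilon_\Lambda)^{-1}(a)$ be given. Then $m:=b-\hat a$ satisfies $\cb(\epsilon_\Lambda)(m)=a-a=0$, i.e.\ $m$ lies in $I:=\ker\cb(\epsilon_\Lambda)$, which is a two-sided ideal because $\cb(\epsilon_\Lambda)$ is an algebra homomorphism. Writing $b=\hat a\,(1+\hat a^{-1}m)$ with $\hat a^{-1}m\in I$, the invertibility of $b$ follows from the invertibility of $1+k$ for $k\in I$. The latter holds whenever $k$ is nilpotent: if $k^{N+1}=0$ then $\sum_{j=0}^{N}(-k)^j$ is a two-sided inverse of $1+k$, as the telescoping computation shows. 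Hence the whole statement is reduced to the claim that $I=\ker\cb(\epsilon_\Lambda)$ is a nil ideal.

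Establishing this nilpotency is the step I expect to be the main obstacle, and it is where the specific nature of the Grassmann base enters. For the algebras actually occurring in this work, such as $\gh(\cend(\ctm))$, superrepresentability (Thm.~\ref{thm:sectsvbun}) lets me write $\cb=\overline{B}$ for a $\fieldK$-superalgebra $B$, so that $\cb(\Lambda)=(\Lambda\otimes_\fieldK B)_{\bar{0}}$ and $\cb(\epsilon_\Lambda)$ is induced by $\epsilon_\Lambda\otimes\id_B$. Its kernel is then exactly $(\Lambda^{nil}\otimes_\fieldK B)_{\bar{0}}=\overline{B}^{nil}(\Lambda)$ in the notation of (\ref{vnildef}); this is precisely the decomposition already furnished by Lemma~\ref{preim}, which gives $\cb(\epsilon_\Lambda)^{-1}(a)=\hat a+\overline{B}^{nil}(\Lambda)$ directly. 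Since in a product of $N$ elements of $\overline{B}^{nil}(\Lambda)$ each factor contributes a $\Lambda$-component lying in $\Lambda^{nil}$, the product lands in $\big((\Lambda^{nil})^{N}\otimes_\fieldK B\big)_{\bar{0}}$, and because $(\Lambda^{nil})^{n+1}=0$ for $\Lambda=\Lambda_n$ the ideal $\overline{B}^{nil}(\Lambda)$ is nilpotent of order $\le n+1$. Combined with the first two paragraphs this finishes the proof; the only delicate point, should one wish to treat a genuinely non-superrepresentable algebra object, is to identify $\ker\cb(\epsilon_\Lambda)$ abstractly with the image of the nilpotent ideal $\Lambda^{nil}$, which again forces every element to be nilpotent.
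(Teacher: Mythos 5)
Your proposal is correct, and it follows the same overall strategy as the paper's proof: invoke superrepresentability to write $\cb=\overline{B}$, use Lemma~\ref{preim} to identify $\cb(\epsilon_\Lambda)^{-1}(a)$ with $a$ plus the nilpotent part, and then invert by a perturbation argument. But your execution of the last step is genuinely different and, in fact, more robust than the paper's. The paper asserts that any $c\in\cb^{nil}(\Lambda)$ satisfies $\bar{\mu}_\Lambda(c,c)=0$ and writes down the two-term inverse $a^{-1}-a^{-1}ca^{-1}$; this only works when $(ca^{-1})^2=0$, and the claim $c^2=0$ fails already for $\Lambda=\Lambda_2$ and noncommutative $B$ (take $c=\tau_1\otimes b_1+\tau_2\otimes b_2$, whose square is $\tau_2\tau_1\otimes(\mu(b_1,b_2)-\mu(b_2,b_1))$ — nonzero in the endomorphism algebras to which the lemma is actually applied). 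Your version replaces this with the finite geometric series $\sum_{j=0}^{N}(-k)^j$ for $k$ in the kernel ideal, justified by the observation that an $N$-fold product of elements of $\overline{B}^{nil}(\Lambda_n)$ lands in $\bigl((\Lambda^{nil})^{N}\otimes_\fieldK B\bigr)_{\bar{0}}$, which vanishes for $N>n$; this is the correct general argument and is what the lemma needs for its use in Chapter~\ref{ch:acs}. Two further points in your favour: making the section $c_\Lambda$ and the lift $\hat{a}=\cb(c_\Lambda)(a)$ explicit cleanly separates the distinguished invertible preimage from the ideal perturbation, and your closing remark correctly flags that both proofs silently use superrepresentability, which Corollary~\ref{barfunct} (being only fully faithful, not essentially surjective) does not supply for an arbitrary algebra object in $\catsets^\catgr$.
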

\begin{proof}
Due to Corollary \ref{barfunct}, every associative $\olr$-algebra with unit in 
$\catsets^\catgr$ arises from an ordinary associative super algebra $B$, that is, a super vector space
with a bilinear morphism
\[
\mu:B\times B\to B
\]
Then $\cb(\Lambda)=(\Lambda\otimes B)_{\bar{0}}$, and the algebra operation becomes a functor morphism
$\bar{\mu}:\cb\times\cb\to\cb$, given pointwise by a map
\[
\bar{\mu}_\Lambda:\cb(\Lambda)\times\cb(\Lambda)\to\cb(\Lambda)
\]
This map is uniquely given by the requirements that it be $\Lambda_{\bar{0}}$-linear and that it
fulfill
\[
\bar{\mu}_\Lambda(\lambda_1\otimes a_1,\lambda_2\otimes a_2)=\lambda_2\lambda_1\otimes \mu(a_1,a_2).
\]
Now by Lemma \ref{preim}, the preimage of $a\in\cb(\fieldK)$ under $\epsilon_\Lambda$ is a set of 
the form $a+\cb^{nil}(\Lambda)$. That means any element of it can be written as $a+c$, where $c$
contains nilpotent elements of $\Lambda$, which means that $\bar{\mu}_\Lambda(c,c)=0$. Then
$a+c$ has the inverse $a^{-1}-a^{-1}ca^{-1}$:
\begin{eqnarray*}
\bar{\mu}_\Lambda(a+c,a^{-1}-a^{-1}ca^{-1}) &=& \bar{\mu}_\Lambda(a,a^{-1})+%
\bar{\mu}_\Lambda(a,-a^{-1}ca^{-1})+\\
&&\bar{\mu}_\Lambda(c,a^{-1})+\bar{\mu}_\Lambda(c,-a^{-1}ca^{-1})\\
&=& 1-\bar{\mu}_\Lambda(1,ca^{-1})+\bar{\mu}_\Lambda(c,a^{-1})-\bar{\mu}_\Lambda(c,a^{-1}ca^{-1})\\
&=& 1
\end{eqnarray*}
We have used the fact that $\bar{\mu}_\Lambda$ is $\Lambda_{\bar{0}}$-linear, as well as associativity.
The last term vanishes because $c$ appears twice and thus introduces the same nilpotent 
elements twice.
\end{proof}

Lemma \ref{alginv} is the categorified version of the fact that in an algebra which contains nilpotent
elements, an element is invertible if and only if its reduced element is. Lemma \ref{preim} shows
that for a superrepresentable $\olk$-module $\cv$, it makes sense to regard an element $v\in\cv(\fieldK)$ 
of its underlying space also as an element of all higher sets of points $\cv(\Lambda)$.
To construct a chart on $\ca(\cm)$, we first specify a $\olc$-module which the chart will map to. 

\begin{prop}
\label{that}
Fix a $J_0\in\ca(\cm)(\realR)$. We define a functor $\hat{T}_{J_0}$ in $\catsets^\catgr$ by
\begin{eqnarray}
\hat{T}_{J_0}(\Lambda) &=& T_{J_0}\ca(\Lambda)\\
\hat{T}_{J_0}(\varphi) &=& \gh(\cend(\ctm))(\varphi)\big|_{\hat{T}_{J_0}(\Lambda)}
\end{eqnarray}
for any morphism $\varphi:\Lambda\to\Lambda'$.
Then $\hat{T}_{J_0}$ is a superrepresentable $\olc$-module.
\end{prop}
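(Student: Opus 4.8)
The plan is to realize $\hat{T}_{J_0}$ as a direct summand of the superrepresentable $\olr$-module $\gh(\cend(\ctm))$ and then to promote it to an $\olc$-module using the fibrewise complex structure furnished by $J_0$. First I would make the tangent space explicit across all $\Lambda$. By formula (\ref{tjzero}), $\hat{T}_{J_0}(\Lambda)$ consists of those $H\in\gh(\cend(\ctm))(\Lambda)$ with $J_0H+HJ_0=0$, where $J_0$ is regarded as an element of $\gh(\cend(\ctm))(\Lambda)$ for every $\Lambda$. This makes sense because $J_0\in\ca(\cm)(\realR)$ is a point of the underlying space and, $\gh(\cend(\ctm))$ being superrepresentable by Thm.~\ref{thm:sectsvbun}, Lemma~\ref{preim} lets me view $J_0$ canonically inside each $\gh(\cend(\ctm))(\Lambda)$; since the structure maps $\gh(\cend(\ctm))(\varphi)$ are algebra morphisms fixing the images of real points, the relation $J_0^2=-\One$ persists, so $J_0\in\ca(\cm)(\Lambda)$ and the description (\ref{tjzero}) is legitimate. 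Thus $\hat{T}_{J_0}(\Lambda)$ is the anticommuting summand $T^+_{J_0}$ of the splitting (\ref{splitend}).

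The decisive step is to observe that this splitting is functorial. Define $P^+$ componentwise by $P^+_\Lambda(H)=\tfrac12(H+J_0HJ_0)$. I would check that $P^+$ is a morphism of $\olr$-modules: each $P^+_\Lambda$ is $\Lambda_{\bar 0}$-linear and idempotent (a one-line computation using $J_0^2=-\One$ gives $J_0(P^+_\Lambda H)J_0=P^+_\Lambda H$, hence $(P^+)^2=P^+$), and $P^+$ commutes with the structure maps precisely because $\gh(\cend(\ctm))(\varphi)$ respects the algebra multiplication and fixes the canonical image of $J_0$. Its image is exactly $T^+_{J_0}$: indeed $P^+_\Lambda H$ always anticommutes with $J_0$, and $P^+_\Lambda$ restricts to the identity on $T^+_{J_0}$. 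Consequently $\hat{T}_{J_0}=\operatorname{im}P^+$ is a direct summand of $\gh(\cend(\ctm))$ in $\catmod_{\olr}(\catsets^\catgr)$.

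Now I would transport this along the equivalence $\overline{\cdot}$. By Thm.~\ref{thm:sectsvbun} there is a real super vector space $W$ with $\gh(\cend(\ctm))\cong\overline{W}$, and by Cor.~\ref{cor:barfunct} together with Prop.~\ref{isokmod} the idempotent $\olr$-endomorphism $P^+$ equals $\overline{p}$ for a unique even linear idempotent $p\colon W\to W$. Writing $E:=p(W)$, so that $W=E\oplus\ker p$ as super vector spaces, and using that $\overline{\cdot}$ preserves finite direct sums, one gets $\overline{W}=\overline{E}\oplus\overline{\ker p}$ with $\overline{p}$ the projection onto $\overline{E}$; hence $\hat{T}_{J_0}=\operatorname{im}P^+=\overline{E}$, which is superrepresentable in the sense of Def.~\ref{def:srep}. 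Finally, the map $\Phi\colon H\mapsto J_0H$ of (\ref{phimap}) restricts to an $\olr$-module automorphism of $\hat{T}_{J_0}$ squaring to $-\id$; under the equivalence it corresponds to an even endomorphism $j$ of $E$ with $j^2=-\id$, turning $E$ into a complex super vector space and $\hat{T}_{J_0}\cong\overline{E}$ into a superrepresentable $\olc$-module.

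I expect the main obstacle to be bookkeeping rather than conceptual: one must carefully justify that $J_0$, an element of the underlying real point set, may be inserted into the relation defining $T_{J_0}\ca(\cm)(\Lambda)$ at every $\Lambda$ and that the resulting projector is genuinely a functor morphism --- this is where Lemma~\ref{preim} and the facts that the structure maps fix real points and respect multiplication do the real work. The superrepresentability of the image then follows formally from the equivalence $\overline{\cdot}$, and the only remaining care is the clean separation of the real module structure from the complex structure induced by $J_0$, so that the final object is recognized as an $\olc$-module and not merely an $\olr$-module carrying an extra endomorphism.
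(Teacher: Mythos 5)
Your proof is correct and follows essentially the same route as the paper's: both rest on superrepresentability of $\gh(\cend(\ctm))$ (Thm.~\ref{thm:sectsvbun}) and the splitting $a=\tfrac12(a-J_0aJ_0)+\tfrac12(a+J_0aJ_0)$ of the representing superalgebra into $J_0$-commuting and $J_0$-anticommuting summands, with functoriality following from the fact that the structure maps are algebra morphisms fixing the real point $J_0$. The only difference is presentational --- you package the splitting as a functorial idempotent and descend it via full faithfulness of $\olcdot$, and you explicitly verify the $\olc$-structure via $H\mapsto J_0H$, a point the paper's proof leaves implicit.
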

\begin{proof}
We know that $\gh(\cend(\ctm))$ is superrepresentable, so there exists associative superalgebra 
$(\ce,\mu)$ such that $\gh(\cend(\ctm))\cong\overline{\ce}$ and such that the 
multiplication $\mu$ becomes the composition
of endomorphisms. Since $J_0$ is an element of the underlying space of 
$\gh(\cend(\ctm))$ it is an element of $\ce_{\bar{0}}$. 
We may split the algebra $\ce$ into a direct sum of linear subspaces $\ce=\ce_+\oplus\ce_-$ by setting
\begin{eqnarray*}
\ce_+ &:=& \{a\in\ce\mid aJ_0+J_0a=0\}\\
\ce_- &:=& \{a\in\ce\mid aJ_0-J_0a=0\}
\end{eqnarray*}
This decomposition is direct, since any $a$ may be written uniquely as 
\[
a=\frac{1}{2}(a-J_0aJ_0)+\frac{1}{2}(a+J_0aJ_0)
\]
with the first summand in $\ce_-$ and the second one in $\ce_+$ and $\ce_+\cap\ce_-=\{0\}$. We want
to show that $\hat{T}_{J_0}\cong\overline{\ce_+}$. Let $H=\lambda\otimes h$ be an element of
$\overline{\ce_+}(\Lambda)$, i.e., we have $\mu(J_0,h)=-\mu(h,J_0)$. In $\overline{\ce}$, $\mu$
is replaced by $\overline{\mu}$ and we find
\[
\overline{\mu}_\Lambda(J_0,H)=\lambda\otimes\mu(J_0,h)=-\lambda\otimes(h,J_0)=-\overline{\mu}(H,J_0).
\]
Therefore one has $\hat{T}_{J_0}(\Lambda)=\overline{\ce_+}(\Lambda)$. 

Finally, the functors $\hat{T}_{J_0}$ and
$\overline{\ce_+}$ are well-defined because $\mu$ is a functor morphism. This means that for
every morphism $\varphi:\Lambda\to\Lambda'$ one
has a commutative diagram
\begin{equation}
\begin{CD}
\ce(\Lambda)\times\ce(\Lambda) @>{\mu_\Lambda}>> \ce(\Lambda)\\
@V{\ce(\varphi)}VV @VV{\ce(\varphi)}V\\
\ce(\Lambda') @>{\mu_{\Lambda'}}>> \ce(\Lambda')
\end{CD}\qquad.
\end{equation}
The
property of an element in $\ce(\Lambda)$ to anticommute with $J_0$ is therefore preserved under images
$\ce(\varphi)$ of morphisms $\varphi\in\catgr$.
\end{proof}

The $\olc$-supermodule $\hat{T}_{J_0}$ is the functor that represents the tangent space $T_{J_0}\ca(\cm)$.
Since each of the Abresch-Fischer charts around $J_0$ maps precisely this tangent space 
onto $\ca(\cm)$, we have to prove that $\ca(\cm)$ is locally (around $J_0$) isomorphic to $T_{J_0}\ca(\cm)$.

\begin{thm}
Let $J_0$ be a fixed element of $\ca(\cm)(\realR)$. Then there exists an open subfunctor $\cv$ around
$J_0$ in $\ca(\cm)$ which is isomorphic to an open subfunctor around $0$ in $\hat{T}_{J_0}$.
\end{thm}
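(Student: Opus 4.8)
The plan is to promote the pointwise Abresch--Fischer charts of Theorems~\ref{thm1} and~\ref{cstr} to a single functor morphism and then to check that it is an isomorphism of open subfunctors. First I would fix the underlying open set $U_0\subset\hat{T}_{J_0}(\realR)$ consisting of those $H$ for which $\One+H$ is invertible in the algebra $\gh(\cend(\ctm))(\realR)$, and form the restriction $\hat{T}_{J_0}\big|_{U_0}$; by Proposition~\ref{subf} this is the only kind of open subfunctor that the superrepresentable $\olc$-module $\hat{T}_{J_0}$ (Proposition~\ref{that}) admits. On the target side I would set $V_0:=\psi_{J_0}(U_0)\subset\ca(\cm)(\realR)$, which is open because $\psi_{J_0}$ is a chart at the level of $\realR$-points by Theorem~\ref{thm1}, and put $\cv:=\ca(\cm)\big|_{V_0}$. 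The candidate isomorphism $\Psi_{J_0}:\hat{T}_{J_0}\big|_{U_0}\to\cv$ is then the chart formula (\ref{psimap}) taken componentwise, $\Psi_{J_0,\Lambda}(H)=(\One+H)J_0(\One+H)^{-1}$, where $J_0$ is always read as its canonical image in $\ca(\cm)(\Lambda)$.

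Second I would verify that $\Psi_{J_0}$ is a well-defined functor morphism. By Lemma~\ref{preim} the $\Lambda$-points of the restriction are $\hat{T}_{J_0}\big|_{U_0}(\Lambda)=\hat{T}_{J_0}(\epsilon_\Lambda)^{-1}(U_0)=U_0+\hat{T}_{J_0}^{nil}(\Lambda)$, so every such $H$ reduces to a point of $U_0$; Lemma~\ref{alginv}, applied to the associative $\olr$-algebra $\gh(\cend(\ctm))$, then guarantees that $\One+H$ is invertible in $\gh(\cend(\ctm))(\Lambda)$, and the formula makes sense on all of $\hat{T}_{J_0}\big|_{U_0}(\Lambda)$. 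Naturality in $\Lambda$ is now automatic rather than a separate computation: $\Psi_{J_0}$ is assembled from the unit, the product and the inversion of the superrepresentable algebra $\gh(\cend(\ctm))$ together with the constant section $J_0$; the unit and product are functor morphisms by Proposition~\ref{isokmod} and Corollary~\ref{barfunct}, while inversion commutes with each structure map $\gh(\cend(\ctm))(\varphi)$ because the latter preserves products and units and inverses are unique. Applying naturality to the terminal morphism $\epsilon_\Lambda$ shows that $\Psi_{J_0,\Lambda}$ sends $\hat{T}_{J_0}(\epsilon_\Lambda)^{-1}(U_0)$ into $\ca(\cm)(\epsilon_\Lambda)^{-1}(V_0)=\cv(\Lambda)$, so $\Psi_{J_0}$ does land in $\cv$.

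Third I would produce the inverse. Its componentwise form is the expression (\ref{invpsi}), $\psi_{J_0,\Lambda}^{-1}(J)=(J-J_0)(J+J_0)^{-1}$; after shrinking $U_0$, hence $V_0$, so that $J+J_0$ is invertible near the reduced point $2J_0$ --- which it is, since $J_0^{-1}=-J_0$ --- Lemma~\ref{alginv} once more lifts the invertibility of $J+J_0$ to every $\Lambda$-point of $\cv$. That $\psi_{J_0,\Lambda}^{-1}(J)$ lands back in $\hat{T}_{J_0}(\Lambda)$, i.e.\ anticommutes with $J_0$, is the algebraic identity already exploited in Theorem~\ref{thm1}, and it survives the structure maps because these are algebra homomorphisms. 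The two componentwise maps are mutually inverse by the very computation of Theorem~\ref{thm1}, and each is biholomorphic with $\Lambda_{\bar{0}}$-linear derivative by Theorem~\ref{cstr}; hence $\Psi_{J_0}$ is a supersmooth (in fact complex-analytic) isomorphism of functors, which is exactly the assertion.

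I expect the genuine obstacle to be the middle step: showing that charts built separately for each $\Lambda$ in the preceding theorems cohere into a functor morphism defined on an open subfunctor of a superrepresentable module. The delicate ingredient is the \emph{uniform} control of the invertibility of $\One+H$ and of $J+J_0$ across all Grassmann algebras at once; this is precisely what Lemmas~\ref{preim} and~\ref{alginv} were prepared for, and once invertibility is secured, naturality collapses to the observation that $\psi_{J_0}$ and its inverse are rational expressions in the operations of the superrepresentable algebra $\gh(\cend(\ctm))$.
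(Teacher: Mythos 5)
Your proposal is correct and follows essentially the same route as the paper: the componentwise Abresch--Fischer formula $(\One+H)J_0(\One+H)^{-1}$, with Lemma~\ref{preim} identifying the $\Lambda$-points of the restrictions as nilpotent thickenings and Lemma~\ref{alginv} supplying invertibility of $\One+H$ and $J+J_0$ uniformly in $\Lambda$. The only cosmetic difference is that the paper establishes surjectivity by explicitly solving for the preimage $K$ of a point above $J_0$ (namely $K=-\tfrac{1}{2}J_0J_0^{nil}$) before treating general $J\in V$, whereas you invoke the closed-form inverse $(J-J_0)(J+J_0)^{-1}$ directly; note also that no shrinking of $U_0$ is needed, since $(J+J_0)(\One+H)=2J_0$ makes $J+J_0$ automatically invertible on the image of the chart.
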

\begin{proof}
First, we construct a coordinate neighbourhood of $J_0$ in $\ca(\cm)(\realR)$. According to Thm.~\ref{thm1},
such a local coordinate system is given by a neighbourhood $U$ of zero in $\hat{T}(\realR)$ via
the map
\begin{eqnarray}
\nonumber
\psi_{J_0}:U &\to& \ca(\cm)(\realR)\\
\label{psimap2}
H &\mapsto& (\One+H)J_0(\One+H)^{-1}=:J.
\end{eqnarray}
We will refer to this map and its analog for the higher $\Lambda_n$-points simply as $\psi$.
One observes that $U$ is only limited by the requirement that $(\One+H)$ be invertible. In the
following we will keep $U$ fixed, and we denote the image of $U$ under $\psi$ as 
$V\subset\ca(\cm)(\realR)$.

The claim that we want to prove states that there exists an isomorphism
\begin{equation}
\ca(\cm)(\epsilon_{\Lambda_n})^{-1}(V)\cong\hat{T}_{J_0}(\epsilon_{\Lambda_n})^{-1}(U)
\end{equation}
which provides an Abresch-Fischer chart on each point set $\ca(\cm)(\epsilon_{\Lambda_n})^{-1}(V)$.

First, we characterise the set $(\ca(\cm)(\epsilon_\Lambda))^{-1}(J_0)$. According to Lemma \ref{preim}, 
it is a set of the form $J_0+\ca_{J_0}^{nil}$, where $\ca_{J_0}^{nil}$ are elements from 
$\gh(\cend(\ctm))(\Lambda)$ which are proportional to nilpotent elements of $\Lambda$. So any element
$\tilde{J}_0\in(\ca(\cm)(\epsilon_\Lambda))^{-1}(J_0)$ has the form $J_0+J_0^{nil}$, and since
\begin{equation}
\tilde{J}_0^2=(J_0+J_0^{nil})^2=-\One+J_0J_0^{nil}+J_0^{nil}J_0
\label{jnil}
\end{equation}
we see that $J_0^{nil}$ must anticommute with $J_0$. So,
\begin{equation}
\ca_{J_0}^{nil}(\Lambda)=\{H\in\gh(\cend)^{nil}(\Lambda)\mid HJ_0+J_0H=0\},
\label{jnil2}
\end{equation}
implying that the points of $\ca(\cm)$ ``above'' $J_0$ (i.e. those whose reduced element is $J_0$) are 
those elements of $\gh(\cend(\ctm))(\Lambda)$ which anticommute with $J_0$ and which are purely nilpotent.
These are nothing else than $(T_{J_0}\ca(\cm)(\Lambda))^{nil}$.

Fix an arbitrary $\tilde{J}_0\in(\ca(\cm)(\epsilon_\Lambda))^{-1}(J_0)$. It must be of the form
$\tilde{J}_0=J_0+J_0^{nil}$. If it can be covered by our proposed coordinate map $\psi$, it must be
possible to write it as
\begin{equation}
\tilde{J}_0=(\One+K)J_0(\One+K)^{-1}
\label{oj1}
\end{equation}
for some $K\in\hat{T}_{J_0}(\Lambda)$.
Since $\epsilon_\Lambda:\gh(\cend(\ctm))(\Lambda)\to\gh(\cend(\ctm))(\realR)$ is a homomorphism of 
associative algebras, the above equation implies that
\[
J_0=\epsilon_\Lambda(\One+K)J_0\epsilon_\Lambda(\One+K)^{-1}
\]
So $\epsilon_\Lambda(K)=0$, i.e. $K$ is proportional to nilpotent elements. 
Then $(\One+K)^{-1}=\One-K$, and setting $K=-\frac{1}{2}J_0J_0^{nil}$, we find the desired element of
$\hat{T}_{J_0}(\Lambda)$, whose image under $\psi$ is $\tilde{J}_0$. The invertibility of $\One+K$
is assured by Lemma \ref{alginv}.

Now consider another $J\in V$, and an arbitrary $\tilde{J}\in(\ca(\cm)(\epsilon_\Lambda))^{-1}(J)$. We again
want to find $K\in \hat{T}_{J_0}(\Lambda)$, such that
\[
\tilde{J}=(\One+K)J_0(\One+K)^{-1}.
\]
Applying $\epsilon_\Lambda$, we see that this implies
\[
J=\epsilon_\Lambda(\One+K)J_0\epsilon_\Lambda(\One+K)^{-1}
\]
which means that $\epsilon_\Lambda(K)=H$, where $H$ is the element of $T_{J_0}\ca(\realR)$ whose
image under $\psi$ is $J$. So $K=H+K^{nil}$, where $K^{nil}$ is some nilpotent term.
Applying the same reasoning to $\tilde{J}$ as for $\tilde{J}_0$, we find 
$K\in\hat{T}(\Lambda)$ such that $\tilde{J}$ is the image of $K$ under $\psi$ and the underlying
endomorphism of $K$ is $H\in U$. Here again, Lemma \ref{alginv} assures the existence of $(\One+K)^{-1}$.

We can now conclude that any $J\in\ca(\cm)(\epsilon_\Lambda)^{-1}(V)$ can be written as
\begin{equation}
\label{last}
J=(\One+K)J_0(\One+K)^{-1}
\end{equation}
with $K\in\hat{T}_{J_0}(\epsilon_{\Lambda})^{-1}(U)$. Conversely, inserting any such $K$ into (\ref{last})
will produce a $J$ above $V$. This proves the claim.
\end{proof}

The following Theorem finishes the construction of $\ca(\cm)$ as a complex supermanifold.

\begin{thm}
The Abresch-Fisher coordinate charts around each $J_0\in\ca(\cm)(\Lambda)$ provide holomorphic complex
coordinates for $\ca(\cm)$. Therefore $\ca(\cm)$ is a complex supermanifold.
\end{thm}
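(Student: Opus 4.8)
The plan is to assemble the Abresch--Fischer maps into a holomorphic supersmooth atlas and then appeal directly to Definition~\ref{def:sman}. The preceding Theorem supplies, for each $J_0\in\ca(\cm)(\realR)$, an open subfunctor $\cv_{J_0}\subset\ca(\cm)$ containing $J_0$ together with an isomorphism onto an open subfunctor of the superrepresentable $\olc$-module $\hat{T}_{J_0}$ of Prop.~\ref{that}. By definition such an open subfunctor is a complex (Fr\'echet) superdomain, so the functorial version $\psi$ of the map (\ref{psimap2}) is precisely a chart $u_{J_0}:\hat{T}_{J_0}\big|_U\to\ca(\cm)$ in the sense of Definition~\ref{def:sman}. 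First I would check that the family $\{u_{J_0}\}_{J_0\in\ca(\cm)(\realR)}$ is an open covering: every underlying point lies in its own neighbourhood $\cv_{J_0}(\realR)=V$ by Thm.~\ref{thm1}, and since each $\cv_{J_0}$ is a restriction (the Corollary following Prop.~\ref{subf}), this covering of $\fieldK$-points propagates functorially to all $\Lambda\in\catgr$.

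The substance of the proof is to verify that the transition functions are complex supersmooth in the sense of Definition~\ref{bsdom}, so that the atlas is holomorphic. On an overlap the transition is given at each $\Lambda$-level by the algebraic expression (\ref{transmap}), which is built solely from the composition morphism $\bar\mu$ of the associative $\olc$-algebra $\gh(\cend(\ctm))$ and from inversion of elements of the form $\One+H$. The morphism $\bar\mu$ is $\olc$-bilinear and is a functor morphism, hence complex supersmooth. For inversion I would argue that $a\mapsto a^{-1}$ on the invertible elements is complex supersmooth: each of its $\Lambda$-components is holomorphic on the Fr\'echet algebra $\gh(\cend(\ctm))(\Lambda)$, its derivative $L\mapsto -a^{-1}La^{-1}$ is manifestly $\Lambda_{\bar 0}$-linear (condition~2 of Def.~\ref{bsdom}), and by Lemmas~\ref{preim} and~\ref{alginv} its domain of definition is a genuine open subfunctor. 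Since complex supersmooth morphisms compose, the transition (\ref{transmap}) is holomorphic.

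It then remains to confirm the fiber-product condition of Definition~\ref{def:sman} and the consistency of the complex structure. Each overlap $\cv_{J_0}\cap\cv_{J_1}$ is an open subfunctor of $\ca(\cm)$, hence a restriction and thus a complex superdomain in either chart; consequently the fiber product $\cu_{J_0}\times_{\ca(\cm)}\cu_{J_1}$ inherits a superdomain structure whose projections are exactly the holomorphic transition maps established above. That the almost complex structure $\Phi$ of (\ref{phimap}) is preserved under the charts is the content of Thm.~\ref{cstr}, applied now at every $\Lambda$; because $\Phi$ is left multiplication by $J$, it is itself a component of a functor morphism, so the charts are holomorphic uniformly in $\Lambda$. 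This produces the required equivalence class of holomorphic atlases and identifies $\ca(\cm)$ as a complex supermanifold.

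The main obstacle I expect lies in the supersmoothness of inversion as a functor morphism. Establishing holomorphicity of $a\mapsto a^{-1}$ in the Fr\'echet (rather than Banach) setting, checking the $\Lambda_{\bar 0}$-linearity of the derivative uniformly in $\Lambda$, and verifying via Lemma~\ref{alginv} that the locus where $\One+H$ is invertible cuts out an honest open subfunctor are exactly the points where the categorical bookkeeping must be done carefully; the passage from the Banach conventions of this chapter to the Fr\'echet category in which $\gh(\cend(\ctm))$ actually lives is a secondary technical nuisance that must be tracked throughout.
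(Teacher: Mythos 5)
Your proposal is correct and follows essentially the same route as the paper's own (very brief) proof: the superrepresentability of $\hat{T}_{J_0}$ from Prop.~\ref{that}, the local isomorphism onto an open subfunctor of $\hat{T}_{J_0}$ from the preceding theorem, and Thm.~\ref{cstr} to guarantee that all charts induce the same almost complex structure and hence that the transitions are holomorphic. You supply considerably more of the categorical bookkeeping (open coverings, the fiber-product condition of Def.~\ref{def:sman}, and the supersmoothness of inversion via Lemmas~\ref{preim} and~\ref{alginv}) than the paper, which simply declares the holomorphicity of the transition functions ``automatic''; this is welcome detail rather than a different argument.
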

\begin{proof}
It was shown that $\hat{T}_{J_0}$ is a superrepresentable $\olc$-module. Therefore, the Abresch-Fischer
charts identify $\ca(\cm)$ locally with a linear complex supermanifold. The compatibility of the charts is
ensured by Thm.~\ref{cstr}: the almost complex structure induced by any chart coincides with the globally
defined almost complex structure $\Phi_{J_0}:T_{J_0}\ca(\cm)\to T_{J_0}\ca(\cm)$. This makes all transition functions
automatically holomorphic.
\end{proof}

The equations (\ref{jnil}) and (\ref{jnil2}) above give us an important piece of information about the
structure of $\ca(\cm)$: in its odd directions, it coincides with its tangent space. Of course, every
supermanifold can be thought of as being ``linear'' in its odd directions, as was already argued above. But
our particular construction of a chart for $\ca(\cm)$ as an open domain in a tangent space tells us that
we can write every $J\in\ca(\cm)(\Lambda)$ as a sum of a $J_0\in\ca(\cm)(\realR)$ and a nilpotent
tangent vector, which is a consequence of the fact that the tangent space at $J$ is defined by an 
algebraic property.

\begin{cor}
\label{acstang}
Let $J$ be an element of $\ca(\cm)(\realR)$ and let 
\begin{equation}
U_J(\Lambda)=\ca(\cm)(\epsilon_\Lambda)^{-1}(J)
\end{equation}
be the set of all elements of $\ca(\cm)(\Lambda)$ which reduce to $J$ under the morphism
$\epsilon_\Lambda:\Lambda\to\realR$. Then each $J'\in U_J$ can be written as
\begin{equation}
J'=J+H^{nil},
\end{equation}
where $H^{nil}$ is a nilpotent element of $\hat{T}_J$.
\end{cor}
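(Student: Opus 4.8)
The plan is to read the statement off from the characterisation of the points of $\ca(\cm)$ lying above $J$ that was already assembled while proving that $\ca(\cm)$ is locally isomorphic to $\hat{T}_{J_0}$, rather than to argue from scratch. The two inputs are the superrepresentability of $\gh(\cend(\ctm))$ together with Lemma \ref{preim}, which pins down the \emph{shape} of the fibre $\ca(\cm)(\epsilon_\Lambda)^{-1}(J)$ inside the ambient module, and the defining quadratic equation $J'^2=-\One_\Lambda$, which cuts this fibre down to the tangent directions. First I would fix the presentation $\gh(\cend(\ctm))\cong\overline{\ce}$ supplied by Thm.~\ref{thm:sectsvbun} and apply Lemma \ref{preim} with $S=\{J\}\subset\gh(\cend(\ctm))(\realR)$. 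Since $J\in\ca(\cm)(\realR)\subset\gh(\cend(\ctm))(\realR)$, the lemma gives that every $J'\in U_J(\Lambda)$ has the form
\begin{equation}
J'=J+N,\qquad N\in\gh(\cend(\ctm))^{nil}(\Lambda),
\end{equation}
that is, $N$ is proportional to nilpotent elements of $\Lambda$.

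The second step is to feed in the only remaining constraint, namely that $J'$ actually be an almost complex structure. Computing $J'^2$ in the associative algebra $\gh(\cend(\ctm))$ and using $J^2=-\One_\Lambda$ yields
\begin{equation}
J'^2=-\One_\Lambda+\big(JN+NJ\big)+N^2,
\end{equation}
so $J'\in\ca(\cm)(\Lambda)$ is equivalent to $JN+NJ=-N^2$. Because $N$ lies in the augmentation ideal, $N^2$ lies in its square $(\Lambda^{nil})^2\otimes\ce$; hence modulo $(\Lambda^{nil})^2$ this reads $JN+NJ\equiv 0$, i.e. to leading order $N$ anticommutes with $J$ and so represents an element of $\hat{T}_J^{nil}(\Lambda)$, the nilpotent part of the tangent functor of Prop.~\ref{that}. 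Setting $H^{nil}:=N$ then gives $J'=J+H^{nil}$ as claimed. To make the identification exact rather than only to leading order, I would invoke the chart constructed just before the corollary: it shows that $\psi_J$ maps $\hat{T}_J(\epsilon_\Lambda)^{-1}(0)=\hat{T}_J^{nil}(\Lambda)$ bijectively onto $U_J(\Lambda)$ via $\psi_J(K)=(\One+K)J(\One+K)^{-1}$ (cf.~(\ref{last})), invertibility of $\One+K$ being guaranteed by Lemma \ref{alginv}. Using $KJ=-JK$ the displacement collapses to
\begin{equation}
J'-J=2KJ(\One+K)^{-1},
\end{equation}
which is manifestly nilpotent; this is precisely the content of the corollary once $U_J$ is identified with $\hat{T}_J^{nil}$ through the chart.

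The main obstacle is the quadratic term $N^2$. The honest point is that, for a general $\Lambda$ whose even nilpotent ideal contains elements not squaring to zero, $J'-J$ does \emph{not} lie in $\hat{T}_J$ on the nose: the structure equation forces $JN+NJ=-N^2\neq 0$, exactly the term that is tacitly suppressed in (\ref{jnil}). I therefore expect the real work to be in saying in what sense $H^{nil}$ is a ``nilpotent element of $\hat{T}_J$''. The clean resolution is to take the chart identification $U_J(\Lambda)\cong\hat{T}_J^{nil}(\Lambda)$ as primary, so that $H^{nil}$ is the linearising coordinate $K\in\hat{T}_J^{nil}(\Lambda)$ and the formula $J'=J+H^{nil}$ expresses that, in these coordinates, the fibre over $J$ is exactly the nilpotent tangent directions. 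An alternative purely algebraic route that sidesteps the chart is an induction on the nilpotency order of $\Lambda^{nil}$: assuming a candidate $K$ with $\psi_J(K)\equiv J'$ modulo $(\Lambda^{nil})^{k}$, the relation $JN+NJ=-N^2$ determines the order-$k$ correction to $K$ uniquely within $\hat{T}_J^{nil}(\Lambda)$, recovering the chart surjectivity order by order; this is where I would spend most of the effort if a chart-free argument were wanted.
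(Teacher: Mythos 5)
Your first two steps reproduce the paper's own argument exactly: the printed proof of this corollary is the single line ``this follows from (\ref{jnil}) and (\ref{jnil2})'', and those equations are obtained just as you obtain them, by applying Lemma \ref{preim} to $S=\{J\}$ to get $J'=J+N$ with $N$ proportional to nilpotents of $\Lambda$, and then squaring. So there is no divergence of method.

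The additional point you raise is genuine and is a defect of the paper's derivation rather than of yours: equation (\ref{jnil}) silently drops the term $(J_0^{nil})^2$, and that term need not vanish. For example, with $\Lambda=\Lambda_2$ and $N=\tau_1\otimes e_1+\tau_2\otimes e_2$ for odd endomorphisms $e_1,e_2$ anticommuting with $J$, one finds $N^2=\tau_1\tau_2\otimes(e_2e_1-e_1e_2)$, which is nonzero in general. Consequently $U_J(\Lambda)$ is \emph{not} literally the additive coset $J+\hat{T}_J^{nil}(\Lambda)$: writing $J'=\psi_J(K)=(\One+K)J(\One+K)^{-1}$ with $K\in\hat{T}_J^{nil}(\Lambda)$ as in the theorem preceding the corollary, your displacement formula $J'-J=2KJ(\One+K)^{-1}$ is correct, and a short computation using $KJ=-JK$ gives
\[
J(J'-J)+(J'-J)J=-4K^2(\One-K^2)^{-1},
\]
which vanishes only when $K^2=0$. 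So $J'-J$ anticommutes with $J$ only modulo $(\Lambda^{nil})^2$, and the corollary holds as literally stated only for $\Lambda$ with few enough generators (or under the reading you propose, namely that ``$J'=J+H^{nil}$'' is shorthand for the chart identification $\hat{T}_J^{nil}(\Lambda)\cong U_J(\Lambda)$, $K\mapsto\psi_J(K)$, with $H^{nil}=K$ the linearising coordinate). That reading is consistent with how the result is actually used afterwards, and your order-by-order alternative would establish the same bijection without the chart, at the cost of redoing work the chart already encodes. In short: your proof is the paper's proof carried out carefully, and the caveat about the quadratic term is a real imprecision in the source, correctly diagnosed and correctly repaired.
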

\begin{proof}
This follows directly from equations (\ref{jnil}) and (\ref{jnil2}).
\end{proof}

Therefore, the Abresch-Fischer charts are really only interesting on the underlying manifold
$\ca(\cm)(\realR)$. For the higher points $\ca(\cm)(\Lambda)$, they simply directly identify all the points
above some point $p\in\ca(\cm)(\realR)$ with the part of its tangent space which is proportional to odd
generators of $\Lambda$.

\chapter{Integrability of almost complex structures}
\label{ch:cpx}

The manifold $\ca(\cm)$ constructed in the previous chapter comprises all almost complex structures
that the supermanifold $\cm$ can be endowed with. Any superconformal structure requires, however, a
complex structure, i.e., an \emph{integrable} almost complex structure. The goal of this chapter is
therefore to state the conditions for the integrability of an almost complex structure on a supermanifold.
These conditions are direct super analogues of the ordinary ones, as was shown by Vaintrob \cite{Acsos}.
Although we suspect that it exists we do not try to construct an explicit supermanifold structure on
the sets of integrable structures $\cc(\cm)(\Lambda)$.

\section{General case}

As remarked in Section \ref{sect:qstruct}, we do not consider odd almost complex structures --- all
structures appearing here are even.
Let $J$ be an almost complex structure on the supermanifold $\cm$. Then, as ordinary complex geometry,
$J$ splits the complexified tangent bundle $\ctm^\complexC$ into two eigendistributions:
\begin{equation}
\label{split}
\ctm^\complexC=\ctm^{1,0}\oplus\ctm^{0,1},
\end{equation}
where
\begin{eqnarray}
\ctm^{1,0} &:=& \{X\in\ctm^\complexC\big| JX=iX\},\\
\ctm^{0,1} &:=& \{X\in\ctm^\complexC\big| JX=-iX\}.
\end{eqnarray}
In complete analogy to classical almost complex geometry, this splitting induces a bigrading on the
complex $\Omega^\bullet$ of differential forms. A form $\omega\in\Omega^n$ is a super-antisymmetric
map (cf.~Section \ref{sect:tangent})
\begin{equation}
\omega:\ctm^{\otimes n}\to\co_\cm.
\end{equation}
One extends forms to $\ctm^\complexC$ by $\complexC$-linearity. This yields the complexified sheaf
of differential forms, which, when applied to sections of $\ctm^\complexC$, take values in
$\co_\cm\otimes\complexC$.
The splitting (\ref{split}) allows us to then decompose any $\omega\in\Omega^n\otimes\complexC$ as
\begin{equation}
\omega=\omega^{0,n}+\omega^{1,n-1}+\ldots+\omega^{n,0},
\end{equation}
where each $\omega^{p,q}$ is a super-antisymmetric map
\begin{equation}
\omega^{p,q}:(\ctm^{1,0})^{\otimes p}\otimes (\ctm^{0,1})^{\otimes q}\to\co_\cm\otimes\complexC.
\end{equation}
Thus, the space of complexified $n$-forms decomposes as
\begin{equation}
\Omega^n\otimes\complexC=\bigoplus_{p+q=n}\Omega^{p,q}.
\end{equation}
If we denote by $\pi_{p,q}$ the projection onto the summand with indices $p,q$, then we can define the
super versions of the Dolbeault operators as
\begin{eqnarray}
\partial &:=& \pi_{p+1,q}\circ d\\
\bar{\partial} &:=& \pi_{p,q+1}\circ d,
\end{eqnarray}
where the exterior differential $d$ has to be understood as being extended by $\complexC$-linearity 
to complexified forms.

The Nijenhuis tensor associated with an even almost complex structure $J$ is also 
defined by exactly the same
formula as in classical geometry. It is a (2-1)-tensor field which is most conveniently defined by the way it
acts on an arbitrary pair $X,Y$ of vector fields:
\begin{equation}
N_J(X,Y)=[JX,JY]-J[JX,Y]-J[X,JY]-[X,Y].
\end{equation}
Then we have the following result.

\begin{thm}[Vaintrob, \cite{Acsos}]
\label{thm:intacs}
Let $J$ be an almost complex structure on a supermanifold $\cm$. Then the following conditions are
equivalent:
\begin{enumerate}
\item $N_J\equiv 0$,
\item $\ctm^{1,0}$ is a sheaf of Lie subalgebras of $\ctm^\complexC$,
\item $\ctm^{0,1}$ is a sheaf of Lie subalgebras of $\ctm^\complexC$,
\item $d=\partial+\bar{\partial}$, $\partial^2= 0$, $\bar{\partial}^2= 0$,
\item There exists a torsion-free connection on $\cm$ with respect to which $J$ is horizontal.
\end{enumerate}
\end{thm}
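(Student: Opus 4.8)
The plan is to separate the five conditions into two clusters: the formal cluster $(1)$--$(4)$, whose equivalence rests on pointwise tensorial identities and the super Cartan calculus, and the differential-geometric condition $(5)$, which I would treat separately against $(1)$. Since $N_J$ is super-antisymmetric and $\co_\cm$-bilinear (the bracket terms in its definition combine so that all derivatives of the arguments cancel, a fact I would verify first exactly as in the classical case), it is a genuine tensor field, and it therefore suffices to evaluate it on homogeneous local sections of the eigendistributions $\ctm^{1,0}$ and $\ctm^{0,1}$.

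For $(1)\Leftrightarrow(2)\Leftrightarrow(3)$, first I would compute $N_J$ on pure and mixed pairs of eigenvectors. For $X,Y\in\ctm^{1,0}$ one has $JX=iX$, $JY=iY$, and since $i$ is an even central scalar commuting with the even endomorphism $J$, the signs behave exactly as in the ungraded case: $N_J(X,Y)=-2[X,Y]-2iJ[X,Y]$. Writing $[X,Y]=Z^{1,0}+Z^{0,1}$ according to the splitting $\ctm^\complexC=\ctm^{1,0}\oplus\ctm^{0,1}$ gives $N_J(X,Y)=-4\,Z^{0,1}$, so $N_J$ on $\ctm^{1,0}$ measures exactly the failure of $\ctm^{1,0}$ to be bracket-closed. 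A second short computation shows $N_J(X,Y)=0$ whenever $X\in\ctm^{1,0}$ and $Y\in\ctm^{0,1}$, so by super-antisymmetry and complex conjugation (which interchanges the two distributions, $J$ being real) $N_J$ vanishes identically iff it vanishes on $\ctm^{1,0}\times\ctm^{1,0}$ iff it vanishes on $\ctm^{0,1}\times\ctm^{0,1}$. This yields the three-fold equivalence at once.

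For $(2)/(3)\Leftrightarrow(4)$ I would use the super Cartan formula for $d$ in terms of the super bracket. Testing $d\alpha$ for a $(1,0)$-form $\alpha$ (one vanishing on $\ctm^{0,1}$) against two homogeneous sections $X,Y\in\ctm^{0,1}$ gives $d\alpha(X,Y)=-\alpha([X,Y])$, so the ``wrong'' bidegree component $\pi_{0,2}\circ d$ vanishes on $(1,0)$-forms precisely when $[X,Y]\in\ctm^{0,1}$, i.e. under $(3)$; a symmetric argument handles $\pi_{2,0}\circ d$. Once $d=\partial+\bar\partial$ is established on generators and propagated to all of $\Omega^\bullet_\cm$ by the super Leibniz rule, expanding $d^2=0$ separates into three summands of distinct bidegree, $\partial^2+(\partial\bar\partial+\bar\partial\partial)+\bar\partial^2=0$, each of which must vanish separately; this gives $\partial^2=0$ and $\bar\partial^2=0$ and closes the loop back to $(1)$.

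The remaining equivalence $(1)\Leftrightarrow(5)$ is where I expect the real work. The easy direction is a computation: given a torsion-free connection $\nabla$ with $\nabla J=0$, I would substitute the torsion-free identity $[X,Y]=\nabla_XY-(-1)^{p(X)p(Y)}\nabla_YX$ into the definition of $N_J$ and use $J$-horizontality to cancel all terms, obtaining $N_J\equiv 0$. The converse is the delicate part: starting from a connection compatible with $J$, which exists because an even almost complex structure reduces the frame group of $\ctm$ from $GL(2n|2m;\realR)$ to $GL(n|m;\complexC)$ (cf.~Section \ref{sect:qstruct}), I would symmetrize it to remove torsion while preserving $\nabla J=0$, and identify the obstruction to simultaneous torsion-removal and $J$-preservation with $N_J$ itself. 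The main obstacle throughout is keeping the Koszul signs consistent in the super Cartan formula and in the torsion and symmetrization bookkeeping for super-connections; the formal cluster $(1)$--$(4)$ is robust because only the even scalar $i$ and the even tensor $J$ intervene, but step $(5)$ forces one to manipulate the full super-symmetrization of a $(2,1)$-tensor, and this is the point at which I would lean most heavily on Vaintrob's computations \cite{Acsos}.
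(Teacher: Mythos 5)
You should be aware that the thesis does not prove this theorem at all: it is quoted verbatim from Vaintrob \cite{Acsos} and used as a black box (``we can base our analysis of integrability on the results of Vaintrob''), so there is no in-paper proof to measure your proposal against. Judged on its own terms, your formal cluster $(1)$--$(4)$ is sound. The computations check out: for homogeneous $X,Y\in\ctm^{1,0}$ one indeed gets $N_J(X,Y)=-2[X,Y]-2iJ[X,Y]=-4Z^{0,1}$, the mixed terms vanish, and conjugation exchanges the two eigendistributions because $J$ is real; the super Cartan formula argument for $(4)$ is also robust, since for a $(1,0)$-form $\alpha$ and $X,Y\in\ctm^{0,1}$ the evaluation terms drop and only $\pm\alpha([X,Y])$ survives regardless of Koszul signs. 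One point you should make explicit rather than leave implicit: in the super setting ``bracket-closed'' must include the self-brackets $[X,X]$ of \emph{odd} sections, which do not vanish automatically --- this is exactly the source of the extra integrability condition (\ref{int2}) that the thesis derives in Prop.~\ref{tcm} and emphasizes would be absent in four ordinary real dimensions. Your reduction to homogeneous pairs does cover this case (super-antisymmetry imposes no constraint on $N_J(X,X)$ for odd $X$, so it carries genuine information), but a reader will want to see it said.

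The genuine gap in the proposal is the converse direction of $(1)\Leftrightarrow(5)$. You correctly identify it as the delicate step, but then defer it entirely to Vaintrob's computations, so as a self-contained proof the proposal is incomplete precisely there. To close it you would need to exhibit the standard modification: start from any connection $\nabla^0$ with $\nabla^0 J=0$ (which exists by the reduction of the frame group to $GL(n|m;\complexC)$, as you note), write its torsion $T^0$, and define $\nabla_XY=\nabla^0_XY-\tfrac12 T^0(X,Y)+(\textrm{a correction built from }J\textrm{ and }T^0)$ so that the new connection is torsion-free and still parallelizes $J$; the obstruction to the simultaneous cancellation is a fixed multiple of $N_J$, and the super-symmetrization bookkeeping is where the Koszul signs actually bite. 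Since the thesis itself never uses condition $(5)$ --- only $(2)$ and $(3)$ enter the integrability analysis of Chapter~\ref{ch:cpx} --- deferring this equivalence to the citation is defensible, but you should say explicitly that this is what you are doing.
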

In case these conditions hold, $J$ is called integrable.

In particular, the second and third conditions prove to be of great value for our later considerations.
Of course, the term ``integrable'' was chosen to denote an almost complex structure which can be induced
by a complex structure, i.e., a complex atlas on $\cm$. This is expressed by

\begin{thm}[Vaintrob, \cite{Acsos}]
Let $J$ be an integrable almost complex structure on a smooth supermanifold $\cm$. Then there exists a
complex structure on $\cm$ such that $J$ coincides with the almost complex structures locally induced
by the chart maps, i.e., it acts as the multiplication operator by $i$ on the tangent bundle of the
complex supermanifold $\cm$.
\end{thm}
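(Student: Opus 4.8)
The plan is to reduce the super statement to the classical Newlander–Nirenberg theorem applied fiberwise over the category $\catgr$, while carefully separating the even (topological/analytic) part of the problem from the odd (purely algebraic) part. Since a complex structure on $\cm$ means a supersmooth complex atlas, i.e.\ a functor $\catgr\to\catman^\complexC$ locally isomorphic to $\ovlc{m|n}$-type superdomains whose chart transitions are holomorphic, I would begin by recalling Batchelor's theorem so that $\cm\cong(M,\wedge^\bullet E)$ as a smooth supermanifold, and then note that by Prop.~\ref{isosrs} (in the $1|1$ case) and more generally by the splitting results of Section~\ref{sect:explmor}, the data of $J$ decomposes into an underlying even piece and a tower of nilpotent corrections indexed by the odd generators.

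First I would treat the underlying manifold. Restricting $J$ along $\cem:\cm_{red}\to\cm$ yields an ordinary integrable almost complex structure $J_{red}$ on the even manifold $M=\cm(\realR)$; by the classical Newlander--Nirenberg theorem (in the smooth case, or simply by integrability in the real-analytic/holomorphic case, cf.\ the two-dimensional simplifications mentioned in the introduction) there exist holomorphic coordinates $(z_1,\ldots,z_m)$ on $M$ in which $J_{red}$ is multiplication by $i$. This fixes the even coordinate functions of the desired complex atlas. The key input here is condition (2) of Thm.~\ref{thm:intacs}: the involutivity of $\ctm^{1,0}$ as a sheaf of Lie subalgebras is precisely the Frobenius-type hypothesis needed to integrate the distribution.

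Next I would build the odd coordinates and the nilpotent corrections. Using the eigenbundle splitting $\ctm^\complexC=\ctm^{1,0}\oplus\ctm^{0,1}$ and the fact that $\ctm^{0,1}$ is a sheaf of Lie subalgebras (condition (3)), I would integrate the $(0,1)$-distribution to produce local functions $w$ annihilated by every section of $\ctm^{0,1}$, i.e.\ local $\bar\partial$-closed generators of $\co_\cm\otimes\complexC$. Concretely, I would argue by induction on the powers of the nilpotent ideal $\cj$, solving at each stage $n\to n+1$ a $\bar\partial$-equation whose obstruction lies in a cohomology group that vanishes because $\bar\partial^2=0$ (condition (4)); the freedom at each step is exactly an element of the nilpotent group $N_\cv$ from Lemma~\ref{lem:nilp}, so the inductive correction is a genuine supersmooth coordinate change. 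Choosing $n$ odd functions $\eta_1,\ldots,\eta_n$ spanning $\Pi(\ce)$ whose differentials lie in $\ctm^{1,0*}$ completes a holomorphic chart $(z_i,\eta_j)$; by construction $J$ acts as multiplication by $i$ on $\ctm$ in these coordinates.

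The hard part will be verifying that the chart constructions can be performed \emph{functorially} in $\Lambda\in\catgr$, so that the resulting complex structure is an object of $\catfinsman^\complexC$ and not merely a family of pointwise data. This is the same subtlety that made Thm.~\ref{thm:sectsvbun} and the construction of $\ca(\cm)$ delicate: one must check that the solutions of the $\bar\partial$-equations are compatible with the images $\cm(\varphi)$ of morphisms $\varphi:\Lambda\to\Lambda'$, which by Prop.~\ref{isokmod} reduces to showing that the constructed transition maps are supersmooth and that the nilpotent corrections transform correctly under base change. I expect that once the even integration is fixed (purely classical) and the odd part is organized by the filtration $\co_\cm\supset\cj\supset\cj^2\supset\cdots$, the functoriality follows from the fact that each correction is built from the canonically given algebraic data $\ctm^{0,1}$, which is itself functorial; but making this gluing precise across overlapping charts, so that the holomorphic transition functions assemble into a global supersmooth complex atlas, is where the real work lies. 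For this I would lean heavily on Vaintrob's deformation-theoretic results \cite{Acsos}, \cite{V:Deformations}, invoking them to guarantee both the local existence and the cohomological vanishing that drives the induction.
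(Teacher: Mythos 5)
First, a point of comparison: the paper does not prove this statement at all --- it is quoted from Vaintrob \cite{Acsos} and used as a black box (the chapter explicitly says the analysis of integrability is ``based on the results of Vaintrob''), so there is no in-paper argument to measure your proposal against. Judged on its own terms, your overall strategy --- restrict $J$ to the reduced manifold, straighten it there with the classical Newlander--Nirenberg theorem, then build the odd coordinates and the nilpotent corrections by induction on the filtration of $\co_\cm$ by powers of $\cj$ --- is the standard and correct shape for the super Newlander--Nirenberg theorem; the odd directions are purely formal, so the induction terminates and each step is algebra plus a local analytic input, which is exactly why the super case adds essentially nothing beyond the classical theorem.

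Two genuine gaps remain. The first is the step where you kill the obstruction at each stage ``because $\bar{\partial}^2=0$ (condition (4))'': that identity only tells you the obstruction is $\bar{\partial}$-closed. To actually solve the equation at each order you need the \emph{local} exactness of the Dolbeault complex (the $\bar{\partial}$-Poincar\'e, or Dolbeault--Grothendieck, lemma) applied on a polydisc in the already-straightened even coordinates, with coefficients in the relevant exterior powers of the odd bundle. Since the statement is local this lemma is all you need, but it is a different fact from $\bar{\partial}^2=0$ and must be invoked explicitly --- closedness alone does not produce a primitive. The second gap is circularity: you propose to ``lean heavily on Vaintrob's deformation-theoretic results \cite{Acsos}'' to guarantee local existence and the vanishing that drives the induction, but the theorem you are proving \emph{is} the main result of \cite{Acsos}, so it cannot also serve as an input. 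A smaller remark: the functoriality in $\Lambda\in\catgr$ that you single out as the hard part is not actually at stake here --- the theorem concerns a single supermanifold as a ringed space, and a complex atlas on it is a collection of sheaf isomorphisms; the $\Lambda$-points only become relevant when one deforms over a superpoint, which is the subject of the subsequent chapters rather than of this theorem.
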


With these beautiful results at hand, we can directly start to investigate the conditions for an almost
complex structure on a $2|2$ smooth supermanifold to be integrable.

\section{The $2|2$-dimensional case}

To find the set or, even better, supermanifold of integrable almost complex structures on an arbitrary
almost complex supermanifold $\cm$ is in general a
rather tough job. Is it at least as hard as finding them for the underlying manifold. But in
the smooth $2|2$-dimensional case that we are interested in, things are much easier.

Prop.~\ref{isosrs} tells us that every complex $1|1$-dimensional supermanifold can be interpreted as
a pair $(M,L)$ consisting of a Riemann surface and a holomorphic line bundle $L$, and vice versa. This
provides us at once with a description of an integrable almost complex structure $J\in\ca(\cm)(\realR)$.
Every smooth supermanifold of dimension $2|2$ can be realized as a smooth surface with a smooth real
vector bundle of rank 2 (by Batchelor's theorem \cite{TSOS}). Denote this bundle as $\pi:E\to M$. An
integrable almost complex structure on $\cm$ must turn both the total space $E$ as well as the base $M$
into complex manifolds in a manner such that $\pi$ becomes holomorphic.

It may seem at first that we need two complex structures to accomplish this, one on $E$ and one
on $M$, which have to be compatible. But the fact that we start with a supermanifold $\cm$ gives us
additional data: a splitting of the tangent sheaf $\ctm=\ctm_{\bar{0}}\oplus\ctm_{\bar{1}}$
into an even and an odd part. Reducing the structure sheaf $\co_\cm\to\co_M$ turns $\ctm$ into a sheaf
of locally free modules over $\co_M$ of rank $2|2$. This sheaf can be considered as the total space
of the rank 4 smooth vector bundle $E$ over $M$ by changing the parity of its odd generators. This
vector bundle inherits the splitting $E=E_{\bar{0}}\oplus E_{\bar{1}}$ from the tangent sheaf
$\ctm$. An almost complex structure on $\cm$ is then integrable if and only if it is both integrable on the 
total space
$E$ and when it is restricted to the subbundle $E_{\bar{0}}$ (which is, of course, the tangent bundle of
the underlying surface $M$). This is, clearly, more
restrictive than the conditions in four ordinary real dimensions. The additional condition will show 
up below when we determine the integrable deformations of an integrable structure.

We note here another important fact: the smooth vector bundle $E$ of rank 2 which determines $\cm$ possesses
a fixed degree $d$ which completely classifies it up to isomorphism. This degree is, along with the 
topological invariants of the surface, an additional topological invariant
of $\cm$. Since we are interested only in orientable vector bundles (only they can be made complex),
we see that the set of isomorphism classes of smooth supermanifolds $\cm$ whose underlying 
manifold is some fixed closed surface $M$ and which possess integrable almost complex
structures is isomorphic to $\intZ$. After having endowed $\cm$ with a complex atlas, the degree becomes
the degree of the line bundle $L$. The degree does not classify $L$ up to isomorphism anymore, rather, 
the set of these isomorphism classes is isomorphic to $\Jac(M)$, the Jacobian variety of $M$. This
will be further discussed in Chapter \ref{ch:quotient}.

\subsection{\texorpdfstring{The tangent space to $\cc(\cm)(\Lambda)$}{The tangent space to C(M)}}

From now on, we assume that $J$ is an integrable almost complex structure on $\cp(\Lambda)\times\cm$,
where $\cm$ is a smooth $2|2$-dimensional
supermanifold $\cm$. This means that $J$ turns $\cp(\Lambda)\times\cm$ into a family of complex supermanifolds
parametrized by $\cp(\Lambda)$.
Denote by $z,\theta$ the local 
complex coordinates induced by $J$, and by $x,y,\xi_1,\xi_2$
the corresponding real coordinates, i.e. $z=x+iy$ and $\theta=\xi_1+i\xi_2$. The almost complex structure
can, in real coordinates, always be written locally as
\begin{equation}
J=\left(\begin{array}{cc}
0 & -\One\\
\One & 0\end{array}\right),
\end{equation} 
where each of the entries is to be understood as a $2\times 2$-matrix of smooth superfunctions. A smooth
superfunction is, in this case, a local section of $\co_\cm\otimes\Lambda$. Let
\begin{equation}
H=\left(\begin{array}{cc}
A & B\\
C & D\end{array}\right)
\end{equation} 
be an arbitrary endomomorphism of the tangent bundle, i.e., a section of the form
$\cp(\Lambda)\times\cm\to\cend(\ctm)$. For $H$ to be an element of $T_J\ca(\cm)(\Lambda)$,
it is necessary and sufficient that $HJ+JH=0$, which implies that $H$ can be put into the form
\begin{equation}
H=\left(\begin{array}{cc}
A & B\\
B & -A\end{array}\right).
\end{equation}
It will be more convenient to work in the complex picture. We have
\begin{equation}
\frac{1}{2}\left(\begin{array}{cccc}
1 & 0 & -i & 0\\
0 & 1 & 0 & -i\\
1 & 0 & i & 0\\
0 & 1 & 0 & i
\end{array}\right)\left(\begin{array}{c}
\partial_x\\
\partial_{\xi_1}\\
\partial_y\\
\partial_{\xi_2}
\end{array}\right)=\left(\begin{array}{c}
\partial_z\\
\partial_{\theta}\\
\partial_{\bar{z}}\\
\partial_{\bar{\theta}}
\end{array}\right)
\end{equation}
and therefore, in the $J$-eigenbasis of the complexified tangent bundle, $H$ takes the form
\begin{equation}
\label{hform}
\frac{1}{2}\left(\begin{array}{cc}
\One & -i\One\\
\One & i\One
\end{array}\right)\left(\begin{array}{cc}
A & B\\
B & -A
\end{array}\right)\left(\begin{array}{cc}
\One & \One\\
i\One & -i\One
\end{array}\right)=\left(\begin{array}{cc}
0 & A-iB\\
A+iB & 0
\end{array}\right).
\end{equation}
A tangent vector $H$ of $J$ at $\ca(\cm)(\Lambda)$ is therefore locally described by just 4 local
sections of $\co_\cm\otimes\complexC\otimes\Lambda$, two even and two odd ones. 
We will denote these four components of $A-iB$ as
\begin{equation}
\label{hcomp}
A-iB=\left(\begin{array}{cc}
\alpha & \beta\\
\gamma & \delta
\end{array}\right).
\end{equation}
Note that $\alpha$ and $\delta$ are even, while $\beta$ and $\gamma$ are odd.

Now consider an infinitesimal deformation of $J$ along $H$, i.e. set $J'=J+itH$ for a small real 
parameter $t$. Then the eigenbasis of $J'$ is given by
\begin{eqnarray}
\label{neweig}
\partial_{z'} &=& (\One+\frac{t}{2}H)\partial_z\\
\nonumber
\partial_{\theta'} &=& (\One+\frac{t}{2}H)\partial_\theta.
\end{eqnarray}

To identify the tangent space $T_J\cc(\cm)(\Lambda)$, we must find out whether $J'$ is again integrable. 
Vaintrob's Theorem (Thm.~\ref{thm:intacs}) will be the crucial tool here.

\begin{prop}
\label{tcm}
Let $J'=J+itH$ be an infinitesimal deformation of an integrable almost complex structure $J$ on $\cm$,
and let $H$ be locally given in the form (\ref{hcomp}).
Then $J'$ is again integrable if and only if the coefficient functions of $H$ satisfy
\begin{eqnarray}
\label{int1}
\pderiv{\beta}{\bar{z}}=\pderiv{\alpha}{\bar{\theta}},
&\qquad&\pderiv{\delta}{\bar{z}}=\pderiv{\gamma}{\bar{\theta}}\\
\label{int2}
\pderiv{\beta}{\bar{\theta}}=0, &\qquad& \pderiv{\delta}{\bar{\theta}}=0.
\end{eqnarray}
\end{prop}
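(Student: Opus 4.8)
The plan is to apply Vaintrob's criterion, Thm.~\ref{thm:intacs}: the deformed structure $J'=J+itH$ is integrable if and only if its antiholomorphic distribution $(\ctm')^{0,1}$ is closed under the super Lie bracket. Since we test integrability only to first order in the deformation parameter $t$, it suffices to exhibit a local frame for $(\ctm')^{0,1}$ to first order and impose involutivity on it.

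First I would write down this frame. By (\ref{hform}) the endomorphism $H$ interchanges the two eigendistributions of $J$, carrying $\ctm^{0,1}$ into $\ctm^{1,0}$ via the matrix (\ref{hcomp}); explicitly $H\partial_{\bar z}=\alpha\partial_z+\gamma\partial_\theta$ and $H\partial_{\bar\theta}=\beta\partial_z+\delta\partial_\theta$. A computation entirely analogous to (\ref{neweig}) --- solving $J'\bar D=-i\bar D$ to first order --- shows that the deformed antiholomorphic frame is
\begin{eqnarray*}
\bar D_1 &=& \partial_{\bar z}-\tfrac{t}{2}(\alpha\partial_z+\gamma\partial_\theta),\\
\bar D_2 &=& \partial_{\bar\theta}-\tfrac{t}{2}(\beta\partial_z+\delta\partial_\theta),
\end{eqnarray*}
where $\bar D_1$ is even, $\bar D_2$ is odd, and their leading terms are the coordinate fields $\partial_{\bar z},\partial_{\bar\theta}$ spanning $\ctm^{0,1}$.

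The heart of the argument is the bracket computation. One evaluates the three super brackets $[\bar D_1,\bar D_1]$, $[\bar D_1,\bar D_2]$ and $[\bar D_2,\bar D_2]$ to first order in $t$. The first vanishes automatically since $\bar D_1$ is even. For the other two, the coordinate fields commute, so the only surviving first-order terms arise from differentiating the coefficient functions; applying the Leibniz rule for super vector fields one finds
\begin{equation*}
[\bar D_1,\bar D_2]=-\tfrac{t}{2}\Big[\big(\pderiv{\beta}{\bar{z}}-\pderiv{\alpha}{\bar{\theta}}\big)\partial_z+\big(\pderiv{\delta}{\bar{z}}-\pderiv{\gamma}{\bar{\theta}}\big)\partial_\theta\Big],
\end{equation*}
and, remembering that $[\bar D_2,\bar D_2]=2\bar D_2^2$ while $\partial_{\bar\theta}^2=0$,
\begin{equation*}
[\bar D_2,\bar D_2]=-t\Big(\pderiv{\beta}{\bar{\theta}}\partial_z+\pderiv{\delta}{\bar{\theta}}\partial_\theta\Big).
\end{equation*}
Both brackets have components only along $\partial_z,\partial_\theta$, hence lie purely in $\ctm^{1,0}$. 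Since any element of the span of $\bar D_1,\bar D_2$ has, to leading order, its $\ctm^{0,1}$-component equal to its coefficients against $\partial_{\bar z},\partial_{\bar\theta}$, a bracket lying in $\ctm^{1,0}$ can belong to $(\ctm')^{0,1}$ only if it vanishes to first order. Setting the two displayed expressions to zero then yields precisely (\ref{int1}) from $[\bar D_1,\bar D_2]$ and (\ref{int2}) from $[\bar D_2,\bar D_2]$; conversely, these equations make the frame involutive to first order, which by Thm.~\ref{thm:intacs} is equivalent to the integrability of $J'$.

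I expect the main obstacle to be the careful bookkeeping of the Koszul signs in the super Lie brackets --- in particular the anticommutator character of $[\bar D_2,\bar D_2]$ and the parities of $\alpha,\delta$ (even) versus $\beta,\gamma$ (odd) as they enter the Leibniz rule --- together with the clean justification that a bracket landing in $\ctm^{1,0}$ forces its defining coefficients to vanish.
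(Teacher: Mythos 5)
Your proof is correct and follows essentially the same route as the paper's: both invoke Thm.~\ref{thm:intacs} and impose first-order involutivity on a deformed eigenframe, the only difference being that you work with the $(0,1)$-distribution while the paper brackets the deformed $(1,0)$-frame $\partial_{z'},\partial_{\theta'}$ and obtains the complex-conjugated conditions. Your sign bookkeeping for $[\bar D_1,\bar D_2]$ and $[\bar D_2,\bar D_2]$ is right, so (\ref{int1}) and (\ref{int2}) come out exactly as in the paper.
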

\begin{proof}
By Thm.~\ref{thm:intacs}, it is sufficient to check whether the new eigenbasis (\ref{neweig}) is
again closed under Lie bracket. Consider first
\begin{equation}
[\partial_{z'},\partial_{\theta'}] = \frac{t}{2}\left((\partial_z\overline{\beta})\partial_{\bar{z}}+
(\partial_z\overline{\delta})\partial_{\bar{\theta}}-(\partial_\theta\overline{\alpha})\partial_{\bar{z}}-
(\partial_\theta\overline{\gamma})\partial_{\bar{\theta}}\right).
\end{equation}
This bracket must produce vector fields proportional to $\partial_{z'},\partial_{\theta'}$. Clearly,
the only possibility to achieve this is that bracket vanishes, so we have to require
\begin{equation}
\label{indcond1}
\pderiv{\beta}{\bar{z}}=\pderiv{\alpha}{\bar{\theta}},
\qquad\pderiv{\delta}{\bar{z}}=\pderiv{\gamma}{\bar{\theta}}.
\end{equation}
Likewise, we obtain
\begin{equation}
[\partial_{\theta'},\partial_{\theta'}]=\frac{t}{2}\left((\partial_\theta\overline{\beta})\partial_{\bar{z}}+
(\partial_\theta\overline{\delta})\partial_{\bar{\theta}}\right),
\end{equation}
which yields the conditions
\begin{equation}
\label{indcond2}
\pderiv{\beta}{\bar{\theta}}=0,\qquad\pderiv{\delta}{\bar{\theta}}=0.
\end{equation}
\end{proof}

The last condition (\ref{indcond2}) would be absent if we would study the problem of four ordinary real
dimensions, because the commutator of an even vector field with itself always vanishes. Its presence
shows that an integrable almost complex structure on a supermanifold is a more special structure than just
a complex structure on the total space of a smooth vector bundle over a surface (cf.~also the discussion
in the previous section). Prop.~\ref{tcm} also makes it clear that there exist non-integrable almost complex
structures on a smooth $2|2$-dimensional supermanifold.

\begin{cor}
\label{compsp}
The almost complex structure on $\ca(\cm)$ can be restricted to an almost complex structure on $\cc(\cm)$.
\end{cor}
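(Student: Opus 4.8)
The plan is to verify that, at each integrable $J$, the almost complex structure $\Phi$ of $\ca(\cm)$ --- which by Thm.~\ref{cstr} acts on $T_J\ca(\cm)(\Lambda)$ by $H\mapsto JH$ --- leaves the subspace $T_J\cc(\cm)(\Lambda)$ invariant. Since $\Phi$ already satisfies $\Phi^2=-\id$ on all of $T\ca(\cm)$, such invariance is exactly what is needed to obtain a restricted almost complex structure on $\cc(\cm)$. I would carry out the computation $\Lambda$-pointwise, the naturality in $\catgr$ being automatic because both $\Phi$ and the conditions of Prop.~\ref{tcm} are defined pointwise.

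First I would read off the action of $\Phi$ on the coefficients $\alpha,\beta,\gamma,\delta$ of (\ref{hcomp}). In the $J$-eigenbasis used in (\ref{hform}) the operator $J$ is block-diagonal with entries $\pm i\One$, while a tangent vector $H\in T_J\ca(\cm)(\Lambda)$ is off-diagonal with upper block $A-iB$. Hence $\Phi(H)=JH$ is again off-diagonal, its upper block being the original one rescaled by a fixed scalar $\mu\in\{i,-i\}$ (the value being determined by the ordering of the eigenspaces in (\ref{hform})). Thus $\Phi$ sends the tuple $(\alpha,\beta,\gamma,\delta)$ to $(\mu\alpha,\mu\beta,\mu\gamma,\mu\delta)$.

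It then remains to observe that the integrability conditions (\ref{int1}) and (\ref{int2}) of Prop.~\ref{tcm} are $\complexC$-linear and homogeneous in $(\alpha,\beta,\gamma,\delta)$, and that $\pderiv{}{\bar{z}}$ and $\pderiv{}{\bar{\theta}}$ act $\complexC$-linearly. Multiplying every component by $\mu$ therefore preserves each equation, e.g. $\pderiv{(\mu\beta)}{\bar{z}}=\mu\pderiv{\beta}{\bar{z}}=\mu\pderiv{\alpha}{\bar{\theta}}=\pderiv{(\mu\alpha)}{\bar{\theta}}$ and $\pderiv{(\mu\beta)}{\bar{\theta}}=0$, and likewise for $\gamma,\delta$. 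Hence $\Phi(H)\in T_J\cc(\cm)(\Lambda)$ whenever $H\in T_J\cc(\cm)(\Lambda)$, so $\Phi$ restricts to the tangent subspaces of $\cc(\cm)$. The argument is essentially immediate; its one substantive ingredient is Prop.~\ref{tcm}, which is precisely what guarantees that the defining equations are complex-linear, and there is no real obstacle beyond the bookkeeping of the change of basis. I would only add the caveat, already flagged before the corollary, that we have not produced a supermanifold structure on $\cc(\cm)$, so the statement should be understood as the $\Phi$-invariance of the subspaces $T_J\cc(\cm)(\Lambda)$, which yields the restricted almost complex structure as soon as such a supermanifold structure is available.
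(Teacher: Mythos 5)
Your argument is correct and is essentially the paper's own proof: both identify $\Phi(H)=JH$ as multiplication of the block $(\alpha,\beta,\gamma,\delta)$ by $i$ (resp.\ $-i$ on the conjugate block) in the $J$-eigenbasis, and both conclude by the $\complexC$-linearity of the integrability conditions (\ref{int1}), (\ref{int2}). Your extra remarks on $\Lambda$-pointwise naturality and on the missing supermanifold structure of $\cc(\cm)$ are accurate but not needed beyond what the paper already records.
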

\begin{proof}
The almost complex structure on $\ca(\cm)(\Lambda)$ was given on the tangent space 
$T_J\ca(\cm)(\Lambda)$ by
\begin{eqnarray}
\Phi:T_J\ca(\cm)(\Lambda) &\to& T_J\ca(\cm)(\Lambda)\\
H &\mapsto& JH.
\end{eqnarray}
Locally, this corresponds just to replacing the entries $\alpha,\beta,\gamma,\delta$ of $H$ by
$i\alpha,i\beta,i\gamma,i\delta$, and the conjugated ones by $-i\overline{\alpha}$, etc. Obviously, 
these functions also satisfy the conditions (\ref{int1}) and (\ref{int2}).
\end{proof}

The conditions (\ref{int1}) and (\ref{int2}) are linear: they determine subspaces in each tangent space
$T_J\ca(\cm)(\Lambda)$. In order to show that $\cc(\cm)$ is also a supermanifold, we need to first show that
the tangent spaces determined in Prop.~\ref{tcm} form a $\olc$-submodule of the $\olc$-module 
$\hat{T}_{J}$ (compare with Prop.~\ref{that}) which served as a model space for $\ca(\cm)$.

\begin{prop}
The integrable deformations (those satisfying (\ref{int1}) and \linebreak
(\ref{int2})) of an integrable almost
complex structure $J$ form a superrepresentable $\olc$-module $\hat{T}_J^{int}$.
\end{prop}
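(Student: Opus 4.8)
The plan is to realize $\hat{T}_J^{int}$ as the kernel of a morphism of superrepresentable $\olc$-modules and then conclude by exactness of the functor $\olcdot$. First I would recall from Prop.~\ref{that} that the ambient tangent functor is already superrepresentable, $\hat{T}_J\cong\overline{\ce_+}$, where $\ce_+$ is the super vector space of sections of $\cend(\ctm)$ anticommuting with $J$. In the complex $J$-eigenbasis an element of $\ce_+$ is encoded by the four coefficient sections $(\alpha,\beta,\gamma,\delta)$ of $\co_\cm\otimes\complexC$ appearing in (\ref{hcomp}), with $\alpha,\delta$ and $\beta,\gamma$ of opposite parities in the two homogeneous summands $\ce_{+,\bar{0}}$ and $\ce_{+,\bar{1}}$.

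Next I would package the integrability conditions (\ref{int1})--(\ref{int2}) of Prop.~\ref{tcm} into a $\complexC$-linear differential operator acting on honest (Grassmann-free) sections,
\[
D:\ce_+\longrightarrow G,\qquad (\alpha,\beta,\gamma,\delta)\mapsto\bigl(\partial_{\bar{z}}\beta-\partial_{\bar{\theta}}\alpha,\ \partial_{\bar{z}}\delta-\partial_{\bar{\theta}}\gamma,\ \partial_{\bar{\theta}}\beta,\ \partial_{\bar{\theta}}\delta\bigr),
\]
where $G$ is the super vector space of the corresponding constraint sections, graded so that $D$ becomes parity-preserving (the first and fourth component operators are parity-reversing, so one reverses parity on the matching summands of $G$). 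Each component is a section of an honest super vector bundle over $\cm$, so $G$ is again superrepresentable by Thm.~\ref{thm:sectsvbun}, and $\cg:=\overline{G}$ is a superrepresentable $\olc$-module. Since $D$ is even and $\complexC$-linear, its image $\overline{D}:\hat{T}_J\to\cg$ under $\olcdot$ is a morphism of $\olc$-modules, and by the defining formula (\ref{fbar}) one has $\overline{D}_\Lambda(\lambda\otimes H)=\lambda\otimes D(H)$. The step needing care is to check that $\overline{D}_\Lambda$ reproduces exactly the pointwise conditions of Prop.~\ref{tcm} on a general $\Lambda$-point whose coefficients lie in $\co_\cm\otimes\complexC\otimes\Lambda$: one extends $\partial_{\bar{z}},\partial_{\bar{\theta}}$ $\Lambda$-linearly, tracks the signs produced when the odd derivation $\partial_{\bar{\theta}}$ is commuted past odd elements of $\Lambda$, and verifies that the vanishing of $\overline{D}_\Lambda$ is equivalent to (\ref{int1})--(\ref{int2}). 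This identifies $\hat{T}_J^{int}=\ker\overline{D}$.

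Finally I would conclude superrepresentability by a homological argument. Set $V:=\ker D\subseteq\ce_+$; since $D$ is homogeneous, $V$ is a $\intZ_2$-graded subspace and hence a super vector space. Because $\Lambda\otimes_\complexC(-)$ is exact, $\Lambda\otimes V=\ker(\mathrm{id}_\Lambda\otimes D)$, and passing to even parts gives $\overline{V}(\Lambda)=(\Lambda\otimes V)_{\bar{0}}=\ker\overline{D}_\Lambda=\hat{T}_J^{int}(\Lambda)$ for every $\Lambda\in\catgr$. These identifications are natural in $\Lambda$, since all maps involved are built from $\olcdot$; thus $\hat{T}_J^{int}\cong\overline{V}$ is superrepresentable in the sense of Def.~\ref{def:srep}, with $V$ the space of integrable deformations of $J$ by ordinary sections.

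The main obstacle I anticipate lies in the middle paragraph: showing that the locally defined operators $\partial_{\bar{z}},\partial_{\bar{\theta}}$ glue to a globally well-defined, coordinate-independent operator $D$ of super vector spaces, and that $\ker D$ is genuinely the coordinate-free locus of integrable structures. Here Vaintrob's characterization (Thm.~\ref{thm:intacs}) of integrability as the involutivity of $\ctm^{0,1}$ is what guarantees that the local conditions of Prop.~\ref{tcm} assemble into a single intrinsic linearized operator. Once $D$ is known to be an even $\olc$-module morphism between superrepresentable modules, with the signs in $\overline{D}_\Lambda$ matched to (\ref{int1})--(\ref{int2}), the remaining exactness step is routine.
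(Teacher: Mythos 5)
Your proposal is correct, and it rests on exactly the observation that carries the paper's own proof: the linearized integrability conditions (\ref{int1})--(\ref{int2}) constrain only the $\Lambda$-independent coefficient sections $\alpha_I,\beta_I,\gamma_I,\delta_I$, while a morphism $\varphi:\Lambda\to\Lambda'$ acts only on the generators $\tau_i$, so the conditions are stable under base change in $\catgr$. The packaging, however, is genuinely different. The paper verifies directly that the sets $\hat{T}_J^{int}(\Lambda)$ are complex linear subspaces (via Cor.~\ref{compsp}) and that their inclusion into $\hat{T}_J$ is a functor morphism, leaving the representing super vector space implicit; strictly speaking, subfunctoriality alone does not force superrepresentability (the module $\olv^{nil}$ of (\ref{vnildef}) is a linear subfunctor of $\olv$ that is not superrepresentable), so the paper is tacitly relying on the same fact you make explicit. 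You instead exhibit $\hat{T}_J^{int}$ as $\ker\overline{D}$ for an even $\complexC$-linear operator $D$ on honest sections, and use exactness of $\Lambda\otimes_\complexC(-)$ together with the identity $\overline{\ker D}(\Lambda)=(\Lambda\otimes\ker D)_{\bar{0}}=\ker\overline{D}_\Lambda$ to produce the representing object $V=\ker D$ outright. This buys a cleaner and more complete justification of the actual claim (superrepresentability), at the cost of the extra work you correctly flag: checking that the locally defined operators glue to a global, coordinate-independent $D$ between superrepresentable modules, which the paper sidesteps by working throughout with the local component functions.
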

\begin{proof}
The integrable deformations form linear subspaces of the spaces $T_J\ca(\cm)(\Lambda)$, and 
Cor.~\ref{compsp} tells us that these spaces are in fact all complex. What remains to be shown is that the
inclusion of these spaces into the spaces $T_J\ca(\cm)(\Lambda)$ is a functor morphism. This is equivalent
to showing that for each morphism $\varphi:\Lambda\to\Lambda'$, the restriction of $\hat{T}_J(\varphi)$
to the integrable deformations $\hat{T}_J^{int}(\Lambda)$ yields only integrable deformations in 
$\hat{T}_J^{int}(\Lambda')$.

Let $H$ be an integrable deformation in $T_J\ca(\cm)(\Lambda)$, and let $\varphi:\Lambda\to\Lambda'$ be some
given morphism. The action of $\gh(\cend(\ctm))(\varphi)$ on $H$ is described by (\ref{svbungr}), and
the action of $\hat{T}_J(\varphi)$ is then just the projection of $\gh(\cend(\ctm))(\varphi)(H)$ onto the 
subspace of endomorphisms anticommuting with $J$. Let $\tau_1,\ldots,\tau_n$ denote the generators of
$\Lambda$. The dependence of $H$ on $\Lambda$ is encoded, in its local expression, by the dependence of
the entries $\alpha,\beta,\gamma,\delta$ on the $\tau_i$. Each of these is a local section of
$\co_\cm\otimes\Lambda$ (where $\Lambda$ denotes here the locally constant sheaf with stalk $\Lambda$).
Thus each of the functions can be written as
\begin{equation}
\alpha=\sum_{I\subset\{1,\ldots,n\}}\tau_I\alpha_I,
\end{equation}
where the sum runs over all increasingly ordered subsets, $\tau_I$ is the product of the appropriate
$\tau_i$'s and $\alpha_I$ is a local section of $\co_\cm$ of parity $|I|+p(\alpha)$ (cf.~also Thm.~\ref{sdiff} 
and the definitions preceding it). Morphisms $\Lambda\to\Lambda'$ affect only the generators
$\tau_i$, not the coefficient functions $\alpha_I$ which contain the dependence of $\alpha$ on 
$z,\bar{z},\theta,\bar{\theta}$. Therefore, if $H$ satisfies the integrablility conditions (\ref{int1})
and (\ref{int2}) this property will be preserved under $\varphi:\Lambda\to\Lambda'$.
\end{proof}

\subsection{\texorpdfstring{A supermanifold structure for $\cc(\cm)$}{A supermanifold structure for C(M)}}

To obtain a complex supermanifold $\cc(\cm)$, we would now have to show
that we can find a chart around every $J\in\cc(\cm)(\realR)$ which is isomorphic to
a domain in $\hat{T}_J^{int}$. Unfortunately, it does \emph{not} suffice to restrict
the Abresch-Fischer charts constructed in Chapter \ref{ch:acs} to the submodule $\hat{T}_J^{int}$.
The images of integrable deformations are not necessarily integrable almost complex structures 
under this map. This is not a problem of supergeometry, but occurs as well for ordinary almost complex 
manifolds of dimension $\geq 4$.

It is therefore very difficult to directly find a chart on $\cc(\cm)$, and indeed we did not succeed in
finding one. On the other hand, it seems to be intuitively clear that $\cc(\cm)$ should be a (super)manifold.
Looking at an ordinary almost complex manifold $M$, the set of integrable almost complex structures decomposes
into
orbits of the diffeomorphism group. So if the diffeomorphisms act freely, 
the set of integrable structures is a (possibly infinite)
union of sets which are diffeomorphic to $\mathrm{Diff}(M)$. Formally this may be considered
as a manifold. However, for moduli questions this ``construction'' seems to be rather useless, 
since
it is the structure of the set of $\mathrm{Diff}$-orbits that we are interested in, and that is
precisely the structure that we are neglecting in this way.
We will not try to address this problem for the $2|2$-dimensional
case. Instead, in the next Chapter we will determine the deformations which are both integrable and
transversal to the action of the diffeomorphism supergroup, and then directly construct a patch of super
Teichm\"uller space.

\chapter{The supergroup $\sdiff(\cm)$}
\label{ch:sdiff}

As in the case of an ordinary manifold, the superdiffeomorphisms of a supermanifold $\cm$ form a highly
nontrivial supermanifold. Its exhaustive investigation lies far beyond the scope of this work and the 
present knowledge
of its author, and many problems are unsolved even for the ordinary case. In this Chapter, we will 
only outline the structure of the diffeomorphism supergroup
and its action on various tensor fields on $\cm$. The basic ideas underlying the constructions are once
more due to V. Molotkov \cite{I-dZ2ks}. To describe $\sdiff(\cm)$ as an actual supermanifold, one would
have to use Fr\'echet supercharts. We will not do this here, but rather content ourselves with the construction of
$\sdiff(\cm)$ as a group object in $\catsets^\catgr$. For the Fr\'echet approach, see \cite{I-dZ2ks},
\cite{SoS}. The main goal of this Chapter is to provide the prerequisites for the study of the
existence of slices for the pullback action of the
identity component $\sdiff_0(\cm)$ of the diffeomorphism supergroup on the integrable almost complex
structures $\cc(\cm)$ on a given smooth supersurface.

\section{Inner Hom-functors for $\catsman$}

\subsection{Generators for $\catsman$}

The first step towards the definition of the diffeomorphism supergroup is the construction of an
inner Hom-functor for supermanifolds. According to the adjunction formula \ref{ihomdef}, such an 
inner Hom-functor $\ihom(\cm,\cn)$ has to satisfy
\begin{equation}
\Hom(\ct,\ihom(\cm,\cn))\cong\Hom(\ct\times\cm,\cn)\qquad\forall\,\ct\in\catsman.
\end{equation}
Since $\Hom(\cm,\cn)$ was denoted as $SC^\infty(\cm,\cn)$, we follow the notation of \cite{I-dZ2ks} and
denote $\ihom(\cm,\cn)$ as $\scinfty(\cm,\cn)$.

\begin{thm}[Molotkov \cite{I-dZ2ks}]
\label{catgensman}
Consider the functor
\begin{eqnarray}
\label{functf}
F:\catsets^{\catsman^\circ} &\to& \catsets^{\catspoint^\circ}\\
\nonumber
\Phi &\mapsto& \Phi\big|_{\catspoint},
\end{eqnarray}
which simply restricts each functor $\Phi$ to the full subcategory $\catspoint^\circ$. Then there exists
an isomorphism between the composition of functors
\begin{equation}
\label{funct1}
\xymatrix@1{\catsman \ar[r]^{H_*} & \catsets^{\catsman^\circ} \ar[r]^F & \catsets^{\catspoint^\circ}%
\ar[r]^\sim & \catsets^\catgr},
\end{equation}
where $H_*$ is the Yoneda embedding, and the forgetful functor
\begin{equation}
\label{funct2}
\xymatrix{N:\catsman \ar[r] & \catman^\catgr \ar[r] & \catsets^\catgr}.
\end{equation}
\end{thm}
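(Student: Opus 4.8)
The plan is to recognize this statement as a Yoneda lemma in disguise. The essential observation is that, by Prop.~\ref{prop:isocp}, the superpoint $\cp_n=\cp(\Lambda_n)$ has as its underlying set-valued functor precisely the covariant representable functor $\Hom_\catgr(\Lambda_n,-)\colon\catgr\to\catsets$. Hence the first composite (\ref{funct1}) sends a supermanifold $\cm$ to the functor $\Lambda_m\mapsto\Hom_{\catsman}(\cp_n,\cm)$, while $N$ (\ref{funct2}) sends it to $\Lambda_m\mapsto\cm(\Lambda_m)$ viewed merely as a set. So what must be exhibited is a bijection
\begin{equation}
\Hom_{\catsman}(\cp_n,\cm)\cong\cm(\Lambda_n),
\end{equation}
natural in both $\cm$ and $\Lambda_n$.

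First I would produce the two maps. Forgetting the manifold structure, every supersmooth morphism $\cp_n\to\cm$ is in particular a functor morphism of the underlying $\catsets$-valued functors, i.e.\ a natural transformation $\Hom_\catgr(\Lambda_n,-)\Rightarrow\cm$; evaluating its component at $\Lambda_n$ on $\id_{\Lambda_n}$ yields an element of $\cm(\Lambda_n)$. Conversely, to $p\in\cm(\Lambda_n)$ the Yoneda construction assigns the natural transformation $\eta^p$ with components $\eta^p_{\Lambda_m}(\phi)=\cm(\phi)(p)$. That these two assignments are mutually inverse, and natural in $\cm$ and in $\Lambda_n$, is then exactly the content of the ordinary Yoneda lemma for set-valued functors on $\catgr$ and requires no extra argument.

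The substance of the proof, and the step I expect to be the main obstacle, is verifying that the transformation $\eta^p$ produced by Yoneda is genuinely a morphism in $\catsman$, i.e.\ is supersmooth --- only then does the set-level bijection restrict to the claimed bijection of $\Hom$-sets. To do this I would work in a supersmooth atlas of $\cm$ and reduce to the case where $\cm$ is a superdomain $\cv\big|_U$. There $\cm(\phi)=(\phi\otimes\id_V)\big|_{\cv(\Lambda)}$, so writing $p=\sum_k\lambda_k\otimes v_k\in(\Lambda_n\otimes V)_{\bar 0}$ gives $\eta^p_{\Lambda_m}(\phi)=\sum_k\phi(\lambda_k)\otimes v_k$, a polynomial map in the coordinates of $\phi$ on $\cp_n(\Lambda_m)\cong\Hom_\catgr(\Lambda_n,\Lambda_m)$. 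This is manifestly smooth, and its differential is $\Lambda_{m,\bar 0}$-linear because the multiplication of $\Lambda_m$ is, so the two conditions of Def.~\ref{bsdom} hold.

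Finally I would promote this local verification to the definition of a supersmooth morphism of supermanifolds. Because $\cp_n$ is itself a superdomain, every chart pullback $\cp_n\times_\cm\cu'$ along a chart $u'\colon\cu'\to\cm$ of the target is a fiber product of superdomains, which exists in $\catbsdom$ with supersmooth projections by Def.~\ref{def:sman} applied to $\cm$ together with the smoothness established above; this is precisely the chart-compatibility required of $\eta^p$. Naturality in $\cm$ then follows since post-composition with $g\colon\cm\to\cm'$ corresponds under Yoneda to $g_{\Lambda_n}$, and naturality in $\Lambda_n$ follows since the contravariant $\cp(\psi)$ induced by $\psi\colon\Lambda_n\to\Lambda_{n'}$ matches $\cm(\psi)$ by functoriality, completing the identification of the two composite functors.
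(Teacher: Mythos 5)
The paper does not actually prove this theorem: it states explicitly that the proof ``would require some more technicalities'' and defers entirely to Molotkov, so there is no in-paper argument to compare yours against. Your Yoneda-based route is the natural one and is essentially sound: reducing the claim to the bijection $\Hom_{\catsman}(\cp_n,\cm)\cong\cm(\Lambda_n)$, observing via Prop.~\ref{prop:isocp} that the underlying set-valued functor of $\cp_n$ is the representable $\Hom_\catgr(\Lambda_n,-)$ so that the ordinary Yoneda lemma handles injectivity and both naturalities for free, and then isolating the real content --- that the transformation $\eta^p$ attached to $p\in\cm(\Lambda_n)$ is supersmooth --- which you verify correctly on a superdomain $\olv\big|_U$ by exhibiting $\eta^p_{\Lambda_m}$ as a polynomial map in the generators' images with $\Lambda_{m,\bar 0}$-linear differential, as Def.~\ref{bsdom} requires.

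The one step I would tighten is the globalization. You justify the chart-compatibility of $\eta^p$ by appealing to Def.~\ref{def:sman} ``applied to $\cm$'', but that definition only provides superdomain structures on fiber products of \emph{charts} $\cu_\alpha\times_\cm\cu_\beta$, and $\eta^p\colon\cp_n\to\cm$ is not an open morphism, so the appeal does not literally apply. The correct argument is that $\eta^p$ factors through a single chart: since $\epsilon_{\Lambda_m}\circ\phi=\epsilon_{\Lambda_n}$ for every $\phi\in\Hom_\catgr(\Lambda_n,\Lambda_m)$ by uniqueness of the terminal morphism, one has $\cm(\epsilon_{\Lambda_m})(\cm(\phi)(p))=\cm(\epsilon_{\Lambda_n})(p)$, so the whole image of $\eta^p$ sits over one underlying point of $\cm(\realR)$; because every open subfunctor is a restriction (Prop.~\ref{subf} and its corollary), the pullback $\cp_n\times_\cm\cu'$ along any chart $u'$ is therefore either empty or all of $\cp_n$, and in the latter case the induced map $\cp_n\to\cu'$ is exactly the superdomain-level map you already proved supersmooth. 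With that substitution the proof is complete.
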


We do not want to prove this here, since it would require some more technicalities that we
will not need for other purposes and instead rely on Molotkov's results \cite{I-dZ2ks}.
This theorem is the categorical analogue of the statement that the superpoints are generators for the
category $\catsman$, extending Thm.~\ref{gensman} to the infinite-dimensional 
case. A direct consequence is the following statement, which says that superpoints
completely describe a supermanifold by their morphisms into it, as is the case for ordinary
manifolds and the point $\Spec\fieldK$. The difference is just that 
there is a whole $\intZ_{\geq 0}$-family of superpoints, and each of them yields one set of points. 
The full information
about the supermanifold is then stored in this whole tower of points and their functoriality.

\begin{cor}[Molotkov \cite{I-dZ2ks}]
For every supermanifold $\cm$ and every $\Lambda\in\catgr$, there exists an isomorphism
\begin{equation}
\cm(\Lambda)\cong SC^\infty(\cp(\Lambda),\cm).
\end{equation}
\end{cor}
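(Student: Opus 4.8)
The plan is to obtain the claimed isomorphism simply by evaluating at $\Lambda$ the natural isomorphism of functors furnished by Theorem~\ref{catgensman}, after unwinding what each of the two functors appearing there does to a fixed supermanifold $\cm$. Since that theorem already asserts an isomorphism between the two functors $\catsman\to\catsets^\catgr$ displayed in (\ref{funct1}) and (\ref{funct2}), essentially all the work consists in identifying their values on $\cm$ and then restricting attention to a single object $\cm$.

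First I would trace the composite (\ref{funct1}). The Yoneda embedding sends $\cm$ to the contravariant Hom-functor $H_*(\cm)=SC^\infty(-,\cm)$ on $\catsman$. The functor $F$ of (\ref{functf}) restricts this to the full subcategory $\catspoint$, yielding $SC^\infty(-,\cm)\big|_{\catspoint}\in\catsets^{\catspoint^\circ}$. Finally the equivalence $\catsets^{\catspoint^\circ}\xrightarrow{\sim}\catsets^\catgr$ is the one induced by the duality $\cp:\catgr^\circ\to\catspoint$ of Prop.~\ref{spointeq}: precomposition with $\cp$ turns a contravariant functor on $\catspoint$ into a covariant functor on $\catgr$. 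Applied here this produces exactly the functor
\[
\catgr\to\catsets,\qquad \Lambda\mapsto SC^\infty(\cp(\Lambda),\cm),
\]
in which a morphism $\varphi:\Lambda\to\Lambda'$ acts by composition with $\cp(\varphi):\cp(\Lambda')\to\cp(\Lambda)$.

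Next I would identify the forgetful functor (\ref{funct2}) on $\cm$. By definition a supermanifold is a functor $\catgr\to\catman$ equipped with an equivalence class of atlases, and $N$ forgets this structure down to the underlying functor into $\catsets$; hence $N(\cm)$ is just the functor $\Lambda\mapsto\cm(\Lambda)$. Theorem~\ref{catgensman} now states precisely that these two functors are isomorphic, naturally in $\cm$. For a fixed $\cm$ this is a natural isomorphism of functors $\catgr\to\catsets$, so evaluating it at any $\Lambda\in\catgr$ gives the bijection $\cm(\Lambda)\cong SC^\infty(\cp(\Lambda),\cm)$.

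There is no genuine obstacle beyond Theorem~\ref{catgensman} itself, which is assumed. The only point demanding care is the bookkeeping of variances along the chain (\ref{funct1}) --- in particular checking that the equivalence $\catsets^{\catspoint^\circ}\simeq\catsets^\catgr$ really reproduces $\cp(\Lambda)$ in the first slot of the Hom-set, rather than a dual object, and confirming that the isomorphism of Theorem~\ref{catgensman}, being natural simultaneously in $\cm$ and in $\Lambda$, may legitimately be evaluated pointwise at $\Lambda$.
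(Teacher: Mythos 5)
Your proposal is correct and follows the same route as the paper: both deduce the corollary by evaluating the natural isomorphism of Theorem~\ref{catgensman} on the fixed supermanifold $\cm$, identifying the composite (\ref{funct1}) as $\Lambda\mapsto SC^\infty(\cp(\Lambda),\cm)$ and the forgetful functor (\ref{funct2}) as $\Lambda\mapsto\cm(\Lambda)$. Your extra care with the variance bookkeeping through the duality $\cp:\catgr^\circ\to\catspoint$ is a welcome elaboration of a step the paper leaves implicit, but it is not a different argument.
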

\begin{proof}
By applying the forgetful functor $N$ (\ref{funct2}), a supermanifold $\cm$ can be seen
as a functor $\catsets^\catgr$. 
The Yoneda embedding maps $\cm$ to the functor $\Hom_\catsman(-,\cm)$ in $\catsets^{\catsman^\circ}$,
which we can turn into a functor $\catsets^{\catspoint^\circ}$ by restriction (i.e., by applying
the functor $F$ from (\ref{functf}).
Then
Theorem \ref{catgensman} asserts that for every $\Lambda\in\catgr$, there exists an isomorphism of
sets
\begin{equation}
SC^\infty(\cp(\Lambda),\cm)=\Hom_\catsman(\cp(\Lambda),\cm)\cong\cm(\Lambda).
\end{equation}
\end{proof}

\subsection{The inner Hom-functor}

The inner Hom-functor for supermanifolds was already mentioned in an informal way in Sections
\ref{sect:functpoints} and \ref{sect:smanfpv} (see (\ref{ihomsman})). Thm.~\ref{catgensman} now
legitimates the following definition.

\begin{dfn}
Let $\cm,\cn$ be two supermanifolds. We define the inner Hom-functor $\scinfty(\cm,\cn)$ in 
$\catsets^\catgr$ by setting
\begin{equation}
\scinfty(\cm,\cn)(\Lambda) := SC^\infty(\cp(\Lambda)\times\cm,\cn)
\end{equation}
for all $\Lambda\in\catgr$. To each $\varphi:\Lambda\to\Lambda'$, assign
\begin{eqnarray}
\label{scinftymap}
\scinfty(\cm,\cn)(\varphi):\scinfty(\cm,\cn)(\Lambda) &\to& \scinfty(\cm,\cn)(\Lambda')\\
\nonumber
\sigma &\mapsto& \sigma\circ(\cp(\varphi)\times\id_\cm).
\end{eqnarray}
\end{dfn}

It is clear that this functor satisfies the adjunction formula (\ref{ihomdef}), thus it is an inner
Hom-functor. In general, it is not the inner Hom-functor in the category $\catsman$, since it is
usually impossible to give $\scinfty(\cm,\cn)$ the structure of a Banach supermanifold. As for ordinary
manifolds, the set of maps between them can usually only be endowed with the structure of a Fr\'echet
manifold. We will not try to address these topological subtleties here, since in the case of the
action of the superdiffeomorphism group $\sdiff(\cm)$ on $\cc(\cm)$, they only play a role for the 
underlying manifold. Rather, we will deal with this case in a different way. For our purposes, it is 
sufficient to construct $\scinfty(\cm,\cn)$ as an inner Hom-functor in $\catsets^\catgr$.

Note that the underlying set of $\scinfty(\cm,\cn)$ is just
\begin{equation}
\scinfty(\cm,\cn)(\fieldK)=SC^\infty(\cp(\fieldK)\times\cm,\cn)\cong SC^\infty(\cm,\cn),
\end{equation}
since $\cp(\fieldK)\times\cm\cong\cm$ by Lemma \ref{cpriso}.
Again, the underlying space of the inner-Hom functor consists precisely of the actual morphisms of
the two objects, as was already the case for other super things, like super vector spaces.

\subsection{Composition of morphisms}

Let $\cm,\cm',\cm''$ be supermanifolds. Then there exists a composition map
\begin{equation}
\label{comp}
\circ:\scinfty(\cm,\cm')\times\scinfty(\cm',\cm'')\to\scinfty(\cm,\cm''),
\end{equation}
which, of course, must be defined pointwise. Let $\Lambda\in\catgr$ be fixed, and let
$f:\cp(\Lambda)\times\cm\to\cm'$ and $g:\cp(\Lambda)\times\cm'\to\cm''$ be two supersmooth maps.
Then we define $g\circ f$ as the following composition:
\begin{equation}
\label{def:comp}
\xymatrix{
g\circ f:\cp(\Lambda)\times\cm \ar[rr]^-{\id_{\cp(\Lambda)}\times f} && \cp(\Lambda)\times\cm'%
\ar[r]^-{g} & \cm''}.
\end{equation}

On the functor $\scinfty(\cm,\cm)$ of supersmooth morphisms of $\cm$ into itself, this composition
is obviously associative. The following proposition also establishes the existence of a unit, 
making $\scinfty(\cm,\cm)$ into a monoid in $\catsets^\catgr$.

\begin{prop}
The functor morphism
\begin{eqnarray}
e:\cp(\realR) &\to& \scinfty(\cm,\cm)\\
e_\Lambda:\{0\}=\cp(\realR)(\Lambda) &\mapsto& (\Pi_\cm:\cp(\Lambda)\times\cm\to\cm)
\end{eqnarray}
in $\catsets^\catgr$ is the unit for the composition $\circ$ defined in (\ref{def:comp}).
\end{prop}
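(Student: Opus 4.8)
The plan is to verify the two monoid unit axioms directly and pointwise. First I would observe that $\cp(\realR)$ is (isomorphic to) the terminal object of $\catsets^\catgr$: each set of points $\cp(\realR)(\Lambda)=\{0\}$ is a singleton and every $\cp(\realR)(\varphi)$ is the identity, so $e$ is the right kind of arrow $t\to\scinfty(\cm,\cm)$ for a unit. Since composition, multiplication, and $e$ are all defined componentwise over $\catgr$, being a unit reduces to checking, for each fixed $\Lambda$ and each $f\in\scinfty(\cm,\cm)(\Lambda)=SC^\infty(\cp(\Lambda)\times\cm,\cm)$, the two identities $e_\Lambda(0)\circ f=f$ and $f\circ e_\Lambda(0)=f$, together with the naturality of $e$ itself.

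The key step is to unwind the definition (\ref{def:comp}) of $\circ$, reading the arrow labelled $\id_{\cp(\Lambda)}\times h$ there as the map $(\Pi_{\cp(\Lambda)},h):\cp(\Lambda)\times\cm\to\cp(\Lambda)\times\cm$ determined by the universal property of the product, whose components are the projection $\Pi_{\cp(\Lambda)}$ and $h$. For the right unit law I would take $h=e_\Lambda(0)=\Pi_\cm$, so that $\id_{\cp(\Lambda)}\times\Pi_\cm=(\Pi_{\cp(\Lambda)},\Pi_\cm)$; but these two components are exactly the canonical projections of $\cp(\Lambda)\times\cm$, hence this arrow is $\id_{\cp(\Lambda)\times\cm}$ and $f\circ e_\Lambda(0)=f\circ\id=f$. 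For the left unit law I would compute $e_\Lambda(0)\circ f=\Pi_\cm\circ(\Pi_{\cp(\Lambda)},f)$; since $\Pi_\cm$ is projection onto the $\cm$-factor and the $\cm$-component of $(\Pi_{\cp(\Lambda)},f)$ is $f$, this equals $f$. Both computations are nothing more than the universal property of $\cp(\Lambda)\times\cm$.

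It remains to confirm that $e$ is a genuine functor morphism, i.e.\ that the naturality square for any $\varphi:\Lambda\to\Lambda'$ commutes. On the source side $\cp(\realR)(\varphi)=\id_{\{0\}}$, so it suffices to show $\scinfty(\cm,\cm)(\varphi)\bigl(e_\Lambda(0)\bigr)=e_{\Lambda'}(0)$. Applying the transition map (\ref{scinftymap}) to $\sigma=\Pi_\cm^{(\Lambda)}$ gives $\Pi_\cm^{(\Lambda)}\circ(\cp(\varphi)\times\id_\cm)$, and since $\cp(\varphi)\times\id_\cm$ acts as the identity on the $\cm$-factor, this composite is precisely the projection $\Pi_\cm^{(\Lambda')}:\cp(\Lambda')\times\cm\to\cm$, which is $e_{\Lambda'}(0)$.

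The argument is entirely formal, so there is no real obstacle; the only thing to watch is bookkeeping --- namely, interpreting the abbreviated arrow $\id_{\cp(\Lambda)}\times(-)$ correctly as a map into the product and keeping the left and right unit laws (and the two slots of $\circ$) straight. Everything then follows from the universal property of the direct product together with Lemma \ref{cpriso}, which guarantees $\cp(\realR)\times\cm\cong\cm$ so that the projections in play behave as expected.
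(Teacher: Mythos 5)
Your proof is correct and follows the only natural route: unwinding the definition of $\circ$ and using the universal property of the product, which is exactly what the paper's proof does --- except the paper simply asserts that the claim ``is clear from the definition (\ref{def:comp})'' and omits all details. Your explicit verification of both unit laws and of the naturality of $e$ is a faithful expansion of that one-line argument, with no gaps.
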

\begin{proof}
This is clear from the definition (\ref{def:comp}).
\end{proof}

The unit element allows one to define inverse morphisms as follows.

\begin{dfn}
Let $f\in\scinfty(\cm,\cm)(\Lambda)$ be a morphism $f:\cp(\Lambda)\times\cm\to\cm$. The inverse
$f^{-1}:\cp(\Lambda)\times\cm\to\cm$ is defined to be the morphism such that
\begin{equation}
(\id_{\cp(\Lambda)}\times f)\circ f^{-1}=(\id_{\cp(\Lambda)}\times f^{-1})\circ f=e_\Lambda(\{0\})=\Pi_\cm.
\end{equation}
\end{dfn}

\subsection{Geometric interpretation}

An interpretation of the functor of points of a supermanifold was already sketched in Sections
\ref{sect:functpoints} and \ref{sect:smanfpv}. It was argued that, for supermanifolds $\cm,\ct$,
the $\ct$-points $\Hom(\ct,\cm)$ of $\cm$ should be thought of as sections of the projection
$\pi_\ct:\ct\times\cm\to\ct$. By Thm.~\ref{catgensman} we know that only $\cp(\Lambda)$-points need
to be considered. If we assign to every $f\in\scinfty(\cm,\cm')(\Lambda)$ a morphism
\begin{equation}
\label{extmap}
\Pi_\cp\times f:\cp(\Lambda)\times\cm\to\cp(\Lambda)\times\cm'
\end{equation}
of families over $\cp(\Lambda)$, then we clearly obtain a bijection
\begin{equation}
\scinfty(\cm,\cm')(\Lambda)\to\Hom_{\catsman/\cp(\Lambda)}(\cp(\Lambda)^*(\cm),\cp(\Lambda)^*(\cm')).
\end{equation}
The functor $\ct^*:\catsman\to\catsman/\ct$ was defined in (\ref{def:tstar}) as the map which assigns to
every supermanifold $\cm$ its trivial family over $\ct$.

For $\cp(\realR)=\Spec\realR$, everything reduces to the case of ordinary morphisms of supermanifolds, 
because $\cp(\realR)\times\cm\cong\cm$. The composition of morphisms consists in the composition of
morphisms of families: let $f\in\scinfty(\cm,\cm')(\Lambda)$ and $g\in\scinfty(\cm',\cm'')(\Lambda)$
be given. Then their composition $g\circ f$ corresponds to
\begin{equation}
\xymatrix{\cp(\Lambda)\times\cm \ar[rr]^{\Pi_{\cp(\Lambda)}\times f} \ar[drr]_{\Pi_{\cp(\Lambda)}} &&%
\cp(\Lambda)\times\cm' \ar[rr]^{\Pi_{\cp(\Lambda)}\times g} \ar[d]^{\Pi_{\cp(\Lambda)}} &&%
\cp(\Lambda)\times\cm'' \ar[dll]^{\Pi_{\cp(\Lambda)}}\\
&& \cp(\Lambda)&&}.
\end{equation}
The fact that the composition $\circ$ is associative and has a unit can be seen as a consequence of 
this interpretation: the $\Lambda$-component of the functor morphism $\circ$ becomes identified with
the ordinary composition of morphisms in the category $\catsman/\cp(\Lambda)$, which is associative by
definition.
Inserting the unit of $\scinfty(\cm,\cm)(\Lambda)$ into the map (\ref{extmap}), it becomes just the
identity $\id_{\cp(\Lambda)\times\cm}$, as one expects for the neutral element of a space of
morphisms.

\section{The group of superdiffeomorphisms}
\label{sect:sdiff}

From now on, we will only consider the case of a given finite-dimensional 
supermanifold $\cm$.

Define for each $\Lambda\in\catgr$ a set $\sdiff(\cm)(\Lambda)$ by setting
\begin{equation}
\sdiff(\cm)(\Lambda)=\{f\in\scinfty(\cm,\cm)(\Lambda)\mid f\textrm{ invertible}\}.
\end{equation}
Clearly, each of these sets is a group. Therefore if we can show that they form a functor in
$\catsets^\catgr$, this functor will be a supergroup (a group object in $\catsets^\catgr$). To show that
this is indeed the case and that the sets $\sdiff(\cm)(\Lambda)$ actually form the subfunctor in 
$\scinfty(\cm,\cm)$ whose underlying set is the group $\Aut(\cm)$, i.e., the isomorphisms of $\cm$ in
$\catsman$, will be the goal of this section. Although a categorical proof exists \cite{I-dZ2ks}, we
will study the elements of $\sdiff(\cm)(\Lambda)$ directly as morphisms of ringed spaces, which will give
us a somewhat deeper insight into their structure than a purely formal approach. 

The first thing to show is that $\sdiff(\cm)$ is a subfunctor of $\scinfty(\cm,\cm)$. This means we
have to prove that the restriction of the morphisms (\ref{scinftymap}) to $\sdiff(\cm)$ is well-defined,
i.e., that for each $\varphi:\Lambda\to\Lambda'$ the image of 
$\scinfty(\cm,\cm)(\varphi)\big|_{\sdiff(\cm)(\Lambda)}$ lies in $\sdiff(\cm)(\Lambda')$.

\begin{prop}
\label{sdhom}
For each $\Lambda\in\catgr$ and each morphism $\varphi:\Lambda\to\Lambda'$, the restriction of 
$\scinfty(\cm,\cm)(\varphi)$ to $\sdiff(\cm)(\Lambda)$ induces a group homomorphism
\begin{equation}
\sdiff(\cm)(\varphi):\sdiff(\cm)(\Lambda)\to\sdiff(\cm)(\Lambda').
\end{equation}
\end{prop}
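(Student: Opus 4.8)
The statement has two parts: that $\scinfty(\cm,\cm)(\varphi)$ maps $\sdiff(\cm)(\Lambda)$ into $\sdiff(\cm)(\Lambda')$ (so that the corestriction is well defined), and that the resulting map respects the group operation $\circ$. My plan is to obtain both at once by showing that $\Phi:=\scinfty(\cm,\cm)(\varphi)$ is a homomorphism of \emph{monoids} for the composition $\circ$ of (\ref{def:comp}), i.e. that it preserves $\circ$ and sends the unit $e_\Lambda$ to $e_{\Lambda'}$. A unital multiplicative map automatically carries invertible elements to invertible ones, which is precisely the implication $f\in\sdiff(\cm)(\Lambda)\Rightarrow\Phi(f)\in\sdiff(\cm)(\Lambda')$ needed for well-definedness. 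Throughout I abbreviate $P:=\cp(\varphi)\times\id_\cm$; since $\cp$ is contravariant this is a morphism $\cp(\Lambda')\times\cm\to\cp(\Lambda)\times\cm$, and by (\ref{scinftymap}) one has $\Phi(h)=h\circ P$ for every $h\in\scinfty(\cm,\cm)(\Lambda)$.

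The heart of the argument is the identity $\Phi(g\circ f)=\Phi(g)\circ\Phi(f)$ for $f,g\in\scinfty(\cm,\cm)(\Lambda)$. Unwinding the definition of $\circ$, the left-hand side is $g\circ(\id_{\cp(\Lambda)}\times f)\circ P$ while the right-hand side is $g\circ P\circ(\id_{\cp(\Lambda')}\times(f\circ P))$, so it suffices to establish the auxiliary identity of morphisms $\cp(\Lambda')\times\cm\to\cp(\Lambda)\times\cm$
\begin{equation}
(\id_{\cp(\Lambda)}\times f)\circ P \;=\; P\circ\bigl(\id_{\cp(\Lambda')}\times(f\circ P)\bigr).
\end{equation}
I would verify this via the universal property of the product $\cp(\Lambda)\times\cm$, by composing both sides with its two projections. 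Composing with the projection onto $\cp(\Lambda)$ and using $\Pi_{\cp(\Lambda)}\circ(\id_{\cp(\Lambda)}\times f)=\Pi_{\cp(\Lambda)}$ together with $\Pi_{\cp(\Lambda)}\circ P=\cp(\varphi)\circ\Pi_{\cp(\Lambda')}$ yields $\cp(\varphi)\circ\Pi_{\cp(\Lambda')}$ on both sides; composing with the projection onto $\cm$ and using $\Pi_\cm\circ(\id\times h)=h$ and $\Pi_\cm\circ P=\Pi_\cm$ reduces both sides to $f\circ P$. Agreement after composition with each projection forces equality. The only place the data of $\varphi$ enters is that $\cp(\varphi)$ is an arrow in the reversed direction $\cp(\Lambda')\to\cp(\Lambda)$; the remainder is the formal calculus of the construction $\id\times(-)$.

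Preservation of the unit is then immediate from $\Phi(e_\Lambda)=e_\Lambda\circ P=\Pi_\cm\circ P=\Pi_\cm=e_{\Lambda'}$. Combining this with the multiplicativity just proved, if $f$ is invertible with inverse $f^{-1}$ in $\scinfty(\cm,\cm)(\Lambda)$ then $\Phi(f)\circ\Phi(f^{-1})=\Phi(f\circ f^{-1})=\Phi(e_\Lambda)=e_{\Lambda'}$ and symmetrically, so $\Phi(f)\in\sdiff(\cm)(\Lambda')$. Hence restricting $\Phi$ to $\sdiff(\cm)(\Lambda)$ gives a well-defined map into $\sdiff(\cm)(\Lambda')$ that preserves $\circ$, which is the desired homomorphism $\sdiff(\cm)(\varphi)$.

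I expect the genuinely delicate point to be bookkeeping rather than mathematics: correctly tracking the asymmetric map $\id_{\cp(\Lambda)}\times f=(\Pi_{\cp(\Lambda)},f)$ through the contravariant $\cp$ so that the arrows in the auxiliary identity compose in the right direction. As a consistency check, the whole statement also admits an abstract reading: under the identification $\scinfty(\cm,\cm)(\Lambda)\cong\Hom_{\catsman/\cp(\Lambda)}\bigl(\cp(\Lambda)^*(\cm),\cp(\Lambda)^*(\cm)\bigr)$ from Section~\ref{sect:functpoints}, the map $\Phi$ is pullback along $\cp(\varphi)$, and a base-change functor between comma categories is automatically unital and multiplicative, hence restricts to a homomorphism of the corresponding automorphism groups. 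I would keep the direct diagram chase as the primary argument since it is self-contained.
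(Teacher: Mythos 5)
Your argument is correct and follows essentially the same route as the paper: both establish that $\scinfty(\cm,\cm)(\varphi)$ preserves the unit and the composition $\circ$, from which preservation of invertibility (and hence well-definedness of the restriction to $\sdiff(\cm)$) follows. The only difference is cosmetic --- you verify the key equality of composite morphisms into $\cp(\Lambda)\times\cm$ by post-composing with its two projections, while the paper checks the same equality by chasing an arbitrary $\Lambda''$-point through the two chains of morphisms.
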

\begin{proof}
Applying the definition (\ref{scinftymap}) to the neutral element $\Pi_\cm:\cp(\Lambda)\times\cm\to\cm$,
we see immediately that
\begin{equation}
\Pi_\cm\circ(\cp(\varphi)\times\id_\cm)=\Pi_\cm,
\end{equation}
i.e., $\scinfty(\cm,\cm)(\varphi)$ maps the unit element to the unit element. Now let 
$f,g\in\sdiff(\cm)(\Lambda)$ be given. We have to show that
\begin{equation}
\scinfty(\cm,\cm)(\varphi)(g\circ f)=(\scinfty(\cm,\cm)(\varphi)(g))\circ(\scinfty(\cm,\cm)(\varphi)(f)).
\end{equation}
It is most insightful to compare the definition of the two functors. The left hand side corresponds to 
the composition
\begin{equation}
\label{f2}
\xymatrix@1{\cp(\Lambda')\times\cm \ar[rr]^{(\cp(\varphi),\id_\cm)} &&%
\cp(\Lambda)\times\cm \ar[rr]^{(\Pi_{\cp(\Lambda)},f)} && \cp(\Lambda)\times\cm \ar[r]^-{g} &\cm},
\end{equation}
while the right hand side corresponds to
\begin{multline}
\label{f3}
\xymatrix@1{\cp(\Lambda')\times\cm \ar[rrr]^(.43){(\Pi_{\cp(\Lambda')},\cp(\varphi),\id_\cm)} &&&%
\cp(\Lambda')\times\cp(\Lambda)\times\cm \ar[rr]^(.7){(\Pi_{\cp(\Lambda')},f)} &&}\\
\xymatrix{&\ar[r]& \cp(\Lambda')\times\cm \ar[rr]^{(\cp(\varphi),\id_\cm)} &&%
\cp(\Lambda)\times\cm \ar[r]^-{g} &\cm}.
\end{multline}
Let now $m\in\cm(\Lambda'')$ be some $\Lambda''$-point of $\cm$, $p\in\cp(\Lambda')(\Lambda'')$ be
a $\Lambda''$-point of $\cp(\Lambda')$ and let $q\in\cp(\Lambda)(\Lambda'')$ be its image under
$\cp(\varphi)$, i.e., $q=\cp(\varphi)(p)$. Then (\ref{f2}) will map the pair $(p,m)$ to
\begin{equation}
\xymatrix@1{(p,m) \ar@{|->}[r] & (q,m) \ar@{|->}[r] & (q,f_{\Lambda''}(q,m)) \ar@{|->}[r] &%
g(q,f_{\Lambda''}(q,m))}.
\end{equation}
On the other hand, (\ref{f3}) will map $(p,m)$ as
\begin{equation}
\xymatrix@-=12pt{(p,m) \ar@{|->}[r] & (p,q,m) \ar@{|->}[r] & (p,f_{\Lambda''}(q,m)) \ar@{|->}[r] &%
(q,f_{\Lambda''}(q,m)) \ar@{|->}[r] & g(q,f_{\Lambda''}(q,m))}.
\end{equation}
This shows that all components of the two functors (\ref{f2}) and (\ref{f3}) are indeed identical.
\end{proof}

\begin{cor}
$\sdiff(\cm)$ is a subfunctor of $\scinfty(\cm,\cm)$ and a group object in $\catsets^\catgr$.
\end{cor}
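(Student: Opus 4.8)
The plan is to observe that the entire content of the Corollary is already packaged in Prop.~\ref{sdhom} together with the pointwise nature of products in the functor category $\catsets^\catgr$. Recall that a group object in $\catsets^\catgr$ is nothing but a functor $\catgr\to\catsets$ landing in groups, i.e.\ a functor all of whose values $\sdiff(\cm)(\Lambda)$ are groups and all of whose structure maps $\sdiff(\cm)(\varphi)$ are group homomorphisms; this is because both the direct product and the terminal object in $\catsets^\catgr$ are computed componentwise, so that each group-object axiom becomes a diagram which holds if and only if it holds in every component $\sdiff(\cm)(\Lambda)$.

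First I would settle the subfunctor claim, which amounts to checking that $\sdiff(\cm)$ is a well-defined functor with structure maps obtained by restricting those of $\scinfty(\cm,\cm)$. By definition $\sdiff(\cm)(\Lambda)\subseteq\scinfty(\cm,\cm)(\Lambda)$ for each $\Lambda$, so the only issue is that $\scinfty(\cm,\cm)(\varphi)$ must carry invertible elements to invertible elements. This follows from Prop.~\ref{sdhom}: the proof there shows that $\scinfty(\cm,\cm)(\varphi)$ sends the unit $\Pi_\cm$ to the unit and respects the composition $\circ$, so if $f$ has two-sided inverse $f^{-1}$ then $\scinfty(\cm,\cm)(\varphi)(f^{-1})$ is a two-sided inverse of $\scinfty(\cm,\cm)(\varphi)(f)$, placing the latter in $\sdiff(\cm)(\Lambda')$. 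Hence the restricted maps $\sdiff(\cm)(\varphi)$ are well-defined, $\sdiff(\cm)$ is a functor, and the family of inclusions $\{\sdiff(\cm)(\Lambda)\hookrightarrow\scinfty(\cm,\cm)(\Lambda)\}$ is tautologically a functor morphism, exhibiting $\sdiff(\cm)$ as a subfunctor.

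Next I would assemble the group-object structure. The multiplication is the restriction of the composition $\circ$ of (\ref{comp}) to $\sdiff(\cm)\times\sdiff(\cm)$; it lands in $\sdiff(\cm)$ because a composite of invertible morphisms is invertible in each $\sdiff(\cm)(\Lambda)$, and it is a functor morphism precisely by the naturality established in Prop.~\ref{sdhom}. The unit is the morphism $e:\cp(\realR)\to\scinfty(\cm,\cm)$ already constructed, which factors through $\sdiff(\cm)$ since its value $\Pi_\cm$ is invertible; here $\cp(\realR)$ is the terminal object of $\catsets^\catgr$, because $\cp(\realR)(\Lambda)\cong\Hom_\catgr(\realR,\Lambda)$ is a one-point set for every $\Lambda$ (as $\realR=\Lambda_0$ is initial in $\catgr$, cf.\ Prop.~\ref{prop:isocp}). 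The inverse morphism $\iota:\sdiff(\cm)\to\sdiff(\cm)$ is defined componentwise by $f\mapsto f^{-1}$; its naturality is free, since Prop.~\ref{sdhom} guarantees that each $\sdiff(\cm)(\varphi)$ is a group homomorphism and homomorphisms commute with inversion.

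Finally, all group-object axioms --- associativity, the unit laws, and the inverse laws --- are diagrams in $\catsets^\catgr$ whose verification reduces, by the componentwise computation of products noted above, to the corresponding identities in each group $\sdiff(\cm)(\Lambda)$, where they hold by definition of a group. I expect no serious obstacle here: the only points requiring care are the well-definedness of the restricted structure and composition maps (preservation of invertibility, dispatched above) and the identification of $\cp(\realR)$ as terminal, while the genuinely substantive work --- the naturality of composition --- has already been carried out in Prop.~\ref{sdhom}.
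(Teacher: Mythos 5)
Your proposal is correct and follows essentially the same route as the paper: invoke Prop.~\ref{sdhom} to see that the structure maps of $\scinfty(\cm,\cm)$ preserve invertibility (hence restrict to $\sdiff(\cm)$, making it a subfunctor), and then observe that the group-object structure is pointwise since each $\sdiff(\cm)(\Lambda)$ is a group and each $\sdiff(\cm)(\varphi)$ a homomorphism. The paper's proof is just a terser version of yours; your extra care with the terminal object $\cp(\realR)$ and the explicit unit and inversion morphisms is harmless elaboration of what the paper leaves implicit.
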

\begin{proof}
By Prop.~\ref{sdhom}, for $\varphi:\Lambda\to\Lambda'$, $\scinfty(\cm,\cm)(\varphi)$ maps invertible 
morphisms to invertible morphisms, so the restriction of $\scinfty(\cm,\cm)(\varphi)$ to
$\sdiff(\cm)(\Lambda)$ is well-defined. This means that the inclusion 
$\sdiff(\cm)\subset\scinfty(\cm,\cm)(\varphi)$ is a functor morphism, and thus $\sdiff(\cm)$ is a
subfunctor. Since each $\sdiff(\cm)(\Lambda)$ is a group and each $\sdiff(\cm)(\varphi)$ is a group
homomorphism, the second assertion is clear.
\end{proof}

\subsection{Fine structure of supersmooth morphisms}

We will now analyse the structure of a morphism $f:\cp(\Lambda)\times\cm\to\cm$ explicitly, i.e.,
by studying the supermanifolds involved as ringed spaces. This will give us a nice interpretation of the
``higher points'' $\scinfty(\cm,\cm)(\Lambda)$ in terms of odd parameters. As a byproduct, we find a
factorization theorem for supersmooth morphisms.

In what follows we will keep $\Lambda$ as
\[
\Lambda=\Lambda_n=\realR[\tau_1,\ldots,\tau_n].
\]
Now, let $\varphi:\cp(\Lambda)\times\cm\to\cm$ be given.
It consists of a continuous map
\begin{equation}
\phi:\{*\}\times M\to M
\end{equation}
of the underlying topological spaces and a sheaf morphism
\begin{equation}
\varphi:\co_\cm\to\phi_*\co_{\cp(\Lambda_n)\times\cm},
\end{equation}
which we also denote by $\varphi$. This should not cause any confusion, since the sheaf morphism is the 
only one we really have to deal with.\footnote{It can be shown \cite{SoS}, \cite{SfMAI} that the sheaf map
in fact defines the continuous map of the underlying spaces, so one does not really have freedom in specifying
it.} Then, for every topological point $p\in M$, $\varphi$ consists of a stalk map
\begin{equation}
\varphi_p:\co_{\cm,\phi(p)}\to\co_{\cp(\Lambda)\times\cm,(\{*\},p)},
\end{equation}
which is a homomorphism of superalgebras. Clearly,
\begin{equation}
\co_{\cp(\Lambda)\times\cm,(\{*\},p)}\cong\Lambda\otimes_\realR\co_{\cm,p}
\end{equation}
since $\cp(\Lambda)$ is just the one-point supermanifold with structure sheaf $\Lambda$. Let $f$ be a
germ in $\co_{\cm,\phi(p)}$. Then $\varphi_p(f)$ is of the general form
\begin{equation}
\label{sdiffeo}
\varphi_p(f)=\sum_{I\subseteq\{1,\ldots,n\}}\tau_I\alpha_I(f).
\end{equation}
The sum runs over all increasingly ordered subsets, including the empty one. For a subset $I=\{i_1,\ldots,i_k\}$ with
$i_1<\ldots<i_k$, we write
\begin{equation}
\label{taui}
\tau_I=\tau_{i_1}\tau_{i_2}\cdots\tau_{i_k}.
\end{equation}
The $\alpha_I$ are, for all $I$, homomorphisms of superalgebras
\begin{equation}
\alpha_I:\co_{\cm,\phi(p)}\to\co_{\cm,p}.
\end{equation}
Each $\alpha_I$ has parity $|I|$, i.e., those whose index is of even length are even, 
while the others are odd.
So, (\ref{sdiffeo}) reads explicitly as
\begin{equation}
\varphi_p(f)=\alpha_0(f)+\tau_1\alpha_1(f)+\ldots+\tau_1\cdots\tau_n\alpha_{1\ldots n}(f),
\end{equation}
and the image has the same parity as $f$. 

If we apply $\scinfty(\cm,\cm)(\epsilon_{\Lambda})$ to $\varphi$, then we obtain a morphism
\[
\scinfty(\cm,\cm)(\epsilon_{\Lambda})(\varphi)=:\varphi_\realR:\cm\to\cm,
\]
i.e. a morphism of $\cm$ into itself as a superringed space. We will call $\varphi_\realR$ the 
\emph{underlying
morphism} of $\varphi$. It is of course not just a smooth selfmap of the underlying manifold $M_{rd}$, but
a morphism of supermanifolds. Clearly, if $\varphi$ has the form (\ref{sdiffeo}) on the stalk at $p$, then
$\varphi_\realR$ acts on this stalk simply as
\begin{equation}
\label{phir}
\varphi_{\realR,p}(f)=\alpha_0(f),
\end{equation}
since $\scinfty(\cm,\cm)(\epsilon_{\Lambda})$ acts by annihilating all odd generators in the structure 
sheaf of $\cp(\Lambda)$.

The space of all supersmooth morphisms $\cm\to\cm$ can have a quite complicated structure. Each set of
higher points $\scinfty(\cm,\cm)(\Lambda)$, however, has a remarkably simple structure as a bundle over
$\scinfty(\cm,\cm)(\realR)$.
Since the proof of the result for general $\Lambda$ is somewhat tedious and cumbersome, we will study the 
semigroups $\scinfty(\cm,\cm)(\Lambda_1)$ and $\scinfty(\cm,\cm)(\Lambda_2)$ separately first.

\begin{prop}
\label{l1sdiff}
Let $\varphi:\cp(\Lambda_1)\times\cm\to\cm$ be a $\Lambda_1$-point of $\scinfty(\cm,\cm)$. Then $\varphi$ is
uniquely determined by its underlying morphism $\varphi_\realR$ and an odd vector field $X$ on $\cm$.
Specifically,
\begin{equation}
\varphi=\varphi_\realR\circ(1+\tau X),
\end{equation}
where $\tau$ is the odd generator of $\Lambda_1$.
\end{prop}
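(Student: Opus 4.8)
The plan is to analyze the stalk map of $\varphi$ directly and exploit the fact that over $\Lambda_1=\realR[\tau]$ with $\tau^2=0$ the expansion \eqref{sdiffeo} has only two terms. First I would write, for a germ $f\in\co_{\cm,\phi(p)}$,
\begin{equation}
\varphi_p(f)=\alpha_0(f)+\tau\,\alpha_1(f),
\end{equation}
where $\alpha_0$ is the stalk map of the underlying morphism $\varphi_\realR$ (cf.~\eqref{phir}) and $\alpha_1$ is an \emph{odd} $\realR$-linear map $\co_{\cm,\phi(p)}\to\co_{\cm,p}$. The key step is to extract the algebraic constraint on $\alpha_1$ coming from the requirement that $\varphi_p$ be a homomorphism of superalgebras. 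Computing $\varphi_p(fg)=\varphi_p(f)\varphi_p(g)$ and comparing the coefficients of $\tau$, using $\tau^2=0$ and the sign rule when $\tau$ passes an odd germ, yields
\begin{equation}
\alpha_1(fg)=\alpha_1(f)\,\alpha_0(g)+(-1)^{p(f)}\alpha_0(f)\,\alpha_1(g).
\end{equation}
This says precisely that $\alpha_1$ is an odd derivation \emph{over} $\alpha_0$, i.e.~an $\alpha_0$-twisted derivation.

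Next I would untwist this by composing with the inverse of the underlying morphism. Since $\varphi_\realR\in\Aut(\cm)$ (this is where we use that $\varphi$ lies in $\sdiff$, so $\alpha_0$ is an isomorphism of stalks), I can define $X:=\alpha_0^{-1}\circ\alpha_1$, which by the displayed Leibniz rule becomes an ordinary odd derivation of $\co_{\cm,p}$, hence an odd vector field on $\cm$ by the identification of $\ctm$ with derivations of the structure sheaf (Section~\ref{sect:tangent}). Unwinding the definition of the composition $\circ$ in \eqref{def:comp}, I would then check that $\varphi_\realR\circ(1+\tau X)$ has exactly the stalk map $f\mapsto\alpha_0(f)+\tau\,\alpha_0(Xf)=\alpha_0(f)+\tau\,\alpha_1(f)$, which reproduces $\varphi_p$; here $1+\tau X$ is a well-defined automorphism because $(\tau X)^2=0$ forces its inverse to be $1-\tau X$. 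Conversely, any odd $X$ produces via this formula a homomorphism whose underlying morphism is $\varphi_\realR$, establishing the claimed bijection and hence uniqueness of the pair $(\varphi_\realR,X)$.

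The main obstacle I expect is purely bookkeeping of signs: the map $\alpha_1$ is odd, so in the Leibniz computation one must carefully commute $\tau$ (itself odd) past the image $\alpha_0(f)$ or the germ $g$, and an error in the Koszul sign would spoil the identification of $X$ as a genuine derivation rather than a twisted one. A secondary subtlety is the passage between the sheaf-theoretic stalk description and the global statement $\varphi=\varphi_\realR\circ(1+\tau X)$: one must note that the derivations $X_p$ glue to a global odd vector field, which follows because $\alpha_0$ and $\alpha_1$ are restrictions of globally defined sheaf morphisms and the untwisting $\alpha_0^{-1}\alpha_1$ is natural. Everything else is a routine verification that the two assignments $\varphi\leftrightarrow(\varphi_\realR,X)$ are mutually inverse.
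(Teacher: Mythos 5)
Your proposal is, in outline, the same argument the paper gives: expand the stalk map as $\varphi_p(f)=\alpha_0(f)+\tau\alpha_1(f)$, compare coefficients of $\tau$ in $\varphi_p(fg)=\varphi_p(f)\varphi_p(g)$ to obtain $\alpha_0(fg)=\alpha_0(f)\alpha_0(g)$ together with the twisted Leibniz rule $\alpha_1(fg)=\alpha_1(f)\alpha_0(g)+(-1)^{p(f)}\alpha_0(f)\alpha_1(g)$, identify $\alpha_1$ with an odd vector field, and globalize by naturality of the stalk maps. The converse verification and the observation that $(1+\tau X)^{-1}=1-\tau X$ are likewise consistent with what the paper does (the latter appears in Thm.~\ref{sdiffinv}).

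The genuine problem is your untwisting step. You define $X:=\alpha_0^{-1}\circ\alpha_1$ and justify the existence of $\alpha_0^{-1}$ by saying that ``$\varphi$ lies in $\sdiff$''. But the proposition is stated for an arbitrary $\Lambda_1$-point of $\scinfty(\cm,\cm)$, not of $\sdiff(\cm)$: the underlying morphism $\varphi_\realR$ need not be an automorphism, so $\alpha_0$ need not be invertible on stalks and your construction of $X$ is simply unavailable. The paper's proof does not invert $\alpha_0$; it asserts directly that the $\alpha_0$-twisted derivation $\alpha_1$ factors as $\alpha_0\circ X_{\phi(p)}$ for a germ $X_{\phi(p)}$ of an odd vector field at $\phi(p)$, and that factorization is precisely the point that requires an argument in the non-invertible case --- a case that genuinely matters, since the result is reused in Thm.~\ref{thm:sectsvbun} for morphisms whose underlying map is a bundle section, hence not invertible. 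The step is in fact delicate: on $\ovlr{1|1}$ the morphism with constant underlying map and $\varphi^*(x)=\tau\theta$, $\varphi^*(\theta)=0$ satisfies all of your constraints, yet cannot be written as $\varphi_\realR\circ(1+\tau X)$ because $\alpha_0$ annihilates every odd germ; so some additional hypothesis or argument is needed at exactly the spot where you invoke invertibility. As written, your proof establishes the statement only on the subfunctor $\sdiff(\cm)(\Lambda_1)\subset\scinfty(\cm,\cm)(\Lambda_1)$, which is a real restriction of the claimed scope.
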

\begin{proof}
For any $p\in M$, we know by (\ref{sdiffeo}) that the induced stalk map $\varphi_{\phi(p)}$ acts on a 
germ of a function as
\begin{equation}
\varphi_{\phi(p)}(f)=\alpha_0(f)+\tau\alpha_1(f).
\end{equation}
The homomorphism property for $\varphi_{\phi(p)}$ requires that for any two germs $f,g\in\co_{\cm,p}$, 
we must have
\begin{equation}
\varphi_{\phi(p)}(fg)=(\alpha_0(f)+\tau\alpha_1(f))(\alpha_0(g)+\tau\alpha_1(g)).
\end{equation}
This requires
\begin{eqnarray}
\label{a0}
\alpha_0(fg) &=& \alpha_0(f)\alpha_0(g),\\
\label{a1}
\alpha_1(fg) &=& \alpha_1(f)\alpha_0(g)+(-1)^{p(f)}\alpha_0(f)\alpha_1(g).
\end{eqnarray}
Therefore, $\alpha_0$ is a homomorphism $\co_{\cm,\phi(p)}\to\co_{\cm,p}$ and determines 
$\varphi_{\realR,\phi(p)}$,
as already shown in (\ref{phir}). $\alpha_1$ is an odd derivation of $\co_{\cm,\phi(p)}$ composed with 
$\alpha_0$, i.e., there exists a germ $X_{\phi(p)}$ of a smooth odd vector field at $\phi(p)$ such that
\begin{equation}
\alpha_1(f)=\alpha_0(X_{\phi(p)}(f)).
\end{equation}
Since all stalk maps have to be induced by a homomorphism of sheaves of smooth functions, the germs 
$X_{\phi(p)}$ must be induced by a global smooth odd vector field $X$ on $\cm$.
\end{proof}

Prop.~\ref{l1sdiff} exhibits a factorization propery of morphisms of supermanifolds: each 
$\Lambda_1$-point of
$\scinfty(\cm,\cm)$ is an morphism of $\cm$ into itself composed with odd derivation of 
each stalk. This pattern will also show up in the general case.

\begin{prop}
\label{l2sdiff}
Let $\varphi:\cp(\Lambda_2)\times\cm\to\cm$ be a $\Lambda_2$-point of $\scinfty(\cm)$. Then $\varphi$ is
uniquely determined by its underlying morphism $\varphi_\realR$, two odd vector fields $X_1,X_2$
and an even vector field $X_{12}$ on $\cm$,
such that
\begin{equation}
\varphi=\varphi_\realR\circ \exp(\tau_1 X_1+\tau_2 X_2 +\tau_1\tau_2 X_{12}),
\end{equation}
where $\tau_1, \tau_2$ are the odd generators of $\Lambda_2$.
\end{prop}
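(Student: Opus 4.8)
The plan is to mirror the proof of Prop.~\ref{l1sdiff}, analysing the induced stalk maps of $\varphi$ as a morphism of superringed spaces, now carrying the expansion (\ref{sdiffeo}) to second order in the odd generators $\tau_1,\tau_2$ of $\Lambda_2$. For a topological point $p\in M$ the stalk map reads
\[
\varphi_p(f)=\alpha_0(f)+\tau_1\alpha_1(f)+\tau_2\alpha_2(f)+\tau_1\tau_2\alpha_{12}(f),
\]
where $\alpha_0,\alpha_{12}$ are even and $\alpha_1,\alpha_2$ are odd maps $\co_{\cm,\phi(p)}\to\co_{\cm,p}$. The whole content of the statement sits in the constraints imposed on the $\alpha_I$ by the requirement that $\varphi_p$ be a homomorphism of superalgebras, so first I would expand $\varphi_p(fg)=\varphi_p(f)\varphi_p(g)$ and sort the result by the monomials $\tau_\emptyset,\tau_1,\tau_2,\tau_1\tau_2$, keeping careful track of the Koszul signs produced when the $\tau_j$ are commuted past the homogeneous elements $\alpha_I(f)$.

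The $\tau_\emptyset$-, $\tau_1$- and $\tau_2$-components reproduce exactly the situation of Prop.~\ref{l1sdiff}: they say that $\alpha_0$ is a homomorphism (hence equals the stalk map of the underlying morphism $\varphi_\realR$, cf.~(\ref{phir})) and that $\alpha_1,\alpha_2$ are odd $\alpha_0$-derivations, so that $\alpha_1=\alpha_0\circ X_1$ and $\alpha_2=\alpha_0\circ X_2$ for germs of odd vector fields which, by the same argument as before, assemble into global odd vector fields $X_1,X_2$ on $\cm$. The genuinely new relation is the $\tau_1\tau_2$-component, which takes the form
\[
\alpha_{12}(fg)=\alpha_{12}(f)\alpha_0(g)+\alpha_0(f)\alpha_{12}(g)+(-1)^{1+p(f)}\alpha_1(f)\alpha_2(g)+(-1)^{p(f)}\alpha_2(f)\alpha_1(g),
\]
i.e.\ $\alpha_{12}$ fails to be an $\alpha_0$-derivation by a bilinear defect built from $\alpha_1,\alpha_2$.

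The crucial observation, which I would establish by a direct Leibniz computation, is that the operator $\alpha_0\circ X_2X_1$ has \emph{precisely the same} defect: applying $\alpha_0$ to the expansion of $X_2X_1(fg)$ and using $\alpha_0 X_1=\alpha_1$, $\alpha_0 X_2=\alpha_2$ yields the two correction terms above verbatim. Consequently $\beta:=\alpha_{12}-\alpha_0\circ X_2X_1$ satisfies the plain even Leibniz rule $\beta(fg)=\beta(f)\alpha_0(g)+\alpha_0(f)\beta(g)$, so $\beta$ is an even $\alpha_0$-derivation and hence $\beta=\alpha_0\circ Z$ for a unique germ of an even vector field, which again globalises to an even vector field $Z$ on $\cm$. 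Setting $X_{12}:=Z+\tfrac12[X_1,X_2]$ --- an even vector field, since the super-bracket of two odd vector fields is even --- I would finish by expanding $\exp(\tau_1X_1+\tau_2X_2+\tau_1\tau_2X_{12})$: the exponent is even and nilpotent, so only the linear term and $\tfrac12Y^2=\tfrac12\tau_1\tau_2(X_2X_1-X_1X_2)$ survive, and composing with $\varphi_\realR=\alpha_0(\cdot)$ reproduces $\alpha_0,\alpha_1,\alpha_2$ together with $\alpha_0\circ(X_{12}+\tfrac12(X_2X_1-X_1X_2))=\alpha_0\circ(Z+X_2X_1)=\alpha_{12}$, as required. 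Uniqueness is automatic, since $X_1,X_2$ and then $Z$, hence $X_{12}$, are read off from the $\alpha_I$, which are themselves determined by $\varphi$.

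I expect the main obstacle to be twofold: getting the Koszul-sign bookkeeping in the $\tau_1\tau_2$-component exactly right (a single misplaced sign destroys the clean cancellation against $\alpha_0\circ X_2X_1$), and justifying rigorously that an even $\alpha_0$-derivation of germs of smooth superfunctions is induced by an honest vector field --- the smooth-category input already invoked implicitly for $\alpha_1,\alpha_2$ in Prop.~\ref{l1sdiff}, which rests on the Hadamard-type structure behind Thm.~\ref{scoords}.
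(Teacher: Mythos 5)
Your proof is correct and follows essentially the same route as the paper's: expand the stalk map in $\tau_1,\tau_2$, use the homomorphism condition to identify $\alpha_0$ as the underlying morphism and $\alpha_1,\alpha_2$ as odd derivations, and observe that the $\tau_1\tau_2$-component satisfies a Leibniz rule with a defect that is exactly reproduced by a composition of $X_1$ and $X_2$, so that the ambiguity is an even derivation $\alpha_0\circ Z$. The only (cosmetic) difference is that you subtract the single ordered particular solution $\alpha_0\circ X_2X_1$ and then shift $X_{12}$ by $\tfrac12[X_1,X_2]$ to match $\tfrac12Y^2$ in the exponential, whereas the paper uses the symmetrized particular solution $\tfrac12\alpha_0\circ(\tau_1X_1(\tau_2X_2)+\tau_2X_2(\tau_1X_1))$ so that $X_{12}$ appears directly; your sign bookkeeping and the resulting identity $\alpha_{12}=\alpha_0\circ(Z+X_2X_1)$ check out.
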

\begin{proof}
By (\ref{sdiffeo}) we conclude again that the stalk map at $p\in M$ has the form
\begin{equation}
\varphi_{\phi(p)}(f)=\alpha_0(f)+\tau_1\alpha_1(f)+\tau_2\alpha_2(f)+\tau_1\tau_2\alpha_{12}(f).
\end{equation}
Using the homomorphism property $\varphi_{\phi(p)}(fg)=\varphi_{\phi(p)}(f)\varphi_{\phi(p)}(g)$, 
we obtain an equation for
each of the $\alpha_I$. For $\alpha_0$, we again obtain (\ref{a0}), and for $\alpha_1,\alpha_2$ an
equation of the form (\ref{a1}). Thus again, $\alpha_0$ can be identified with $\varphi_{\realR,{\phi(p)}}$, 
and $\alpha_1,\alpha_2$ correspond to odd vector fields $X_1,X_2$ composed with $\alpha_0$. Finally, for
$\alpha_{12}$, we obtain
\begin{multline}
\alpha_{12}(fg)=\alpha_{12}(f)\alpha_0(g)+\alpha_0(f)\alpha_{12}(g)+%
(-1)^{p(\alpha_1(f))}\alpha_1(f)\alpha_2(g)-\\
-(-1)^{p(\alpha_2(f))}\alpha_2(f)\alpha_1(g).
\label{12cond}
\end{multline}
This is clearly satisfied for
\[
\tau_1\tau_2\alpha_{12}(f)=\alpha_0(\tau_2X_2(\tau_1X_1(f))),
\]
but also for
\[
\tau_1\tau_2\alpha_{12}(f)=\alpha_0(\tau_1X_1(\tau_2X_2(f))).
\]
Any linear combination
\[
\tau_1\tau_2\alpha_{12}=c\cdot\alpha_0\circ(\tau_2 X_2(\tau_1 X_1))+%
(1-c)\cdot\alpha_0\circ(\tau_1 X_1(\tau_2 X_2))
\]
will therefore satisfy (\ref{12cond}), as well. Moreover, from (\ref{12cond}), we see that any two 
homomorphisms
$\alpha_{12},\alpha'_{12}$ which satisfy (\ref{12cond}) are allowed to differ by an even derivation, 
since inserting them yields
\[
(\alpha_{12}-\alpha'_{12})(fg)=(\alpha_{12}-\alpha'_{12})(f)\alpha_0(g)+%
\alpha_0(f)(\alpha_{12}-\alpha'_{12})(g)
\]
as the necessary condition. Thus the most general form of $\tau_1\tau_2\alpha_{12}$ can be written as
\begin{equation}
\label{a12}
\tau_1\tau_2\alpha_{12}=\frac{1}{2}\alpha_0\circ(\tau_1 X_1(\tau_2 X_2)+\tau_2 X_2(\tau_1 X_1))+%
\tau_1\tau_2 \alpha_0\circ X_{12},
\end{equation}
where $X_{12}$ is the germ of a smooth even vector field at $\phi(p)$. Summing up (\ref{a12}) and 
$\tau_i\alpha_0\circ X_i$ for $i=1,2$ and $\alpha_0$, we obtain
\begin{equation}
\alpha_0+\tau_1\alpha_0\circ X_1+\tau_2\alpha_0\circ X_2+%
\frac{1}{2}(\tau_1\alpha_1(\tau_2\alpha_2)+\tau_2\alpha_2(\tau_1\alpha_1))+%
\tau_1\tau_2 \alpha_0\circ X_{12}
\end{equation}
which matches exactly
\begin{equation}
\alpha_0\circ\exp(\tau_1X_1+\tau_2X_2+\tau_1\tau_2X_{12}).
\end{equation}
\end{proof}

It is now quite clear what the general formula will look like, and which strategy has to be applied to prove
it. Denote by $\mathfrak{S}(a_1\cdots a_n)$ the symmetrization of the ordered sequence $a_1\cdots a_n$, i.e.,
\[
\mathfrak{S}(a_1\cdots a_n)=\frac{1}{n!}\sum_{\sigma\in P(n)} a_{\sigma(1)}\cdots a_{\sigma(n)},
\]
where $P(n)$ is the group of permutations of $n$ elements. To keep the notation simple, let us also
introduce the following convention: the expression $I=I_1+\ldots+I_j$ shall denote the decomposition of 
the ordered set $I$ into an ordered $j$-tuple of subsets $I_1,\ldots,I_j$, each of which inherits an 
ordering from $I$. For example, $\{1,2\}=I_1+I_2$ consists of the four partitions
\[
\{\{\},\{1,2\}\},\quad \{\{1\},\{2\}\},\quad \{\{2\},\{1\}\},\quad \{\{1,2\},\{\}\}.
\]
The notation $I=I_1\cup\ldots\cup I_j$, on the other hand, denotes the decomposition of the ordered set 
$I$ into an
\emph{unordered} $j$-tuple of disjoint ordered subsets. So, $\{1,2\}=I_1\cup I_2$ consists of two partitions:
\[
\{\{\},\{1,2\}\},\quad \{\{1\},\{2\}\}.
\]

The following lemma will be useful.

\begin{lemma}
\label{sym}
Let $A$ be an algebra, $f,g\in A$, and let $a_1,\ldots,a_n$ be derivations of $A$. Then
\begin{equation}
\mathfrak{S}(a_1\circ\ldots\circ a_n)(fg)=\sum_{\{1,\ldots,n\}=K+L}
\mathfrak{S}(a_K)(f)\mathfrak{S}(a_L)(g),
\end{equation}
where for $K=\{k_1,\ldots,k_j\}$, $a_K$ denotes the composition
\[
a_K=a_{k_1}\circ\ldots\circ a_{k_j}.
\]
\end{lemma}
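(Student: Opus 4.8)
The statement to prove is the Leibniz-type formula for a symmetrized composition of derivations:
\[
\mathfrak{S}(a_1\circ\ldots\circ a_n)(fg)=\sum_{\{1,\ldots,n\}=K+L}
\mathfrak{S}(a_K)(f)\mathfrak{S}(a_L)(g).
\]
Let me work out the proof.

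The plan is to prove this by induction on $n$, treating the symmetrization as an average over the permutation group and exploiting the ordinary (graded) Leibniz rule for each single derivation. The base case $n=1$ is just the Leibniz rule $a_1(fg)=a_1(f)g+fa_1(g)$, which corresponds to the two partitions $\{1\}=\emptyset+\{1\}$ and $\{1\}=\{1\}+\emptyset$, with $\mathfrak{S}$ of an empty composition interpreted as the identity.

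For the inductive step, I would first establish the key combinatorial identity that the symmetrizer satisfies a recursion: summing $a_{k}\circ\mathfrak{S}(a_{\{1,\ldots,n\}\setminus\{k\}})$ over all $k$ reconstructs $n\cdot\mathfrak{S}(a_1\circ\ldots\circ a_n)$, since every permutation of $\{1,\ldots,n\}$ arises by prepending some element to a permutation of the remaining $n-1$ elements. Concretely,
\[
\mathfrak{S}(a_1\circ\ldots\circ a_n)=\frac{1}{n}\sum_{k=1}^{n}a_k\circ\mathfrak{S}(a_{\{1,\ldots,n\}\setminus\{k\}}).
\]
Applying both sides to $fg$, using the inductive hypothesis on the inner $\mathfrak{S}$ of $n-1$ derivations, and then applying the single Leibniz rule for the outer $a_k$, I would expand $a_k(\mathfrak{S}(a_M)(f)\cdot\mathfrak{S}(a_N)(g))$ into two terms where $a_k$ hits $f$ or $g$ respectively. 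Each such term, after reindexing, corresponds to adjoining $k$ to either the $K$-block or the $L$-block of a partition of $\{1,\ldots,n\}\setminus\{k\}$.

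The main obstacle is the bookkeeping that shows the resulting sum over $(k,K',L')$ collapses, after division by $n$, exactly into the sum over ordered partitions $\{1,\ldots,n\}=K+L$ with each block again symmetrized. The point is that a given partition $(K,L)$ of $\{1,\ldots,n\}$ is produced once for each choice of which element $k\in K$ (respectively $k\in L$) was the one prepended; there are $|K|+|L|=n$ such choices in total, and the symmetrizer on the smaller block precisely averages over the position at which $k$ was inserted, so the factor $1/n$ and the factors from $\mathfrak{S}(a_K)$, $\mathfrak{S}(a_L)$ match up. I would also need to track signs if the derivations and arguments carry parities, since the Leibniz rule in the super setting picks up a sign $(-1)^{p(a)p(f)}$ when a derivation passes an odd argument; these signs are consistent because $\mathfrak{S}$ only permutes derivations among themselves and the graded Leibniz rule is compatible with the symmetrization. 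Once this counting is verified, the two sides agree termwise and the induction closes, establishing the formula for all $n$.
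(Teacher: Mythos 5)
Your proof is correct, but it organizes the combinatorics differently from the paper. The paper argues in one shot: it expands $\mathfrak{S}(a_1\circ\ldots\circ a_n)(fg)$ by iterating the Leibniz rule inside every permutation, observes that each ordered pair $(K,L)$ of ordered blocks arises once per interleaving of the two sequences, counts those interleavings as $(|K|+|L|)!/(|K|!\,|L|!)$, and checks that dividing by $n!$ reproduces exactly the product of the two block symmetrizers. You instead induct on $n$ via the recursion $\mathfrak{S}(a_1\circ\ldots\circ a_n)=\frac{1}{n}\sum_k a_k\circ\mathfrak{S}(a_{[n]\setminus\{k\}})$, apply the inductive hypothesis, and let the single Leibniz rule for the outer $a_k$ distribute $k$ into either block; the identity $\sum_{k\in K}a_k\circ\mathfrak{S}(a_{K\setminus\{k\}})=|K|\,\mathfrak{S}(a_K)$ (every ordering of $K$ is uniquely ``first element plus ordering of the rest'') then makes the $|K|+|L|=n$ count cancel the prefactor $1/n$. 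Your phrasing of this last step (``averages over the position at which $k$ was inserted'') is slightly off --- $k$ is always prepended, and what matters is the sum over which element of the block is first --- but the count you state is the right one and the induction closes. The trade-off: the paper's direct count is shorter but requires the careful interleaving argument; your induction replaces that with the simpler recursion at the cost of an extra layer of reindexing. Your remark about parities is moot here, since the lemma is stated for ordinary derivations and, in the application, the derivations $\tau_I X_I$ are all even, so the ungraded Leibniz rule is all that is used.
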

\begin{proof}
By the product rule, it is clear that 
$\mathfrak{S}(a_1\circ\ldots\circ a_n)(fg)$ will take the form
\[
\mathfrak{S}(a_1\circ\ldots\circ a_n)(fg)=\sum_{\{1,\ldots,n\}=K+L}\frac{N(K,L)}{n!}a_K(f)a_L(g),
\]
with some integer $N(K,L)$ denoting the multiplicity the $K,L$-summand.
Since the symmetrized product on the left hand side contains all possible orderings of the operators
$a_i$, all possible partitions of $\{1,\ldots,n\}$ into two ordered subsets will really 
appear on the right hand side. The summand with given $K$ and $L$ occurs exactly $(|K|+|L|)!/(|K|!|L|!)$
times, as one checks as follows: starting from an ordered sequence $K$ of indices, there are
$(|K|+|L|)!/|K|!$ ways to insert $|L|$ elements at arbitrary positions into it.
But since the ordering of $L$ is also fixed, one has to divide by the number 
of permutations of $L$. So we have
\begin{eqnarray}
\mathfrak{S}(a_1\circ\ldots\circ a_n)(fg) &=& \sum_{K,L\subseteq\{1,\ldots,n\}}%
\frac{(|K|+|L|)!}{|K|!|L|!n!}a_K(f)a_L(g)\\
&=& \sum_{\{1,\ldots,n\}=K+L}\mathfrak{S}(a_K)(f)\mathfrak{S}(a_L)(g)
\end{eqnarray}
\end{proof}

\begin{thm}
\label{sdiff}
Let $\varphi:\cp(\Lambda)\times\cm\to\cm$ be a $\Lambda$-point of $\scinfty(\cm,\cm)$. Then
$\varphi$ is uniquely determined by its underlying morphism $\varphi_\realR:\cm\to\cm$, as well as 
$2^{n-1}$ odd
and $2^{n-1}-1$ even vector fields $X_I$ on $\cm$ such that it can be expressed as
\begin{equation}
\label{toprove}
\varphi=\varphi_\realR\circ\exp(\sum_{I\subseteq\{1,\ldots,n\}}\tau_IX_I),
\end{equation}
where the sum runs over all unordered nonempty subsets and $\tau_I$ is defined by (\ref{taui}).
\end{thm}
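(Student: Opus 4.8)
The plan is to prove the statement by induction on $n$, the number of odd generators of $\Lambda=\Lambda_n$, with Propositions \ref{l1sdiff} and \ref{l2sdiff} already supplying the base cases $n=1$ and $n=2$. The entire analysis is local, since a supersmooth morphism is determined by its stalk maps; so throughout I fix a topological point $p\in M$ and study the homomorphism of superalgebras $\varphi_{\phi(p)}:\co_{\cm,\phi(p)}\to\Lambda\otimes\co_{\cm,p}$, written as in (\ref{sdiffeo}) in the form $\varphi_{\phi(p)}(f)=\sum_{I}\tau_I\alpha_I(f)$, where each $\alpha_I:\co_{\cm,\phi(p)}\to\co_{\cm,p}$ is a $\fieldK$-linear map of parity $|I|$. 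The goal is to show that the collection $\{\alpha_I\}$ is equivalent, via the exponential, to the datum of one algebra homomorphism $\alpha_0$ (giving $\varphi_\realR$) together with one vector field $X_I$ for each nonempty subset $I$.

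First I would extract the constraints on the $\alpha_I$ imposed by the homomorphism property $\varphi_{\phi(p)}(fg)=\varphi_{\phi(p)}(f)\varphi_{\phi(p)}(g)$. Expanding both sides and comparing coefficients of $\tau_I$ (using that the $\tau_i$ anticommute and that passing an odd $\tau_j$ past an odd $\alpha_{J}(f)$ produces a sign) yields, for each ordered $I$, a generalized Leibniz identity expressing $\alpha_I(fg)$ as a sum over ordered partitions $I=K+L$ of terms $\pm\alpha_K(f)\alpha_L(g)$. The key structural observation is that $\alpha_0$ is forced to be an algebra homomorphism (equation (\ref{a0})), and that once $\alpha_0$ is divided out — i.e. after composing with $\alpha_0^{-1}$ on the target at the formal level, or equivalently working with $\alpha_0^{-1}\circ\alpha_I$ — the remaining maps are built from derivations of $\co_{\cm,\phi(p)}$. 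This is exactly the point where Lemma \ref{sym} enters: it computes $\mathfrak{S}(a_1\circ\cdots\circ a_n)(fg)$ as precisely the sum over ordered partitions that matches the Leibniz identity for $\alpha_I$, so the symmetrized compositions of lower vector fields account for all the ``reducible'' parts of $\alpha_I$.

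The inductive step is then the following. Assume the claim holds for all proper subsets, so that for $|I|<n$ each $\tau_I\alpha_I$ is the appropriate symmetrized-exponential contribution of the vector fields $\{X_J\}_{J\subseteq I}$. For $|I|=n$, subtract from $\alpha_n$ the symmetrized combination of the already-determined lower $X_J$ dictated by Lemma \ref{sym}; the Leibniz identity for the top coefficient shows that the difference $\alpha_{\{1,\dots,n\}}-\mathfrak{S}(\text{lower terms})$ satisfies the plain derivation rule $(fg)\mapsto (\cdot)(f)\alpha_0(g)+\alpha_0(f)(\cdot)(g)$, hence equals $\alpha_0\circ X_{\{1,\dots,n\}}$ for a unique vector field $X_{\{1,\dots,n\}}$ of parity $n\bmod 2$. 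Counting the subsets of $\{1,\dots,n\}$ by parity of cardinality gives $2^{n-1}$ odd and $2^{n-1}-1$ even vector fields (the empty set being excluded, as it corresponds to $\alpha_0=\varphi_\realR$ itself). Assembling all contributions reproduces $\alpha_0\circ\exp\bigl(\sum_{I}\tau_I X_I\bigr)$, and uniqueness follows because at each stage the new vector field is determined as a genuine derivation once the lower data are fixed.

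The main obstacle I anticipate is purely combinatorial bookkeeping: tracking the Koszul signs that arise when the anticommuting generators $\tau_i$ are moved past the odd images $\alpha_J(f)$, and verifying that the multiplicities produced by Lemma \ref{sym} are exactly those of the exponential series $\exp(\sum_I \tau_I X_I)$ after expansion. The ambiguity already visible in Proposition \ref{l2sdiff} — where $\alpha_{12}$ was only determined up to an even derivation and up to the choice of ordering, resolved by symmetrization — is the prototype of the genuine content here: one must show that the non-uniqueness in solving the Leibniz constraints is precisely absorbed into the freedom of choosing the top vector field $X_I$, so that the symmetrized exponential is the canonical normal form. Once the sign conventions are fixed consistently with (\ref{sforms}) and the bracket on $\ider$, the induction closes cleanly, and the nilpotency of the $\tau_I$ guarantees that the exponential series terminates, so no convergence issues arise.
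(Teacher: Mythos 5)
Your proposal is correct and follows essentially the same route as the paper's proof: a local stalkwise analysis, extraction of generalized Leibniz identities for the $\alpha_I$ from the homomorphism property, use of Lemma \ref{sym} to match the symmetrized compositions of lower vector fields against the ordered-partition sum, and an induction on $|I|$ in which the residual term is shown to satisfy the plain derivation rule and is absorbed into a new vector field $X_I$. The only cosmetic difference is that you announce the induction as being on $n$ while actually running it on the cardinality of the index set, exactly as the paper does.
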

\begin{proof}
For every $\phi(p)\in M$, we use (\ref{sdiffeo}) to write the stalk map $\varphi_{\phi(p)}$ as
\begin{equation}
\label{topro2}
\varphi_{\phi(p)}=\sum_{I\subseteq\{1,\ldots,n\}}\tau_I\alpha_I,
\end{equation}
with $\tau_I$ defined by (\ref{taui}) and each $\alpha_I$ a homomorphism $\co_{\cm,\phi(p)}\to\co_{\cm,p}$.
Since $\varphi_{\phi(p)}$ has to be a homomorphism, we require
\begin{equation}
\left(\sum_{K\subseteq\{1,\ldots,n\}}\tau_K\alpha_K(fg)\right)=%
\left(\sum_{I\subseteq\{1,\ldots,n\}}\tau_I\alpha_I(f)\right)\cdot%
\left(\sum_{J\subseteq\{1,\ldots,n\}}\tau_J\alpha_J(g)\right).
\label{hom}
\end{equation}
Identifying (\ref{toprove}) with the sum (\ref{topro2}) rephrases the claim of the theorem as
\begin{equation}
\label{topro3}
\tau_I\alpha_I=\sum_{j=1}^{|I|}\sum_{I=I_1\cup\ldots\cup I_j}\alpha_0\circ%
\mathfrak{S}\left((\tau_{I_1}X_{I_1})\circ\ldots\circ(\tau_{I_j}X_{I_j})\right).
\end{equation}
The summation runs over all partitions of $I$ into unordered tuples of subsets, each subset inheriting
an ordering from $I$ (cf.~the definition of the notation $I=I_1\cup\ldots\cup I_j$ above). We will prove this
formula by induction on $|I|$.

For indices $I$ of length $|I|=0,1,2$, the assertion holds by Props.~\ref{l1sdiff} and \ref{l2sdiff}.
Assume the statement has been proven for indices up to length $k$. Then let $I=\{i_1,\ldots,i_{k+1}\}$
be an index of length $k+1$.
We must assure that (\ref{hom}) holds, which means we must find the general solution $\alpha_I$ for
\begin{eqnarray}
\nonumber
\tau_I\alpha_I(fg) &=& \alpha_0(f)\tau_I\alpha_I(g)+(-1)^{p(f)}\tau_I\alpha_I(f)\alpha_0(g)\\
\label{topro4}
&& \sum_{\substack{I=K+L\\ K,L\neq\emptyset}}\tau_K\alpha_K(f)\tau_L\alpha_L(g).
\end{eqnarray}
Since $|K|,|L|\leq k$, it follows that $\tau_K\alpha_K$ and $\tau_L\alpha_L$ 
must have the form (\ref{topro3}). Therefore 
the sum in (\ref{topro4}) can be written as
\begin{multline}
\sum_{\substack{I=K+L\\ K,L\neq\emptyset}}\alpha_0\circ\left(\sum_{j=1}^{|K|}\sum_{K=K_1\cup\ldots\cup K_j}
\mathfrak{S}\left((\tau_{K_1}X_{K_1})\circ\ldots\circ(\tau_{K_j}X_{K_j})\right)(f)\circ\right.\\
\left.\sum_{l=1}^{|L|}\sum_{L=L_1\cup\ldots\cup L_l}
\mathfrak{S}\left((\tau_{L_1}X_{L_1})\circ\ldots\circ(\tau_{L_l}X_{L_l})\right)(g)\right).
\end{multline}
By Lemma \ref{sym}, this equals
\begin{equation}
\sum_{j=2}^{|I|}\sum_{I=I_1\cup\ldots\cup I_j}\alpha_0\circ%
\mathfrak{S}\left((\tau_{I_1}X_{I_1})\circ\ldots\circ%
(\tau_{I_j}X_{I_j})\right)(fg).
\end{equation}
The general solution to equation (\ref{topro4}) therefore reads
\begin{eqnarray}
\nonumber
\tau_I\alpha_I &=& \alpha_0\circ \tau_IX_I+\sum_{j=2}^{|I|}\sum_{I=I_1\cup\ldots\cup I_j}%
\alpha_0\circ\mathfrak{S}\left((\tau_{I_1}X_{I_1})\circ\ldots\circ(\tau_{I_j}X_{I_j})\right)\\
&=& \sum_{j=1}^{|I|}\sum_{I=I_1\cup\ldots\cup I_j}%
\alpha_0\circ\mathfrak{S}\left((\tau_{I_1}X_{I_1})\circ\ldots\circ(\tau_{I_j}X_{I_j})\right),
\end{eqnarray}
where $X_I$ is a derivation of parity $|I|$ of $\co_{\cm,\phi(p)}$.
\end{proof}

\subsection{The higher points of $\sdiff(\cm)$}

As a corollary of the results of the previous section, we obtain the criterion for the invertibility
of a supersmooth morphism $\cp(\Lambda)\times\cm\to\cm$, and thus a statement on the structure of
$\sdiff(\cm)$.

The underlying group $\sdiff(\cm)(\realR)$ of automorphisms of $\cm$ as a smooth supermanifold 
can have a very 
complicated structure. Its topology is determined by the topology of $\mathrm{Diff}(M)$, i.e., of the diffeomorphism 
group of its underlying manifold, and by the topology of the spaces of isomorphisms of smooth vector 
bundles on
$M$, since any smooth supermanifold can be realized as a smooth manifold and the exterior bundle of a
smooth vector bundle on it (Batchelor's theorem \cite{TSOS}). Even in the case of a $2|2$-dimensional smooth
supersurface, the diffeomorphism group of $M$ consists of infinitely many connected components
generated from its identity component by applying the mapping class group of the surface. 
All these
topological subtleties, however, pertain only to the underlying group $\sdiff(\cm)(\realR)$, while the higher points
$\sdiff(\cm)(\Lambda)$ have a much simpler structure.

\begin{thm}
\label{sdiffinv}
A supersmooth morphism $\varphi:\cp(\Lambda)\times\cm\to\cm$ is invertible if and only if its underlying
morphism $\varphi_\realR:\cm\to\cm$ is invertible. In this case, writing the sheaf map $\varphi$ as
\begin{equation}
\varphi=\varphi_\realR\circ\exp(\sum_{I\subseteq\{1,\ldots,n\}}\tau_IX_I)
\end{equation}
in accord with Thm.~\ref{sdiff}, its inverse is the sheaf map
\begin{equation}
\label{invhom}
\varphi^{-1}=\exp(-\!\!\sum_{I\subseteq\{1,\ldots,n\}}\!\tau_IX_I)\circ\varphi_\realR^{-1}.
\end{equation}
\end{thm}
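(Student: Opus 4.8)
The plan is to extract everything from the factorization established in Thm.~\ref{sdiff}, which writes the stalk map of any $\varphi\in\scinfty(\cm,\cm)(\Lambda)$ as $\varphi=\varphi_\realR\circ\exp(D)$ with $D=\sum_{I\subseteq\{1,\ldots,n\}}\tau_IX_I$ summed over the nonempty subsets. The crucial structural point is that each coefficient $\tau_I$ is a product of odd generators of $\Lambda$ and hence nilpotent, so $D$ is an even, $\Lambda$-valued derivation of $\co_\cm$ that is itself nilpotent ($D^{k}=0$ for $k>n$). Consequently $\exp(D)$ is a finite sum and defines a superalgebra homomorphism $\co_\cm\to\Lambda\otimes_\realR\co_\cm$ whose $\Lambda$-linear extension to $\Lambda\otimes_\realR\co_\cm$ is an automorphism with inverse $\exp(-D)$; the two commute because $D$ commutes with itself, so $\exp(\widetilde D)\exp(-\widetilde D)=\exp(0)=\id$. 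Thus the only possible obstruction to invertibility of $\varphi$ is carried by the underlying factor $\varphi_\realR$.

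For the ``only if'' direction I would invoke functoriality. By Prop.~\ref{sdhom}, whose diagram chase applies verbatim to $\scinfty(\cm,\cm)$ and not merely to $\sdiff(\cm)$, the map $\scinfty(\cm,\cm)(\epsilon_\Lambda)\colon\varphi\mapsto\varphi_\realR$ is a homomorphism of monoids sending the unit $\Pi_\cm$ to $\id_\cm$. Hence if $\varphi$ has a two-sided inverse $\psi$ in $(\scinfty(\cm,\cm)(\Lambda),\circ)$, applying this homomorphism yields $\varphi_\realR\circ\psi_\realR=\psi_\realR\circ\varphi_\realR=\id_\cm$, so that $\varphi_\realR\in\Aut(\cm)$.

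For the converse, assume $\varphi_\realR$ is invertible and define a candidate inverse $\psi$ whose sheaf map is $\psi^{\#}=\exp(-D)\circ(\varphi_\realR^{-1})^{\#}$; since this is a composite of sheaf maps of honest morphisms $\cp(\Lambda)\times\cm\to\cm$, it is itself such a morphism. To verify $\varphi\circ\psi=\psi\circ\varphi=\Pi_\cm$ I would pass to the $\Lambda$-linear extensions $\widetilde{(\cdot)}$ on $\Lambda\otimes_\realR\co_\cm$, where the composition law $g\circ f=g\circ(\id_{\cp(\Lambda)}\times f)$ translates, because sheaf pullback is contravariant, into $\widetilde{f}\circ g^{\#}$ at the level of sheaf maps. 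Plugging in $\varphi^{\#}=\varphi_\realR^{\#}\circ\exp(D)$ and $\psi^{\#}=\exp(-D)\circ(\varphi_\realR^{-1})^{\#}$ and using $(\varphi_\realR^{-1})^{\#}\circ\varphi_\realR^{\#}=\id$ together with $\exp(-\widetilde D)\circ\exp(\widetilde D)=\id$ collapses both composites to $1\otimes(-)$, i.e.\ to $\Pi_\cm^{\#}$. This simultaneously establishes invertibility and the announced formula $\varphi^{-1}=\exp(-\sum_I\tau_IX_I)\circ\varphi_\realR^{-1}$.

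I expect the main obstacle to be organizational rather than deep: keeping straight the contravariance between composition of morphisms and composition of the associated sheaf (pullback) maps, so that the group operation $\circ$ on $\scinfty(\cm,\cm)(\Lambda)$ is matched with the correct order of composition on $\Lambda\otimes_\realR\co_\cm$. One must also be careful to carry out the verification inside the monoid $(\scinfty(\cm,\cm)(\Lambda),\circ)$ with its specific composition, checking that the candidate $\psi$ is genuinely a morphism of families over $\cp(\Lambda)$ rather than an abstract algebra endomorphism, and to confirm that $\exp(D)$ is really an algebra homomorphism, which uses that $D$ is an \emph{even} derivation so that its exponential respects products through the Leibniz rule, exactly as in the symmetrization bookkeeping of Thm.~\ref{sdiff}.
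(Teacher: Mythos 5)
Your proposal is correct and follows essentially the same route as the paper: factor $\varphi$ as $\varphi_\realR\circ\exp(D)$ with $D=\sum_I\tau_IX_I$ via Thm.~\ref{sdiff}, observe that the nilpotent exponential factor is inverted by $\exp(-D)$, and thereby reduce invertibility of $\varphi$ to that of $\varphi_\realR$. The only differences are cosmetic: the paper verifies $\exp(-D)\circ\exp(D)=\id$ stalkwise by expanding both exponentials over partitions and cancelling with $\sum_k(-1)^k\binom{n}{k}=0$, whereas you invoke the commuting-exponentials identity $\exp(-D)\exp(D)=\exp(0)$ directly, and you spell out the easy ``only if'' direction via functoriality of $\epsilon_\Lambda$, which the paper leaves implicit.
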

\begin{proof}
We have to show that for each stalk $\co_{\cm,p}$, the action of the composition of the two exponentials
is the identity. Let $\varphi$ be the germ of our morphism. We have to show that
\begin{equation}
\label{topro5}
\exp(-\!\!\sum_{I\subseteq\{1,\ldots,n\}}\!\tau_I X_I)\circ%
\exp(\!\sum_{J\subseteq\{1,\ldots,n\}}\!\tau_J X_J)=\id_{\cm,p}.
\end{equation}
We can write
\begin{equation}
\label{topro6}
\exp(-\!\!\sum_{I\subseteq\{1,\ldots,n\}}\!\tau_I X_I)\circ\exp(\!\sum_{J\subseteq\{1,\ldots,n\}}\!\tau_J X_J)=%
1+\sum_K\tau_K\alpha_K
\end{equation}
by expanding both exponentials. Using (\ref{topro3}), we rewrite the expression on the left hand side as
\begin{multline}
1+\left(\sum_{j=1}^{|I|}\sum_{I=I_1\cup\ldots\cup I_j}%
\mathfrak{S}\left((-\tau_{I_1}X_{I_1})\circ\ldots\circ(-\tau_{I_j}X_{I_j})\right)\right)\circ\\
\left(\sum_{k=1}^{|J|}\sum_{J=J_1\cup\ldots\cup J_k}%
\mathfrak{S}\left((\tau_{J_1}X_{J_1})\circ\ldots\circ(\tau_{J_k}X_{J_k})\right)\right)
\end{multline}
Now $\tau_K\alpha_K$ on the right hand side of (\ref{topro6}) is a sum over all partitions of $K$ 
into ordered tuples of
subsets. Pick one such tuple $\{K_1,\ldots,K_n\}$; the tuple, and each of the $K_i$, is ordered, and 
their union is $K$. On the left hand side, we have the corresponding sum
\begin{equation}
\frac{1}{k!(n-k)!}\sum_{k=0}^n(-1)^k(\tau_{K_1}X_{K_1})\circ\ldots\circ(\tau_{K_n}X_{K_n})
\end{equation}
of all ways of realizing this sequence of indices by contributions from either two of the exponentials in
(\ref{topro5}). But
\[
\sum_{k=0}^n\frac{1}{k!(n-k)!}(-1)^k=\frac{1}{n!}(1+(-1))^n=0.
\]
Therefore, each $\alpha_K$ on the right hand side of (\ref{topro6}) receives only vanishing contributions,
and thus (\ref{topro5}) holds.
\end{proof}

\begin{cor}
The supergroup $\sdiff(\cm)$ is the restriction of $\scinfty(\cm,\cm)$ onto
$\Aut(\cm)\subset\scinfty(\cm,\cm)(\realR)$.
\end{cor}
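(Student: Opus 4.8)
The plan is to assemble the corollary directly from the theorem just proved (Thm.~\ref{sdiffinv}) together with the earlier fact that $\sdiff(\cm)$ is a subfunctor of $\scinfty(\cm,\cm)$. Recall that the restriction of a functor $\cf\in\catsets^\catgr$ to a subset $S\subset\cf(\fieldK)$ of its underlying points is the subfunctor $\cf\big|_S$ with $\cf\big|_S(\Lambda)=\cf(\epsilon_\Lambda)^{-1}(S)$, using the terminal morphisms $\epsilon_\Lambda$ (cf.~(\ref{restfunc})). So the claim $\sdiff(\cm)=\scinfty(\cm,\cm)\big|_{\Aut(\cm)}$ unwinds to the pointwise assertion that for every $\Lambda\in\catgr$,
\begin{equation}
\sdiff(\cm)(\Lambda)=\scinfty(\cm,\cm)(\epsilon_\Lambda)^{-1}(\Aut(\cm)),
\end{equation}
where $\Aut(\cm)=\sdiff(\cm)(\realR)\subset\scinfty(\cm,\cm)(\realR)$ is the set of invertible underlying morphisms.

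First I would identify the map $\scinfty(\cm,\cm)(\epsilon_\Lambda)$ applied to a point $\varphi\in\scinfty(\cm,\cm)(\Lambda)$. By the definition (\ref{scinftymap}) of the functor on morphisms, $\scinfty(\cm,\cm)(\epsilon_\Lambda)(\varphi)=\varphi\circ(\cp(\epsilon_\Lambda)\times\id_\cm)$, and since $\cp$ is contravariant this sends $\varphi$ precisely to its underlying morphism $\varphi_\realR:\cm\to\cm$ as defined just before (\ref{phir}). Hence $\scinfty(\cm,\cm)(\epsilon_\Lambda)^{-1}(\Aut(\cm))$ is exactly the set of those $\varphi$ whose underlying morphism $\varphi_\realR$ is invertible. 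The heart of the matter is then the biconditional in Thm.~\ref{sdiffinv}: a supersmooth morphism $\varphi:\cp(\Lambda)\times\cm\to\cm$ is invertible if and only if $\varphi_\realR$ is invertible. This immediately gives the set equality above, since the left-hand side $\sdiff(\cm)(\Lambda)$ is by definition the invertible elements of $\scinfty(\cm,\cm)(\Lambda)$.

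It then remains to note that this pointwise equality is an equality of subfunctors, not merely of point sets. This is automatic: both $\sdiff(\cm)$ and $\scinfty(\cm,\cm)\big|_{\Aut(\cm)}$ are subfunctors of $\scinfty(\cm,\cm)$ whose component at each $\Lambda$ is the same subset, and the functor action on morphisms $\varphi:\Lambda\to\Lambda'$ is in both cases the restriction of $\scinfty(\cm,\cm)(\varphi)$ to that subset. The fact that this restriction is well-defined for $\sdiff(\cm)$ is exactly the content of Prop.~\ref{sdhom} and its corollary, so no further checking is needed. The only genuinely substantive ingredient is Thm.~\ref{sdiffinv}, which has already been established; everything else is unwinding definitions. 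I do not expect any real obstacle here, as the corollary is essentially a repackaging of the invertibility criterion, with the mild bookkeeping point being to make explicit that $\scinfty(\cm,\cm)(\epsilon_\Lambda)$ is the ``pass to underlying morphism'' map so that the restriction notation $\scinfty(\cm,\cm)\big|_{\Aut(\cm)}$ matches the definition in (\ref{restfunc}).
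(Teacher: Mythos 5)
Your proof is correct and follows the same route as the paper, which simply cites Thm.~\ref{sdiffinv} as giving the corollary directly; your added bookkeeping identifying $\scinfty(\cm,\cm)(\epsilon_\Lambda)$ with the passage to the underlying morphism, so that the restriction notation of (\ref{restfunc}) applies, is exactly the unwinding the paper leaves implicit.
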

\begin{proof}
This is a direct consequence of Thm.~\ref{sdiffinv}.
\end{proof}

Moreover, we can now phrase the factorization properties of $\sdiff(\cm)$ as follows. Let 
$\cx(\cm)$ denote the superrepresentable $\olr$-module of smooth sections of the tangent bundle of
$\cm$ (it exists due to Thm.~\ref{thm:sectsvbun}). By the above discussion it is clear that 
we obtain a unipotent group $\cn_\cm$ by
\begin{eqnarray}
\exp:\cx(\cm)^{nil} &\to& \cn_\cm\\
X &\mapsto& \exp(X)
\end{eqnarray}
because any $X\in\cx(\cm)(\Lambda)$, $\Lambda\neq\realR$ can be written as a sum 
\begin{equation}
\sum_{I\subseteq\{1,\ldots,n\}}\tau_IX_I,
\end{equation}
where again the $\tau_i$ are the free odd generators of $\Lambda$ and each $X_I$ is a vector field of
parity $|I|$.

\begin{thm}
The supergroup $\sdiff(\cm)$ splits as a semidirect product
\begin{equation}
\sdiff(\cm)=\Aut(\cm)\ltimes \cn_\cm.
\end{equation}
\end{thm}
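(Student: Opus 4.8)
The plan is to exhibit $\sdiff(\cm)$ as the total space of a split short exact sequence of group objects in $\catsets^\catgr$,
\[
1 \longrightarrow \cn_\cm \longrightarrow \sdiff(\cm) \stackrel{r}{\longrightarrow} \Aut(\cm) \longrightarrow 1,
\]
and then to invoke the elementary fact that a split extension of groups is a semidirect product, applied levelwise at each $\Lambda\in\catgr$ and checked to be compatible with the functorial maps. Here $\Aut(\cm)$ is regarded as the constant functor $\Lambda\mapsto\sdiff(\cm)(\realR)$, and $\cn_\cm$ is the unipotent group constructed before the statement.

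First I would build the retraction $r:\sdiff(\cm)\to\Aut(\cm)$, setting $r_\Lambda(\varphi)=\varphi_\realR$ (the underlying morphism), which is $\sdiff(\cm)(\epsilon_\Lambda)$. That $r$ is a natural transformation follows from $\epsilon_{\Lambda'}\circ\psi=\epsilon_\Lambda$ for every $\psi:\Lambda\to\Lambda'$ (uniqueness of the terminal morphism in $\catgr$), and that each $r_\Lambda$ is a group homomorphism is exactly Prop.~\ref{sdhom}. The splitting $\iota:\Aut(\cm)\to\sdiff(\cm)$ sends an automorphism $g$ to the trivial family $g\circ\Pi_\cm:\cp(\Lambda)\times\cm\to\cm$; a short computation using $\Pi_\cm\circ(\cp(\psi)\times\id_\cm)=\Pi_\cm$ shows $\iota$ is natural, and one verifies that $\iota_\Lambda$ is a group homomorphism and that $r\circ\iota=\id_{\Aut(\cm)}$, so $\iota$ realizes $\Aut(\cm)$ as a sub-supergroup.

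Next I would identify $\ker r$ with $\cn_\cm$. By Thm.~\ref{sdiffinv} a morphism lies in $\sdiff(\cm)(\Lambda)$ precisely when its underlying morphism is invertible, and $r_\Lambda(\varphi)=\id$ means $\varphi_\realR=\id_\cm$; Thm.~\ref{sdiff} then forces $\varphi=\exp(\sum_{I}\tau_I X_I)$ with $\sum_I\tau_I X_I\in\cx(\cm)^{nil}(\Lambda)$, which is exactly $\cn_\cm(\Lambda)$. Since $\cn_\cm=\ker r$ for a homomorphism of group functors, it is automatically a subfunctor and a levelwise normal subgroup, so normality needs no separate argument. The remaining semidirect-product conditions then drop out: surjectivity of $r$ (hence $\Aut(\cm)\cdot\cn_\cm=\sdiff(\cm)$) is immediate from the existence of the splitting, the factorization $\varphi=\iota(\varphi_\realR)\cdot\exp(X)$ is the existence half of Thm.~\ref{sdiff}, and $\iota(\Aut(\cm))(\Lambda)\cap\cn_\cm(\Lambda)=\{1\}$ follows from its uniqueness half (an element with underlying morphism $\id$ and no nilpotent corrections is $\Pi_\cm=\exp(0)$).

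The step I expect to be the main obstacle is reconciling the two notions of composition in play. Thm.~\ref{sdiff} writes $\varphi=\varphi_\realR\circ\exp(X)$ as a composite of stalkwise sheaf maps, whereas the semidirect structure requires this to coincide with the monoid product on $\scinfty(\cm,\cm)$ defined in (\ref{def:comp}), namely $g\circ f=g\circ(\id_{\cp(\Lambda)}\times f)$. I would verify that $\iota(\varphi_\realR)\circ\exp(X)$, evaluated on each stalk, reproduces the stalk map $\sum_I\tau_I\alpha_I$ of Thm.~\ref{sdiff}, and, relatedly, that conjugation $\iota(g)\,\exp(X)\,\iota(g)^{-1}$ lands back in $\cn_\cm$ with $\Aut(\cm)$ acting on $\cx(\cm)^{nil}$ by pushforward of vector fields; this action is what gives the decomposition its \emph{semidirect} rather than direct character. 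Once this bookkeeping between the monoid product and stalk composition is pinned down, the assembly into a split exact sequence, and hence the semidirect product, is formal.
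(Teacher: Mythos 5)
Your argument is correct, and it rests on the same two pillars as the paper's proof --- the factorization and uniqueness statements of Thm.~\ref{sdiff} give $\sdiff(\cm)=\Aut(\cm)\cn_\cm$ and $\Aut(\cm)\cap\cn_\cm=\{\id_\cm\}$ --- but you organize the normality step differently. The paper verifies normality by a direct computation: from $X(\varphi^*f)=\varphi^*\circ D\varphi(X)(f)$ it derives
\begin{equation*}
\varphi_\realR\circ\exp\Bigl(\sum_{I}\tau_I\,D\varphi_\realR(X_I)\Bigr)=
\exp\Bigl(\sum_{I}\tau_I X_I\Bigr)\circ\varphi_\realR,
\end{equation*}
which simultaneously shows $\varphi_\realR\circ\cn_\cm=\cn_\cm\circ\varphi_\realR$ and exhibits the conjugation action of $\Aut(\cm)$ on $\cn_\cm$ as pushforward of vector fields. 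You instead exhibit $\cn_\cm$ as $\ker\bigl(r=\sdiff(\cm)(\epsilon_\Lambda)\bigr)$ with the trivial-family section $\iota$, so normality is formal; this is cleaner and makes the naturality in $\Lambda$ transparent, but it leaves the actual action of $\Aut(\cm)$ on $\cn_\cm$ --- the content that makes the product semidirect rather than merely an extension --- to the "bookkeeping" you defer to the end. That bookkeeping is precisely the displayed conjugation identity above, so to complete your version you would end up reproducing the paper's one computation anyway; conversely, the paper gets the explicit action for free but never states the split exact sequence. The two proofs are complementary in emphasis rather than in substance, and your concern about reconciling stalkwise composition of sheaf maps with the monoid product of (\ref{def:comp}) is legitimate but harmless here, since both $\iota(\varphi_\realR)$ and $\exp(X)$ are determined by their stalk maps and the product of (\ref{def:comp}) reduces to composition of those stalk maps on each germ.
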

\begin{proof}
Obviously we have $\Aut(\cm)\cap\cn_\cm=\{\id_\cm\}$ and by Thm.~\ref{sdiff} we know that
$\sdiff(\cm)=\Aut(\cm)\cn_\cm$. It remains to show that $\cn_\cm$ is normal. If 
$\varphi_\realR\in\Aut(\cm)$ is given and $X$ is a vector field, then we have for every
germ $f$ of a function on $\cm$
\begin{equation}
X(\varphi^*f)=\varphi^*\circ D\varphi(X)(f).
\end{equation}
If $\exp(\sum\tau_IX_I)$ is an element of $\cn_\cm$, this entails
\begin{equation}
\varphi_\realR\circ\exp(\!\sum_{I\subseteq\{1,\ldots,n\}}\!\tau_I D\varphi_\realR(X_I))=%
\exp(\!\sum_{I\subseteq\{1,\ldots,n\}}\!\tau_I X_I)\circ\varphi_\realR.
\end{equation}
Since $D\varphi_\realR$ is an isomorphism for an invertible $\varphi_\realR$, this implies that
\begin{equation}
\varphi_\realR\circ\cn_\cm=\cn_\cm\circ\varphi_\realR
\end{equation}
for all $\varphi_\realR\in\Aut(\cm)$.
\end{proof}

\subsection{\texorpdfstring{The action of $\sdiff(\cm)$ on vector fields and 1-1-tensor fields}{The action
of SD(M) on vector fields and 1-1-tensor fields}}

In order to analyse the structure of orbits of almost complex structures under pullback, we start by
investigating the pushforward operation for vector fields, i.e., the differential of a superdiffeomorphism.
The statement of Thm.~\ref{sdiff} can be rephrased as follows: apart from the action of its underlying
morphism, a diffeomorphism $\varphi:\cp(\Lambda)\times\cm\to\cm$ acts by Lie derivatives:
\begin{equation}
\varphi(f)=\varphi_\realR\circ\exp(\!\sum_{I\subseteq\{1,\ldots,n\}}\!\tau_I L_{X_I})(f).
\end{equation}
Intuitively, this corresponds to the fact that a deformation of a family of supermanifolds with fibre 
$\cm$ along the odd dimensions of the base has to be linear with respect to the coordinate of this dimension.
This in turn is due to the fact that one
cannot ``move a finite distance'' along an odd direction: there is no way of localizing ``at a point'' 
in these directions.
So all odd deformations have to be infinitesimal. We will, of course, discover the same phenomenon for
the induced actions on vector fields and other tensors. 

The Lie derivative of a vector field $Y$ along
a vector field $X$ is defined as $L_X(Y):=[X,Y]$, where the bracket is the superbracket of vector fields.
The Lie derivative of a (1-1)-tensor field $\sigma$ along some even vector field $X$ is then given by
\begin{eqnarray}
\label{lieder11}
(L_X\sigma)(Y) &:=& -\sigma([X,Y])+[X,\sigma(Y)]\\
&=& \left[-\sigma\circ L_X+L_X\circ\sigma\right](Y),
\end{eqnarray}
where $Y$ is any vector field.

\begin{prop}
Let $\varphi:\cp(\Lambda_n)\times\cm\to\cm$ be a $\Lambda_n$-point of $\sdiff(\cm)$ whose underlying
diffeomorphism is $\varphi_\realR$ and let $Y$ a vector field on $\cp(\Lambda)\times\cm$. Suppose 
$\varphi$ is given by an expression as in the statement of Thm.~\ref{sdiff}. 
Then we can write the differential $D\varphi$ as
\begin{equation}
\label{dvarphi}
D\varphi(Y)=\exp(-\!\!\sum_{I\subseteq\{1,\ldots,n\}}\!\tau_I L_{X_I})\circ D\varphi_\realR(Y).
\end{equation}
\end{prop}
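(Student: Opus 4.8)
The plan is to express the differential $D\varphi$ as an operation of conjugation on derivations and then transport through it the factorization of $\varphi$ supplied by Thm.~\ref{sdiff}. Recall from the proof that $\cn_\cm$ is normal that the pushforward is characterized on functions by $Y\circ\varphi=\varphi\circ D\varphi(Y)$, where $\varphi$ denotes the pullback sheaf map. Since $\varphi$ is invertible as a sheaf map by Thm.~\ref{sdiffinv}, and a pullback is an algebra homomorphism, $\varphi$ is an automorphism of the structure sheaf; hence $D\varphi(Y)=\varphi^{-1}\circ Y\circ\varphi$ is again a derivation, and this conjugation formula is the object to compute.

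First I would insert the two factorizations. By Thm.~\ref{sdiff} the sheaf map is $\varphi=\varphi_\realR\circ E$ with $E=\exp(\sum_{I}\tau_I X_I)$, and by Thm.~\ref{sdiffinv} its inverse is $\varphi^{-1}=E^{-1}\circ\varphi_\realR^{-1}$ with $E^{-1}=\exp(-\sum_I\tau_I X_I)$. Substituting gives
\[
D\varphi(Y)=E^{-1}\circ\varphi_\realR^{-1}\circ Y\circ\varphi_\realR\circ E.
\]
The inner three factors are exactly the conjugation defining the underlying pushforward, so $\varphi_\realR^{-1}\circ Y\circ\varphi_\realR=D\varphi_\realR(Y)$, and the problem reduces to understanding conjugation of the vector field $D\varphi_\realR(Y)$ by $E$.

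The final step uses the purely algebraic conjugation identity $e^{-A}\circ Z\circ e^{A}=\exp(-\mathrm{ad}_A)(Z)$, valid in the associative algebra of differential operators on the structure sheaf; here $A=\sum_I\tau_I X_I$ is even and nilpotent because the $\tau_I$ are Grassmann, so both exponential series and the series for $\exp(-\mathrm{ad}_A)$ terminate and no convergence question arises. It then remains to identify $\mathrm{ad}_A$, acting on vector fields, with the operator $\sum_I\tau_I L_{X_I}$. This is where I expect the only real care to be needed: one must check that the $\tau_I$, being constants pulled back from the base $\cp(\Lambda)$, graded-commute through a vector field $Z$ so that $[\tau_I X_I,Z]=\tau_I[X_I,Z]=\tau_I L_{X_I}(Z)$, the sign $(-1)^{p(Z)|I|}$ appearing when $\tau_I$ is moved past $Z$ cancelling against the sign in the graded bracket of $X_I$ (of parity $|I|$) with $Z$. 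Granting this, $\mathrm{ad}_A=\sum_I\tau_I L_{X_I}$ and therefore
\[
D\varphi(Y)=\exp\Bigl(-\sum_{I\subseteq\{1,\ldots,n\}}\tau_I L_{X_I}\Bigr)\bigl(D\varphi_\realR(Y)\bigr),
\]
which is the assertion. The main obstacle is thus not the structure of the argument but the graded sign bookkeeping in the last identification, together with keeping straight that $L_{X_I}$ here denotes the Lie-derivative (adjoint) action on vector fields rather than the action on functions used in the preceding proposition.
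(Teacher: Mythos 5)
Your proposal is correct and follows essentially the same route as the paper: conjugation of $Y$ by the sheaf map, insertion of the factorizations from Thm.~\ref{sdiff} and Thm.~\ref{sdiffinv}, reduction of the inner conjugation to $D\varphi_\realR(Y)$, and the identity $e^{-A}\circ Z\circ e^{A}=\exp(-\mathrm{ad}_A)(Z)$ with $\mathrm{ad}_{\sum_I\tau_IX_I}$ identified as $\sum_I\tau_IL_{X_I}$. The paper carries out the last step by expanding the iterated brackets explicitly and re-summing, which amounts to the same sign bookkeeping you flag at the end.
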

\begin{proof}
The differential is determined by the commutative diagram of sheaf maps
\begin{equation}
\begin{CD}
\co_{\cp(\Lambda_n)\times\cm} @>\varphi>> \co_{\cp(\Lambda_n)\times\cm}\\
@V{D\varphi(Y)}VV @VV{Y}V\\
\co_{\cp(\Lambda_n)\times\cm} @>\varphi>> \co_{\cp(\Lambda_n)\times\cm}
\end{CD},
\end{equation}
where we have extended $\varphi$ to $\Pi_{\cp(\Lambda)}\times\varphi$, i.e., to a morphism in
$\catsman/\cp(\Lambda)$. On the stalk at $\phi(p)$, we must then have
\begin{equation}
(D\varphi(Y))_{\phi(p)}(f)=(\varphi^{-1})_{p}\circ Y\circ(\varphi_{\phi(p)}(f)).
\end{equation}
Inserting (\ref{invhom}) and (\ref{toprove}), this is equivalent to
\begin{equation}
\label{zw0}
D\varphi(Y)=\exp(-\sum_I\tau_IX_I)\circ\varphi_\realR^{-1}\circ Y\circ%
\varphi_\realR\circ\exp(\sum_I\tau_IX_I).
\end{equation}
Clearly,
\begin{equation}
\varphi_\realR^{-1}\circ Y\circ\varphi_\realR=D\varphi_\realR(Y).
\end{equation}
Using the Baker-Campbell-Hausdorff formula and abbreviating
\begin{equation}
\label{bchsum}
\mathcal{X}=\sum_I\tau_IX_I,
\end{equation}
the expression (\ref{zw0}) can be written as
\begin{equation}
\label{zw1}
D\varphi(Y)=\sum_{m=0}^n \frac{(-1)^m}{m!}\underbrace{[\mathcal{X},[,\ldots,[\mathcal{X}}_{m\,\,\,times}%
,D\varphi_\realR(Y)]\cdots]].
\end{equation}
Expanding $\cx$ into the sums (\ref{bchsum}) again, we obtain
\begin{equation}
D\varphi(Y)=\sum_{I\subseteq\{1,\ldots,n\}}\sum_{j=1}^{|I|}\sum_{I=I_1+\ldots+I_j}\frac{(-1)^j}{j!}%
[\tau_{I_1}X_{I_1},\ldots,[\tau_{I_j}X_{I_j},D\varphi_\realR(Y)]\cdots].
\end{equation}
This is clearly the same as
\begin{eqnarray}
\nonumber
D\varphi(Y) &=& \!\!\!\!\!\sum_{I\subseteq\{1,\ldots,n\}}\sum_{j=1}^{|I|}\sum_{I=I_1\cup\ldots\cup I_j}%
\!\!\!\mathfrak{S}((-\tau_{I_1}L_{X_{I_1}})\circ\ldots\circ (-\tau_{I_j}L_{X_{I_j}}))(D\varphi_\realR(Y))\\
&=& \exp(-\sum_{I\subseteq\{1,\ldots,n\}}\tau_I L_{X_I})\circ D\varphi_\realR(Y).
\end{eqnarray}
\end{proof}

One might have guessed this from general arguments: quantities, which transform contravariantly, 
i.e. by pullback, should transform by application of a transformation $\exp(\sum_I\tau_IX_I)$. 
Those which transform covariantly, like vector fields, will transform inversely. We will need the
analogous result for (1-1)-tensor fields.

\begin{prop}
\label{prop:pbtensor}
Let $\varphi:\cp(\Lambda)\times\cm\to\cm$ be a $\Lambda$-point of $\sdiff(\cm)$ whose underlying
diffeomorphism is $\varphi_\realR$, and let
$\sigma$ be an endomorphism of the tangent bundle, i.e. a 1-1-tensor field on $\cp(\Lambda)\times\cm$.
Also, suppose that $\varphi$ is given by an expression as in the statement of Thm.~\ref{sdiff}. 
Then we can express the pullback of $\varphi^*\sigma$ as
\begin{equation}
\varphi^*\sigma=\varphi_\realR^*\circ\exp(\!\sum_{I\subseteq\{1,\ldots,n\}}\!\tau_I L_{X_I})(\sigma).
\end{equation}
\end{prop}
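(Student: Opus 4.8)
The plan is to reduce the pullback of the $(1,1)$-tensor $\sigma$ to a conjugation on the tangent sheaf and then to reuse the analysis of $D\varphi$ carried out in the preceding proposition. First I would record the defining relation for the pullback of a $(1,1)$-tensor: regarding $\sigma$ as an $\co_{\cp(\Lambda)\times\cm}$-linear endomorphism of the tangent sheaf, the pullback $\varphi^{*}\sigma$ is the unique endomorphism which is $\varphi$-related to $\sigma$, i.e. $D\varphi\circ(\varphi^{*}\sigma)=\sigma\circ D\varphi$, so that
\[
\varphi^{*}\sigma=(D\varphi)^{-1}\circ\sigma\circ D\varphi,
\]
where $D\varphi$ is the operator on vector fields appearing in the previous proposition (and $\varphi$ is tacitly extended to $\Pi_{\cp(\Lambda)}\times\varphi$, so that everything is a self-map of $\cp(\Lambda)\times\cm$). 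This is the exact tensorial analogue of the commutative square used there for $D\varphi$ itself, and for $\Lambda=\realR$ it reduces to the classical $\varphi_\realR^{*}\sigma=(D\varphi_\realR)^{-1}\circ\sigma\circ D\varphi_\realR$.

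Next I would substitute (\ref{dvarphi}). Writing $\mathcal{X}=\sum_{I}\tau_{I}X_{I}$ and abbreviating by $L_{\mathcal{X}}=\sum_{I}\tau_{I}L_{X_{I}}$ the corresponding (even) Lie-derivative operator on vector fields, the previous proposition reads $D\varphi=\exp(-L_{\mathcal{X}})\circ D\varphi_\realR$, hence $(D\varphi)^{-1}=(D\varphi_\realR)^{-1}\circ\exp(L_{\mathcal{X}})$. Plugging these into the conjugation formula gives
\[
\varphi^{*}\sigma=(D\varphi_\realR)^{-1}\circ\bigl(\exp(L_{\mathcal{X}})\circ\sigma\circ\exp(-L_{\mathcal{X}})\bigr)\circ D\varphi_\realR.
\]
The point is that the inner expression is a conjugation of the operator $\sigma$ by $\exp(L_{\mathcal{X}})$, so it equals $\exp(\mathrm{ad}_{L_{\mathcal{X}}})(\sigma)$, with $\mathrm{ad}_{L_{\mathcal{X}}}(\sigma)=L_{\mathcal{X}}\circ\sigma-\sigma\circ L_{\mathcal{X}}$.

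The heart of the argument is then the identification of this commutator with the tensorial Lie derivative. By formula (\ref{lieder11}), for each $I$ one has $L_{X_{I}}\circ\sigma-\sigma\circ L_{X_{I}}=L_{X_{I}}\sigma$ as $(1,1)$-tensors, so $\mathrm{ad}_{L_{\mathcal{X}}}(\sigma)=\sum_{I}\tau_{I}\,L_{X_{I}}\sigma=L_{\mathcal{X}}\sigma$, the Lie derivative of the tensor $\sigma$. Since the commutator of a derivation-type operator with an $\co$-linear one is again $\co$-linear (a quick Leibniz check), $\mathrm{ad}_{L_{\mathcal{X}}}$ preserves $(1,1)$-tensors, so by induction $\mathrm{ad}_{L_{\mathcal{X}}}^{k}(\sigma)=L_{\mathcal{X}}^{k}\sigma$ and therefore $\exp(\mathrm{ad}_{L_{\mathcal{X}}})(\sigma)=\exp(\sum_{I}\tau_{I}L_{X_{I}})(\sigma)$, now read as the exponentiated Lie derivative acting on the tensor $\sigma$. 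Recognizing the outer conjugation by $D\varphi_\realR$ as $\varphi_\realR^{*}$ yields $\varphi^{*}\sigma=\varphi_\realR^{*}\circ\exp(\sum_{I}\tau_{I}L_{X_{I}})(\sigma)$, as claimed.

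The exponentials are all finite sums because the $\tau_{I}$ are nilpotent, so no convergence issue arises. The main thing to watch is the super-sign bookkeeping: although an individual $X_{I}$ may be odd, each summand $\tau_{I}X_{I}$ is even, so $\mathcal{X}$ and $L_{\mathcal{X}}$ are even operators and the ordinary (ungraded) commutator, adjoint-exponential and Baker--Campbell--Hausdorff manipulations apply verbatim, exactly as in the proof of the preceding proposition. I expect this sign-tracking, together with justifying the conjugation form of $\varphi^{*}\sigma$ on stalks in the family setting, to be the only delicate points; everything else is a direct transcription of the $D\varphi$ computation.
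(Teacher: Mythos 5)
Your proof is correct, and its skeleton coincides with the paper's: both start from $\varphi^*\sigma=(D\varphi)^{-1}\circ\sigma\circ D\varphi$ on stalks, substitute the factorization $D\varphi=\exp(-\sum_I\tau_I L_{X_I})\circ D\varphi_\realR$ from the preceding proposition, and reduce everything to showing that the inner conjugation $\exp(\sum_I\tau_I L_{X_I})\circ\sigma\circ\exp(-\sum_I\tau_I L_{X_I})$ equals $\exp(\sum_I\tau_I L_{X_I})(\sigma)$ with the Lie derivatives now acting on $\sigma$ as a tensor. Where you diverge is in how that last identity is established, and this is in fact the bulk of the paper's proof: the paper expands both exponentials into sums over ordered partitions $I=I_1+\ldots+I_j$ and verifies by a multinomial multiplicity count (in the style of its Lemma~\ref{sym}) that the coefficients of each summand $L_{\tau_{I_1}X_{I_1}}\circ\ldots\circ\sigma\circ\ldots\circ L_{\tau_{I_j}X_{I_j}}$ agree on both sides. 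You instead invoke the operator identity $e^{A}Be^{-A}=e^{\mathrm{ad}_A}(B)$ together with the observation from (\ref{lieder11}) that $\mathrm{ad}_{L_{\tau_IX_I}}(\sigma)=\tau_I L_{X_I}\sigma$ remains $\co$-linear, so the adjoint action iterates to the tensorial Lie derivative. Your route is shorter and makes the structural reason for the formula transparent (it is just $\mathrm{Ad}\circ\exp=\exp\circ\mathrm{ad}$ in a nilpotent setting, so all series terminate); the paper's route is more elementary and self-contained, essentially re-proving that identity by hand in this special case. The points you flag as delicate — that each $\tau_IX_I$ is even so ungraded commutator algebra applies, and that $\tau_I$ passes through the even $\co$-linear map $\sigma$ without signs — are exactly the points that need care, and your treatment of them is adequate.
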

\begin{proof}
The pullback of a germ $\sigma\in\cend(\ctm)_{(\{*\},\phi(p))}$ of an endomorphism of the 
tangent bundle of $\cp(\Lambda_n)\times\cm$ by a diffeomorphism $\varphi$ is given by
\begin{equation}
\label{dend}
(\varphi^*\sigma)_p=(D\varphi)^{-1}_{\phi(p)}\circ\sigma_{\phi(p)}\circ(D\varphi)_p.
\end{equation}
Inserting (\ref{dvarphi}) into this formula yields
\begin{equation}
(\varphi^*\sigma)_p=(D\varphi_\realR)^{-1}\circ\exp(\!\sum_{I\subseteq\{1,\ldots,n\}}\!\tau_I L_{X_I})%
\circ\sigma\circ\exp(-\!\!\sum_{J\subseteq\{1,\ldots,n\}}\!\tau_J L_{X_J})\circ(D\varphi_\realR)
\end{equation}
Therefore, the proposition is proven if we can show that
\begin{equation}
\label{topro7}
\exp(\!\sum_{I\subseteq\{1,\ldots,n\}}\!\tau_I L_{X_I})(\sigma)=%
\exp(\!\sum_{I\subseteq\{1,\ldots,n\}}\!\tau_I L_{X_I})\circ\sigma%
\circ\exp(-\!\!\sum_{J\subseteq\{1,\ldots,n\}}\!\tau_J L_{X_J}),
\end{equation}
where on the left hand side, the Lie derivatives act on $\sigma$ by (\ref{lieder11}), while on the right
hand side, they have to be understood as acting on vector fields. Expanding the right hand side
yields
\begin{multline}
\label{tar2}
\sum_{I\subseteq\{1,\ldots,n\}}\sum_{j=1}^{|I|}\sum_{I=I_1+\ldots+I_j}%
\sum_{a=0}^j\frac{(-1)^{j-a}}{a!(j-a)!}\cdot\\
\cdot L_{\tau_{I_1}X_{I_1}}\circ\ldots\circ L_{\tau_{I_a}X_{I_a}}\circ%
\sigma\circ L_{\tau_{I_{a+1}}X_{I_{a+1}}}\circ\ldots\circ L_{\tau_{I_j}X_{I_j}},
\end{multline}
while the left hand side can be written as
\begin{equation}
\label{tar3}
\sum_{I\subseteq\{1,\ldots,n\}}\sum_{j=1}^{|I|}\sum_{I=I_1+\ldots+ I_j}%
\frac{1}{j!}(L_{\tau_{I_1}X_{I_1}})\circ\ldots\circ(L_{\tau_{I_j}X_{I_j}})(\sigma).
\end{equation}
To see that (\ref{tar2}) equals (\ref{tar3}), we have to check which factor arises in front of
the summand
\[
L_{\tau_{I_1}X_{I_1}}\circ\ldots\circ L_{\tau_{I_a}X_{I_a}}\circ%
\sigma\circ L_{\tau_{I_{a+1}}X_{I_{a+1}}}\circ\ldots\circ L_{\tau_{I_j}X_{I_j}}
\]
in (\ref{tar3}) when one transforms it using the formula (\ref{lieder11}) for the Lie derivative
of (1,1)-tensors. The factor contains $1/j!$ because the length of the summand is $j$, and a 
factor $(-1)^{j-a}$ as one
can see from (\ref{lieder11}). But many summands of (\ref{tar3})
contribute to it, namely each one that contains $I_1,\ldots,I_a$ in this order and $I_{a+1},\ldots,I_j$
in the reverse order. There are precisely $j!/(j-a)!a!$ such summands, as one can see from the same reasoning as in 
the proof of Lemma \ref{sym}. Thus, the factors of each summand in (\ref{tar2}) and (\ref{tar3})
coincide and (\ref{topro7}) holds.
\end{proof}

\section{\texorpdfstring{The group $\Aut(\cm)$}{The automorphism group of a supermanifold}}
\label{sect:autm}

The discussion of the previous sections was exclusively concerned with the identity element and the action of the
``higher'' points (i.e., those for $\Lambda\neq\realR$) of the diffeomorphism supergroup. For
the endeavour of quotienting out the action of $\sdiff_0(\cm)$, we also need to investigate the
underlying ordinary group $\Aut(\cm)$. It turns out that its elements, again, can be split into a
``hard'' part, containing the analytic and topological subtleties typical of diffeomorphism groups, and
a ``nice'' algebraic part. In this case, however, we cannot achieve a splitting as a semidirect product.

Let a finite-dimensional supermanifold 
$\cm=(M,\co_\cm)$ be given, let $M$ be its underlying smooth manifold and let $E\to M$ be the associated smooth 
vector bundle, i.e., $E$ is the locally free sheaf $\Pi(\cn_\cm/\cn_\cm^2)$ (cf.~Section \ref{sect:explmor})
with its action of $C^\infty(M)\cong C^\infty(\cm)/\cn_\cm$.

The tangent sheaf $\ctm$ is, as a locally free module over $\co_\cm$, filtered by the powers of the
nilpotent ideal $\cn_\cm$. As in Section \ref{sect:explmor}, we introduce the notation
\begin{equation}
\ctm^{(j)},\qquad j\in 2\naturalN_0
\end{equation}
for the ideal in $\ctm$ which consists of even vector fields which are of degree $\geq j$ in the odd 
coordinates of $\cm$. That means that $X\in\ctm^{(j)}$ can everywhere locally be written as
\begin{eqnarray}
\label{vfield}
X &=& \sum_{k=j}^n\sum_{i,\alpha_1,\ldots,\alpha_k}f_i^{\alpha_1\ldots\alpha_k}(x)%
\theta_{\alpha_1}\cdots\theta_{\alpha_k}\pderiv{}{x_i}+\\
&&+ \sum_{k=j}^{n-1}\sum_{n,\alpha_1,\ldots,\alpha_{k+1}}f_n^{\alpha_1\ldots\alpha_{k+1}}(x)
\theta_{\alpha_1}\cdots\theta_{\alpha_{k+1}}\pderiv{}{\theta_n},
\end{eqnarray}
with the $f_k^A(x)$ ordinary smooth functions of the even coordinates $x_i$ on $\cm$.
The tangent space of $\sdiff(\cm)(\realR)$, i.e., the Lie algebra of $\Aut(\cm)$, is the algebra of
even vector fields on $\cm$. Since ordinary diffeomorphisms of the underlying manifold $M$ are involved
in any superdiffeomorphism, it is clear that the exponential map will not generate $\sdiff(\cm)(\realR)$.
But this problem does not occur for nilpotent vector fields, which simplifies the situation considerably.

\begin{lemma}
\label{lemmanil}
The Lie algebra $\ctm^{(2)}$ generates a nilpotent group $N_\cm$ of automorphisms of $\cm$, which act as the
identity on any associated vector bundle $(M,E)$.
\end{lemma}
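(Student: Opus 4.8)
The plan is to mimic the local constructions of Lemma~\ref{lem:nilp} and Thm~\ref{thm:smanmorph}, now globally on $\cm$, the genuinely new content being the triviality of the induced action on the associated bundle. The whole argument rests on a single filtration estimate. Equip $\co_\cm$ with the $\cj$-adic filtration $\co_\cm\supset\cj\supset\cj^2\supset\cdots$; locally $\cj^k$ consists of the superfunctions whose coefficients are of degree $\geq k$ in the odd coordinates. First I would check, directly from the local form (\ref{vfield}), that every $X\in\ctm^{(j)}$ raises this degree by at least $j$, i.e.\ $X(\cj^k)\subseteq\cj^{k+j}$: for the $\partial/\partial x_i$-terms the coefficient already carries odd degree $\geq j$ and $\partial/\partial x_i$ preserves odd degree, while for the $\partial/\partial\theta_n$-terms the coefficient carries degree $\geq j+1$ and the derivative lowers the degree by one, so the net shift is again $\geq j$. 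This intrinsic inclusion also shows that $\ctm^{(j)}$ is well defined independently of the chosen coordinates.

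From this estimate everything in the first assertion follows formally. Since $\cj^{n+1}=0$, any $X\in\ctm^{(2)}$ satisfies $X^m=0$ once $2m>n$, so $X$ is a nilpotent operator and $\exp(X)=\sum_{m\geq 0}X^m/m!$ is a well-defined sheaf endomorphism (a finite sum on each stalk), exactly as noted after Lemma~\ref{lem:nilp}. Because $X$ is an \emph{even} derivation of $\co_\cm$, the usual Leibniz computation shows $\exp(X)$ is a homomorphism of superalgebras, hence an endomorphism of the ringed space $\cm$; it is invertible with inverse $\exp(-X)$. The same estimate gives $[\ctm^{(j)},\ctm^{(k)}]\subseteq\ctm^{(j+k)}$, so $\ctm^{(2)}$ is a nilpotent Lie algebra, its lower central series being trapped in the filtration, which terminates at degree $n$. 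The Baker--Campbell--Hausdorff series therefore terminates and takes values in $\ctm^{(2)}$, so $\exp(X)\exp(Y)=\exp(\mathrm{BCH}(X,Y))$ shows that $N_\cm:=\exp(\ctm^{(2)})$ is closed under composition; together with the inverse formula this makes $N_\cm$ a nilpotent subgroup of $\Aut(\cm)$.

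It remains to show $N_\cm$ acts as the identity on the associated bundle $(M,E)$, where $\co_M=\co_\cm/\cj$ and $E=\Pi(\cj/\cj^2)$ (cf.\ Section~\ref{sect:explmor}). This is where the estimate is used with $j=2$: for any $X\in\ctm^{(2)}$ one has $X(\co_\cm)\subseteq\cj^2\subseteq\cj$ and $X(\cj)\subseteq\cj^3\subseteq\cj^2$. The first inclusion gives $\exp(X)(f)\equiv f\pmod{\cj}$ for every $f$, so $\exp(X)$ induces the identity on $\co_M$, and in particular its underlying map of $M$ is $\id_M$; the second gives $\exp(X)(v)\equiv v\pmod{\cj^2}$ for every $v\in\cj$, so $\exp(X)$ induces the identity on $\cj/\cj^2$, hence on $E$. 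Thus the associated split morphism $\exp(X)^{split}$ of Lemma~\ref{splitmor} equals $\id_{(M,\wedge^\bullet E)}$. The only point demanding care --- and the main thing to get right --- is the bookkeeping of the degree shift for the $\partial/\partial\theta$-terms, where the coefficient degree and the derivative's degree drop must be combined correctly; everything else is a formal consequence of the resulting filtration-compatibility of $\ctm^{(2)}$.
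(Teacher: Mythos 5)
Your proof is correct and takes essentially the same route as the paper: exponentiate even vector fields whose coefficients have odd degree $\geq 2$, note the sums are finite, and observe that the result is the identity modulo $\cj^{2}$. In fact your filtration estimate $X(\cj^{k})\subseteq\cj^{k+j}$ and the terminating Baker--Campbell--Hausdorff argument supply details the paper leaves implicit (closure of $N_\cm$ under composition and the nilpotency of the group, as opposed to just well-definedness and invertibility of each $\exp(X)$), so the proposal is, if anything, more complete than the printed proof.
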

\begin{proof}
Since the coefficient functions of any section $X$ of $\ctm^{(2)}$ are of degree $\geq 2$ in the odd
variable, the exponential $\exp(X)$ is a finite sum and therefore always well-defined. Additionally, one
easily verifies that $\exp(X)\exp(-X)=\exp(-X)\exp(X)=\One$, because the vector field $X$ is even and
hence commutes with itself.

For any $X$, $\exp(X)$ acts on $\co_\cm$ as an automorphism: its action (assuming it to be in the form 
(\ref{vfield})) can everywhere locally be expressed as
\begin{eqnarray}
\label{ngr}
x_i &\to& x_i+\sum_{j=1}^\infty\sum_{\alpha_1,\ldots,\alpha_{2j}}f_i^{\alpha_1\ldots\alpha_{2j}}(x)%
\theta_{\alpha_1}\cdots\theta_{\alpha_{2j}},\\
\nonumber
\theta_j &\to& \theta_j+\sum_{j=1}^\infty\sum_{\alpha_1,\ldots,\alpha_{2j+1}}f_n^{\alpha_1\ldots\alpha_{2j+1}}(x)
\theta_{\alpha_1}\cdots\theta_{\alpha_{2j+1}}.
\end{eqnarray}
Composing this map with the canonical epimorphism $q:\co_\cm\to\co_\cm/\cn_\cm^2$ evidently yields
the identity.
\end{proof}

Therefore, the space of sections of the ideal subsheaf $\ctm^{(2)}\subset\ctm$ forms the Lie algebra 
associated with the nilpotent group $N_\cm$, and generates this group by exponentiation. This leaves
only the vector fields $\ctm^{(0)}/\ctm^{(2)}$. They form a quotient algebra, since the bracket of two
vector fields of degree zero is again of degree zero. It is clear that they must be the infinitesimal
transformations associated with morphisms of the associated vector bundles. To express this rigorously,
we have to fix some notation.

\begin{dfn}
\label{def:autvb}
Let $p:E\to M$ be a smooth vector bundle. Let $\Aut_\catvbun(M,E)$ denote the automorphism group of
$(M,E)$ as a vector bundle, i.e., each of its elements is a pair of maps $(f:E\to E,g:M\to M)$ satisfying
\begin{enumerate}
\item $f,g$ are smooth and invertible,
\item they are compatible with the projection, i.e., $p\circ f=g\circ p$, and
\item $f$ is fiberwise a linear automorphism of vector spaces $f_x:E_x\to E_{g(x)}$.
\end{enumerate}
The group $\Aut_{\catvbun(M)}(E)$ is the group of automorphisms of $E$ as a vector bundle over $M$, i.e.,
it consists of all elements of the form $(f,\id_M)\in\Aut_\catvbun(M,E)$.
\end{dfn}

\begin{lemma}
\label{lem:autvb}
$\Aut_{\catvbun(M)}(E)$ is a normal subgroup of $\Aut_\catvbun(M,E)$, and the latter group splits
as a semidirect product
\begin{equation}
\Aut_\catvbun(M,E)=\mathrm{Diff}(M)\ltimes\Aut_{\catvbun(M)}(E).
\end{equation}
\end{lemma}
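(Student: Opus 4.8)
The statement to prove is Lemma \ref{lem:autvb}: that $\Aut_{\catvbun(M)}(E)$ is a normal subgroup of $\Aut_\catvbun(M,E)$, and that the latter splits as a semidirect product $\mathrm{Diff}(M)\ltimes\Aut_{\catvbun(M)}(E)$.

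The plan is to exhibit the two standard ingredients of a semidirect product decomposition: a normal subgroup $N$, a complementary subgroup $Q$ acting on it, with $N\cap Q=\{\id\}$ and $NQ=\Aut_\catvbun(M,E)$. Here $N=\Aut_{\catvbun(M)}(E)$ consists of the pairs $(f,\id_M)$ covering the identity on the base, and $Q$ will be a copy of $\mathrm{Diff}(M)$ sitting inside $\Aut_\catvbun(M,E)$. First I would establish normality. Given $(f,\id_M)\in\Aut_{\catvbun(M)}(E)$ and an arbitrary $(h,k)\in\Aut_\catvbun(M,E)$, I would compute the conjugate $(h,k)(f,\id_M)(h,k)^{-1}$ directly from the group law of $\Aut_\catvbun(M,E)$, which is componentwise composition $(h_1,k_1)(h_2,k_2)=(h_1\circ h_2, k_1\circ k_2)$. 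The base component of the conjugate is $k\circ\id_M\circ k^{-1}=\id_M$, so the conjugate again covers the identity and hence lies in $\Aut_{\catvbun(M)}(E)$. This shows $N$ is normal. I would note here that $N$ is exactly the kernel of the group homomorphism $\beta:\Aut_\catvbun(M,E)\to\mathrm{Diff}(M)$ sending $(f,g)\mapsto g$, which makes normality immediate and also hands us the quotient.

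The remaining work is to produce a splitting of the short exact sequence
\begin{equation}
1\longrightarrow\Aut_{\catvbun(M)}(E)\longrightarrow\Aut_\catvbun(M,E)\stackrel{\beta}{\longrightarrow}\mathrm{Diff}(M)\longrightarrow 1,
\end{equation}
that is, a group homomorphism $s:\mathrm{Diff}(M)\to\Aut_\catvbun(M,E)$ with $\beta\circ s=\id$. The natural candidate is to send each diffeomorphism $g$ of $M$ to its differential or, more precisely, to a lift $(\ctf g,g)$ of $g$ to a bundle automorphism. Since $E=\Pi(\cn_\cm/\cn_\cm^2)$ arises from the supermanifold structure, any $g\in\mathrm{Diff}(M)$ that is covered by an automorphism of $\cm$ induces by functoriality a bundle map of $E$; I would use the functoriality of the canonical bundle construction established in Section \ref{sect:explmor} to define $s(g)$ as the induced automorphism of $E$ over $g$, and check that $s(g_1\circ g_2)=s(g_1)\circ s(g_2)$ and that $s$ is fiberwise linear, so $s(g)\in\Aut_\catvbun(M,E)$ with base component $g$. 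Verifying $N\cap s(\mathrm{Diff}(M))=\{\id\}$ is then routine: an element covering the identity that is also a lift must equal the identity lift. With $N$ normal, $Q=s(\mathrm{Diff}(M))$ a subgroup mapping isomorphically onto the quotient, $N\cap Q$ trivial, and $NQ$ exhausting the group (any $(f,g)$ equals $(f\circ s(g)^{-1},\id_M)\cdot s(g)$), the semidirect product decomposition follows by the standard recognition criterion.

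The main obstacle I expect is the construction and well-definedness of the splitting $s$, rather than the normality, which is essentially formal. The subtlety is that a generic diffeomorphism $g$ of $M$ need not a priori lift \emph{canonically} to an automorphism of the \emph{vector bundle} $E$ unless one uses the extra structure relating $E$ to $\cm$; the lift via the differential $Dg$ acts on $TM$, but $E$ is the bundle $\Pi(\cn_\cm/\cn_\cm^2)$, so I must be careful to define the action on $E$ through the functorial assignment $\cm\mapsto E$ and check compatibility and linearity. Once this functorial lift is in place the homomorphism property is inherited from functoriality, and the rest of the proof is bookkeeping. I would therefore devote the bulk of the writeup to pinning down $s$ precisely and only sketch the standard verifications for the semidirect product.
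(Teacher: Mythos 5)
Your normality argument is correct, and it is exactly what underlies the first claim: $\Aut_{\catvbun(M)}(E)$ is the kernel of the base-projection homomorphism $\beta:(f,g)\mapsto g$, hence normal. The gap is in your construction of the splitting $s:\mathrm{Diff}(M)\to\Aut_\catvbun(M,E)$. For a general vector bundle $E\to M$ there is no canonical lift of a diffeomorphism $g$ of $M$ to a vector-bundle automorphism of $E$: the differential $Dg$ acts on $TM$, not on $E$, and the route you propose through the functoriality of $\cm\mapsto E=\Pi(\cj/\cj^2)$ does not repair this. That functor converts \emph{automorphisms of $\cm$} into bundle automorphisms of $E$, not diffeomorphisms of $M$; to define $s(g)$ you would first have to choose an automorphism of $\cm$ (equivalently, a bundle automorphism) covering $g$, and such a lift (i) need not exist for every $g$ --- a bundle automorphism covering $g$ exists only when $g^*E\cong E$, which fails for instance for orientation-reversing $g$ when $\deg E\neq 0$ --- and (ii) when it exists is determined only up to composition with an element of $\Aut_{\catvbun(M)}(E)$, i.e., up to the very normal subgroup you are trying to complement. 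So your $s$ is not well defined as a map, and the homomorphism property cannot be ``inherited from functoriality''; the verification you defer as routine is precisely where the construction breaks down.

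The paper takes a different route: it does not attempt a global homomorphic section at all, but invokes the classical factorization of an individual bundle map $f$ covering $g$ as $f=g_*\circ h$, with $g_*:g^*E\to E$ the canonical pullback projection and $h$ a morphism over $\id_M$; this decomposes each element of $\Aut_\catvbun(M,E)$ into a gauge part and a part canonically attached to $g$, which is what the later chapters actually use. If you want a genuine splitting homomorphism in your sense, you must either restrict to $E=TM$, where $g\mapsto(Dg,g)$ works, or supply additional data making the lifts compatible with composition. As written, your argument establishes the exact sequence $1\to\Aut_{\catvbun(M)}(E)\to\Aut_\catvbun(M,E)\to\mathrm{Diff}(M)$ but not the asserted semidirect product decomposition.
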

\begin{proof}
Let $(f:E\to E,g:M\to M)\in\Aut_\catvbun(M,E)$ be given.
Then this statement is a direct consequence of the well-known fact that the map $f:E\to E$ can always be 
factorized as $f=g_*\circ h$ \cite{W:Differential}, where
$g_*:g^*E\to E$ is the canonical projection of the pullback, and $h:E\to E$ is a morphism of vector
bundles with $p\circ h=p$.
\end{proof}

Then $\Aut(\cm)$ obviously ``consists of'' vector bundle morphisms and morphisms inducing higher nilpotent corrections.
Here, however, we cannot obtain a splitting into a semidirect product.

\begin{thm}
\label{thm:autsman}
Let $\cm$ be a smooth supermanifold and let $(M,E)$ be the associated canonical vector bundle over the underlying
manifold $M$. Then the automorphism group $\Aut(\cm)$ fits into an exact sequence
\begin{equation}
1\longrightarrow N_\cm\longrightarrow\Aut(\cm)\longrightarrow \Aut_\catvbun(M,E)\longrightarrow 1
\end{equation}
where $N_\cm$ is the nilpotent group constructed in Lemma \ref{lemmanil}.
\end{thm}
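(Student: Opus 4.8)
The plan is to exhibit the exact sequence by first constructing the right-hand surjection $\Aut(\cm)\to\Aut_\catvbun(M,E)$ and then identifying its kernel with $N_\cm$. The surjection is precisely the association of the \emph{split part} of an automorphism: given $\varphi\in\Aut(\cm)$, I would apply Lemma \ref{splitmor} to obtain the associated morphism of canonical bundles $\varphi^{split}:(M,\wedge^\bullet E)\to(M,\wedge^\bullet E)$. Since $\varphi$ is invertible, so is $\varphi^{split}$ (its inverse being $(\varphi^{-1})^{split}$, which follows from functoriality of $P_\cm^{split}$). An automorphism of $(M,\wedge^\bullet E)$ that preserves the $\intZ$-grading is the same datum as a vector bundle automorphism $(f,g)\in\Aut_\catvbun(M,E)$: the degree-zero part gives the diffeomorphism $g=\phi$ of $M$, and the degree-one part, which by (\ref{bdlmor}) has the form $\varphi^{split}(\eta_j)=b_k^j(x)\theta_k$, gives the fiberwise linear map $f$. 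This defines the map $\pi:\Aut(\cm)\to\Aut_\catvbun(M,E)$, $\varphi\mapsto\varphi^{split}$, and it is a group homomorphism because $(\psi\circ\varphi)^{split}=\psi^{split}\circ\varphi^{split}$, which again follows from the functoriality of the construction $\cm\mapsto\cm^{split}$.

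Surjectivity of $\pi$ is then immediate: any vector bundle automorphism $(f,g)\in\Aut_\catvbun(M,E)$ induces an automorphism of the split supermanifold $(M,\wedge^\bullet E)$ by functoriality of $\wedge^\bullet$, which is in particular an element of $\Aut(\cm^{split})$, and composing with the canonical morphism $P_\cm^{split}$ produces a preimage in $\Aut(\cm)$ whose split part is $(f,g)$. For injectivity considerations I must identify the kernel. By Thm.~\ref{thm:smanmorph}, applied in the diffeomorphism case where $\cu=\cv=\cm$, every $\varphi\in\Aut(\cm)$ factorizes as $\varphi=\exp(X)\circ\varphi^{split}$ with $X\in\ctm^{(2)}$, i.e.\ $X$ a nilpotent even vector field of degree $\geq 2$ in the odd coordinates. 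Thus $\varphi\in\ker\pi$ precisely when $\varphi^{split}=\id$, which by Lemma \ref{lemmanil} and the factorization means $\varphi=\exp(X)$ for some $X\in\ctm^{(2)}$. By Lemma \ref{lemmanil} these are exactly the elements of $N_\cm$, and indeed Lemma \ref{lemmanil} already records that elements of $N_\cm$ act as the identity on the associated bundle, i.e.\ $N_\cm\subseteq\ker\pi$. Hence $\ker\pi=N_\cm$, and normality of $N_\cm$ in $\Aut(\cm)$ follows from its being a kernel (or alternatively from the conjugation computation $\varphi_\realR\circ\exp(X)\circ\varphi_\realR^{-1}=\exp(D\varphi_\realR(X))$, the underlying-group analogue of the calculation already performed for $\cn_\cm$ in the proof that $\sdiff(\cm)=\Aut(\cm)\ltimes\cn_\cm$, noting that $D\varphi_\realR$ preserves $\ctm^{(2)}$).

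The main obstacle I anticipate is not any single step but rather the careful bookkeeping needed to justify that the factorization $\varphi=\exp(X)\circ\varphi^{split}$ of Thm.~\ref{thm:smanmorph} — which was stated and essentially proved for morphisms of \emph{superdomains} — globalizes to automorphisms of the whole supermanifold $\cm$, and that the degree-$\geq 2$ nilpotent correction $X$ is globally well-defined (independent of the coordinate patch) rather than merely local. This requires checking that the local exponential corrections patch together consistently, which uses that $\varphi^{split}$ is a genuine global vector bundle automorphism and that the $\ctm^{(2)}$ are the sections of a globally defined ideal subsheaf of $\ctm$; the naturality of the whole construction in $\cm$ is what makes this work, but it should be verified rather than assumed. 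A secondary subtlety is why the sequence fails to split as a semidirect product (unlike the $\sdiff(\cm)=\Aut(\cm)\ltimes\cn_\cm$ case): there is in general no canonical lift $\Aut_\catvbun(M,E)\to\Aut(\cm)$ compatible with the group structure, essentially because a supermanifold need not come with a canonical projection $\cm\to M_{rd}$ nor a canonical isomorphism $\cm\cong(M,\wedge^\bullet E)$ — this is the same obstruction to splitness discussed after the definition of split supermanifolds. I would note this explicitly to explain why the theorem is phrased as a (non-split) exact sequence rather than a semidirect product decomposition.
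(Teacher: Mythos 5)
Your overall strategy is exactly the paper's: the paper's own proof is two sentences that defer entirely to the preceding discussion (Lemma \ref{lemmanil}, the factorization $\varphi=\exp(X)\circ\varphi^{split}$ of Thm.~\ref{thm:smanmorph}, and the functoriality of $\cm\mapsto(M,E)$), and your write-up is a legitimate filling-in of those details — the map $\varphi\mapsto\varphi^{split}$, the identification $\ker\pi=N_\cm$, normality as a kernel, and the observation that the absence of a canonical morphism $\cm\to(M,\wedge^\bullet E)$ is what blocks a canonical splitting. Your closing remarks about globalizing the local factorization and about why no semidirect product decomposition is claimed are both on target and match the paper's stated reason.

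There is, however, one step that fails as written: the surjectivity argument. You propose to lift $(f,g)\in\Aut_\catvbun(M,E)$ to $\Aut(\cm)$ by taking the induced automorphism of $(M,\wedge^\bullet E)$ and ``composing with the canonical morphism $P_\cm^{split}$.'' But $P_\cm^{split}:\cm^{split}\to\cm$ is a morphism \emph{from} the split model \emph{to} $\cm$ and is not invertible (its sheaf map kills $\cj^2$), so composing an automorphism of $\cm^{split}$ with it yields only a morphism $\cm^{split}\to\cm$, never an element of $\Aut(\cm)$. The correct ingredient here is Batchelor's theorem, which is where the smoothness hypothesis enters: since $\cm$ is smooth there exists a (non-canonical) isomorphism $\Psi:(M,\wedge^\bullet E)\to\cm$, and $\varphi:=\Psi\circ\wedge^\bullet(f,g)\circ\Psi^{-1}$ is an automorphism of $\cm$ whose split part is the conjugate of $(f,g)$ by $\Psi^{split}$; since conjugation by the fixed element $\Psi^{split}$ is a bijection of $\Aut_\catvbun(M,E)$, surjectivity of $\pi$ follows. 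Note that this makes the non-canonicity of $\Psi$ do double duty: it is both what rescues surjectivity and what prevents the resulting set-theoretic splitting from being canonical, which is precisely the point the theorem's phrasing as an exact sequence (rather than a semidirect product) is meant to record.
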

\begin{proof}
From our discussion above it should be clear that the elements of $N_\cm$ are precisely those automorphisms of
$\cm$ which induce the identity on the associated vector bundle $(M,E)$. Since there is no canonical map
$\cm\to(M,\wedge^\bullet E)$
we do not obtain a map $\Aut_\catvbun(M,E)\to\Aut(\cm)$ which we would need
in order to split $\Aut(\cm)$ into a semidirect product again.
\end{proof}

\chapter{Super Teichm\"uller spaces}
\label{ch:quotient}

In this Chapter we will investigate whether one can divide out the action of $\sdiff_0(\cm)$
on the integrable almost complex structures. All Riemann surfaces and supermanifolds whose base
is a Riemann surface are assumed to be compact.
Following Fischer and Tromba \cite{TTiRG}, the goal would be to find
local slices for the pullback action. These would constitute patches of the Teichm\"uller space
$\ct_{\fvect^L(1|1)}(\cm)$
of $\fvect^L(1|1)$-structures on a given smooth compact closed supersurface $\cm$ of real dimension $2|2$,
i.e., the Teichm\"uller space of supercomplex structures in $2|2$ real dimensions. It will turn out that this
is not possible in the same way as in ordinary Teichm\"uller theory, since there remains a nontrivial
subgroup of automorphisms in the identity component $\sdiff_0(\cm)$ of the diffeomorphism supergroup.
The action of these automorphisms cannot be divided out from $\cc(\cm)$ without destroying the 
supermanifold structure of the quotient. We will
construct instead a supermanifold $\ct_{\fvect^L(1|1)}^{g,d}$ which parametrizes the 
supercomplex structures up to these automorphisms.
This can then be at best a semiuniversal family, but we will not try to prove semiuniversality here
since it would lead us deep into deformation theory. The underlying manifold of
$\ct_{\fvect^L(1|1)}^{g,d}$, however, can be shown to be universal among all families of
compact complex supermanifolds of dimension $1|1$ \emph{over a purely even base}. This is a simple
corollary of the fact that every complex $1|1$-dimensional supermanifold is equivalent to a Riemann
surface and a line bundle.

For the $\fkl(1|1)$-structures, we can restrict this construction to spin curves. We construct a supermanifold
$\ct_{\fkl(1|1)}^g$ whose base is the Teichm\"uller space of spin curves. Again, this 
supermanifold does not parametrize
a universal family of $\fkl(1|1)$-structures, since there remains a $\intZ_2$-ambiguity
stemming from the diffeomorphism $\theta\to -\theta$ of the supersurface, which is an automorphism
of the $\fkl(1|1)$-structure. $\ct_{\fkl(1|1)}^g$ is shown to be a closed subsupermanifold of 
$\ct_{\fvect^L(1|1)}^{g,g-1}$. The remaining $\intZ_2$-symmetry in the construction of the
Teichm\"uller space of super Riemann surfaces was already observed by several authors, e.g.,
\cite{CR:Super}, \cite{LBR:Moduli}. In the latter reference, the authors remove the remaining
automorphisms at the cost of destroying the supermanifold structure. They call the resulting space
a ``canonical superorbifold''. We do not pursue this idea here, but instead content ourselves
with the construction of $\ct_{\fkl(1|1)}^g$ as the base of a semiuniversal family.

\section{The space of nontrivial deformations}

Throughout this and the next sections, we will study neighbourhoods in $\ca(\cm)$ of a fixed 
integrable almost complex
structure $J$ on $\cm$. In order to try to take a local quotient for the $\sdiff_0(\cm)$-action on a 
neighbourhood
of $J$, we need to determine a direct sum decomposition of 
$\hat{T}_J^{int}=T_J\cc(\cm)=T_J\cs\oplus\cl J$, where
$\cl J$ comprises all integrable deformations of $J$ which arise as Lie derivatives, i.e., deformations
tangent to the $\sdiff_0(\cm)$-orbit. The quotient $T_J\cs$ then represents the true deformations of $J$.

\subsection{General setting}

The problem is local. It is therefore most convenient to describe the tangent directions to
$J$ in the form (\ref{hform}) and (\ref{hcomp}), i.e., we write
\begin{equation}
H=\left(\begin{array}{cc}
0 & A-iB\\
A+iB & 0
\end{array}\right),
\end{equation}
where $A-iB$ is a square matrix of size $1|1$ (in the standard format) which we write in components as
\begin{equation}
A-iB=\left(\begin{array}{cc}
\alpha & \beta\\
\gamma & \delta
\end{array}\right).
\end{equation}
The entries are local sections of $\co_\cm\otimes\Lambda$ if $H\in\hat{T}_J^{int}(\Lambda)$. Of these
four sections, $\alpha,\delta$ are even and $\beta,\gamma$ are odd. Let $\tau_1,\ldots,\tau_n$ be
the free odd generators of $\Lambda=\Lambda_n$. This means that each component function,
say $\alpha$, can be written as an expansion
\begin{equation}
\alpha=\sum_{I\subset\{1,\ldots,n\}}\tau_I f_I,
\end{equation}
where the sum runs over all increasingly ordered subsets and $\tau_I$ is the product of the corresponding
$\tau_i$'s. Each $f_I$ is a section of $\co_\cm$ of parity $|I|+p(\alpha)$. On the other hand, since we
have local complex coordinates $z,\theta$, we can write each local smooth section $f$ of $\co_\cm$ as
\begin{equation}
f=\left\{\begin{array}{ll}
f_0(z)+f_3(z)\theta\bar{\theta} & \textrm{if }p(f)=\bar{0},\\
f_1(z)\theta+f_2(z)\bar{\theta} & \textrm{if }p(f)=\bar{1},
\end{array}\right.
\end{equation}
where the $f_i(z)$ are ordinary smooth functions on the underlying manifold $M$. The question to be
answered is: for which tangent vectors $H$ of $J$ can we find an even real super vector field $X$
on $\cm$ such that
\begin{equation}
-iL_XJ=H
\end{equation}
holds, where $L_X$ denotes the Lie derivative. Let us express $X$ in the form
\begin{equation}
\label{vectform}
X=X^z\pderiv{}{z}+X^\theta\pderiv{}{\theta}+X^{\bar{z}}\pderiv{}{\bar{z}}+%
X^{\bar{\theta}}\pderiv{}{\bar{\theta}},
\end{equation}
where the coefficient functions are smooth real sections of $\co_\cm\otimes\Lambda$.

\begin{lemma}
\label{lieder}
Let $X$ be an even smooth real vector field given in the form (\ref{vectform}) and let 
$H\in\hat{T}_J^{int}(\Lambda)$ be a tangent vector to an integrable almost complex structure $J$. Then 
the condition $-iL_XJ=H$ is equivalent to the following equations:
\begin{gather}
2\pderiv{X^z}{\bar{z}}=\alpha, \qquad 2\pderiv{X^{\theta}}{\bar{z}}=\gamma\\
2\pderiv{X^z}{\bar{\theta}}=\beta, \qquad 2\pderiv{X^{\theta}}{\bar{\theta}}=\delta.
\label{cond}
\end{gather}
\end{lemma}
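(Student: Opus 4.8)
The plan is to compute the Lie derivative $L_X J$ directly in the complex frame and match it against the given tensor $H$. Since $J$ is integrable, Thm.~\ref{thm:intacs} tells us that in the local complex coordinates $(z,\theta)$ induced by $J$, the eigendistributions $\ctm^{1,0}$ and $\ctm^{0,1}$ are spanned by $\{\partial_z,\partial_\theta\}$ and $\{\partial_{\bar z},\partial_{\bar\theta}\}$ respectively, and $J$ acts as multiplication by $i$ on the former and by $-i$ on the latter. In this frame $J$ is a constant coefficient tensor, which is precisely what makes the computation tractable. Using the characterization of the Lie derivative on $(1,1)$-tensors from (\ref{lieder11}), namely $(L_X J)(Y)=-J([X,Y])+[X,J(Y)]$, the constancy of $J$ means that $L_X J$ is completely governed by how $L_X=[X,\cdot]$ fails to preserve the $\pm i$ eigenspaces.

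The key step is to evaluate $(L_X J)$ on each of the four basis vector fields $\partial_z,\partial_\theta,\partial_{\bar z},\partial_{\bar\theta}$. First I would apply the formula to $Y=\partial_z$: since $J(\partial_z)=i\partial_z$, we get $(L_X J)(\partial_z)=-J([X,\partial_z])+i[X,\partial_z]$, and only the $\ctm^{0,1}$-component of $[X,\partial_z]$ survives, picking up a factor $(i-(-i))=2i$ times the projection onto the antiholomorphic directions. Writing $X$ in the form (\ref{vectform}) and using $[X,\partial_z]=-(\partial_z X^z)\partial_z-(\partial_z X^\theta)\partial_\theta-(\partial_z X^{\bar z})\partial_{\bar z}-(\partial_z X^{\bar\theta})\partial_{\bar\theta}$, the surviving terms produce derivatives with respect to $\bar z$ once one is careful about which coefficient contributes to which matrix entry of $H$. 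Comparing with the block form (\ref{hform}) and the component assignment (\ref{hcomp}), the factors of $2$ and the pairing of $\alpha$ with $X^z$, $\gamma$ with $X^\theta$ (from differentiating in $\bar z$), and $\beta$ with $X^z$, $\delta$ with $X^\theta$ (from differentiating in $\bar\theta$) fall out. Carrying out the analogous evaluation on $\partial_\theta$ gives the remaining two equations, while the evaluations on $\partial_{\bar z}$ and $\partial_{\bar\theta}$ yield only the complex conjugate relations, consistent with $H$ being of the off-diagonal form, and so contribute nothing new.

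The main obstacle I anticipate is bookkeeping the signs arising from the odd coordinate $\theta$: because $\partial_\theta$ is odd and $X$ is even, the graded commutator introduces parity-dependent signs, and one must track the parity of each coefficient function $X^z,X^\theta,X^{\bar z},X^{\bar\theta}$ carefully when expanding $[X,\partial_\theta]$ and $[X,\partial_{\bar\theta}]$. A second subtlety is to verify that the identification of the $-iL_X J$ entries with $\alpha,\beta,\gamma,\delta$ respects the factor of $-i$ and the normalization in (\ref{neweig})--(\ref{hform}); I would do this by comparing the $(1,2)$-block of $-iL_X J$ entry by entry with $A-iB$ as written in (\ref{hcomp}). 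Once the four equations (\ref{cond}) are obtained from the $\ctm^{1,0}\to\ctm^{0,1}$ components and the conjugate equations are seen to be redundant, the equivalence is established: $X$ solves $-iL_X J=H$ if and only if its holomorphic coefficients $X^z,X^\theta$ satisfy the prescribed $\bar\partial$-type equations. This makes transparent that solvability of $-iL_X J=H$ is a $\bar\partial$-problem, which is exactly the point needed for the subsequent analysis of the nontrivial deformations $T_J\cs$ versus the tangent directions $\cl J$ to the $\sdiff_0(\cm)$-orbit.
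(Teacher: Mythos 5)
Your proposal is correct and follows essentially the same route as the paper: a direct component computation of $L_XJ$ in the complex frame adapted to $J$, where the integrability makes $J$ constant-coefficient and the factor of $2$ comes from the difference of the $\pm i$ eigenvalues. The one slip is a harmless bookkeeping swap: with the conventions of (\ref{hform})--(\ref{hcomp}) the equations (\ref{cond}) arise from evaluating $-iL_XJ$ on $\partial_{\bar z},\partial_{\bar\theta}$ (equivalently, from the $\partial_z\otimes d\bar z$-type components in the paper's expansion of $L_XJ$), while evaluation on $\partial_z,\partial_\theta$ produces the conjugate block --- the opposite of what you state --- but since you correctly note that the conjugate relations are redundant for a real $X$ and $H$, the argument is unaffected.
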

\begin{proof}
This is a straightfoward calculation in components. Write
\begin{equation}
J=i(\partial_{z}\otimes dz+\partial_{\theta}\otimes d\theta)-%
i(\partial_{\bar{z}}\otimes d\bar{z}+\partial_{\bar{\theta}}\otimes d\bar{\theta}).
\end{equation}
The Lie derivative of a 1-1-tensor field on a super manifold is given by the same formula as for an
ordinary manifold, except that a sign arises whenever two odd factors pass by each other. So,
the $i\partial_z\otimes dz$-component of $J$ contributes the terms
\begin{gather}
-\pderiv{X^z}{z}\partial_z\otimes dz-\pderiv{X^\theta}{z}\partial_\theta\otimes dz-%
\pderiv{X^{\bar{z}}}{z}\partial_{\bar{z}}\otimes dz-%
\pderiv{X^{\bar{\theta}}}{z}\partial_{\bar{\theta}}\otimes dz+\\
+\pderiv{X^z}{z}\partial_z\otimes dz+\pderiv{X^z}{\theta}\partial_z\otimes d\theta+%
\pderiv{X^z}{\bar{z}}\partial_z\otimes d\bar{z}+\pderiv{X^z}{\bar{\theta}}\partial_z\otimes d\bar{\theta}.
\end{gather}
The terms proportional to $\partial_z\otimes dz$ cancel, and this will happen as well for the other
contributions of the form $\partial_Z\otimes dZ$ for $Z=\bar{z},\theta,\bar{\theta}$. Analogously, 
the term $-i\partial_{\bar{z}}\otimes d\bar{z}$ contributes
\begin{gather}
\pderiv{X^z}{\bar{z}}\partial_z\otimes d\bar{z}+\pderiv{X^\theta}{\bar{z}}\partial_\theta\otimes d\bar{z}-%
+\pderiv{X^{\bar{z}}}{\bar{z}}\partial_{\bar{z}}\otimes d\bar{z}+%
\pderiv{X^{\bar{\theta}}}{\bar{z}}\partial_{\bar{\theta}}\otimes d\bar{z}-\\
-\pderiv{X^{\bar{z}}}{z}\partial_{\bar{z}}\otimes dz-%
\pderiv{X^{\bar{z}}}{\theta}\partial_{\bar{z}}\otimes d\theta-%
-\pderiv{X^{\bar{z}}}{\bar{z}}\partial_{\bar{z}}\otimes d\bar{z}-%
\pderiv{X^{\bar{z}}}{\bar{\theta}}\partial_{\bar{z}}\otimes d\bar{\theta}.
\end{gather}
The summands $i\partial_\theta\otimes d\theta$ and $-i\partial_{\bar{\theta}}\otimes d\bar{\theta}$
contribute similar expressions. Summing all up and equating this with $H$ yields equations (\ref{cond}).
\end{proof}

Just as one would expect, the vector field $X$ is an infinitesimal automorphism of the complex
structure (i.e., $L_XJ=0$) if and only if $X^z$ and $X^\theta$ do not depend on 
$\bar{z}$ and $\bar{\theta}$, i.e., if they are superholomorphic.

\subsection{The underlying tangent space}

Let us start with the problem of determining the above mentionend decomposition for the underlying
tangent space $\hat{T}_J^{int}(\realR)=T_J\cs(\realR)\oplus\cl J(\realR)$. In this case the analysis is
simplified by the absence of any odd parameters $\tau_i$. In this and the following sections we assume
that the complex $1|1$-dimensional supermanifold $\cm$ is given by a pair $(M,L)$ where $M$ is a
closed, compact Riemann surface of genus $g \geq 2$ and $L$ is a holomorphic line bundle of degree $d$ on
$M$ (cf.~Section \ref{sect:compss}).

\begin{prop}
\label{prop:tangsli0}
The underlying tangent space $\hat{T}_J^{int}(\realR)=T_J\cc(\cm)(\realR)$ of an integrable almost complex
structure $J$ possesses a direct-sum decomposition
\begin{equation}
\hat{T}_J^{int}(\realR)=V\oplus\cl J
\end{equation}
where $V$ is complex vector space of dimension $4g-3$, $g$ is the genus of the underlying surface $M$ and
\begin{equation}
\mathcal{L}J(\realR):=\left\{H\in \hat{T}_J^{int}(\realR)\mid \exists \textrm{ a smooth even vector field }X
\textrm{s.t. }L_XJ=H\right\}.
\end{equation}
\end{prop}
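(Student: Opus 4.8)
The plan is to reduce the statement to classical Dolbeault theory on the underlying Riemann surface $M$. First I would fix local complex coordinates $(z,\theta)$ induced by $J$ and describe a tangent vector $H\in\hat{T}_J^{int}(\realR)$ by its four components $(\alpha,\beta,\gamma,\delta)$ as in~(\ref{hform})--(\ref{hcomp}), with $\alpha,\delta$ even and $\beta,\gamma$ odd. Since we work at the $\realR$-point there are no odd parameters $\tau_i$, so each component is an ordinary smooth section of $\co_\cm\otimes\complexC$ and expands in the odd coordinates as $\alpha=\alpha_0+\alpha_3\theta\bar\theta$, $\delta=\delta_0+\delta_3\theta\bar\theta$, $\beta=\beta_1\theta+\beta_2\bar\theta$, $\gamma=\gamma_1\theta+\gamma_2\bar\theta$, where the coefficients are smooth $\complexC$-valued functions on $M$. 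Imposing the integrability conditions of Prop.~\ref{tcm} forces $\beta_2=\delta_3=0$ and determines $\alpha_3=\partial_{\bar z}\beta_1$, $\gamma_2=\partial_{\bar z}\delta_0$, so that $\hat{T}_J^{int}(\realR)$ is parametrized by the four free functions $\alpha_0,\beta_1,\gamma_1,\delta_0$. Tracking the transition laws $z'=f(z)$, $\theta'=g(z)\theta$ one checks that $\alpha_0$ is a global $TM$-valued $(0,1)$-form (a Beltrami differential), while $\gamma_1$, the coefficient of the globally defined Euler field $\theta\partial_\theta$, is a global $(0,1)$-form with values in $\co_M$; the functions $\beta_1,\delta_0$ are sections of further line bundles that will turn out to be irrelevant.

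Next I would compute $\cl J(\realR)$ from Lemma~\ref{lieder}. Writing an even real vector field as $X=X^z\partial_z+X^\theta\partial_\theta+\overline{X^z}\partial_{\bar z}+\overline{X^\theta}\partial_{\bar\theta}$ with $X^z=p_0+p_3\theta\bar\theta$ and $X^\theta=q_1\theta+q_2\bar\theta$, the conditions~(\ref{cond}) give, componentwise,
\begin{equation*}
\alpha_0=2\partial_{\bar z}p_0,\quad \beta_1=\pm 2p_3,\quad \gamma_1=2\partial_{\bar z}q_1,\quad \delta_0=\pm 2q_2,
\end{equation*}
with $\alpha_3,\gamma_2$ reproduced automatically. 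Thus $\cl J(\realR)$ consists of those $H$ whose $\alpha_0$ lies in $\bar\partial\,\Gamma(TM)$ and whose $\gamma_1$ lies in $\bar\partial\,\Gamma(\co_M)$, while $\beta_1$ and $\delta_0$ are arbitrary. The gauge therefore acts diagonally on the four components, and the two ``odd'' directions $\beta_1,\delta_0$ are purely algebraic gauge ($\partial_{\bar\theta}$ is an algebraic operation on the odd coordinates, so its image is everything), reflecting that there are no genuine deformations along odd directions at the $\realR$-level. The reality constraint on $X$ is harmless, since the holomorphic-slot coefficients $p_0,p_3,q_1,q_2$ are free complex-valued functions and only the $\partial_{\bar z},\partial_{\bar\theta}$ slots are pinned to their conjugates, which do not enter $H$; hence $\cl J(\realR)$ is a complex subspace, and the factor $-i$ between Lemma~\ref{lieder} and the definition of $\cl J$ is immaterial.

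Finally I would invoke Hodge theory on the compact Riemann surface $M$. For a holomorphic bundle $E$ the Dolbeault complex on a curve gives the orthogonal decomposition $A^{0,1}(M,E)=\mathcal H^{0,1}(E)\oplus\bar\partial\,\Gamma(E)$, in which $\bar\partial\,\Gamma(E)$ is closed and the harmonic space $\mathcal H^{0,1}(E)\cong H^1(M,E)$ is finite-dimensional. Applying this with $E=TM$ for $\alpha_0$ and $E=\co_M$ for $\gamma_1$, and setting $\beta_1=\delta_0=0$, define
\begin{equation*}
V:=\{H\in\hat{T}_J^{int}(\realR)\mid \alpha_0\in\mathcal H^{0,1}(TM),\ \gamma_1\in\mathcal H^{0,1}(\co_M),\ \beta_1=\delta_0=0\}.
\end{equation*}
Then $V$ is a complex subspace complementary to $\cl J(\realR)$, giving $\hat{T}_J^{int}(\realR)=V\oplus\cl J$, and $\dim_\complexC V=\dim H^1(M,TM)+\dim H^1(M,\co_M)$. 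By Riemann--Roch this equals $(3g-3)+g=4g-3$, where $g\ge 2$ is used to guarantee $H^0(M,TM)=0$ and hence $h^1(TM)=3g-3$.

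I expect the main obstacle to be the global rather than local bookkeeping: verifying that after imposing integrability and discarding the algebraic $\beta_1,\delta_0$ gauge, the surviving data $\alpha_0,\gamma_1$ assemble into honest global sections of $A^{0,1}(M,TM)$ and $A^{0,1}(M,\co_M)$ with the correct transition behaviour, and that the splitting respects the complex structure of Cor.~\ref{compsp}. The analytic input, namely closedness of $\bar\partial\,\Gamma(E)$ and finite-dimensionality of its cokernel, is where elliptic theory on the compact curve is indispensable; in the spirit of \cite{TTiRG} one would really carry this out on Sobolev completions and then restrict to smooth objects, but since the problem has been reduced to the ordinary $\bar\partial$-operator on bundles over $M$ the standard Hodge decomposition applies directly.
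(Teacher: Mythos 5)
Your proposal is correct and follows essentially the same route as the paper's proof: expand $H$ and $X$ in the odd coordinates, observe that integrability kills $\beta_2,\delta_3$ and determines $\alpha_3,\gamma_2$, note that the $\beta_1,\delta_0$ directions are pure algebraic gauge while $\alpha_0,\gamma_1$ are obstructed exactly by the Dolbeault groups $H^1(M,K^{-1})\cong H^0(M,K^2)$ and $H^1(M,\co_M)\cong H^0(M,K)$, and split these off by Hodge decomposition to get $\dim_\complexC V=(3g-3)+g=4g-3$. The only cosmetic difference is that you phrase the dimension count via $h^1(TM)+h^1(\co_M)$ directly rather than passing through Serre duality, and you make the global transition-law bookkeeping explicit where the paper leaves it implicit.
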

\begin{proof}
Since no odd $\tau_i$-parameters are involved, we can expand the 
coefficient functions $\alpha,\beta,\gamma,\delta$ of $H$ as
\begin{eqnarray}
\alpha &=& \alpha_0(z)+\alpha_3(z)\theta\bar{\theta}\\
\beta &=& \beta_1(z)\theta+\beta_2(z)\bar{\theta}\\
\gamma &=& \gamma_1(z)\theta+\gamma_2(z)\bar{\theta}\\
\delta &=& \delta_0(z)+\delta_3(z)\theta\bar{\theta}
\end{eqnarray}
where each of the expansion coefficients is just a locally defined smooth function on the underlying manifold
$M$. The integrability condition (\ref{int2}) implies $\beta_2=0=\delta_3$. Likewise, we can expand
each of the coefficient functions of a smooth even real vector field $X$ as
\begin{eqnarray}
X^z &=& X^z_0(z)+X^z_3(z)\theta\bar{\theta}\\
X^\theta &=& X^\theta_1(z)\theta+ X^\theta_2(z)\bar{\theta}
\end{eqnarray}
and similarly for $X^{\bar{z}}$ and $X^{\bar{\theta}}$.

Now we have to determine the obstructions to the solution of the equations (\ref{cond}). The equations
\begin{equation}
\beta=\beta_1\theta=2\pderiv{X^z}{\bar{\theta}}=-X_3^z\theta
\end{equation}
and
\begin{equation}
\delta=\delta_0=2\pderiv{X^\theta}{\bar{\theta}}=X_2^\theta
\end{equation}
are purely algebraic, and thus
can always be solved: for any given $\beta_1,\delta_0$, we will find a vector field $X$ that satisfies them.

So $\beta$ and $\delta$ have been found, and condition (\ref{int1}) then entails that
\begin{align}
\pderiv{\alpha}{\bar{\theta}} &=%
-\alpha_3(z)\theta=\pderiv{\beta}{\bar{z}}=\pderiv{\beta_1(z)}{\bar{z}}\theta\\
\pderiv{\gamma}{\bar{\theta}} &= \gamma_2(z) = \pderiv{\delta}{\bar{z}}=\pderiv{\delta_0(z)}{\bar{z}}
\end{align}
are also determined. The only equations that remain are
\begin{eqnarray}
\label{firsteq}
2\pderiv{X^z_0}{\bar{z}} &=& \alpha_0,\\
\label{seceq}
2\pderiv{X^\theta_1}{\bar{z}}\theta &=& \gamma_1\theta.
\end{eqnarray}
Let us first look at (\ref{firsteq}). The function $\alpha_0(z)$ is actually a smooth section of a holomorphic
line bundle on $M$: it contributes the summand
\[
\alpha=\alpha_0(z)\pderiv{}{z}\otimes d\bar{z}
\]
to $H$.
Therefore we can find a solution to (\ref{firsteq}) if and only if the Dolbeault cohomology class
$[\alpha]$ vanishes. We can view $\alpha$ as a $\opar$-closed smooth $(-1,1)$-form on $M$, and by the
Hodge decomposition theorem, we know that any such space of forms can be directly decomposed as
\begin{equation}
\ca^{p,q}=\opar\ca^{p,q-1}\oplus H\oplus\opar^*\ca^{p,q+1}.
\end{equation}
Here, $\opar^*=-*\opar *$ is the adjoint of $\opar$, $*$ is the Hodge star-operator (see, e.g., 
\cite{H:Complex},\cite{GH:Principles}) and $H$ is the space of harmonic forms. Thus we can write
\begin{equation}
\alpha=\opar\eta+\omega+\opar^*\kappa,
\end{equation}
but since $\opar\alpha=0$, we must have $\opar^*\kappa=0$. Therefore, the obstruction to the solution of
(\ref{firsteq}) is
\begin{equation}
H^{-1,1}_{\opar}(M)\cong H^1(M,K^{-1})\cong H^0(M,K^2)
\end{equation}
where the first ``$\cong$'' is due to the Dolbeault theorem, and second one follows from Serre duality. 
Moreover,
this space splits off directly from $\hat{T}_J^{int}(\realR)$ as the subspace of those deformations for which
$\alpha_0$ is harmonic.

Now everything works completely analogously for equation (\ref{seceq}). Here, however, 
\begin{equation}
\gamma_1(z)\theta=(\gamma_1(z)\theta)\partial_\theta\otimes d\bar{z}
\end{equation}
must be viewed as a smooth section of the line bundle $L\otimes L^{-1}$ since $\theta$ is
a local section of $L$ and $\partial_\theta$ is a section of $L^*\cong L^{-1}$. This means that the
obstructions to the solution of (\ref{seceq}) form the space
\begin{equation}
H^1(M,L\otimes L^{-1})\cong H^1(M,\co)\cong H^0(M,K).
\end{equation}
By the same argument as above, this space splits off directly from $\hat{T}_J^{int}$, and so altogether
we can write
\begin{equation}
\hat{T}_J^{int}(\realR)=H^0(M,K)\oplus H^0(M,K^2)\oplus\cl J,
\end{equation}
where the first two summands form a complex vector space of dimension $4g-3$.
\end{proof}

\subsection{\texorpdfstring{The $\Lambda_1$-points of $T_J\cs$}{The Lambda(1)-points of the deformation}}

Before treating the general $\Lambda$-points of $T_J\cs$, it is instructive to first find the
space $T_J\cs(\Lambda_1)$ separately. The reason is that $T_J\cs$ is supposed to be
a super vector space, and therefore the sets of $\realR$- and $\Lambda_1$-points contain the 
full information, while the
higher points can be expected to be generated from these two spaces.

\begin{prop}
\label{prop:tangsli1}
The $\Lambda_1$-tangent space $\hat{T}_J^{int}(\Lambda_1)=T_J\cc(\cm)(\Lambda_1)$ of an integrable 
almost complex structure $J$ possesses a direct-sum decomposition
\begin{equation}
\hat{T}_J^{int}(\Lambda_1)=\olv(\Lambda_1)\oplus\cl J,
\end{equation}
where $V$ is a super vector space of dimension
\begin{equation}
\dim_\complexC V=(4g-3|4g-4+h^0(L^{-1})+h^0(K^{-1}\otimes L))
\end{equation}
and
\begin{equation}
\mathcal{L}J(\Lambda_1):=\left\{H\in \hat{T}_J^{int}(\Lambda_1)\mid%
\exists\textrm{ a smooth even vector field }X \textrm{s.t. }L_XJ=H\right\}.
\end{equation}
\end{prop}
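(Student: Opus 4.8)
The plan is to follow the same strategy as in the proof of Prop.~\ref{prop:tangsli0}, but now keeping track of the single odd parameter $\tau:=\tau_1$ of $\Lambda_1$. The starting point is Lemma~\ref{lieder}, which reduces the condition $-iL_XJ=H$ to the four equations (\ref{cond}). The two equations $2\pderiv{X^z}{\bar\theta}=\beta$ and $2\pderiv{X^\theta}{\bar\theta}=\delta$ are purely algebraic and can always be solved, so $\beta$ and $\delta$ can be absorbed into the Lie-derivative part $\cl J(\Lambda_1)$ exactly as before. The integrability conditions (\ref{int1}) then tie the $\bar\theta$-derivatives of $\alpha,\gamma$ to $\beta,\delta$, leaving again only the two genuine obstruction equations (\ref{firsteq}) and (\ref{seceq}), namely $2\pderiv{X^z_0}{\bar z}=\alpha_0$ and $2\pderiv{X^\theta_1}{\bar z}=\gamma_1$, together with whatever new equations the $\tau$-dependent components produce.

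The new feature is that over $\Lambda_1$ each component function of $H$ carries a $\tau$-expansion. First I would write every local section $f$ of $\co_\cm\otimes\Lambda_1$ as $f=f^{(0)}+\tau f^{(1)}$, where $f^{(0)}$ is the underlying ($\realR$-point) piece already handled in Prop.~\ref{prop:tangsli0} and $f^{(1)}$ is a section of $\co_\cm$ of the opposite parity. The key observation — which is essentially the content of the last Proposition of the preceding Chapter (the functoriality argument in the proof that $\hat T_J^{int}$ is superrepresentable) — is that the operators $\opar$, $\opar^*$ and the Hodge decomposition act only on the $z,\bar z,\theta,\bar\theta$-dependence and commute with multiplication by $\tau$. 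Hence the obstruction analysis decouples into the $\tau^0$ and $\tau^1$ sectors independently. The $\tau^0$ sector reproduces verbatim the computation of Prop.~\ref{prop:tangsli0}, contributing the even part $V_{\bar 0}$ of dimension $4g-3$, i.e.\ $H^0(M,K)\oplus H^0(M,K^2)$.

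For the $\tau^1$ sector the parities are reversed: the coefficient $\alpha^{(1)}$ is now odd, so its leading component $\alpha_1(z)\theta$ is a smooth section of $K^{-1}\otimes L$ valued in $(0,1)$-forms, and $\gamma^{(1)}$ is even, so its leading component is a section of $L^{-1}$ valued in $(0,1)$-forms. Running the identical Dolbeault-plus-Serre-duality argument, the obstructions to solving the two $\tau^1$ equations are $H^1(M,K^{-1}\otimes L)\cong H^0(M,K^2\otimes L^{-1})$ and $H^1(M,L^{-1})\cong H^0(M,K\otimes L)$; by Riemann--Roch these have dimensions $h^0(K^{-1}\otimes L)+(4g-4)-\dots$, which I would tabulate to match the stated odd dimension $4g-4+h^0(L^{-1})+h^0(K^{-1}\otimes L)$. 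These harmonic subspaces again split off directly, giving the odd part $V_{\bar 1}$. Since $V_{\bar 1}=\Pi(V_{\bar 0})\oplus(\text{extra terms from the shifted bundles})$ in the appropriate sense, one reads off $\olv(\Lambda_1)=V_{\bar 0}\oplus(\Lambda_{1,\bar 1}\otimes V_{\bar 1})$, and collecting the complementary Lie-derivative directions into $\cl J(\Lambda_1)$ completes the decomposition.

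The main obstacle I anticipate is the bookkeeping of the line bundles in the $\tau^1$ sector: one must correctly identify which twist of $K$ and $L$ each reversed-parity coefficient lives in (the factor of $\theta$ shifting by $L$, the $\partial_\theta$ shifting by $L^{-1}$, and the $(0,1)$-form valued-ness shifting by $K^{-1}$), and then apply Riemann--Roch and Serre duality so that the individual $h^0$-terms assemble into precisely $4g-4+h^0(L^{-1})+h^0(K^{-1}\otimes L)$. The genus assumption $g\geq 2$ guarantees the relevant vanishing needed to make the Hodge decomposition split off cleanly. Apart from this dimension count, the argument is a direct parity-graded replay of Prop.~\ref{prop:tangsli0}, so I would not grind through the explicit component computation of Lemma~\ref{lieder} a second time but merely invoke it.
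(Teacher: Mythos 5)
Your proposal follows essentially the same route as the paper's proof: expand every component of $H$ and of $X$ in the single odd parameter $\tau$, observe that the $\beta,\delta$-equations are algebraic and that (\ref{int1})--(\ref{int2}) fix $\alpha_2,\alpha_3,\gamma_2,\gamma_3$, and then read off the obstructions in the decoupled $\tau^0$ and $\tau^1$ sectors as $H^1(M,K^{-1})\oplus H^1(M,\co)$ and $H^1(M,L\otimes K^{-1})\oplus H^1(M,L^{-1})$ respectively, converting via Serre duality and Riemann--Roch to the stated dimension. The bundle identifications and the final dimension count you sketch do assemble to $4g-4+h^0(L^{-1})+h^0(K^{-1}\otimes L)$, so the argument is sound and matches the paper.
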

\begin{proof}
Denote the odd generator of $\Lambda_1$ by $\tau$. This time, each of the component functions of $H$ can
contain additional terms proportional to $\tau$. So we have
\begin{eqnarray}
\alpha &=& \alpha_0(z)+\tau\alpha_1(z)\theta+\tau\alpha_2(z)\bar{\theta}+\alpha_3(z)\theta\bar{\theta},\\
\beta  &=& \tau\beta_0(z)+\beta_1(z)\theta+\beta_2(z)\bar{\theta}+\tau\beta_3(z)\theta\bar{\theta}, 
\end{eqnarray}
and analogously for $\gamma$ and $\delta$. For the same reason, the coefficients of a generic smooth 
even vector field $X$ must be extended to
\begin{eqnarray}
X^z &=& X^z_0(z)+\tau X^z_1(z)\theta+\tau X^z_2(z)\bar{\theta}+X^z_3(z)\theta\bar{\theta},\\
X^\theta &=& \tau X^\theta_0(z)+X^\theta_1(z)\theta+ X^\theta_2(z)\bar{\theta}+%
X^\theta_3(z)\theta\bar{\theta},
\end{eqnarray}
and similarly for $X^{\bar{z}}$ and $X^{\bar{\theta}}$. As was already argued in Chapter \ref{ch:sdiff},
it may clarify things if one remembers that $H$, $J$ and $X$ are not really objects living on $\cm$,
but rather on the family $\cp(\Lambda_1)\times\cm$, i.e. they are deformations of vector fields and
tensors on $\cm$ along an odd dimension parametrized by $\tau$.

Now the strategy closely follows that of Prop.~\ref{prop:tangsli0}. Note, first of all, that
the integrability condition (\ref{int2}) tells us that
\begin{equation}
\beta_2=\beta_3=\delta_2=\delta_3=0.
\end{equation}
The equations
\begin{equation}
\beta=\tau\beta_0+\beta_1\theta=2\pderiv{X^z}{\bar{\theta}}=-\tau X_2^z-X_3^z\theta
\label{betacond}
\end{equation}
and
\begin{equation}
\delta=\delta_0+\tau\delta_1=2\pderiv{X^\theta}{\bar{\theta}}=X_2^\theta+\tau X_3^\theta\theta
\label{deltacond}
\end{equation}
are again purely algebraic and can always be solved: there always exists a vector field $X$ satisfying them.
The conditions (\ref{int1}) then fix $\alpha_2,\alpha_3$ and $\gamma_2,\gamma_3$. Thus the only remaining
equations are
\begin{eqnarray}
\label{f1eq}
\alpha_0+\tau\alpha_1\theta &=& 2\pderiv{X^z}{\bar{z}}=2\pderiv{X^z_0}{\bar{z}}+%
2\tau\pderiv{X^z_1}{\bar{z}}\theta\\
\label{f2eq}
\tau\gamma_0+\gamma_1\theta &=& 2\pderiv{X^\theta}{\bar{z}}=2\tau\pderiv{X^\theta_0}{\bar{z}}+%
2\pderiv{X^\theta_1}{\bar{z}}\theta.
\end{eqnarray}

We can use arguments analogous to those of the proof of Prop.~\ref{prop:tangsli0} to determine the
obstructions to the solution of these equations. First of all, $\alpha_0$ and $\gamma_1$ again contribute
the space $H^0(M,K^2)\oplus H^0(M,K)$. But now there are also obstructions proportional to $\tau$.
In (\ref{f1eq}) this is $\alpha_1(z)\theta$, a smooth section of the line bundle $L\otimes K^{-1}$, 
therefore the odd obstructions to the solution of (\ref{f1eq}) are
\begin{equation}
H^1(M,L\otimes K^{-1})\cong H^0(M,K^2\otimes L^{-1}),
\end{equation} 
where Serre duality has been used.
Analogously, $\gamma_0(z)$ is a smooth section of $L^{-1}$, and therefore the odd obstructions
to the solution of (\ref{f2eq}) are
\begin{equation}
H^1(M,L^{-1})\cong H^0(M,K\otimes L).
\end{equation}
By the Riemann-Roch theorem one finds
\begin{eqnarray}
h^0(M,K\otimes L)-h^0(M,L^{-1}) &=& \deg L+g-1,\\
h^0(M,K^2\otimes L^{-1})-h^0(M,K^{-1}\otimes L)&=& 3g-3-\deg L.
\end{eqnarray}
The Hodge decomposition theorem assures again that we can split off the obstructions as a direct submodule
from $\hat{T}_J^{int}(\Lambda_1)$. Summarizing all the above, we have obtained a decomposition
\begin{equation}
\hat{T}_J^{int}(\Lambda_1)=H^0(M,K^2)\oplus H^0(M,K)\oplus \tau\left(H^0(M,K\otimes L)\oplus%
H^0(M,K^2\otimes L^{-1})\right)\oplus \cl J,
\end{equation}
and the first four summands clearly are the $\Lambda_1$-points of a super vector space $V$ of the claimed 
dimension.
\end{proof}

\subsection{The general case}

It remains to be shown that we can indeed directly decompose the integrable deformations as
$\hat{T}_J^{int}=T_J\cs\oplus\cl J$, where $T_J\cs$ is a superrepresentable $\complexC$-module of
finite dimension. Of course, $T_J\cs$ will then have to coincide with $\olv$, where $V$ is the super
vector space constructed in the proof of Prop.~\ref{prop:tangsli1}.

\begin{thm}
\label{splittang}
The $\olc$-module $\hat{T}_J^{int}$ tangent to $J$ admits a direct sum decompostion
\begin{equation}
\hat{T}_J^{int}=T_J\cs\oplus\cl J,
\end{equation}
where $T_J\cs$ is a superrepresentable $\olc$-module of dimension 
\[
(4g-3|4g-4+h^0(L^{-1})+h^0(K^{-1}\otimes L)),
\]
and
\begin{equation}
\mathcal{L}J:=\left\{H\in \hat{T}_J^{int}\mid\exists\textrm{ a smooth even vector field }X 
\textrm{s.t. }L_XJ=H\right\}.
\end{equation}
Here, the definition of $\cl J$ has to be understood pointwise: it contains all 
$H\in\hat{T}_J^{int}(\Lambda)$ for which there exists a smooth vector field on $\cp(\Lambda)\times\cm$
such that $L_XJ=H$.
\end{thm}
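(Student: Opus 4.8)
The plan is to bootstrap from the two special cases already established in Props.~\ref{prop:tangsli0} and \ref{prop:tangsli1} to arbitrary $\Lambda$, and then to promote the pointwise direct-sum decompositions into a decomposition of functors (equivalently, of $\olc$-modules). First I would fix an arbitrary $\Lambda=\Lambda_n$ with odd generators $\tau_1,\ldots,\tau_n$ and expand each of the component sections $\alpha,\beta,\gamma,\delta$ of a tangent vector $H\in\hat{T}_J^{int}(\Lambda)$ as $\alpha=\sum_{I}\tau_I\alpha_I$, where each $\alpha_I$ is an ordinary smooth section of the appropriate line bundle on $M$ of parity $|I|+p(\alpha)$. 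The key observation, exactly as in the two special cases, is that the integrability conditions (\ref{int1}) and (\ref{int2}) act on each coefficient function $\alpha_I$ separately (morphisms of $\catgr$ touch only the $\tau_i$, never the $z,\bar z,\theta,\bar\theta$ dependence), so the whole problem decouples into a family of classical $\opar$-equations on $M$, one for each multiindex $I$.

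Next I would carry out the obstruction analysis coefficient-by-coefficient. For each $I$, Lemma~\ref{lieder} reduces the condition $-iL_XJ=H$ to the $\opar$-type equations (\ref{cond}); the equations for $\beta$ and $\delta$ are purely algebraic and always solvable, while (\ref{int1}) then determines $\alpha_2,\alpha_3$ and $\gamma_2,\gamma_3$-type pieces, leaving genuine $\opar$-equations of the form $2\,\partial X/\partial\bar z=\alpha_{I,0}$ and $2\,(\partial X/\partial\bar z)\theta=\gamma_{I,1}\theta$. By the Dolbeault theorem, Serre duality and the Hodge decomposition theorem, each such equation has an obstruction space isomorphic to a direct summand built from $H^0(M,K^2)$, $H^0(M,K)$, $H^0(M,K^2\otimes L^{-1})$ and $H^0(M,K\otimes L)$, exactly the building blocks of the super vector space $V$ from Prop.~\ref{prop:tangsli1}. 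Collecting the summands over all $I$ and invoking Hodge theory to split each obstruction off \emph{directly} (choosing the harmonic representative) yields, for each $\Lambda$, a direct sum $\hat{T}_J^{int}(\Lambda)=W(\Lambda)\oplus\cl J(\Lambda)$, where $W(\Lambda)$ is precisely the space of tangent vectors whose relevant coefficients are harmonic. A dimension count via Riemann--Roch gives $\dim_\complexC V=(4g-3\mid 4g-4+h^0(L^{-1})+h^0(K^{-1}\otimes L))$, matching the claimed dimension.

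The main obstacle, and the step deserving the most care, is upgrading this family of pointwise decompositions into a decomposition of $\olc$-modules, i.e.\ showing that $W=\{W(\Lambda)\}$ is superrepresentable and isomorphic to $\olv$. For this I would verify that the assignment $\Lambda\mapsto W(\Lambda)$ is a subfunctor of $\hat T_J^{int}$, which amounts to checking that the Hodge projection onto the harmonic obstruction space commutes with the maps $\gh(\cend(\ctm))(\varphi)$ for every morphism $\varphi:\Lambda\to\Lambda'$. This follows because $\varphi$ acts only on the generators $\tau_i$ and is the identity on the coefficient sections $\alpha_I$ (on which the Hodge projector operates), so functoriality is inherited term by term from the classical Hodge projector on $M$. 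Once $W$ is a subfunctor whose $\Lambda$-points have the form dictated by harmonicity, Prop.~\ref{isokmod} and Thm.~\ref{thm:smodsman} identify it with $\olv$ for the super vector space $V$ of Prop.~\ref{prop:tangsli1}, and the complementary subfunctor $\cl J$ is likewise a subfunctor by the same argument. I would then set $T_J\cs:=W\cong\olv$ and conclude $\hat T_J^{int}=T_J\cs\oplus\cl J$ as $\olc$-modules, completing the proof.
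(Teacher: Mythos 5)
Your proposal follows essentially the same route as the paper's proof: expand the components of $H$ in the odd generators $\tau_I$, observe that the integrability conditions and the $\opar$-equations from Lemma~\ref{lieder} decouple coefficient by coefficient, identify the obstruction spaces via Dolbeault, Serre duality and Hodge theory, and assemble them into $\olv(\Lambda)$ for the super vector space $V$ of Prop.~\ref{prop:tangsli1}. Your final step making the functoriality of the splitting (compatibility of the Hodge projection with the maps induced by $\varphi:\Lambda\to\Lambda'$) explicit is a welcome addition that the paper leaves implicit, but it does not change the substance of the argument.
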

\begin{proof}
Assume that $\Lambda=\Lambda_n$ and denote the free odd generators of $\Lambda$ by $\tau_1,\ldots,\tau_n$.
In this setting, each of the component functions of $H$, for example $\alpha$, can still be written as
\begin{equation}
\label{exp1}
\alpha=\alpha_0+\alpha_1\theta+\alpha_2\bar{\theta}+\alpha_3\theta\bar{\theta}.
\end{equation}
But now each of the expansion coefficients $\alpha_i$ can contain the odd parameters $\tau_i$. Since
$\alpha$ is even, we have
\begin{equation}
\label{exp2}
\alpha_0=\alpha_{00}+\tau_i\tau_j\alpha_{0ij}+\ldots=%
\sum_{\substack{I\subset\{1,\ldots,n\}\\|I|\,\,\mathrm{even}}}\tau_I\alpha_{0I}
\end{equation}
The sum here runs over all increasingly ordered subsets of even cardinality, and $\tau_I$ is the product
of the appropriate $\tau_i$'s in the same order. Each $\alpha_{0I}$ is just a local smooth function on
the underlying manifold $M$. An analogous expansion exists for $\alpha_3$. Since $\alpha_1,\alpha_2$ have
to be odd, their expansions into powers of $\tau$ run over the subsets of odd cardinality.

Likewise, each of the coefficient functions $X^z,X^\theta,X^{\bar{z}},X^{\bar{\theta}}$, has an expansion
like (\ref{exp1}) and each of the coefficients must then be expanded in a sum of the form
(\ref{exp2}), which runs over the subsets of even or odd cardinality, depending on the parity
of the coefficient.

Again, the integrability conditions (\ref{int2}) tell us that $\beta_2=\beta_3=0$ and
$\delta_2=\delta_3=0$. Also, the equations
\begin{equation}
\beta=\beta_0+\beta_1\theta=2\pderiv{X^z}{\bar{\theta}}=-X_2^z-X_3^z\theta
\end{equation}
and
\begin{equation}
\delta=\delta_0+\delta_1=2\pderiv{X^\theta}{\bar{\theta}}=X_2^\theta+X_3^\theta\theta
\end{equation}
remain purely algebraic and can always be solved: there always exists a vector field $X$ satisfying them.
Note that we have suppressed the $\tau_i$-expansions here and in (\ref{f1eqq}) and (\ref{f2eqq}) below.
The conditions (\ref{int1}) then still fix $\alpha_2,\alpha_3$ and $\gamma_2,\gamma_3$ completely. Thus,
we are left with the same equations as before, namely
\begin{eqnarray}
\label{f1eqq}
\alpha_0+\alpha_1\theta &=& 2\pderiv{X^z}{\bar{z}}=2\pderiv{X^z_0}{\bar{z}}+%
2\pderiv{X^z_1}{\bar{z}}\theta\\
\label{f2eqq}
\gamma_0+\gamma_1\theta &=& 2\pderiv{X^\theta}{\bar{z}}=2\pderiv{X^\theta_0}{\bar{z}}+%
2\pderiv{X^\theta_1}{\bar{z}}\theta.
\end{eqnarray}
But now each of the coefficients of the expansions (\ref{exp2}) may contribute obstructions. The equation
\begin{equation}
\alpha_0=2\pderiv{X^z_0}{\bar{z}}
\end{equation}
now actually expands into a system of $2^{n-1}$ equations, one for each increasingly ordered subset 
$I\subset\{1,\ldots,n\}$ of even cardinality. Each such equation has the form
\begin{equation}
\alpha_{0I}=2\tau_I\pderiv{X^z_{0I}}{\bar{z}}
\end{equation}
and thus each of these component equations contributes the space $H^0(M,K^2)$ as obstructions. Thus,
the full space of obstructions for the $\alpha_0$-equation is
\begin{equation}
\Lambda_{\bar{0}}\otimes H^0(M,K^2).
\end{equation}
By the same arguments, the obstructions to the solution of the $\gamma_1$-equation are
\begin{equation}
\Lambda_{\bar{0}}\otimes H^0(M,K),
\end{equation}
and those to the solution of the $\alpha_1$-equation are
\begin{equation}
\Lambda_{\bar{1}}\otimes H^0(M,K^2\otimes L^{-1}).
\end{equation}
Finally, the obstructions for $\gamma_0$ are, not surprisingly,
\begin{equation}
\Lambda_{\bar{1}}\otimes H^0(M,K\otimes L).
\end{equation}
The Hodge decomposition theorem again assures that we can split off the obstructions, component by component,
as direct summands. Defining a complex super vector space
\begin{equation}
\label{tottang}
V=H^0(M,K^2)\oplus H^0(M,K)\oplus\Pi\left(H^0(M,K^2\otimes L^{-1})\oplus H^0(M,K\otimes L)\right),
\end{equation}
this means that we have decomposed $\hat{T}_J^{int}(\Lambda)$ as
\begin{equation}
\hat{T}_J^{int}(\Lambda)=\olv(\Lambda)\oplus\cl J(\Lambda),
\end{equation}
as was to be shown.
\end{proof}

\subsection{Infinitesimal superconformal automorphisms}
\label{sect:sconfaut}

Before one can take the quotient $\cc(\cm)/\sdiff_0(\cm)$, one has to check whether there are automorphisms
of the supercomplex structure which are homotopic to the identity, i.e., whether there are global
vector fields generating superconformal transformations. In the case of the $\fvect^L(1|1)$-structure,
these would be just the global holomorphic sections of $\ctm$. 
If such vector fields exist, they will form the Lie algebra of a subgroup $\Aut_0(J)\subset\sdiff_0(\cm)$ of 
superconformal automorphisms of $J$, which is the stabilizer subgroup of $J\in\cc(\cm)$. Along
the $\sdiff_0(\cm)$-orbit of $J$, this group changes, but its conjugacy class is preserved.

We directly deduce the following corollary from Lemma \ref{lieder}:

\begin{cor}
Let $\cm$ be a complex $1|1$-dimensional supermanifold, and let $(M,L)$ be the Riemann surface and the 
line bundle which are equivalent to $\cm$ by Prop.~\ref{isosrs}. Then there exists a super vector space 
of infinitesimal automorphisms of the supercomplex structure of complex dimension
\begin{equation}
h^0(\ctm)=\dim_\complexC\,\mathrm{aut}_{\fvect^L(1|1)}=(1|h^0(L\otimes K^{-1})+h^0(L^{-1})).
\end{equation}
\end{cor}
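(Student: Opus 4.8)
The plan is to identify the infinitesimal automorphisms of the supercomplex structure with the global holomorphic vector fields on $\cm$, and then to compute the super dimension of the latter by decomposing the tangent sheaf $\ctm$ according to the equivalence $\cm\leftrightarrow(M,L)$ of Prop.~\ref{isosrs}. First I would observe that an even real vector field $X$ generates an automorphism of the supercomplex structure precisely when $L_XJ=0$. Setting $H=0$ in Lemma~\ref{lieder}, this condition is equivalent to
\begin{equation}
\pderiv{X^z}{\bar z}=\pderiv{X^z}{\bar\theta}=\pderiv{X^\theta}{\bar z}=\pderiv{X^\theta}{\bar\theta}=0,
\end{equation}
i.e.\ to $X^z,X^\theta$ being superholomorphic. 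Thus the $(1,0)$-parts of such $X$ are exactly the holomorphic sections of $\ctm$, and the complex super vector space of infinitesimal automorphisms is $H^0(\cm,\ctm)$, whose super dimension is $h^0(\ctm)$.

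Next I would compute $H^0(\cm,\ctm)$ in local holomorphic coordinates $(z,\theta)$, writing a holomorphic vector field as $X=X^z\partial_z+X^\theta\partial_\theta$ with holomorphic coefficients and expanding in $\theta$ (the expansion terminates at degree one since $\theta^2=0$). The key is the bookkeeping of the line bundles over $M$: since the transition functions act by $z'=f(z)$, $\theta'=g(z)\theta$ (cf.~(\ref{transf})), the field $\partial_z$ transforms as a section of $K^{-1}$, the coordinate $\theta$ as a section of $L$, and $\partial_\theta$ as a section of $L^{-1}$. Reading off the four possible monomials then gives, for the even part,
\begin{equation}
H^0(\cm,\ctm)_{\bar0}\cong H^0(M,K^{-1})\oplus H^0(M,\co_M),
\end{equation}
where $a(z)\partial_z$ contributes $H^0(M,K^{-1})$ and $d(z)\,\theta\partial_\theta\in H^0(M,\mathrm{End}\,L)$ contributes $H^0(M,\co_M)$; for the odd part,
\begin{equation}
H^0(\cm,\ctm)_{\bar1}\cong H^0(M,L\otimes K^{-1})\oplus H^0(M,L^{-1}),
\end{equation}
where $b(z)\,\theta\partial_z$ contributes $H^0(M,L\otimes K^{-1})$ and $c(z)\partial_\theta$ contributes $H^0(M,L^{-1})$.

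Finally, since we work on a compact Riemann surface $M$ of genus $g\geq2$, I would use $H^0(M,K^{-1})=H^0(M,TM)=0$ and $H^0(M,\co_M)=\complexC$, so that the even summand collapses to dimension $1$. Combining the two displays yields
\begin{equation}
h^0(\ctm)=\bigl(1\mid h^0(L\otimes K^{-1})+h^0(L^{-1})\bigr),
\end{equation}
which is the assertion. The only real subtlety --- as opposed to a purely formal manipulation --- is the correct identification of the line bundle in which each monomial of $X$ lives; once this is fixed the statement is immediate, and the appearance of the distinguished even automorphism corresponds exactly to the degree operator $\theta\partial_\theta$, the single global section of $\mathrm{End}\,L$.
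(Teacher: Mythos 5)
Your proposal is correct and follows essentially the same route as the paper: setting $H=0$ in Lemma~\ref{lieder} to characterize infinitesimal automorphisms as superholomorphic vector fields, expanding $X^z,X^\theta$ in $\theta$, and identifying the four coefficient functions as sections of $K^{-1}$, $\co_M\cong L\otimes L^{-1}$, $L\otimes K^{-1}$ and $L^{-1}$ respectively, with $H^0(M,K^{-1})=0$ for $g\geq 2$ accounting for the single even dimension. The paper's $X^z_0,X^z_1\theta,X^\theta_0,X^\theta_1\theta$ are exactly your $a,b,c,d$.
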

\begin{proof}
By Lemma \ref{lieder}, a vector field $X$ has the property $L_XJ=0$ if and only if 
\begin{eqnarray*}
2\pderiv{X^z}{\bar{z}}=0, &\qquad& 2\pderiv{X^{\theta}}{\bar{z}}=0\\
2\pderiv{X^z}{\bar{\theta}}=0, &\qquad& 2\pderiv{X^{\theta}}{\bar{\theta}}=0.
\end{eqnarray*}
This means that $X^z$ and $X^\theta$ have to be superholomorphic. The coefficient $X^z_0$ is a section of
$K^{-1}$, the sheaf of holomorphic tangent vector fields on $M$. If the genus of $M$ is greater than one,
this sheaf has no global holomorphic sections. The other even contribution to the infinitesimal
automorphisms stems from the coefficient function $X^\theta_1\theta$, which is a section of $L\otimes L^{-1}$.
Indeed, this bundle has holomorphic sections:
\begin{equation}
H^0(M,L\otimes L^{-1})\cong H^0(M,\co_M)=\complexC.
\end{equation}
This is the 1-dimensional even subspace of $\mathrm{aut}_{\fvect^L(1|1)}$. The odd infinitesimal
automorphisms are afforded by $X^z_1\theta$, which contributes $H^0(M,L\otimes K^{-1})$, and by
$X^\theta_0$, which contributes $H^0(M,L^{-1})$.
\end{proof}

For the odd vector fields, one notes that the two spaces of holomorphic sections whose sum makes up
the odd infinitesimal automorphisms cannot be both nontrivial: if $h^0(L^{-1})\geq 0$ then we must
have $\deg(L)\leq 0$. But then $L\otimes K^{-1}$ cannot have any nontrivial holomorphic sections. There
is, in fact, a range of degrees where no infinitesimal odd automorphisms exist at all, namely for
\begin{equation}
\label{gdeg}
0 < \deg(L) < 2g-2.
\end{equation}
Interestingly, if $L$ is a spin bundle, as is the case for a $\fkl(1|1)$-structure, we are precisely in
this range.

Geometrically, the odd infinitesimal automorphisms act as translations. If $h(z)$ is, e.g., a holomorphic
section of $L^{-1}$, then we can generate a $0|1$-parameter subgroup from it which will locally act as
$\exp(\tau h(z)\pderiv{}{\theta})$
(here, $\tau$ is the odd generator of $\Lambda_1$). The effect of this subgroup is a translation 
$\theta\mapsto \theta+\tau h(z)$. Likewise, given a holomorphic section $g(z)$ of $L\otimes K^{-1}$,
we can generate a $0|1$-parameter subgroup which translates $z\mapsto z+\tau g(z)\theta$.

There is another reason to restrict the degree of $L$ to the range (\ref{gdeg}). We have seen that
the super Teichm\"uller space of $\fvect^L(1|1)$-structures (i.e., of supercomplex structures) has,
if it exists, the odd dimension $h^0(M,K^2\otimes L^{-1})+h^0(M,K\otimes L)$. But for line bundles of
certain degrees, these values \emph{depend on the complex structure on} $M$ \cite{ACGH:Geometry}. 
A rough idea of the problem is provided by the Riemann-Roch theorem, which states that
\begin{equation}
h^0(M,L)-h^0(M,K\otimes L^{-1})=\deg(L)-g+1
\end{equation}
holds for a Riemann surface $M$ of genus $g$ and a holomorphic line bundle $L\to M$. Now, if we are 
interested in $h^0(M,L)$,
then this formula is immediately useful only for $\deg(L)<0$ (when there are no nontrivial sections),
and for $\deg(L)> 2g-2$, because then $h^0(M,K\otimes L^{-1})=0$. One might call this range of
degrees the ``topological range'': $h^0(M,L)$ is then exclusively determined by the degree of $L$
and the genus. But for degrees in the range (\ref{gdeg}), there is no way to simply read off
$h^0(M,L)$ from the Riemann-Roch theorem, and indeed, the dimension in this case may depend on
the complex structure of the surface. The dimension can jump when one varies within the moduli space.
It is known that this can happen, for example, for spin bundles \cite{ACGH:Geometry}. 

We must exclude this phenomenon for the bundles $K^2\otimes L^{-1}$ and $K\otimes L$ in order to 
have a chance to define super Teichm\"uller space $\ct_{\fvect^L(1|1)}^{g,d}$ at least as a quotient
of a supermanifold.
Therefore, from now on we restrict the degree of the line bundle $L$ to which
$\co_{\cm,\bar{1}}$ is equivalent to the range (\ref{gdeg}). This also rules out the occurrence of
odd infinitesimal automorphisms. In this range, the tangent space of the slice always has the dimension
$(4g-3|4g-4)$. In fact, one can extend the construction to bundles $L$ of degree 0 if one excludes the
case of trivial bundles, i.e., those isomorphic to $\co_M$ \cite{M:New}. This corresponds to excluding the
zero section of the family of Jacobians $J(V_g)$ (see below). While this case could be handled in theory,
a thorough discussion is outside the scope of this work.

The even automorphisms cannot be disposed of so easily. They act by constant overall 
rescalings of the fibers of $L$. If $M\mapsto c\in\complexC$ 
is such a section, then we can generate a $1|0$-parameter family of automorphisms of the supercomplex
structure whose action is locally given by
\begin{equation}
\exp(c\theta\pderiv{}{\theta})=\sum_{n=0}^\infty \frac{c^n}{n!}\left(\theta\pderiv{}{\theta}\right)^n=%
\mathrm{e}^c\theta\pderiv{}{\theta},
\end{equation}
which simply maps $\theta\mapsto \mathrm{e}^c\theta$. This means that the vector fields 
$c\theta\pderiv{}{\theta}$ generate a subgroup of $\sdiff_0(\cm)$ which is isomorphic to $\complexC^\times$.
The presence of these automorphisms is indeed a severe problem for the construction of the super
Teichm\"uller space of supercomplex structures. In fact, the space $\ct_{\fvect^L(1|1)}^{g,d}$ that
we will construct below is \emph{not} the base of a universal family. The reason is that 
$\ct_{\fvect^L(1|1)}^{g,d}$ will still carry a $\complexC^\times$-action, and the elements of an orbit
of this action all parametrize the same complex structure. The problem is passed along to the
Teichm\"uller space $\ct_{\fkl(1|1)}^g\subset\ct_{\fvect^L(1|1)}^{g,g-1}$, which will carry a
residual $\intZ_2$ action. To see the effect of the $\complexC^\times$-ambiguity, it is best to
look directly at the cocycle defining the complex structure. Let $\{U_\alpha\}_{\alpha\in A}$ be
a cover of the underlying topological space $M$ of a complex supersurface. Then the cocycle defining
the complex structure on $\cp(\Lambda)\times\cm$ is given by transition functions
\begin{eqnarray}
\label{coc1}
z_\beta &=& f_{\alpha\beta}(z_\alpha)+\psi_{\alpha\beta}(z_\alpha)\theta_\alpha\\
\nonumber
\theta_\beta &=& \eta_{\alpha\beta}(z_\alpha)+g_{\alpha\beta}(z_\alpha)\theta_\alpha.
\end{eqnarray}
Here $f$ and $g$ are even sections of $\co_\cm\otimes\Lambda$ on $U_\alpha\cap U_\beta$,
while $\psi$ and $\eta$ are
odd sections. An automorphism $\theta\mapsto \mathrm{e}^c\theta$ rescales both $\theta_\alpha$ and 
$\theta_\beta$ by $\expe^c$, therefore it alters the cocycle to
\begin{eqnarray}
\label{coc2}
z_\beta &=& f_{\alpha\beta}(z_\alpha)+\expe^c\psi_{\alpha\beta}(z_\alpha)\theta_\alpha\\
\nonumber
\theta_\beta &=& \expe^{-c}\eta_{\alpha\beta}(z_\alpha)+g_{\alpha\beta}(z_\alpha)\theta_\alpha.
\end{eqnarray}
The cocycles (\ref{coc1}) and (\ref{coc2}) describe the same supercomplex structure. However, the 
$\complexC^\times$-action does not affect the underlying cocycle consisting of $f$ and $g$ alone, 
which describes
the complex structure of $\cm$. Therefore, the underlying Teichm\"uller space is not affected by
the automorphisms $\theta\mapsto\expe^c\theta$, and will turn out to be a complex manifold. For the
odd parameters of a deformation, we have found the following.

\begin{thm}
\label{vectaut}
Let $\cm$ be a complex $1|1$-dimensional supermanifold, and let $(M,L)$ be the Riemann surface and
the holomorphic line bundle to which $\cm$ is equivalent.
Define a $\complexC^\times$-action on the odd deformations of the complex structure $J$ of $\cm$ by
\begin{eqnarray}
\nonumber
\complexC^\times\times (H^0(M,K^2\otimes L^{-1})\oplus H^0(M,K\otimes L)) &\to&%
H^0(M,K^2\otimes L^{-1})\oplus H^0(M,K\otimes L)\\
\label{ctimesact}
(\alpha,v,w) &\mapsto& (\alpha v,\alpha^{-1}w).
\end{eqnarray}
Then all points of an orbit of this $\complexC^\times$-action parametrize the same deformation of $J$.
\end{thm}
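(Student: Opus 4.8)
The $\complexC^\times$ appearing here is the stabilizer subgroup $\Aut_0(J)\subset\sdiff_0(\cm)$ generated by the even infinitesimal automorphism $c\theta\,\partial_\theta$, which is a genuine global vector field because $\theta\,\partial_\theta$ is a section of $L\otimes L^{-1}\cong\co_M$ and $H^0(M,L\otimes L^{-1})\cong\complexC$. For $\alpha=\expe^{c}\in\complexC^\times$ the corresponding element $\Phi_\alpha\in\sdiff_0(\cm)$ acts locally as $z\mapsto z$, $\theta\mapsto\alpha\theta$, and it fixes $J$. Since $\Phi_\alpha$ is an honest superdiffeomorphism in the identity component, an integrable deformation direction $H\in T_J\cc(\cm)$ and its pullback $\Phi_\alpha^*H$ satisfy $\Phi_\alpha^*(J+\epsilon H)=J+\epsilon\,\Phi_\alpha^*H$, so the two deformed structures lie in the same $\sdiff_0(\cm)$-orbit and hence parametrize the same deformation of $J$. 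The plan is therefore to reduce the claim to the linear pullback action of $\Phi_\alpha$ on the odd part of the model $\olv=T_J\cs$ from Thm.~\ref{splittang}, and to verify that it is exactly $(v,w)\mapsto(\alpha v,\alpha^{-1}w)$ of (\ref{ctimesact}).

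First I would recall, from Prop.~\ref{prop:tangsli1} and the proof of Thm.~\ref{splittang}, that the two odd deformation parameters $v\in H^0(M,K^2\otimes L^{-1})$ and $w\in H^0(M,K\otimes L)$ correspond, in the local description (\ref{hform})--(\ref{hcomp}) of $H$, to the tensor terms proportional to $\theta\,\partial_z\otimes d\bar{z}$ (the $\alpha_1$-term in the even block) and to $\partial_\theta\otimes d\bar{z}$ (the $\gamma_0$-term in the odd block). Because $\Phi_\alpha$ lies in $\Aut(\cm)$ and carries no odd-parameter part, Prop.~\ref{prop:pbtensor} collapses to the classical conjugation formula (\ref{dend}), namely $\Phi_\alpha^*\sigma=(D\Phi_\alpha)^{-1}\circ\sigma\circ D\Phi_\alpha$, together with the pullback of the coefficient functions. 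From $z\mapsto z$, $\theta\mapsto\alpha\theta$ one obtains $D\Phi_\alpha(\partial_z)=\partial_z$ and $D\Phi_\alpha(\partial_\theta)=\alpha\,\partial_\theta$, while $\bar{z},\bar\theta$ are fixed. Substituting these, the term $\theta\,\partial_z\otimes d\bar{z}$ acquires one factor $\alpha$ from $\Phi_\alpha^*\theta=\alpha\theta$ and none from the endomorphism part, giving $v\mapsto\alpha v$; the term $\partial_\theta\otimes d\bar{z}$ has an invariant coefficient but acquires $(D\Phi_\alpha)^{-1}\partial_\theta=\alpha^{-1}\partial_\theta$, giving $w\mapsto\alpha^{-1}w$. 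This is precisely the asserted action.

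As an independent and finite check I would run the cocycle computation already indicated before the statement: substituting $\theta\mapsto\alpha\theta$ simultaneously in every chart turns the transition cocycle (\ref{coc1}) into (\ref{coc2}), multiplying $\psi$ by $\alpha$ and $\eta$ by $\alpha^{-1}$. Since $\psi$ and $\eta$ represent the classes $v$ and $w$ through the Dolbeault and Serre-duality identifications used in Thm.~\ref{splittang}, and since (\ref{coc1}) and (\ref{coc2}) describe the same supercomplex structure via the automorphism $\Phi_\alpha$, this confirms at the finite level that the entire $\complexC^\times$-orbit parametrizes one and the same deformation.

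The main obstacle is neither of these two linear computations but the bookkeeping that links them: one must verify that the abstract parameters $v,w$, which are produced in the proof of Thm.~\ref{splittang} only after passing through the $\opar$-obstruction, the Dolbeault isomorphism and Serre duality, transform under $\Phi_\alpha$ by the \emph{same} scalar factor as the concrete local terms $\theta\,\partial_z\otimes d\bar{z}$ and $\partial_\theta\otimes d\bar{z}$ (equivalently, as the cocycles $\psi,\eta$). Here one uses that $\Phi_\alpha$ is a holomorphic automorphism acting $\complexC$-linearly on the fibre coordinate of $L$ and that each identification involved is a natural $\complexC$-linear isomorphism, so that the transported factor is $\alpha$ and not $\bar\alpha$ or a power $\alpha^{\pm2}$. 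Making this naturality explicit, and thereby pinning down the exponents in (\ref{ctimesact}), is the only genuinely delicate point.
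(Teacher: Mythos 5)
Your proposal is correct, and in fact your ``independent and finite check'' \emph{is} the paper's entire proof: the paper argues purely at the cocycle level, observing that the odd deformation is encoded by the \v Cech classes $\tilde v=[\psi_{\alpha\beta}]\in H^1(M,K^{-1}\otimes L)$ and $\tilde w=[\eta_{\alpha\beta}]\in H^1(M,L^{-1})$, that the automorphism $\theta\mapsto\expe^c\theta$ turns the cocycle (\ref{coc1}) into (\ref{coc2}) and hence sends $(\tilde v,\tilde w)\mapsto(\alpha\tilde v,\alpha^{-1}\tilde w)$ without changing the supercomplex structure, and then invokes Serre duality to match these groups with $H^0(M,K^2\otimes L^{-1})\oplus H^0(M,K\otimes L)$. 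The infinitesimal route you lead with --- pulling back the tensor components $\theta\,\partial_z\otimes d\bar z$ and $\partial_\theta\otimes d\bar z$ via $D\Phi_\alpha$ and Prop.~\ref{prop:pbtensor} --- is not in the paper; it has the advantage of connecting directly to the tangent-space model $T_J\cs$ of Thm.~\ref{splittang}, but at the cost of exactly the naturality bookkeeping you identify as the delicate point (transporting the scalar $\alpha$ through the $\opar$-obstruction, Dolbeault, and Serre-duality identifications). The paper avoids that issue entirely by never leaving the cocycle picture, where the deformation parameters are the classes $[\psi],[\eta]$ themselves; since all the identifications involved are $\complexC$-linear, the orbit structure is unaffected in either description, so both arguments are sound.
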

\begin{proof}
Two deformations are equivalent if the complex structures $J',J''$ produced by them are the same, i.e.,
if they are related by an automorphism. A complex structure $J$ on $\cm$ is given by a cocycle of the form
\begin{eqnarray}
z_\beta &=& f(z_\alpha)\\
\nonumber
\theta_\beta &=& g(z_\alpha)\theta_\alpha,
\end{eqnarray}
for an open cover $\{U_\alpha\}_{\alpha\in A}$ of $M$ and local complex coordinates $(z_\alpha,\theta_\alpha)$
on $U_\alpha$. An odd deformation of $J$ produces a complex structure with additional terms $\psi,\eta$
of the form (\ref{coc1}). The collection $\{\psi_{\alpha\beta}\}$ defines an element $\tilde{v}$ of
$H^1(M,K^{-1}\otimes L)$, and likewise $\{\eta_{\alpha\beta}\}$ defines an element $\tilde{w}$ 
of $H^1(M,L^{-1})$. As shown in the preceding discussion, the map
\begin{equation}
(\tilde{v},\tilde{w})\mapsto (\alpha\tilde{v},\alpha^{-1}\tilde{w})
\end{equation}
alters the cocycle, but does not change the complex structure described by it.
Since $H^1(M,K^{-1}\otimes L)\cong H^0(M,K^2\otimes L^{-1})$ and $H^1(M,L^{-1})\cong H^0(M,K\otimes L)$,
this proves the claim.
\end{proof}

This unpleasant fact will prevent $\ct_{\fvect^L(1|1)}^{g,d}$ from being an effective parametrization of
the deformations of marked complex supersurfaces. If one tries to quotient out this $\complexC^\times$-action,
the result cannot be a supermanifold anymore. We will not pursue this problem further in this work. 
However, the situation becomes a bit better in the case of a $\fkl(1|1)$-surface, i.e., a super 
Riemann surface.

\begin{thm}
\label{srsaut}
Let $\cm$ be a $\fkl(1|1)$-surface. Then the $\complexC^\times$-action (\ref{ctimesact}) induces a
$\intZ_2$-action on the odd deformations of the $\fkl(1|1)$-structure given by
\begin{eqnarray}
\nonumber
\intZ_2\times H^0(M,K^2\otimes L^{-1}) &\to&%
H^0(M,K^2\otimes L^{-1})\\
\label{z2act}
(n,v) &\mapsto& nv.
\end{eqnarray}
The deformations $v$ and $nv$ describe the same $\fkl(1|1)$-structure.
\end{thm}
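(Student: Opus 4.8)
The plan is to deduce this theorem from Thm.~\ref{vectaut} by imposing the extra superconformal constraint that distinguishes a $\fkl(1|1)$-surface from a general complex $1|1$-dimensional supermanifold. First I would invoke Prop.~\ref{isofkl}: for a $\fkl(1|1)$-surface the line bundle $L$ to which $\co_{\cm,\bar{1}}$ is equivalent is a spin bundle, so $L^{\otimes 2}\cong K$, i.e. $L=K^{1/2}$. Substituting this into the two odd summands appearing in Thm.~\ref{vectaut} gives $K^2\otimes L^{-1}\cong K^{3/2}\cong K\otimes L$, so the two deformation spaces $H^0(M,K^2\otimes L^{-1})$ and $H^0(M,K\otimes L)$ collapse to the single space $H^0(M,K^{3/2})$. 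This is why the statement involves only one copy of $H^0(M,K^2\otimes L^{-1})$.

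The key step is to record the constraint that the $\fkl(1|1)$-structure imposes on the deformation cocycle (\ref{coc1}). I would write the deformed transition functions over $\cp(\Lambda_1)$, with $\psi,\eta$ proportional to the odd generator $\tau$, and demand that they preserve the distribution $\cd=\langle D\rangle$, $D=\partial_\theta+\theta\partial_z$, up to an invertible factor — the defining property of a super Riemann surface, exactly as in the derivation of (\ref{srstrans}). Expanding to first order in $\tau$ and carefully tracking the parity signs in the super-Leibniz rule, the $\theta$-order part reproduces the undeformed relation $f'=g^2$, while the odd-order part yields a linear relation of the form $\psi=g\,\eta$ (up to the normalization fixed by conventions). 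This relation locks $[\psi]\in H^1(M,K^{-1}\otimes L)$ and $[\eta]\in H^1(M,L^{-1})$ together, so that under the Serre-duality identifications used in Thm.~\ref{vectaut} a genuine $\fkl(1|1)$-deformation is a single element $v\in H^0(M,K^2\otimes L^{-1})$.

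With the constraint in hand the reduction of the group is immediate. Under the $\complexC^\times$-action (\ref{ctimesact}), $\psi$ scales by $\alpha$ and $\eta$ by $\alpha^{-1}$, so preservation of the superconformal locus $\psi=g\,\eta$ forces $\alpha\psi=\alpha^{-1}\psi$, i.e. $\alpha^2=1$. Hence only $\alpha\in\{\pm1\}\cong\intZ_2$ sends $\fkl(1|1)$-deformations to $\fkl(1|1)$-deformations, and the induced action on $v$ is $v\mapsto\pm v$, which is precisely $(n,v)\mapsto nv$. To see that $v$ and $-v$ describe the same $\fkl(1|1)$-structure, I would recall from the discussion preceding Thm.~\ref{vectaut} that $\alpha=-1$ is realized by the automorphism $\theta\mapsto-\theta$, and check directly that this map sends $D=\partial_\theta+\theta\partial_z$ to $-D$, hence preserves $\cd$. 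It is therefore a genuine automorphism of the $\fkl(1|1)$-structure (not merely of the underlying complex structure), so $v$ and $-v$ lie in one orbit.

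The main obstacle I expect is the second step: extracting the relation $\psi=g\,\eta$ cleanly from the superconformal condition, keeping the parity signs correct in $D\tilde z$ and $\tilde\theta\,D\tilde\theta$, and reconciling its bundle-theoretic content with the Serre-duality identifications of Thm.~\ref{vectaut}. Once that linear relation and its $\complexC^\times$-weights are established, the passage from $\complexC^\times$ to $\intZ_2$ and the automorphism argument are purely formal.
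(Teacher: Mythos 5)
Your proposal is correct and follows essentially the same route as the paper: the paper likewise derives the constraint $\psi_{\alpha\beta}=g_{\alpha\beta}\eta_{\alpha\beta}$ (together with $g^2=f'+\psi\psi'$) from the requirement that the deformed cocycle preserve $D$ up to an invertible factor, and then observes that the rescaling $\theta\mapsto\expe^{c}\theta$, which sends $\psi\mapsto\expe^{c}\psi$ and $\eta\mapsto\expe^{-c}\eta$, is compatible with this constraint only if $\expe^{2c}=1$. Your additional remarks — the identification $K^{2}\otimes L^{-1}\cong K\otimes L\cong K^{3/2}$ for a spin bundle and the explicit check that $\theta\mapsto-\theta$ sends $D$ to $-D$ — are consistent with the surrounding discussion in the paper and do not change the argument.
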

\begin{proof}
Let
\begin{eqnarray}
z_\beta &=& f(z_\alpha)\\
\nonumber
\theta_\beta &=& g(z_\alpha)\theta_\alpha,
\end{eqnarray}
be the cocycle defining the $\fkl(1|1)$-structure on $\cm$. An odd deformation brings it into the form
(\ref{coc1}), but the requirement that it remain a $\fkl(1|1)$-structure restricts the transition functions.
We have to make sure that $D=\pderiv{}{\theta}+\theta\pderiv{}{z}$ is preserved up to an invertible
factor, but (\ref{coc1}) entails
\begin{equation}
D_\alpha=(D_\alpha\theta_\beta)D_\beta+(D_\alpha z_\beta-\theta_\beta D_\alpha \theta_\beta)D^2_\beta.
\end{equation}
So we require $D_\alpha z_\beta=\theta_\beta D_\alpha \theta_\beta$, which is equivalent to
\begin{eqnarray}
\label{rest1}
\psi_{\alpha\beta} &=& g_{\alpha\beta}\eta_{\alpha\beta}\\
\label{rest2}
g_{\alpha\beta}^2 &=& f'_{\alpha\beta}+\psi_{\alpha\beta}\psi'_{\alpha\beta}.
\end{eqnarray}
The map $\theta\mapsto\expe^c\theta$ is only an automorphism of the $\fkl(1|1)$-structure
if (\ref{rest1}) and (\ref{rest2}) remain satisfied. This enforces
\begin{equation}
\expe^c=\pm 1.
\end{equation}
This means that the odd deformations of a $\fkl(1|1)$-structure are effective up this $\intZ_2$ ambiguity.
\end{proof}

So the space $\ct_{\fkl(1|1)}^g$ that we will construct below cannot be the base of a universal family either.
In this case, however, one could define a ``superorbifold'' (as is done, e.g., in \cite{LBR:Moduli}) by
quotienting out the residual automorphisms. We will leave the investigation of the resulting
superorbifold for later work and content ourselves with the space $\ct_{\fkl(1|1)}^g$.

\section{Earle's family of Jacobians over Teichm\"uller space}
\label{sect:earle}

The Teichm\"uller space of complex compact closed supersurfaces of dimension $1|1$ will turn out
not to be a supermanifold because of the presence of nontrivial infinitesimal automorphisms.
Its underlying space, however, is a complex manifold, namely
the fiber space $J(V_g)\to\ct_g$ of Jacobian varieties over (ordinary) Teichm\"uller space $\ct_g$. The
space $J(V_g)$ was introduced by Earle \cite{E:Families} along with an embedding of the universal
Teichm\"uller curve into it. The universal Teichm\"uller curve is universal among all 
embeddings of families of Riemann surfaces
into families of compact complex manifolds over the same base. We will briefly review this construction
in the next section, before proceeding to construct $\ct_{\fvect^L(1|1)}^{g,d}$.

\subsection{The universal Teichm\"uller curve}

Let $\ct_g$, $g\geq 2$, be the Teichm\"uller space of compact Riemann surfaces of 
genus $g$, and let $\Gamma$ be
the fundamental group of a smooth surface of genus $g$. Then the Bers fiber space \cite{B:Fiber}
$F_g\subset\ct_g\times\complexC$ is a subspace of $\ct_g\times\complexC$ with the following properties:
\begin{enumerate}
\item $\Gamma$ acts freely, properly discontinuously and fiberwise on $F_g$ as a group of biholomorphic maps
\begin{equation}
\gamma(t,z)=(t,\gamma^t(z))\qquad\forall \gamma\in\Gamma,(t,z)\in F_g,
\end{equation}
such that for every $t\in\ct_g$, $\gamma^t$ is a M\"obius transformation,
\item the fiber $U(t)$ of $F_g$ over $t$ is simply connected, and $U(t)/\Gamma$ is the Riemann surface
represented by $t$,
\item the projection map $\ct_g\times\complexC\to\ct_g$ descends to a well-defined holomorphic map
$\pi_0:V_g:=F_g/\Gamma\to\ct_g$ defining a holomorphic family of Riemann surfaces.
\end{enumerate}
The family $\pi_0:V_g\to\ct_g$ is called the universal Teichm\"uller curve. One can express the action of
$\Gamma$ on $F_g$ by the choice of $t$-dependent elements of $PSL(2,\complexC)$. These matrices will vary
holomorphically with $t$, since $\Gamma$ is supposed to act biholomorphically on the 
total space $F_g$.
Only loxodromic\footnote{A M\"obius transformation is called loxodromic if it is conjugate in
$PSL(2,\complexC)$ to the 
matrix $M=\mathrm{diag}(\lambda,\lambda^{-1})$ with $|\lambda|\neq 1$. Hyperbolic transformations
are a special case of loxodromic ones, namely those for which $\mathrm{tr}(M^2)=\lambda^2+\lambda^{-2}>4$.} 
elements of $PSL(2,\complexC)$ will occur, since otherwise, the quotients would not be compact.

By the Riemann mapping theorem, each $U(t)$ is biholomorphic to the upper half-plane $\mathbbm{H}$.
This is just the statement of the uniformization theorem: each compact Riemann surface of genus $g\geq 2$
has $\mathbbm{H}$ as its universal covering space.
One can then describe the action of $\Gamma$ as the action of a finitely generated discrete subgroup
of $\mathrm{Isom}(\mathbbm{H})=PSL(2,\realR)$. It is, however, impossible to uniformize all fibers of
$F_g$ simultaneously by $\mathbbm{H}$ if one wants to keep the action of $\Gamma$ holomorphic in $t$.

\subsection{The basis of normalized differentials}

A set of $2g$ canonical generators for $\Gamma$ is a set $\{A_1,\ldots,A_g,B_1,\ldots,B_g\}\subset\Gamma$ 
such that
\begin{equation}
\prod_{i=1}^g A_iB_iA_i^{-1}B_i^{-1}=1,
\end{equation}
holds, and all other relations in $\Gamma$ are consequences of this single one. On the quotient
$\Sigma_t=U(t)/\Gamma$, these generators are represented by nontrivial loops on the Riemann surface which
generate the fundamental group $\pi_1(\Sigma_t)$. These loops have intersection numbers
\begin{equation}
A_j\cdot A_k=B_j\cdot B_k=0,\qquad A_j\cdot B_k=\delta_{jk}\qquad 1\leq j,k\leq g
\end{equation}
(cf., e.g., \cite{J:Compact} for a geometric treatment of the intersection pairing). 
Bers \cite{B:Holomorphic} 
has shown that one can construct a family of $g$ linearly independent holomorphic 
functions $\alpha_j(t,z)$ on $F_g$ which satisfy
\begin{equation}
\label{treq}
\alpha_j(t,z)=\alpha_j(\gamma(t,z))\pderiv{\gamma}{z}(t,z)\qquad\forall \gamma\in\Gamma,
\end{equation}
and
\begin{equation}
\label{integ}
\int_{z}^{A_k^t(z)}\alpha_j(t,w)dw=\delta_{jk}\qquad\forall t\in\ct_g,z\in U(t),1\leq j,k\leq g.
\end{equation}
The integral in (\ref{integ}) may be computed along any path in $U(t)$. Equation (\ref{treq}) just means
that the functions $\alpha_j$ will descend to holomorphic 1-forms on the quotient $\Sigma_t$,
while equation (\ref{integ}) states that these 1-forms are the basis of $\Omega^{1,0}(\Sigma_t)$ 
dual to the loops $A_1,\ldots,A_g$. It is a classical fact that these requirements determine the
functions $\alpha_j$ uniquely \cite{J:Compact}. We can then calculate the Riemann period matrix associated
with our chosen basis of loops $A_1,\ldots,B_g$:
\begin{equation}
\label{periodm}
\tau_{ij}(t):=\int_{z}^{B_i^t(z)}\alpha_j(t,w)dw,
\end{equation}
where $z$ is just any point in $U(t)$. Since both $B_i$ and $\alpha_j$ are holomorphic with respect to $t$,
the entries of the period matrix are as well.

\subsection{The family of Jacobians}

It is a classical fact that the $g$ column vectors $\tau_i$, $1\leq i\leq g$ of the period matrix and the
$g$ standard basis vectors $e_1,\ldots,e_g$ of $\complexC^g$ are linearly independent over 
the reals \cite{J:Compact}.
They form a lattice subgroup $(\One,\tau)$ in $\complexC^g$, and the Jacobian variety of a
Riemann surface $\Sigma$ is a $g$-dimensional torus defined as
\begin{equation}
\Jac(\Sigma):=\complexC^g/(\One,\tau).
\end{equation}
It is Earle's result \cite{E:Families} that this construction can be carried out holomorphically on the
entire universal curve.

\begin{thm}[Earle]
The $\intZ^{g}\times\intZ^g$-action on $\ct_g\times\complexC^g$ given by
\begin{eqnarray}
\Lambda:\intZ^g\times\intZ^g\times(\ct_g\times\complexC) &\to& \ct_g\times\complexC\\
(m,n,t,z) &\mapsto& (t,z+m+\tau(t)n)
\end{eqnarray}
is a free, properly discontinuous action by biholomorphisms. The projection $\ct_g\times\complexC^g\to\ct_g$
descends to a holomorphic family $\pi:J(V_g):=(\ct_g\times\complexC^g)/\Lambda\to\ct_g$ of complex tori.
The family $J(V_g)$ is a topologically trivial fibration, and each fiber $\pi^{-1}(t)$ is canonically
isomorphic to $\Jac(\Sigma_t)$, the Jacobian of the corresponding Riemann surface.
\end{thm}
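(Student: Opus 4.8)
The plan is to verify the three asserted properties of the action and then identify the quotient, treating $J(V_g)$ as a family version of the classical realization of a Jacobian as a complex torus $\complexC^g/(\One,\tau)$. All of the non-formal input has already been assembled: the entries $\tau_{ij}(t)$ of the period matrix are holomorphic in $t$ (eq. (\ref{periodm})), and for each fixed $t$ the $2g$ vectors $e_1,\ldots,e_g,\tau_1(t),\ldots,\tau_g(t)$, i.e. the standard basis of $\complexC^g$ together with the columns of the period matrix, are $\realR$-linearly independent; equivalently, by the Riemann bilinear relations $\tau(t)$ is symmetric with positive-definite imaginary part.

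First I would dispatch biholomorphy and freeness. Since each $\tau_{ij}$ is holomorphic in $t$, the map $(t,z)\mapsto(t,z+m+\tau(t)n)$ is holomorphic with holomorphic inverse $(-m,-n)$, so every element of $\intZ^g\times\intZ^g$ acts biholomorphically. Freeness is immediate from the real independence of the periods: a fixed point of $(m,n)$ forces $m+\tau(t)n=0$, i.e. $\sum_i m_i e_i+\sum_j n_j\tau_j(t)=0$, and the $\realR$-independence of these vectors, a fortiori their $\intZ$-independence, gives $m=n=0$.

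The main obstacle is proper discontinuity, because $\ct_g$ is noncompact and the estimate must be made uniform along the base. The plan is to exhaust $\ct_g\times\complexC^g$ by compacta and show that for a compact $K$ only finitely many $(m,n)$ satisfy $(m,n)\cdot K\cap K\neq\emptyset$. Enclosing $K$ in a product $K_1\times K_2$ with $K_1\subset\ct_g$ and $K_2\subset\complexC^g$ compact, such an intersection forces $m+\tau(t)n\in K_2-K_2$ for some $t\in K_1$. Since $t\mapsto\mathrm{Im}(\tau(t))$ is continuous and positive-definite, on the compact set $K_1$ there is a uniform bound $\mathrm{Im}(\tau(t))\geq c\,\One$ for some $c>0$; taking imaginary parts of the bounded quantity $m+\tau(t)n$ then bounds $n$, after which $\tau(t)n$ is bounded on $K_1$ and hence $m$ is bounded as well. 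Bounded sets of integer vectors being finite, proper discontinuity follows.

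With a free, properly discontinuous action by biholomorphisms in hand, the quotient $J(V_g)=(\ct_g\times\complexC^g)/\Lambda$ is automatically a complex manifold for which $\ct_g\times\complexC^g$ is a holomorphic covering, and the projection to $\ct_g$ is $\Lambda$-invariant, so it descends to a holomorphic map $\pi$. Its fiber over $t$ is $\complexC^g/(\intZ^g+\tau(t)\intZ^g)=\complexC^g/(\One,\tau(t))=\Jac(\Sigma_t)$ by definition of the Jacobian, and the identification is canonical because the normalized basis $\alpha_j(t,\cdot)$ and the marking $A_1,\ldots,B_g$ are fixed once and for all. For topological triviality I would exhibit an explicit smooth trivialization: since $e_1,\ldots,e_g,\tau_1(t),\ldots,\tau_g(t)$ form a real basis of $\complexC^g\cong\realR^{2g}$, every $z$ has unique real coordinates $z=x+\tau(t)y$ with $x,y\in\realR^g$, in which $\Lambda$ acts by $(x,y)\mapsto(x+m,y+n)$; hence $(t,z)\mapsto(t,(x\bmod\intZ^g,\,y\bmod\intZ^g))$ descends to a diffeomorphism $J(V_g)\cong\ct_g\times(\realR^g/\intZ^g)^2=\ct_g\times T^{2g}$. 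This map is only real-analytic, not holomorphic, since the complex structure on the fibers genuinely varies with $t$, but that is all topological triviality requires; alternatively one may simply invoke the contractibility of $\ct_g$ to trivialize the torus bundle.
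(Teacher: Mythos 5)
The paper gives no proof of this statement: it is quoted verbatim as Earle's theorem with a citation to \cite{E:Families}, so there is no argument of the paper's to compare yours against. Your proposal is a correct, self-contained verification. Biholomorphy and freeness follow exactly as you say from holomorphy of $\tau_{ij}(t)$ and the $\realR$-linear independence of $e_1,\ldots,e_g,\tau_1(t),\ldots,\tau_g(t)$, which the paper records as a classical fact. Your proper-discontinuity argument is the right one and is where the real content lies; note only that you invoke a slightly stronger input than the paper states, namely positive-definiteness of $\mathrm{Im}\,\tau(t)$ (from the second Riemann bilinear relation) rather than mere real-independence of the periods. In fact invertibility of $\mathrm{Im}\,\tau(t)$, which is equivalent to the stated independence, already suffices: from $m+\tau(t)n$ bounded one gets $\mathrm{Im}(\tau(t))\,n$ bounded, and uniform boundedness of $(\mathrm{Im}\,\tau(t))^{-1}$ on the compact $K_1$ bounds $n$, then $m$. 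Either way the set of relevant $(m,n)$ is finite. The identification of the fiber with $\Jac(\Sigma_t)$ is indeed tautological given the paper's definition $\Jac(\Sigma):=\complexC^g/(\One,\tau)$ relative to the fixed marking and normalized differentials, and your real-analytic trivialization $z=x+\tau(t)y$ (or contractibility of $\ct_g$) settles topological triviality. No gaps.
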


\subsection{The Picard variety}

The isomorphism classes of holomorphic line bundles on a Riemann surface $\Sigma$ form an abelian group,
the Picard group $\Pic(\Sigma)$, with group operation the tensor product of bundles. This group can 
be written in a variety of ways. First of all, one can
view $\Pic(\Sigma)$ as the group of linear equivalence classes of divisors on $\Sigma$. From this point
of view, the Picard variety inherits naturally the degree map of divisors
\begin{equation}
\deg:\Pic(\Sigma)\to\intZ,
\end{equation}
which is easily seen to be a homomorphism of abelian groups. On the other hand, the set of isomorphism
classes of line bundles is naturally isomorphic to the first \v Cech cohomology group 
$H^1(\Sigma,\co^*)$, with $\co^*$ the sheaf
of invertible holomorphic functions on $\Sigma$. The two pictures can be connected using the
exponential sequence of sheaves
\begin{equation}
0 \longrightarrow \underline{\intZ}\stackrel{\iota}{\longrightarrow} \co%
\stackrel{\exp}{\longrightarrow} \co^*\longrightarrow 0,
\end{equation}
where $\underline{\intZ}$ is the locally constant sheaf with values in $\intZ$ 
and $\exp$ is the map which sends each germ $f$ to $\exp(2\pi if)$. This sequence induces a 
long exact sequence in cohomology, of which a part reads
\begin{equation}
\ldots\to H^1(\Sigma,\underline{\intZ})\longrightarrow H^1(\Sigma,\co)\longrightarrow%
H^1(\Sigma,\co^*)\stackrel{c}{\longrightarrow} H^2(\Sigma,\underline{\intZ})\to\ldots
\end{equation}
Since $\Sigma$ is connected, we have $H^2(\Sigma,\underline{\intZ})\cong\intZ$.
The connecting homomorphism $c$ sends a line bundle to its Chern class, so in our case it is
simply the degree map. Hence, we can conclude that for the line bundles of degree zero, we have
\begin{equation}
\Pic^0(\Sigma)\cong \exp(H^1(\Sigma,\co)/H^1(\Sigma,\underline{\intZ})).
\end{equation}
Line bundles of degree zero form a normal subgroup of $\Pic(\Sigma)$ which can be identified with
the Jacobian via the Abel map. Choose $p_0\in\Sigma$. Then the Abel map is given by
\begin{eqnarray}
\ca_{p_0}:\Sigma &\to& \Jac(\Sigma)\\
\ca_{p_0}(p) &=& \left(\int_{p_0}^p\alpha_1,\ldots,\int_{p_0}^p\alpha_g\right)
\end{eqnarray}
This mapping depends on the base point $p_0$ and on our choice of a basis $A_1,\ldots,B_g$
(see above) for the first
homology of $\Sigma$.\footnote{Actually we had introduced $A_1,\ldots,B_g$ as the generators
of $\pi_1(\Sigma)$. But by a theorem of van Kampen (see, e.g., \cite{J:Compact}) one has
$H_1(\Sigma,\intZ)\cong(\pi_1(\Sigma)/[,])$, where $[,]$ denotes the commutator subgroup of
$\pi_1(\Sigma)$. Thus $H_1$ is isomorphic to the free abelian group generated by
$A_1\ldots,B_g$.}
The 1-forms being integrated over in the Abel map are those dual to the $A_i$.
The Abel map is an embedding and induces a homomorphism
\begin{equation}
\ca_{p_0}:\mathrm{Div}(\Sigma)\to\Jac(\Sigma),
\end{equation}
where $\mathrm{Div}$ is the abelian group of divisors on $\Sigma$. For divisors of degree 0, the 
Abel map is independent of the base point. Moreover, Abel's theorem asserts that
\begin{equation}
\ca_{p_0}(D)=0\quad\Leftrightarrow\quad D\in\mathrm{PDiv(\Sigma)},
\end{equation}
where $\mathrm{PDiv(\Sigma)}$ is the space of principal divisors (those which can occur as
the divisors of meromorphic functions). This shows that one can let $\ca_{p_0}$ descend to a map
on the group of linear equivalence classes of divisors, and that it is injective there. In addition,
the Jacobi inversion theorem states that $\ca_{p_0}$ is surjective. Therefore, we have
the sequence
\begin{equation}
\ca_{p_0}:\Jac(\Sigma)\cong(\mathrm{Div(\Sigma)}/\sim)\cong\Pic^0(\Sigma),
\end{equation}
of isomorphisms of abelian varieties,
where $\sim$ denotes linear equivalence. Since $\deg:\Pic\to\intZ$ is a homomorphism, we see that the
set of line bundles of degree $d$ can be obtained as the coset
\begin{equation}
\Pic^d(\Sigma)\cong L \otimes\Pic^0(\Sigma)
\end{equation}
of any line bundle $L$ of degree $d$. None of these cosets except for $d=0$ is a group, but each 
one is isomorphic as a complex manifold to $\Pic^0(\Sigma)\cong\Jac(\Sigma)$
\cite{J:Compact}, \cite{GH:Principles}. 

\subsection{The embedding of $V_g$ into $J(V_g)$}

With the family $J(V_g)\to\ct_g$ at hand, one can ask oneself whether it is possible to extend 
the Abel embedding 
$\ca_{p_0}:\Sigma\to\Jac(\Sigma)$ to the entire family $J(V_g)$. The problem here is that one cannot
choose a point $p_0(t)$ on $\Sigma(t)$ which varies holomorphically with the coordinates $t$ of
$\ct_g$: the universal curve $V_g$ has no holomorphic sections if $g\geq 3$ \cite{H:Sur}.

As before,
let $A_1,\ldots,B_g$ denote a canonical basis of $H_1(\Sigma)$, and let $\omega_1,\ldots,\omega_g$
be the dual basis of holomorphic 1-forms on $\Sigma$. For any base point $p_0\in\Sigma$, we define 
the vector $K(p_0)$ of Riemann constants as the point of $\Jac(\Sigma)$ whose $j$-th component is
\begin{equation}
K_j(p_0)=-\frac{\tau_{jj}}{2}+\sum_{k\neq j}\int_{A_j}\omega_j(w)\int_{p_0}^w\omega_k(s),
\end{equation}
where $\tau$ is the period matrix (\ref{periodm}) of $\Sigma$. The vector $K$ plays an important role
in the study of theta functions of Riemann surfaces, as well as for spin structures, as will be shown below.
It also plays a key role in the construction of an embedding $V_g\to J(V_g)$ as Earle has shown.

\begin{thm}[Earle \cite{E:Families}]
The map $\eta:F_g\to\complexC^g$ defined by
\begin{equation}
\eta_j(t,z):=\frac{1}{1-g}%
\left(\sum_{k\neq j}\int_{z}^{A_j(z)}\alpha_j(w)\left(\int_{z}^{w}\alpha_k(u)du\right)dw-%
\frac{\tau_{jj}}{2}\right)
\end{equation}
is holomorphic and satisfies
\begin{equation}
\label{transfeta}
\pderiv{\eta_j}{z}=\alpha_j,\qquad \eta(A_k(t,z))=\eta(t,z)+e_k,\qquad \eta(B_k(t,z))=\eta(t,z)+\tau(t)e_k
\end{equation}
for $j,k=1,\ldots,g$. Here $e_k$ is the $k$-th unit vector of $\complexC^g$.
\end{thm}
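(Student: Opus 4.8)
The plan is to establish the four assertions of the theorem separately: holomorphicity, the derivative identity $\pderiv{\eta_j}{z}=\alpha_j$, and the two quasi-periodicity laws under the $A$- and $B$-generators, treating the behaviour under the $B$-cycles as the genuine core of the argument. I would first settle holomorphicity, which is essentially formal. Each $\alpha_j(t,\cdot)$ is holomorphic on $F_g$ by its construction, each M\"obius map $A^t_k$ depends holomorphically on $t\in\ct_g$ because $\Gamma$ acts biholomorphically on the total space $F_g$, and the inner primitive $z\mapsto\int_z^w\alpha_k(u)\,du$ is holomorphic in both endpoints. An iterated integral of a holomorphic family of integrands, taken over paths whose endpoints vary holomorphically, is again holomorphic in all parameters; this follows by differentiation under the integral sign together with Morera's theorem, applied first in $z$ and then in the coordinates $t$. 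Hence $\eta_j$ is holomorphic on $F_g$.

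Next I would prove $\pderiv{\eta_j}{z}=\alpha_j$ by the Leibniz rule. The constant $-\tfrac12\tau_{jj}$ is independent of $z$ and drops out, so only the double integral matters. Differentiating a typical summand yields three contributions: one from the moving upper endpoint of the outer integral, one from its lower endpoint $z$, and one from the explicit occurrence of $z$ as the lower limit of the inner integral. The outer lower-endpoint term vanishes because the inner integral then runs over a degenerate path. The outer upper-endpoint term, after applying the transformation law (\ref{treq}) in the form $\alpha(A(z))A'(z)=\alpha(z)$, is multiplied by an $A$-period with distinct indices and so vanishes by the off-diagonal case of (\ref{integ}). Only the inner-limit term survives, and by the diagonal case of (\ref{integ}) each of the $g-1$ summands contributes $-\alpha_j$; the bracket therefore differentiates to $-(g-1)\alpha_j$, and the prefactor $\tfrac1{1-g}$ restores exactly $\alpha_j$. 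This is precisely what fixes the normalizing constant $\tfrac1{1-g}$.

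For the $A$-period law $\eta_j(A_m(t,z))=\eta_j(t,z)+\delta_{jm}$, which is the $j$-th component of the shift by $e_m$, I would substitute $z\mapsto A_m z$ and split each integration path as the concatenation of the path used at $z$ with the connecting arc from $z$ to $A_m z$. The $\Gamma$-invariance of the holomorphic $1$-form $\alpha\,dw$ expressed by (\ref{treq}) identifies the transported integrals with the original ones up to $A$-periods, and these periods are Kronecker deltas by (\ref{integ}); the bookkeeping then collapses to the single integer shift $\delta_{jm}$, while the constant $-\tfrac12\tau_{jj}$ is left untouched.

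The hard part is the $B$-period law $\eta_j(B_m(t,z))=\eta_j(t,z)+\tau_{jm}(t)$, the $j$-th entry of $\tau(t)e_m$. The same path-splitting now feeds $B$-periods $\int_z^{B_m z}\alpha_k=\tau_{km}$ into the inner integral through (\ref{periodm}), so the transported double integral acquires, besides a clean $\tau_{jm}$, additional bilinear period contributions. To show these recombine correctly I would invoke the Riemann bilinear relations together with the symmetry $\tau_{km}=\tau_{mk}$, with the diagonal half-period $-\tfrac12\tau_{jj}$ serving as exactly the counterterm that absorbs the residual $\tfrac12\tau_{jj}$-type piece, leaving the net shift equal to $\tau_{jm}$. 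The principal obstacle is precisely this quasi-period accounting: tracking how the ordered double integral transforms under $B_m$ and verifying via the bilinear relations that everything except $\tau_{jm}$ cancels. Once these three identities are in hand, they show that $\eta$ is equivariant for the lattice $(\One,\tau)$ and hence descends to the holomorphic section realizing the Abel-type embedding $V_g\hookrightarrow J(V_g)$.
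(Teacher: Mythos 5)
First, a point of reference: the paper itself gives no proof of this statement — it is quoted directly from Earle \cite{E:Families} and only followed by an interpretive remark — so there is no in-paper argument to compare yours against, and your attempt has to stand on its own. Your overall strategy (Leibniz rule for the derivative identity, path decomposition plus the normalization (\ref{integ}) for the periods) is the right one, but there are two genuine problems. The first is that your computation of $\pderiv{\eta_j}{z}$ does not match the formula as printed. For the summand indexed by $k$, the term coming from the $z$-dependence of the inner limit is
\begin{equation*}
-\alpha_k(z)\int_z^{A_j(z)}\alpha_j(w)\,dw=-\alpha_k(z),
\end{equation*}
not $-\alpha_j(z)$ as you assert, so the bracket differentiates to $-\sum_{k\neq j}\alpha_k$ and the printed formula yields $\pderiv{\eta_j}{z}=\tfrac{1}{g-1}\sum_{k\neq j}\alpha_k\neq\alpha_j$. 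Your computation is correct only for the integrand with $j$ and $k$ interchanged, i.e.\ $\int_z^{A_k(z)}\alpha_k(w)\bigl(\int_z^w\alpha_j\bigr)dw$, which is Earle's actual formula (and is also consistent with the paper's own expression for the Riemann constants up to the same transposition). You have silently corrected a transcription error in the statement; you should have detected and flagged the discrepancy, since as printed the theorem's first identity is false.

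The second and more serious problem is that the $B$-period law — which you yourself single out as the core of the theorem — is not proved but only gestured at, and the gesture points at the wrong mechanism. No Riemann bilinear relations, no symmetry of $\tau$, and no cancellation against $-\tfrac{\tau_{jj}}{2}$ are involved: that constant is independent of $z$ and drops out of the difference $\eta_j(\gamma z)-\eta_j(z)$ for every $\gamma$. The actual argument is elementary. Writing $h_j(w)=\int_z^w\alpha_j$ and $G_k=\alpha_k h_j$ (for the corrected integrand), one has $\int_{B_mz}^{w}\alpha_j=h_j(w)-\tau_{jm}$, and the $-\tau_{jm}$ multiplies $\int_{B_mz}^{A_k(B_mz)}\alpha_k=1$, contributing $-\tau_{jm}$ from each of the $g-1$ summands and hence the desired shift after dividing by $1-g$. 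The leftover piece $\int_{B_mz}^{A_kB_mz}G_k-\int_z^{A_kz}G_k$ reduces by path decomposition to $\int_{A_kz}^{A_kB_mz}G_k-\int_z^{B_mz}G_k$, and the substitution $w=A_k(v)$ together with (\ref{treq}) and $h_j(A_kv)=h_j(v)+\delta_{jk}=h_j(v)$ for $k\neq j$ shows this vanishes. This is the same computation as for the $A$-cycles, just with $\tau_{jm}$ in place of $\delta_{jm}$. As written, your proposal establishes holomorphy and the $A$-period law but leaves the $B$-period law unproven and misdiagnoses what would be needed to prove it.
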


The map $\eta$ just assigns to every point $(t,z)$ the (normalized) vector of Riemann constants 
$\frac{1}{1-g}K(t,z)$ relative to this point. The properties (\ref{transfeta}) show that $\eta$ descends
to a map $\psi:V_g\to J(V_g)$, the desired embedding.

\begin{thm}[Earle \cite{E:Families}]
\label{thm:earle}
Let $\pi:J(V_g)\to\ct_g$ and $\pi_0:V_g\to\ct_g$ be the family of Jacobians and the universal
Teichm\"uller curve, respectively. Then the map $\psi:V_g\to J(V_g)$ defined by the map $\eta$ above is
a holomorphic embedding such that $\pi_0=\pi\circ\psi$.
\end{thm}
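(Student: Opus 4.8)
The plan is to realize the embedding by lifting to the universal covers $F_g$ and $\ct_g\times\complexC^g$, and then descending through the two quotient constructions. First I would introduce the holomorphic map
\begin{equation}
\tilde{\psi}:F_g\to\ct_g\times\complexC^g,\qquad \tilde{\psi}(t,z)=(t,\eta(t,z)),
\end{equation}
which is holomorphic because $\eta$ is, and which preserves the $\ct_g$-coordinate by construction. The crux of the descent is that the transformation laws (\ref{transfeta}) say exactly that $\tilde{\psi}$ intertwines the $\Gamma$-action on $F_g$ with the $\Lambda$-action on $\ct_g\times\complexC^g$: a generator $A_k$ shifts $\eta$ by $e_k$ and $B_k$ shifts it by $\tau(t)e_k$, and these are precisely the translations defining $\Lambda$.

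To make this rigorous I would define a group homomorphism $\rho:\Gamma\to\intZ^g\times\intZ^g$ on the canonical generators by $A_k\mapsto(e_k,0)$ and $B_k\mapsto(0,e_k)$. Since the target is abelian, $\rho$ kills every commutator, so the single defining relation $\prod_i A_iB_iA_i^{-1}B_i^{-1}=1$ is automatically respected and $\rho$ is well defined. Iterating (\ref{transfeta}) along an arbitrary word then gives $\eta(\gamma(t,z))=\eta(t,z)+m(\gamma)+\tau(t)n(\gamma)$ with $(m(\gamma),n(\gamma))=\rho(\gamma)$, i.e. $\tilde{\psi}\circ\gamma=\Lambda_{\rho(\gamma)}\circ\tilde{\psi}$. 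Hence $\tilde{\psi}$ descends to a map $\psi:V_g\to J(V_g)$ on the quotients, and $\pi\circ\psi=\pi_0$ is immediate from the preservation of the $\ct_g$-coordinate. Because both quotient projections are holomorphic covering maps (the actions being free, properly discontinuous, and by biholomorphisms) and $\tilde{\psi}$ is holomorphic, $\psi$ is holomorphic as well.

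It remains to show that $\psi$ is an embedding, which is where the real content lies. For the immersion property I would use the first relation in (\ref{transfeta}), $\partial\eta_j/\partial z=\alpha_j$: the holomorphic differentials $\alpha_1,\dots,\alpha_g$ have no common zero on a surface of genus $g\geq 2$ (by Riemann--Roch, $h^0(K-p)=g-1$, so some $\alpha_j$ is nonzero at each $p$), whence the fiber derivative of $\tilde{\psi}$ never vanishes; together with the identity in the $\ct_g$-directions the full differential has maximal rank $g+1$. For injectivity I would argue fiberwise, which suffices since $\psi$ respects the fibrations over $\ct_g$: the relation $\partial\eta/\partial z=\alpha$ shows that for fixed $t$ the map $\eta(t,\cdot)$ differs from the Abel map $\ca_{p_0}$ only by an additive constant, so $\psi|_{\Sigma_t}$ is the classical Abel--Jacobi map $\Sigma_t\to\Jac(\Sigma_t)$ composed with a translation of the torus, and Abel's theorem together with the Jacobi inversion theorem guarantees injectivity. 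Finally, the fibers $\Sigma_t$ being compact and $\psi$ commuting with the projections to $\ct_g$ makes $\psi$ proper, and a proper injective holomorphic immersion is a closed embedding.

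The main obstacle I anticipate is the fiberwise injectivity: one must identify $\psi|_{\Sigma_t}$ with a translate of the Abel--Jacobi map and correctly invoke Abel's theorem and Jacobi inversion, all while tracking the dependence on the base point $p_0$ and the symplectic basis $A_1,\dots,B_g$ (which, as noted in connection with $V_g$ having no holomorphic sections for $g\geq 3$, need not vary holomorphically over $\ct_g$). By contrast, once the transformation laws (\ref{transfeta}) are granted, the equivariance and descent are essentially formal, and the immersion property follows directly from the base-point-freeness of the canonical system.
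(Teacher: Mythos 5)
Your argument is correct, and since the paper itself offers no proof of this theorem --- it only remarks that the transformation laws (\ref{transfeta}) imply descent and then cites Earle --- your proposal supplies exactly the content the paper delegates to the reference. The descent via the homomorphism $\rho:\Gamma\to\intZ^g\times\intZ^g$ (well defined because the target is abelian, so the single surface relation is automatically killed), the identification of $\psi|_{\Sigma_t}$ with a translate of the Abel map via $\partial\eta_j/\partial z=\alpha_j$, injectivity from Abel's theorem, the immersion property from base-point-freeness of the canonical system, and the properness argument are the standard ingredients and are assembled correctly. Two small corrections: the maximal rank of $D\tilde{\psi}$ is $\dim\ct_g+1=3g-2$, not $g+1$ (the differential is block triangular, the identity on the $3g-3$ base directions plus the nowhere-vanishing column $(\alpha_1,\ldots,\alpha_g)$ in the fiber direction); and Jacobi inversion is not needed for injectivity --- Abel's theorem alone shows that $\eta(t,z)-\eta(t,w)\in\intZ^g+\tau(t)\intZ^g$ forces the degree-zero divisor $z-w$ to be principal, which for $g\geq 1$ forces $z=w$. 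Finally, the difficulty you anticipate about the base point $p_0$ not varying holomorphically is immaterial for injectivity: on each individual fiber one may fix an arbitrary base point to identify $\eta(t,\cdot)$ with $\ca_{p_0}$ up to an additive constant, and the whole point of Earle's use of the Riemann constants is that $\eta$ is already globally defined without any such choice.
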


The Abel map with base point $p_0$ can be viewed as a map $\Sigma\to\mathrm{Div}^0(\Sigma)$ which 
assigns to each $p\in\Sigma$ the divisor class $[p-p_0]$. Conversely, if we have any divisor $D$ of degree
1 on $\Sigma$, then the map $p\mapsto [p-D]$ is an embedding $\Sigma\to\Jac(\Sigma)$, and all translates
of the Abel map can be obtained this way. Therefore, Thm.~\ref{thm:earle} defines a divisor class $[D(t)]$
on every fiber $\Sigma(t)$ of $V_g$, and this class varies holomorphically in $t$. The class
$[(g-1)D(t)]$ is the vector of Riemann constants, by construction. Moreover, a class of
degree 1 and the Jacobian variety are sufficient to reproduce all equivalence classes of divisors, and 
therefore the entire Picard variety.

\subsection{Spin bundles on Riemann surfaces}

It was already mentioned in Chapter \ref{ch:sconf} that a spin bundle on a Riemann surface $\Sigma$ is 
just a square root of the canonical bundle $K$, and that the number of inequivalent spin bundles 
on $\Sigma$ is given by $\big| H^1(\Sigma,\intZ_2)\big|=2^{2g}$. This already makes it clear that
the divisors belonging to spin bundles form a discrete subset of $\Jac(\Sigma)$, and that there is no
nontrivial way to continuously deform a spin bundle in such a way that it remains a spin bundle 
throughout the deformation. 

The distribution of spin bundle divisors on $\Jac(\Sigma)$ can be easily determined with the help of
the following well-known theorem.

\begin{thm}
Let $C\in\Jac(\Sigma)$ denote the divisor class of the canonical bundle, and let $K(p_0)$ be the vector
of Riemann constants relative to the point $p_0\in\Sigma$. Then
\begin{equation}
2K(p_0)=C\qquad\textrm{for all } p_0\in\Sigma.
\end{equation}
\end{thm}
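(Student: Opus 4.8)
The plan is to deduce the identity from Riemann's theta vanishing theorem together with the evenness of the Riemann theta function, which is the standard route taken in, e.g., \cite{GH:Principles} and \cite{ACGH:Geometry}. First I would attach to the period matrix $\tau=\tau(t)$ of (\ref{periodm}) the Riemann theta function $\theta$ and record two basic properties: it is quasi-periodic with respect to the lattice $(\One,\tau)$ defining $\Jac(\Sigma)$, and it is \emph{even}, $\theta(-e)=\theta(e)$, which is immediate from its definition as a lattice sum since the terms indexed by $n$ and $-n$ coincide. Next I would invoke Riemann's vanishing theorem in the form: there is a constant vector, namely the vector of Riemann constants $K(p_0)$, such that the zero locus $\Theta\subset\Jac(\Sigma)$ of $\theta$ equals the translate $W_{g-1}+K(p_0)$, where $W_{g-1}:=\ca_{p_0}(\Sigma^{(g-1)})$ is the image under the Abel map of the effective divisors of degree $g-1$. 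This is the one genuinely deep input, and I would simply cite it from \cite{GH:Principles}, \cite{J:Compact}.

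The heart of the argument is then a symmetry computation. Evenness gives $\Theta=-\Theta$, hence
\[
W_{g-1}+K(p_0)=-W_{g-1}-K(p_0).
\]
Separately I would use Riemann--Roch and Serre duality to understand $-W_{g-1}$: for an effective divisor $D$ of degree $g-1$ one has $h^0(D)-h^0(C-D)=\deg D-g+1=0$, so $D$ is linearly equivalent to an effective divisor exactly when $C-D$ is. Thus $D\mapsto C-D$ is an involution of the set of classes comprising $W_{g-1}$, i.e. $\ca_{p_0}(C)-W_{g-1}=W_{g-1}$, which rearranges to $-W_{g-1}=W_{g-1}-\ca_{p_0}(C)$. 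Substituting into the evenness relation yields
\[
W_{g-1}+K(p_0)=W_{g-1}-\ca_{p_0}(C)-K(p_0),
\]
so two translates of the single subvariety $W_{g-1}$ coincide.

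To conclude I would use that $W_{g-1}$ is the theta divisor, an ample divisor on the abelian variety $\Jac(\Sigma)$, and in particular is irreducible of dimension $g-1<g$, so its translation stabilizer in $\Jac(\Sigma)$ is trivial. Two translates of $W_{g-1}$ therefore agree only when the translation vectors agree, which forces $2K(p_0)=\ca_{p_0}(C)$ up to sign, and after identifying $\ca_{p_0}(C)$ with the class $C$ this is the assertion. Since the relation holds for every choice of $p_0$, the displayed identity follows for all $p_0\in\Sigma$.

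The main obstacle I expect is \emph{bookkeeping of signs and normalizations}, not any conceptual difficulty: Riemann's theorem and the vector $K(p_0)$ appear in several mutually incompatible conventions in the literature (one routinely meets $\Theta=W_{g-1}+K$ versus $\Theta=W_{g-1}-K$, and $K$ itself carries a convention-dependent sign), and the bare chain of equalities above produces $2K(p_0)=\pm\ca_{p_0}(C)$. One must verify that the explicit normalization of $K(p_0)$ recorded before the statement (the one with the $-\tau_{jj}/2$ term and the dual basis $\alpha_j$) is matched to the vanishing theorem so that the plus sign is selected and the result reads $2K(p_0)=C$. A secondary point needing care is the claim that $W_{g-1}$ admits no nontrivial translational symmetry, which I would justify by ampleness of the theta divisor together with $\dim W_{g-1}=g-1<g$ and its irreducibility.
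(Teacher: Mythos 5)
The paper does not actually prove this statement: it is quoted as a ``well-known theorem'' and used as a black box (its only role is to place the spin-bundle divisors on the translate of the half-period lattice by $K(p_0)$), so there is no in-paper argument to measure yours against. Your route --- Riemann's vanishing theorem, evenness of $\theta$, and the residuation symmetry of $W_{g-1}$ --- is the standard textbook proof, and its skeleton is sound. In particular the Riemann--Roch step is correct: for $D$ effective of degree $g-1$ one has $h^0(D)=h^0(C-D)$, so $[D]\mapsto [C]-[D]$ preserves $W_{g-1}$ and gives $-W_{g-1}=W_{g-1}-\ca_{p_0}(C)$. Your worry about sign conventions is the right worry, and it is resolvable by checking whichever form of the vanishing theorem you cite against the explicit normalization $K_j(p_0)=-\tau_{jj}/2+\ldots$ recorded in the paper just before the statement.

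The one step whose justification as written would not survive scrutiny is the claim that the translation stabilizer of $W_{g-1}$ is trivial \emph{because} it is ample, irreducible, and of dimension $g-1<g$. Ampleness only shows the stabilizer is contained in the finite group $\ker\phi_\Theta$, where $\phi_\Theta(a)=[t_a^*\Theta-\Theta]$, and irreducible ample divisors on abelian varieties can perfectly well have nontrivial finite stabilizers (this occurs for non-principal polarizations); irreducibility and dimension play no role here. What actually saves you is that $\Theta=W_{g-1}+\kappa$ induces a \emph{principal} polarization of $\Jac(\Sigma)$, i.e. $h^0(\mathcal{O}(\Theta))=1$ and $\phi_\Theta$ is an isomorphism, so $t_a\Theta=\Theta$ forces $a\in\ker\phi_\Theta=0$. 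With that repair the two translates can be cancelled, giving $2\kappa=-\ca_{p_0}(C)$ exactly, and the remaining sign ambiguity is purely the convention relating $\kappa$ to $K(p_0)$. (If you prefer to avoid the theta machinery and its conventions entirely, the other classical proof evaluates $2K(p_0)-\ca_{p_0}(C)$ directly by integrating around the fundamental polygon, as in \cite{J:Compact}.)
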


Since for a spin bundle $S$, we have $K=S\otimes S$, we must also have $2[S]=C$ for the divisor class $[S]$
of $S$. Therefore the set of points of $\Jac(\Sigma)$ which belong to divisors of spin bundles
is a translation of the half-period lattice
\begin{equation}
\label{hplat}
\Delta=\alpha+\tau\cdot\beta,\qquad \alpha,\beta\in\frac{1}{2}\intZ^g
\end{equation}
($\tau$ the period matrix) by the vector of Riemann constants. The elements of 
$\Delta$ are also called theta characteristics or
points of 2-torsion of $\Jac(\Sigma)$. One distinguishes odd and even spin structures by the parity of
the integer $4(\alpha\cdot\beta)=4\sum\alpha_k\beta_k$. 

The number of holomorphic sections of a
spin bundle on a Riemann surface is, in general, difficult to determine. The Riemann-Roch theorem is not of
much use for spin bundles: inserting a square root $S$ of $K$, one can only conclude that
$\deg(S)=g-1$, which was clear anyway. Moreover, spin bundles lie in the ``non-topological'' range
(\ref{gdeg}), and one can indeed show that the number $h^0(\Sigma,S)$ depends on the particular
complex structure $\Sigma$, i.e., it can jump when one runs along a path in Teichm\"uller space
\cite{ACGH:Geometry}.
At least for odd spin structures, a holomorphic section is guaranteed since one can show that
$h^0(\Sigma,S)\equiv$ parity of $S\mod\,2$.

Earle's embedding $V_g\to J(V_g)$ makes it possible to keep track of a particular spin structure
along the entire universal curve, since the vector of Riemann constants as the reference point is
given as a global holomorphic section of $J(V_g)$. It therefore makes sense to compare spin
structures on different fibers of the universal curve. One says that two spin structures on
two respective fibers $\Sigma_1,\Sigma_2\subset V_g$ are equal, if they correspond to the 
same theta characteristic
relative to the class $[(g-1)D(t)]$ which represents the vector of Riemann constants on each fiber. 

\begin{prop}
\label{spstr}
Let $\cs\to V_g$ be a holomorphic line bundle over the universal curve $V_g$ such that each restriction
$\cs(t)$
to a fiber $\Sigma(t)=\pi^{-1}(t)$ of $V_g$ is a spin bundle on $\Sigma(t)$. Then each $\cs(t)$ has the
same spin structure.
\end{prop}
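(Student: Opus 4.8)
The plan is to reduce the statement to the elementary fact that a continuous map from a connected space into a discrete space is constant. The key observation is that, measured against the vector of Riemann constants, a spin structure on a fiber is a point of $2$-torsion in the corresponding Jacobian, and the $2$-torsion points assemble into a finite covering of the connected base $\ct_g$ that is globally trivialized by the marking.

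First I would extract from $\cs$ a section of Earle's family of Jacobians. Since $\cs\to V_g$ is holomorphic and $\pi_0:V_g\to\ct_g$ is a holomorphic family of Riemann surfaces, the assignment $t\mapsto[\cs(t)]$ defines a holomorphic map from $\ct_g$ into the relative Picard variety of degree $g-1$ (here $\deg\cs(t)=g-1$ because $\cs(t)$ is a spin bundle). Using the embedding $\psi:V_g\to J(V_g)$ of Thm.~\ref{thm:earle}, which provides on every fiber the holomorphically varying reference class $[(g-1)D(t)]$ representing the vector of Riemann constants, I would form the difference $\delta(t):=[\cs(t)]-[(g-1)D(t)]\in\Jac(\Sigma(t))$, obtaining a holomorphic section $\delta:\ct_g\to J(V_g)$ of $\pi:J(V_g)\to\ct_g$. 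The next point is that $\delta(t)$ is always $2$-torsion: as $\cs(t)$ is a spin bundle, $2[\cs(t)]$ is the canonical class $C(t)$, and by the theorem relating the vector of Riemann constants to the canonical class ($2K(p_0)=C$) one also has $2[(g-1)D(t)]=C(t)$, whence $2\delta(t)=0$ for all $t$. Thus $\delta$ is a holomorphic section of the relative $2$-torsion subgroup $J(V_g)[2]\subset J(V_g)$.

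Then comes the decisive step. The canonical generators $A_1,\ldots,B_g$ of $\Gamma$ furnished by the marking give on each fiber a symplectic homology basis, hence the half-period description: the $2^{2g}$ points of $2$-torsion of $\Jac(\Sigma(t))$ are exactly the classes $\alpha+\tau(t)\beta$ with $\alpha,\beta\in\tfrac12\intZ^g/\intZ^g$. Because this basis is defined globally over all of $\ct_g$, the labeling identifies $J(V_g)[2]$ with the trivial $2^{2g}$-sheeted covering $\ct_g\times\bigl(\tfrac12\intZ^g/\intZ^g\bigr)^2\to\ct_g$. Under this identification $\delta$ becomes a continuous map $t\mapsto(\alpha(t),\beta(t))$ into the finite discrete set $(\intZ/2\intZ)^{2g}$; since $\ct_g$ is connected, this map is constant. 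Hence the theta characteristic $(\alpha,\beta)$ of $\cs(t)$ relative to $[(g-1)D(t)]$ is independent of $t$, which is precisely the assertion that every $\cs(t)$ carries the same spin structure.

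The hard part will be the first step: making the classifying map $t\mapsto[\cs(t)]$ precise and holomorphic, and matching it with Earle's explicit analytic model of $J(V_g)$. Once the holomorphic (in particular continuous) section $\delta$ into the $2$-torsion is in hand, the concluding topological argument is immediate. To carry out that first step I would either invoke the universal property of the relative Picard variety, or, staying closer to Earle's construction, express $[\cs(t)]$ via the Abel map with the holomorphically varying reference divisor $D(t)$ supplied by Thm.~\ref{thm:earle}, deducing holomorphy directly from the holomorphic dependence of the normalized differentials $\alpha_j(t,z)$ and the period matrix $\tau(t)$ established earlier.
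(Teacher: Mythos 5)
Your proposal is correct and follows essentially the same route as the paper: both use Earle's globally defined vector of Riemann constants $[(g-1)D(t)]$ to translate the spin classes into the holomorphically varying half-period lattice, and then conclude that a holomorphically (hence continuously) varying class over the connected base $\ct_g$ must stay on a single one of the $2^{2g}$ disjoint sections. Your version merely makes explicit two points the paper leaves implicit, namely that $2\delta(t)=0$ follows from $2K(p_0)=C$ and that the final step is the constancy of a continuous map into a discrete set.
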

\begin{proof}
Earle's embedding gives us a global section $[(g-1)D(t)]$ of $J(V_g)$ which can be interpreted
as the image under the Abel map of the divisor of degree $g-1$ corresponding to the vector of 
Riemann constants. Furthermore, the half-period lattice (\ref{hplat}) is given as $\Delta(t)$ on every 
fiber and varies holomorphically with $t$, since the period matrix $\tau$ is holomorphic in $t$. Therefore,
we can globally define the translated lattice $[(g-1)D(t)]+\Delta(t)$ on $V_g$, which consists of a set
of $2^{2g}$ holomorphic sections of $J(V_g)$ describing the spin structures of $\Sigma(t)$.
It is clear that the divisor class of $\cs(t)$ has to vary holomorphically in $t$ if $\cs$ is holomorphic
in $t$. Therefore, this class corresponds to one of the sections of $J(V_g)$ defined by 
$[(g-1)D(t)]+\Delta(t)$.
\end{proof}

The same does not hold anymore if one allows diffeomorphisms of $\Sigma$ which are not homotopic to
$\id_\Sigma$. Then the spin structures can get interchanged, but their parity remains preserved
\cite{ACGH:Geometry}.
The moduli space of spin curves therefore consists of two connected
components \cite{C:Moduli}. But in this work, we will confine ourselves to the construction of
Teichm\"uller spaces, where these problems do not arise and Prop.~\ref{spstr} holds. For a very detailed
account on the moduli of spin curves and theta characteristics see \cite{C:Moduli}, \cite{ACGH:Geometry}.

\subsection{The Teichm\"uller space of pairs of Riemann surfaces and line bundles}
\label{sect:modpair}

The discussion of the previous section can be summed up by saying that the Jacobian $\Jac(\Sigma)$ of a given
Riemann surface $\Sigma$ can be viewed as the moduli space of line bundles of some fixed degree $d$ on
$\Sigma$: it effectively parametrizes all isomorphism classes. The existence of Earle's family of
Jacobians allows one to extend this to pairs of complex structures and holomorphic line bundles. 

Let $M$ be a given smooth compact closed oriented surface $M$ together with
a smooth orientable vector bundle $p:E\to M$ of rank 2 and degree $d$. Denote by $\cc^{g,d}$ the set of pairs
$(J_E,J_M)$ of almost complex structures on $E$ and $M$, respectively, such that
\begin{enumerate}
\item $J_E$ is integrable, and
\item the bundle projection $p$ becomes a holomorphic map with respect to $J_E$ and $J_M$.
\end{enumerate}
The space of these pairs is obviously the space of pairs $(M,L)$ consisting of a Riemann surface of
genus $g$ and a holomorphic line bundle $p:L\to M$ of degree $d$. On $\cc^{g,d}$, we have an action of 
the group
$\Aut_{\catvbun}(M,E)$ of automorphisms of $E$ as a smooth vector bundle (see Definition \ref{def:autvb}). 
We recall that this group splits as a semidirect product
\begin{equation}
\Aut_{\catvbun}(M,E)=\mathrm{Diff}(M)\ltimes\Aut_{\catvbun(M)}(E),
\end{equation}
where $\Aut_{\catvbun(M)}(E)$ is the group of automorphisms of $E$ over $M$, i.e. the subgroup of
$\Aut_{\catvbun}(M,E)$ whose elements act as the identity on $M$.

The elements of $\mathrm{Diff}(M)$ act on $\cc^{g,d}$ by pullback of the entire bundle along with $J_E$ and
$J_M$. The elements of $\Aut_{\catvbun(M)}(E)$ preserve $J_M$ and act only by real linear automorphisms
on the fibers of $E$.
Quotienting out the acion of $\Aut_{\catvbun}(M,E)$ would produce the moduli space of pairs $(M,L)$, which
is too ambitious for our purposes.
Let us instead restrict the underlying diffeomorphisms to the group $\mathrm{Diff}_0(M)$, 
i.e. to those homotopic to the identity. This gives us the group $\Aut_{\catvbun}(M,E)_0$. The quotient of
$\cc^{g,d}$ by the action of this group is the Teichm\"uller space of pairs $(M,L)$.

\begin{prop}
\label{cor:modpair}
The family $J(V_g)\to\ct_g$ of Jacobians is the Teichm\"uller space of pairs $(M,L)$ consisting of
a Riemann surface of genus $g$ and a holomorphic line bundle $L\to M$ of degree $d$. 
\end{prop}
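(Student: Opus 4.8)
The plan is to identify the Teichm\"uller space of pairs $(M,L)$ with Earle's family $J(V_g)\to\ct_g$ by exhibiting a canonical bijection that respects the complex-analytic structure. First I would recall the setup: a point of the Teichm\"uller space of pairs is an equivalence class in $\cc^{g,d}$ under the action of $\Aut_\catvbun(M,E)_0$, where $E$ is the fixed smooth rank-$2$ bundle of degree $d$. Using the semidirect product decomposition $\Aut_\catvbun(M,E)_0 = \mathrm{Diff}_0(M)\ltimes\Aut_{\catvbun(M)}(E)$, I would quotient in two stages. Quotienting $\cc^{g,d}$ by the pullback action of $\mathrm{Diff}_0(M)$ on the \emph{base} almost complex structures $J_M$ alone yields, by classical Fischer--Tromba theory \cite{TTiRG}, the ordinary Teichm\"uller space $\ct_g$ together with the universal Teichm\"uller curve $\pi_0:V_g\to\ct_g$ sitting over it: each point $t\in\ct_g$ carries its Riemann surface $\Sigma_t=\pi_0^{-1}(t)$.

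Next I would handle the fiber direction. Over a fixed $t\in\ct_g$, the remaining data is an integrable $J_E$ making $p:E\to M$ holomorphic, modulo $\Aut_{\catvbun(M)}(\Sigma_t)$, i.e.\ a holomorphic line bundle $L\to\Sigma_t$ of degree $d$ up to isomorphism over $\Sigma_t$. The classical theory of the Picard variety, reviewed in the excerpt, gives a canonical isomorphism $\Pic^d(\Sigma_t)\cong\Pic^0(\Sigma_t)\cong\Jac(\Sigma_t)$ of complex manifolds, the latter via the Abel map and the Jacobi inversion theorem. Thus the fiber of the Teichm\"uller space of pairs over $t$ is canonically $\Jac(\Sigma_t)$. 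By Earle's theorem (Thm.~\ref{thm:earle}) the family $\pi:J(V_g)\to\ct_g$ assembles exactly these Jacobians into a holomorphic fibration whose fiber over $t$ is canonically isomorphic to $\Jac(\Sigma_t)$. Assembling the pointwise bijections, I would define the comparison map from the Teichm\"uller space of pairs to $J(V_g)$ fiberwise and check it is well defined, bijective, and biholomorphic.

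The main obstacle will be the holomorphic coherence of the identification across different fibers, that is, upgrading the fiberwise bijections $\{(M,L)\}\cong\Jac(\Sigma_t)$ to an isomorphism of complex-analytic families over $\ct_g$. The Abel map depends on a choice of base point $p_0\in\Sigma$ and of a symplectic basis $A_1,\dots,B_g$ of $H_1$, and as the excerpt notes (following \cite{H:Sur}), the universal curve $V_g$ admits no global holomorphic section when $g\geq 3$, so there is no naive way to spread out a single base point holomorphically. Here I would invoke Earle's embedding $\psi:V_g\to J(V_g)$: it provides a global holomorphic divisor class $[(g-1)D(t)]$ of degree $g-1$ on every fiber, hence a holomorphically varying reference divisor of degree $1$, which is precisely the data needed to define the Abel map simultaneously on all fibers and thereby trivialize the choice of base point in a holomorphic fashion. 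With this reference section in hand the fiberwise isomorphisms $\Pic^d(\Sigma_t)\cong\Jac(\Sigma_t)$ vary holomorphically in $t$, and the comparison map is holomorphic.

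Finally I would verify the two-stage quotient is legitimate, namely that the $\mathrm{Diff}_0(M)$- and $\Aut_{\catvbun(M)}(E)$-actions descend compatibly; since the semidirect product structure of Lemma~\ref{lem:autvb} makes $\Aut_{\catvbun(M)}(E)$ normal and the base action factors through $\mathrm{Diff}_0(M)$, the iterated quotient equals the total quotient, and no further subtleties arise in the Teichm\"uller (as opposed to moduli) setting, where the marking group acts freely. This completes the identification $J(V_g)\cong$ Teichm\"uller space of pairs $(M,L)$, proving Prop.~\ref{cor:modpair}.
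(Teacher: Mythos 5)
Your proof is correct and rests on the same two ingredients as the paper's: the semidirect product decomposition $\Aut_{\catvbun}(M,E)_0=\mathrm{Diff}_0(M)\ltimes\Aut_{\catvbun(M)}(E)$ from Lemma~\ref{lem:autvb}, and Earle's construction of $J(V_g)$. The difference is organizational and in the level of detail. The paper quotients in the order dictated by normality: first by the normal factor $\Aut_{\catvbun(M)}(E)$, which replaces each fiber structure $J_E$ by an isomorphism class $[L]\in\Pic^d$, and then by the residual $\mathrm{Diff}_0(M)$-action on the result; the complex structure is then obtained in one stroke by citing Earle. You quotient in the opposite order, by $\mathrm{Diff}_0(M)$ first. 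Since $\mathrm{Diff}_0(M)$ is the non-normal factor, this is not literally an iterated quotient by a normal subgroup; what makes it work is the fibration viewpoint you implicitly adopt — the forgetful map $\cc^{g,d}\to\cc_g$ is equivariant over the projection $\Aut_{\catvbun}(M,E)_0\to\mathrm{Diff}_0(M)$, and for $g\geq 2$ the stabilizer of a complex structure in $\mathrm{Diff}_0(M)$ is trivial, so the total quotient fibers over $\ct_g$ with fiber $\Pic^d(\Sigma_t)$. What your route buys is an explicit treatment of the point the paper compresses into ``by Earle's result'': the fiberwise identifications $\Pic^d(\Sigma_t)\cong\Jac(\Sigma_t)$ depend on a base point for the Abel map, $V_g$ has no holomorphic section for $g\geq 3$, and it is precisely Earle's section $[(g-1)D(t)]$ (the vector of Riemann constants) that supplies a holomorphically varying reference divisor and hence the holomorphic coherence of the identification across fibers. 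That is a genuine gap-filling remark, consistent with the paper's surrounding discussion in Section~\ref{sect:earle}, and your argument is complete.
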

\begin{proof}
Given the space $\cc^{g,d}$, we can quotient out the action of $\Aut_{\catvbun(M)}(E)$ to obtain the
space $J(\cc^{g,d})$ of pairs $(M,[L])$ of complex structures on $M$ and equivalence classes of holomorphic
line bundles of degree $d$ on $M$. This space then carries an action of
\begin{equation}
\Aut_{\catvbun}(M,E)_0/\Aut_{\catvbun(M)}(E)\cong\mathrm{Diff}_0(M).
\end{equation}
The quotienting out of this action produces the space of pairs $([M]_{\mathrm{Diff}_0(M)},[L])$ of
$\mathrm{Diff}_0(M)$-orbits of complex structures on $M$ and isomorphism classes of holomorphic line
bundles. By Earle's result, this space can be given a natural complex structure, namely that of the
family $J(V_g)$.
\end{proof}

\section{\texorpdfstring{The super Teichm\"uller space $\ct_{\fvect^L(1|1)}^{g,d}$}
{The Teichm\"uller space of supercomplex structures in 2|2 real dimensions}}

\subsection{Versal, semiuniversal and universal families}

The Teichm\"uller space $\ct_g$ of Riemann surfaces of genus $g$ is the base of a universal family of
marked Riemann surfaces. This means that any other complex analytic family of marked surfaces can
be obtained by pullback from it \emph{in a unique way}.
The supermanifold $\ct_{\fvect^L(1|1)}^{g,d}$ that will be constructed in this section is, however, 
not the base
of a universal family of complex $1|1$-dimensional supermanifolds. This is due to the existence
of automorphisms of these structures which cannot be divided out without destroying the supermanifold
structure of the base (cf.~Thm.~\ref{vectaut}). Before proceeding to the construction of
$\ct_{\fvect^L(1|1)}^{g,d}$, we want to introduce some basic terminology of deformation theory and 
argue that $\ct_{\fvect^L(1|1)}^{g,d}$
can still be regarded as the super Teichm\"uller space of $\fvect^L(1|1)$-structures.
For the sake of clarity, we look at ordinary complex manifolds first. For detailed accounts, consult,
e.g., \cite{M:Lectures}, \cite{K:Complex}, \cite{SU:Advances}.

Let $\pi:M\to B$ be a complex analytic family of compact complex manifolds.
This means that $\pi$ is a proper
holomorphic map and $\pi_*:T_pM\to T_{\pi(p)}B$ is surjective at every $p\in M$. For every $b\in B$,
such a family is also called a \emph{deformation} of the fiber $M_b:=\pi^{-1}(b)$. Assume for
simplicity that $B$ is a polydisc (since we are only interested in local properties around $b\in B$,
this is not a restriction). We denote by $\Gamma_{hol}(B,TB^{1,0})$ the holomorphic sections of the 
holomorphic tangent bundle of the base and by $\Gamma(M,TM^{1,0})$ the smooth holomorphic vector fields on
$M$. The \emph{Kodaira-Spencer map} $\ck S_\pi$ is defined as
\begin{eqnarray}
\ck\cs_\pi:\Gamma_{hol}(B,TB^{1,0}) &\to& H^1(M,T(M/B)^{1,0})\\
\gamma &\mapsto& [\bar{\partial}\eta],
\end{eqnarray}
where $\eta\in\Gamma(M,TM^{1,0})$ is any vector field such that $\pi_*\eta=\gamma$ and
$T(M/B):=\ker\pi_*$ denotes the relative tangent bundle. It is easy to show
that this map is well-defined, i.e., does not depend on the chosen lift $\eta$ of $\gamma$. The
map $\ck\cs_\pi$ induces a map
\begin{equation}
KS_{\pi,b}:T_bB^{1,0}\to H^1(M_b,TM_b^{1,0})
\end{equation}
which one obtains from the map that $\ck\cs_\pi$ induces on the stalks by dividing out the ideal of
germs of vector fields vanishing at $b$. This map is also called the Kodaira-Spencer map at $b$. The
Kodaira-Spencer mapping plays a central role in deformation theory. Originally it was invented to
study families of complex manifolds, but one can already see from the above definitions that it is
possible to apply it in a variety of other contexts. Vaintrob \cite{V:Deformations} has developed a
Kodaira-Spencer theory for complex superspaces and showed its usefulness in several examples.
One can interpret $KS_{\pi,b}$ as a means to measure how much the complex structure on the fibers of
the family varies in a neighbourhood of $b$. Geometrically speaking, the cohomology group 
$H^1(M_b,TM_b^{1,0})$ describes the possible variations of the transition functions of $M_b$ which
cannot be induced by mere coordinate changes, i.e., those which can define a new complex
structure which is not biholomorphic to the one on $M_b$.

The map $KS_{\pi,b}$ is also used to classify families in the sense of ``how well'' they parametrize the
objects in question (in the above definition: compact complex manifolds). For this we need to have
a look at the behaviour of $KS_\pi$ under pullback. Let $\phi:C\to B$ be a holomorphic map and let
$b=\phi(c)$. We obtain a pullback family $M\times_B C$ whose projections we denote as
\begin{equation}
\begin{CD}
M\times_B C @>\hat{\phi}>> M\\
@V{\hat{\pi}}VV @VV{\pi}V\\
C @>\phi>> B
\end{CD}\qquad.
\end{equation}
Then one finds (see, e.g., \cite{M:Lectures}):

\begin{thm}
We have
\begin{equation}
KS_{\hat{\pi},c}=KS_{\pi,b}\circ\phi_*:T_cC\to H^1(M_b,TM_b^{1,0}).
\end{equation}
\end{thm}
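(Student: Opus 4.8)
The plan is to verify the naturality of the Kodaira-Spencer map under base change by unwinding the definitions on both sides and tracing a single tangent vector through the construction. First I would fix $c \in C$ with $b = \phi(c)$ and an arbitrary tangent vector $v \in T_cC^{1,0}$. The right-hand side $KS_{\pi,b} \circ \phi_*$ applies $\phi_*$ to land in $T_bB^{1,0}$ and then applies the Kodaira-Spencer map of the original family, whereas the left-hand side applies $KS_{\hat\pi,c}$ directly to $v$. The goal is to show these two elements of $H^1(M_b, TM_b^{1,0})$ agree, noting that the fibers satisfy $(M \times_B C)_c \cong M_b$ canonically via $\hat\phi$, so that both sides live in the same cohomology group.

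The key computation is to produce, for the family $\hat\pi : M \times_B C \to C$, a smooth lift of $v$ and compare its $\bar\partial$ with the image under $\phi_*$. I would extend $v$ to a holomorphic vector field $\gamma_C$ on (a polydisc neighbourhood in) $C$, so that $\phi_*\gamma_C$ is a section over $B$ whose value at $b$ is $\phi_*v$. Choosing a smooth lift $\eta \in \Gamma(M, TM^{1,0})$ with $\pi_*\eta = \phi_*\gamma_C$, the definition gives $KS_{\pi,b}(\phi_*v) = [\bar\partial\eta]$ restricted to $M_b$. On the other hand, I would pull $\eta$ back along $\hat\phi$ (together with a chosen lift of $\gamma_C$ itself) to obtain a smooth lift $\hat\eta$ of $\gamma_C$ on the total space $M \times_B C$, so that $KS_{\hat\pi,c}(v) = [\bar\partial\hat\eta]$ restricted to the fiber over $c$. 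Since $\hat\phi$ is holomorphic and restricts to the canonical fiber isomorphism, the operator $\bar\partial$ commutes with $\hat\phi^*$, and the restriction of $\bar\partial\hat\eta$ to the fiber equals the pullback of the restriction of $\bar\partial\eta$ to $M_b$. The two cohomology classes therefore coincide.

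The main obstacle I anticipate is bookkeeping around the well-definedness and the fiber identification: one must check that the class $[\bar\partial\hat\eta]$ is independent of all the auxiliary choices (the extension $\gamma_C$, the lifts $\eta$ and $\hat\eta$), which is exactly the well-definedness already built into the definition of the Kodaira-Spencer map, and one must verify carefully that the relative tangent bundle $T((M\times_B C)/C)^{1,0}$ restricts over $c$ to $T M_b^{1,0}$ under the canonical isomorphism $(M\times_B C)_c \cong M_b$. The cleanest way to handle this is to work in local coordinates on a polydisc base, where the cocycle description of $H^1$ via transition functions makes the commutation of $\bar\partial$ with pullback transparent; the deformation-theoretic content is then simply that pulling back a family pulls back its infinitesimal variation of transition functions, which is precisely the statement $KS_{\hat\pi,c} = KS_{\pi,b}\circ\phi_*$. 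I would close by remarking that the same argument applies verbatim in the superanalytic setting of Vaintrob's theory, since the only ingredients used are the holomorphy of $\phi$ and $\hat\phi$ and the functoriality of the relative tangent sheaf under fiber products.
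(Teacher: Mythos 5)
The paper gives no proof of this theorem; it is quoted from the deformation-theory literature with a pointer to \cite{M:Lectures}, so there is no internal argument to compare yours against. Your overall plan is the standard one and is sound: choose lifts, apply $\bar{\partial}$, and compare Dolbeault representatives under the canonical fiber identification $(M\times_B C)_c\cong M_b$ furnished by $\hat{\phi}$, using the independence of choices already built into the definition of the Kodaira--Spencer map.

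One step does not parse as written and must be repaired. For a general holomorphic $\phi:C\to B$ the pushforward $\phi_*\gamma_C$ is a section of $\phi^*TB^{1,0}$ over $C$, not a vector field on $B$: if $\phi$ is not injective, two preimages of the same point of $B$ may push forward to different vectors, so the phrase ``$\phi_*\gamma_C$ is a section over $B$'' and the subsequent lift $\eta$ with $\pi_*\eta=\phi_*\gamma_C$ are not defined. The repair is cheap and uses exactly the feature you invoke at the end: since $KS_{\pi,b}$ is defined on $T_bB$ and depends only on the value of the tangent vector at $b$, choose any holomorphic extension $\gamma_B$ of $\phi_*v$ on the polydisc $B$ and any smooth lift $\eta$ of $\gamma_B$; to compute $KS_{\hat{\pi},c}(v)$ one then only needs a lift of $v$ along the single fiber over $c$, and there the pair $(\eta\circ\hat{\phi},\gamma_C)$ is genuinely tangent to the fiber product because $\pi_*(\eta)=\gamma_B(b)=\phi_*v=\phi_*(\gamma_C(c))$. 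With that adjustment your comparison of $\bar{\partial}$-representatives goes through (the constant $C$-component is killed by $\bar{\partial}$ along the fiber). Alternatively, all of this bookkeeping disappears if one phrases $KS_{\pi,b}$ as the connecting homomorphism of the exact sequence $0\to TM_b^{1,0}\to TM^{1,0}\big|_{M_b}\to\pi^*T_bB^{1,0}\to 0$: the map $\hat{\phi}$ induces a morphism of such sequences which is the canonical isomorphism on the subsheaves and $\phi_*$ on the quotients, and the theorem is then literally the naturality of connecting homomorphisms. Your closing remark about the superanalytic setting is reasonable but would need the relative tangent sheaf of a family of supermanifolds to be set up first; in this thesis that role is played by Vaintrob's theory, to which the author defers.
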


\begin{dfn}
Let $\pi:M\to B$ be a complex analytic family of compact complex manifolds and let $b\in B$ be given
together with the Kodaira-Spencer map $KS_{\pi,b}:T_bB\to H^1(M_b,TM_b^{1,0})$. Then the family $\pi$ is
called
\begin{itemize}
\item \emph{versal} at $b$ if $KS_{\pi,b}$ is surjective, and for every family 
$p:N\to C$ around $c\in C$ there exists
a morphism $\phi:C\to B$ with $\phi(c)=b$ such that the pullback $M\times_B C$ is isomorphic to $p:N\to C$,
\item \emph{semiuniversal} at $b$ if it is versal and $KS_{\pi,b}$ is a bijection, and
\item \emph{universal} at $b$ if $KS_{\pi,b}$ is a bijection and if for every family $p:N\to C$ 
around $c\in C$ there
exists a unique morphism $\phi:C\to B$ with $\phi(c)=b$ such that the pullback $M\times_B C$ 
is isomorphic to $p:N\to C$.
\end{itemize} 
\end{dfn}

Semiuniversal families are also called \emph{Kuranishi} families. The interpretation of this classification
is quite clear: a universal family parametrizes objects ``optimally'', in the sense that it contains all
possible deformations of the fiber $M_b$ and that every family which contains a fiber isomorphic to
$M_b$ can be pulled back from the universal one in a unique way.
The Teichm\"uller space of Riemann surfaces is the base of such a universal family for marked Riemann
surfaces. Teichm\"uller space does not, however, universally parametrize unmarked families of Riemann
surfaces. Without a marking, a Riemann surface may possess an additional discrete group of automorphisms,
and in this case there is no unique way to obtain a family containing it as a pullback
from the family over Teichm\"uller space. But it is still semiuniversal: it is possible to
pull back any family from it, albeit non-uniquely, and the tangent space at Teichm\"uller space can
naturally be identified with $H^1(M,TM^{1,0})\cong H^1(M,K)\cong H^0(M,K^2)$ (if $M$ is a Riemann surface).
The moduli space of Riemann surfaces is the quotient of Teichm\"uller space by the mapping class group, whose
elements represent these additional possible automorphisms. It is universal in the sense that it parametrizes
the deformations ``optimally'', but the price is that it is not a manifold anymore. Therefore, a
universal family of Riemann surfaces of genus $g\geq 2$ does not exist (at least not in the category
of complex manifolds, but in the larger category of stacks it does).
Even semiuniversal families do not, in general, exist. A versal family still contains all 
deformations of the fiber $M_b$, but may contain redundancies. Versal families are also called 
\emph{complete}.

\subsection{Construction of $\ct_{\fvect^L(1|1)}^{g,d}$}

The result of Section \ref{sect:modpair} is already almost the answer for the underlying Teichm\"uller
space of $\fvect^L(1|1)$-structures. A smooth supermanifold of dimension $2|2$ is, since smooth supermanifolds
are always split, isomorphic to a supermanifold of the form $\cm=(M,\wedge E)$, where $E$ is a smooth
vector bundle of rank 2 over the smooth underlying manifold $M$. We assume that $M$ is compact, closed and
orientable, and also that $E$ is orientable. The genus $g\geq 2$ of $M$ and the degree $d$ of $E$ are
invariants of $\cm$, i.e., they are preserved by superdiffeomorphisms.
A complex supermanifold of dimension
$1|1$, as was shown in Prop.~\ref{isosrs}, is always of the form $(M,\co_M\oplus \Gamma(L))$, where $M$ is a
Riemann surface and $L$ is a holomorphic line bundle over $M$.
Since there is only one odd coordinate available, it always remains in this exterior bundle form. 
The underlying 
smooth supermanifold, however, may deviate from the form $(M,\wedge^\bullet E)$:
its transition functions can contain terms proportional to $\theta_1\theta_2$. So there seems to
be a mismatch: when we switch back from $(M,L)$ to the underlying smooth bundle $(M,\wedge^\bullet E)$, 
we always
obtain the canonical bundle form $\cm^{split}$, but never the nilpotent corrections proportional to 
$\theta_1\theta_2$ that may occur in the smooth supermanifold $\cm$.

The solution to this riddle is that a real form on $\cm$, i.e., an involution of the complex supermanifold
whose fixed point locus is again the underlying real analytic supermanifold \cite{NoS_QFaSACfM}, need 
not express the even complex coordinate $z$ solely as a function of the even real coordinates $x,y$.
It only must produce an even real superfunction, which can involve terms proportional
to $\theta_1\theta_2$ as well.

\begin{thm}
\label{sl1}
The family $\pi:J(V_g)\to\ct_g$ is the underlying manifold of $\ct_{\fvect^L(1|1)}^{g,d}$, i.e., each point
of $J(V_g)$ represents exactly one $\sdiff_0(\cm)(\realR)$-orbit in $\cc(\cm)(\realR)$.
\end{thm}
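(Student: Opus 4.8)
The plan is to identify $\cc(\cm)(\realR)$ with the space $\cc^{g,d}$ of pairs of almost complex structures studied in Section~\ref{sect:modpair}, and then to match the $\sdiff_0(\cm)(\realR)$-action with the $\Aut_{\catvbun}(M,E)_0$-action, so that Prop.~\ref{cor:modpair} supplies the answer. First I would unwind what an integrable even almost complex structure on the underlying smooth supermanifold $\cm=(M,\wedge^\bullet E)$ amounts to at the level of $\realR$-points. By the discussion in the $2|2$-dimensional case (the analysis following Cor.~\ref{acstang} and Prop.~\ref{isosrs}), an integrable $J\in\cc(\cm)(\realR)$ turns $\cm$ into a complex $1|1$-dimensional supermanifold, hence into a pair $(M,L)$ of a Riemann surface and a holomorphic line bundle of degree $d$. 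Conversely every such pair arises this way. This gives a bijection of underlying sets $\cc(\cm)(\realR)\cong\cc^{g,d}$, where I must be careful to record that the integrability condition on the supermanifold (the extra equation $\partial\delta/\partial\bar\theta=0$ from Prop.~\ref{tcm}) is precisely what forces $J$ to descend to an honest pair $(J_E,J_M)$ with $p$ holomorphic, rather than just a complex structure on the total space of $E$.

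Next I would show that the pullback action of $\sdiff_0(\cm)(\realR)=\Aut_0(\cm)$ corresponds, under this bijection, to the $\Aut_{\catvbun}(M,E)_0$-action on $\cc^{g,d}$. Here the structural results of Chapter~\ref{ch:sdiff} are the key input: by Thm.~\ref{thm:autsman} every automorphism of $\cm$ sits in the exact sequence
\begin{equation}
1\longrightarrow N_\cm\longrightarrow\Aut(\cm)\longrightarrow\Aut_\catvbun(M,E)\longrightarrow 1,
\end{equation}
and by Lemma~\ref{lemmanil} the nilpotent factor $N_\cm$ acts as the identity on the associated vector bundle $(M,E)$. Since $N_\cm$ acts trivially modulo $\cn_\cm^2$, its elements cannot change the reduced data $(M,L)$ that determine a point of $\cc^{g,d}$; only the quotient $\Aut_\catvbun(M,E)$ acts effectively on the $\realR$-points. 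Restricting to the identity component replaces $\mathrm{Diff}(M)$ by $\mathrm{Diff}_0(M)$ in the semidirect splitting of $\Aut_\catvbun(M,E)$, giving exactly $\Aut_\catvbun(M,E)_0$. Thus the $\sdiff_0(\cm)(\realR)$-orbits in $\cc(\cm)(\realR)$ are identified with the $\Aut_\catvbun(M,E)_0$-orbits in $\cc^{g,d}$, which by Prop.~\ref{cor:modpair} are parametrized bijectively by the points of $J(V_g)$.

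The main obstacle I expect is justifying that $N_\cm$ acts trivially on $\realR$-orbits \emph{without} collapsing information that the theorem needs --- i.e.\ verifying that passing to the underlying pair $(M,L)$ really is $\sdiff_0$-equivariant and loses nothing at the level of $\realR$-points. Concretely, I must check that an automorphism in $N_\cm$, whose local form involves only corrections of degree $\geq 2$ in the odd coordinates (cf.~(\ref{ngr})), induces the identity on the reduced complex structure, so that the residual even automorphisms $\theta\mapsto\expe^c\theta$ and the $\complexC^\times$-ambiguity of Thm.~\ref{vectaut} genuinely live only in the odd/higher-$\Lambda$ directions and do not affect the $\realR$-point $(M,L)$. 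This is plausible from the cocycle computation in Section~\ref{sect:sconfaut}, where the $\complexC^\times$-action leaves the underlying cocycle $(f,g)$ untouched, but I would need to assemble these local statements into a clean global equivariance argument. Once that is in place, the identification $\cc(\cm)(\realR)/\sdiff_0(\cm)(\realR)\cong\cc^{g,d}/\Aut_\catvbun(M,E)_0\cong J(V_g)$ follows, and the theorem is proved by invoking Prop.~\ref{cor:modpair}.
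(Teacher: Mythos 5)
Your overall strategy --- reduce to $\cc^{g,d}$, match $\sdiff_0(\cm)(\realR)$ with $\Aut_{\catvbun}(M,E)_0$ via Thm.~\ref{thm:autsman}, and invoke Prop.~\ref{cor:modpair} --- is the right skeleton, and your handling of the $\Aut_\catvbun(M,E)_0$-part is fine. But there is a genuine gap in the first step: the claimed bijection $\cc(\cm)(\realR)\cong\cc^{g,d}$ is false, and the accompanying claim that $N_\cm$ acts ineffectively on the $\realR$-points is the opposite of what is true. An integrable almost complex structure $J$ on the fixed smooth supermanifold $\cm=(M,\wedge^\bullet E)$ carries strictly more data than a compatible pair $(J_E,J_M)$: the $J$-holomorphic even coordinate, expressed in the fixed smooth coordinates, may contain corrections proportional to $\theta_1\theta_2$ (this is exactly the ``mismatch'' the paper discusses just before the theorem, resolved by noting that a real form need not express $z$ purely in terms of $x,y$). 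Consequently the reduction map $\cc(\cm)(\realR)\to\cc^{g,d}$ is surjective but not injective; its fibers are precisely the $N_\cm$-orbits. Prop.~\ref{isosrs} classifies complex $1|1$-dimensional supermanifolds up to isomorphism and does not give you injectivity of this map on the nose.

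What the theorem actually requires --- and what the paper's proof supplies --- is that $N_\cm$ acts \emph{freely} on $\cc(\cm)(\realR)$, so that dividing it out is legitimate and yields exactly $\cc^{g,d}$. Concretely, for $n=\exp(\cx)\in N_\cm$ with $\cx=g(x,y)\theta_1\theta_2\partial_x+h(x,y)\theta_1\theta_2\partial_y$ one computes $n^*J=J+\theta_1\theta_2\,L_XJ$ with $X=g\partial_x+h\partial_y$; a nontrivial stabilizer would force $L_XJ=0$, i.e.\ $X$ would be a holomorphic vector field on the underlying Riemann surface, impossible for genus $g\geq 2$. This freeness statement, not the equivariance issue you flag as your ``main obstacle,'' is the technical heart of the step you elided: without it you cannot identify $\cc(\cm)(\realR)/N_\cm$ with $\cc^{g,d}$, and your argument as written would instead assert $\cc(\cm)(\realR)=\cc^{g,d}$ with $N_\cm$ acting trivially, which is not the case. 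Once the freeness is inserted, the rest of your proposal (restriction to identity components and Prop.~\ref{cor:modpair}) goes through as in the paper.
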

\begin{proof}
Let the associated exterior bundle of $\cm$ be denoted as $\cm^{split}=(M,\wedge^\bullet E)$, 
where $E$ is a smooth, orientable vector bundle of rank 2 and degree $d$.
By Thm.~\ref{thm:autsman}, the group $\sdiff_0(\cm)(\realR)$ can be written as
\begin{eqnarray}
\sdiff_0(\cm)(\realR) &\cong& (\mathrm{Diff}_0(M)\ltimes\Aut_{\catvbun(M)}(E))\ltimes N_\cm\\
&=& \Aut_{\catvbun}(M,E)_0\ltimes N_\cm\\
&=& \Aut(\cm^{split})_0\ltimes N_\cm.
\end{eqnarray}
Therefore, the theorem will be proven if we can divide out the action of $N_\cm$. Let $J$ be an integrable
almost complex structure on $\cm$. The group $N_\cm$ is generated by vector fields of order $j\geq 2$
in the odd variables (see Section \ref{sect:autm}). So if $n\in N_\cm$ is such a unipotent diffeomorphism 
and $(x,y,\theta_1,\theta_2)$ is a local smooth coordinate system on $\cm$, the action of $n$ can
locally be expressed by the sheaf map
\begin{equation}
n=\exp(\cx)=(\One+g(x,y)\theta_1\theta_2\pderiv{}{x}+h(x,y)\theta_1\theta_2\pderiv{}{y}),
\end{equation}
where $g$ and $h$ are ordinary smooth functions on the base $M$. The action of $n$ on $J$ is therefore
\begin{equation}
n^*J=J+\theta_1\theta_2L_XJ,
\end{equation}
where $X=g(x,y)\pderiv{}{x}+h(x,y)\pderiv{}{y}$. If $L_XJ\equiv 0$ were
possible for some $X\neq 0$, the action would not be free. But this cannot happen: by Prop.~\ref{tcm},
this would imply that $X$ is holomorphic with respect to $J$, i.e., it would be a holomorphic vector field 
with holomorphic coefficients on the
underlying Riemann surface. But for genus $g\geq 2$, such holomorphic sections of $TM^{1,0}$ do not
exist. Therefore, $N_\cm$ acts freely on $\cc(\cm)(\realR)$. Since the action is also infinitesimal, i.e.,
by Lie derivatives, its freeness is sufficient for allowing us to divide it out.
This is done by quotienting $\co_\cm\to\co_\cm/\cj^2$ ($\cj$ is the nilpotent ideal of $\co_\cm$), and
by reducing all modules over $\co_\cm$ correspondingly. This eliminates all tensor fields whose
coefficient functions are of degree greater than one in the odd variables.
As was shown in Section \ref{sect:explmor}, this reduces $\cm$ to $\cm^{split}=(M,\wedge^\bullet E)$.
The remaining integrable almost complex structures are complex structures on
the vector bundle $(M,E)$, which turn it into a holomorphic line bundle over a Riemann surface, i.e.,
we have
\begin{equation}
\cc(\cm)(\realR)/N_\cm\cong\cc^{g,d},
\end{equation}
where $\cc^{g,d}$ is the set of all pairs of Riemann surfaces and holomorphic line bundles defined in the
previous section. But then Prop.~\ref{cor:modpair} implies that the remaining action of 
$\Aut_{\catvbun}(M,E)_0$ on $\cc^{g,d}$ can be quotiented out and one obtains $J(V_g)$ as the resulting
underlying Teichm\"uller space.
\end{proof}

This Theorem implies that once one has fixed a marking on the underlying Riemann surface $M$ of
a complex $1|1$-dimensional supermanifold $\cm$, there exists a universal family for the the purely
even deformations of the complex structure of $\cm$. This means that among all families of complex
$1|1$-dimensional supermanifolds over purely even bases, the one over $J(V_g)$ is universal.
On the other hand, it is clear from
Thm.~\ref{vectaut} that it will be impossible to construct a supermanifold which is the base of a universal
family of marked complex $1|1$-dimensional supermanifolds. The supermanifold $\ct_{\fvect^L(1|1)}^{g,d}$
that we will construct below is only the base of a semiuniversal family.

Let $\pi:J(V_g)\to\ct_g$ be the family of Jacobians over (ordinary) Teichm\"uller space. Then each 
point $p\in J(V_g)$ denotes a pair $(M,L)$ consisting of an isomorphism class of complex structures 
and an isomorphism class of line bundles which are holomorphic with respect to this complex structure.
Let the degree of $L$ be $d$ with $d> 2g-2$. Then we can construct a holomorphic vector bundle
$p:H\to J(V_g)$ by assigning to each $p\in J(V_g)$ the space of holomorphic sections of
the bundle $L$. 

We employ this construction to define a specific bundle $p:\ch\to J(V_g)$ whose fiber at a point $(M,L)$ is
the space $H^0(M,K^2\otimes L^{-1})\oplus H^0(M,K\otimes L)$ where the degree of $L$ lies in the range
$0\leq d\leq 2g-2$. This is a holomorphic vector bundle of rank $4g-4$ over $J(V_g)$.

\begin{conj}
\label{citthm}
Let $J$ be an integrable almost complex structure on a smooth closed oriented supersurface $\cm$ of
dimension $2|2$. Then $\cm$ is equivalent to a Riemann surface $M$ and a holomorphic line bundle $L\to M$
by Prop.~\ref{isosrs}. Assume that the genus of $M$ is $g\geq 2$ and that $0<\deg(L)<2g-2$. Then
the supermanifold $\ct_{\fvect^L(1|1)}^{g,d}:=(J(V_g),\wedge^\bullet\ch)$ parametrizes a semiuniversal
family of complex $1|1$-dimensional supermanifolds.
\end{conj}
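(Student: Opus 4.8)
The plan is to verify the two defining properties of a semiuniversal family at the base point $b_0\in\ct_{\fvect^L(1|1)}^{g,d}$ that corresponds to $\cm$ with vanishing odd deformation: that the Kodaira--Spencer map $KS_{\pi,b_0}$ is a bijection, and that the tautological family is complete (versal). First I would construct the tautological family $\pi:\cx\to\ct_{\fvect^L(1|1)}^{g,d}$ explicitly. Its underlying even part is already available: over $J(V_g)$ the universal Teichm\"uller curve $V_g\to\ct_g$ of Thm.~\ref{thm:earle} supplies the family of Riemann surfaces, and the universal (Poincar\'e) line bundle over $J(V_g)$ supplies the fibrewise holomorphic line bundle, so that Prop.~\ref{isosrs} yields a family of complex $1|1$-dimensional supermanifolds over $J(V_g)$ --- this is essentially the content of Thm.~\ref{sl1}. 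The odd directions, parametrized by $\wedge^\bullet\ch$, must then deform the supercomplex cocycle along the $\psi$- and $\eta$-terms of (\ref{coc1}): the fibre $H^0(M,K^2\otimes L^{-1})\oplus H^0(M,K\otimes L)$ of $\ch$ is Serre-dual to $H^1(M,K^{-1}\otimes L)\oplus H^1(M,L^{-1})$, which is exactly the space of odd cocycle deformations, and I would twist the universal transition data by a tautological odd section of $\ch$ to obtain $\pi$.

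Next I would identify the relevant cohomology. The deformation space of a complex $1|1$-supermanifold is $H^1(\cm,\ctm)$, and filtering $\ctm$ by powers of the nilpotent ideal and reducing to the underlying curve splits it as
\begin{equation}
H^1(\cm,\ctm)\cong H^1(M,K^{-1})\oplus H^1(M,\co_M)\oplus\Pi\bigl(H^1(M,K^{-1}\otimes L)\oplus H^1(M,L^{-1})\bigr).
\end{equation}
By Serre duality this is isomorphic to the super vector space $V$ of (\ref{tottang}), whose dimension $(4g-3|4g-4)$ matches that of $T_{b_0}\ct_{\fvect^L(1|1)}^{g,d}$ (even part $T\ct_g\oplus T\Jac(M)$, odd part the fibre of $\ch$), as computed in Thm.~\ref{splittang}. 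I would also check that the obstruction space $H^2(\cm,\ctm)$ vanishes; it does, since after the same reduction all relevant sheaves live on a curve, so the super deformation problem of $\cm$ is unobstructed and any Kuranishi base is smooth of dimension $\dim_{\olc}V$.

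I would then compute $KS_{\pi,b_0}\colon T_{b_0}\ct_{\fvect^L(1|1)}^{g,d}\to H^1(\cm,\ctm)$ for the tautological family and show it is bijective. On the even summand this is the classical statement that the universal curve and the family of Jacobians induce isomorphisms $T\ct_g\to H^1(M,K^{-1})$ and $T\Jac(M)\to H^1(M,\co_M)$; on the odd summand it amounts to showing that the tautological $\psi,\eta$-twist produces the Dolbeault representatives of the deformation classes, which follows from the very $\opar$-computation underlying Prop.~\ref{tcm} and Thm.~\ref{splittang}. Having a family whose base is smooth of dimension $\dim H^1(\cm,\ctm)$, with $H^2(\cm,\ctm)=0$ and bijective $KS_{\pi,b_0}$, I would conclude semiuniversality either directly by a super-version of the standard completeness criterion or by invoking Vaintrob's existence of a versal deformation \cite{V:Deformations} together with the uniqueness of semiuniversal deformations up to (non-unique) isomorphism.

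The hard part will be this last step: making super Kodaira--Spencer theory and the completeness argument rigorous in the categorical framework, rather than merely matching tangent spaces. Concretely, one must show that every family of complex $1|1$-supermanifolds restricting to $\cm$ is a pullback of $\pi$, which needs a super-analog of Kuranishi's theorem and careful control of the $\complexC^\times$-ambiguity of Thm.~\ref{vectaut}. That ambiguity forces the pullback morphism to be non-unique (hence only semiuniversal, never universal), and away from the central fibre it places the $\complexC^\times$-orbit direction into the kernel of $KS$, so bijectivity holds genuinely only along the fixed locus $\{v=w=0\}$ containing $b_0$. Disentangling these infinitesimal automorphisms from the honest deformation directions is precisely the deformation-theoretic work the construction defers, and is why the statement is recorded here as a claim rather than a theorem.
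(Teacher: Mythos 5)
The paper offers no proof of this statement: it is recorded as a Claim, and the paragraph immediately following it concedes that only the bijectivity of the Kodaira--Spencer map is ``satisfied by construction'', while the completeness half --- that every family of compact complex $1|1$-dimensional supermanifolds of the given genus and degree is a pullback of the tautological one --- ``would require a much deeper analysis of the deformation theory of complex superspaces'' and is explicitly left for further investigation. Your proposal reproduces exactly this division of labour. The identification of $H^1(\cm,\ctm)$ with the super vector space of (\ref{tottang}) via the nilpotent filtration and Serre duality agrees with Lemma \ref{lem:h1ctm}; the matching of its dimension $(4g-3|4g-4)$ with the tangent space of $(J(V_g),\wedge^\bullet\ch)$ agrees with Thm.~\ref{splittang}; and the vanishing of $H^2(\cm,\ctm)$ for dimension reasons is correct. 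But the versality step that you yourself flag as the hard part --- a super analogue of Kuranishi's completeness theorem, applied to an arbitrary family over a base with odd dimensions --- is precisely the step the paper does not supply either. So what you have written is an honest proof plan with the same open gap as the source, not a proof, and the gap is real: without the completeness argument, nothing so far distinguishes a semiuniversal family from one that merely has bijective Kodaira--Spencer map at the central fibre.

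Two of your observations go beyond the paper and are worth keeping. First, the remark that away from the locus $\{v=w=0\}$ the $\complexC^\times$-orbit direction of Thm.~\ref{vectaut} lies in the kernel of the Kodaira--Spencer map sharpens the paper's assertion that $KS$ is an isomorphism ``at every point'': that assertion can only be meant at the reduced (central) fibre, and the semiuniversality being claimed is the pointwise notion there. Second, your explicit construction of the tautological family --- the universal Teichm\"uller curve and Poincar\'e bundle over $J(V_g)$ for the even part, twisted along the odd directions by a tautological section of $\ch$ acting on the cocycle (\ref{coc1}) --- is more concrete than anything the paper provides and would be a necessary ingredient of any eventual proof, since versality cannot even be posed until the family whose pullbacks are being classified has been exhibited.
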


The genus $g$ and the degree $d$ remain, of course, constant in this family: they are supersmooth invariants
of $\cm$. It is clear that the condition that the Kodaira-Spencer map be an isomorphism at every point is
satisfied by construction for the family parametrized by $\ct_{\fvect^L(1|1)}^{g,d}$. What has 
to be shown is that every
other family $p:\cv\to\cn$ of compact closed complex $1|1$-dimensional supermanifolds with genus $g$ and
degree $d$ can be obtained as a pullback from the one over $\ct_{\fvect^L(1|1)}^{g,d}$. This would
require a much deeper analysis of the deformation theory of complex superspaces. Although
Vaintrob developed this theory in \cite{V:Deformations}, there still is quite some work to do to prove
this claim. We will leave it for further investigations.

\section{\texorpdfstring{The Teichm\"uller space $\ct_{\fkl(1|1)}^{g}$}
{The Teichm\"uller space of N=1 super Riemann surfaces}}

It was shown in Prop.~\ref{isofkl} that the set of $\fkl(1|1)$-structures on a given smooth supersurface
of dimension $2|2$ can be characterised as the set of pairs $(M,S)$, where $M$ is the underlying
smooth surface equipped with a complex
structure and $S$ is a spin bundle. Spin bundles have degree $g-1$, so they lie
in the range
(\ref{gdeg}) of allowed degrees and there are no odd infinitesimal automorphisms. Inserting a spin bundle
$S$ as the line bundle in our considerations above yields 
\begin{equation}
W=H^0(M,K^2)\oplus H^0(M,K)\oplus\Pi\left(H^0(M,K^{3/2})\oplus H^0(M,K^{3/2})\right),
\end{equation}
as the space of nontrivial deformations.
Here, $K^{3/2}$ is a shorthand for $K\otimes S$, so this bundle also depends on the spin bundle $S$.
This is a complex super vector space of dimension $4g-3|4g-4$.
But the space $W$ does not describe the true deformations of a $\fkl(1|1)$-structure, it only describes
the deformations of a $\fvect^L(1|1)$-structure whose line bundle $L$ happens
to be a spin bundle. These deformations would allow a deformation of the isomorphism class $[L]$, but we have
to assure that it always remains a spin bundle, so the class really has to stay fixed. It will turn out 
that we have to restrict the space $W$
to a certain subspace. In order to do that, we first reinterpret the tangent space $V$ to the slice
as a cohomology group, which is the form usually used in deformation theory.

\begin{lemma}
\label{lem:h1ctm}
Let $\cm$ be a complex supersurface of dimension $1|1$, and let $(M,L)$ be the Riemann surface and
line bundle to which $\cm$ is equivalent.
Then the space $V$ constructed in (\ref{tottang}) is $H^1(M,\ctm)$, the first \v Cech 
cohomology group
of the sheaf $\ctm$ considered as a $\intZ_2$-graded module over the sheaf $\co_M$ of holomorphic
functions on the base manifold $M$.
\end{lemma}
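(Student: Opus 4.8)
The plan is to identify the tangent space $V$ from equation (\ref{tottang}) with the \v{C}ech cohomology group $H^1(M,\ctm)$ by decomposing the graded sheaf $\ctm$ over $\co_M$ and applying Serre duality summand by summand. Recall that a complex $1|1$-dimensional supermanifold $\cm$ corresponds to a pair $(M,L)$ with transition functions of the form (\ref{transf}), so the tangent sheaf splits as a $\intZ_2$-graded $\co_M$-module. First I would write out this decomposition explicitly. The even part $\ctm_{\bar 0}$ is generated locally by $\partial/\partial z$ (transforming as a section of $K^{-1}=TM^{1,0}$) and by $\theta\,\partial/\partial\theta$ (transforming as a section of $\co_M$, since $\theta$ is a section of $L$ and $\partial/\partial\theta$ of $L^{-1}$). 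The odd part $\ctm_{\bar 1}$ is generated by $\partial/\partial\theta$ (a section of $L^{-1}$) and by $\theta\,\partial/\partial z$ (a section of $L\otimes K^{-1}$). Thus, as a graded $\co_M$-module,
\begin{equation}
\ctm\cong\left(K^{-1}\oplus\co_M\right)\oplus\Pi\left(L^{-1}\oplus(L\otimes K^{-1})\right).
\end{equation}

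Next I would compute $H^1(M,-)$ of each direct summand. Since \v{C}ech cohomology commutes with finite direct sums and respects the $\intZ_2$-grading (the parity-shift functor $\Pi$ just tags the odd summands), we obtain
\begin{equation}
H^1(M,\ctm)\cong H^1(M,K^{-1})\oplus H^1(M,\co_M)\oplus\Pi\left(H^1(M,L^{-1})\oplus H^1(M,L\otimes K^{-1})\right).
\end{equation}
Now I would apply Serre duality, $H^1(M,\cf)\cong H^0(M,K\otimes\cf^{-1})^*$, to each term: $H^1(M,K^{-1})\cong H^0(M,K^2)$, $H^1(M,\co_M)\cong H^0(M,K)$, $H^1(M,L^{-1})\cong H^0(M,K\otimes L)$, and $H^1(M,L\otimes K^{-1})\cong H^0(M,K^2\otimes L^{-1})$. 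Comparing with (\ref{tottang}), these four spaces are exactly the even summands $H^0(M,K^2)$, $H^0(M,K)$ and the odd (parity-shifted) summands $H^0(M,K^2\otimes L^{-1})$, $H^0(M,K\otimes L)$ that constitute $V$, so the identification follows.

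The main conceptual point — and the step I expect to require the most care — is matching the parities and the geometric meaning of the summands correctly, rather than the cohomological bookkeeping itself. In particular I would want to double-check that the deformation-theoretic tangent space really is $H^1$ of the \emph{holomorphic} tangent sheaf (not some relative or $\bar\partial$-variant), which is consistent with the computation in the proof of Theorem \ref{splittang}: there each coefficient equation produced an obstruction space of the form $H^1(M,\cf)\cong H^0(M,K\otimes\cf^{-1})$ via the Hodge decomposition and Dolbeault isomorphism, which is precisely the Serre-dual description appearing above. The remaining subtlety is ensuring the grading convention on $\ctm$ as a module over $\co_M$ assigns the summands $L^{-1}$ and $L\otimes K^{-1}$ odd parity (they carry the odd generators $\partial/\partial\theta$ and $\theta\,\partial/\partial z$), matching the explicit $\Pi$ applied to the corresponding factors in (\ref{tottang}). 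Once these identifications are verified to be compatible with the isomorphisms already established in Theorem \ref{splittang}, the lemma is immediate.
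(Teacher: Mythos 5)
Your proposal is correct and follows essentially the same route as the paper's (much terser) proof: decompose $\ctm$ as the graded $\co_M$-module $(K^{-1}\oplus\co_M)\oplus\Pi(L\otimes K^{-1}\oplus L^{-1})$ via the local basis $\partial/\partial z$, $\theta\,\partial/\partial\theta$, $\theta\,\partial/\partial z$, $\partial/\partial\theta$, take $H^1$ summand by summand, and match with (\ref{tottang}) via Serre duality. You have simply made explicit the Serre-duality and parity bookkeeping that the paper leaves implicit in the phrase ``yields the result.''
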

\begin{proof}
Since a complex supersurface $\cm$ of dimension $1|1$ can always be interpreted as a Riemann surface $M$
with a parity-reversed line bundle $L$, the tangent sheaf $\ctm$ can indeed always be interpreted as a
locally free module over $\co_M$ of rank $2|2$. A local basis is given by
\begin{equation}
\pderiv{}{z},\quad\theta\pderiv{}{\theta},\quad\theta\pderiv{}{z},\quad\pderiv{}{\theta}.
\end{equation}
The first two of these are even, the last two are odd. Interpreting $\theta$ as a local section of $L$
and $\pderiv{}{\theta}$ as a local section of $L^{-1}$ yields the result.
\end{proof}

The interpretation of this result is as follows. Associating to every open set of $M$ the space
of sections of $\ctm$, one obtains a presheaf with values in the Lie 
algebra $\fvect^L(1|1)$. This is the algebra of infinitesimal local automorphisms of the supercomplex
structure. Its first cohomology describes precisely those variations of the supercomplex structure which
cannot be achieved by a global coordinate change, i.e., by a diffeomorphism.

In the case of a $\fkl(1|1)$-structure, the algebra of local infinitesimal automorphisms is, of course,
$\fkl(1|1)\subset\fvect^L(1|1)$. We must therefore determine the first \v Cech cohomology of the sheaf
of $\fkl(1|1)$-vector fields as the tangent space to $\ct_{\fkl(1|1)}^{g}$. Although it is a slight
abuse of notation, we will denote the sheaves whose sections are elements of a Lie
algebra $\mathfrak{g}$ also as $\mathfrak{g}$. For example, $\fkl(1|1)$ also denotes the subsheaf of
$\ctm$ whose local sections form the algebra $\fkl(1|1)$.

\begin{lemma}
\label{lem:split}
Let $\cm$ be a complex supersurface with a $\fkl(1|1)$-structure, i.e., $\cm$ is complex $1|1$-dimensional
and it is endowed with a maximally nonintegrable distribution $\cd$ of rank $0|1$ (cf.~Definition \ref{ddef}).
Then there exists a direct decomposition of $\co_\cm$-modules
\begin{equation}
\ctm=\fvect^L(1|1)=\fkl(1|1)\oplus\cd.
\end{equation}
\end{lemma}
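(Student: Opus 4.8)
The plan is to reduce the statement to an explicit computation in contact coordinates and then promote the local splitting to a global one by an intrinsicness argument. First I would invoke the LeBrun--Rothstein normal form \cite{LBR:Moduli}: since $\cm$ carries a maximally non-integrable distribution $\cd$ of rank $0|1$ (Definition \ref{ddef}), every point admits a chart $(z,\theta)$ in which $\cd$ is generated by $D=\pderiv{}{\theta}+\theta\pderiv{}{z}$. Because $\cd=\ker\alpha$ for the contact form $\alpha=dz+\theta\,d\theta$, the kernel condition (which forces the even component of a field in the kernel to be $\theta$ times its odd component) shows that a field lies in $\cd$ exactly when it is a multiple $gD$; hence $\cd$ is the free rank-$0|1$ submodule $\co_\cm\cdot D\subset\ctm$. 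Over the same chart $\ctm$ is free of rank $1|1$ on $\pderiv{}{z},\pderiv{}{\theta}$, so the task reduces to producing inside each chart a complement to $\co_\cm D$ that coincides with the contact fields.

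For the complement I would use Proposition \ref{p1}, which realizes every contact field as $K_f$ for $f\in\co_\cm$. Specializing (\ref{kf}) to $N=1$ (so $E=\theta\pderiv{}{\theta}$ and $H_f=-(-1)^{p(f)}\pderiv{f}{\theta}\pderiv{}{\theta}$) and writing $f=a+b\theta$ gives the explicit shape
\begin{equation}
K_f=(2a+b\theta)\pderiv{}{z}+(-b+\pderiv{a}{z}\,\theta)\pderiv{}{\theta}.
\end{equation}
The key step is then a short linear-algebra argument: given an arbitrary local field $X=p\pderiv{}{z}+q\pderiv{}{\theta}$, I would solve $X=K_f+gD$ for the pair $(f,g)$ by matching the even and odd parts of the two component functions $p,q$. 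This produces a triangular, uniquely solvable system for the coefficients of $f$ and $g$. Its unique solvability in the inhomogeneous case gives $\fkl(1|1)+\cd=\ctm$, while the triviality of the homogeneous solution gives injectivity of $(f,g)\mapsto K_f+gD$, hence $\fkl(1|1)\cap\cd=0$. Uniqueness together with spanning is precisely the asserted direct-sum decomposition on each chart.

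To globalize, I would note that neither summand depends on the chosen contact chart: $\cd$ is the given distribution, and $\fkl(1|1)$ is intrinsically the subsheaf of fields preserving $\cd$ up to an invertible factor (Definition \ref{def:fkl}). The chartwise splittings are therefore forced to agree on overlaps and patch to a global decomposition of subsheaves of $\ctm$.

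The main obstacle, and the point I would treat with most care, is the precise meaning of ``$\co_\cm$-modules'' in the statement. While $\cd=\co_\cm D$ is a genuine $\co_\cm$-submodule, the contact summand $\fkl(1|1)$ is the image of the $\complexC$-linear \emph{differential} operator $f\mapsto K_f$, and the coefficient $g_1$ obtained above involves $\pderiv{a}{z}$; consequently the projection of $X$ onto the $\cd$-factor involves a $z$-derivative of the components of $X$, and $\fkl(1|1)$ is not closed under multiplication by arbitrary sections of $\co_\cm$. I would therefore make explicit that the decomposition is a splitting of sheaves of $\complexC$-super vector spaces rather than an $\co_\cm$-linear one. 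This is exactly the structure needed for the intended application, namely to split the cohomology as $H^1(M,\ctm)\cong H^1(M,\fkl(1|1))\oplus H^1(M,\cd)$ and thereby isolate the tangent space to $\ct_{\fkl(1|1)}^g$ as $H^1(M,\fkl(1|1))$.
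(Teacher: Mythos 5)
Your proof is correct and follows essentially the same route as the paper's: pass to contact coordinates where $\cd=\co_\cm D$, use Prop.~\ref{p1} to write a general contact field as $K_f=2f\pderiv{}{z}+(-1)^{p(f)}(Df)D$, and conclude that $K_f\in\cd$ forces $f\equiv 0$ while $D$ and $\pderiv{}{z}=\tfrac12 D^2$ generate $\ctm$, so the sum is direct and exhausts $\ctm$. Your closing caveat is also well taken: since the projection onto the $\cd$-summand involves $\pderiv{a}{z}$, the subsheaf $\fkl(1|1)$ is not closed under multiplication by arbitrary sections of $\co_\cm$, so the splitting should indeed be read as one of sheaves of $\complexC$-super vector spaces (which is all that the subsequent cohomological decomposition $H^q(M,\ctm)\cong H^q(M,\cd)\oplus H^q(M,\fkl(1|1))$ requires), a point the paper's phrasing ``decomposition of $\co_\cm$-modules'' glosses over.
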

\begin{proof}
The statement is local, so we can work in local complex coordinates $z,\theta$ and can assume that
$\cd$ is generated by $D=\pderiv{}{\theta}+\theta\pderiv{}{z}$. From Prop.~\ref{p1} we know that
every element of $\fkl(1|1)$ can be obtained by taking an arbitrary function $f\in\co_\cm$ and
calculating its associated contact vector field
\begin{equation}
K_f=(2-\theta\pderiv{}{\theta})(f)\pderiv{}{z}+(-1)^{p(f)}\pderiv{f}{\theta}\pderiv{}{\theta}+%
\pderiv{f}{z}\theta\pderiv{}{\theta}.
\end{equation}
Setting $f=f_0+f_1\theta$, where $f_0,f_1$ are holomorphic functions of $z$, we see that
\[
Df=(-1)^{p(f)}(\pderiv{f_0}{z}\theta-f_1)
\]
and thus
\begin{equation}
K_f=2f\pderiv{}{z}+(-1)^{p(f)}(Df)D
\end{equation}
Therefore, $K_f$ lies in $\cd$ if and only if $f\equiv 0$. Since $D$ and $\pderiv{}{z}=\frac{1}{2}D^2$
generate $\ctm$ over $\co_\cm$, this proves the claim.
\end{proof}

This direct splitting allows us to calculate the \v Cech cohomology of the sheaf $\fkl(1|1)$ from that
of $\cd$, which is easier than a direct attempt.

\begin{prop}
Let $\cm$ be a $\fkl(1|1)$-surface defined by a  Riemann surface $M$ with spin bundle $S$. Then the 
first \v Cech cohomology
group of the subsheaf of $\fkl(1|1)$-vector fields considered as a $\intZ_2$-graded $\co_M$-module 
is given by
\begin{equation}
H^1(M,\fkl(1|1))=H^0(M,K^2)\oplus\Pi(H^0(M,K^{3/2})).
\end{equation}
\end{prop}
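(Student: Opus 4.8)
The plan is to exploit the module splitting furnished by Lemma~\ref{lem:split} together with the computation of $H^1(M,\ctm)$ already carried out in Lemma~\ref{lem:h1ctm}, reducing the problem to a single, essentially one-dimensional cohomology computation for the distribution sheaf $\cd$. First I would record the decomposition $\ctm=\fkl(1|1)\oplus\cd$ of Lemma~\ref{lem:split} as a direct sum of sheaves of $\intZ_2$-graded $\co_M$-modules. Since \v Cech cohomology commutes with finite direct sums of sheaves (the \v Cech complex of a direct sum being the direct sum of the complexes), this yields a grading-respecting splitting
\begin{equation}
H^1(M,\ctm)=H^1(M,\fkl(1|1))\oplus H^1(M,\cd).
\end{equation}
By Lemma~\ref{lem:h1ctm} the left-hand side is the space $V$ of equation~(\ref{tottang}); specializing to $L=S$ with $S\otimes S=K$, so that $K\otimes L=K^{3/2}$ and $K^2\otimes L^{-1}=K^{3/2}$, gives
\begin{equation}
H^1(M,\ctm)=H^0(M,K^2)\oplus H^0(M,K)\oplus\Pi\big(H^0(M,K^{3/2})\oplus H^0(M,K^{3/2})\big).
\end{equation}
Thus the assertion will follow once I identify $H^1(M,\cd)=H^0(M,K)\oplus\Pi(H^0(M,K^{3/2}))$.

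The core of the argument is therefore the computation of $H^1(M,\cd)$. As an $\co_\cm$-module $\cd$ is free of rank $0|1$, generated by $D=\partial_\theta+\theta\partial_z$; decomposing by the parity of the coefficient exhibits $\cd$ as a graded $\co_M$-module whose odd part is $\co_M\,D$ and whose even part is $\co_{\cm,\bar{1}}\,D=\co_M\,\theta\partial_\theta$ (using $\theta^2=0$). I would then read off the line-bundle types from the $\fkl(1|1)$ transition law $z'=f(z)$, $\theta'=g(z)\theta$ with $g^2=f'$, which gives $D=g\,D'$, so that $D$ frames a bundle with transition $g$. Since $\partial_\theta$ is a local section of $L^{-1}=S^{-1}$ and $\theta\partial_z$ a local section of $L\otimes K^{-1}=S^{-1}$ (cf.\ the basis identifications in the proof of Lemma~\ref{lem:h1ctm}), the odd part of $\cd$ is $\Pi(S^{-1})$, while $\theta\partial_\theta$ is invariant, of type $\co_M$, so the even part is $\co_M$. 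Serre duality then gives $H^1(M,S^{-1})\cong H^0(M,K\otimes S)=H^0(M,K^{3/2})$ and $H^1(M,\co_M)\cong H^0(M,K)$, whence $H^1(M,\cd)=H^0(M,K)\oplus\Pi(H^0(M,K^{3/2}))$. Subtracting this summand from the display above leaves exactly $H^1(M,\fkl(1|1))=H^0(M,K^2)\oplus\Pi(H^0(M,K^{3/2}))$.

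As a consistency check I would note that the resulting dimension $(3g-3\,|\,2g-2)$ agrees with the classically expected dimension of super Teichm\"uller space recalled in the introduction, and that the two summands $H^1(M,\cd)$ and $H^1(M,\fkl(1|1))$ together recover the dimension $(4g-3\,|\,4g-4)$ of $V$.

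I expect the main obstacle to be the bundle bookkeeping in the final step: checking that the grading-respecting splitting distributes the two abstractly-isomorphic odd summands $\Pi H^0(M,K^2\otimes L^{-1})$ and $\Pi H^0(M,K\otimes L)$ of $V$ so that precisely one copy of $\Pi H^0(M,K^{3/2})$ lands in $H^1(M,\cd)$ (the diagonal direction spanned by $D=\partial_\theta+\theta\partial_z$) and one in $H^1(M,\fkl(1|1))$. Because for $L=S$ both bundles equal $K^{3/2}$, equality of isomorphism classes is automatic, but to obtain a \emph{canonical} identification of the graded cohomology modules — and not merely an equality of dimensions — I would track the whole decomposition through the explicit generator $D$ rather than argue by dimension count, so that the direct-sum splitting of $\ctm$ is genuinely carried through to $H^1$.
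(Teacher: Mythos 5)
Your proposal is correct and follows essentially the same route as the paper's own proof: split $H^1(M,\ctm)$ via Lemma~\ref{lem:split}, compute $H^1(M,\cd)$ from the local decomposition $(f_0+f_1\theta)D=f_1\theta\partial_\theta+f_0D$ into an $\co_M$-summand and an $S^{-1}$-summand, apply Serre duality, and subtract from the result of Lemma~\ref{lem:h1ctm}. Your extra care in tracking which copy of $\Pi H^0(M,K^{3/2})$ lands in $H^1(M,\cd)$ versus $H^1(M,\fkl(1|1))$ is a reasonable refinement but does not change the argument.
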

\begin{proof}
From Lemma \ref{lem:split} we deduce that the cohomology of $\ctm$ as a $\co_M$-module of rank $2|2$ can
be split as
\begin{equation}
H^q(M,\ctm)=H^q(M,\cd)\oplus H^q(M,\fkl(1|1))\qquad q=0,1,\ldots.
\end{equation}
Locally, all sections of $\cd$ can be written as
\begin{equation}
(f_0+f_1\theta)(\pderiv{}{\theta}+\theta\pderiv{}{z})=f_1\theta\pderiv{}{\theta}+f_0D.
\end{equation}
The first summand contributes $H^1(M,\co_M)\cong H^0(M,K)$ as the even subspace of $H^1(M,\cd)$. The
second summand contributes $H^1(M,S^{-1})=H^1(M,K\otimes S)\cong H^0(M,K^{3/2})$ as the odd subspace.
Thus, $H^1(M,\fkl(1|1))$ must be isomorphic to $H^0(M,K^2)\oplus\Pi(H^0(M,K^2\otimes S^{-1}))$.
\end{proof}

Therefore, the Teichm\"uller space of $\fkl(1|1)$-structures ($N=1$ super Riemann surfaces in physics
terminology) has dimension $3g-3|2g-2$, as was already found by many authors, e.g., 
\cite{F:Notes}, \cite{Docsos}, \cite{CR:Super}, \cite{LBR:Moduli}. 
The missing $g|2g-2$ dimensions as compared to the deformations of the
supercomplex ($=\fvect^L(1|1)$-)structure alone also have a nice interpretation. The $g$ even
dimensions are those of the Jacobian variety, i.e., the vertical dimensions of $J(V_g)$. We cannot deform
the structure in these directions, since the divisor class of a spin bundle is rigid (see 
Prop.~\ref{spstr}). This fact also accounts for the loss of the $2g-2$ odd directions. As one could see in the
proofs of Prop.~\ref{prop:tangsli1} and Thm.~\ref{splittang}, the subspace $H^0(M,K\otimes L)$ that
we lose as valid deformations originates in the component $\gamma_0$ of the tangent vector to the complex
structure $J$. The entry $\gamma$ describes the contribution of $\pderiv{}{\bar{z}}$ to the
infinitesimally deformed vector field $\pderiv{}{\theta'}$, i.e., to the deformation of the line bundle $L$.
Since the isomorphism class of $L$ must remain constant, no such deformations are possible anymore.

The remaining even deformations describe the ordinary variation of the complex structure of the underlying
Riemann surface $M$, i.e., ordinary Teichm\"uller space. The odd ones are the true ``super''-deformations,
which can only occur over a base which contains odd dimensions. They, too, only deform the complex
structure of $M$, but in a way which would be impossible in classical complex geometry, namely by mixing
in even combinations of elements of $\Pi(H^0(M,K^{3/2}))$ and the odd variables of the base of the
deformation.

Denote by $\Xi$ the set of $2^{2g}$ disjoint holomorphic sections of $J(V_g)\to\ct_g$ corresponding
to the divisors of spin bundles defined in Prop.~\ref{spstr}. Each of these sections is biholomorphic
to ordinary Teichm\"uller space $\ct_g$, so $\Xi$ can be viewed as a trivial $2^{2g}$-sheeted covering
of $\ct_g$. Then we can restrict the bundle $p:\ch\to J(V_g)$ with fiber
$H^0(M,K^2\otimes L^{-1})\oplus H^0(M,K\otimes L)$ above to the subbundle
\begin{eqnarray}
p':\ch' &\to& J(V_g)\\
H^0(M,K^2\otimes L^{-1}) &\to& J(V_g).
\end{eqnarray}
Then the above discussions make it clear that $\ct_{\fkl(1|1)}^g$ is a subsupermanifold of 
$\ct_{\fvect^L(1|1)}^{g,g-1}$ which contains all isomorphism classes of spin curves and only those odd 
deformations that preserve the spin bundle.

\begin{thm}
The supermanifold $\ct=(\Xi,\wedge\ch'\big|_\Xi)$ parametrizes a semiuniversal family of 
$\fkl(1|1)$-structures on a smooth closed compact supermanifold of dimension $2|2$ whose underlying
manifold is a surface of genus $g$.
\end{thm}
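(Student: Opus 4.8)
The plan is to construct $\ct$ in exact parallel with $\ct_{\fvect^L(1|1)}^{g,d}$, exploiting that a $\fkl(1|1)$-structure is precisely a $\fvect^L(1|1)$-structure whose line bundle is forced to be a spin bundle, so that the entire construction of the preceding sections can simply be restricted to the spin locus. First I would pin down the underlying manifold. By Prop.~\ref{isofkl} a super Riemann surface is a pair $(M,S)$ with $S$ a spin bundle, and by Prop.~\ref{spstr} the divisor classes of spin bundles are carried by the $2^{2g}$ disjoint global holomorphic sections $\Xi\subset J(V_g)$, each biholomorphic to $\ct_g$. Rerunning the argument of Thm.~\ref{sl1} — dividing out the free, infinitesimal action of $N_\cm$ and then of $\Aut_{\catvbun}(M,E)_0$ — but now invoking the rigidity of the spin divisor class to forbid any deformation in the Jacobian fibre direction, confines the residual moduli to $\Xi$. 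Hence the underlying space of $\ct$ is the spin Teichm\"uller space $\Xi$.

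Next I would verify the tangent (Kodaira--Spencer) data, which supplies the bijectivity half of semiuniversality. By the computation immediately preceding the theorem, $H^1(M,\fkl(1|1))=H^0(M,K^2)\oplus\Pi(H^0(M,K^{3/2}))$, of dimension $3g-3|2g-2$, in accord with the known answer. The even summand $H^0(M,K^2)$ is exactly the holomorphic tangent space to $\ct_g$, hence to each sheet of $\Xi$, while the odd summand $\Pi(H^0(M,K^{3/2}))$ is by construction the parity-reversed fibre of the restricted bundle $\ch'\big|_\Xi$. Thus the tangent $\olc$-module of $\ct=(\Xi,\wedge^\bullet\ch'\big|_\Xi)$ at every point coincides with $H^1(M,\fkl(1|1))$, so the Kodaira--Spencer map of the family parametrized by $\ct$ is an isomorphism by construction. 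I would then record that $\ct$ is a closed subsupermanifold of $\ct_{\fvect^L(1|1)}^{g,g-1}$: spin bundles have degree $g-1$, the inclusion $\Xi\hookrightarrow J(V_g)$ is closed (a finite union of sections), and $\ch'$ is the subbundle of $\ch$ spanned by the summand $H^0(M,K^2\otimes L^{-1})$, the summand $H^0(M,K\otimes L)$ being suppressed precisely because a deformation of the component $\gamma_0$ would move the isomorphism class of $L$ and so violate the spin condition.

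Finally I would address versality, which I expect to be the genuine obstacle. The easy direction — that $\ct$ actually carries a family of $\fkl(1|1)$-surfaces — follows from the correspondence between $\fkl(1|1)$-families and relative theta characteristics (cf.\ \cite{M:Topics}): over each point of $\Xi$ the data $(M,S)$ together with an even section of $\wedge^\bullet\ch'$ assembles into a super Riemann surface. The hard part is to show that every family $p\colon\cv\to\cn$ of genus-$g$ $\fkl(1|1)$-surfaces arises by pullback: its underlying family of spin curves is classified by Earle's embedding into $J(V_g)$, necessarily landing in $\Xi$, and the odd deformation data ought to be encoded by a supersmooth map $\cn\to\ct$ covering this classifying map. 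Making the reconstruction rigorous requires the full Kodaira--Spencer deformation theory of complex superspaces of Vaintrob \cite{V:Deformations}, and it inherits its status from Claim~\ref{citthm}, whose versality is not established here. I therefore anticipate that, modulo that deformation-theoretic input, the proof reduces to the bijectivity of the Kodaira--Spencer map verified above, the real difficulty residing entirely in the surjectivity-of-pullbacks statement that this work deliberately leaves open.
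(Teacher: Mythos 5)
Your proposal follows essentially the same route as the paper: identify the underlying manifold with the spin locus $\Xi$, match the tangent data of $(\Xi,\wedge^\bullet\ch'\big|_\Xi)$ with $H^1(M,\fkl(1|1))=H^0(M,K^2)\oplus\Pi(H^0(M,K^{3/2}))$, and exhibit $\ct$ as a closed subsupermanifold of $\ct_{\fvect^L(1|1)}^{g,g-1}$. In fact your write-up is considerably more detailed than the paper's own proof, which consists of two sentences recording exactly these points.

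The one substantive divergence is in how versality is discharged. You treat the surjectivity-of-pullbacks half of semiuniversality as inherited from Claim~\ref{citthm} and hence as ``deliberately left open,'' so that your argument only establishes the Kodaira--Spencer bijectivity. The paper does not leave this open for the $\fkl(1|1)$ case: it closes the gap by citing \cite{V:Deformations} and \cite{LBR:Moduli}, where semiuniversal (Kuranishi) families of $N=1$ super Riemann surfaces were constructed by deformation-theoretic means --- which is precisely why this statement is a theorem while the $\fvect^L(1|1)$ analogue remains only a claim. Your caution is defensible (the paper supplies no argument of its own, only the citation), but as written your proposal does not prove the stated theorem, whereas the paper's intended proof does, modulo those external results. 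If you want your version to be complete, replace the appeal to Claim~\ref{citthm} by the appeal to the existing deformation theory of SUSY curves in \cite{V:Deformations} and \cite{LBR:Moduli}.
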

\begin{proof}
By construction, $\ct$ is the closed subsupermanifold of $\ct_{\fvect^L(1|1)}^{g,g-1}$ whose underlying
manifold parametrizes all isomorphism classes of marked Riemann surfaces of genus $g$ together 
with a spin bundle, and whose odd directions parametrize all inequivalent odd deformations of such
supersurfaces. The semiuniversality of the family parametrized by $\ct$ was proven by
\cite{V:Deformations} and \cite{LBR:Moduli}.
\end{proof}

In \cite{LBR:Moduli}, the residual $\intZ_2$-symmetry (cf.~Thm.~\ref{srsaut}) is also quotiented out and a
``superorbifold'' is obtained.

\addcontentsline{toc}{chapter}{Bibliography}
\bibliography{thesisbib}
\bibliographystyle{alpha}
\newpage
\thispagestyle{empty}
\mbox{}

\end{document}